%
%
%
%
\documentclass[10.6pt]{amsart}

\usepackage{slashed}
\usepackage{graphicx}
\usepackage{amsfonts}
\usepackage{amssymb}
\usepackage{amsmath}
\usepackage{amsxtra}
\usepackage{latexsym}





\newtheorem{theorem}{Theorem}[section]
\newtheorem{lemma}[theorem]{Lemma}
\newtheorem{corollary}[theorem]{Corollary}
\newtheorem{proposition}[theorem]{Proposition}
\newtheorem{definition}[theorem]{Definition}

\newtheorem{assumption}[theorem]{Assumption}

\theoremstyle{remark}

\numberwithin{equation}{section}



\def\T{\mathcal{T}}
\def\slP{\slashed{P}}

\def\Rp{\R_{0,p}}

\def\slp{\slashed{\pi}}
\def\Et{\mbox {Ext}}
\def\smu{\slashed{\mu}}
\def\cpi{\stackrel{\circ}\pi}

\def\hg{{\hat g}}

\def\hR{\hat{R}}
\def\tt{{t'}}

\def\bb{{\mathbf{b}}}

\def\er{\mbox{err}}
\def\Er{\mbox{Er}}

\def\sn{{\slashed{\nabla}}}

\def\bkpi{{{}^{(K)}\bar\pi}}

\def\ab{{\underline{\a}}}
\def\eh{\hat{\eta}}

\def\zb{{\underline{\zeta}}}
\def\bpi{{\bar{\pi}}}

\def\J{{\mathcal{J}}}

\def\Ab{{\underline{A}}}
\def\bM{\textbf{M}}
\def\M{{\mathcal{M}}}
\def\bT{{\textbf{T}}}

\def\bR{{\textbf{R}}}

\def\bd{{\textbf{D}}}
\def\ti{\tilde}

\def\bg{\mathbf{g}}

\def\hk{{\hat{k}}}
\def\I{{\mathcal I}}

\def\beaa{\begin{eqnarray*}}
\def\eeaa{\end{eqnarray*}}

\def\ba{\begin{array}}
\def\ea{\end{array}}
\def\d{\delta}
\def\be#1{\begin{equation} \label{#1}}
\def \eeq{\end{equation}}

\newcommand{\nn}{\nonumber}

\def\l{\langle}
\def\r{\rangle}

\def\nn{\nonumber}
\def\S{{\mathcal S}}

\def\S2{{\Bbb S}^2}

\def\A{\mathcal {A}}

\def\E{{\mathcal E}}

\def\U{{\mathcal U}}

\def\ze{{\zeta}}

\def\ub{\underline{u}}
\def\udb{\underline{\b}}
\def\Lb{\underline{L}}
\def\Lie{{\mathcal L}}
\def\tr{\mbox{tr}}

\def\D{{\mathcal D}}
\def\H{{\mathcal H}}

\def\N{{\mathcal N}}

\def\La{{\Lambda}}

\def\B{{\mathcal B}}
\def\F{{\mathcal F}}
\def\P{{\mathcal P}}
\def\R{{\mathcal R}}
\def\c{\cdot}
\def\hot{\widehat{\otimes}}
\def\sig{\sigma}
\def\s{\sigma}
\def\a{\alpha}
\def\b{\beta}
\def\e{\eta}

\def\ep{\epsilon}
\def\l{\langle}
\def\r{\rangle}
\def\ga{\gamma}
\def\Ga{\Gamma}
\def\la{\lambda}

\def\p{\partial}
\def\P{{\mathcal P}}

\def\nab{\nabla}
\def\F{{\mathcal{F}}}

\def\C{{\mathcal C}}

\def\Lb{{\underline{L}}}

\def\div{\mbox{\,div\,}}
\def\curl{\mbox{\,curl\,}}

\def\tr{\mbox{tr}}
\def\Tr{\mbox{Tr}}

\def\itt{{\mbox{Int}}}


\def\f14{\frac{1}{4}}
\def\f12{{\frac{1}{2}}}

\def\t1a{t^{-\frac{1}{a}}}
\def\kp{{\kappa}}

\def\sl{\slashed}
\def\sD{\slashed{\Delta}}

\def\er{\mbox{err}}

\def\sn{{\slashed{\nabla}}}

\def\ab{{\underline{\a}}}
\def\eh{\hat{\eta}}

\def\zb{{\underline{\zeta}}}
\def\bpi{{\bar{\pi}}}

\def\J{{\mathcal{J}}}

\def\Ab{{\underline{A}}}
\def\bM{\textbf{M}}
\def\M{{\mathcal{M}}}
\def\bT{{\textbf{T}}}

\def\bR{{\textbf{R}}}

\def\bd{{\textbf{D}}}
\def\ti{\tilde}

\def\hk{{\hat{k}}}
\def\I{{\mathcal I}}

\def\beaa{\begin{eqnarray*}}
\def\eeaa{\end{eqnarray*}}

\def\ba{\begin{array}}
\def\ea{\end{array}}
\def\be#1{\begin{equation} \label{#1}}
\def \eeq{\end{equation}}
\def\nn{\nonumber}

\def\l{\langle}
\def\r{\rangle}

\def\nn{\nonumber}
\def\S{{\mathcal S}}

\def\S2{{\Bbb S}^2}

\def\A{\mathcal {A}}

\def\E{{\mathcal E}}

\def\U{{\mathcal U}}

\def\ze{{\zeta}}

\def\ub{\underline{u}}
\def\udb{\underline{\b}}
\def\Lb{\underline{L}}
\def\Lie{{\mathcal L}}
\def\tr{\mbox{tr}}

\def\D{{\mathcal D}}
\def\H{{\mathcal H}}

\def\N{{\mathcal N}}

\def\La{{\Lambda}}

\def\B{{\mathcal B}}
\def\F{{\mathcal F}}
\def\P{{\mathcal P}}
\def\R{{\mathcal R}}
\def\c{\cdot}
\def\hot{\widehat{\otimes}}
\def\sig{\sigma}
\def\s{\sigma}
\def\a{\alpha}
\def\b{\beta}
\def\e{\eta}

\def\ep{\epsilon}
\def\l{\langle}
\def\r{\rangle}
\def\ga{\gamma}
\def\Ga{\Gamma}
\def\la{\lambda}

\def\p{\partial}
\def\P{{\mathcal P}}

\def\nab{\nabla}
\def\F{{\mathcal{F}}}

\def\Lb{{\underline{L}}}

\def\div{\mbox{\,div\,}}
\def\curl{\mbox{\,curl\,}}

\def\tr{\mbox{tr}}
\def\Tr{\mbox{Tr}}

\def\itt{{\mbox{Int}}}


\def\f14{\frac{1}{4}}
\def\f12{{\frac{1}{2}}}

\def\t1a{t^{-\frac{1}{a}}}
\def\kp{{\kappa}}

\def\sl{\slashed}

\def\sD{\slashed{\Delta}}

\def\ckk{\check}

\def\RR{{\mathcal R}_0}

\def\Ric{\mbox{Ric}}
\def\hn{\widehat \nab}
\def\hdt{\hat \Delta}
\def\dum{\mbox{ }}
\newcommand{\bea}{\begin{eqnarray}}
\newcommand{\eea}{\end{eqnarray}}

\def\nn{\nonumber}
\def\be{{(e)}}
\def\bi{{(i)}}
\def\np{\N_{1,p}}
\def\nps{\N_{1,p}}

\def\G{\mathcal{G}}
\def\sz{\sn^{(0)}}
\def\gaz{\gamma^{(0)}}
\def\gas{\ga_{{{\Bbb
S}^2}}}
\def\dvz{\div^{(0)}}
\def\Gz{\Gamma^{(0)}}
\def\Pz#1{P_{#1}^{(0)}}
\def\tPz#1{\ti P_{#1}^{(0)}}
\def\schih{\sl{\chih}}
\def\ei{\E^{(1)}}

\newcommand{\chih}{\hat{\chi}}
\newcommand{\chib}{\underline{\chi}}

\newcommand{\chibh}{\underline{\hat{\chi}}\,}
\newcommand{\les}{\lesssim}

\newcommand\ovl{\overline}

\def\gac{\stackrel{\circ}\ga}



\begin{document}

\title[]
{Rough solutions of Einstein vacuum equations in CMCSH gauge}

\author{Qian Wang}
\address{Max Planck Institute for Gravitational Physics\\ 
Albert Einstein Institute\\
Am M\"{u}lenberg 1\\
D-14476 Golm\\
Germany}
\email{qwang@aei.mpg.de}
\curraddr{} \email{}

\date{}





\begin{abstract}
In this paper, we consider very rough solutions to Cauchy problem for
the Einstein vacuum equations in CMC spacial harmonic gauge, and obtain the local well-posedness result in $H^s, s>2$. The novelty of our approach lies in that,  without  resorting to the standard paradifferential regularization over the rough, Einstein metric $\bg$,  we manage to implement the commuting vector field approach to prove Strichartz estimate for  geometric wave equation  $\Box_\bg \phi=0$ directly.
\end{abstract}

\maketitle

\section{\bf  Introduction} \label{intr}
\setcounter{equation}{0}

In mathematical relativity, a fundamental question is to find a four dimensional
Lorentz metric $\bg$ that satisfies the vacuum Einstein equation
\begin{equation}\label{10.15.1}
{\bf Ric}(\bg)=0.
\end{equation}
Since the equation is diffeomorphic invariant, certain gauge should be fixed before
solving it.  There exist extensive works on (\ref{10.15.1}) under the wave condition
gauge or the constant mean curvature gauge.

In \cite{AnMon} Andersson and Moncrief consider the vacuum Einstein equation (\ref{10.15.1})
under the so-called constant mean curvature and spatial harmonic coordinate (CMCSH) gauge
condition. To set up the framework, let $\Sigma$ be a $3$-dimensional compact, connected
and orientable smooth manifold, and let ${\mathcal M}:={\mathbb R}\times \Sigma$. Let
$t: {\mathcal M}\to {\mathbb R}$ be the projection on the first component and let $\Sigma_t:=
\{t\}\times \Sigma$ be the level sets of $t$.  One may construct solutions of
(\ref{10.15.1}) by considering Lorentz metrics $\bg$ of form
\begin{equation*}
\bg=-n^2 dt\otimes dt +g_{ij} (dx^i +Y^i dt) \otimes (dx^j+Y^j dt)
\end{equation*}
with suitable determination of the scalar function $n$, the vector field $Y:=Y^j \p_j$ and the
Riemannian metric $g:=g_{ij} dx^i\otimes dx^j$ on $\Sigma$. In order for $\p_t$ to be
time-like, it is necessary to have $n^2-g_{ij} Y^i Y^j>0$. Let $\bT$ be the time-like normal to
$\Sigma_t$, then
\begin{equation*}
\p_t =n \bT + Y.
\end{equation*}
We call $n$ the lapse function and $Y$ the shift vector field.

Let $\hg$ be a fixed smooth Riemannian metric on $\Sigma$ with Levi-Civita connection $\hn$ and Christoffel
symbol $\hat{\Gamma}_{ij}^k$. Let $\Ga_{ij}^k$ denote the Christoffel symbol with
respect to $g$, we may introduce the vector field $U=U^l \p_l$ with
\begin{equation*}
U^l:=g^{ij} (\Ga_{ij}^l -\hat{\Ga}_{ij}^l).
\end{equation*}
Let $k$ be the second fundamental form of $\Sigma_t$ in ${\mathcal M}$,
i.e. $k_{ij}=-\frac{1}{2} \Lie_\bT \bg_{ij}$. The solution of (\ref{10.15.1}) constructed in
\cite{AnMon} is to find the pair $(g, k)$ such that they satisfy the CMCSH condition
\begin{equation}\label{CMCSH}
\Tr k:=g^{ij} k_{ij} =t\quad \mbox{and} \quad U^j=0
\end{equation}
and the vacuum Einstein evolution equations
\begin{align}
\p_t g_{ij}&=-2n k_{ij}+\Lie_Y g_{ij}\label{eqn8}\\
\p_t k_{ij}&=-\nab_i \nab_j n+n(R_{ij}+\Tr k k_{ij} -2 k_{im}
k_j^m)+\Lie_Y k_{ij}\label{eqn9}
\end{align}
with the constraint equations
\begin{equation}\label{constraint}
R-|k|^2+(\Tr k)^2=0 \quad \mbox{and} \quad
\nab_i \Tr k-\nab^j k_{ij}=0.
\end{equation}
It has been shown in \cite{AnMon} that for initial data $(g^0, k^0)\in H^s\times H^{s-1}$
with $s>5/2$ satisfying the constraint equation (\ref{constraint}) with $t_0:=\Tr k^0<0$,
the Cauchy problem for the system (\ref{CMCSH})--(\ref{constraint}) is locally well-posed.
In particular, there is a time $T_*>0$ depending on $\|g^0\|_{H^s}$ and $\|k^0\|_{H^{s-1}}$ such that
the Cauchy problem has a unique solution defined on $[t_0-T_*, t_0+T_*]\times \Sigma$. We should mention that,
for the solution constructed in this way, the lapse function $n$ and the shift vector field $Y$ satisfy
the elliptic equations
\begin{align}
-\Delta n+|k|^2 n&=1 \label{n1}
\end{align}
and
\begin{align}
\Delta Y^i+R^i_j Y^j&=\left(-2n k^{jl}+2 \nab^j Y^l\right) U^i_{jl}
+2 \nab^j n k_{j}^i-\nab^i n k_j^j,\label{shi1}
\end{align}
where $U^i_{jl}$ is the tensor defined by
\begin{equation}\label{11.5.2}
U^i_{jl}:=\Ga^i_{jl}-\hat \Ga^i_{jl}.
\end{equation}

It is natural to ask under what minimal regularity on the initial data the CMCSH Cauchy problem
(\ref{CMCSH})--(\ref{constraint}) is locally well-posed. In this paper we prove the
following result which shows the well-posedness of the problem when the initial data is in
$H^s\times H^{s-1}$ with $s>2$.

\begin{theorem}[Main Theorem]\label{main1}
For any $s>2$, $t_0<0$ and $M_0>0$,  there exist positive constants $T_*$, $M_1$ and $M_2$
such that the following properties hold true:

\begin{enumerate}
\item[(i)] For any initial data set $(g^0, k^0)$ satisfying (\ref{constraint}) with $t_0:=\Tr k^0<0$
and $\|g^0\|_{ H^s(\Sigma_{t_0})}+\|k^0\|_{H^{s-1}(\Sigma_{t_0})}\le M_0$, there exists
a unique solution $(g, k)\in C(I_*, H^s\times H^{s-1})\times C^1(I_*, H^{s-1}\times H^{s-2})$
to the problem (\ref{CMCSH})--(\ref{constraint});

\item[(ii)] There holds
\begin{equation*}
\|\hn g, k\|_{L^2_{I_*} L_x^\infty}+\|\hn g, k\|_{L_{I_*}^\infty
H^{s-1}}\le M_1;
\end{equation*}

\item[(iii)] For $2<r\le s$, and for each $\tau \in I_*$
the linear equation
\begin{equation*}
\left\{\begin{array}{lll}
\Box_\mathbf{g} \psi=0, \quad&(t,x)\in I_*\times \Sigma\\
 \psi(\tau, \cdot) =\psi_0\in H^r(\Sigma), &\p_t \psi(\tau,
\cdot)=\psi_1\in H^{r-1}(\Sigma)
\end{array}\right.
\end{equation*}
admits a unique solution $\psi\in C(I_*, H^r)\times C^1(I_*, H^{r-1})$ satisfying the estimates
\begin{equation*} 
\|\psi\|_{L_t^\infty H^{r}}+\|\p_t \psi\|_{L_t^\infty
H^{r-1}}\le M_2\|\psi_0, \psi_1\|_{H^{r}\times H^{r-1}}
\end{equation*}
and
\begin{equation*}
\|\bd\psi\|_{L_t^2 L_x^\infty}\le M_2\|(\psi_0,
\psi_1)\|_{H^{r}\times H^{r-1}};
\end{equation*}
 \end{enumerate}
 where $I_*:=[t_0-T_*, t_0+T_*]$. 
 
\end{theorem}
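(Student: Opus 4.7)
The plan is to prove the three conclusions simultaneously by a bootstrap/continuity argument in which the Strichartz bound in (iii) for the geometric wave operator $\Box_\bg$ is the crux that feeds back into (ii) and thereby makes (i) available. I would begin by approximating the rough data $(g^0, k^0) \in H^s \times H^{s-1}$ by a sequence of smooth, constraint-satisfying data $(g^0_n, k^0_n)$, to each of which the classical Andersson--Moncrief theorem of \cite{AnMon} applies at subcritical regularity. On the resulting smooth CMCSH solutions I would impose the bootstrap assumptions
\[
\|\hn g, k\|_{L^2_I L^\infty_x} + \|\hn g, k\|_{L^\infty_I H^{s-1}} \le 2 M_1
\]
on a time interval $I \subset I_*$ whose length depends only on $M_0$, and attempt to recover the same estimate with the sharp constant $M_1$.

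The improvement on the $L^\infty_I H^{s-1}$ piece is by now standard: one differentiates \eqref{eqn8}--\eqref{eqn9} up to order $s-1$, couples with the elliptic estimates for $n$ and $Y$ coming from \eqref{n1} and \eqref{shi1}, and closes by Gronwall, with all product terms controlled by the bootstrap $L^\infty_x$ bound. The substantive step is the $L^2_I L^\infty_x$ part, which I would derive from (iii) as follows. Modulo quadratic errors in $\bd \bg$ and lower-order elliptic corrections coming from the CMCSH gauge, the components of $\hn g$ and $k$ satisfy wave equations of the schematic form $\Box_\bg \Psi = \mbox{(good errors)}$, obtained for example by commuting \eqref{eqn9} with $\bT$ and invoking the contracted second Bianchi identity together with \eqref{n1}--\eqref{shi1}. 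Applying (iii) with $r = s$ and treating the error terms with the norms already present in the bootstrap yields the desired $L^2_I L^\infty_x$ bound with constant depending only on $M_0$, which strictly improves the bootstrap and thus closes (i)--(ii).

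The heart of the proof is therefore (iii), whose novelty is that it is carried out directly on the rough Einstein metric $\bg$ without paradifferential regularization, via the commuting vector field approach of Klainerman--Rodnianski adapted to the CMCSH setting. Given a base point $p \in I_* \times \Sigma$, I would construct a geodesic outgoing optical function $u$ solving $\bg^{\a\b} \p_\a u \p_\b u = 0$ and foliate a neighborhood of the corresponding null cone by the level sets $C_u$ and the $2$-surfaces $S_{t,u} = \Sigma_t \cap C_u$. Let $L, \Lb$ be the associated null normals with Ricci coefficients $\tr \chi, \chih, \eta, \etab, \zeta$ on $S_{t,u}$. Commuting $\Box_\bg \phi = 0$ with rotation, scaling and Morawetz-type vector fields adapted to this null frame, performing geometric energy estimates on the cones $C_u$ by integration by parts against the deformation tensors of the commutators, and converting the resulting $L^2$ flux on $C_u$ into $L^2_t L^\infty_x$ control by sharp trace and Sobolev embeddings along $C_u$, one would recover the dispersive-type bound
\[
\|\bd \phi\|_{L^2_I L^\infty_x} \lesssim \|(\psi_0, \psi_1)\|_{H^{r} \times H^{r-1}}, \qquad r > 2.
\]

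The main obstacle, and where most of the effort must lie, is the control of the null hypersurface geometry at this rough regularity. The transport equations for $\tr \chi$ and $\chih$ along $L$, and the Hodge/transport systems on $S_{t,u}$ for $\eta, \zeta$, couple to curvature components of $\bg$; the borderline Raychaudhuri source is precisely $R_{LL}$, which vanishes thanks to the vacuum condition ${\bf Ric}(\bg) = 0$. Exploiting this cancellation, together with the CMCSH elliptic control of $(n, Y)$ from \eqref{n1}--\eqref{shi1} and an $L^2$ curvature flux bound along $C_u$, one must close a secondary bootstrap for the Ricci coefficients in the spaces needed to justify the commutator and energy identities above. Once this geometric bootstrap closes, (iii) follows; this improves the bootstrap for (ii); and the passage from smooth approximations to the rough solution of (i) is obtained by standard weak-strong uniqueness together with stability in an intermediate Sobolev norm $H^{s'} \times H^{s'-1}$ with $2 < s' < s$.
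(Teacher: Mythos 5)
Your proposal captures the broad architecture of the argument (reduction to a bootstrap in which the Strichartz estimate for $\Box_\bg\phi=0$ is the crux, optical function and null foliation, Ricci coefficients controlled via $\mathbf{Ric}(\bg)=0$ and the elliptic lapse/shift equations), and that architecture is indeed the one the paper follows. However, two of the steps you state as if they were routine are precisely the obstructions the paper is designed to circumvent, and as written they would fail.

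First, you claim that "modulo quadratic errors in $\bd\bg$ and lower-order elliptic corrections," $\hn g$ and $k$ satisfy wave equations $\Box_\bg\Psi=\text{(good errors)}$, to which one applies (iii). The errors are \emph{not} good in CMCSH gauge: the source produced by reducing the evolution system to a scalar wave operator contains $\bd_\bT Y$, and, as the paper stresses, the elliptic equation obeyed by $\bd_\bT Y$ (which contains a term $R^i_j\nab_{n\bT}Y^j$) does not yield a usable $L^2$ bound unless $g$ has negative sectional curvature. For this reason the paper deliberately avoids the second-order wave formulation for $\hn g$ and $k$: the energy estimates of Sections~\ref{h2es}--\ref{sec3} are carried out for the first-order hyperbolic systems (\ref{lie3}) with pairs $(g,-2k)$, $(k,E)$, and $(\phi,e_0\phi)$, and the flux is derived in Section~\ref{flx} through a modified energy-momentum current $\ti P_\mu$ built to eliminate $\bd_\bT Y$. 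The same issue forces a refinement of the $\T\T^*$ argument (Section~4.1) and a curvature decomposition in invariant form (Appendix~I) so that no term $\sn(\bd_\bT Y)$ appears in the Hodge source for $\chih$ and $\zeta$.

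Second, your route to (iii) — commuting $\Box_\bg\phi=0$ with rotation, scaling, and Morawetz vector fields and bounding the resulting conformal energies — is exactly the step the paper explains cannot close at $H^{2+\ep}$ regularity: controlling the conformal energy of $\bd_Z\phi$ for $Z=\bT,\Lb,e_A$ requires $\|\bd{}^{(Z)}\pi\|_{L_t^1L_x^\infty}$ and hence curvature in $H^{1/2+}$ along the null cone, which is only available for data in $H^{5/2+\ep}$. The paper's decay estimate (Theorem~\ref{decayth}) is instead obtained from the boundedness of $\ti Q[\phi](t)$ for $\phi$ \emph{itself} (Theorem~\ref{BT}), with $\|P\bd_\bT\phi\|_{L_x^\infty}$ recovered from that conformal energy through Bernstein inequalities and a commutator estimate $[P,\varpi N^l]\p_l\phi$ whose control reduces to $\|\p(\varpi N)\|_{L_x^\infty}$, hence to $L_t^2 L_x^\infty$ estimates on $\chih,\zeta$. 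Those in turn require a modified Calderon--Zygmund inequality (Proposition~\ref{cz.2}) feeding off the dyadic Strichartz bound on $k,\hn g$ — a genuinely self-referential bootstrap ((\ref{BA3})) that your proposal does not anticipate. These are not cosmetic differences: without the first-order reformulation, the modified current, the $\T\T^*$ refinement, and the "conformal energy of $\phi$ only" strategy, the argument you sketch has unboundable terms at each stage.
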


We actually obtain a stronger result than Theorem \ref{main1}, which is contained in Theorem  \ref{mainformetric}.

\subsection{ Review and Motivation}

Since the pioneer work of Choquet-Bruhat \cite{Ch}, there has been extensive work
on the well-posedness of quasilinear wave equation
\begin{equation}\label{gew}
\left\{\begin{array}{lll}
\Box_{g(\phi)}\phi:=\p_t^2 \phi-g^{ij}(\phi)\p_i \p_j\phi =N(\phi, \p \phi), \\
\phi|_{t=0}=\phi_0,\quad \p_t \phi|_{t=0}=\phi_1
\end{array}\right.
\end{equation}
in ${\Bbb R}^{n+1}$, where the symmetric matrix $g^{ij}(\phi)$ is positive definite and smooth as a
function of $\phi$, and the function $N(\phi, \p\phi)$ is smooth in its arguments and is quadratic
in $\p \phi$. In view of the energy estimate
\begin{equation}\label{eqin1}
\|\p \phi(t)\|_{H^{s-1}}\les \|\p \phi(0)\|_{H^{s-1}}\c
\exp \left(\int_0^t  \|\p \phi(\tau)\|_{L_x^\infty} d\tau\right),
\end{equation}
the Sobolev embedding and a standard iteration argument, the classical result of Hughes-Kato-Marsden \cite{HKM} of
well-posedness in the Sobolev space $H^s$  follows for any $s>\frac{n}{2}+1$, where the estimate
on $\|\p \phi\|_{L_t^\infty L_x^\infty}$ is heavily relied. To improve the classical
result, it is crucial to get a good estimate on $\|\p \phi\|_{L_t^1 L_x^\infty}$. This is
naturally reduced to deriving the Strichartz estimate for the wave operator $\Box_{g(\phi)}$ which has
rough coefficients since $g^{ij}(\phi)$ depend on the solution $\phi$ and thus at most have as much regularity as $\phi$.
The first important breakthrough was achieved by Bahouri-Chemin \cite{BC2} and by Tataru \cite{T1}
 using parametrix constructions. They obtained the well-posedness of (\ref{gew}) in $H^s$
with $s>\frac{n}{2}+\f12+\frac{1}{4}$ by establishing a Strichartz estimate of the following form,  if $\p g\in L_t^2 L_x^\infty$,
\begin{equation*}
\|\p \phi\|_{L_I^2 L_x^\infty}\le c(\|\phi_0\|_{H^{\frac{n}{2}+\f12+\sigma}}+\|\phi_1\|_{H^{\frac{n}{2}-\f12 +\sigma}})
\end{equation*}
with a loss of $\sigma>\frac{1}{4}$, for solutions to linearized equations. This well-posedness result
was later improved to $s>\frac{n}{2}+\f12+\frac{1}{6}$ in \cite{T3}.

The next important progress was made by Klainerman in \cite{KCom} where a vector field
approach was developed to establish the Strichartz estimate. This approach
was further developed by Klainerman-Rodnianski in \cite{KRduke} where they successfully
improved the local well-posedness  of (\ref{gew}) in ${\mathbb R}^{3+1}$
to the Sobolev space $H^{s}$ with $s>2+\frac{2-\sqrt{3}}{2}$.  Due to the limited
regularity of the coefficients, the paradifferential localization procedure in
\cite{BC2,T2,T3} was adopted in \cite{KRduke} to consider the Strichartz estimate for
solutions of linearized wave equation $\Box_{g_{\le \la^a}}\psi=0$ for some $0<a\le 1$, where
$g_{\le\la^a}:=S_{\la^a}(g(S_{\la^a}(\phi)))$ is the truncation of $g(\phi)$ at the frequency level $\la^a$.
Here $S_\la:=\sum_{\mu\le \la} P_\mu$ and $P_\la$ is the Littlewood-Paley projector  with
frequency $\la=2^k $ defined for any function $f$ by
\begin{equation}\label{LP2012}
P_\la f(x)=f_\la(x)=\int e^{-i x\c \xi} \ze(\la^{-1}\xi) \hat f(\xi) d\xi
\end{equation}
with $\zeta$ being a smooth function supported in the shell $\{\xi: \f12\le |\xi|\le 2\}$
satisfying $\sum_{k\in {\Bbb Z}}\zeta(2^k \xi)=1$ for $\xi\ne 0$. We refer to \cite{Stein2,KRsurf} for detailed properties of Littlewood-Paley decompositions.
With the help of a $\T\T^*$ argument, such Strichartz estimate was reduced to the
dispersive estimate for solutions of $\Box_{g_{\le \la^a}} \psi=0$ with frequency localized
initial data. It was then further reduced  to deriving the boundedness of
Morewatz type energy for $\p \psi$ and its higher derivatives. To derive these energy estimates requires us to  control deformation tensor of Morawetz vector field, which involves the Ricci coefficients
relative to the Lorentzian metric $-d t^2 +(g_{\le \la^a})_{ij} dx^i\otimes dx^j$. Since  $\bf{Ric}$ appears crucially in the structure equations for Ricci coefficients, deep observations and techniques were developed to control $\bf{Ric}$ relative to the smoothed metric, such as taking advantage of the facts that  the coefficients $g$ themselves verify
equations of the form (\ref{gew}),  and the observation that $\bR_{44}$, the tangential component of $\bf{Ric}$ along null hypersurfaces, has better structure.
For Einstein vacuum equation under the wave coordinates gauge, the local well-posedness
were obtained in $H^s$ for any $s>2$ in \cite{KR1,KR2,KRd}.
 The core progress which enables the
improvement from $s>2+\frac{2-\sqrt{3}}{2}$ to $s>2$ was made in \cite{KRd} by showing that
the Ricci tensor relative to the frequency-truncated metric ${\bf h}:=\bg_{\le \la}$
does not deviate from $0$ to a harmful level; the decay rate of $\bf{Ric}(h)$  and its
derivatives were proven to be sufficiently strong in terms of $\la$. However, similar
estimates for $\bf{Ric}(h)$ can hardly be obtained for (\ref{gew}).  The sharp local well-posedness for type (\ref{gew}) in $H^s$ with $s>2$ was achieved by
Smith and Tataru in \cite{SmTT} based on the wave packet
parametrix construction, with an innovative application to represent the nontruncated metric $\bg$.
The particular
structure of $\bR_{44}$ observed in \cite{KRduke} also played important role
to control the geometry of null surface. The local well-posedness  with $s=2$ for Einstein vacuum equation was conjectured by Klainerman in \cite{uni}. Recently we learn that significant progress has been achieved for this so-called $L^2$ curvature conjecture \cite{psc}.

A reduction to consider  $\Box_{\bg_{\le \la^a} }\psi=0,$ with $ 0<a\le 1$  appeared in
all the above mentioned work. This regularization on metric is used to phase-localize
the solution, and in most of the works, to balance the differentiability on coefficients
required either by parametrix construction or by energy method.
 Such a regularization on metric,
nevertheless, poses major technical baggage, in particular, to carry out the vector field approach in  Einstein vacuum spacetime, since
$\bf{Ric}(\bg_{\le \la})$ no longer vanishes. The  analysis in \cite{KRd} on the defected Ricci tensor
and its derivatives is a very delicate procedure, which relies crucially on full force of $\p \bf{h}$, hence, on their non-smoothed counter part $\p \bf{g}$ as well. One particular issue tied to CMCSH gauge itself arises due to the lack of control on $\bd_\bT Y$, the time derivative
of the shift vector field. Although $\bd_\bT Y$  satisfies an elliptic equation, that equation is
not good enough to provide a valid control on $\bd_\bT Y$ even in terms of $L^2$-norm.
The loss of control over some components of $\p {\bf g}$ becomes a serious hurdle in
recovering the decay for ${\bf{Ric}}({\bf h})$ and its derivatives.
The potential issue on Ricci defect forces us to abandon the frequency truncation on metric.

 The  important aspect of our analysis is to implement the vector field approach
directly in the non-smoothed Einstein spacetime $(\M,\bg)$ to establish the Strichartz estimate with
an arbitrarily small loss for the linearized problem $\Box_\bg \psi=0$.  This confirms that,
 due to the better behavior of $\bf{Ric}$,  Einstein metric is in nature ``smooth"
enough to implement the vector field approach without
 the truncation on $\bg$ in Fourier space, and  leads to the $H^s$ well-posedness result with $s>2$
for Einstein equation.

Note that in \cite{KRduke}, \cite{KR1}-\cite{KRd},  deriving the bounded Morawetz type energy of derivatives of $\psi$, with $\Box_{\bg_{\le\la^a}}\psi=0$, is the main building block to obtain the dispersive estimate for $P \p_t \psi$ required by Strichartz estimate. This procedure relies on $H^\s, \s>\frac{1}{2}$ norm of curvature, which is impossible to be obtained relative to the rough and non-smoothed metric.  Our strategy is to derive the dispersive estimate merely using Morawetz type energy for $\psi$ itself. The analysis to control such energy  is mainly focused on the Ricci coefficients relative to Einstein metric. Although part of such analysis benefits from $\bf{Ric}=0$, the crucial estimates such as  strichartz type norm $\|\chih\|_{L_t^2 L_x^\infty}$ require the bound of $H^{\s}, \s>1/2$ for $\hn k$ and $\hn^2 g$ if only the  classic $L_x^\infty$ Calderson-Zygmund inequality is employed. We solve this problem by modifing Calderson Zygmund inequality followed by taking advantage of the extra differentialbity for $\hn g, k$ that can be obtained by Strichartz estimates.
The  difficulty coming from $\bd_\bT Y$ still penetrates in key steps in the vector fields approach, where all components of $\p \bg$ were typically involved. We exclude such term by modifying the standard treatments including modifying energy momentum tensor,  refining $\T\T^*$ argument and curvature decomposition into more invariant fashion.

 Our approach can be directly applied for reproducing $H^{2+\ep}$ result for Einstien equations in wave coordinates gauge. It actually works better under wave coordinates since  $\bd_\bT Y$ can be well controlled in this situation.  Steps which are involved with getting around this term in CMCSH gauge, such as energy estimate, estimate for flux, $\T\T^*$ argument and decompositions for curvature,  take simpler and more straightforward form in wave coordinate gauge. Thus our approach gives a vast simplification over the the methodology in \cite{KR1}-\cite{KRd}.

\subsection{Outline of the proof}

According to \cite{KR2, SmTT}, in order to complete the proof of Theorem \ref{main1} it
suffices to show that for any $s>2$ there exist two positive constants $C$ and $T$
depending on $\|g\|_{H^s(\Sigma_0)}$ and $\|k\|_{H^{s-1}(\Sigma_0)}$ such that
\begin{equation}\label{main10}
\|g\|_{L_t^\infty H^s(I\times \Sigma)} +\|k\|_{L_t^\infty H^{s-1}(I\times \Sigma)}\le C,
\end{equation}
where $I:=[t_0-T, t_0+T]$. We achieve this by a bootstrap argument.
That is, we first make the bootstrap assumption
\begin{equation}\label{ba1}\tag{BA1}
\int_{t_0-T}^{t_0+T} \|\hn g,\, k,\, \hn Y,\, \hn n \|_{L^\infty(\Sigma_t)} dt \le B_1,
\end{equation}
where, for any $\Sigma$-tangent tensor $F$, we will use $\|F\|_{L^\infty(\Sigma_t)}$ to denote
its $L^\infty$-norm with respect to the Riemannian metric $g$ on $\Sigma_t$.
We then show that (\ref{ba1}) and some auxiliary bootstrap assumptions imply (\ref{main10}). 
We prove these bootstrap assumptions can be improved for small but universal $T>0$.

We will only work on the time interval $[t_0, t_0+T]$ since the same procedure applies to the time
interval $[t_0-T, t_0]$ by simply reversing the time.
In view of (\ref{ba1}) and elliptic estimates, we derive in Section \ref{h2es} better
estimates for $\hn Y $ and $\hn n$. That is, we show that, for any $1< b<2$, there holds
\begin{equation*}
\|\hn Y, \hn n \|_{L^{b}_{[t_0,t_0+T]} L_x^\infty}\le C
\end{equation*}
which improves the estimates for $\hn n$ and $\hn Y$ in (\ref{ba1}) with
$T$ sufficiently small. In order to improve the estimates for $\hn g$ and $k$, we
establish the core estimates in Theorem \ref{main1}(ii) by showing that
\begin{equation*}
\|\hn g,\, k\|_{L^2_{[t_0, t_0+T]}L_x^\infty}\le C T^\delta,
\end{equation*}
for some $\delta>0$. Here we briefly describe the ideas behind the proof.

\subsubsection{Step 1.  Energy estimates and flux}

In Sections \ref{h2es} and \ref{sec3}, we derive (\ref{main10}) under bootstrap assumptions. 
We also derive for the scalar
solution of homogeneous geometric wave equation $\Box_\bg \phi=0$, the
energy estimate
\begin{equation*}
\|\p \phi\|_{H^{s-1}}\les \|\phi(0)\|_{H^s}+\|\p \phi(0)\|_{H^{s-1}}.
\end{equation*}
The typical argument relies on the estimate of $\|\p \bg\|_{L_t^1 L_x^\infty}$, including  the one for $\bd_\bT Y$ in this norm.
In view of (\ref{shi1}), $\bd_\bT Y$ satisfies the elliptic equation
\begin{align*}
\Delta \nab_{n\bT} Y^i+R_j^i \nab_{n\bT} Y^j &-2 U_{mp}^{\dum\dum i}
\nab^m \nab_{n\bT} Y^p=-n (\curl H)_j^i Y^j+\bg\c \nab \ti\pi\c \ti\pi,
\end{align*}
where $\ti \pi$ denotes components of $\p \bg$ excluding $\p_t Y$.
Due to the appearance of the term $R_j^i \nab_{n\bT} Y^j$, this elliptic equation is
not good enough to provide valid control for $\bd_\bT Y$ unless the space
metric $g$ has negative sectional curvature. In order to avoid the difficulty
coming from $\bd_\bT Y$, in Section \ref{sec3} we derive the energy estimate by
considering the first order hyperbolic system
\begin{equation}\label{lie3}
\left\{\begin{array}{lll}
\p_t u-\hn_Y u=n v+F_u\\
\p_t v-\hn_Y v=n \hdt u+F_v
\end{array}\right.
\end{equation}
for the pairs $(u,v)=(g, -2k), (k, E)$ and $(\phi, e_0\phi)$ with corresponding
remainder terms $(F_u, F_v)$, where, for any $\Sigma$ tangent tensor $F$,
\begin{equation}\label{11.13.1}
\hdt F:= g^{ij} \hn_i \hn_j F.
\end{equation}
Consistent with these energy estimates, we also obtain the $L^\infty_t H^{s-\f12}$
and $L^1_t H^s$ estimates of $\hn n, \hn Y, \bd_\bT n$
with the help of elliptic equations (\ref{n1}) and (\ref{shi1}).

However, to  derive the  flux estimate for $\hn g,$ $k, $ $P_\mu k,$ and $P_\mu \hn g$,
we still have to rely on the second order hyperbolic system
of $k$ and $\hn g$, both of which contain the time derivative of the shift vector
field. This issue is solved in Section \ref{flx} by introducing a
modified energy momentum tensor.

As the major technicality to carry out energy estimates  in fractional Sobolev space and dyadic flux,
a series of more delicate commutator estimates are established in  Appendix III (Section \ref{apiii})
on the Littlewood Paley projection and the rough metric, particularly to handle the
decreased differentiability of coefficients.

\subsubsection{ Step 2. Reduction to dyadic Strichartz estimates on frequency dependent time intervals}

By using the Littlewood-Paley decomposition, it is easy to reduce the proof of
Theorem \ref{main1} to establishing for sufficiently large $\la$ the estimates
\begin{equation}\label{11.3.1}
\|P_\la \hn g, P_\la k\|_{L_I^2 L_x^\infty}
\les \la^{-\d} |I|^{\f12-\frac{1}{q}}\|\hn g, k\|_{H^{s-1}(\Sigma_0)}
\end{equation}
and
\begin{equation}\label{plphi}
\| P_\la \p\phi\|_{L_I^2 L_x^\infty}
\les \la^{-\d} |I|^{\f12-\frac{1}{q}}\|\hn \phi, e_0 \phi\|_{H^{s-1}(\Sigma_0)}
\end{equation}
for any solution $\phi$ of the equation $\Box_\bg \phi=0$,
where $I=[t_0, t_0+T]$, $q>2$ is sufficiently close to $2$, and $\d>0$ is sufficiently close to $0$.

We reduce the proof of (\ref{11.3.1}) and (\ref{plphi}) to Strichartz estimates
on small time intervals. We pick a sufficiently small $\epsilon_0>0$ and partition $[t_0, t_0+T]$ into
disjoint union of subintervals $I_k:=[t_{k-1}, t_k]$ of total number $\les \la^{8\epsilon_0}$
with the properties that
\begin{equation}\label{11.8.1}
|I_k|\les \la^{-8\epsilon_0} T \quad \mbox{and}
\quad \|k, \hn g, \hn Y, \hn n\|_{L_{I_k}^2 L_x^\infty}\le \la^{-4\epsilon_0}.
\end{equation}
To explain our approach, we take the derivation of (\ref{plphi}) as an example.
We consider on each  $I_k$ the Strichartz norm for $P_\la \p \phi$. By commuting
$P_\la$ with $\Box_{\bg}$ we have
$$
\Box_\bg P_\la \phi=F_\la,
$$
where $F_\la =[\Box_\bg, P_\la]\phi$ which can be treated as phase-localized at level of $\la$
in certain sense although it is not frequency-localized.
We use $W(t,s)$ to denote the operator that sends $(f_0, f_1)$ to the solution of $\Box_{\bg} \psi=0$
satisfying the initial conditions $\psi(s)=f_0$ and $\p_t \psi(s)=f_1$ at the time $s$.
Using Duhamel principle followed by differentiation, we can represent $P_\la \p\phi$ as
\begin{equation}\label{plagen}
P_\la \p  \phi(t)= \p W(t, t_{k-1})P_\la  \phi[t_{k-1}]+\int_s^t\p W(t,s)(0,F_\la (s))ds,
\end{equation}
where we used the convention $\phi[t]:=(\phi(t), \p_t \phi(t))$.
Running a $\T \T^*$ argument leads to Strichartz estimate for one dyadic piece of $\p \psi$,
\begin{equation}\label{plagen2}
  \|P_\la \p \psi\|_{L^q_{I_k} L_x^\infty}\les\la^{\frac{3}{2}-\frac{1}{q}}\|\psi[0]\|_{H^1},
\end{equation}
where $q>2$ is sufficiently close to $2$.

Similar procedure was used in \cite{KR1} for $\Box_{\bg_{\le \la}} \phi=0$. Observe that the solution
of this homogeneous wave equation is frequency-localized at the level of $\la$ if the data is
localized in Fourier space at the dyadic shell $\{\xi: \frac{\la}{2}\le |\xi|\le 2\la\}$.
Therefore, the dyadic Strichartz estimates (\ref{plagen2}) can be applied directly to the
representation of $P_\la \p\phi$. Since we will work for the metric $\bg$ without frequency truncation,
the corresponding operator $W(t,s)$ does not preserve the frequency-localized feature of data.
The Strichartz estimate for $\p W(t, t_{k-1})P_\la  \phi[t_{k-1}]$ is no longer expected to be
obtained directly from (\ref{plagen2}).  We solve this problem in Section 4  by modifying (\ref{plagen}) with the help of  the reproducing property of the
Littlewood-Paley projections, i.e. $P_\la=\ti P_\la\ti P_\la$, as follows,
\begin{equation}\label{plagen1}
 P_\la \p \phi(t)=\ti P_\la \p W(t, t_{k-1})\ti P_\la \phi[t_{k-1}]
 + \int_s^t \ti P_\la \p W(t,s)(0,F_\la (s))ds.
\end{equation}
This makes it possible to apply (\ref{plagen2}). The effort then goes into piecing together the
result of dyadic Strichartz estimates over intervals $I_k$ with the help of (\ref{11.8.1}).
This trick would have successfully reduced the main estimates to dyadic strichartz estimate
for the solution of $\Box_\bg \phi=0$ on  one sub-interval $I_k$,  had the term of $\bd_\bT Y$
not appeared in $F_\la$. We then  refine (\ref{plagen1}) further by modifying the application of Duhamel principle.

\subsubsection{Step 3. Reduction to dispersive estimates and  boundedness theorem}

By rescaling coordinates as $(t, x)\rightarrow ((t-t_{k-1})/\la, x/\la)$,
we need only to consider (\ref{plagen2}) on $[0,t_*]\times \Sigma$ with $t_*\le \la^{1-8\ep_0}T$.
In view of a $\T\T^*$ argument, this essentially relies on the dispersive estimate
\begin{equation}\label{dispst1}
\|P\bd_\bT W(t,s) I[s]\|_{L_x^\infty}
\les \left((1+|t-s|)^{-\frac{2}{q}}+d(t)\right)\sum_{k=0}^m \|\hn^k I[s]\|_{L_x^1}
\end{equation}
with initial data  $I[s]=(\psi(s), \bd_\bT \psi(s))$ for all $0<s\le t_*$,
where $m$ is a positive integer, and $d(t)$  is a function satisfying
$\|d\|_{L^{\frac{q}{2}}}\les 1$ for  $q>2$ sufficiently close to $2$.

Let $\{\chi_J\}$  be a suitable partition of unity on $\Sigma$ supported on balls of
radius $1$ in rescaled coordinates. We localize the solution of $\Box_\bg \psi=0$
by writing $\psi(t,x)=\sum_J\psi_J(t,x)$, where $\psi_J(t,x)$ is the solution of
$\Box_{\bg} \psi_J=0$ with the initial data  $\psi_J[\tau_0]=\chi_J\c \psi[\tau_0]$.
We then reduce the derivation of (\ref{dispst1}) to proving that
\begin{equation}\label{dispst2}
\|P \bd_\bT \phi(t) \|_{L_x^\infty}\le \left(\frac{1}{{(1+|t-\tau_0|)}^{\frac{2}{q}}}+d(t)\right)
\sum_{k=0}^{m-2}\|\hn^{k}\phi[\tau_0]\|_{L^2},
\end{equation}
with $\phi$ the solution of $\Box_\bg \phi=0$ and with data supported within a unit ball at $\Sigma_{\tau_0}$. It then suffices to consider  (\ref{dispst2}) on $\J_0^+$, the causal
future of the support of $\chi_J$ from $t=\tau_0\approx 1$, where one can introduce optical
function $u$ whose level sets  are null cones $C_u$. Thus $\J^+_0$ can be foliated by
$S_{t,u}:=C_u\cap \Sigma_t$ and a null frame $\{L,\Lb,e_1, e_2\}$ can be naturally defined,
where $e_A, A=1,2$, are tangent to $S_{t, u}$. Using these vector fields and $\ub=2t-u$,
one can introduce the Morawetz vector field $K=\frac{1}{2} n(u^2 \Lb +\ub^2 L)$.
Consequently, for any function $f$, one can introduce the generalized energy
$$
\ti Q[f](t):=\int_\Sigma \bar Q(K,\bT)[f],
$$
where $\bar Q(K,\bT)[f]$ is defined by applying $X=K, Y=\bT, \Omega=4t$ to
\begin{equation}\label{gneng1}\mathbf{}
\bar Q(X,Y)[f] =Q(X,Y)[f]+\f12\Omega f Y(f)-\frac{1}{4} f^2 Y(\Omega)
\end{equation}
with  $Q_{\mu\nu}$ being the standard energy momentum tensor
$$
Q_{\mu\nu}:=Q[f]_{\mu\nu}=\p_\mu f \p_\nu f-\f12 \bg_{\mu\nu}(\bg^{\a\b}\p_\a f \p_\b f ).
$$

The typical energy method gives
\begin{equation}\label{eniden}
\ti Q[f](t)-\ti Q[f](\tau_0)= -\f12\int_{\J_0^+}{}^{(K)}\bar\pi_{\a\b}Q[f]_{\a\b}
+\int_{\Sigma \times I} \Box_{\bg} f\cdot K f+l.o.t,
\end{equation}
where, for any vector field $X$,  the deformation tensor ${}^{(X)}\pi_{\a\b}:=\Lie_X \bg_{\a\b}$ and ${}^{(K)}\bar\pi_{\a\b}:={}^{(K)}\pi_{\a\b}-4t\bg_{\a\b}$.
 By applying (\ref{eniden}) to $f=\bd_\bT \phi$, we consider to bound generalized energy $\ti Q[\bd_\bT \phi]$ in
terms of their initial values at $t=\tau_0\approx 1$.
Due to one bad term contained in  $\Box_\bg \bd_\bT \phi=[\Box_\bg, \bd_\bT]\phi$,
  the estimate of $\ti Q[\bd_\bT \phi]$ has to be coupled  with $ \ti Q[\bd_Z\phi]$ with $Z$ either $\Lb$ or $ e_A$, for which we need to control
$\|\bd{}^{(Z)}\pi\|_{L_t^1 L_x^\infty}$ and $\int_0^{t_*} \sup_{u}\|\bd{}^{(Z)}\pi\|_{L^2(S_{t,u})} dt$. Since  $\bd{}^{(Z)}\pi$ contains curvature terms, such estimates relative to non-smoothed metric can only be obtained under the assumption of $H^{\frac{5}{2}+\ep}$ on data. Similar regularity issue occurs for the estimates required for $\bd{}^{(\bT)} \pi$ due to the integration by part argument  employed to handle that bad term.
Therefore, we no longer expect to obtain the boundedness of the conformal energy for any derivative
of $\phi$, including the one for $\bd_\bT \phi$.

Our strategy is  to  control
$\|P\bd_\bT \phi(t)\|_{L_x^\infty}$ merely in terms of $\ti Q[\phi](t)$, with certain loss of decay rate and with error incorporated into $d(t)$ in (\ref{dispst2}). With $\varpi$  a cut-off function whose
support is in a so-called exterior region, our treatment concerning  the harder part, $P(\varpi\bd_\bT  \phi)$, starts with writing it as
$P(\varpi\bd_\bT  \phi)=P( \varpi L\phi)- P(\varpi N \phi)$ with $N$ the unit outward normal
vector fields on $S_{t,u}\subset \Sigma$. The first term is controlled by Bernstein inequality
and $\ti Q[\phi](t)$. The second term is treated in view of
\begin{equation}\label{trt1}
P(\varpi N\phi)=\varpi N^l \p_l P\phi+[P, \varpi N^l]\p_l \phi.
\end{equation}
The first term of (\ref{trt1})  is then related to $\ti Q[\phi]$ with the help of  Sobolev embedding and commutator estimates.
By using the machinery developed in Section \ref{apiii}, the treatment on the commutators involved in both terms in (\ref{trt1}) is reduced to  estimating $\|\p (\varpi N)\|_{L_x^\infty}$.
Note that $\p N $ can be expressed as $\bg\c (\chi, \zeta, \hn g, k)$,
  thus we need to establish estimates on $L_t^{\frac{q}{2}} L_x^\infty, \,q>2$ of Ricci coefficients $\chih, \zeta$ and $L^\infty$ estimate on $\tr\chi$.
  The components of
${}^{(K)}\bar\pi_{\a\b}$ in (\ref{eniden}) involve ${}^{(\bT)}\pi$, $\chi$,  $\zeta$ and other Ricci coefficients as well. By assuming suitable control on Ricci coefficients, the proof of boundedness theorem is given in Section \ref{sec5}. We accomplish this step by showing that (\ref{dispst2}) holds true with  $m=3$.

\subsubsection{Step 4. $L^{2+}$ type flux and Ricci coefficients}

The control of Ricci coefficients consistent with $H^2$ Einstein metrics
has been studied in \cite{KR1, Wang09, Wang1},  where a set of estimates concerning $\tr\chi, \chih, \zeta, \zb$ was achieved   in terms of  curvature flux, combined with flux of $k$ if null hypersurface is foliated by $S_t$, level sets of $t$.  Bearing the flavor of these works, in the situation when $H^{2+\ep}$ estimates
for $\bg$ can be established, we first manage to gain from the extra differentiability
of metric a slightly stronger flux type control for $\hn g, k$. We then obtain a
 stronger set of estimates on Ricci coefficients
in terms of the $L^{2+}$ type flux, which contains the
$L_u^\infty L_t^2 L_\omega^\infty$-norms for $\chih, \zeta$
and corresponding estimates for $\mu, \tr\chi-\frac{2}{n(t-u)}$. This enables us to carry out
delicate analysis such as the standard $L^p, 1<p\le\infty$
type Calderon Zygmund inequality on null hypersurfaces under rough metric.
In this procedure, thanks to working directly in vacuum spacetime, we no longer
encounter the technical baggage in  \cite{KR1}--\cite{KRd} posed by defected $\bf{Ric}(h)$.
Nevertheless, this set of estimates is  far from
sufficient to control $\|\cdot\|_{L_t^1 L_u^\infty L_\omega^\infty}$ norm of deformation
tensor ${}^{(K)} \pi$, which relies on  estimates for $\chih, \zeta$ in this norm.
The idea of  Klainerman-Rodnianski \cite{KR2} is to rely on the Hodge system such as the equation of (\ref{stc1})
\begin{equation*}
\div \chih=\frac{1}{2}\sn \tr \chi -\beta+\cdots.
\end{equation*}
The key points of deriving a strichartz type norm for $\chih$ lie in:

(1) To obtain a decomposition of the form $\b=\sn \ti\pi+\cdots$ so that
\begin{equation*}
\div \chih=\sn (\tr\chi-\frac{2}{n(t-u)})+\sn \ti \pi+\cdots
\end{equation*}
with $\ti\pi$ being certain components of $\p \bg$.  The new difficulty arising in our
situation is to exclude the time derivative of shift in
the decomposition for the null curvature component of $\b$. This issue is settled in Section \ref{decmp}.

(2) To employ  Calderon-Zygmund theorem  for Hodge system $\div F=\sn G+e$
\begin{equation}\label{czin}
\|F\|_{L^\infty(S)}\les\|G\|_{L_x^\infty(S)}\ln(2+r^{\frac{3}{2}-\frac{2}{p}}\|\sn
G\|_{L^p(S)})+r^{1-\frac{2}{p}}\|e\|_{L^p(S)},
\end{equation}
where $S=S_{t,u}$. 
Relative to a regularized metric $\bg_{\le\la}$, using (\ref{czin})
to estimate $\|F(t,\cdot)\|_{L_x^\infty}$ only leads to a loss of $\ln \la$, which can be easily
balanced after integration in a frequency dependent time subinterval. However, relative to non-smoothed
metric, the application of (\ref{czin}) relies on the norm of $\sup_{u}\|\sn \ti \pi\|_{L^{2+}(S_{t,u})}$,
which requires the boundedness of the $H^{\frac{3}{2}+}$ norm of $\hn g$ and $k$ that can only be achieved
under the assumption of $H^{s}, s>\frac{5}{2}$  on data. We fix this problem
by squeezing a bit more differentiability out of
$\hn g,\, k$ in  $L_t^2 L_x^\infty$, which  can be achieved by Strichartz estimate via a bootstrap argument.
To implement this idea, we  establish a modified Calderon-Zygmund inequality relative to the rough metric, which can be seen in Section \ref{czz}.
\\

\noindent{\bf Acknowledgement.}  Part of this work was done during a four-month visit to L'Institut des Hautes \'{E}tudes Scientifiques, Paris. The author would like to thank
Professors Sergiu Klainerman and Lars Andersson
for interesting discussions,  and Qinian Jin for carefully reading the manuscript and giving many very useful comments.

\section{\bf  $H^2$ estimates }\label{h2es}

We first derive some simple consequences of (\ref{ba1}) that will be used throughout this paper.

Let $X$ be an arbitrary vector field on $\Sigma$. We use $|X|_g$ and $|X|_{\hat{g}}$ to denote
the lengths of $X$ measured by $g$ and $\hat{g}$ respectively. It then follows from
(\ref{eqn8}) that
\begin{align*}
\p_t (|X|^2_g )&= Y^m \hn_m g_{ij}X^i X^j -2nk_{ij}X^i X^j +(g_{im} \hn_j Y^m +g_{mj}\hn_i Y^m ) X^i X^j.
\end{align*}
Therefore
\begin{equation*}
\left|\p_t |X|^2_g\right|\le \left(2|\hn Y| +|Y|_g |\hn g| +2n|k|\right)|X|^2_{g}.
\end{equation*}
In view of (\ref{n1}) and the maximum principle, we can derive that $0<n\le C$, where $C$ is a constant
depending only on $t_0$; see \cite[Section 2]{Wang10}. Recall that $|Y|_g\le n$. We thus have
\begin{equation*}
\left|\p_t |X|^2_g\right|\le C\left(|\hn Y|+|\hn g|+|k|\right)|X|^2_{g}.
\end{equation*}
This together with the bootstrap assumption (\ref{ba1}) implies $ C^{-1}|X|_{g(t_0)}\le |X|_{g(t)}
\le C|X|_{g(t_0)}$. Since $g(t_0)$ and $\hg$ are always equivalent on compact $\Sigma$, we therefore have
\begin{equation}\label{11.5.1}
C^{-1} \hat{g}\le g\le C \hat{g}, \quad \mbox{on } [t_0, t_0+T]\times \Sigma
\end{equation}
for some universal constant $C>0$.\begin{footnote}{We will always use $C$ to denote a universal constant that
depends only on the constant in the bootstrap assumptions, information on $g(t_0)$ and
$\|(g, k)\|_{H^{s}\times H^{s-1}(\Sigma_{t_0})}$. For two quantities $\Phi$ and $\Psi$ we will use
$\Phi\les \Psi$ to mean that $\Phi\le C\Psi$ for some universal constant $C$.}\end{footnote}
This equivalence between $g$ and $\hg$ on each $\Sigma_t$ gives us the freedom to use
$g$ or $\hg$ to measure the length of any $\Sigma$-tangent tensor.

Using (\ref{11.5.1}) and (\ref{ba1}), we can follow the arguments in \cite[Sections 2 and 3]{Wang10}
to derive that
\begin{align}
 & C^{-1}<n<C,\quad Q(t)\les C, \quad \|H, E, \Ric\|_{L_x^2}\le C, \quad \|\pi, \bd_{n\bT}n\|_{H^1}\le C\label{q1}\\
&\|\nab^3 n\|_{L_x^2}+\|\nab^2 \bd_{n\bT} n\|_{L_x^2}\les \|k\|_{L_x^\infty}, \label{q2}
\end{align}
where $\pi$ is the deformation tensor of $\bT$ with components $k$ and $\nab \log n$, $E$ and $H$ are
the electric and magnetic parts of spacetime curvature defined by $E_{ij}=\bR_{0i0j}$
and $H_{ij}={}^\star\bR_{0i0j}$ respectively,  and $Q(t)$ is the Bel-Robinson energy defined by
$$
Q(t)=\int_{\Sigma}\left(|E|_g^2+|H|_g^2\right) d\mu_g.
$$
As a consequence of (\ref{q2}), we have
\begin{align}
\|\nab n, \bd_{nT} n\|_{L_x^\infty}\les 1+ \|k\|_{L_x^\infty}^{\frac{3}{2}-\frac{3}{p}}, \quad 3<p\le 6\label{q3}.
\end{align}


Let us fix the convention that $F\ast G$ denotes
contraction by $g$ and $\cdot$ denotes either usual multiplication
or contraction by $\hat g$.

\begin{lemma}\label{shift1}
Under the spatial harmonic gauge, the shift vector field $Y$ satisfies the equation
\begin{equation}\label{rshift}
\hdt Y=\pi\ast U+\pi\ast \pi+g\c \hn g\c \hn g \c Y+g^3 \c
\hat{R}\c Y
\end{equation}
where $\hat{\Delta}$ is defined in (\ref{11.13.1}), $U$ is defined in (\ref{11.5.2}),
and $\hat R$ is the Riemannian curvature with respect to $\hat g$.
\end{lemma}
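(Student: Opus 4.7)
The starting point is the Levi--Civita elliptic equation \eqref{shi1} for $Y$. My plan is to convert the $g$-Laplacian $\Delta$ and the Ricci term $R^i_j Y^j$ into their $\hg$-counterparts $\hdt$ and $\hR$, using the connection-difference tensor $U^i_{jk} = \Ga^i_{jk}-\hat\Ga^i_{jk}$ together with the spatial-harmonic condition $U^l = g^{jk} U^l_{jk} = 0$. Writing $\nab = \hn + U$ and applying it twice to $Y^i$ yields
\[
\Delta Y^i = \hdt Y^i + g^{jk}(\hn_j U^i_{km})Y^m + 2 g^{jk} U^i_{jl}\hn_k Y^l + g^{jk} U^i_{jl} U^l_{km} Y^m,
\]
where two would-be contributions of the form $g^{jk}U^l_{jk}(\cdots) = U^l(\cdots)$ vanish by CMCSH. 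In parallel, the standard Ricci-difference identity
\[
R_{mj} = \hR_{mj} + \hn_k U^k_{mj} - \hn_j U^k_{km} + U^k_{kn} U^n_{mj} - U^k_{jn} U^n_{mk}
\]
splits $R^i_j Y^j$ into an $\hR$-piece, two $\hn U$-pieces and quadratic $U\ast U\cdot Y$ pieces.

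Substituting both expansions into \eqref{shi1} produces a first helpful cancellation. The source term $2\nab^j Y^l U^i_{jl}$ on the RHS of \eqref{shi1}, when split via $\nab Y = \hn Y + U\cdot Y$, contributes $2 g^{jk} U^i_{jl}\hn_k Y^l$, which exactly cancels the corresponding $U\cdot\hn Y$ term from the Laplacian conversion, so no $\hn Y$ survives in the resulting equation for $\hdt Y^i$. Collecting the remainder: $-2n k^{jl} U^i_{jl}$ is of $\pi\ast U$ type; the $\nab n\cdot k$ pieces are of $\pi\ast\pi$ type; every $U\ast U\cdot Y$ product fits $g\cdot\hn g\cdot\hn g\cdot Y$ once $U^i_{jk}=\tfrac12 g^{il}(\hn_j g_{lk}+\hn_k g_{lj}-\hn_l g_{jk})$ is substituted; and $g^{im}\hR_{mj}Y^j$ supplies the $g^3\cdot\hR\cdot Y$ term.

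The main obstacle is then the three surviving $\hn U$-pieces (one from the Laplacian conversion, two from the Ricci expansion), each of which nominally carries $\hn^2 g$ and so does not a priori fit the $\hn g\cdot\hn g$ schematic form. Here one must use CMCSH beyond the naive identity $U^l=0$: unpacking $g^{ij} U^l_{ij}=0$ gives the primitive relation $g^{ab}\hn_a g_{pb} = \tfrac12 g^{ab}\hn_p g_{ab}$, and applying $\hn_c$ to it, with the $\hn_c g^{ab}$ contributions moved to the right, yields
\[
g^{ab}\bigl[\hn_c\hn_a g_{pb}-\tfrac12\hn_c\hn_p g_{ab}\bigr] = (\text{pure } \hn g\cdot\hn g).
\]
After substituting the explicit $U^i_{jk}$ into the three $\hn U$-terms and relabelling indices, most of the $\hn^2 g$ contributions cancel pairwise; the surviving principal combination, after one $\hn$-commutator (which produces $\hR$-type terms), reduces precisely to a multiple of the bracket on the left-hand side above. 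Absorbing it via the differentiated gauge identity converts all remaining second-derivative residues into the allowed $g\cdot\hn g\cdot\hn g\cdot Y$ and $g^3\cdot\hR\cdot Y$ schematic pieces, completing the identification. The rest of the argument is careful bookkeeping of schematic types.
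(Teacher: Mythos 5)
Your proposal is correct and takes essentially the same approach as the paper: expand $\Delta$ and $R^i_j$ via $U$ and the Ricci-difference identity, observe the exact cancellation of the $U\cdot\hn Y$ terms against $2\nab^j Y^l U^i_{jl}$ from \eqref{shi1}, and reduce the $\hn U$-pieces (the paper's $\Omega^i_p$) to $\hn g\cdot\hn g$ and $\hR$ schematics by an $\hn$-commutator plus the differentiated spatial-harmonic identity. Your write-up is somewhat more explicit about the differentiated-gauge step, which the paper compresses into the displayed computation of $\Omega^i_p$, but the underlying argument is the same.
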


\begin{proof}
Straightforward calculation shows for any vector field $Y$ and tensor $F$ that
\begin{equation*}
\nab_j Y^i=\hn_j Y^i+U_{jq}^{i} Y^q,\quad\, \nab_j F_m^i=\hn_j F_m^i+U_{jp}^i F_m^p-U_{jm}^p F_p^i.
\end{equation*}
In view of the spatial harmonic gauge condition $U^i:=g^{jl} U^i_{jl}=0$,  we obtain
\begin{align*}
g^{mj} \nab_m \nab_j Y^i
&=g^{mj}\hn_m\hn_j Y^i+g^{mj} \hn_m U_{jq}^i Y^q + 2 g^{mj} U_{jq}^i \hn_m Y^q +g^{mj} U_{mp}^i U_{jq}^p Y^q.
\end{align*}
Recall the identity
\begin{equation}\label{dell2}
R_{jl}={\hat R}_{jl}+\hn_i U_{jl}^i -\hn_j U_{il}^i+U_{jl}^p U_{pi}^i-U_{il}^p U_{pj}^i
\end{equation}
which can be verified directly, we can obtain
\begin{align*}
\Delta Y^i+R_p^i Y^p &=\hdt Y^i +\Omega_p^i  Y^p + 2 g^{mj} U_{jq}^i \hn_m Y^q
+g^{mj} U_{mp}^i U_{jq}^p Y^q \\
& \quad \, +U\c U \c g\c Y+\hat{R}\c g\c Y,
\end{align*}
where
$$
\Omega_p^i = g^{mj} \hn_m U_{jp}^i +(\hn_m U_{pk}^m -\hn_p U_{mk}^m) g^{ki}.
$$
By using the expression of $U$, the commutation formula and the gauge condition
$U^p=g^{ij} U_{ij}^p=0$ we have
\begin{align*}
\Omega_p^i &= g\c\hn g\c \hn g+\f12 g^{ki} g^{ml} \left(\hn_m \hn_p g_{kl}-\hn_p \hn_m g_{kl}\right)\\
&\quad\, +\f12 g^{ki} g^{ml} \left(\hn_m \hn_p g_{kl}+\hn_p\hn_l g_{mk}-\hn_p\hn_k g_{ml} \right)\\
&=g\c \hn g\c \hn g + g\c g\c g \c \hR.
\end{align*}
Thus
\begin{align*}
\Delta Y^i+R_p^i Y^p &=\hdt Y^i+(g\c \hn g\c \hn g+g^3\c \hat R)\c Y
+ 2g^{mj}U_{jq}^i \hn_m Y^q \\
&\quad \, +g^{mj}U_{mp}^i U_{jq}^p Y^q.
\end{align*}
Combining this with (\ref{shi1}) gives
\begin{align*}
\hdt Y^i +2 g^{mj} U_{jq}^i  & \hn_m Y^q+g^{mj} U_{mp}^i U_{jq}^p Y^q\\
&=-2n k^{mj} U_{mj}^i+2\nab^m Y^j U_{mj}^i +2 \nab^m n k_m^i-\nab^i n k_m^m \\
& +g\c \hn g\c \hn g\c Y+g^3\c \hat{R}\c Y.
\end{align*}
Since $\nab^m Y^l U_{ml}^i =g^{mj} \nab_j Y^l U_{ml}^i=g^{mj} \hn_j Y^l U_{ml}^i +g^{mj}
U_{jq}^p U_{mp}^i Y^q$, we obtain
\begin{align*}
\hdt Y^i &=g^{mj} U_{mp}^i U_{jq}^p Y^q-2 n k^{mp} U_{mp}^i + 2 \nab^m n k_m^i -\nab^i n k_m^m  \\
& \quad \, +g\c \hn g\c
\hn g\c Y+g^3\c \hat{R}\c Y
\end{align*}
which is the desired equation.
\end{proof}

\begin{lemma}\label{Cor8.12.1}
For any $\Sigma$-tangent tensor field $F$, on each $\Sigma_t$ there holds
\begin{equation*}
\|\hn^2 F\|_{L_x^2}\les \|\hdt F\|_{L_x^2}+\|\hn g\c \hn
F\|_{L_x^2}+\|\hn F\|_{L_x^2}+\|F\|_{L_x^2}.
\end{equation*}
\end{lemma}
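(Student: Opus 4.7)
The plan is a Bochner-type integration by parts adapted to the operator $\hdt=g^{ij}\hn_i\hn_j$, which mixes the Levi--Civita connection $\hn$ of the smooth background metric $\hg$ with the rough metric $g$. The key technical advantage is that, since $\Sigma$ is compact without boundary and $\hn$ preserves the volume form $d\mu_{\hg}$, any divergence identity $\int_{\Sigma_t}\hn_a V^a\, d\mu_{\hg}=0$ is available with no boundary contribution, so one can move $\hn$-derivatives freely at the cost of picking up factors of $\hn g$ from the non-parallel metric $g$.

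First I would work with the proxy
$$B := \int_{\Sigma_t} g^{ac} g^{bd}\,\hn_a\hn_b F\cdot \hn_c\hn_d F\, d\mu_{\hg},$$
whose integrand is pointwise comparable to $|\hn^2 F|^2$ by the equivalence $g\sim \hg$ from \eqref{11.5.1}; thus it suffices to control $B$ by the right-hand side of the claim. Applying the divergence identity to $V^a=g^{ac}g^{bd}\hn_b F\cdot \hn_c\hn_d F$ will produce
$$B = -\int g^{ac}g^{bd}\,\hn_b F\cdot \hn_a\hn_c\hn_d F\, d\mu_{\hg} + O\!\left(\int |\hn g||\hn F||\hn^2 F|\, d\mu_{\hg}\right).$$

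The next step is to extract $\hdt F$ from the triple-derivative $g^{ac}\hn_a\hn_c\hn_d F$. The crucial cancellation is $g^{ac}[\hn_a,\hn_c]=0$: the commutator is the $\hg$-curvature, antisymmetric in the pair $(a,c)$, while $g^{ac}$ is symmetric. Together with commutators of the form $[\hn_a,\hn_d]=\hat R\cdot$ with bounded coefficients (since $\hg$ is a fixed smooth metric on the compact manifold $\Sigma$), a rearrangement followed by one more integration by parts in $\hn_d$, using the identity $g^{bd}\hn_d\hn_b F=\hdt F$, will yield
$$B = \int_{\Sigma_t}|\hdt F|^2\, d\mu_{\hg} + O\!\left(\int |\hn g||\hn F|\,(|\hn^2 F|+|\hdt F|)\, d\mu_{\hg}\right) + O\!\left(\|\hn F\|_{L_x^2}^2 + \|F\|_{L_x^2}\|\hn F\|_{L_x^2}\right).$$

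Finally I would close the estimate by absorbing the top-order cross term into the left-hand side: Cauchy--Schwarz with a small weight $\epsilon$ bounds $\int |\hn g||\hn F||\hn^2 F|$ by $\epsilon\|\hn^2 F\|_{L_x^2}^2 + C_\epsilon\|\hn g\cdot \hn F\|_{L_x^2}^2$, and since $B\sim \|\hn^2 F\|_{L_x^2}^2$ the $\epsilon$-piece is absorbed. The remaining cross terms split by standard Cauchy--Schwarz/Young inequalities into $\|\hdt F\|_{L_x^2}^2$, $\|\hn g\cdot \hn F\|_{L_x^2}^2$, $\|\hn F\|_{L_x^2}^2$ and $\|F\|_{L_x^2}^2$; taking square roots delivers the lemma. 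The main obstacle I anticipate is the bookkeeping: verifying that the only structurally ``large'' commutator contribution is the advertised $\hn g\cdot \hn F$ term, while all other pieces are either $\hdt F$ contributions or lower-order tails controlled by the boundedness of $\hat R$ and $\hn\hat R$ on $\Sigma$.
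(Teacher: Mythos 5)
Your argument is correct and follows the same Bochner-type route as the paper: two integrations by parts convert $\|\hn^2 F\|_{L^2}^2$ into $\|\hdt F\|_{L^2}^2$, the commutation identity $\hdt\hn_q F_l-\hn_q\hdt F_l = g\cdot\hn g\cdot\hn^2 F + g\cdot\hat R\cdot\hn F + g\cdot\hn\hat R\cdot F$ (the paper's \eqref{commt2}) supplies the intermediate step, and the $\hn g\cdot\hn F\cdot\hn^2 F$ cross term is absorbed by Cauchy--Schwarz with a small $\epsilon$. The one genuine difference is how the integration by parts is justified: the paper integrates against $d\mu_g$ and uses the spatial-harmonic-gauge identity $\hn_i(g^{ij}d\mu_g)=0$, so that the outer contraction $g^{ij}$ produces no extra term; you integrate against the fixed background volume $d\mu_{\hg}$, which needs no gauge condition (it works for any $g$ and any reference connection $\hn$) at the cost of a few additional $\hn g\cdot\hn F\cdot\hn^2 F$ and $\hn g\cdot\hn F\cdot\hdt F$ terms that you absorb by the same Cauchy--Schwarz step. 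Both variants are equivalent given \eqref{11.5.1}, and yours is slightly more elementary in that it isolates where the gauge is (and is not) needed.

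One remark on wording: the ``crucial cancellation $g^{ac}[\hn_a,\hn_c]=0$'' you invoke does not actually enter. When commuting $\hdt$ past $\hn_d$, the commutators that appear involve the index pairs $(a,d)$ and $(c,d)$, and the only top-order error $g\cdot\hn g\cdot\hn^2 F$ comes from $\hn_d(g^{ac})\hn_a\hn_c F$, i.e.\ from $g$ not being $\hn$-parallel, not from curvature antisymmetry. The $\hat R$-curvature contributions are all lower order, carrying only $\hn F$ and $F$; this is exactly the content of \eqref{commt2}, and once you substitute that bookkeeping your proof goes through as stated.
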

\begin{proof} Let $du_g$ denote the volume form induced by $g$ on $\Sigma_t$.
Then, under the spacial harmonic gauge, there holds $\hn_i(g^{ij}d\mu_g)=0$ (see \cite[Page 3]{AnMon}).
Thus,  by integration by part, we have
\begin{align*}
\int_{\Sigma} |\hn^2 F|^2_g d\mu_g&=\int_{\Sigma} g^{ij}
g^{pq} \hn_i \hn_p F^l \hn_{j}\hn_{q}F_l d\mu_g\nn\\
&=-\int_\Sigma \left(\hn_p F^l \hdt \hn_{q} F_l g^{pq}+g^{ij} \hn_i
g^{pq}\hn_p F^l \hn_{j}\hn_{q} F_l \right)  d\mu_g
\end{align*}
Here and throughout the paper we will $\hg$ to raise and lower the indices in tensors.
It is easy to check the following commutator formula
\begin{equation}\label{commt2}
\hdt \hn_{q}F_l -\hn_{q} \hdt F_l= g\c \hn g\c \hn^2 F+ g\c {\hat
R}\c \hn F+g \c \hn{\hat R}\c F.
\end{equation}
Therefore we can derive that
\begin{align*}
\int_{\Sigma} |\hn^2 F|^2_g d\mu_g & =\int_{\Sigma}\left(
-g^{pq} \hn_p F^l \hn_{q} \hdt F_l  +g\c \hn g\c \hn F\c \hn^2 F \right.\\
& \qquad \qquad \left.+(g\c {\hat R}\c\hn F+g\c \hn {\hat R}\c F)\hn F\right) d\mu_g
\end{align*}
where the first term is $\int_\Sigma \hdt F^l\hdt F_l d\mu_g $ by
integration by part.
\end{proof}

\begin{lemma}\label{lem6}
On each $\Sigma_t$ there hold
\begin{align}
\|\hn Y\|_{L_x^2}&\les \|\hn g\|_{L_x^2}+1\label{lm7}, \\
\|\hn^2 Y\|_{L_x^2}&\les \|(\pi, \hn Y, \hn g)\c \hn g\|_{L_x^2}+\|\hn
g\|_{L_x^2}+1, \label{lm6}\\
\|\hn^3 Y\|_{L_x^2}&\les \|\hn Y, \hn g\|_{H^1}(\|\hn g\|_{L_x^\infty}^{\frac{4}{3}} \|\hn g\|_{L_x^2}^{\frac{2}{3}}+\|\hn g\|_{L_x^\infty})\nn\\&+(\|\hn^2g \|_{L_x^2}+1) \c \|\hn g, \pi\|_{L_x^\infty}+\|\hn g\|_{L_x^2} +1.\label{lmm7}
\end{align}
\end{lemma}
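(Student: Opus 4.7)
The plan is to combine the elliptic identity for $Y$ provided by Lemma~\ref{shift1} with the Bochner-type inequality of Lemma~\ref{Cor8.12.1}, treating the three estimates in order of increasing complexity.

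For (\ref{lm7}), I would pair $Y$ against $\hdt Y$ on $\Sigma_t$ and integrate by parts using the spatial-harmonic identity $\hn_i(g^{ij}d\mu_g)=0$; this yields $\|\hn Y\|_{L^2}^2 \les \int_\Sigma |Y\c\hdt Y|\,d\mu_g$ modulo harmless lower-order terms. Substituting the right-hand side of Lemma~\ref{shift1} and using that $\|Y\|_{L^\infty}, \|\hR\|_{L^\infty}$ are bounded (the latter since $\hg$ is smooth on the compact $\Sigma$), $\|\pi\|_{H^1}\les 1$ from (\ref{q1}), and $\|U\|_{L^2}\les\|\hn g\|_{L^2}$ (since $U=g\c\hn g$ schematically), Cauchy--Schwarz delivers $\|\hn Y\|_{L^2}^2 \les \|\hn g\|_{L^2}^2+\|\hn g\|_{L^2}+1$.

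For (\ref{lm6}) I would apply Lemma~\ref{Cor8.12.1} with $F=Y$ and bound $\|\hdt Y\|_{L^2}$ directly from Lemma~\ref{shift1}: the pieces $\pi\ast U$ and $\hn g\c\hn g\c Y$ fit into $\|(\pi,\hn g)\c\hn g\|_{L^2}$ (using $\|Y\|_{L^\infty}$ bounded), the term $\pi\ast\pi$ is controlled via $\|\pi\|_{L^4}\les 1$ from Sobolev, and $\hR\c Y$ is smooth-bounded. The companion Lemma~\ref{Cor8.12.1} pieces $\|\hn g\c\hn Y\|_{L^2}$, $\|\hn Y\|_{L^2}$, $\|Y\|_{L^2}$ are either already on the target right-hand side or covered by (\ref{lm7}).

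For (\ref{lmm7}) I would apply Lemma~\ref{Cor8.12.1} to $F=\hn Y$, producing $\|\hn^3 Y\|_{L^2} \les \|\hdt\hn Y\|_{L^2}+\|\hn g\c\hn^2 Y\|_{L^2}+\|\hn^2 Y\|_{L^2}+\|\hn Y\|_{L^2}$. The commutator formula (\ref{commt2}) rewrites $\hdt\hn Y = \hn\hdt Y + g\c\hn g\c\hn^2 Y + g\c\hR\c\hn Y + g\c\hn\hR\c Y$, and differentiating Lemma~\ref{shift1} expresses $\hn\hdt Y$ as a sum of $\hn\pi\c\hn g$, $\pi\c\hn^2 g$, $\hn^2 g\c\hn g\c Y$, $(\hn g)^2\c\hn Y$, and smooth-$\hR$ contributions. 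Terms containing a single $\hn^2 g$ or $\hn\pi$ factor in $L^2$ paired with an $L^\infty$ factor assemble into $(\|\hn^2 g\|_{L^2}+1)\|\hn g,\pi\|_{L^\infty}$. The principal obstacle is the cubic nonlinearity $(\hn g)^2\c\hn Y$: I would bound it by H\"older $\|(\hn g)^2\c\hn Y\|_{L^2}\le\|\hn g\|_{L^6}^2\|\hn Y\|_{L^6}$, Sobolev $\|\hn Y\|_{L^6}\les\|\hn Y\|_{H^1}$, and the interpolation inequality $\|\hn g\|_{L^6}^2\le\|\hn g\|_{L^\infty}^{4/3}\|\hn g\|_{L^2}^{2/3}$, producing precisely the factor $\|\hn Y,\hn g\|_{H^1}\|\hn g\|_{L^\infty}^{4/3}\|\hn g\|_{L^2}^{2/3}$ appearing in (\ref{lmm7}); the companion piece $\|\hn g\c\hn^2 Y\|_{L^2}\les\|\hn g\|_{L^\infty}\|\hn^2 Y\|_{L^2}$ is absorbed by feeding (\ref{lm6}) back in, which yields the pure $\|\hn g\|_{L^\infty}$ contribution in the first summand of (\ref{lmm7}), while the $\hR, \hn\hR$ terms remain bounded by smoothness of $\hg$.
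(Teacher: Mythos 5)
Your argument tracks the paper's proof step by step: (\ref{lm7}) by pairing $Y$ against $\hdt Y$ under the spatial-harmonic integration-by-parts identity $\hn_i(g^{ij}d\mu_g)=0$ and substituting Lemma~\ref{shift1}; (\ref{lm6}) by applying Lemma~\ref{Cor8.12.1} to $F=Y$ and reading $\|\hdt Y\|_{L_x^2}$ off Lemma~\ref{shift1}; and (\ref{lmm7}) by applying Lemma~\ref{Cor8.12.1} to $F=\hn Y$, commuting via (\ref{commt2}) to reach (\ref{onedy}), and using $\|\hn g\|_{L^6}^2\le\|\hn g\|_{L_x^\infty}^{4/3}\|\hn g\|_{L_x^2}^{2/3}$ together with $L^6$ Sobolev on the cubic terms $(\hn Y,\hn g,\pi)\c\hn g\c\hn g$, exactly as in the text.

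The one place your justification does not quite close is the treatment of $\|\hn g\c\hn^2 Y\|_{L_x^2}\les\|\hn g\|_{L_x^\infty}\|\hn^2 Y\|_{L_x^2}$ in (\ref{lmm7}). You propose to ``feed (\ref{lm6}) back in,'' but (\ref{lm6}) replaces $\|\hn^2 Y\|_{L_x^2}$ by $\|(\pi,\hn Y,\hn g)\c\hn g\|_{L_x^2}+\|\hn g\|_{L_x^2}+1$, and after H\"older/Sobolev the first of these produces a term quadratic in the $H^1$ norms (and, on a different splitting, a $\|\hn g\|_{L_x^\infty}^2$) rather than the stated $\|\hn Y,\hn g\|_{H^1}\|\hn g\|_{L_x^\infty}$; moreover the absolute bound $\|\pi,\hn Y,\hn g\|_{L_x^2}\les 1$ that would rescue the latter route depends on Proposition~\ref{thm1}, which is proved after this lemma. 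The intended step is simply $\|\hn^2 Y\|_{L_x^2}\le\|\hn Y\|_{H^1}$, which delivers $\|\hn Y,\hn g\|_{H^1}\|\hn g\|_{L_x^\infty}$ directly and also handles the companion term $\|\hn^2 Y\|_{L_x^2}$ coming from Lemma~\ref{Cor8.12.1}. With that substitution your proof coincides with the paper's.
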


\begin{proof}
Consider (\ref{lm7}) first. By using (\ref{rshift}) and $\hn_j(g^{ij} d\mu_g)=0$, we have
\begin{align*}
\|\hn Y\|_{L_x^2}^2&\approx \int g^{ij} \hn_i Y^l \hn_{j} Y_l d\mu_g
=\int -\hdt Y^l \c Y_l d\mu_g \\
& \les \|\pi \c \pi\|_{L_x^1}+\|\hn g\c
(\hn g, \pi)\|_{L_x^1}+1.
\end{align*}
In view of (\ref{q1}), we thus obtain (\ref{lm7}).

Next, by using (\ref{rshift}) we have
\begin{equation*}
\|\hdt Y\|_{L_x^2}\les \|(\pi, \hn g)\c \hn
g\|_{L_x^2}+\|\pi\|_{L_x^4}^2+1.
\end{equation*}
It then follows from Lemma \ref{Cor8.12.1} that
\begin{equation*}
\|\hn^2 Y\|_{L_x^2}\les \|(\pi, \hn Y, \hn g)\c \hn g\|_{L_x^2}+\|\hn
Y\|_{L_x^2}+1.
\end{equation*}
We thus obtain (\ref{lm6}) in view of (\ref{lm7}).

Finally we derive (\ref{lmm7}). From (\ref{rshift}) and (\ref{commt2}) we first have
\begin{align}\label{onedy}
\hdt \hn Y& =\hn \hdt Y+[\hdt, \hn]Y\nn\\
&=\hn \pi \c \hn g+ \pi \c \hn^2 g + \pi \c \hn g \c \hn g + \pi \c \hn \pi +\pi\c\pi\c \hn g \nn\\
& + \hn g \c \hn^2 g\c Y + \hn g\c \hn g \c \hn g \c Y +\hn g\c \hn g \c \hn Y  + g\c \hn g \c \hn^2 Y \nn\\
& +  g\c \hR\c \hn Y +g\c \hn \hR \c Y +\hn (g^3 \c \hR \c Y).
\end{align}
It is easy to see that the last three terms involving $\hR$ can be bounded by $\les 1+\|\hn g\|_{L_x^2}$.
Note that
\begin{align*}
&\|\pi\c \pi\c \hn g\|_{L_x^2}\les \|\hn g\|_{L_x^\infty}\|\pi\|_{L_x^4}^2\les \|\hn g\|_{L_x^\infty}, \\
&\|(\hn Y, \hn g, \pi)\c \hn g \c \hn g\|_{L_x^2}\les (\|\hn g, \hn Y\|_{H^1}+\|\pi\|_{L_x^6})\|\hn g\|_{L_x^2}^{\frac{2}{3}}
\|\hn g\|_{L_x^\infty}^{\frac{4}{3}}.
\end{align*}
Thus, using Lemma \ref{Cor8.12.1}, we can obtain (\ref{lmm7}).
\end{proof}

\subsection{Energy estimate for $\hn g$.}\label{sec2.1}

In order to proceed further, besides (\ref{ba1}) we also need the following bootstrap assumption
\begin{equation}
\|\hn g\|_{L^2_{[0, T]} L_x^\infty}+\|k\|_{L^2_{[0,T]} L_x^\infty} \le B_0. \label{BA2} \tag{BA2}
\end{equation}
which is a stronger version for the corresponding part in (\ref{ba1}). The verification of (\ref{ba1}) and
(\ref{BA2}) will be carried out in Section 4.

We first introduce some conventions. For any 2-tensors $u$ and $v$ we define
$$
\langle u, v\rangle:=\hat g^{ik} \hat g^{jl} u_{ij} v_{kl} \quad \mbox{and} \quad
\langle \hn u, \hn v\rangle_g:= g^{ij} \langle \hn_i u, \hn_j v\rangle.
$$
We will use $|u|^2:=\langle u, u\rangle$ and $|\hn u|_g^2 :=\langle \hn u, \hn v\rangle_g$.

In the following we will derive some estimates on $\hn g$ and the derivatives on $Y$.
By using the formula
$$
\Lie_Y u_{ij}=\hn_Y u_{ij} +u_{im} \hn_j Y^m +u_{mj} \hn_i Y^m
$$
for any $2$-tensor $u$ and the formula under the spatial harmonic gauge,
\begin{equation}\label{ricid}
R_{ij}=-\f12 \hdt g_{ij}+\widehat{\mbox{R}}_{ij}+g\c \hn g\c\hn g
\end{equation}
 we can derive from  (\ref{eqn8}) and (\ref{eqn9})
that the $2$-tensors $u:=g$ and $v:=-2k$ satisfy the hyperbolic system (\ref{lie3})
with $F_u$ and $F_v$ given symbolically by
\begin{equation}\label{fuv11}
F_u =u \cdot \hn Y \quad \mbox{and} \quad F_v=2\nab^2 n+n\c k\ast k+k\ast \hn Y.
\end{equation}
From (\ref{lie3}), the commutation formula (\ref{commt2}),
we can derive
\begin{align}
&\p_t \hn u- \hn_Y \hn u=n \hn v+F_{\hn u}\label{lie.5}\\
&\p_t\hn v-  \hn_Y \hn v=n \hdt\hn u+F_{\hn v}\label{lie.6}
\end{align}
where
\begin{align*}
F_{\hn u}&=\hn Y\c \hn u + Y \cdot \hR \c u + \hn n \c v+\hn F_u, \\
F_{\hn v}&=\hn Y\c \hn v + Y \cdot \hR \c v + \hn n \hdt u + \hn F_v \\
&  +n(g\c \hn g\c \hn^2 u+g\c \hR\c \hn u+g\c\hn \hR\c u).
\end{align*}
It is straightforward to derive that
\begin{equation*}
 \left\{\begin{array}{lll}
\hn^2 F_u= \hn^2 Y\c\hn g+g\cdot \hn^3 Y+\hn Y\c\hn^2 g,\\
\hn F_v=g\cdot \hn^2 Y\c k+ 2 \nab^3 n +g\cdot \hn k\c \hn Y+\hn (n g \cdot k\c k)
\end{array}\right.
 \end{equation*}
 and
\begin{align}
|\hn F_{\hn u}, F_{\hn v}|&\le |\hn^2 Y\c (k, \hn g)|+|\nab^3 n, \hn^3 Y|
+|(\hn Y, \nab n , k, \hn g)\c \hn (\hn g, k)|\nn\\
& \quad \, +|\hn n \c k\c (\hn g, k)|+|\hn g\c k \c\hn Y|+|\hn g\c k\c k| +|\nab^2 n \c k| \nn\\
& \quad \, +|k| +|\hn Y| +|\hn g| +1.
\label{d1lu3}
\end{align}

In order to derive the estimates, we use the energy introduced in \cite[Section 2]{AnMon}
\begin{equation}\label{energy0}
\E^{(0)}(t)=\E^{(0)}(u,v)(t):=\frac{1}{2} \int_{\Sigma} \left(|u|^2+|\hn u|_g^2+|v|^2\right) d\mu_g
\end{equation}
with $u=g$ and $v=-2k$.

\begin{proposition}\label{thm1}
Under the bootstrap assumption (\ref{ba1}), there holds
\begin{equation*}
\sup_{[t_0, t_0+T]}\|\hn g\|_{L^2(\Sigma_t)}\le C.
\end{equation*}
\end{proposition}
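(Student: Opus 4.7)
The plan is to derive a Gronwall inequality for the Andersson--Moncrief energy $\E^{(0)}(t) = \E^{(0)}(g,-2k)(t)$ defined in (\ref{energy0}), using the hyperbolic system (\ref{lie3}) for the pair $(u,v)=(g,-2k)$, and then invoke the integral $L^\infty$ bound in (\ref{ba1}). By the equivalence $g\approx \hg$ of (\ref{11.5.1}), we have $\|\hn g\|_{L_x^2}^2\lesssim \E^{(0)}(t)$ and $\|g\|_{L_x^2}\lesssim 1$, so a uniform control of $\E^{(0)}(t)$ on $[t_0,t_0+T]$ directly yields the proposition. The initial value $\E^{(0)}(t_0)$ is controlled by the data.

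The core computation is to differentiate $\E^{(0)}(t)$ in time. The volume-form evolution is $\p_t d\mu_g = (-n\tr k + \div Y)\,d\mu_g$; substituting (\ref{lie3}) produces three types of terms. Terms of the form $\int \langle u,\hn_Y u\rangle d\mu_g$ and $\int\langle v,\hn_Y v\rangle d\mu_g$ are rewritten via integration by parts using the spatial harmonic gauge identity $\hn_i(g^{ij}d\mu_g)=0$, which converts them into terms proportional to $(\div Y)|u|^2$ and $(\div Y)|v|^2$, controllable by $\|\hn Y\|_{L_x^\infty}$. The crucial cross-terms are the one of the form $n\int \langle v,\hdt u\rangle d\mu_g$ arising from $\frac12 \frac{d}{dt}\int|v|^2 d\mu_g$ via the $v$-equation, and the term $n\int \langle \hn u,\hn v\rangle_g d\mu_g$ arising from $\frac12\frac{d}{dt}\int |\hn u|_g^2 d\mu_g$ via (\ref{lie.5}). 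Integrating the latter by parts (again using $\hn_i(g^{ij}d\mu_g)=0$) produces exactly $-\int n\langle v,\hdt u\rangle d\mu_g$ plus the harmless remainder $-\int \langle\hn n\c \hn u,v\rangle d\mu_g$. The two cross-terms therefore cancel — this is the symmetric structure the energy $\E^{(0)}$ is designed to exploit — and only a bulk term controllable by $\|\hn n\|_{L_x^\infty}\E^{(0)}$ survives.

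The remaining contributions all carry at least one factor among $n$, $\hn n$, $\hn Y$, $\hn g$ or $k$, together with $F_u, F_v$ and $\hn F_u$, $F_{\hn u}$, $F_{\hn v}$. Using (\ref{fuv11}) and $\|g\|_{L_x^\infty}\les 1$,
\[
\|F_u\|_{L_x^2}\les \|\hn Y\|_{L_x^2}\les \|\hn g\|_{L_x^2}+1\les \E^{(0)}(t)^{1/2}+1
\]
by (\ref{lm7}), and a similar estimate for $\|F_v\|_{L_x^2}$ follows from (\ref{fuv11}), (\ref{q1})--(\ref{q2}) and Lemma \ref{lem6}. Combining these with the cross-term cancellation and the basic bounds (\ref{q1})--(\ref{q3}) gives
\[
\frac{d}{dt}\E^{(0)}(t)\les \bigl(1+\|\hn g,\,k,\,\hn Y,\,\hn n\|_{L_x^\infty(\Sigma_t)}\bigr)\E^{(0)}(t)+C.
\]
Integrating and using (\ref{ba1}) to bound $\int_{t_0}^{t_0+T}\|\hn g,\,k,\,\hn Y,\,\hn n\|_{L_x^\infty}\,dt\le B_1$, Gronwall's inequality yields $\E^{(0)}(t)\le C$ on $[t_0,t_0+T]$, which gives the stated bound on $\|\hn g\|_{L_x^2}$.

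The main technical obstacle is verifying the cancellation between the two cross-terms with full care for the conventions: indices in $\langle\cdot,\cdot\rangle$ are raised by $\hg$ while the inner product $\langle\hn u,\hn v\rangle_g$ uses $g$, so the integration by parts brings in a commutator $[\hn,\p_t]$ contribution and a factor $\p_t(g^{ij})$ acting on $\hn u$; one must check that these, together with the flow of the volume form, leave behind only terms controlled pointwise by $\|\hn g,k,\hn Y,\hn n\|_{L_x^\infty}$. Once this bookkeeping is in place, the Gronwall step is routine.
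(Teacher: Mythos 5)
Your proposal is correct and follows essentially the same argument the paper gives: differentiate the Andersson--Moncrief energy $\E^{(0)}(g,-2k)$ in time, exploit the cancellation between $n\int\l v,\hdt u\r$ and $n\int\l\hn u,\hn v\r_g$ via integration by parts (using $\hn_i(g^{ij}d\mu_g)=0$), and bound the remaining terms with $\|\hn^2 Y\|_{L^2}$ from Lemma~\ref{lem6} to close the Gronwall argument with (\ref{ba1}). One small imprecision: the quantity that actually enters (\ref{e0eng}) and drives the Gronwall step is $\|\hn F_u\|_{L^2}$, not $\|F_u\|_{L^2}$, so the bound you should be displaying is $\|\hn F_u\|_{L^2}\les\|\hn Y\|_{L^\infty}\|\hn g\|_{L^2}+\|\hn^2 Y\|_{L^2}\les(\|\pi,\hn Y,\hn g\|_{L^\infty}+1)\|\hn g\|_{L^2}+1$, which requires (\ref{lm6}) rather than (\ref{lm7}); your final inequality is nonetheless the correct one.
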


\begin{proof}
Recall that for any vector fields  $Z$ tangent to $\Sigma_t$ and any scalar function $f$ there holds
$\int_{\Sigma_t} \Lie_Z (f d\mu_g) =\int_{\Sigma_t} \mbox{div}(f Z) d\mu_g=0$. Therefore
\begin{align*}
\p_t \E^{(0)}(t) &= \frac{1}{2} \int_{\Sigma_t} (\p_t -\Lie_Y) \left\{(|u|^2 +|\hn u|_g^2+|v|^2) d\mu_g\right\}\\
& =\int_{\Sigma_t} \left[\l u, \p_t u-\hn_Y u\r + \l v, \p_t v-\hn_Y v\r
   +g^{ij} \l \hn_i u, (\p_t-\hn_Y)\hn_j u\r\right] d\mu_g\\
&\quad \, +\frac{1}{2} \int_{\Sigma_t} (\p_t g^{ij} -\Lie_Y g^{ij}) \l \hn_i u, \hn_j u\r d\mu_g\\
&\quad \, +\frac{1}{2} \int_{\Sigma_t} (|u|^2+|\hn u|_g^2 +|v|^2) (\p_t -\Lie_Y)(d\mu_g)
\end{align*}
By using (\ref{eqn8}) we have $\p_t g^{ij}-\Lie_Y g^{ij}=2n k^{ij}$ and
$(\p_t -\Lie_Y) (d\mu_g) =-n \Tr k d\mu_g$. These two identities together with (\ref{lie3}),
(\ref{lie.5}) and (\ref{lie.6}) give
\begin{align*}
\p_t \E^{(0)}(t) &=\int_{\Sigma_t}  \left( n \l u, v\r+\l u,  F_u\r +\l v, F_v\r + n k^{ij}\hn_i u\hn_j u\right) d\mu_g\\
&\quad \, +\int_{\Sigma_t}  g\cdot \left(\hn F_u \cdot \hn u +Y\c \hat R\c u\c
\hn u +\hn Y \c \hn u \c \hn u \right)d\mu_g\\
&\quad -\f12\int_{\Sigma_t} n \Tr k(|u|^2+|\hn u|_g^2+|v|^2)\}d \mu_g.
\end{align*}
In view of the bounds on $n$, $|Y|$ and $g$, we can derive that
\begin{equation}\label{e0eng}
\p_t \E^{(0)}(t) \les \left(\|k, \hn Y\|_{L^\infty}+1\right)\E^{(0)}(t) + \|\hn F_u\|_{L^2}
\|\hn u\|_{L^2}+\|v\|_{L^2}\|F_v\|_{L^2}.
\end{equation}
By using Lemma \ref{lem6} we have
\begin{align*}
\|\hn F_u\|_{L^2} &\le \|\hn Y\|_{L^\infty} \|\hn g\|_{L^2}+\|\hn^2 Y\|_{L^2}
\les (\|\hn Y, \hn g,\pi\|_{L^\infty}+1) \|\hn g\|_{L^2}+1.
\end{align*}
and
\begin{align*}
\|F_v\|_{L^2} & \les \|\nabla^2 n\|_{L^2} +\|k\|_{L^4}^2 +\|k\|_{L^6} \|\hn Y\|_{L^3}
\les 1+\|\hn^2 Y\|_{L^2}+\|\hn Y\|_{L^2}\\
& \les (\|\pi, \hn Y, \hn g\|_{L^\infty}+1) \|\hn g\|_{L^2}+1.
\end{align*}
Therefore
$$
\p_t \E^{(0)}(t) \les \left(\|\pi, \hn Y, \hn g\|_{L^\infty(\Sigma_t)} +1\right) \E^{(0)}(t) +1.
$$
This together with the bootstrap assumption (\ref{ba1}) gives $\E^{(0)}(t)\le \E^{(0)}(t_0)+1$
for all $t\in [t_0, t_0+T]$. The proof is thus complete.
\end{proof}

We now consider the energy $\E^{(1)}(t)=\E^{(0)}(\hn u, \hn v)$. From (\ref{e0eng}) it follows easily that
\begin{align*}
\p_t \E^{(1)}(t)\les \left (\|k, \hn Y\|_{L_x^\infty}+1\right)\E^{(1)}(t)
+\left(\|\hn F_{\hn u}\|_{L_x^2}+\|F_{\hn v}\|_{L_x^2}\right) \sqrt{\E^{(1)}(t)}.
\end{align*}
In view of  (\ref{d1lu3}), we derive
\begin{align*}
\|\hn F_{\hn u}, F_{\hn v}\|_{L_x^2}&\les \|\hn^2 Y\|_{L_x^2}\|\hn g, k\|_{L_x^\infty}+\|\hn^3 Y\|_{ L_x^2}+\|\nab^3 n\|_{L_x^2}\\&+\|\hn Y, \hn n , k, \hn g\|_{L_x^\infty}\|\hn(\hn g, k)\|_{L_x^2}+\|\hn g\|_{L_x^6}\|\hn Y\|_{L_x^\infty}\|k\|_{L_x^3}\\&+\|\hn g\|_{L_x^\infty}\|k\|^2_{L_x^4}+\|\pi\|_{L_x^6}^3+\|\nab^2 n\|_{L^2_x} \|k\|_{L_x^\infty}\\
&+\|k\|_{L^2_x} +\|\hn Y\|_{L_x^2} +\|\hn g\|_{L_x^2}+1.
\end{align*}
In view of (\ref{q1}), (\ref{q2}),  Proposition \ref{thm1},  and (\ref{lm7}) and (\ref{lmm7}) in Lemma \ref{lem6},  we then have
\begin{align*}
\|\hn F_{\hn u}, F_{\hn v}\|_{L_x^2}
&\les \left(\|\hn g, k,\hn Y, \hn n  \|_{L_x^\infty}+\||\hn g\|_{L_x^\infty}^{\frac{4}{3}}\right)
\left(\|\hn^2 g\|_{L_x^2}+1\right)\\
&+\|\hn Y\|_{H^1}\left(\|\hn g, k\|_{L_x^\infty}+\|\hn g\|_{L_x^\infty}^{\frac{4}{3}} +1\right).
\end{align*}
Using this estimate, the inequality $\|\hn^2 g\|_{L_x^2}\le \sqrt{\E^{(1)}(t)}$, and the Young's inequality,
we can obtain
\begin{align*}
\p_t \E^{(1)}(t) &\les \left(1+\|k, \hn n, \hn g, \hn Y\|_{L_x^\infty} +\|k, \hn g\|_{L_x^\infty}^2\right) \E^{(1)}(t) \\
& \quad \, + \|k, \hn n, \hn g, \hn Y\|_{L_x^\infty} +\|\hn g\|_{L_x^2}^2 +\|\hn^2 Y\|_{L_x^2}^3+1
\end{align*}
In view of (\ref{ba1}) and (\ref{BA2}), it follows easily that
\begin{equation*}
\E^{(1)}(t)\les \E^{(1)}(t_0) +1 +\|\hn^2 Y\|_{L_t^3 L_x^2}^3.
\end{equation*}
This in particular implies that
\begin{equation}\label{hatn2g}
\|\hn^2 g\|_{L^2(\Sigma_t)}\les 1+ \|\hn^2 Y\|_{L_t^3 L_x^2}^{3/2}.
\end{equation}
On the other hand, it follows from (\ref{lm6}), (\ref{q1}), and Proposition \ref{thm1} that
\begin{align*}
\|\hn^2 Y\|_{L_x^2}&\les  \|\hn Y\c \hn g\|_{L_x^2}+\|\hn g\|_{L_x^4}^2+\|\pi\|_{L_x^4}^2 +1 \\
&\les\|\hn Y\|_{L_x^6}^{\f12}\|\hn Y\|_{L_x^2}^{\f12}\|\hn g\|_{L_x^6}+\|\hn g\|_{L_x^4}^2+1.
\end{align*}
Using $\|\hn Y\|_{L_x^6}\les \|\hn^2 Y\|_{L_x^2}+1$ and (\ref{lm7}) we can obtain
\begin{align*}
\|\hn^2 Y\|_{L_x^2} &\les  \|\hn g\|_{L_x^6}^2+ 1\les \|\hn^2 g\|_{L_x^2}^2 +1.
\end{align*}
This together with (\ref{hatn2g}) gives
\begin{equation}\label{12.23.1}
\|\hn^2 Y\|_{L_x^2}\les 1+ \|\hn^2 Y\|_{L_t^3 L_x^2}^3.
\end{equation}
Integrating with respect $t$ over $[t_0, t_0+T]$ yields
$$
\|\hn^2 Y\|_{L^3_{[t_0, t_0+T]} L_x^2}
\les T^{\frac{1}{3}} \left(1 +\|\hn^2 Y\|_{L_{[t_0, t_0+T]}^3 L_x^2}^3\right).
$$
Therefore we can choose a small but universal $T>0$ such that
$\|\hn^2 Y\|_{L^3_{[t_0, t_0+T]} L_x^2}\le C$ for some universal constant $C$.
Consequently, by using (\ref{hatn2g}) and (\ref{12.23.1}) we can obtain
(\ref{gh2}) and (\ref{cor3}) in the following result.

\begin{proposition}\label{cor2}
Under the bootstrap assumption (\ref{ba1}) and (\ref{BA2}), there hold
\begin{align}
\|g\|_{H^2(\Sigma_t)} +\|\hn k\|_{L^2(\Sigma_t)} &\le C,\label{gh2}\\
\|\hn^2 Y\|_{L^2(\Sigma_t)} +\|\hn Y\|_{L^2(\Sigma_t)} & \le C,\label{cor3}\\
\|e_0 (\hn g)\|_{L^2(\Sigma_t)}+\|\p_t \hn g\|_{L^2(\Sigma_t)} & \le C\label{eg}
\end{align}
for all $t\in [t_0, t_0+T]$ with $T>0$ being a universal number, where, for any $\Sigma_t$-tangent tensor
field $F$, we use the notation $e_0(F):=n^{-1} (\p_t F -\hn_Y F)$.
\end{proposition}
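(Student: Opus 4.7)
The plan is to read (\ref{gh2}) and (\ref{cor3}) off the bootstrap analysis already performed, and then derive (\ref{eg}) from the first-order hyperbolic system (\ref{lie.5}) together with Sobolev embedding. The three bounds are tightly coupled: once $\|\hn^2 Y\|_{L^2}$ and $\|\hn^2 g\|_{L^2}$ are controlled pointwise in $t$, everything else follows from H\"older and the elliptic estimates on $n,Y$.

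For (\ref{gh2}) and (\ref{cor3}), I would close the bootstrap inequality (\ref{12.23.1}) via the standard absorption trick: integrating in $t$ yields $\|\hn^2 Y\|_{L^3_{[t_0,t_0+T]}L^2_x}\lesssim T^{1/3}(1+\|\hn^2 Y\|_{L^3_{[t_0,t_0+T]}L^2_x}^3)$, so a continuity argument with $T>0$ chosen sufficiently small (depending only on universal constants) forces $\|\hn^2 Y\|_{L^3_{[t_0,t_0+T]}L^2_x}\le C$. Feeding this into (\ref{hatn2g}) gives $\|\hn^2 g\|_{L^2(\Sigma_t)}\le C$ uniformly in $t$, and inserting that back into (\ref{12.23.1}) gives $\|\hn^2 Y\|_{L^2(\Sigma_t)}\le C$. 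The bound $\|\hn k\|_{L^2}\le C$ is already contained in the energy $\E^{(1)}(t)$ (through $v=-2k$), while $\|\hn Y\|_{L^2}\le C$ comes from (\ref{lm7}) once $\|\hn g\|_{L^2}\le C$ is known from Proposition \ref{thm1}.

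For (\ref{eg}), I would apply (\ref{lie.5}) to the pair $(u,v)=(g,-2k)$ to get
\[
e_0(\hn g)=-2\hn k+n^{-1}F_{\hn u},\qquad F_{\hn u}=\hn Y\cdot\hn g+Y\cdot\hR\cdot g+\hn n\cdot(-2k)+\hn F_u,
\]
where $\hn F_u=\hn g\cdot\hn Y+g\cdot\hn^2 Y$ since $F_u=g\cdot\hn Y$. Each factor has already been bounded: from Step 1, $\|\hn^2 Y\|_{L^2}\le C$ gives $\|\hn Y\|_{L^6}\le C$ by Sobolev embedding on the compact 3-manifold $\Sigma$; from $\|g\|_{H^2}\le C$ we get $\|\hn g\|_{L^3}\le C$; from (\ref{q1}) we have $\|\hn n\|_{L^6}\le C$ and from (\ref{gh2}) we have $\|k\|_{L^3}\le C$. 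H\"older's inequality then yields
\[
\|\hn Y\cdot\hn g\|_{L^2}+\|\hn n\cdot k\|_{L^2}+\|g\cdot\hn^2 Y\|_{L^2}+\|\hn g\cdot\hn Y\|_{L^2}\le C,
\]
while $\|Y\cdot\hR\cdot g\|_{L^2}\le C$ since $|Y|_g\le n\le C$ and $\hR$ is smooth on compact $\Sigma$. Hence $\|F_{\hn u}\|_{L^2}\le C$, and combining with $n^{-1}\le C$ and $\|\hn k\|_{L^2}\le C$ gives $\|e_0(\hn g)\|_{L^2}\le C$. Finally, rewriting $\p_t\hn g=n\,e_0(\hn g)+\hn_Y\hn g$ and using $\|n\|_{L^\infty}+\||Y|_g\|_{L^\infty}\le C$ together with $\|\hn^2 g\|_{L^2}\le C$ produces the bound on $\|\p_t\hn g\|_{L^2}$.

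The only genuinely non-routine point is the bootstrap absorption in Step 1; every other estimate is a direct application of Sobolev embedding in three dimensions and the pointwise bounds on $n$ and $Y$ already established in (\ref{q1})--(\ref{q3}) and Lemma \ref{lem6}. I do not anticipate any obstruction beyond choosing $T$ small enough so that the cubic self-coupling can be absorbed.
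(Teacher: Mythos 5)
Your proposal is correct and follows essentially the same route as the paper: the bootstrap absorption in $\|\hn^2 Y\|_{L^3_t L^2_x}$ to close (\ref{hatn2g}) and (\ref{12.23.1}), followed by reading $\|e_0(\hn g)\|_{L^2}$ off (\ref{lie.5}) with H\"older and 3D Sobolev embedding, is exactly the paper's argument. The only cosmetic difference is the choice of H\"older exponents for $\hn n\cdot k$ ($L^6\times L^3$ versus the paper's $L^4\times L^4$), which is immaterial.
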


\begin{proof}
It remains only to prove (\ref{eg}). We use (\ref{lie.5}), (\ref{gh2}) and (\ref{cor3}) to deduce that
 \begin{align*}
\|e_0 (\hn g)\|_{L^2} &\les \|\hn k\|_{L^2}+ \|\hn Y\|_{L^6} \|\hn g\|_{L^3}
+\|\hn^2 Y\|_{L^2} \\
& +\|\hn n\|_{L^4} \|k\|_{L^4} + \|Y\|_{L^\infty} \|g\|_{L^2} \les 1.
\end{align*}
Finally, in view of (\ref{gh2}) we obtain $\|\p_t \hn g\|_{L^2} \les \|e_0(\hn g)\|_{L^2}
+\|\hn^2 g\|_{L^2} \les 1$.
\end{proof}

\begin{lemma}
Under the bootstrap assumptions (\ref{ba1}) and (\ref{BA2}), for $3<p\le 6$ there hold
\begin{align}
\|\hn^3 Y\|_{L^2} &\les  \|\hn g, k\|_{L^\infty}+1 , \label{shift3}\\
\|\hn Y\|_{L^\infty} &\les \|k, \hn g\|_{L^\infty}^{3/2-3/p}+1. \label{shift3.3}
\end{align}
\end{lemma}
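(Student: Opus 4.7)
The plan is to prove (\ref{shift3}) by applying Lemma \ref{Cor8.12.1} with $F=\hn Y$, yielding
$$
\|\hn^3 Y\|_{L^2_x} \les \|\hdt \hn Y\|_{L^2_x} + \|\hn g\c \hn^2 Y\|_{L^2_x} + \|\hn^2 Y\|_{L^2_x} + \|\hn Y\|_{L^2_x}.
$$
By Proposition \ref{cor2} the last two terms are $\les 1$, and $\|\hn g\c \hn^2 Y\|_{L^2_x}\les \|\hn g\|_{L^\infty_x}$. Hence it suffices to bound $\|\hdt \hn Y\|_{L^2_x}$ by $1+\|\hn g, k\|_{L^\infty_x}$, for which I would feed the twelve-term expansion (\ref{onedy}) into Hölder's inequality. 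The governing principle is that \emph{at most one} factor in each term is placed in $L^\infty$, with the remaining factors dispatched via Sobolev embedding into $L^6$ using the global bounds $\|\hn g, \hn Y, \pi\|_{H^1}\les 1$ from Proposition \ref{cor2} together with (\ref{q1})--(\ref{q2}).

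Concretely, terms like $\pi\c\hn g\c\hn g$, $\pi\c\pi\c\hn g$, $\hn g\c \hn g\c\hn Y$, $(\hn g)^3 Y$ are controlled by $\|\pi, \hn g, \hn Y\|_{L^6}^3 \les 1$. Terms carrying $\hn^2 g$ or $\hn^2 Y$ with a single smooth or low-regularity coefficient, such as $\hn g\c\hn^2 g\c Y$ and $g\c\hn g\c\hn^2 Y$, are bounded by $\|\hn g\|_{L^\infty}\|\hn^2 g, \hn^2 Y\|_{L^2}\les \|\hn g\|_{L^\infty}$. The delicate cases are $\pi\c\hn^2 g$ and $\pi\c\hn\pi$, where $\pi$ must be placed in $L^\infty$; here I would invoke (\ref{q3}), and use $3/2-3/p\le 1$ to deduce $\|\pi\|_{L^\infty}\les 1+\|k\|_{L^\infty}$, which paired with $\|\hn^2 g\|_{L^2}, \|\hn \pi\|_{L^2}\les 1$ yields $\les 1+\|k\|_{L^\infty}$. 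The $\hR$-terms are trivial since $\hg$ is fixed. The remaining term $\hn\pi\c\hn g$ is bounded by $\|\hn\pi\|_{L^2}\|\hn g\|_{L^\infty}\les \|\hn g\|_{L^\infty}$. Summing gives (\ref{shift3}).

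For (\ref{shift3.3}), I would invoke the Sobolev embedding $W^{2,p}(\Sigma)\hookrightarrow W^{1,\infty}(\Sigma)$ for $p>3$ on the compact $3$-manifold $\Sigma$, yielding $\|\hn Y\|_{L^\infty}\les \|\hn Y\|_{L^p}+\|\hn^2 Y\|_{L^p}$. Since $\|\hn Y\|_{H^1}\les 1$ by Proposition \ref{cor2}, the first summand is $\les 1$. For the second, I would interpolate between $L^2$ and $L^6$, choosing the exponent $\theta$ so that $1/p=\theta/2+(1-\theta)/6$, i.e.\ $\theta=3/p-1/2$ and $1-\theta=3/2-3/p$, giving
$$
\|\hn^2 Y\|_{L^p}\les \|\hn^2 Y\|_{L^2}^{3/p-1/2}\,\|\hn^2 Y\|_{L^6}^{3/2-3/p}.
$$
The Sobolev embedding $H^1(\Sigma)\hookrightarrow L^6(\Sigma)$ together with (\ref{shift3}) and Proposition \ref{cor2} gives $\|\hn^2 Y\|_{L^6}\les \|\hn^3 Y\|_{L^2}+\|\hn^2 Y\|_{L^2}\les 1+\|\hn g,k\|_{L^\infty}$, and since $3/2-3/p\in(1/2,1]$, $(1+M)^{3/2-3/p}\les 1+M^{3/2-3/p}$ produces (\ref{shift3.3}).

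The main obstacle is to keep the bound in (\ref{shift3}) \emph{linear} in $\|\hn g,k\|_{L^\infty}$: a direct application of (\ref{lmm7}) from Lemma \ref{lem6} produces the extra factor $\|\hn g\|_{L^\infty}^{4/3}$ together with cross products of $L^\infty$ norms, neither of which is admissible here. The way out is to exploit the stronger bounds of Proposition \ref{cor2} (in particular $\|\hn^2 g\|_{L^2}, \|\hn^2 Y\|_{L^2}\les 1$ and Sobolev $L^6$-control for $\hn g, \hn Y, \pi$) so that every quadratic/cubic product in (\ref{onedy}) is absorbed by Sobolev norms and $L^\infty$ appears only linearly through a single factor of $\hn g$ or $k$.
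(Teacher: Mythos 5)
Your proof is correct and follows essentially the same route as the paper: apply Lemma \ref{Cor8.12.1} to $F=\hn Y$, estimate each term of (\ref{onedy}) by H\"older with at most one $L^\infty$ factor using the $H^1/H^2$ bounds of Proposition \ref{cor2} and (\ref{q1})--(\ref{q3}), then obtain (\ref{shift3.3}) from Sobolev embedding plus $L^2$--$L^6$ interpolation of $\hn^2 Y$. The only (harmless) deviation is the term $g\c\hn g\c\hn^2 Y$, which the paper estimates via $\|\hn^2 Y\|_{L^3}\|\hn g\|_{L^6}$ and a Sobolev interpolation rather than your simpler $\|\hn g\|_{L^\infty}\|\hn^2 Y\|_{L^2}$; both are admissible since the bound is only required to be linear in $\|\hn g, k\|_{L^\infty}$.
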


\begin{proof}
In view of Proposition \ref{cor2}, (\ref{q1}), and Lemma \ref{Cor8.12.1}, we obtain from (\ref{onedy}) that
\begin{equation*}
\|\hn^3 Y\|_{L^2}\les \|\hn \pi, \hn g\|_{L_x^2}\|\hn g, \pi\|_{L_x^\infty}+ \|\hn^2 Y\c \hn g\|_{L^2} + \|\hn g, k\|_{L_x^\infty}+1.
\end{equation*}
We may write
$$
\|\hn^2 Y\c \hn g\|_{L^2}\les \|\hn^2 Y\|_{L^3} \|\hn g\|_{L^6}\les \|\hn^2 Y\|_{L^3} \|\hn^2 g\|_{L^2}.
$$
Applying the Sobolev type inequality  (\cite[Lemma 2.5]{Wang10}) to $\|\hn^2 Y\|_{L^3}$, and
using (\ref{q1}), (\ref{q2}), (\ref{gh2}) and (\ref{cor3}), we can obtain (\ref{shift3}).
Finally we can use the Sobolev embedding given in \cite[Lemma 2.6]{Wang10} to conclude (\ref{shift3.3}).
\end{proof}

\section{\bf $H^{2+\ep}$ estimates}\label{sec3}


In this section, under the bootstrap assumptions (\ref{ba1}) and (\ref{BA2}), we will
establish $H^{1+\ep}$ type energy estimates for $k, \hn g$ and $\bd \phi$ with $\phi$ being
solutions of homogeneous wave equation $\Box_\bg \phi=0$. We will also obtain the
$H^{\frac{3}{2}+\ep}$ and $H^{2+\ep}$ estimates for $\hn n, \hn Y, n e_0 (n)$ simultaneously.
As the main  building block of this section, established in Appendix III are a series of product
estimates in fractional Sobolev spaces and estimates for commutators between the Littlewood-Paley
projections $P_\mu$ and the rough coefficients.

For simplicity of exposition, we fix some conventions. We will use $\ti\pi$ to denote any term from the set
$\hn Y$, $\hn n$, $k$, $\hn g$ and $e_0(n)$, where $e_0(n)=n^{-1}(\p_t n-\hn_Y n)$ as defined before.
It follows from Proposition \ref{cor2} and (\ref{q1})  that $\|\ti\pi\|_{H^1}\le C$.
We also introduce the error terms
\begin{eqnarray}
 \er_1=\bg\c \ti \pi\c \hn\ti\pi, \qquad \er_2=\bg\c \ti\pi\c\ti\pi\c\ti\pi,\label{ertp}
\end{eqnarray}
where $\bg$ denotes any product of the components of $n$, $g$ and $Y$. We denote
by $\er(\hR)$ any term involving $\hR$ and its derivatives satisfying
\begin{equation}\label{er3}
\|\er(\hR)\|_{H^1(\Sigma_t)}\le C
\end{equation}
for all $t\in [t_0, t_0+T]$.

\begin{proposition}\label{eplem}
For $0<\ep<1/2$ there hold
\begin{align}
&\|\La^{1/2+\ep}\big( \hn^2 n, \hn (ne_0(n)), \hn^2 Y\big)\|_{L^2}\les \|\hn g, k\|_{H^{1+\ep}}+1, \label{elp.1}\\
&\|\La^{\ep} \big( \hn^3 n, \hn^2( ne_0 (n)), \hn^3 Y\big)\|_{L^2}\les
\|\hn g, k\|_{L^\infty} \|\hn g,k\|_{H^{1+\ep}}+1\label{ertp3}
\end{align}
and, for the error type terms defined in (\ref{ertp}), there hold
\begin{align}
&\|\La^{\ep}\er_1\|_{L^2}\les \|k, \hn g\|_{L^\infty} (\|\hn g, k\|_{H^{1+\ep}}+1) +1, \label{ertp1}\\
&\|\La^{\ep} \er_2\|_{L^2}\les  \|\hn g, k\|_{H^{1+\ep}}+1. \label{ertp2}
\end{align}
\end{proposition}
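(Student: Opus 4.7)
The proof rests on combining elliptic equations for $n$, $Y$ and $ne_0(n)$ with the fractional product and commutator estimates of Appendix III. First I would collect the three elliptic equations. For $n$, the spatial harmonic gauge identity $g^{ij}U^k_{ij}=0$ reduces (\ref{n1}) to $\hdt n=|k|^2 n-1$, whose source is an $\er_2$-type term. For $Y$, Lemma \ref{shift1} gives the equation directly, and its right-hand side is of type $\er_1+\er_2+\er(\hR)$ once one recognizes $U=g\cdot\hn g$. For $ne_0(n)$, the analogous equation is derived by applying $n^{-1}(\p_t-\Lie_Y)$ to (\ref{n1}), commuting through $\hdt$ with the help of (\ref{eqn8})--(\ref{eqn9}), and absorbing $|k|^2(ne_0(n))$ onto the left; the remaining source is again of type $\er_1+\er_2+\er(\hR)$.

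Next I would prove the error-term bounds (\ref{ertp1})--(\ref{ertp2}) by direct application of the fractional Leibniz rule from Appendix III. For $\er_2=\bg\cdot\ti\pi^3$, using $\|\bg\|_{L^\infty}\le C$ and $\|\ti\pi\|_{H^1}\le C$ (Proposition \ref{cor2} and (\ref{q1})) together with the Sobolev embedding $H^1\hookrightarrow L^6$ in three dimensions, one distributes $\La^\ep$ onto a single factor to obtain $\|\La^\ep\er_2\|_{L^2}\lesssim\|\ti\pi\|_{H^{1+\ep}}+1$; for $\er_1=\bg\cdot\ti\pi\cdot\hn\ti\pi$ one instead places $\ti\pi$ in $L^\infty$, producing the $\|\hn g,k\|_{L^\infty}$ prefactor. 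Then for (\ref{elp.1}) and (\ref{ertp3}) I would apply a fractional analogue of Lemma \ref{Cor8.12.1},
\begin{equation*}
\|\La^\s\hn^2\Phi\|_{L^2}\lesssim \|\La^\s\hdt\Phi\|_{L^2}+(\text{lower order}),\qquad 0<\s<1,
\end{equation*}
with $\s=1/2+\ep$ for (\ref{elp.1}) and, after differentiating once more in space, $\s=\ep$ for (\ref{ertp3}). Each time the right-hand side is controlled by the error-term estimates together with the Sobolev multiplication $H^1\cdot H^{1+\ep}\subset H^{1/2+\ep}$ in dimension three, which yields $\|k^2\|_{H^{1/2+\ep}}\lesssim\|k\|_{H^{1+\ep}}$. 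The additional $\|\hn g,k\|_{L^\infty}$ factor in (\ref{ertp3}) is produced because the extra derivative on the quadratic source forces one factor into $L^\infty$. The fractional norms of the components $\hn n,\hn Y,e_0(n)$ of $\ti\pi$ that are not among $\hn g,k$ are then absorbed via (\ref{elp.1}) itself, applied to the $n$, $Y$, $ne_0(n)$ pieces, in a self-consistent bootstrap.

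The main obstacle is the careful derivation and classification of the elliptic equation for $ne_0(n)$: commuting $\p_t-\Lie_Y$ through $\Delta$ and using (\ref{eqn8})--(\ref{eqn9}) to re-express the resulting time derivatives of $g$ and $k$ produces many terms that must be identified as $\er_1$, $\er_2$ or $\er(\hR)$, with the constant source $1$ in (\ref{n1}) fortuitously dropping out after differentiation. Once this is organized and the self-consistency of the fractional bootstrap between $\hn n,\hn Y,e_0(n)$ and the target $\|\hn g,k\|_{H^{1+\ep}}$ is closed, the remaining work is pure bookkeeping using the material of Appendix III.
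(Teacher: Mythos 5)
Your overall architecture matches the paper's: the same three elliptic equations (for $n$ via (\ref{n1}), for $Y$ via Lemma \ref{shift1}, for $ne_0(n)$ via the commutator of $\hdt$ with $\nab_{n\bT}$, which is the paper's (\ref{comm9})--(\ref{onedtn})), the same sequential ordering ($n$ first, then $ne_0(n)$ and $Y$), the same use of the trilinear estimate of Lemma \ref{epeq2} for $\er_2$ and of the Leibniz-type estimate (\ref{epeq1}) plus Lemma \ref{prd3} for $\er_1$, and for (\ref{ertp3}) exactly the paper's route: write $\hdt F=\er_1+\er_2+\er(\hR)$ and apply (\ref{elep1}).

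The gap is in your treatment of (\ref{elp.1}). You propose to keep the undifferentiated equations and apply a Hessian estimate $\|\La^{\s}\hn^{2}\Phi\|_{L^{2}}\les\|\La^{\s}\hdt\Phi\|_{L^{2}}+\mbox{l.o.t.}$ with $\s=\f12+\ep$, so you must place each source in $H^{1/2+\ep}$. This fails for $ne_0(n)$: by (\ref{hdtn2}) its source contains the product $n\hdt g\cdot ngk$, and $\hdt g\approx\hn^{2}g$ is only controlled in $H^{\ep}$ (indeed only in $L^2$ a priori at this stage), so $\|\hdt g\cdot k\|_{H^{1/2+\ep}}$ is not bounded by $\|\hn g,k\|_{H^{1+\ep}}$ — it would require $g\in H^{5/2+\ep}$ or better. (Your multiplication $H^{1}\cdot H^{1+\ep}\subset H^{1/2+\ep}$ does not apply, since neither factor of $\hdt g\cdot k$ lies in $H^{1+\ep}$ in the needed sense.) Moreover the target for this term is only $\|\La^{1/2+\ep}\hn(ne_0(n))\|$, one derivative, so your Hessian lemma is the wrong shape. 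The paper's resolution is the key ingredient your proposal omits: measure the source in the \emph{negative} norm $H^{\ep-1/2}$, using the bilinear estimate (\ref{ebeq.1}), $\|\mu^{-1/2+\ep}P_\mu(F\cdot G)\|_{l_\mu^{2}L^{2}}\les\|G\|_{H^{\ep}}\|F\|_{H^{1}}$ (so $\hdt g$ enters only through $\|\hdt g\|_{H^{\ep}}$), combined with the elliptic estimate (\ref{elep2}) which gains a full derivative from $\dot H^{\ep-1/2}$ of $\hdt F$ to $\dot H^{1/2+\ep}$ of $\hn F$; for $\hn^{2}n$ and $\hn^{2}Y$ one first differentiates the equations (getting (\ref{hdtnn}) and (\ref{onedy})) and applies (\ref{elep2}) to $F=\hn n,\hn Y$. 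A secondary issue: even where your route is viable (the $n$ and $Y$ Hessians), the lemma you invoke for $\s=\f12+\ep\in(\f12,1)$ is not the paper's Lemma \ref{ellp1}, which is proved only for $0<\ep<\f12$; extending it would require an additional commutator bound for $[P_\mu,g^{ij}]\p^{2}F$ at weight $\mu^{1/2+\ep}$ together with an interpolation--absorption step, none of which you supply.
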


\begin{proof}
For any scalar function $f$ it is easy to derive the commutation formula
\begin{equation}\label{comm9}
[\Delta, \nab_{n\bT}] f=-2n k_i^l \nab_l \nab^i f-\nab^i n k_i^l
\nab_l f.
\end{equation}
To obtain the estimates of $\hn n$ and $n e_0(n)$, we first use (\ref{n1}) and (\ref{comm9})
to derive the identities
\begin{align}
\hdt \hn n &=\hn(n|k|^2_g)+g \hR\c\hn n, \label{onedn}\\
\hdt \big(n e_0(n)\big)&=n e_0(n |k|_g^2)-2n k_a^l \nab_l \nab^a n-\nab^a n
\nab_l n k_a^l.\label{onedtn}
\end{align}
In view of (\ref{onedn}), we have
\begin{equation}\label{hdtnn}
 \hdt \hn n= n \hn  k
\c k \c g+(\hn n,k,\hn g)^3\c (n, g)+\er(\hR).
 \end{equation}
It then follows from (\ref{ebeq.1}) that
\begin{align*}
&\|\La^{-1/2+\ep}\hdt \hn n \|_{L^2}\les \|\hn k \|_{H^\ep}\|n g \c k\|_{H^1}
+\|(\hn n,k,\hn g)^3\c (n, g)\|_{L^2}+\|\er(\hR)\|_{L^2}.
\end{align*}
By using (\ref{elep2}) and $\|\ti \pi\|_{H^1}\le C$, we can conclude that
\begin{equation}\label{1n2}
\|\La^{1/2+\ep}\hn^2 n\|_{L^2}\les \|\hn k\|_{H^\ep}+1.
\end{equation}

In view of (\ref{onedtn}) and (\ref{lie3}), we have
\begin{equation}\label{hdtn2}
\hdt \big(n e_0 (n) \big)=(n\hdt g+\nab^2n) ng\c k+\bg\c \ti \pi\c \ti \pi\c \ti \pi.
\end{equation}
Thus, with the help of (\ref{ebeq.1}) and (\ref{1n2}), it follows
\begin{align*}
\|\La^{-1/2+\ep}\hdt(n e_0 (n))\|_{L^2}
&\les (\|\hn^2 g\|_{H^\ep}+\|\hn^2 n \|_{H^\ep}) \|\bg \c k\|_{H^1} +\|\bg |\ti\pi|^3\|_{L^2} \\
& \les \|\hn^2 g,\hn k\|_{H^\ep}+1
\end{align*}
which implies, in view of (\ref{elep2}) and $\|\ti \pi\|_{H^1}\le C$, that
\begin{equation}\label{e0nes1}
 \|\La^{1/2+\ep}\hn\big( n e_0(n)\big)\|_{L^2}\les \|\hn^2 g, \hn k\|_{H^\ep}+1.
\end{equation}

Next we use (\ref{onedy}) and (\ref{ebeq.1}) to obtain
$$
\|\La^{-1/2+\ep}\hdt \hn Y\|_{L^2}\les \|\hn (\pi, \hn g, \hn Y)\|_{H^\ep}+\|\er(\hR)\|_{L^2}.
$$
This together with (\ref{1n2}), (\ref{elep2}), $\|\ti \pi\|_{H^1}\le C$  and the interpolation inequality gives
\begin{equation}\label{n2yes1}
\|\La^{1/2+\ep}\hn^2 Y\|_{L^2}\les \|\hn^2 Y\|_{H^\ep}+\|\hn^2 g, \hn
k\|_{H^\ep}+1\les\|\hn^2 g, \hn k\|_{H^\ep}+1.
\end{equation}
Combining the estimates (\ref{1n2}), (\ref{e0nes1}) and (\ref{n2yes1}), we therefore complete the
proof of (\ref{elp.1}).

As a byproduct of (\ref{elp.1}), we have
\begin{equation}\label{igel1}
\|\La^\ep \hn \ti\pi\|_{L^2}\les \|k, \hn g\|_{H^{1+\ep}}+1.
\end{equation}
It then follows from (\ref{epeq1}) and (\ref{igel1}) that
\begin{equation}\label{ertp.1}
\|\La^\ep (\ti \pi\c \hn \ti \pi)\|_{L^2}\les \|\ti \pi\|_{L^\infty}\| \ti\pi\|_{H^{1+\ep}}
\les( \|k, \hn g\|_{H^{1+\ep}}+1)\|\ti \pi\|_{L^\infty}.
\end{equation}
By Lemma \ref{epeq2} and (\ref{igel1}), we have
\begin{equation}\label{ertp.2}
\|\La^\ep(\ti\pi\c \ti\pi\c \ti \pi)\|_{L_x^2}
\les \|\ti \pi\|_{H^1}^2\|\ti \pi\|_{H^{1+\ep}}\les \|k, \hn g\|_{H^{1+\ep}}+1.
\end{equation}
To treat the factor $\bg$ in the definition (\ref{ertp}), in view of $\|\bg\|_{H^2}\le C$ in
Proposition \ref{cor2}, using Lemma \ref{prd3}, and (\ref{ertp.1}) and (\ref{ertp.2}), we thus obtain (\ref{ertp1}) and (\ref{ertp2}).

Finally, we consider (\ref{ertp3}) with the help of (\ref{elep1}). Let $F=n e_0(n), \hn n, \hn Y$. Then
it follows from (\ref{q1}) and (\ref{cor3}) that $\|F\|_{H^1}\les 1$. Moreover,  the elliptic equations
(\ref{hdtnn}), (\ref{hdtn2}) and  (\ref{onedy}) can be written symbolically as
$
\hdt F=\er_1+\er_2+\er(\hR).
$
In view of (\ref{ertp1}), (\ref{ertp2}), and the definition of $\er(\hR)$, we thus obtain (\ref{ertp3}).
\end{proof}

\subsection{First order hyperbolic systems}

\subsubsection{Energy estimates}

We consider a pair of tensors  $(u, v)$ satisfying the first order hyperbolic system
\begin{equation}\label{lu3}
 \left\{\begin{array}{lll}
\p_t u-\hn_Y u=n v+F_u\\
\p_t v-\hn_Y v=n \hdt u+F_v.
\end{array}\right.
 \end{equation}
Note that for  $(u,v)$ satisfying (\ref{lu3}), the pair  $(U_1, V_1)=(\hn u,\hn v)$ satisfies
an equation system of the form of (\ref{lu3}) with
\begin{equation}\label{fu1v1}
 \left\{
 \begin{array}{lll}
 F_{U_1}=\hn Y^m \hn_m u+\hn n\c  v+\hn F_u+Y\c \hat{R}\c u\\
 F_{V_1}=\hn Y^m \hn_m v+\hn n \c\hdt u+n \hat{R} \c \hn u+Y\c \hat{R}\c v+\hn F_v+n\hn (\hat{R}\c u)
 \end{array}\right.
 \end{equation}
where the last term in $F_{U_1}$ and $F_{V_1}$ can be dropped in the case that $(u,v)$ is a pair of scalar functions.

We also can check that the pair of functions $(U^\mu,V^\mu):=(P_\mu u, P_\mu v)$ satisfies (\ref{lu3})
with $F_{U^\mu}$ and $F_{V^\mu}$ given by
\begin{footnote}{We remark that the precise form of the first term in $F_{V^\mu}$ should
be $[P_\mu, ng\hn^2]u$ which consists of $[P_\mu, ng]\hn^2 u$ and $ng [P_\mu, \hat\Ga]\p u$.
The latter is of much lower order, which not only can be treated similar to the first term,
but also can be done in a much easier way since $\hat \Ga$ is smooth. Thus, we will omit this
term for ease of exposition.} \end{footnote}
\begin{equation}\label{puuv}
\left\{\begin{array}{lll}
F_{U^\mu}=[P_\mu, Y^m]\p_m u +[P_\mu, n]v+P_\mu F_u,\\
F_{V^\mu}=[P_\mu, ng]\hn_{ij}^2 u+P_\mu F_v+[P_\mu, Y^m]\p_m v.
\end{array}\right.
\end{equation}
Thus, it is easy to check that $(U_1^\mu,V_1^\mu):=(P_\mu \hn u, P_\mu \hn v)$ satisfies the system
(\ref{lu3}) with $F_{U_1^\mu}$ and $F_{V_1^\mu}$ given by
 \begin{equation}\label{puuv1}
\left\{\begin{array}{lll}
F_{U_1^\mu}=[P_\mu, Y^m]\p_m \hn u +[P_\mu, n]\hn v+P_\mu F_{U_1},\\
F_{V_1^\mu}=[P_\mu, ng]\hn_{ij}^2 \hn u+[P_\mu, Y^m]\p_m \hn v+P_\mu F_{V_1}.
\end{array}\right.
\end{equation}

\begin{lemma}
Let $0<\epsilon<1/2$. Then for $F_{U^\mu}$ and  $F_{V^\mu}$ defined by (\ref{puuv}) there hold the estimates
\begin{align}
\|\mu^{\f12+\ep}F_{U^\mu}\|_{l_\mu^2 L_x^2} & +\|\mu^{-\f12+\ep}\hn F_{U^\mu}\|_{l_\mu^2 L_x^2}
\les\|\hn n, \hn Y\|_{H^1}\|\hn u, v\|_{H^\ep}+\|\mu^{\f12+\ep}P_\mu F_u\|_{l_\mu^2 L_x^2}, \label{rem2.1}
\end{align}
\begin{align}
\|\mu^\ep & \hn F_{U^\mu}\|_{l_\mu^2 L_x^2}+\|\mu^{1+\ep} F_{U^\mu}\|_{l_\mu^2 L_x^2} \nn\\
&\les  \|\hn^2 Y, \hn^2 n \|_{H^{\f12+\ep}}\| \hn u, v\|_{L_x^2} +\|\hn n,\hn Y \|_{L_x^\infty}\|\hn u, v\|_{H^\ep}
+\|\mu^{\ep}\hn P_\mu F_u\|_{l_\mu^2 L_x^2}, \label{dfu}
\end{align}
and
\begin{align}
\| & \mu^\ep F_{V^\mu}\|_{l_\mu^2 L_x^2}\les\|\hn (n g), \hn Y\|_{L_x^\infty}\| \hn u, v\|_{H^\ep}
+\|\mu^{\ep}P_\mu F_ v\|_{l_\mu^2 L_x^2}. \label{fv}
\end{align}
\end{lemma}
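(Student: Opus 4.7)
The plan is to decompose each summand in (\ref{puuv}) and invoke the Littlewood--Paley commutator estimates developed in Appendix III (Section \ref{apiii}). Under (\ref{ba1})--(\ref{BA2}) together with Proposition \ref{cor2}, the coefficients $Y$, $n$ and $g$ all lie in $H^2$, with the stronger bounds $\hn^2 Y, \hn^2 n\in H^{1/2+\ep}$ supplied by (\ref{elp.1}). Consequently, a generic commutator $[P_\mu,f]\p g$ with $f\in H^2$ satisfies a square-summable bound of the type
\[
\|\mu^{s}\,[P_\mu,f]\p g\|_{\ell^2_\mu L^2_x}\;\les\;\|\hn f\|_{H^1}\,\|\p g\|_{H^{s-1}},
\]
together with an $L^\infty$-type variant $\|[P_\mu,f]\p g\|_{L^2_x}\les \mu^{-1}\|\hn f\|_{L^\infty}\|\p g\|_{L^2_x}$. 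These two shapes, or their direct analogues in Appendix III, will drive every step.

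For (\ref{rem2.1}), I would apply the above bound with $s=\tfrac12+\ep$ to $[P_\mu, Y^m]\p_m u$, and to $[P_\mu, n]v$ after rewriting the latter so that $n$ is the smoother factor that gains the derivative; this produces $\|\hn Y\|_{H^1}\|\hn u\|_{H^\ep}$ and $\|\hn n\|_{H^1}\|v\|_{H^\ep}$. The term $P_\mu F_u$ is kept in the form that appears on the right-hand side. The companion bound involving $\mu^{-1/2+\ep}\hn F_{U^\mu}$ is obtained by distributing $\hn$ through each commutator,
\[
\hn\,[P_\mu,f]\p g \;=\; [P_\mu,\hn f]\p g\,+\,[P_\mu, f]\,\hn\p g,
\]
and applying the same bound with $s=-\tfrac12+\ep$ to the second piece; the $\mu^{-1}$ gain built into the commutator exactly balances the extra derivative on $g$, so no derivative is lost overall.

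For (\ref{dfu}) the decomposition is the same, but the higher weight $\mu^{1+\ep}$ forces us to use the extra regularity of the coefficients. When $\hn$ lands on the coefficient, $\hn^2 Y$ and $\hn^2 n$ appear, and these are controlled in $H^{1/2+\ep}$ via Proposition \ref{eplem}; this produces the first summand $\|\hn^2 Y,\hn^2 n\|_{H^{1/2+\ep}}\|\hn u, v\|_{L^2_x}$. When $\hn$ lands on $\p u$ or $v$ we invoke the $L^\infty$-type commutator estimate, yielding $\|\hn n,\hn Y\|_{L^\infty}\|\hn u, v\|_{H^\ep}$ after the $\mu^{1+\ep}$ weight absorbs the remaining $\mu^{-1}$ gain.

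For (\ref{fv}), the one genuinely new point is the term $[P_\mu, ng]\hn_{ij}^2 u$, in which the factor $\hn^2 u$ carries one more derivative than the norms on the right-hand side allow. I would redistribute derivatives through the identity
\[
[P_\mu, ng]\hn_{ij}^2 u \;=\; \hn_i\bigl([P_\mu, ng]\hn_j u\bigr)\,-\,[P_\mu, \hn_i(ng)]\hn_j u,
\]
modulo harmless lower-order contributions from commuting $\hn$ with $P_\mu$ on $(\Sigma,\hg)$ involving the smooth Christoffel symbols $\hat\Ga$. The first piece is an exact $\hn$ of a commutator of the type already handled, while the second trades $\hn^2 u$ for $\hn u$ at the cost of $\hn(ng)$, and the $L^\infty$ commutator bound then produces $\|\hn(ng)\|_{L^\infty}\|\hn u\|_{H^\ep}$ after the $\mu^\ep$ weight. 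The term $[P_\mu, Y^m]\p_m v$ is handled identically, giving the remaining contribution $\|\hn Y\|_{L^\infty}\|v\|_{H^\ep}$. I expect this derivative-redistribution step in $[P_\mu,ng]\hn^2 u$ to be the sole technical obstacle: the first-order reformulation (\ref{lu3}) is precisely designed so that $\hn^2$ of the rough factor appears only inside this commutator, where a single integration by parts restores the right regularity balance.
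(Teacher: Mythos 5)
Your overall plan—decompose $F_{U^\mu}$ and $F_{V^\mu}$ term by term and invoke the commutator machinery of Appendix III—is precisely what the paper does, and your treatment of (\ref{rem2.1}) and (\ref{dfu}) matches the paper's in spirit: (\ref{rem2.1}) comes from Lemma~\ref{lem2}, i.e.\ (\ref{lem2eq})–(\ref{lem4eq}), and (\ref{dfu}) is exactly (\ref{epeq3}) applied with $(F,G)=(Y,\hn u)$ and $(n,v)$, which produces both the $\|\hn^2 Y,\hn^2 n\|_{H^{1/2+\ep}}\|\hn u,v\|_{L^2}$ piece and the $\|\hn n,\hn Y\|_{L^\infty}\|\hn u,v\|_{H^\ep}$ piece in one stroke.

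For (\ref{fv}) you go a different route, and this is where the argument is more brittle than it needs to be. You treat $[P_\mu,ng]\hn^2 u$ as an overloaded object that requires a derivative-redistribution identity $[P_\mu, ng]\hn^2 u = \hn([P_\mu, ng]\hn u) - [P_\mu, \hn(ng)]\hn u$, but the paper's Lemma~\ref{lem1} (estimate (\ref{com3})) is built precisely for the quantity $[P_\mu, F]\hn G$: one sets $G=\hn u$ so that $\hn G=\hn^2 u$, and (\ref{com3}) directly yields $\|\mu^\ep [P_\mu,ng]\hn^2 u\|_{\ell^2_\mu L^2}\les\|\hn(ng)\|_{L^\infty}\|\hn u\|_{H^\ep}$ by exploiting the finite-band property $\|\hn G_\la\|\les\la\|G_\la\|$ on the low-frequency part—no redistribution needed. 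Your reformulated first term $\hn([P_\mu,ng]\hn u)$ would then be estimated by (\ref{epeq3}), whose right-hand side contains the additional term $\|\hn^2(ng)\|_{H^{1/2+\ep}}\|\hn u\|_{L^2}$ that does \emph{not} appear in (\ref{fv}); so your route proves a weaker statement (with extraneous terms) rather than (\ref{fv}) as stated. The terms are not unbounded—(\ref{elp.1}) controls them—but you would owe a separate argument, and the lemma as quoted would not be established.

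A smaller point: the schematic bound you record, $\|\mu^{s}[P_\mu,f]\p g\|_{\ell^2_\mu L^2}\les\|\hn f\|_{H^1}\|\p g\|_{H^{s-1}}$, promises half a derivative more than the Appendix actually supplies; e.g.\ (\ref{lem4eq}) with weight $\mu^{1/2+\ep}$ controls $\|G\|_{H^\ep}=\|G\|_{H^{s-1/2}}$, not $\|G\|_{H^{s-1}}$. This does not affect the conclusion since (\ref{rem2.1}) only asks for $\|\hn u,v\|_{H^\ep}$, but as written your template estimate is not what Appendix III proves, and you should invoke (\ref{lem2eq})/(\ref{lem4eq}) directly rather than appeal to an optimistic interpolated form.
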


\begin{proof}
 (\ref{dfu}) follows from (\ref{epeq3}),
(\ref{fv}) follows from (\ref{com3}), and  (\ref{rem2.1}) follows from (\ref{lem2eq}) and (\ref{lem4eq}).
\end{proof}

\begin{lemma}\label{dfu.1}
For $0<\ep<1/2$ there hold
\begin{align}
&\|\mu^{1+\ep} \hn F_{U^\mu}\|_{l_\mu^2 L_x^2}+\|\mu^\ep \hn F_{U_1^\mu}\|_{l_\mu^2 L_x^2}
     \les\|\hn^2 F_u\|_{H^\ep}+\I(\ep, u,v), \label{1dfu1}\\
&\|\mu^\ep F_{V_1^\mu}\|_{l_\mu^2 L_x^2}
    \les (\|\hn g\|_{L_x^\infty}+1)\|\hn^2 u\|_{H^\ep}+ \|\hn F_v\|_{H^\ep}+\I(\ep,u,v)+\|\hn u, v\|_{H^\ep}, \label{1dfu.2}
\end{align}
where
$$
\I(\ep, u,v)=\|\hn^2 Y, \hn^2 n \|_{H^{\f12+\ep}}\|\hn^2 u,\hn v\|_{L_x^2}
+\|\hn Y, \hn n\|_{L_x^\infty}\|\hn u, v\|_{H^{1+\ep}}.
$$
\end{lemma}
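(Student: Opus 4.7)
The plan is to expand $F_{U^\mu}$, $F_{U_1^\mu}$ and $F_{V_1^\mu}$ using (\ref{puuv1}) together with (\ref{fu1v1}) for $F_{U_1}, F_{V_1}$, and then reduce each resulting piece to the commutator and product estimates established in Appendix III (Section \ref{apiii}), combined with the regularity of the lapse and shift supplied by Proposition \ref{eplem}. Throughout, I would commute $\hn$ past $P_\mu$ freely (modulo smooth Christoffel symbols), use the Littlewood-Paley identity $\|\mu^\ep P_\mu F\|_{l_\mu^2 L_x^2}\approx\|F\|_{H^\ep}$, and match each schematic product against one of $\|\hn^2 F_u\|_{H^\ep}$, $\|\hn F_v\|_{H^\ep}$, $\|\hn u, v\|_{H^\ep}$, or the quantity $\I(\ep,u,v)$.

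For the first part of (\ref{1dfu1}), $\|\mu^{1+\ep}\hn F_{U^\mu}\|_{l_\mu^2 L_x^2}$ is a one-derivative upgrade of (\ref{dfu}). The commutator pieces $\hn[P_\mu,Y^m]\p_m u$ and $\hn[P_\mu,n]v$ I would handle by the natural one-derivative analogue of (\ref{epeq3}), of the schematic form $\|\mu^{1+\ep}\hn[P_\mu,f]h\|_{l_\mu^2 L_x^2}\les \|\hn^2 f\|_{H^{\f12+\ep}}\|h\|_{L^2}+\|\hn f\|_{L^\infty}\|\hn h\|_{H^\ep}$ applied with $f\in\{n,Y\}$ and $h\in\{u,v\}$; Proposition \ref{eplem} then places these contributions inside $\I(\ep,u,v)$, while the remainder $\hn P_\mu F_u=P_\mu\hn F_u$ gives $\|\hn^2 F_u\|_{H^\ep}$. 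The second part, $\|\mu^\ep\hn F_{U_1^\mu}\|_{l_\mu^2 L_x^2}$, repeats the commutator analysis with $u,v$ replaced by $\hn u,\hn v$, and then reduces the $P_\mu\hn F_{U_1}$ piece to bounding $\|\hn F_{U_1}\|_{H^\ep}$. Expanding $\hn F_{U_1}$ from (\ref{fu1v1}) produces the products $\hn^2 Y\c\hn u$, $\hn Y\c\hn^2 u$, $\hn^2 n\c v$, $\hn n\c\hn v$, $\hn^2 F_u$, and $\hn(Y\c\hR\c u)$; by (\ref{epeq1}), Lemma \ref{epeq2}, and (\ref{elp.1}), each is absorbed into either $\I(\ep,u,v)$, $\|\hn^2 F_u\|_{H^\ep}$, or a curvature remainder controlled by (\ref{er3}).

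For the estimate (\ref{1dfu.2}) on $\|\mu^\ep F_{V_1^\mu}\|_{l_\mu^2 L_x^2}$ the same template applies, with one genuinely new ingredient: the top-order commutator $[P_\mu,ng]\hn^2\hn u$, which I would control by the $L^\infty$-type commutator inequality (\ref{com3}), yielding $\|\hn(ng)\|_{L^\infty}\|\hn^2 u\|_{H^\ep}\les(\|\hn g\|_{L^\infty}+1)\|\hn^2 u\|_{H^\ep}$ after splitting off the $\|\hn n\|_{L^\infty}$ contribution into $\I$. The commutator $[P_\mu,Y^m]\p_m\hn v$ is treated as in (\ref{rem2.1}) and fits into $\I$. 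For $P_\mu F_{V_1}$, after reducing to $\|F_{V_1}\|_{H^\ep}$, the delicate term $\hn n\c\hdt u$ is bounded by $\|\hn n\|_{L^\infty}\|\hn^2 u\|_{H^\ep}$ via (\ref{epeq1}) and goes into $\I$; $\hn F_v$ supplies the explicit $\|\hn F_v\|_{H^\ep}$ term; $\hn Y^m\hn_m v$ and $Y\c\hR\c v$ fit $\I$; and $n\hn(\hR\c u)$ contributes the $\|\hn u,v\|_{H^\ep}$ tail thanks to (\ref{er3}). The main obstacle will be the bookkeeping across these many schematic products: each must be split so that the factor carrying the higher fractional derivative lands in a norm where Proposition \ref{eplem} supplies a uniform bound, while the other factor is asked only for $L^2$ or $H^\ep$ control consistent with $\I(\ep,u,v)$; the wrong derivative distribution would silently demand $H^{1+\ep}$ regularity on $\hn^2 u$ or $\hn v$ and break the estimate.
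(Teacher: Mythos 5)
Your outline matches the paper's structure closely: expand the commutator terms from (\ref{puuv1}) and (\ref{fu1v1}), treat the top-order commutators by one-derivative analogues of the Appendix III lemmas, and absorb the lapse/shift products into $\I(\ep,u,v)$ using Proposition \ref{eplem}. The first part of (\ref{1dfu1}), via what you call the one-derivative upgrade of (\ref{epeq3}), is indeed how the paper proceeds (this is (\ref{comm10})), and your treatment of the top-order $[P_\mu,ng]$ commutator via (\ref{com3}) is also the paper's treatment.

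The gap is in the lemma you choose for the remaining product terms. You invoke (\ref{epeq1}), i.e.\ $\|\La^\ep(F\c\hn G)\|_{L^2}\les \|F\|_{L^\infty}\|G\|_{H^{1+\ep}}+\|G\|_{L^\infty}\|F\|_{H^{1+\ep}}$, for products such as $\hn n\c\hdt u$, $\hn^2 Y\c\hn u$, and the pieces of $\hn F_{U_1}$. This estimate places $L^\infty$ on both factors symmetrically, which is exactly the wrong derivative distribution you warn against in your closing sentence: applied to $\hn n\c\hdt u$ with $F=\hn n$, $G\sim\hn u$, the second term on the right becomes $\|\hn u\|_{L^\infty}\|\hn n\|_{H^{1+\ep}}$, and $\|\hn u\|_{L^\infty}$ is not controllable for a general pair $(u,v)$ — it does not appear anywhere on the right of (\ref{1dfu.2}). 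Similarly, for $\hn^2 Y\c\hn u$ one would need either $\|\hn^2 Y\|_{L^\infty}$ or $\|\hn^2 Y\|_{H^{1+\ep}}$, and Proposition \ref{eplem} only supplies $\|\hn^2 Y\|_{H^{\f12+\ep}}$. What is needed are the asymmetric estimates that load the higher fractional regularity onto the coefficient while asking only $L^2$ or $H^\ep$ on $u,v$: Lemma \ref{prd3} ($\|\mu^\ep[P_\mu,F]G\|_{l_\mu^2L^2}\les\|F\|_{L^\infty}\|G\|_{H^\ep}+\|\hn F\|_{H^{\f12+\ep}}\|G\|_{L^2}$) for $\hn n\c\hdt u$, $\hn Y\c\hn v$, etc., and (\ref{prodd.1}) from Lemma \ref{lem22} for the $\hn F_{U_1}$ pieces such as $\hn(\hn Y\c\hn u)$. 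With these substitutions each term lands in $\I(\ep,u,v)$ or $\|\hn F_v\|_{H^\ep}$ as you intend; without them the argument silently demands $L^\infty$ or $H^{1+\ep}$ control on second derivatives that are not available. Lemma \ref{epeq2}, which you also cite, is a trilinear estimate with $H^1$ on all factors and is likewise not the right tool for these bilinear terms.
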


\begin{proof}
The first part of (\ref{1dfu1}) follows from (\ref{comm10}) and (\ref{puuv}). In order to prove
the second part of (\ref{1dfu1}), we may use the same argument for deriving (\ref{dfu}) to obtain
\begin{equation*}
\|\mu^\ep \hn F_{U_1^\mu}\|_{l_\mu^2 L_x^2}\les \I(\ep, u,v)+\|\mu^\ep \hn P_\mu F_{U_1}\|_{l_\mu^2 L_x^2}.
\end{equation*}
In view of (\ref{fu1v1}), we apply (\ref{prodd.1}) to obtain
\begin{align*}
\|\mu^\ep \hn P_\mu F_{U_1}\|_{l_\mu^2 L_x^2} &\les \I(\ep, u, v) + \|\mu^\ep \hn P_\mu \hn  F_u\|_{l_\mu^2 L_x^2}
+\|\La^\ep \hn(Y\cdot \hat{R}\cdot u)\|_{L^2}\\
&\les \I(\ep, u,v) + \|\mu^\ep \hn P_\mu \hn  F_u\|_{l_\mu^2 L_x^2}.
\end{align*}
Combining the above two estimates, we therefore obtain the second part of (\ref{1dfu1}).

Next we prove (\ref{1dfu.2}). We first apply Lemma \ref{lem1}  to derive that
\begin{align}\label{9.30.1}
\|\mu^\ep F_{V_1^\mu}\|_{l_\mu^2 L_x^2}
   &\les \|\hn(ng)\|_{L^\infty} \|\hn^2 u\|_{H^\ep} +\|\hn Y\|_{L^\infty} \|\hn v\|_{H^\ep}  + \|\mu^\ep P_\mu F_{V_1}\|_{l_\mu^2 L_x^2}.
\end{align}
By using Lemma \ref{prd3}  we have
\begin{align*}
\|\mu^\ep P_\mu F_{V_1}\|_{l_\mu^2 L_x^2}
   &\les \|\hn Y\|_{L_x^\infty}\|\hn v\|_{H^{\ep}}+\|\hn^2 Y\|_{H^{\f12+\ep}}\|\hn v\|_{L_x^2}
    +\|\hn n\|_{L_x^\infty}\|\hdt u\|_{H^{\ep}}\\
   &\quad\, +\|\hn^2 n\|_{H^{\f12+\ep}}\|\hdt u\|_{L_x^2} +\|n \hat{R}\c \hn u\|_{H^\ep}+\|Y \hat{R}\c v\|_{H^\ep}+\|\hn F_v\|_{H^\ep}.
\end{align*}
With the help of Lemma \ref{prd3} and $\|\ti \pi\|_{H^1}\le C$, we obtain
\begin{align*}
\|\hdt u\|_{H^\ep}&\les \|\mu^\ep g^{ij} P_\mu \hn^2_{ij} u\|_{l_\mu^2 L_x^2}
+ \|\mu^\ep [P_\mu, g^{ij}] \hn^2_{ij} u\|_{l_\mu^2 L_x^2}\\
&\les \|\hn^2 u\|_{H^\ep}+\|\hn g\|_{H^{\f12+\ep}}\|\hn^2 u\|_{L_x^2}\les \|\hn^2 u\|_{H^\ep}, \\
\|n \hat{R} \c \hn u\|_{H^\ep}&\les \|\mu^\ep[P_\mu, n\hat{R}]\hn u\|_{l_\mu^2 L_x^2}+\|\hn u\|_{H^\ep}\\
&\les \|\hn u\|_{H^{\ep}}+\|\hn (n \hat{R})\|_{H^{\f12+\ep}}\|\hn u\|_{L_x^2}\les \|\hn u\|_{H^\ep}
\end{align*}
Similarly, we have with the help of $\|\hn  Y\|_{H^1}\le C$ that
$
\|Y \c \hat{R} \c v\|_{H^\ep}\les \|v\|_{H^\ep}.
$
Therefore
\begin{align*}
\|\mu^\ep P_\mu F_{V_1}\|_{l_\mu^2 L_x^2}
   &\les \I(\ep, u,v) +\|\hn F_v\|_{H^\ep} + \|\hn u\|_{H^\ep}+\| v\|_{H^\ep}.
\end{align*}
Combining this estimate with (\ref{9.30.1}) we thus obtain (\ref{1dfu.2}).
\end{proof}

In the following we will derive the estimates on $\|\hn^2 g\|_{H^\ep}$ and $\|\hn k\|_{H^\ep}$. Recall
the energy $\E^{(0)}(u,v)$ defined in (\ref{energy0}). Let $P_\mu$ be the Littlewood-Paley projection with frequency size
$\mu$, we can introduce
\begin{equation*}
\ei_\mu(t)= \ei_\mu(u,v):=\E^{(0)}(P_\mu \hn u, P_\mu \hn v),
\end{equation*}
and  the energy
\begin{equation}\label{epengy.1}
\E^{(1+\ep)}(u,v)(t) := \sum_{\mu>1}\mu^{2\ep}\E_\mu^{(1)}(u,v)(t)+\sum_{i=0}^1\E^{(i)}(u,v)(t).
\end{equation}
In view of (\ref{e0eng}) we can derive that
\begin{align}\label{eimu}
\p_t\ei_\mu(t) &\le\left(\|k, \hn Y\|_{L^\infty}+1\right)\ei_\mu(t)+\|\hn F_{U_1^\mu}\|_{L^2} \|\hn U_1^\mu\|_{L^2}
 +\|V_1^\mu\|_{L^2}\|F_{V_1^\mu}\|_{L^2}.
\end{align}
Hence, by the Cauchy-Schwartz inequality, we obtain
\begin{align}
\p_t\left(\sum_{\mu}\mu^{2\ep}\ei_\mu(t)\right)
   &\le \left(\|k, \hn Y\|_{L^\infty}+1\right)\sum_{\mu}\mu^{2\ep}\ei_\mu(t)\nn\\
   &+\|\mu^\ep \hn F_{U_1^\mu} \|_{l_\mu^2 L^2}\|\mu^\ep \hn U_1^\mu\|_{l_\mu^2 L^2}+
\|\mu^\ep V_1^\mu\|_{l_\mu^2 L^2}\|\mu^\ep F_{V_1^\mu}\|_{l_\mu^2 L^2}\label{engep.1}.
\end{align}

We will apply (\ref{engep.1}) to the pair $(u,v)=(g, -2k)$ and (\ref{fuv11}) to derive energy estimates.
Lemma \ref{dfu.1} will be used to estimate the terms $\|\mu^\ep \hn F_{U_1^\mu} \|_{l_\mu^2 L^2}$
and $\|\mu^\ep F_{V_1^\mu}\|_{l_\mu^2 L^2}$ which involve terms related to $F_u$ and $F_v$.
The following result gives such estimates.

\begin{lemma}\label{L10.1.1}
For $0<\ep<1/2$ there hold
\begin{align}
\|\La^\ep \hn^2 F_u\|_{L_x^2}+\|\La^\ep \hn F_v\|_{L_x^2}
&\les (\|\hn g,\hn Y, k,\hn n\|_{L_x^\infty}+1)\|\hn g, k\|_{H^{1+\ep}}, \label{estfu.1}\\
\|\La^{\f12+\ep}\hn F_u\|_{L_x^2} &\les\|\hn g, k\|_{H^{1+\ep}}+1. \label{estfu.2}
\end{align}
\end{lemma}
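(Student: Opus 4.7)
My approach is to expand $\hn^2 F_u$ and $\hn F_v$ by the Leibniz rule, classify each resulting term either as an error of the form (\ref{ertp}) or as a smooth factor $\bg$ times $\hn^3 Y$ or $\hn^3 n$, and then invoke Proposition \ref{eplem} together with a standard fractional product rule (Lemma \ref{prd3}) to conclude.

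For $F_u=g\cdot\hn Y$, Leibniz gives
\[
\hn^2 F_u = g\cdot\hn^3 Y + \hn g\cdot\hn^2 Y + \hn^2 g\cdot\hn Y,
\]
where the last two pieces are of $\er_1$ form and the first is $\bg$ times $\hn^3\ti\pi$. For $F_v$, writing $\nab^2 n=\hn^2 n+U\cdot\hn n$ with $U=g\cdot\hn g$, and expanding the remaining two factors of $F_v$ by Leibniz, yields
\[
\hn F_v = 2\bigl(\hn^3 n+U\cdot\hn^2 n+\hn U\cdot\hn n\bigr)+\hn n\cdot k\ast k+n\,\hn k\cdot k+\hn k\cdot\hn Y+k\cdot\hn^2 Y,
\]
which similarly decomposes cleanly into $\bg\cdot\hn^3 n$, $\er_1$, and $\er_2$ blocks. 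Applying (\ref{ertp1}) and (\ref{ertp2}) to the $\er_i$ pieces, and applying (\ref{ertp3}) together with the multiplier bound $\|\bg\cdot h\|_{H^\ep}\les\|\bg\|_{H^2}\|h\|_{H^\ep}$ (a special case of Lemma \ref{prd3}, since $H^2\hookrightarrow L^\infty$ in three dimensions) to the $\bg\cdot\hn^3\ti\pi$ pieces, produces (\ref{estfu.1}).

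For (\ref{estfu.2}) we have $\hn F_u = \hn g\cdot\hn Y + g\cdot\hn^2 Y$. The term $g\cdot\hn^2 Y$ is handled by Kato--Ponce, pairing $\|g\|_{L^\infty}\les 1$ with the bound $\|\La^{1/2+\ep}\hn^2 Y\|_{L^2}\les\|\hn g,k\|_{H^{1+\ep}}+1$ from (\ref{elp.1}); the commutator piece is tamed by $\|\La^{1/2+\ep}g\|_{L^6}\les\|g\|_{H^2}\les 1$ and Sobolev against $\|\hn^2 Y\|_{L^3}$. The delicate term is $\hn g\cdot\hn Y$: a direct Kato--Ponce split with an $L^\infty$ factor would force a quadratic dependence on $\|\hn g,k\|_{H^{1+\ep}}$ via $\|\hn Y\|_{L^\infty}\les\|\hn Y\|_{H^{3/2+\ep}}$. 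I propose instead the Hölder split
\[
\|\La^{1/2+\ep}(\hn g\cdot\hn Y)\|_{L^2}\les \|\hn g\|_{L^6}\|\La^{1/2+\ep}\hn Y\|_{L^3}+\|\La^{1/2+\ep}\hn g\|_{L^3}\|\hn Y\|_{L^6},
\]
paying the $L^6$ factors through the uniform background bound $\|\hn g,\hn Y\|_{H^1}\les 1$ from Proposition \ref{cor2} and the $L^3$ factors through the 3D Sobolev embedding $\dot H^{1+\ep}\hookrightarrow \dot W^{1/2+\ep,3}$, which combined with (\ref{elp.1}) gives the linear bound $\les\|\hn g,k\|_{H^{1+\ep}}+1$.

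The main obstacle is precisely securing the \emph{linear}, rather than quadratic, dependence on $\|\hn g,k\|_{H^{1+\ep}}$ in (\ref{estfu.2}): this estimate ultimately drives a Grönwall loop for the fractional energy $\E^{(1+\ep)}$ defined in (\ref{epengy.1}), and any quadratic behavior would break closure of the bootstrap. Routing the Kato--Ponce pairing through $L^6\times L^3$ rather than $L^\infty\times L^2$, so as to pay the singular factor against the \emph{uniformly bounded} $H^1$ background of $\hn g,\hn Y$ instead of against $\|\hn Y\|_{L^\infty}$, is the key trick that makes this work.
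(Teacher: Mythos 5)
Your proof of (\ref{estfu.1}) follows the paper's argument exactly: expand $\hn^2 F_u$ and $\hn F_v$ symbolically into a leading $\bg\cdot(\hn^3 Y,\hn^3 n)$ block plus $\er_1$ and $\er_2$ pieces, then quote (\ref{ertp1})--(\ref{ertp3}), absorbing the smooth $\bg$ factor via Lemma \ref{prd3} and $\|\bg\|_{H^2}\le C$. That part is the same proof.

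For (\ref{estfu.2}) your route is genuinely different from the paper's, and both are correct. The paper keeps $\hn F_u=\hn(g\cdot\hn Y)$ intact and applies Lemma \ref{lem22} with $\ep$ replaced by $\f12+\ep$ (admissible since $\ep<1/2$), giving at once
$\|\La^{\f12+\ep}\hn(g\cdot\hn Y)\|_{L^2}\les\|\hn g\|_{H^{1+\ep}}\|\hn Y\|_{H^1}+\|g\|_{L^\infty}\|\hn Y\|_{H^{3/2+\ep}}$,
which is linear in $\|\hn g,k\|_{H^{1+\ep}}$ after (\ref{cor3}) and (\ref{elp.1}). You instead distribute the outer derivative, $\hn F_u=\hn g\cdot\hn Y+g\cdot\hn^2 Y$, and then apply Kato--Ponce with an $L^6\times L^3$ pairing so that the singular factor $\La^{1/2+\ep}$ always lands on the term whose remaining Lebesgue exponent can be absorbed by the uniformly controlled $H^1$ background; you then route through the Sobolev chain $\dot H^{1+\ep}\hookrightarrow\dot W^{1/2+\ep,3}$ and (\ref{elp.1}). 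What the paper's Lemma \ref{lem22} buys is precisely what you reconstruct by hand: it was built so that the high-order Sobolev norm of one factor is paired against the uniformly bounded $H^1$ (or $L^\infty$) norm of the other, avoiding any appeal to a pointwise-in-$t$ $L^\infty$ bound on $\hn g$ or $\hn Y$ (which the bootstrap only gives in $L^2_t L^\infty_x$). Your flagging of that issue is the right instinct, although I would note that its downstream consequence is milder than you suggest, since by the time (\ref{estfu.2}) is actually invoked (in Section \ref{sec4}) the bound (\ref{eng4}) already gives $\|\hn g,k\|_{H^{1+\ep}}\le C$, so even a quadratic estimate would not break the argument there; the truly Gr\"onwall-sensitive input is (\ref{estfu.1}), not (\ref{estfu.2}).
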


\begin{proof}
Recall $F_u$ and $F_v$ from (\ref{fuv11}). By straightforward calculation, symbolically we have
\begin{align*}
\hn^2 F_u=g\c \hn^3 Y+ \er_1+\er_2, \qquad \hn F_v=\hn \nab^2 n+ \er_1+\er_2.
\end{align*}
where $\er_1$ and $\er_2$ denote the terms introduced in (\ref{ertp}). Then (\ref{estfu.1}) follows
from Proposition \ref{eplem}.
Applying  Lemma \ref{lem22} to $F=g$ and $G= \hn Y$, and  using (\ref{elp.1}) and (\ref{cor3}) we obtain
\begin{align*}
\|\La^{\ep+\f12} \hn(g\c \hn Y)\|_{L_x^2}&\les \|\hn g\|_{H^{1+\ep}}\|\hn Y\|_{H^1}
+\| g\|_{L^\infty } \|\hn^2 Y\|_{H^{\f12+\ep}} \les \|\hn g, k\|_{H^{1+\ep}}+1
\end{align*}
which gives (\ref{estfu.2}).
\end{proof}

\begin{proposition}\label{eg1}
For $0<\ep\le s-2$ there holds
\begin{align}
\|\hn^2 g (t)\|_{H^\ep} +\|\hn k(t)\|_{H^\ep} \le C \label{eng4}
\end{align}
and for any pair $(u,v)$ satisfying (\ref{lu3}) there holds
\begin{align}
\E^{(1+\ep)}(u,v)(t)&\les \E^{(1+\ep)}(t_0)
+\int_{t_0}^t \left(\| \hn u \|_{H^{1+\ep}}\| \hn F_u\|_{H^{1+\ep}}+
\|  v\|_{H^{1+\ep}}\|F_v\|_{H^{1+\ep}}\right). \label{geneng}
\end{align}
\end{proposition}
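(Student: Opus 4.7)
The plan is to combine the frequency-weighted energy identity (\ref{engep.1}) with the source-term estimates of Lemma~\ref{dfu.1} and Lemma~\ref{L10.1.1}, and close the argument via Gronwall's inequality, drawing on the bootstrap (\ref{BA2}) together with the elliptic bounds (\ref{q3}) and (\ref{shift3.3}) for $\hn n$ and $\hn Y$. The proof naturally splits in two stages: first I specialize to the pair $(u,v)=(g,-2k)$ to establish the absolute bound (\ref{eng4}); then, with this in hand, the elliptic coefficients appearing on the right of Lemma~\ref{dfu.1} become time-integrable, and (\ref{geneng}) follows for an arbitrary pair $(u,v)$ satisfying (\ref{lu3}).

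For (\ref{eng4}): with $(u,v)=(g,-2k)$ the sources are given by (\ref{fuv11}). Inserting Lemma~\ref{dfu.1} into (\ref{engep.1}) and applying Cauchy--Schwarz yields a differential inequality of the form
\[
\p_t \E^{(1+\ep)}(t) \les (\|k,\hn Y\|_{L_x^\infty}+1)\E^{(1+\ep)}(t)+ S(t)\sqrt{\E^{(1+\ep)}(t)},
\]
where $S(t)$ collects $\|\hn^2 F_u\|_{H^\ep}+\|\hn F_v\|_{H^\ep}+\I(\ep,g,k)+(\|\hn g\|_{L_x^\infty}+1)\|\hn^2 g\|_{H^\ep}$. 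Lemma~\ref{L10.1.1} controls the $F_u,F_v$ pieces by $(\|\hn g,\hn Y,k,\hn n\|_{L_x^\infty}+1)\|\hn g,k\|_{H^{1+\ep}}$, while Proposition~\ref{eplem} converts the $\|\hn^2 Y,\hn^2 n\|_{H^{1/2+\ep}}$ factor inside $\I(\ep,g,k)$ into $\|\hn g,k\|_{H^{1+\ep}}+1\les\sqrt{\E^{(1+\ep)}}+1$. The resulting inequality takes the form
\[
\p_t \E^{(1+\ep)}(t)\les (A(t)+1)\bigl(\E^{(1+\ep)}(t)+1\bigr),\qquad A:=\|\hn g,k,\hn Y,\hn n\|_{L_x^\infty}.
\]
By (\ref{BA2}) together with the $L_t^2$ bound on $\|\hn n,\hn Y\|_{L_x^\infty}$ supplied by (\ref{q3}) and (\ref{shift3.3}) (take $p=6$), the function $A$ lies in $L^1([t_0,t_0+T])$ with a universal bound, and Gronwall's inequality delivers (\ref{eng4}).

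For (\ref{geneng}): I rerun the argument for a general $(u,v)$, keeping $F_u,F_v$ as external inhomogeneities. Using $\|\hn^2 F_u\|_{H^\ep}\les \|\hn F_u\|_{H^{1+\ep}}$, $\|\hn F_v\|_{H^\ep}\le \|F_v\|_{H^{1+\ep}}$ and $\|\hn u\|_{H^{1+\ep}}+\|v\|_{H^{1+\ep}}\les \sqrt{\E^{(1+\ep)}(u,v)}$, Lemma~\ref{dfu.1} yields
\[
\p_t \E^{(1+\ep)}(u,v)\les B(t)\E^{(1+\ep)}(u,v)+\sqrt{\E^{(1+\ep)}(u,v)}\,\bigl(\|\hn F_u\|_{H^{1+\ep}}+\|F_v\|_{H^{1+\ep}}\bigr),
\]
with $B(t):=\|\hn g,\hn Y,\hn n,k\|_{L_x^\infty}+\|\hn^2 Y,\hn^2 n\|_{H^{1/2+\ep}}+1$. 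Proposition~\ref{eplem} combined with the already-established (\ref{eng4}) makes the $H^{1/2+\ep}$ piece of $B$ uniformly bounded in $t$, so $B\in L^1([t_0,t_0+T])$. A standard Gronwall argument applied to $\sqrt{\E^{(1+\ep)}}$ (dividing through by $2\sqrt{\E^{(1+\ep)}}$ and integrating) produces exactly (\ref{geneng}).

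The principal difficulty is the high-order commutator $[P_\mu, ng]\hn^2(\hn u)$ concealed in $F_{V_1^\mu}$: its estimate in Lemma~\ref{dfu.1} forces $\|\hn^2(ng)\|_{H^{1/2+\ep}}$ into the analysis, and (\ref{ricid}) then turns this into a demand for $\|\hn k\|_{H^\ep}$, i.e.\ the very quantity one aims to bound. The circularity is resolved thanks to Proposition~\ref{eplem}, which shows that the deficit $\|\hn^2 Y,\hn^2 n\|_{H^{1/2+\ep}}$ is controlled by $\|\hn g,k\|_{H^{1+\ep}}$ with half a derivative to spare; this extra half-derivative of room is precisely what allows the offending term to be absorbed into the Gronwall coefficient rather than appearing as an external source, provided the bootstrap interval $T$ keeps $A(t)$ integrable in time, as ensured by (\ref{BA2}), (\ref{q3}) and (\ref{shift3.3}).
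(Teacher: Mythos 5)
Your proposal is correct and follows the paper's argument essentially step for step: feed Lemmas~\ref{dfu.1} and~\ref{L10.1.1} together with Proposition~\ref{eplem} into the frequency-weighted energy identity (\ref{engep.1}), first for $(g,-2k)$ to get (\ref{eng4}), then use (\ref{eng4}) and (\ref{elp.1}) (i.e.\ (\ref{ell.1})) to make $\I(\ep,u,v)\les\|\hn u,v\|_{H^{1+\ep}}$ for general $(u,v)$ and close with Gronwall; deriving the $L^1_t$ integrability of $\|\hn g,k,\hn Y,\hn n\|_{L_x^\infty}$ from (\ref{BA2}), (\ref{q3}) and (\ref{shift3.3}) rather than quoting (\ref{ba1}) directly is a harmless (and slightly more self-contained) variant of the paper's route. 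Two small points to tidy: (i) your final Gronwall step, dividing through by $2\sqrt{\E^{(1+\ep)}}$, yields $\E^{(1+\ep)}(t)\les\E^{(1+\ep)}(t_0)+\bigl(\int(\|\hn F_u\|_{H^{1+\ep}}+\|F_v\|_{H^{1+\ep}})\bigr)^2$, which is equivalent in applications but is not literally the form (\ref{geneng}); to land on (\ref{geneng}) as stated, keep the source term as the product $\|\mu^\ep\hn U_1^\mu\|_{l_\mu^2L^2}\|\mu^\ep\hn F_{U_1^\mu}\|_{l_\mu^2L^2}+\|\mu^\ep V_1^\mu\|_{l_\mu^2L^2}\|\mu^\ep F_{V_1^\mu}\|_{l_\mu^2L^2}$, integrate, and Gronwall only the $B(t)\E^{(1+\ep)}$ term; and (ii) the identity (\ref{engep.1}) governs only $\sum_\mu\mu^{2\ep}\E^{(1)}_\mu$, so one must also handle the lower pieces $\E^{(0)},\E^{(1)}$ of (\ref{epengy.1}) — for (\ref{eng4}) these are already bounded by (\ref{e10}), and for (\ref{geneng}) they satisfy compatible inequalities via (\ref{e0eng}) and (\ref{fu1v1}), as in the paper's (\ref{ee1}).
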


\begin{proof}
Now we consider the energy defined by (\ref{epengy.1}) for the pair $(u,v)=(g, -2k)$
by using (\ref{engep.1}). In view of (\ref{q2}), Propositions \ref{thm1} and Proposition \ref{cor2}, we have
\begin{equation}\label{e10}
 \E^{(1)}+\E^{(0)}\le C.
 \end{equation}
 Combining this fact with  (\ref{elp.1}), we have
$$
\I(\ep, u, v)\les \left(\|\hn Y, \hn n\|_{L^\infty} +1\right) \sqrt{\E^{(1+\ep)}(u, v)}.
$$
This together with Lemma \ref{dfu.1}, Lemma \ref{L10.1.1} and (\ref{engep.1}) implies
\begin{align*}
\p_t\left(\sum_{\mu}\mu^{2\ep}\ei_\mu(t)\right)
   &\le \left(\|k, \hn Y\|_{L^\infty}+1\right)\sum_{\mu}\mu^{2\ep}\ei_\mu(t)\nn\\
   &+\left(\|k, \hn g, \hn Y, \hn n\|_{L^\infty} +1\right) \E^{(1+\ep)}(u, v).
\end{align*}
which combined with (\ref{e10}) gives
\begin{align}
\p_t\left(\sum_{\mu}\mu^{2\ep}\ei_\mu(t)\right) \les \left(\|k, \hn g, \hn Y, \hn n\|_{L^\infty} +1\right) \left(\sum_{\mu}\mu^{2\ep}\ei_\mu(t)+1\right).
\end{align}
By the bootstrap assumption (\ref{ba1}), we obtain $\E^{(1+\ep)}(u,v)(t)\les \E^{(1+\ep)}(u,v)(t_0)+1$
for $t_0\le t\le t_0+T$ which implies (\ref{eng4}).

It remains to prove  (\ref{geneng}). From (\ref{eng4}), (\ref{elp.1}) and Sobolev embedding, it follows that
\begin{align}\label{ell.1}
\|\hn Y, &\hn n, ne_0 (n)\|_{L^\infty}
+\|\La^{1/2+\ep}(\hn^2 Y, \hn^2 n, \hn (ne_0(n)))\|_{L^2}\les \|\hn(\hn g, k)(t_0)\|_{H^\ep}+1.
\end{align}
Thus for any pair $(u,v)$ satisfying (\ref{lu3}) there holds $\I(\ep,u,v)\les \|\hn u, v\|_{H^{1+\ep}}$
on $\Sigma_t$. We will rely on Lemma \ref{dfu.1} to treat $\|\mu^\ep \hn F_{U_1^\mu} \|_{l_\mu^2 L^2}$ and
$\|\mu^\ep V_1^\mu\|_{l_\mu^2 L^2}$ in (\ref{engep.1}). We then obtain from (\ref{engep.1}) that
\begin{align}
\p_t\left(\sum_{\mu}\mu^{2\ep}\ei_\mu(t)\right) &\le \left(\|k, \hn Y,
   \hn g\|_{L^\infty}+1\right)\sum_{\mu}\mu^{2\ep}\ei_\mu(t)\nn\\
   &+(\|\hn^2 F_u\|_{H^\ep}+
\|\hn F_v\|_{H^\ep}+\|\hn u, v\|_{H^{1}})\sum_{\mu}\mu^{2\ep}\ei_\mu(t)\label{engep.12}.
\end{align}
 Now we consider lower order energy $\E^{(1)}(u,v)$,
 by using (\ref{fu1v1}) for the pair $(U_1, V_1)=(\hn u, \hn v)$ and
\begin{equation*}
\|\hn F_{U^1}, F_{V^1}\|_{L_x^2}\les \|\hn u, v\|_{H^1}+\|u\|_{L_x^2}+\|\hn^2 F_u\|_{L_x^2}+\|\hn F_v\|_{L_x^2}
\end{equation*}
which can be derived by using Sobolev embedding and (\ref{ell.1}).  By (\ref{e0eng}), we derive
\begin{align}
\p_t \E^{(1)}(u,v)(t) &\les \left(\|k, \hn Y\|_{L^\infty}+1\right)\E^{(1)}(u,v)(t)\nn\\
&+\|\hn^2 u, \hn v\|_{L_x^2}(\|\hn(\hn F_u, F_v)\|_{L^2}+\|\hn u, v\|_{H^1}+\|u\|_{L_x^2}).\label{ee1}
\end{align}
For $\E^{(0)}(u,v)(t)$, we employ  (\ref{e0eng}) again.
Combining (\ref{engep.12}), (\ref{ee1}), (\ref{e0eng}), Lemma \ref{dfu.1} and the
Gronwall inequality gives (\ref{geneng}).
\end{proof}

\subsubsection{Geometric wave operator}

For the pair $(u, v)$ satisfying (\ref{lu3}), we can show that $u$ satisfies the geometric wave equation
\begin{equation}\label{wave1}
n^2 \Box_\bg u=-(n F_v+n e_0 (F_u))+e_0(n) F_u-n^2 \pi_{0a} \nab^a u+n^2
\Tr k \, e_0 (u).
\end{equation}
Indeed,  relative to an orthonormal frame ${e_0:=\bT, e_j, j=1,2,3}$,
by  straightforward calculation we have
\begin{eqnarray*}
\bg^{ij}\bd^2_{ij}u=\Delta u+\Tr k \bd_\bT u, &&\, \bd_\bT \bd_\bT u
=e_0(e_0 (u))+\pi_{0j} \nab^j u.
\end{eqnarray*}
Since $\Box_\bg u=-\bd_\bT\bd_\bT u+\bg^{ij}\bd^2_{ij} u$, we obtain
\begin{equation}\label{eq2.1}
n^2 \Box_\bg u=-n^2 e_0(e_0 (u))+n^2 \hdt u -n^2 \pi_{0j} \nab^j u +n^2 \Tr k \, \bd_\bT u.
\end{equation}
In view of (\ref{lu3}) we have
\begin{align*}
n e_0(n e_0 (u)) =n e_0 (n)\c v+n^2 \hdt u+ n F_v+n e_0 (F_u).
\end{align*}
Combining this with (\ref{eq2.1}) and using (\ref{lu3}) we
obtain (\ref{wave1}) as desired.

Therefore, if $\psi$ is a solution of the geometric wave equation
\begin{equation}\label{wv2}
\Box_\bg \psi = W,
\end{equation}
we can check by (\ref{wave1}) that  $(u,v):=(\psi, e_0 (\psi))$ satisfies the hyperbolic system
\begin{equation}\label{wv3}
\begin{array}{lll}
e_0 (u) =v, &&\, ne_0 (v)=n \hdt u +F_v, \\
F_u=0,  &&F_v= -n W-n \pi_{0j}\nab^j u+n v \Tr k.
\end{array}
\end{equation}

\begin{lemma}
Let $\psi$ be a scalar function satisfying the geometric wave
equation $\Box_\bg \psi=0$. Then for $0<\ep\le s-2$ there holds
\begin{equation}\label{dfv}
\|\hn F_v\|_{H^{\ep}}\les \|\hn(\hn \psi, e_0(\psi))\|_{H^{\ep}}.
\end{equation}
\end{lemma}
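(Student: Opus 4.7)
The plan is to expand $F_v$ explicitly using $\Box_\bg\psi=0$ together with the CMCSH gauge condition $\Tr k=t$, and then bound $\hn F_v$ term by term in $H^\ep$ via the product estimates of Appendix III combined with the elliptic bounds of Proposition \ref{eplem} and (\ref{eng4}). From (\ref{wv3}) with $W=0$, and using that $\pi_{0j}$ is a smooth-coefficient multiple of $n^{-1}\hn_j n$ (since $\pi$ is the deformation tensor of $\bT$), one has
\[
F_v \;=\; -\,n\pi_{0j}\nab^j\psi\;+\;nt\,e_0(\psi)\;=\;\bg\cdot\hn n\cdot\hn\psi\;+\;\bg\cdot e_0(\psi),
\]
where $\bg$ denotes a product of $n^{\pm 1},g,Y$ factors which is bounded in $H^2\cap L^\infty$ by Proposition \ref{cor2}. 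Differentiating once and collecting terms,
\[
\hn F_v\;\sim\;\bg\cdot\hn^2 n\cdot\hn\psi\;+\;\bg\cdot\hn n\cdot\hn^2\psi\;+\;\bg\cdot\hn n\cdot e_0\psi\;+\;\bg\cdot\hn(e_0\psi)\;+\;\text{lower order},
\]
the ``lower order'' terms absorbing contributions involving $\hn\bg$, whose $L^\infty$ norm is under control.

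For the three contributions whose ``rough'' factor is $\hn n$, $n$, $k$ or $g$, I would apply the Moser-type product estimates from Appendix III (Lemma \ref{prd3}); all such coefficients lie in $L^\infty\cap H^{1+\ep}$ by (\ref{ell.1}) and (\ref{eng4}), so for instance
\[
\|\bg\cdot\hn n\cdot\hn^2\psi\|_{H^\ep}\;\les\;(\|\hn n\|_{L^\infty}+\|\hn n\|_{H^{1+\ep}})\,\|\hn^2\psi\|_{H^\ep}\;\les\;\|\hn(\hn\psi)\|_{H^\ep},
\]
and the two terms involving $e_0\psi$ and $\hn(e_0\psi)$ are treated identically, yielding the $\|\hn(e_0\psi)\|_{H^\ep}$ contribution on the right side (with $\|e_0\psi\|_{L^2}$ absorbed into the lower-order tail of the $H^\ep$ norm).

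The main obstacle is the sharp term $\hn^2 n\cdot\hn\psi$. Here $\hn^2 n$ is only controlled in $H^{\f12+\ep}$ through (\ref{elp.1}) (using $\|\hn g,k\|_{H^{1+\ep}}\le C$ from (\ref{eng4})), and $\hn\psi$ need not lie in $L^\infty$ in the regime $\ep<\f12$, so a naive H\"older split fails. For this I would invoke the three-dimensional duality-type product rule
\[
\|\La^\ep(FG)\|_{L^2}\;\les\;\|F\|_{H^{\f12+\ep}}\,\|G\|_{H^1},
\]
which is precisely the same trade already applied in the proof of (\ref{estfu.2}) via Lemma \ref{lem22}. Applied with $F=\hn^2 n$ and $G=\hn\psi$ it yields $\|\hn^2 n\cdot\hn\psi\|_{H^\ep}\les\|\hn\psi\|_{H^1}\les\|\hn(\hn\psi)\|_{H^\ep}$, which combines with the previous paragraph to close the desired bound. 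This sharp product step is the only delicate point; the remainder is routine bookkeeping with the Appendix III lemmas.
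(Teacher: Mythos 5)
Your proof is correct and follows essentially the same route as the paper, which applies (\ref{prodd.1}) (Lemma \ref{lem22}) directly to $\hn(n\pi_{0j}\nab^j u)$ — the two terms of that lemma are precisely your two product estimates for $\hn^2 n\cdot\hn\psi$ and $\hn n\cdot\hn^2\psi$ — and then invokes (\ref{ell.1}). The only cosmetic slip is that the coefficient norm actually required by Lemma \ref{prd3} (and supplied by (\ref{ell.1})) is $\|\hn^2 n\|_{H^{\f12+\ep}}$ rather than $\|\hn n\|_{H^{1+\ep}}$; this does not affect the argument.
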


\begin{proof}
Indeed,  in view of (\ref{prodd.1}) and $\Tr k=t$ we have
\begin{equation*}
\|\hn F_v\|_{H^\ep}\les \|\hn^2 n \|_{H^{\f12+\ep}}\|\hn^2 u\|_{L_x^2}
+\|\hn n \|_{L_x^\infty}\|\hn^2 u\|_{H^{\ep}}+\|\hn v\|_{H^\ep}.
\end{equation*}
Since $0<\ep\le s-2$, we have from (\ref{ell.1}) that
$\|\hn F_v\|_{H^\ep}\les \|\hn^2 u\|_{L_x^2}+\|\hn(\hn u, v)\|_{H^{\ep}}$ which gives the estimate.
\end{proof}


It is standard to derive for $\psi$  satisfying  $\Box_\bg \psi=0$ that
\begin{equation}\label{h1en}
\|\bd\psi(t)\|_{L^2}\les \|\bd\psi(t_1)\|_{L^2}, \quad t_0\le t_1\le t\le t_0+T.
\end{equation}
We now give the following energy estimate.

\begin{proposition}\label{geoeg}
Let $\psi$ be a scalar function satisfying the geometric wave
equation $\Box_\bg \psi=0$. Then for any $0<\ep<1/2$ and $t_0\le t_1\le t\le t_0+T$ there
hold the energy estimates
\begin{equation}\label{h1ena}
 \E^{(i)}(\psi, e_0(\psi))(t)\les \sum_{0\le j\le i}\E^{(j)}(\psi, e_0(\psi))(t_1), \quad i=0,1,
\end{equation}
and
\begin{equation*}
\E^{(1+\ep)}(\psi, e_0(\psi))(t)\les \E^{(1+\ep)}(\psi, e_0(\psi))(t_1)
\end{equation*}
\end{proposition}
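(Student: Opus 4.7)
The strategy is to recognize, via (\ref{wv3}), that $(u,v)=(\psi, e_0(\psi))$ is a pair of the type considered in Section 3.1, satisfying the first order hyperbolic system (\ref{lu3}) with $F_u=0$ and $F_v=-n\pi_{0j}\nab^j \psi + nv\,\Tr k$ (since $W=0$). All three estimates then reduce to specializing the already-established energy inequalities for general pairs $(u,v)$ and verifying that the only nontrivial source term $F_v$ can be absorbed into the energy via Gronwall.

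First I would handle $i=0$: the classical identity (\ref{h1en}) already gives $\|\bd\psi(t)\|_{L^2}\les \|\bd\psi(t_1)\|_{L^2}$, so it only remains to control $\|\psi\|_{L^2}$ in the definition of $\E^{(0)}(\psi,e_0\psi)$. This follows by integrating $\p_t\psi=ne_0(\psi)+\nab_Y\psi$ from $t_1$ to $t$ and using the uniform bounds on $n$, $|Y|_g$ from (\ref{q1}) together with the short length of $I_*$.

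For $i=1$, I would apply the inequality (\ref{ee1}) with $(u,v)=(\psi,e_0\psi)$, $F_u=0$, and $F_v$ as above. The only task is to estimate $\|\hn F_v\|_{L^2}$; using $\|\pi\|_{H^1}\le C$ and Proposition \ref{cor2}, and the product estimates of Appendix III, one obtains
$$\|\hn F_v\|_{L^2}\les \|\hn(\hn\psi,e_0\psi)\|_{L^2}+\|\hn\psi, e_0\psi\|_{L^2}\les \sqrt{\E^{(1)}(\psi,e_0\psi)}+\sqrt{\E^{(0)}(\psi,e_0\psi)}.$$
Plugging into (\ref{ee1}), combining with the $i=0$ bound, and applying Gronwall with (\ref{ba1}) yields the $i=1$ estimate.

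For the top-order estimate, the plan is to re-run the proof of Proposition \ref{eg1} with starting time $t_1$ in place of $t_0$; its derivation used only the equations (\ref{lu3}) and Gronwall, both of which apply verbatim. With $F_u=0$, the right-hand side of the analogue of (\ref{engep.12}) is controlled by the single quantity $\|\hn F_v\|_{H^\ep}$, for which the key tool is (\ref{dfv}):
$$\|\hn F_v\|_{H^\ep}\les \|\hn(\hn\psi,e_0\psi)\|_{H^\ep}\les \sqrt{\E^{(1+\ep)}(\psi,e_0\psi)}.$$
A final Gronwall argument, using the $L^1_t L^\infty_x$ control on $k,\hn g,\hn Y,\hn n$ from (\ref{ba1}) and the bounds (\ref{ell.1}) on the coefficients, closes the estimate. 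The one point requiring care is ensuring no derivative loss occurs in the Gronwall step; this hinges on the fact that $F_v$ contains at most one derivative of $\psi$ multiplied by coefficients already controlled in $L^\infty_x\cap H^{1/2+\ep}$ via Proposition \ref{eplem}, so (\ref{dfv}) indeed bounds $\|\hn F_v\|_{H^\ep}$ by $\sqrt{\E^{(1+\ep)}}$ without appealing to derivatives of $\psi$ of order higher than those controlled by the energy. This is the only mild obstacle, and it is already essentially packaged in (\ref{dfv}).
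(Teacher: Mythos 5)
Your proposal is correct and follows essentially the same route as the paper: reduce to the first-order system (\ref{wv3}) with $F_u=0$, bound $\|F_v\|_{L^2}$ and $\|\hn F_v\|_{L^2}$ by the lower-order energies using (\ref{ell.1}), and for the top order combine (\ref{geneng}) with (\ref{dfv}) and Gronwall. The only (harmless) variation is at $i=0$, where the paper applies the general energy inequality (\ref{e0eng}) directly rather than invoking (\ref{h1en}) plus a separate transport argument for $\|\psi\|_{L^2}$.
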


\begin{proof}
Since $(u,v)=(\psi, e_0 (\psi))$ satisfies (\ref{wv3}) with $W=0$,
we can easily derive that
\begin{align*}
\|F_v\|_{L_x^2}&\les (\|\hn n \|_{L_x^\infty}+\|\Tr k\|_{L_x^\infty})\left(\E^{(0)}(u,v)\right)^{\f12}
\end{align*}
Recall that $F_u=0$, an application of (\ref{e0eng}) gives (\ref{h1ena}) with $i=0$.
The case $i=1$ can be proved by employing (\ref{e0eng}) with $(U^1, V^1):=(\hn \psi, \hn (e_0 (\psi)))$. Indeed,
in view of (\ref{fu1v1}) and (\ref{ell.1}), we have
\begin{align*}
\|F_{V^1}, \hn F_{U^1}\|_{L_x^2}&\les\|\hn F_v\|_{L_x^2}+(\E^{(1)}(u,v))^{\f12},
\end{align*}
and
\begin{align*}
\|\hn F_v\|_{L_x^2}&\les (\|\hn \nab n\|_{L_x^3}+\|\hn n \|_{L_x^\infty})\|\hn u\|_{H^1}+(\|\hn v\|_{L_x^2}+\|\hn n\|_{L_x^\infty} \|v\|_{L_x^2})\|\Tr k\|_{L_x^\infty}\\
&\les\left(\E^{(1)}(u, v)(t)\right)^{\f12}+\left(\E^{(0)}(u,v)(t)\right)^{\f12}.
\end{align*}
By substituting to (\ref{e0eng}),  we can complete the proof of (\ref{h1ena}). Using  (\ref{geneng}), (\ref{dfv}) and the Gronwall inequality, we can complete the proof of Proposition \ref{geoeg}.
\end{proof}

Consider either $(u,v)=(g, -2k)$ in view of  (\ref{lie3}), or  the solution of $\Box_\bg \psi=0$
in view of the reduction in  (\ref{wv3}) with $(u,v)=(\psi, e_0(\psi))$ and  $W=0$ therein.
For $0<\ep\le s-2$ we have obtained Proposition \ref{eg1} and Proposition \ref{geoeg}, which can be rewritten as
\begin{equation}\label{eng5}
\E^{({1+\ep})}(u,v)(t)\les  \E^{({1+\ep})}(u,v)(t_0)+1, \qquad  t_0\le t\le t_0+T.
\end{equation}

By the inequality (\ref{eimu}), we have
\begin{align*}
\sup_{t_0\le t\le t_0+T} \mu^{2\ep}\ei_\mu(t)&\le \mu^{2\ep}\ei_\mu(t_0)\nn\\
&+\int_{t_0}^{t_0+T} \left(\|\mu^\ep \hn F_{U_1^\mu}\|_{L^2} \|\mu^\ep \hn U_1^\mu\|_{L^2}
+\|\mu^\ep V_1^\mu\|_{L^2}\|\mu^\ep F_{V_1^\mu}\|_{L^2}\right) dt. 
\end{align*}
Using Lemma \ref{dfu.1} together with Young's inequality, we have
\begin{align*}
\left(\sum_{\mu}\sup_{t_0\le t\le t_0+T} \mu^{2\ep}\ei_\mu(t)\right)^{\f12}
& \les  \left(\E^{(1+\ep)}(t_0)\right)^{\f12}\\
& +\int_{t_0}^{t_0+T} \left(\|\mu^\ep P_\mu(\hn^2 F_u)\|_{l_\mu^2 L_x^2}+\|\mu^\ep \hn P_\mu F_v\|_{l_\mu^2 L_x^2}\right).
\end{align*}
We then conclude that

\begin{proposition}\label{remaindd}
For $0<\ep \le s-2$ there holds
\begin{align*}
\left(\sum_{\mu}\sup_{t_0\le t\le t_0+T} \mu^{2\ep}\ei_\mu(t)\right)^{\f12}
&\les \|\hn u,  v\|_{H^{1+\ep}(t_0)} \\
& +\int_{t_0}^{t_0+T} \left(\|\mu^\ep P_\mu(\hn^2 F_u)\|_{l_\mu^2 L_x^2}+\|\mu^\ep \hn P_\mu F_v\|_{l_\mu^2 L_x^2}\right).
\end{align*}
\end{proposition}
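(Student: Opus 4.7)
\medskip

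\noindent\textbf{Proof plan.} The plan is to extract the proposition from the dyadic differential energy inequality (\ref{eimu}) by an ODE/Gronwall argument followed by a Littlewood--Paley square-summation. Starting from (\ref{eimu}), I first observe that the definition of $\ei_\mu$ in (\ref{energy0}) immediately yields $\|\hn U_1^\mu\|_{L_x^2}, \|V_1^\mu\|_{L_x^2}\les \sqrt{\ei_\mu(t)}$, so (\ref{eimu}) collapses to a scalar inequality of the form
\begin{equation*}
\p_t \ei_\mu(t)\les (\|k,\hn Y\|_{L_x^\infty}+1)\ei_\mu(t)+\sqrt{\ei_\mu(t)}\bigl(\|\hn F_{U_1^\mu}\|_{L_x^2}+\|F_{V_1^\mu}\|_{L_x^2}\bigr).
\end{equation*}
Dividing by $2\sqrt{\ei_\mu}$ to differentiate $\sqrt{\ei_\mu}$, then applying Gronwall on $[t_0,t_0+T]$ while controlling the integrating factor by the bootstrap assumption (\ref{ba1}) (which gives $\int(\|k,\hn Y\|_{L_x^\infty}+1)\,dt\les 1$), produces
\begin{equation*}
\sup_{t_0\le t\le t_0+T}\sqrt{\ei_\mu(t)}\les \sqrt{\ei_\mu(t_0)}+\int_{t_0}^{t_0+T}\bigl(\|\hn F_{U_1^\mu}\|_{L_x^2}+\|F_{V_1^\mu}\|_{L_x^2}\bigr)\,dt.
\end{equation*}

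Multiplying by $\mu^\ep$, taking the $\ell^2_\mu$ norm, and pushing the $\ell^2_\mu$ norm past the time integral by Minkowski's integral inequality yields
\begin{equation*}
\Bigl(\sum_\mu \sup_t \mu^{2\ep}\ei_\mu(t)\Bigr)^{\f12}\les \Bigl(\sum_\mu \mu^{2\ep}\ei_\mu(t_0)\Bigr)^{\f12}+\int_{t_0}^{t_0+T}\bigl(\|\mu^\ep\hn F_{U_1^\mu}\|_{\ell^2_\mu L_x^2}+\|\mu^\ep F_{V_1^\mu}\|_{\ell^2_\mu L_x^2}\bigr)\,dt.
\end{equation*}
The initial term is bounded by $\|\hn u,v\|_{H^{1+\ep}(\Sigma_{t_0})}$ via the standard Littlewood--Paley characterization of Sobolev spaces applied to $\hn u, \hn^2 u, \hn v$ and the already-established $H^1$ bounds. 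To treat the source integrand I would invoke Lemma \ref{dfu.1}, which expresses the two $\ell^2_\mu L_x^2$ norms in terms of $\|\hn^2 F_u\|_{H^\ep}$ and $\|\hn F_v\|_{H^\ep}$ plus the auxiliary pieces $\I(\ep,u,v)$, $(\|\hn g\|_{L_x^\infty}+1)\|\hn^2 u\|_{H^\ep}$, and $\|\hn u,v\|_{H^\ep}$; the two main pieces are then identified with $\|\mu^\ep P_\mu(\hn^2 F_u)\|_{\ell^2_\mu L_x^2}$ and $\|\mu^\ep \hn P_\mu F_v\|_{\ell^2_\mu L_x^2}$ modulo harmless commutator terms.

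The main obstacle is disposing of the auxiliary terms so that the final inequality contains only the initial data and the two source norms on the right. Here I would use the a priori bound (\ref{ell.1}), which tells us that $\|\hn Y,\hn n\|_{L_x^\infty}$ and $\|\hn^2 Y,\hn^2 n\|_{H^{\f12+\ep}}$ are already controlled by universal constants, so $\I(\ep,u,v)\les \|\hn u,v\|_{H^{1+\ep}}$; combined with the bootstrap (\ref{BA2}) controlling $\|\hn g\|_{L^2_t L_x^\infty}$, the leftover integral is dominated by $T^{1/2}\|\hn u,v\|_{L^\infty_t H^{1+\ep}}$. Since the Littlewood--Paley characterization gives $\sup_t\|\hn u,v\|_{H^{1+\ep}}^2\les \sum_\mu\sup_t\mu^{2\ep}\ei_\mu(t)+\|\hn u,v\|_{L^\infty_t H^1}^2$, Young's inequality then permits the $\sum_\mu\sup_t\mu^{2\ep}\ei_\mu$ factor to be absorbed into the left-hand side for sufficiently small universal $T$, while the residual $\|\hn u,v\|_{L^\infty_t H^1}$ is controlled by the lower-order energy identity from Proposition \ref{eg1}, whose right-hand side is itself dominated by $\|\hn u,v\|_{H^{1+\ep}(\Sigma_{t_0})}$ plus the two source integrals. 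Putting all this together delivers the stated estimate.
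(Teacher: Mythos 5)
Your plan is correct and matches the paper's (quite terse) derivation: integrate the dyadic energy inequality (\ref{eimu}) using Gronwall together with (\ref{ba1}), square-sum in $\mu$, control the source via Lemma \ref{dfu.1} and (\ref{ell.1}), and absorb the leftover $\|\hn u, v\|_{H^{1+\ep}}$ contributions using the smallness of $T$, the bootstrap (\ref{BA2}) and Proposition \ref{eg1}. The only cosmetic difference is that you extract $\sqrt{\ei_\mu}$ pointwise in $\mu$ before taking $\ell^2_\mu$ and applying Minkowski, whereas the paper retains the product $\|\mu^\ep \hn F_{U_1^\mu}\|_{L^2}\|\mu^\ep \hn U_1^\mu\|_{L^2}$ on the right, sums first, and then applies Cauchy--Schwarz in $\mu$ and Young's inequality---the two routes are equivalent.
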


\subsection{Flux}\label{flx}

In view of (\ref{lie.5}) and (\ref{lie.6}), we can see that $(u,v):=(\hn g, -2\hn k)$ satisfies the
hyperbolic system (\ref{lu3}) with
\begin{equation}\label{8.20.0}
\left\{\begin{array}{lll}
F_u=g \hn^2 Y+ \hn Y\c \hn g + Y \c \hR \c g + \hn n \c k ,\\
F_v=\hn (2 \nabla^2 n + n * k* k +k * \hn Y) + \hn n \hdt g  + \hn Y \c \hn k + Y \c \hR \c k\\
\quad \quad\,\,\,\, + n (g \c \hn g\c \hn^2 g + g \c \hR \c \hn g + g \c \hn \hR \c g).
\end{array}\right.
\end{equation}
By straightforward calculation we have
$$
\left\{
\begin{array}{lll}
\hn  F_u=g\c \hn^3 Y+ \er_1+\er_2+\er(\hR),\\
 F_v=\hn^3 n+ \er_1+\er_2+\er(\hR).
\end{array}
\right.
$$

Next we give the first order hyperbolic system for the pair $(k, E)$.
Recall that (see \cite[(3.11a)]{AnF})
\begin{align}\label{le}
n^{-1}( \p_t -\Lie_Y )E_{ij}
&=\curl H_{ij}-n^{-1} (\nab n\wedge H)_{ij}-\frac{5}{2}(E\times k)_{ij} \nn\\
& \quad \,  -\frac{2}{3}(E\ast k) g_{ij}-\frac{1}{2}\Tr k E_{ij},
\end{align}
where, for any symmetric 2-tensor $F$,
\begin{equation}\label{cl1}
\curl F_{ab}=\frac{1}{2}(\ep_a^{\mbox{ }cd}\nab_d
F_{cb}+\ep_b^{\mbox{ }cd}\nab_d F_{ca}).
\end{equation}
with $\ep_a^{\mbox{ }cd}$ denoting the components of the volume form of $(\Sigma_t, g)$.
When $\div F=0$ and $\Tr F=t$, symbolically we can obtain the identity
\begin{equation}\label{dcurl}
\curl \curl F=-\hdt F+\Ric\ast F+\hn g\c \hn g\c g\c F
\end{equation}
In view of $\curl k=-H$, we can use (\ref{dcurl}) with $F=k$ to treat the term $\curl H$ in (\ref{le}).
Consequently we obtain
\begin{equation}\label{eqtE}
n^{-1}(\p_t-\Lie_Y)E_{ij}=\hdt k+\Ric\ast k+\hn g\c \hn g\c k\c g+\nab
n\ast H+k\ast k\ast k.
\end{equation}
By coupling (\ref{eqtE}) with (\ref{eqn9}), we can see that the pair $(u,v):=(k,E)$ satisfies the first order
hyperbolic system (\ref{lu3}) with
\begin{equation}\label{8.20.1}
\left\{\begin{array}{lll}
F_u=\nab^2_{ij}n+n k\ast k+k\c \hn Y, \\
F_v=n R\ast k+n \hn g\c\hn g \c g\c k +\nab n\ast H+n k\ast k\ast k+E\c \hn Y.
\end{array}\right.
\end{equation}
Using the Gauss equation $E=Ric+ k\star k$ and (\ref{ricid}) to treat $E$, we have
\begin{align}
\left\{\begin{array}{lll}
&\hn F_u= \hn^3 n+\er_1+\er_2+\er(\hR),\nn\\
 &F_v=\er_1+\er_2+\er(\hR). \nn
\end{array}\right.
\end{align}

In view of Proposition \ref{eplem}, we obtain

\begin{proposition}[Remainder estimates]\label{hepeg}
Let $(F_u, F_v)$ be defined by either (\ref{8.20.0}) or (\ref{8.20.1}). Then for
any $0<\ep<1/2$ there hold
\begin{equation}\label{eqnep1}
\left\{
\begin{array}{lll}
\|\hn  F_u\|_{H^\ep}+\|F_v\|_{H^\ep}\les (\|\hn g, k\|_{L^\infty}+1)\|\hn g, k\|_{H^{1+\ep}},\\
\|F_u\|_{H^{1/2+\ep}}\les\|\La^{\ep}\hn(k,\hn g)\|_{L^2}
\end{array}\right.
\end{equation}
and
\begin{equation}\label{tu0}
\|\hn F_u\|_{L^2}+\|F_v\|_{L^2}\les(\|\hn g, k\|_{L^\infty}+1)\|\hn g, k\|_{H^1}.
\end{equation}
\end{proposition}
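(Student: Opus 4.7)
The plan is to exploit the symbolic decompositions of $\hn F_u$ and $F_v$ stated just before the proposition, which reduce everything to quantities already controlled by Proposition \ref{eplem}. Recall that for the pair $(u,v)=(\hn g, -2\hn k)$ from (\ref{8.20.0}) one has
\[
\hn F_u = g\c \hn^3 Y + \er_1+\er_2+\er(\hR), \qquad F_v = \hn^3 n + \er_1+\er_2+\er(\hR),
\]
while for $(u,v)=(k,E)$ from (\ref{8.20.1}) (after invoking the Gauss equation $E=\Ric+k\star k$ and the identity (\ref{ricid}) to trade $\Ric$ for $\hdt g$)
\[
\hn F_u = \hn^3 n + \er_1+\er_2+\er(\hR), \qquad F_v = \er_1+\er_2+\er(\hR).
\]
These reductions are direct consequences of $\hn \hR, \hR\in L^\infty$, and of expanding $\hn F_u$ and $F_v$ and collecting everything quadratic or cubic in $\ti\pi$ into $\er_1$ or $\er_2$ as defined in (\ref{ertp}).

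For the first estimate in (\ref{eqnep1}), apply Proposition \ref{eplem} term by term. The top order pieces $\hn^3 Y$ and $\hn^3 n$ are bounded in $H^\ep$ by (\ref{ertp3}), giving $\|\hn g,k\|_{L^\infty}\|\hn g,k\|_{H^{1+\ep}}+1$. The $\er_1$ terms are handled by (\ref{ertp1}), producing exactly the factor $\|\hn g,k\|_{L^\infty}(\|\hn g,k\|_{H^{1+\ep}}+1)+1$; the $\er_2$ terms via (\ref{ertp2}) are cheaper. Finally $\er(\hR)\in H^1\hookrightarrow H^\ep$ is absorbed into the additive $+1$. For the scalar prefactor $g$ in $g\c\hn^3 Y$ one uses a standard fractional Leibniz (e.g.\ Lemma \ref{prd3}) together with $\|g\|_{H^2}\le C$ from Proposition \ref{cor2}.

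For the $H^{1/2+\ep}$ estimate on $F_u$, look at the explicit expressions in (\ref{8.20.0}) and (\ref{8.20.1}): each $F_u$ is a sum of a principal term ($g\c \hn^2 Y$ or $\nab^2 n$) plus lower-order products of $\ti\pi$ against $\ti\pi$. The principal terms are treated by (\ref{elp.1}), which gives $\|\La^{1/2+\ep}(\hn^2 Y,\hn^2 n)\|_{L^2}\les \|\hn g,k\|_{H^{1+\ep}}+1\approx \|\La^\ep\hn(k,\hn g)\|_{L^2}$. The quadratic pieces like $\hn Y\c \hn g$, $k\c\hn Y$, $\hn n\c k$, $n\, k\ast k$ are handled by a fractional Leibniz (Lemma \ref{lem22}-type) that places the $L^\infty$ bound on $\hn Y,\hn n$ (available from (\ref{ell.1}) via Sobolev embedding) and the $H^{1/2+\ep}$ derivative on the remaining factor, using $\|\ti\pi\|_{H^1}\le C$ and the already-established control of $\ti\pi$ in $H^{1+\ep}$.

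Finally, (\ref{tu0}) is the $L^2$-analogue of the above, and is strictly easier: $\|\hn^3 Y\|_{L^2}$ and $\|\hn^3 n\|_{L^2}$ are controlled by (\ref{shift3}) and (\ref{q2}) respectively by $\|\hn g,k\|_{L^\infty}+1$, while the quadratic and cubic terms in $\ti\pi$ are bounded via Sobolev embedding using $\|\ti\pi\|_{H^1}\le C$ and distributing one $L^\infty$ and one $L^2$ factor. The main obstacle in the whole argument is really already packaged in Proposition \ref{eplem}: one must squeeze a fractional gain out of the elliptic equations (\ref{hdtnn}), (\ref{hdtn2}), (\ref{onedy}) despite the rough metric, and the present proposition is essentially a bookkeeping consequence once (\ref{ertp1})–(\ref{ertp3}) are in hand.
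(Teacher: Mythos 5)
Your proof is correct and follows the same route as the paper: the argument given (in telegraphic form) immediately before the proposition consists precisely of the symbolic reductions $\hn F_u,\, F_v = \hn^3 Y\ (\mbox{or } \hn^3 n) + \er_1+\er_2+\er(\hR)$ followed by a term-by-term application of Proposition \ref{eplem}, with (\ref{elp.1}) supplying the $H^{1/2+\ep}$ bound on $F_u$ and (\ref{shift3}), (\ref{q2}) supplying the $L^2$ bound (\ref{tu0}). Your elaboration just fills in explicitly the steps that the paper leaves implicit behind ``In view of Proposition \ref{eplem}.''
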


We now fix a point $p$ in $\Sigma\times I$ and use $\Ga^+$ to denote the time axis passing through $p$ which
is defined to be the integral curve of the forward unit normal $\bT$ with $\Ga^+(t_p)=p$.
We use $\Ga_t$ to denote the intersection point of $\Ga$ with $\Sigma_t$. Let $u$ be the outgoing
solution of the eikonal equation $\bg^{\a\b}\p_\a u\p_\b u=0$ satisfying the initial condition
$u(\Ga_t)=t-t_p$ on the time axis. We will call this $u$ an optical function. We denote by $C_u$
the level sets of $u$ which are the outgoing null cones with vertex on the time axis.
Let $S_{t,u}=C_u\cap \Sigma_t$ and let $\{e_1, e_2\}$ be an orthonormal frame on $S_{t,u}$.
 Let $N$ be the exterior unit normal, along $\Sigma_t$,
to the surface $S_{t,u}$. We define
\begin{equation}\label{bb1}
\bb^{-1}:=\bT(u),\quad L:=\bT+N, \quad \Lb:=\bT-N=2\bT-L.
\end{equation}
We will call $L, \Lb$ the canonical null pair. Then $\{e_1, e_2, e_3:=\Lb, e_4:=L\}$ also forms a
null frame.

For $0<u\le t_0+T-t_p$ and $t_p\le t_1\le t_2\le t_0+T$, we introduce the null hypersurface
$\H:=\cup_{t_1\le t\le t_2}S_{t,u}$. We will use $D^+$ to denote the region enclosed by $\H$,
$\Sigma_{t_1}$ and $\Sigma_{t_2}$.  For any scalar function $\psi$ we introduce the flux
\begin{equation*}
\F[\psi]=\int_{\H}\left(|L\psi|^2+\ga^{AB}\sn_A \psi \sn_B \psi\right),
\end{equation*}
where $\ga$ is the induced metric on $S_{t,u}$ and $\sn$ is the corresponding covariant differentiation.
For any scalar functions $\phi$ and $\psi$ we introduce the energy-momentum tensor
\begin{equation}\label{momentum}
Q[\phi, \psi]_{\mu\nu}=\f12 (\bd_\mu \phi\bd_\nu \psi+\bd_\nu \phi \bd_\mu \psi)
-\frac{1}{2}\bg_{\mu \nu} (\bg^{\a\b} \bd_\a\phi \bd_\b\psi).
\end{equation}
Let $Q[\psi]:=Q[\psi, \psi]$ and define the energy
$$
Q(\psi)(t):=\int_{\Sigma_t} Q[\psi](\bT, \bT) d\mu_g.
$$
It is straightforward to check $\F[\psi]=2\int_{C_u} Q[\psi](\bT, L)$.

For any $\Sigma$-tangent tensor field $F_\mu$, we set
\begin{equation}\label{ddf1}
\nab_L F_i= e_i^\mu L^\nu \bd_\nu F_{\mu}, \qquad \sn_A F_i=e_i^\mu\nab_A F_\mu
\end{equation}
and introduce the norms
\begin{equation*}
|\nab_L F|_g^2:=g^{ij}\bd_L F_i \bd_L F_j, \qquad |\sn F|_g^2:=\ga^{AB}g^{ij}\sn_A F_i \sn_B F_j.
\end{equation*}
We will drop the subscript $g$ in the definition of norms whenever there occurs no confusion.

Following the same proof in  \cite[Section 5]{Wang10}, we can obtain the following result on tensorial
$k$-flux.

\begin{proposition}\label{dfk}
Under the bootstrap assumption (\ref{ba1}),
for the tensorial $k$-flux there holds on the null cone $C_u$ the estimate
\begin{equation*}
\int_{C_u} \left(|\sn k|_g^2 +|\nab_L
k|_g^2\right)\le C.
\end{equation*}
\end{proposition}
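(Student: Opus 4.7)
The plan is to run an energy identity for the first-order hyperbolic system satisfied by $(u,v)=(k,E)$ (equation (\ref{lu3}) with $(F_u,F_v)$ as in (\ref{8.20.1})) on the spacetime region $D^+$ bounded by $\Sigma_{t_p}$, $\Sigma_{t_0+T}$ and the outgoing null hypersurface $\H=\cup_t S_{t,u}$. Following the approach indicated in Section \ref{flx} and in \cite{Wang10}, the first step is to construct a modified energy-momentum tensor $\bar Q_{\mu\nu}[k,E]$, built from $\hn k$, $E$ and the metric $\bg$, so that (i) on the null cone the contraction $\bar Q(\bT,L)$ agrees, up to lower-order terms, with $|\nabla_L k|_g^2+|\sn k|_g^2$, and (ii) the contraction $\bar Q(\bT,\bT)$ is coercive and controlled on $\Sigma_t$ by $\|\hn k\|_{L^2}^2+\|E\|_{L^2}^2$. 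The modification is precisely the one flagged in Section \ref{flx}: it is engineered so that when we compute $\mathrm{Div}(\bar Q\c\bT)$ no factor of $\bd_\bT Y$ appears, since $\bd_\bT Y$ is the one component of $\p\bg$ not controlled in $L_t^1 L_x^\infty$.

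Having fixed $\bar Q$, the second step is to apply the divergence theorem on $D^+$. This yields the identity, schematically,
\begin{equation*}
\int_{C_u}\bar Q(\bT,L)\;+\;\int_{\Sigma_{t_0+T}\cap D^+}\bar Q(\bT,\bT)
=\int_{\Sigma_{t_p}\cap D^+}\bar Q(\bT,\bT)
+\int_{D^+}\bigl(\bar Q\c{}^{(\bT)}\bar\pi+\bar Q\c(F_u,F_v)\bigr).
\end{equation*}
The data term is bounded by a universal constant via $\|k(t_p)\|_{H^1}+\|E(t_p)\|_{L^2}\le C$, which is provided by (\ref{q1}) together with Proposition \ref{cor2}. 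The deformation-tensor piece $\bar Q\c{}^{(\bT)}\bar\pi$ now contains only components of $\pi$ and $\hn g,k$ already controlled pointwise under the bootstrap assumption (\ref{ba1}), so it contributes at most $\int_{t_p}^{t_0+T}(\|\pi\|_{L_x^\infty}+\|\hn g,k\|_{L_x^\infty})(\|\hn k\|_{L^2}^2+\|E\|_{L^2}^2)\,dt$, which is $\le C$ by the energy estimate (\ref{eng4}) and the bootstrap assumption.

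The third step is to handle the source term $\bar Q\c (F_u,F_v)$. Here we invoke Proposition \ref{hepeg}: the $L^2$ remainder bound (\ref{tu0}) gives $\|\hn F_u,F_v\|_{L^2}\les (\|\hn g,k\|_{L_x^\infty}+1)\|\hn g,k\|_{H^1}$, and by Proposition \ref{eg1} the $H^1$ norm of $\hn g,k$ is universally bounded. Integrating over $D^+$ and using Cauchy–Schwarz together with the bootstrap assumption (\ref{ba1}) on $\|\hn g,k\|_{L_t^1 L_x^\infty}$ (obtained from (\ref{BA2}) and H\"older), we obtain a bound $\les C$ for this bulk term as well. A Gronwall argument, absorbing the analog term arising from the energy on $\Sigma_{t_0+T}$ on the left, then closes the estimate and yields $\int_{C_u}(|\sn k|_g^2+|\nabla_L k|_g^2)\le C$.

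The main obstacle is the first step: designing the modified energy-momentum tensor so that the deformation-tensor contraction ${}^{(\bT)}\bar\pi\c\bar Q$ and the bulk error coming from integration by parts are free of the uncontrollable $\bd_\bT Y$ factor. This is the algebraic core of the argument, and the rest is a fairly standard energy/flux divergence computation once the tensorial identities for $\sn$, $\nabla_L$ on $\Sigma$-tangent 2-tensors and the Ricci identity relating $\nabla$ and $\hn$ are carried through. All other error terms fall into one of the classes $\er_1,\er_2,\er(\hR)$ already estimated in Proposition \ref{eplem} and Proposition \ref{hepeg}.
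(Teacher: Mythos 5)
The paper's proof of Proposition \ref{dfk} is a one-line deferral: ``Following the same proof in \cite[Section 5]{Wang10}.'' There is no in-paper derivation to compare against, so your proposal is necessarily an independent route.

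Your plan is in the spirit of the paper's Section \ref{flx}: the pair $(k,E)$ does satisfy (\ref{lu3}) with $(F_u,F_v)$ from (\ref{8.20.1}); when (\ref{wave1}) is applied to $u=k$, the term $e_0(F_u)$ with $F_u\ni k\cdot\hn Y$ produces the uncontrollable $\bd_\bT Y$; and the modified $\ti P_\mu$ is designed precisely so that (\ref{dpu}) replaces $\bd_0(n^{-1}F_u)$ by $\bd^i(n^{-1}F_u)\bd_i u$. That is exactly the mechanism the paper uses in the proof of Proposition \ref{fluxg} for the \emph{scalar} fluxes. So the idea is right.

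The genuine gap is the one you yourself flag as ``the algebraic core.'' The modified tensor $\ti P_\mu=-n^{-1}F_u\bd_\mu u+Q_{\mu\nu}\bT^\nu+\tfrac12(n^{-1}F_u)^2n\bd_\mu t$ and the divergence identity (\ref{dpu}) are written and verified \emph{only for a scalar} $u$. Proposition \ref{dfk} concerns the tensorial flux $\int_{C_u}(|\sn k|_g^2+|\nabla_L k|_g^2)$. The tensorial analog of $\ti P_\mu$ must trace $F_u\cdot\bd u$ and $Q_{\mu\nu}[u]$ over the $\Sigma$-indices by $g^{ij}$, which is $t$-dependent, so the divergence produces additional bulk terms through $\bd_0 g^{ij}=2nk^{ij}-\Lie_Y g^{ij}$; and commuting the spacetime divergence past $\bd_\nu$ on a $\Sigma$-tensor generates curvature corrections of the form $\bR\ast k\ast\bd k$ which have no scalar analog. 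Both must be shown to be absorbable under the bootstrap and, crucially, free of $\bd_\bT Y$ after the modification. Asserting that this is ``fairly standard'' does not establish it, and the paper never performs the tensorial computation. As written you have a plausible outline, not a proof.

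Two smaller points. First, you appeal to (\ref{eng4}), Proposition \ref{eg1}, Proposition \ref{hepeg} and the bound (\ref{tu0}), all of which require (\ref{ba1}) and (\ref{BA2}), while Proposition \ref{dfk} is stated under (\ref{ba1}) alone; some uses can be replaced by (\ref{ba1})-only facts from (\ref{q1}) (e.g.\ $\|\pi\|_{H^1}\le C$ gives $\|\nab k\|_{L^2}\le C$), but the $F_v$ bound in (\ref{tu0}) leans on $\|\hn^2 g\|_{L^2}$ from Proposition \ref{cor2}, which is a (\ref{BA2}) result. Second, $\|\hn g,k\|_{L_t^1 L_x^\infty}$ is controlled directly by (\ref{ba1}); there is no need to route it through (\ref{BA2}) and H\"older as you write.
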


The following estimate is the main result of this subsection.

\begin{proposition}\label{fluxg}
Let the bootstrap assumptions (\ref{ba1}) and (\ref{BA2}) hold. 
Let $f$ be the scalar components of $\hn g$ and $k$. Then for $0<\ep\le s-2$ there holds
\begin{equation*}
\F^\f12[f]+\|\mu^\ep\F^\f12[P_\mu f]\|_{l_\mu^2}\le C.
\end{equation*}
\end{proposition}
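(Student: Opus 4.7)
The plan is to obtain the flux estimate by applying a divergence identity to a suitably modified energy-momentum tensor built from the pair $(u,v)$ appearing in the first-order hyperbolic system, rather than from $u$ alone. For $f$ a scalar component of $\hn g$ we take $(u,v)=(\hn g, -2\hn k)$ satisfying (\ref{lu3}) with $F_u,F_v$ given by (\ref{8.20.0}); for $f$ a component of $k$ we take $(u,v)=(k,E)$ with $F_u,F_v$ given by (\ref{8.20.1}). The flagged difficulty is that converting either system to a second-order wave equation for $u$ forces a differentiation of $\hn_Y u$ in time, producing $\bd_\bT Y$, which cannot be controlled in $L_t^1 L_x^\infty$ or even $L_t^\infty L_x^2$.

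To bypass this, I would introduce a modified energy-momentum tensor $\tilde Q[u,v]_{\mu\nu}$ built so that $v$ plays the role of $\bT u$. It should satisfy three properties: (i) $\tilde Q(\bT,\bT)\approx |v|_g^2+|\hn u|_g^2$, so the $\Sigma_t$-flux of $\tilde Q(\cdot,\bT)$ is controlled by the $H^1$ energy from Proposition \ref{eg1}; (ii) $\tilde Q(\bT,L)\approx |Lf|^2+|\sn f|^2$ on $\H$ (after restricting to a scalar component, up to lower-order cross terms absorbed by the transport equations); and (iii) the divergence $\bd^\mu(\tilde Q_{\mu\nu}\bT^\nu)$ expresses in terms of $(F_u,F_v)$, the spatial deformation components of $\bT$ (namely $k$ and $\hn\log n$), and contractions with $\hn g,k$, but \emph{not} $\bd_\bT Y$. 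The modification terms are chosen precisely to cancel the $\bd_\bT Y$ contribution that would otherwise come from $\p_t \hn_Y u$ via a $\tfrac12\Omega v\cdot v$-type correction in the spirit of (\ref{gneng1}).

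With such $\tilde Q$, divergence theorem on the region $D^+$ bounded by $\Sigma_{t_1},\Sigma_{t_2},\H$ yields, for any scalar component $f$ of $u$,
\begin{equation*}
\F[f] \lesssim E(t_1)+E(t_2)+\int_{t_1}^{t_2}\!\!\int_{\Sigma_t}\bigl(|F_u||\hn u|+|F_v||v|+|\pi||\hn u|^2+|\pi||v|^2\bigr)\,d\mu_g\,dt,
\end{equation*}
where $E(t):=\|v\|_{L^2}^2+\|\hn u\|_{L^2}^2$. The first two boundary terms are bounded by Proposition \ref{eg1}. For the bulk, the $\pi$-piece (excluding $\bd_\bT Y$ by construction) is absorbed using the bootstrap (\ref{ba1}) together with $L^\infty_t H^1$ control of $\hn n$; the $F_u,F_v$ contributions are handled by (\ref{tu0}) combined with the bootstrap (\ref{BA2}). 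This proves $\F^{1/2}[f]\le C$.

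For the dyadic estimate, apply $P_\mu$ to the first-order system and use (\ref{puuv}): $(P_\mu u, P_\mu v)$ satisfies (\ref{lu3}) with sources $F_{U^\mu},F_{V^\mu}$ involving $P_\mu F_u, P_\mu F_v$ plus Littlewood--Paley commutators. Repeating the above divergence argument with $P_\mu f$ in place of $f$, one gets
\begin{equation*}
\F[P_\mu f]\lesssim \|\hn P_\mu u\|_{L^2(\Sigma_{t_1})}^2+\|P_\mu v\|_{L^2(\Sigma_{t_1})}^2+\text{bulk terms depending on }F_{U^\mu},F_{V^\mu}.
\end{equation*}
Multiplying by $\mu^{2\ep}$ and summing in $\ell_\mu^2$, Proposition \ref{remaindd} together with Proposition \ref{hepeg} (which provide $\ell_\mu^2$-control of $\mu^\ep P_\mu(\hn^2 F_u)$ and $\mu^\ep\hn P_\mu F_v$), and the commutator estimates of Appendix III (Lemma \ref{dfu.1}) for the $[P_\mu,\cdot]$ pieces, yield $\|\mu^\ep\F^{1/2}[P_\mu f]\|_{\ell_\mu^2}\le C$.

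The main obstacle is step one: constructing $\tilde Q$ so that it is simultaneously positive on null pairs $(\bT,L)$ with the correct flux density, coercive on $(\bT,\bT)$, \emph{and} has a divergence identity free of $\bd_\bT Y$. The remaining work is largely bookkeeping through (\ref{eqnep1}), (\ref{tu0}), and the machinery of Section \ref{apiii}.
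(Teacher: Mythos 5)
Your high-level plan is essentially the one the paper follows: use the first-order systems $(\hn g,-2\hn k)$ and $(k,E)$, build a modified current to avoid $\bd_\bT Y$, apply the divergence theorem over $\D^+$, and repeat dyadically with $(P_\mu u, P_\mu v)$ using the commutator machinery from Appendix III together with Propositions \ref{remaindd} and \ref{hepeg}. However, you explicitly leave open the one step that constitutes the actual content of the paper's argument --- the construction of the modified current --- so as written the proposal has a genuine gap.

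The paper's construction is the one-form
\begin{equation*}
\ti P_\mu=-n^{-1} F_u \,\bd_\mu u+Q_{\mu\nu}\bT^\nu+\tfrac12 (n^{-1}F_u)^2\, n \,\bd_\mu t,
\end{equation*}
where $Q_{\mu\nu}=Q[u]_{\mu\nu}$. (A current, not a two-tensor $\tilde Q_{\mu\nu}$, though that distinction is cosmetic.) The mechanism by which $\bd_\bT Y$ disappears is a cancellation, not a direct elimination from the flux density: computing $\bd^\mu \ti P_\mu$ produces a time derivative $\bd_0(n^{-1}F_u)\bd_0 u$, while (\ref{wave1}) shows that $n^2\Box_\bg u$ carries the compensating term $-n\,e_0(F_u)$. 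After substituting (\ref{wave1}) and using $\bd_\bT u-n^{-1}F_u=v$ from (\ref{lu3}), the two time derivatives of $F_u$ cancel, leaving only spatial derivatives $\bd^i(n^{-1}F_u)\bd_i u$ in the divergence identity (\ref{dpu}); the remaining deformation tensor ${}^{(\bT)}\pi$ has only $k$ and $\nab\log n$ components. Your guess that the correction would be a ``$\tfrac12\Omega v\cdot v$'' modification in the spirit of (\ref{gneng1}) is off in two ways: the correction terms depend on $F_u$ rather than on $v$, and they also introduce an extra contribution $\int_\H(n^{-1}F_u)^2$ to the null flux, which must be absorbed by the trace inequality (\ref{flfu}) --- a piece missing from your bulk-term bookkeeping. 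Aside from that, your inventory of supporting estimates --- (\ref{tu0}), (\ref{eqnep1}), Lemma \ref{dfu.1}, the dyadic bounds (\ref{rem2.1})--(\ref{fv}) --- and your dyadic summation strategy do match the paper's.
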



In the following we will give the proof of Proposition \ref{fluxg}.
By the standard energy estimate we have
\begin{align}
\F[\psi]&\le |Q(\psi)(t_2)-Q(\psi)(t_1)|\nn\\
& +\int_{t_1}^{t_2} Q^{\f12}(\psi)(t')\|\Box_\bg \psi\|_{L_x^2}
+\int_{t_1}^{t_2} C(\|\pi\|_{L_x^\infty}+1) Q(\psi)(t') dt'.\label{fpsi}
\end{align}
Recall that $(\hn g, -2 \hn k)$ and $(k, E)$ satisfy the first order hyperbolic system (\ref{lu3}).
Thus, for $\psi=\hn g$ or $k$, the expression of $\Box_\bg \psi$ derived from (\ref{wave1}) contains
time derivatives of the shift vector field $Y$ since $F_u$ contains the term $\psi\c \hn Y$ and
other terms involving $Y$. The lack of control on $\bd_\bT Y$ makes it impractical to apply (\ref{fpsi})
directly to $\psi=\hn g, k$.

To get around the difficulty, we consider the following modified energy momentum tensor
\begin{equation*}
\ti P_\mu=-n^{-1} F_u \bd_\mu u+Q_{\mu\nu}\bT^\nu+\f12 (n^{-1}F_u)^2 n \bd_\mu (t).
\end{equation*}
When $(u,v)$ satisfies the system (\ref{lu3}), $u$ must satisfy (\ref{wave1}).
We claim that
\begin{align}\label{dpu}
\bd^\mu \ti P_\mu & =\left(-\pi_{0a} \nab^a u+\Tr k e_0 (u)-n^{-1}F_v\right)v
 -\bd^i(n^{-1} F_u)\bd_i u+Q_{\mu\nu}{}^{(\bT)}\pi^{\mu\nu}.
\end{align}
Indeed, since (\ref{lu3}) implies $-n^{-1}F_u+\bd_\bT u=v$, we have from the definition of $\ti P_\mu$
that
\begin{align*}
\bd^\mu \ti P_\mu &=-\bd^\mu(n^{-1}F_u) \bd_\mu u -n^{-1}F_u \Box_\bg u+\bd^\mu Q_{\mu\nu}\bT^\nu
 +Q_{\mu\nu}{}^{(\bT)}\pi^{\mu\nu}\\
&\quad \, +(n^{-1}F_u)\bd^\mu(n^{-1}F_u)n \bd_\mu t\\
&=\bd_0(n^{-1}F_u) \bd_0 u-\bd^i (n^{-1}F_u) \bd_i u-n^{-1} F_u \Box_\bg u+\Box_\bg u \bd_{\bT}u
+Q_{\mu\nu}{}^{(\bT)}\pi^{\mu\nu}\\
&\quad\, +(n^{-1}F_u) \bd^\mu(n^{-1} F_u) n \bd_\mu t\\
&=(-n^{-1}F_u+\bd_\bT u) (\Box_\bg u+\bd_0(n^{-1}F_u))-\bd^i(n^{-1}F_u)\bd_i u +Q_{\mu\nu}{}^{(\bT)}\pi^{\mu\nu}\\
&=\left(\Box_\bg u+\bd_0(n^{-1} F_u)\right) v-\bd^i(n^{-1}F_u)\bd_i u+Q_{\mu\nu}{}^{(\bT)}\pi^{\mu\nu}.
\end{align*}
In view of (\ref{wave1}), we obtain (\ref{dpu}).

By the divergence theorem we have for $\ti P_\mu$ that
\begin{equation}\label{div1}
\int_\H L^\mu \ti P_\mu=\int_{\Sigma_{t_2}\cap \D^+}\ti P_\mu \bT^\mu
-\int_{\Sigma_{t_1}\cap \D^+} \ti P_\mu \bT^\mu-\int_{\D^+} \bd^\mu \ti P_\mu.
\end{equation}
Note that
\begin{align*}
\int_\H L^\mu \ti P_\mu&=\int_{\H}\f12(|\bd_L u|^2+|\sn u|^2+(n^{-1} F_u)^2)-n^{-1} F_u \bd_L u\\
&=\int_\H \f12(\frac{1}{2}|\bd_L u|^2+|\sn u|^2-(n^{-1}F_u)^2)+(\f12\bd_L u- n^{-1}F_u)^2.
\end{align*}
Thus
\begin{equation*}
 \int_\H \f12(\frac{1}{2}|\bd_L u|^2+|\sn u|^2)\le \int_\H L^\mu \ti P_\mu+\f12(n^{-1}F_u)^2.
\end{equation*}
Also using (\ref{div1}), we obtain
\begin{equation}\label{fu1}
\F[u](\H)\les \int_\H \f12(n^{-1}F_u)^2+\left|\int_{\Sigma_{t_2}\cap \D^+}\ti P_\mu \bT^\mu-\int_{\Sigma_{t_1}\cap \D^+} \ti P_\mu \bT^\mu\right|+\left|\int_{\D^+} \bd^\mu \ti P_\mu\right|.
\end{equation}
Now  consider the terms on the right of (\ref{fu1}).
By trace inequality,
\begin{align}
\int_{\H}(n^{-1}F_u)^2&\les \int_{t_1}^{t_2}\|F_u\|_{H^1}\|F_u\|_{L_x^2}.\label{flfu}
\end{align}
By definition of $\ti P_\mu$ and $C^{-1}<n<C$,  for any $0< t'\le T$,
\begin{equation}\label{eng3}
\left|\int_{\Sigma_{t'}\cap \D^+} \bT^\mu \ti P_\mu\right| \les \|\bd u\|_{L_x^2}^2+\|F_u\|_{L_x^2}^2.
\end{equation}
For the third term, by (\ref{dpu}), there holds
\begin{align}
\left|\int_{\D^+}\bd^\mu \ti P_\mu\right|&\le \int_{t_0}^t\|{}^{(\bT)}\pi\|_{L_x^\infty}\|\bd u\|_{L_x^2}(\|v\|_{L_x^2}+\|\bd u\|_{L_x^2})\nn\\&+\int_{\D^+}\left| n^{-1} v F_v+\bd^i (n^{-1}F_u)\bd_i u\right|.\label{eng2}
\end{align}

\begin{proof}[Proof of Proposition \ref{fluxg}]

We first apply (\ref{eng2}) to the modified energy momentum tensor $\ti P_\mu$
corresponding to $(u,v)=(\hn g, -2\hn k)$ or $(k,E)$. In view of (\ref{tu0}), we obtain
\begin{align*}
\left|\int_{\D^+}\bd^\mu \ti P_\mu\right|&\les \|\hn g, k\|_{L_t^1 L_x^\infty}\|\hn g, k\|_{L_t^\infty H^1}^2.
\end{align*}
By Proposition \ref{cor2} and (\ref{tu0}) we have
\begin{equation*}
\left|\int_{\Sigma_{t'}\cap \D^+} \bT^\mu \ti P_\mu\right|\le  C \qquad \mbox{and} \qquad
\int_{\H}(n^{-1}F_u)^2\le C.
\end{equation*}
Therefore we can conclude that $\F[\hn g, k]\le C$.

Recall again that $(u,v) =(\hn g, -2\hn k)$ and $(k,E)$ satisfy (\ref{lu3}) with $(F_u, F_v)$ given by
(\ref{8.20.0}) and (\ref{8.20.1}) respectively. Then the pair $(U^\mu,V^\mu)=(P_\mu u, P_\mu v)$
satisfies (\ref{lu3}) with $(F_{U^\mu}, F_{V^\mu})$ given by in (\ref{puuv}).
In view of (\ref{rem2.1})--(\ref{fv}), (\ref{eng4}), and Proposition \ref{hepeg}, we have
\begin{align}
&\|\mu^{\f12+\ep}F_{U^\mu}\|_{l_\mu^2 L^2}+\|\mu^{-\f12+\ep}\hn F_{U^\mu}\|_{l_\mu^2 L_x^2}
  \les \|\hn(\hn g, k)\|_{H^\ep}\les 1, \label{rem2.2}\\
&\|\mu^\ep\hn F_{U^\mu}\|_{l_\mu^2 L^2}
\les \left(\|k, \hn g\|_{L_x^\infty}+1\right)\|\hn(\hn g, k)\|_{H^\ep}
 \les\|k, \hn g\|_{L_x^\infty}+1, \label{rem2.3}\\
&\|\mu^\ep F_{V^\mu} \|_{l_\mu^2 L^2}
 \les\left(\|k,\hn g\|_{L_x^\infty}+1\right)\|\hn(\hn g,k)\|_{H^\ep}
 \les \|k,\hn g\|_{L_x^\infty}+1.  \label{rem2.4}
\end{align}

Define
\begin{equation}\label{defb1mu}
  \B^{(1)}_\mu=\mu^{2\ep}\|F_{U^\mu}\|_{H^1}\|F_{U^\mu}\|_{L_x^2}.
  \end{equation}
 Similar to (\ref{flfu}), we have
$
\int_{\H}\mu^{2\ep}(n^{-1}F_{U^\mu})^2\les \int_{t_1}^{t_2}\B^{(1)}_\mu.
$
Using (\ref{rem2.2}), it yields
\begin{equation}\label{B1mu}
\sum_{\mu>1}\B^{(1)}_\mu\les \|\mu^{\ep-\f12}F_{U^\mu}\|_{l_\mu^2 H^1}\|\mu^{\ep+\f12}F_{U^\mu}\|_{l_\mu^2 L_x^2}\le C
\end{equation}
In view of (\ref{eng2}), it follows that
\begin{align*}
\sum_{\mu>1} & \left|\int_{\D^+} \mu^{2\ep}\bd^\a \ti P_\a\right|\\
&\les\int_{t_1}^{t_2} \left(\|{}^{(\bT)}\pi\|_{ L_x^\infty}\sum_{\mu>1}\|\mu^{\ep}(P_\mu \p u, P_\mu v)\|_{L_x^2}^2\nn
+\sum_{\mu>1}\|\mu^{\ep} F_{V^\mu}\|_{L_x^2}\|\mu^{\ep} P_\mu v\|_{L_x^2}\right.\\
&\left.+\sum_{\mu>1}\|\mu^{\ep} \hn F_{U^\mu}\|_{ L_x^2}\|\mu^{\ep}\bd_i P_\mu  u\|_{L_x^2}\right).
\end{align*}
Using (\ref{rem2.2})-(\ref{rem2.4}), we obtain
\begin{equation}\label{2ndt}
\sum_{\mu>1} \left|\int_{\D^+}\mu^{2\ep} \bd^\a \ti P_\a\right|
\les \|{}^{(\bT)}\pi, \hn g, \hn Y\|_{L_t^1 L_x^\infty}\|\hn(\hn g, k)\|_{H^\ep}^2\le C.
\end{equation}
In view of (\ref{eng3}),  (\ref{rem2.2}) and  (\ref{eng4}),
\begin{align}
\left|\int_{\Sigma_{t'}\cap \D^+} \mu^{2\ep}\bT^\a \ti P_\a\right|
&\les \sum_{\mu>1} \mu^{2\ep}(\|\bd P_\mu u\|_{L_x^2}^2+\| F_{U^{\mu}}\|_{L_x^2}^2)
\le C\label{3rdt}
\end{align}
We conclude in view of (\ref{B1mu}), (\ref{2ndt}) and (\ref{3rdt}) that
$$
\sum_{\mu>1}\mu^{2\ep}(\F[P_\mu \hn g]+\F[P_\mu k])\le C.
$$
The proof is thus complete.
\end{proof}

\section{\bf Strichartz estimate and Main estimates}\label{sec4}

In this section we will show that (BA1) and (\ref{BA2}) can be improved. For ease of exposition,
we shift the origin of time coordinate to $t_0$ and consider $[0,T]\times\Sigma$. Now we make
the following additional bootstrap assumption: there is a constant $B_0$ such that
\begin{align}
\|\mu^{\d}P_\mu \hn g\|_{l_\mu^2 L^2_{[0, T]} L_x^\infty}
+\|\mu^{\d} P_\mu k\|_{l_\mu^2 L^2_{[0,T]} L_x^\infty} &\le B_0,\label{BA3}\tag{BA3}
\end{align}
where $0<\d<s-2$ is a sufficiently small number. As an immediate consequence of  (\ref{BA2}), (\ref{BA3}), 
and (\ref{eqn20}), there holds
\begin{equation*}
\|\mu^{\d} P_\mu (g\hn g)\|_{l_\mu^2 L^2_{[0,T]} L_x^\infty} \les B_0.
\end{equation*}
This estimate will always be used together with (\ref{BA3}). 
Our goal is  to show that the estimates
in (\ref{ba1}), (\ref{BA2}) and (\ref{BA3}) can be improved by shrinking the time interval if necessary.
We will achieve this by establishing the following main estimates.


\begin{theorem}[Main Estimates]\label{mainformetric}
Let (\ref{BA2}) and (\ref{BA3}) hold for some sufficiently small number $0<\d<s-2$.
Then for any number $q>2$ that is sufficiently close to $2$ there holds
\begin{equation*}
\|\hn g, k\|_{L^2_{[0, T]} L_x^\infty}
+\|\mu^{\delta}P_\mu (\hn g, k)\|_{l_\mu^2 L^2_{[0, T]} L_x^\infty} \les T^{\f12-\frac{1}{q}}.
\end{equation*}
If $\phi$ is a function satisfying $\Box_\bg \phi=0$,  then there holds
\begin{equation*}
\|\p\phi\|_{L_I^2 L_x^\infty}^2+\|\mu^{\delta} P_\mu \p\phi\|_{l_\mu^2 L_I^2 L_x^\infty}^2
\le C T^{1-\frac{2}{q}}\|\hn \phi, e_0 \phi\|^2_{H^{1+\ep}(0)}.
\end{equation*}
\end{theorem}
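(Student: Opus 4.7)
The plan is to reduce both estimates, via Littlewood--Paley decomposition, to dyadic Strichartz bounds of the form (\ref{11.3.1}) and (\ref{plphi}) for $\la$ large; the low-frequency contributions are controlled directly by the $H^{1+\ep}$ energy estimates from Section \ref{sec3} combined with Bernstein's inequality, so only the high-frequency tail remains. For each such $\la$ I partition $[0,T]$ into $\les \la^{8\ep_0}$ sub-intervals $I_k$ satisfying (\ref{11.8.1}), which is possible by (\ref{BA3}) together with a greedy selection procedure, and prove the single-interval dyadic Strichartz bound (\ref{plagen2}). Summing over $k$ by Cauchy--Schwarz (or H\"older) in $k$ and using $|I_k|\les \la^{-8\ep_0}T$ together with the count $\les \la^{8\ep_0}$ then produces both the $\la^{-\d}$ gain and the $T^{\f12-1/q}$ factor appearing in the statement.

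The single-interval dyadic estimate (\ref{plagen2}) is obtained by a $\T\T^*$ argument applied to the modified Duhamel formula (\ref{plagen1}), where the insertion of the reproducing factor $\ti P_\la$ is essential because the evolution operator $W(t,s)$ associated to the non-truncated metric $\bg$ does not preserve frequency localization. The $\T\T^*$ step reduces the Strichartz bound, after a rescaling $(t,x)\mapsto(\la(t-t_{k-1}),\la x)$, to the dispersive estimate (\ref{dispst1}) for $P\bd_\bT W(t,s)I[s]$ on a time interval of rescaled length $\les \la^{1-8\ep_0}T$. Localizing the data with a partition of unity $\{\chi_J\}$ on balls of rescaled radius one further reduces matters to the localized dispersive estimate (\ref{dispst2}) in the causal future of a unit ball in $\Sigma_{\tau_0}$, where the optical function $u$, the null frame $\{L,\Lb,e_A\}$, the Morawetz vector field $K=\f12 n(u^2\Lb+\ub^2 L)$, and the generalized conformal energy $\ti Q[\phi]$ defined via (\ref{gneng1}) and propagated by the divergence identity (\ref{eniden}) are all available.

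The main obstacle is the missing control on $\bd_\bT Y$, which otherwise contaminates the commutator $[\Box_\bg,\bd_\bT]\phi$, the deformation tensor ${}^{(K)}\bar\pi$, and the forcing in (\ref{plagen1}); this forces a substantial departure from \cite{KR1}--\cite{KRd}. I circumvent it by (i) refining (\ref{plagen1}) via a further modification of the Duhamel step that excludes $\bd_\bT Y$ from the right-hand side, (ii) bounding $\|P\bd_\bT\phi\|_{L_x^\infty}$ only in terms of $\ti Q[\phi]$ itself, by writing $\bd_\bT\phi = L\phi-N\phi$ in the exterior region, treating $L\phi$ by Bernstein and $N\phi$ via (\ref{trt1}) with the commutator estimates of Section \ref{apiii}, and (iii) controlling the Ricci coefficients $\chih,\zeta$ and $\tr\chi-\tfrac{2}{n(t-u)}$ entering ${}^{(K)}\bar\pi$ in the $L_u^\infty L_t^2 L_\omega^\infty$-type norms through the $L^{2+}$ flux afforded by Proposition \ref{fluxg}, the Hodge system $\div\chih=\tfrac12 \sn\tr\chi-\b+\cdots$ after rewriting $\b$ as $\sn\ti\pi+\cdots$ (excluding $\bd_\bT Y$), and a Calderon--Zygmund inequality modified to absorb the $\ln\la$ loss into the Strichartz gain recovered by bootstrapping (\ref{BA3}).

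Finally, the wave-equation estimate on $\p\phi$ is the direct output of this scheme applied to $\Box_\bg\phi=0$ under the reduction (\ref{wv3}). The nonlinear estimate on $(\hn g,k)$ follows by the same argument applied to the pairs $(g,-2k)$ and $(k,E)$, which satisfy the first-order hyperbolic system (\ref{lie3}) with remainders controlled by Proposition \ref{hepeg}; one treats the scalar components of $\hn g$ and $k$ as ``$\p\phi$'' and uses the flux and energy bounds of Sections \ref{sec3}--\ref{flx} as the substitute for $\|\hn\phi,e_0\phi\|_{H^{1+\ep}(0)}$. The $\mu^{\d}$-weighted $l_\mu^2$ dyadic square sum then follows from the same reduction applied uniformly in $\la=\mu$, with $\d$ chosen strictly smaller than the raw Strichartz gain produced by the single-interval step.
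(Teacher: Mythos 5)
Your proposal correctly reproduces the paper's strategy for Theorem \ref{mainformetric}: reduce via Littlewood--Paley and the frequency-dependent partition (\ref{part}) to the dyadic Strichartz estimate of Theorem \ref{ttstar}, prove the latter by a $\T\T^*$ argument with a Duhamel representation modified (via the first-order hyperbolic structure of (\ref{lie3}) and the representation (\ref{pmug})--(\ref{ppg})) to preserve frequency localization and avoid $\bd_\bT Y$, reduce the required dispersive decay to the boundedness of the conformal energy of $\psi$ alone (Theorem \ref{BT}), and control the deformation tensor and Ricci coefficients through the $L^{2+}$ flux and modified Calderon--Zygmund inequality of Sections \ref{ricc}--\ref{sec7}. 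The only imprecision is where you place the source of the $\mu^{-\delta}$ gain: summing over the $\sim\mu^{8\epsilon_0}$ subintervals actually \emph{costs} a factor $\sim\mu^{4\epsilon_0}$, and the gain comes from the $H^{1+\a}$ energy control (Propositions \ref{eg1}, \ref{geoeg}, \ref{remaindd}) with $\a=4\epsilon_0+\delta_0+\delta<s-2$ absorbing both that cost and the $\mu^{\delta_0}$ loss; otherwise your route is the paper's.
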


\subsection{Decay estimate $\Rightarrow$ Strichartz estimates}

Let us rescale the coordinate $(t,x)\rightarrow (\frac{t}{\lambda},
\frac{x}{\lambda})$ for some positive constant $\la$. We first prove Strichartz
estimate by assuming the following decay estimate.

\begin{theorem}[Decay estimate]\label{decayth}
Let $0<\ep_0<s-2$ be a given number. There exists a large number $\La$ such that for
any $\la\ge \La$ and any solution $\psi$ of the equation
\begin{equation}\label{wave.4}
\Box_\bg \psi=0
\end{equation}
on the time interval $I_*=[0, t_*]$ with $t_*\le \la^{1-8\ep_0} T$ there is a function $d(t)$
satisfying
\begin{equation}\label{correccondi}
 \|d\|_{L^{\frac{q}{2}}}\les 1, \mbox{ for } q>2 \mbox{ sufficiently close to }2
 \end{equation}
such that for any $0\le t\le t_*$ there holds
\begin{equation}\label{decay}
 \|P e_0 \psi(t) \|_{L_x^\infty}\le \left(\frac{1}{{(1+t)}^{\frac{2}{q}}}+d(t)\right)
 (\|\psi[0]\|_{H^1}+\|\psi(0)\|_{L^2}),
\end{equation}
where $\psi[0]:=(\psi(0), \p_t \psi(0))$ and $\|\psi[0]\|_{H^1}:=\|\hn \psi(0)\|_{L^2}+\|\p_t \psi(0)\|_{L^2}$.
\end{theorem}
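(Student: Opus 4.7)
\medskip
\noindent\textbf{Proof proposal for Theorem \ref{decayth}.}

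The plan is to reduce the decay estimate to a local-in-space dispersive estimate of the form (\ref{dispst2}) via a partition of unity adapted to the rescaled geometry, then establish that estimate through a conformal/Morawetz energy argument coupled with careful commutator analysis. Concretely, I would first fix a cover of $\Sigma$ by balls of unit radius in the rescaled coordinates together with a subordinate partition of unity $\{\chi_J\}$, and decompose $\psi=\sum_J \psi_J$ where $\Box_\bg \psi_J=0$ and $\psi_J[0]=\chi_J\cdot\psi[0]$. Since for each $J$ the data is supported in a unit ball and there are only polynomially many $J$'s contributing at any $(t,x)$, it suffices to prove
\begin{equation*}
\|P e_0 \phi(t)\|_{L^\infty_x}\les \Bigl((1+|t-\tau_0|)^{-2/q}+d(t)\Bigr)\sum_{k=0}^{m-2}\|\hn^k\phi[\tau_0]\|_{L^2}
\end{equation*}
for $\phi$ a solution of $\Box_\bg\phi=0$ with data supported in a unit ball on $\Sigma_{\tau_0}$, $\tau_0\approx 1$.

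Next I would install the geometric apparatus on the causal future $\J_0^+$ of the initial support. Following the outline, introduce the optical function $u$ with $u=t-\tau_0$ on the time axis, the null cones $C_u$, the surfaces $S_{t,u}$, and the null frame $\{L,\Lb,e_1,e_2\}$; set $\ub=2t-u$ and form the Morawetz vector field $K=\frac12 n(u^2\Lb+\ub^2 L)$. For the scalar function $\phi$, define the conformal energy
\begin{equation*}
\tilde Q[\phi](t)=\int_{\Sigma_t}\bar Q(K,\bT)[\phi],
\end{equation*}
with $\bar Q$ as in (\ref{gneng1}). Applying the energy identity (\ref{eniden}) to $f=\phi$ (rather than $\bd_\bT\phi$) and exploiting $\Box_\bg\phi=0$, the bulk contribution reduces to $\int_{\J_0^+}{}^{(K)}\bar\pi\cdot Q[\phi]$. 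The crucial point of this step is that we need bounds only on ${}^{(K)}\bar\pi$ and its components built from ${}^{(\bT)}\pi$, $\chi$, $\zeta$ and other Ricci coefficients in the Strichartz-type norms described in Step 4 of the outline (with $L_t^{q/2}L_x^\infty$ for $\chih,\zeta$ and $L^\infty$ for $\tr\chi$), which we may assume from the Ricci coefficient estimates developed there.

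The heart of the argument is to convert a bound on $\tilde Q[\phi](t)$ into pointwise decay for $P e_0\phi(t)$. I would work in the exterior region, multiply by a cutoff $\varpi$ supported there (handling the interior region by a direct $L^2$-based Sobolev/Bernstein estimate, which is the easier case), and write
\begin{equation*}
P(\varpi e_0\phi)=P(\varpi L\phi)-P(\varpi N\phi),
\end{equation*}
using $e_0=\bT=L-N$ on the right. For the first term, Bernstein's inequality combined with the flux-like control of $L\phi$ inherent in $\tilde Q[\phi](t)$ yields a gain of $(1+|t-\tau_0|)^{-2/q}$ up to an error absorbed in $d(t)$. For the second term, expand as in (\ref{trt1}):
\begin{equation*}
P(\varpi N\phi)=\varpi N^l\p_l P\phi+[P,\varpi N^l]\p_l\phi.
\end{equation*}
The first piece is reduced to $\tilde Q[\phi]$ via Sobolev embedding on $S_{t,u}$ and standard commutation. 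The commutator is controlled using the modified Calder\'on--Zygmund machinery of Section \ref{czz} together with the key input $\|\p(\varpi N)\|_{L^\infty_x}\les \|\chi,\zeta,\hn g,k\|_{L^\infty_x}$, whose temporal $L^{q/2}$ norm is finite by the Ricci coefficient theorem. The function $d(t)$ is built from the $L^{q/2}_t$-integrable tails of these Ricci norms, giving (\ref{correccondi}).

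The main obstacle I anticipate is twofold: first, ensuring that every step avoids any integration involving $\bd_\bT Y$ (this forces the use of the modified energy momentum tensor from Section \ref{flx}, the reduction to $\tilde Q[\phi]$ instead of $\tilde Q[\bd_\bT\phi]$, and the more invariant decomposition of curvature components referenced in Section \ref{decmp}); second, closing the bootstrap on the conformal energy $\tilde Q[\phi](t)$ itself, which requires that the deformation tensor terms ${}^{(K)}\bar\pi\cdot Q[\phi]$ integrate against the Ricci coefficient norms rather than against higher Sobolev norms of curvature that would force $s>5/2$. The final extraction of pointwise decay from the $L^{q/2}_t$-integrable error relies on the choice $m=3$ so that an $L^\infty$ Sobolev embedding at the $S_{t,u}$ level is affordable with the differentiability at hand.
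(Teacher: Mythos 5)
Your proposal follows essentially the same route as the paper: localize the data at a time slice $t_0\approx 1$ by a unit-scale partition of unity, prove a dispersive estimate for localized data via the conformal energy $\tilde Q[\phi]$ of $\phi$ itself (not of a derivative), and extract pointwise decay through the splitting $P(\varpi e_0\phi)=P(\varpi L\phi)-P(\varpi N\phi)$ followed by the expansion $P(\varpi N\phi)=\varpi N^l\p_l P\phi+[P,\varpi N^l]\p_l\phi$, Sobolev interpolation on $S_{t,u}$, and the Strichartz-type Ricci coefficient control of $\|\hn(\varpi N)\|_{L^\infty_x}$. One minor misattribution: the commutator $[P,\varpi N^l]\p_l\phi$ is bounded by the direct Littlewood–Paley commutator estimate (Lemma \ref{comh1}), not by the modified Calder\'on–Zygmund machinery of Section \ref{czz}; the latter enters only one level upstream, in establishing the Ricci coefficient bound (\ref{aricc}) that controls $\|\hn(\varpi N)\|_{L^{q/2}_t L^\infty_x}$.
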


Using Theorem \ref{decayth}, we can prove the following result.

\begin{theorem}[Dyadic Strichartz estiamte]\label{ttstar}
There is a large universal constant $C_0$ such that if on the time interval $I_*:=[0, t_*]$
there holds
\begin{equation}\label{smles}
C_0 \|\pi, \hn g, \hn Y\|_{L^1_{I_*} L_x^\infty}\le 1,
\end{equation}
then for any $\phi$ satisfying the wave equation $\Box_\bg \phi=0$ and  $q>2$  sufficiently close to $2$, there holds
\begin{equation}\label{str3}
\|P\p \phi\|_{L^q_{I_*} L_x^\infty} \les \| \phi[0]\|_{H^1},
\end{equation}
where $P$ denote the Littlewood-Paley projection on the frequency
domain $\{1/2\le |\xi|\le 2\}$.
\end{theorem}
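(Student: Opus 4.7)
The plan is to deduce Theorem~\ref{ttstar} from the dispersive estimate of Theorem~\ref{decayth} by a $TT^*$ argument, with the smallness hypothesis (\ref{smles}) and the basic energy inequality (\ref{h1en}) guaranteeing that the solution operator $W(t,s)$ for $\Box_\bg$ and its adjoint are uniformly bounded on the energy space $\dot H^1\times L^2$ over $I_*$, so that solution operators can be composed freely in the $TT^*$ step. The first move is a frequency-localisation reduction: since $P\p\phi$ is localised at frequency $\sim 1$, the reproducing identity $P=\tilde P\tilde P$ (with $\tilde P$ a slightly widened Littlewood--Paley multiplier, exactly as in (\ref{plagen1})) allows one to rewrite
\[
P\p\phi(t)\;=\;\tilde P\,\p\,W(t,0)\,\tilde P\,\phi[0] \;+\; (\text{commutator errors}),
\]
with the commutator errors controlled by (\ref{smles}) through the commutator machinery developed in the earlier sections. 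The task reduces to proving the mapping bound $T:(f_0,f_1)\mapsto\tilde P\,\p\,W(t,0)(f_0,f_1)$ from $H^1\times L^2$ to $L^q_{I_*}L^\infty_x$, applied to data already frequency-localised at scale $1$.

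By duality this is equivalent to $TT^*:L^{q'}_{I_*}L^1_x\to L^q_{I_*}L^\infty_x$, and formally
\[
(TT^*G)(t)\;=\;\int_{I_*}\tilde P\,\p\,W(t,s)\,\p^*\,\tilde P\,G(s)\,ds.
\]
For fixed $s$, the integrand is $\tilde P\,\p\,\psi_s(t)$ with $\psi_s$ solving $\Box_\bg\psi_s=0$ from frequency-$1$-localised Cauchy data at time $s$, and Bernstein applied to those data gives $\|\psi_s[s]\|_{H^1}+\|\psi_s(s)\|_{L^2}\lesssim\|G(s)\|_{L^1_x}$. Theorem~\ref{decayth} then yields the pointwise-in-$s$ kernel bound
\[
\|\tilde P\,e_0\psi_s(t)\|_{L^\infty_x}\;\lesssim\;\bigl((1+|t-s|)^{-2/q}+d(t-s)\bigr)\|G(s)\|_{L^1_x},
\]
and the spatial component $\tilde P\,\hn\psi_s$ is handled in the same way, either by invoking $\Box_\bg\psi_s=0$ to trade $\hn^2\psi_s$ for $e_0(e_0\psi_s)$ (a Hodge-type reduction) or by absorbing the spatial derivative into the $\p^*$ factor on the other side of the kernel, at the cost of the smallness in (\ref{smles}).

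Substituting the kernel estimate and integrating in $s$, Young's inequality together with Hardy--Littlewood--Sobolev give $\|TT^*G\|_{L^q_{I_*}L^\infty_x}\lesssim\|G\|_{L^{q'}_{I_*}L^1_x}$: for $q>2$ sufficiently close to $2$ the weight $(1+|t|)^{-2/q}$ lies in weak-$L^{q/2}$, and $\|d\|_{L^{q/2}}\lesssim 1$ by (\ref{correccondi}), so both contributions to the kernel convolve appropriately from $L^{q'}_t$ to $L^q_t$. Passing back through duality produces (\ref{str3}). The main obstacle I anticipate is precisely the $TT^*$ symmetrisation in the absence of any frequency truncation of $\bg$: because $W(t,s)$ does not preserve frequency, the reproducing trick $P=\tilde P\tilde P$ and sharp control of the induced commutators are essential to prevent the argument from acquiring factors that grow with $t_*$, a difficulty explicitly flagged in the discussion surrounding (\ref{plagen1}).
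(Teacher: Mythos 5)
Your proposal misses the central technical difficulty that the paper's proof is designed to overcome, and as written the $\T\T^*$ step does not close. You treat $\T\T^*$ as a clean kernel operator
$(\T\T^*G)(t)=\int \tilde P\,\p W(t,s)\,\p^*\tilde P\,G(s)\,ds$
with "commutator errors controlled by (\ref{smles})". But the adjoint of $\T$ with respect to the fixed inner product on $\H=H^1\times L^2$ is \emph{not} the backward solution operator: because the inner product uses the flat frame while the wave operator is $\Box_\bg$, the energy identity (\ref{expr1}) produces an extra term $\int Q[\phi,\psi]_{\a\b}{}^{(\bT)}\pi^{\a\b}$. Consequently $\T^*f=\psi[t_0]+R(f)$, where $\psi$ solves the backward problem $\Box_\bg\psi=-Pf$, $\psi(t_*)=\p_t\psi(t_*)=0$, and $R(f)$ is a genuine remainder, not a commutator. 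This remainder is the key obstruction: to estimate $\|\T R(f)\|_{X}$ one needs $\|\bd\psi\|_{L^\infty_tL^2_x}$, which cannot be bounded uniformly in $t_*$ by a plain energy estimate (that would give a factor $t_*^{1/q}$ growing like $\la^{(1-8\ep_0)/q}$). The paper instead proves $\|\bd\psi\|_{L^\infty_tL^2_x}\le C_2 M\|f\|_{L^{q'}L^1}$ where $M=\|\T\|_{\H\to X}$ is the very quantity being bounded, so that $\|\T R(f)\|_X\le C_1C_2M^2\|\pi\|_{L^1L^\infty}\|f\|_{L^{q'}L^1}$, and the smallness (\ref{smles}) is used precisely to absorb this into the bootstrap inequality $M^2\le C+\tfrac12 M^2$. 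Your proposal contains no such self-referential bootstrap, so the error term is left uncontrolled.

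Two further points. First, the reproducing identity $P=\tilde P\tilde P$ plays no role in the proof of Theorem \ref{ttstar}; it is used elsewhere (in passing from Theorem \ref{ttstar} to Proposition \ref{mainpro}) to handle the loss of frequency localization across subintervals. Here the Duhamel source already carries the projection $P$ (the data is $(0,-n^2Pf)$), which is what allows Bernstein to produce $\|\hn(n^2Pf)\|_{L^2}\les\|f\|_{L^1}$; no $\tilde P$ on the right is needed, and the identity you write, $P\p\phi(t)=\tilde P\p W(t,0)\tilde P\phi[0]+\text{errors}$, is not valid because $W(t,0)$ does not preserve frequency and the commutator is not small. Second, your treatment of the spatial and time derivatives of $\phi$ is too brief: the paper establishes (\ref{pen}) for $Pe_0\phi$ first, then handles $P\p_m\phi$ by a separate duality pairing with $Z=\p_m$ in the energy identity (\ref{enbinlinear}) — crucially using that ${}^{(\p_m)}\pi=\bg\cdot\tilde\pi$ excludes $\bd_\bT Y$ — and finally obtains $P\p_t\phi$ by writing $\p_t=ne_0+Y^m\p_m$ and estimating the resulting commutators $[P,n]e_0\phi$, $[P,Y^m]\p_m\phi$ via (\ref{bdcp}) and the rescaled bounds $\|\hn n,\hn Y\|_{L^q L^\infty}\les\la^{-1+1/q}$. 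Neither the "Hodge-type reduction" nor "absorbing the spatial derivative into $\p^*$" that you sketch corresponds to a step that would work here.
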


We will prove Theorem \ref{ttstar} by adapting a $\T\T^*$ argument from \cite{KRduke,KR1}.
Applying the $\T\T^*$ argument therein  directly to our setting
requires the  control over  $\p \bg$ including the undesired quantity $\bd_\bT Y$.
To get around this difficulty, we  give a careful refinement.

\begin{definition}
Let $\omega:=(\omega_0, \omega_1)\in H^1(\Sigma)\times L^2(\Sigma)$.
We denote by $\phi(t;s,\omega)$ the unique solution of the homogeneous geometric wave
equation $\Box_\bg \phi=0$ satisfying the initial condition $\phi(s;s,\omega)=\omega_0$
and $\bd_0 \phi(s;s,\omega)=\omega_1$. We set $\Phi(t;s,\omega):=(\phi(t;s,\omega),
\bd_0\phi(t;s,\omega))$. By uniqueness we have $\Phi\left(t;s,\Phi(s;t_0,\omega)\right)=\Phi(t;t_0,\omega)$.
\end{definition}

We first show that
\begin{equation}\label{pen}
\|P (e_0\phi)\|_{L^q_{I_*} L_x^\infty} \les \| \phi[0]\|_{H^1}.
\end{equation}
To this end, we let $\H:= H^1(\Sigma)\times L^2(\Sigma)$ endowed with the inner product
\begin{equation*}
\l \omega, v\r=\int_{\Sigma}\big(\omega_1\c v_1+\delta^{ij} \bd_i \omega_0\c \bd_j v_0\big)
\end{equation*}
relative to the orthornormal frame $\{e_0=\bT, e_i=1,2,3\}$. Let $I=[t',t_*]$ with $0\le t'\le t_*$  and let $X=L^q_{I} L_x^\infty$.
 Then the dual of $X$ is $X'=L^{q'}_{I} L_x^1$, where $1/q'+1/q=1$. Let $\T(t'): \H \to X$ be the
 linear operator defined by
\begin{equation}\label{defofT}
\T(t') \omega:=P \bd_0 \phi(t;t',\omega),
\end{equation}
where $\phi:=\phi(t;t',\omega)$ is the unique solution of $\Box_{\bg} \phi=0$ satisfying
$\phi(t')=\omega_0$ and $D_0\phi(t')=\omega_1$ with $\omega:=(\omega_0, \omega_1)$.

By using the Bernstein inequality for LP projections and the energy estimate it is easy to see that
$\T(t'):\H\to X$ is a bounded linear operator, i.e.
\begin{equation}\label{asstrch}
\|\T(t')\omega\|_X=\|P (e_0\phi)\|_{L^q_{I} L_x^\infty}\le C(\la)\|\bd\phi(t')\|_{L_x^2}
\end{equation}
for some constant $C(\la)$ possibly depending on $\la$.  Let $M(t'):=\|\T(t')\|_{\H\to X}$. Then $M(t')<\infty$,
and for the adjoint $\T(t')^*: X'\to \H$ we have
$$
\|\T(t')^*\|_{X'\to \H} =M(t'), \quad \|\T(t')\T(t')^*\|_{X'\to X}=M(t')^2.
$$
Note that $M(\cdot)$ is a continuous function on $I_*$, whose  maximum, denoted by $M$, is achieved at certain $t_0\in [0,t_*)$.
Our goal is to show that $M$ is independent of $\la$.
 Our strategy is to show that
\begin{equation}\label{gl1}
M^2\le C+\f12 M^2
\end{equation}
for some universal positive constant $C$ independent of $\la$. Let us set $I_0=[t_0, t_*]$ and consider $X=L^q_{I_0} L_x^\infty$ , $X'=L^{q'}_{I_0} L_x^1$, and the operators $\T(t_0)$ and $\T(t_0)^*$. For convenience, we drop the $t_0$ in the notation for operators.

We first calculate $\T^*: X'\to \H$. For any $f\in X'$ and $\omega\in \H$ we have
\begin{align*}
\l \T^* f, \omega\r_\H &=\l f, \T\omega\r_{X', X} =\int_{I_0\times \Sigma} f P \bd_0 \phi
=\int_{I_0\times \Sigma} (Pf) \bd_0 \phi(t,t_0, \omega).
\end{align*}
We introduce the function $\psi$ to be the solution of the initial value problem
\begin{equation}\label{psiastz}
\left\{\begin{array}{lll}
\Box_\bg \psi=-Pf, \quad \mbox{in } [t_0, t_*)\times \Sigma,\\
\psi(t_*)=\p_t \psi(t_*)=0.
\end{array}\right.
\end{equation}
Recall the energy momentum tensor $Q[\phi, \psi]$ introduced in (\ref{momentum}).
For any vector field $Z$ we set $P_\mu:=Q[\phi, \psi]_{\mu\nu} Z^\nu$.  In view of $\Box_\bg \phi=0$, it is easy to check that
\begin{equation*}
 \bd^\b P_\b=\frac{1}{2}\left((Z \phi) \Box_\bg \psi+Q[\phi, \psi]_{\a\b}{}^{(Z)}\pi^{\a\b} \right).
\end{equation*}
By the divergence theorem we have
\begin{align}\label{enestr1}
\int_{\Sigma_{t_*}} Q[\phi, \psi]_{\mu\nu} Z^\mu \bT^\nu
-\int_{\Sigma_{t_0}} Q[\phi, \psi]_{\mu\nu} Z^\mu \bT^\nu
&=\int_{I_0\times \Sigma} \bd^\b P_\b
\end{align}
which together with the initial conditions in (\ref{psiastz}) implies that
\begin{equation}\label{enbinlinear}
\int_{I_0\times \Sigma} -(Z\phi) \Box_\bg \psi=2\int_{\Sigma_{t_0}}\bT^\a P_\a
+\int_{I_0\times\Sigma} Q[\phi, \psi]_{\a\b}{}^{(Z)}\pi^{\a\b}.
\end{equation}
Now we take $Z=\bT$. Then it follows from (\ref{enbinlinear}) that
\begin{align}\label{expr1}
\int_{I_0\times \Sigma} -\bd_0\phi \Box_\bg \psi
&=\int_{\Sigma_{t_0}} \bd_0 \phi\bd_0 \psi+\delta^{ij} \bd_i \phi\bd_j \psi
+ \int_{I_0\times\Sigma} Q[\phi, \psi]_{\a\b}{}^{(\bT)}\pi^{\a\b}.
\end{align}
Therefore
\begin{equation}\label{10.4.1}
\l \T^*f, \omega\r_\H=\l \psi[t_0],\omega\r_\H+l(\omega),
\end{equation}
where $l(\cdot)$ is a linear functional on $\H$ defined by
\begin{equation*}
l(\omega):= \int_{I_0\times\Sigma} Q[\phi, \psi]_{\a\b}{}^{(\bT)}\pi^{\a\b}.
\end{equation*}
We claim that $l(\cdot)$ is a bounded linear functional on $\H$. To see this, let
$\omega\in \H$ with $\|\omega\|_\H\le 1$. Then by the energy estimate we have
$\|\bd \phi\|_{L_t^\infty L_x^2}\le \|\omega\|_\H\les 1$.  Thus
$$
|l(\omega)|\le \|\pi\|_{L_t^1 L_x^\infty} \|\bd \phi \|_{L_{I_0}^\infty L_x^2}
\|\bd \psi \|_{L_{I_0}^\infty L_x^2}
\les \|\pi\|_{L_t^1 L_x^\infty} \|\bd \psi \|_{L_{I_0}^\infty L_x^2}.
$$
Hence, by the Riesz representation theorem we have $l(\omega)=\l R(f), \omega\r_\H$ for some
$R(f)\in \H$ and there is a universal constant $C_1$ such that
$$
\|R(f)\|_\H\le C_1 \|\pi\|_{L_t^1 L_x^\infty} \|\bd \psi \|_{L^\infty_{I_0} L_x^2}.
$$
Moreover, we have from (\ref{10.4.1}) that $\T^*f=\psi[t_0]+R(f)$ and hence
\begin{equation}\label{10.4.2}
\T \T^* f=\T \psi[t_0]+\T R(f).
\end{equation}
We claim that there is a universal constant $C_2$ such that
\begin{equation}\label{dpsi}
\|\bd\psi\|_{L^\infty_{I_0} L_x^2}\le C_2 M \|f\|_{L^{q'}_{I_0} L_x^1}.
\end{equation}
Assuming this claim for a moment. Then it follows from the definition of $M$ that
\begin{equation*}
\|\T R(f)\|_{L^{q}_{I_0} L_x^\infty}\le C_1 C_2  M^2 \|\pi\|_{L_t^1 L_x^\infty}
\|f\|_{L^{q'}_{I_0} L_x^1}.
\end{equation*}
Thus, if (\ref{smles}) holds with $C_0\ge 2C_1 C_2$, then
\begin{equation}\label{10.4.3}
\|\T R(f)\|_{L^{q}_{I_0} L_x^\infty}\le \frac{1}{2} M^2 \|f\|_{L^{q'}_{I_0} L_x^1}.
\end{equation}

Next we will estimate $\|\T \psi[t_0]\|_{L_{I_0}^q L_x^\infty}$. We set $F:=(0,-n^2 Pf)$.
By the Duhamel principle we have
\begin{equation*}
\psi[t]=\int_{t_*}^t \Phi(t; s, F(s)) ds.
\end{equation*}
Then $\psi[t_0]=-\int_{t_0}^{t_*} \Phi(t_0;s, F(s)) ds$ and thus
\begin{align*}
\T \psi[t_0]&=P \left[e_0 \phi\left(t; t_0, -\int_{t_0}^{t_*} \Phi(t_0;s, F(s))\right)\right]
=-P\left[ e_0\left(\int_{t_0}^{t_*}\Phi(t,s, F(s)) ds\right)\right]\\
&=-\int_{t_0}^{t_*} P \left[e_0 \Phi(t, s, F(s))\right] ds.
\end{align*}
It follows from Theorem \ref{decayth} that
\begin{equation*}
\|P \left[e_0 \Phi(t,s, F(s))\right]\|_{L_x^\infty} \le C\left((1+|t-s|)^{-\frac{2}{q}}+d(|t-s|)\right)\|\hn(n^2 Pf)(s)\|_{L_x^2}.
\end{equation*}
Observe that on $\Sigma$ there holds
\begin{align*}
\|\hn(n^2 Pf)\|_{L_x^2}&\les \|\hn(n^2) P f\|_{L_x^2}+\|n^2\hn Pf\|_{L_x^2}
\les (\|\hn n\|_{L_x^\infty}+1)\|f\|_{L_x^1}\les \|f\|_{L_x^1}.
\end{align*}
Thus, in view of the Hardy-Littlewood-Sobolev inequality, (\ref{correccondi}) and  Hausdorff Young inequality we obtain
\begin{equation}\label{t1}
\|\T \psi[t_0]\|_{L^q_{I_0} L_x^\infty}
\les \|f\|_{L^{q'}_{I_0}L_x^1}+\left\|\int_{t_0}^{t_*} d(|t-s|)\|f(s)\|_{L_x^1} ds\right\|_{L^q_{I_0}}
\les \|f\|_{L^{q'}_{I_0} L_x^1}.
\end{equation}
Combining (\ref{10.4.2}), (\ref{10.4.3}) and  (\ref{t1}), we therefore obtain (\ref{gl1}).

It remains to prove (\ref{dpsi}). Let $\ti \phi$ be a solution of $\Box_\bg \ti \phi=0$ in $I_*$.
Then for any $t_0\in [0, t_*]$ there holds the energy estimate $\|\bd \ti\phi(t)\|_{L^2(\Sigma)}
\les \|\bd\ti \phi(t_0)\|_{L^2(\Sigma)}$ for $t\in [t_0, t_*]$. Let $t_0\le t'<t_*$.  Similar to the derivation of
(\ref{expr1}), we have on  $I=[t',t_*]$ that
\begin{align}\label{expr1t}
\int_{I\times \Sigma} -\bd_0\ti \phi \Box_\bg \psi&=\int_{\Sigma_{t'}}\bd_0 \phi\bd_0 \psi
+\delta^{ij} \bd_i \phi\bd_j \psi+ \int_{I\times\Sigma} Q[\phi, \psi]_{\a\b}{}^{(\bT)}\pi^{\a\b},
\end{align}
which together with $\Box_\bg \psi =-P f$ gives
\begin{align*}
\l \bd\psi, \bd\ti\phi\r(t')&\le \|P e_0 \ti \phi\|_{L^q_{I} L_x^\infty}\|f\|_{L^{q'}_{I} L_x^1}+\|{}^{(\bT)}\pi\|_{L^1_{I}L_x^\infty}\|\bd\psi\|_{L_t^\infty L_x^2}\|\bd \ti \phi\|_{L_I^\infty L_x^2}
\end{align*}
According to definition of $M$, we can obtain
$\|P e_0 \ti \phi\|_{L^q_{I} L_x^\infty}\le M \|\bd \ti \phi(t')\|_{L_x^2}$.  Thus
\begin{equation*}
\l \bd\psi, \bd\ti\phi\r(t') \les \left( M\|f\|_{L^{q'}_{I} L_x^1}
+ \|{}^{(\bT)}\pi\|_{L^1_{I}L_x^\infty}  \|\bd\psi\|_{L_t^\infty L_x^2}\right) \|\bd \ti \phi(t')\|_{L_x^2}.
\end{equation*}
Since $\bd \ti\phi(t')$ can be arbitrary, there is a universal constant $C_3$ such that
$$
\| \bd\psi(t')\|_{L_x^2} \le C_3  M\|f\|_{L^{q'}_I L_x^1}
+ C_3 \|{}^{(\bT)}\pi\|_{L^1_{I}L_x^\infty}  \|\bd\psi\|_{L_I^\infty L_x^2}.
$$
Recall that $t'\in [t_0, t_*)$ is arbitrary. Thus, if (\ref{smles}) holds with $C_0\ge 2 C_3$ then
$$
\| \bd\psi\|_{L_{[t_0, t_*)}^\infty L_x^2} \le C_3 M\|f\|_{L^{q'}_{[t_0, t_*)} L_x^1}
+ \frac{1}{2} \|\bd\psi\|_{L_{[t_0, t_*)}^\infty L_x^2}.
$$
This implies claim (\ref{dpsi}) with $C_2=2 C_3$.
 The proof of (\ref{pen}) is thus completed. We also have proved for any $t\in I_*$
 \begin{equation}\label{dsp2}
\|\bd\psi\|_{L^\infty_{[t, t_*)} L_x^2}\le C_2 M \|f\|_{L^{q'}_{[t_,t_*)} L_x^1}.
\end{equation}

Now we consider $\|P (\p_m \phi)\|_{L^q_{I_*} L_x^\infty}$. It suffices to estimate
\begin{equation*}
\I=\int_{I_*\times \Sigma} f P (\p_m \phi) =\int_{I_*\times \Sigma} \p_m \phi Pf
\end{equation*}
for any function $f$ satisfying $\|f\|_{L^{q'}_{I_*} L_x^1} \le 1$.
Let $\psi$ be the solution of (\ref{psiastz}), then
$$
\I=\int_{I_*\times \Sigma}-\p_m \phi\Box_\bg \psi.
$$
In view of (\ref{enbinlinear}), we have with $Z=\p_m$ that
\begin{align*}
\int_{I_*\times \Sigma} -Z\phi \Box_\bg \psi &=2\int_{\Sigma_0}\bT^\a P_\a
+ \int_{I_*\times\Sigma} Q[\phi, \psi]_{\a\b}{}^{(Z)}\pi^{\a\b}.
\end{align*}
By direct calculation we can see that ${}^{(Z)} \pi=\bg\c\ti\pi$.
Thus it follows from the energy estimate (\ref{dsp2}) and (\ref{smles}) that
\begin{align*}
\left|\int_{I_*\times\Sigma} Q[\phi, \psi]_{\a\b}{}^{(Z)}\pi^{\a\b}\right|
&\les \|\ti\pi\|_{L^1_{I_*} L_x^\infty}\|\bd \psi\|_{L^\infty_{I_*}L_x^2}\|\bd\phi\|_{L^\infty_{I_*} L_x^2}\\
&\les \|\bd\phi(0)\|_{L_x^2}\|f\|_{L^{q'}_{I_*}L_x^1}
\end{align*}
and
\begin{align*}
\left|\int_{\Sigma_0}\bT^\a P_\a\right|
\les\|\bd\phi(0)\|_{L^2}\|\bd \psi\|_{L^\infty_{I_*} L_x^2}
\les\|\bd\phi(0)\|_{L^2} \|f\|_{L_{I_*}^{q'}L_x^1}.
\end{align*}
Therefore $|\I|\les \|\bd\phi(0)\|_{L_x^2}\|f\|_{L^{q'}_{I_*} L_x^1}$.
Hence we can conclude that
\begin{equation}\label{strspd}
\|P \p_m \phi\|_{L^q_{I_*} L_x^\infty}\le C \|\bd \phi(0)\|_{L_x^2}.
\end{equation}

Finally we prove (\ref{str3}) for the case that $\p=\p_t$, i.e.
\begin{equation}\label{ppartt}
\|P \p_t \phi\|_{L^q_{I_*} L_x^\infty}\le C\|\bd \phi(0)\|_{L_x^2}.
\end{equation}
Note that $\p_t f=n e_0 (f)+Y^m \p_m f$. We can write
\begin{equation*}
P \p_t\phi=n P e_0 (\phi)+Y^m P \p_m \phi+[P,n]e_0 \phi+[P,Y^m]\p_m \phi.
\end{equation*}
By using (\ref{bdcp}), the Bernstein inequality and the finite band property for
the Littlewood-Paley projections, we obtain
\begin{align*}
\|[P, n]e_0\phi\|_{L^{q}_{I_*} L_x^\infty}
&\les \|\hn n\|_{L^{q}_{I_*} L_x^\infty}\|(e_0\phi)_{\le 1}\|_{L^\infty}
+\sum_{\ell>1}\|P(n_{\ell}\c (e_0 \phi)_{\ell})\|_{L^q_{I_*}L_x^\infty}\\
&\les \|\hn n\|_{L^q_{I_*} L_x^\infty}\|e_0 \phi\|_{L^\infty_{I_*}L_x^2}.
\end{align*}
By using (\ref{q3}) and (\ref{BA2}) under  the rescaling coordinates
$
\|\hn n \|_{L^q_{I_*} L_x^\infty}\les\la^{-1+1/q},
$
 also  using the energy estimate for $\phi$, we can obtain
\begin{equation*}
\|[P, n]e_0\phi\|_{L^q_{I_*} L_x^\infty}\les \|\bd \phi(0)\|_{L_x^2}.
\end{equation*}
Similarly, we have $\|\hn Y\|_{L^q_{I_*} L_x^\infty}\les \la^{-1+\frac{1}{q}}$ and
\begin{equation*}
\|[P, Y^m ]\p_m \phi\|_{L^q_{I_*} L_x^\infty}\les \|\bd \phi(0)\|_{L_x^2}.
\end{equation*}
 Combining the above two estimates with (\ref{pen}) and (\ref{strspd}) we therefore obtain (\ref{ppartt}).
 The proof is thus complete.

\subsection{Strichartz estimates $\Rightarrow$ Main estimates}

In this section we will use Theorem \ref{ttstar} to prove Theorem \ref{mainformetric}.
According to the properties of Littlewood-Paley projections, it is easy to derive
the desired estimates for the low frequency part. Therefore, to complete the proof of
Theorem \ref{mainformetric}, it suffices to establish the following result.

\begin{proposition}\label{mainpro}
There exists a large number $\La\ge 1$ such that for any $q>2$ sufficiently close to $2$
and any $\delta>0$ sufficiently close to $0$ there holds on $I=[0,T]$
\begin{equation*}
\sum_{\mu>\La} \left\{\|\mu^{\delta} P_\mu \p_m g\|_{L_{I}^2 L_x^\infty}^2
+\|\mu^{\delta}  P_\mu \p_t g\|_{L_{I}^2 L_x^\infty}^2\right\}\les T^{1-\frac{2}{q}}.
\end{equation*}
Moreover for any solution of $\Box_\bg \phi=0$, there holds
\begin{equation*}
\sum_{\mu>\La} \|\mu^{\delta} P_\mu \p\phi\|_{L_{I}^2 L_x^\infty}^2
\les T^{1-\frac{2}{q}}\|\hn \phi, e_0 \phi\|^2_{H^{1+\ep}(0)}.
\end{equation*}

\end{proposition}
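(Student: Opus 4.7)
The plan is to combine the dyadic Strichartz estimate of Theorem \ref{ttstar} with a frequency-dependent time partition dictated by (BA3), handling the $P_\mu\p g$ and $P_\mu\p\phi$ estimates in parallel: for $\p g$, I exploit that $u := \hn g$ satisfies the inhomogeneous wave equation (\ref{wave1}) coming from the hyperbolic system (\ref{lu3}), while for $\phi$ one works directly with $\Box_\bg\phi=0$.

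Fix a dyadic frequency $\la = \mu > \La$. Using (BA2), (BA3), Proposition \ref{eplem}, and the shift estimates of Lemma \ref{lem6}, I partition $I = [0,T]$ into $\les \la^{8\ep_0}$ subintervals $I_j = [t_{j-1}, t_j]$ satisfying $|I_j|\les \la^{-8\ep_0}T$ and $\|\hn g, k, \hn Y, \hn n\|_{L^2_{I_j}L^\infty_x} \le \la^{-4\ep_0}$; such a partition exists by evenly distributing the (finite) total norm across $\la^{8\ep_0}$ buckets. After the rescaling $(t,x)\mapsto ((t-t_{j-1})\la, x\la)$, each $I_j$ becomes $[0,t_*]$ with $t_*\les \la^{1-8\ep_0}T$. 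Hölder in time applied to the partition bound verifies the smallness hypothesis (\ref{smles}) of Theorem \ref{ttstar}, and the rescaled metric retains the structure required by Theorem \ref{decayth}.

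On each rescaled $I_j$ I use the modified Duhamel representation
\begin{equation*}
P_\la \p \phi(t) = \tilde P_\la \p W(t, t_{j-1}) \tilde P_\la \phi[t_{j-1}] + \int_{t_{j-1}}^t \tilde P_\la \p W(t,s)(0, F_\la(s))\, ds,
\end{equation*}
where $W(t,s)$ is the solution operator for $\Box_\bg$ and $F_\la := [\Box_\bg, P_\la]\phi$, and apply Theorem \ref{ttstar} to each piece. The frequency localization of $\tilde P_\la$ supplies the Bernstein gain $\|\tilde P_\la \phi[t_{j-1}]\|_{H^1} \les \la^{-\ep}\|P_\la \phi[t_{j-1}]\|_{H^{1+\ep}}$, and the energy inequality of Proposition \ref{geoeg} transfers the relevant Sobolev norms back to $t=0$. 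Summing squared $L^2_{I_j}L^\infty_x$ norms over the $\les \la^{8\ep_0}$ subintervals (Hölder in time produces $|I_j|^{1-2/q}\les (\la^{-8\ep_0}T)^{1-2/q}$) and choosing $\ep_0$ small enough that $\delta := \ep - C\ep_0(1-2/q) > 0$ yields the desired $\la^{-2\delta}T^{1-2/q}$ bound, which is square-summable in $\la$. For $\p g$, the analogous argument treats $\hn g$ via (\ref{wave1}) with source controlled through Propositions \ref{hepeg} and \ref{remaindd}; the estimate for $P_\mu\p_t g$ then follows from $\p_t g = -2nk + \Lie_Y g$ together with the LP commutator bounds on $[P_\mu, Y^m]\p_m g$ and $[P_\mu, n]k$ established in Appendix III.

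The main obstacle is the appearance of $\bd_\bT Y$ in the commutator $F_\la = [\Box_\bg, P_\la]\phi$: since the elliptic equation (\ref{shi1}) affords no $L^2_tL^\infty_x$ control on $\bd_\bT Y$ (as emphasized in the introduction and Section \ref{sec3}), the naive Duhamel representation cannot be closed. The resolution is to restructure the source term so that $\bd_\bT Y$ is eliminated, via the identity $\p_t = ne_0 + \hn_Y$ applied to $\phi$ and absorption of the resulting $e_0\phi$-dependence into the initial data of a subsequent Duhamel iterate—this mirrors the modified energy-momentum construction used in Section \ref{flx} to close the flux estimate in the presence of the same obstruction. Combined with the commutator machinery of Appendix III, this structural rearrangement is the technical heart of the proof.
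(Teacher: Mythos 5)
Your outline correctly identifies the main scaffolding: the frequency-dependent partition of $I$ into $\les\mu^{8\ep_0}$ subintervals $I_k$ satisfying (\ref{part}), the rescaling, the reproducing property $P_\mu=\ti P_\mu\ti P_\mu$ to restore frequency localization after applying the non-frequency-preserving operator $W(t,s)$, the Bernstein gain that offsets the cost of summing over the $\les\mu^{8\ep_0}$ subintervals, and the application of Theorem \ref{ttstar} on each $I_k$ after verifying (\ref{smles}) by H\"older. You also correctly flag the obstruction: the naive source $F_\la:=[\Box_\bg,P_\la]\phi$ brings in the first-order coefficients of $\Box_\bg$, which contain $\p_t Y$, and no usable bound on $\p_t Y$ is available.

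However, your resolution of this obstruction is too vague and in fact misdescribes the actual mechanism, so there is a genuine gap at the central step. The paper never feeds $[\Box_\bg,P_\la]\phi$ into a Duhamel formula. It works instead through the first-order hyperbolic system: for $(u,v)$ satisfying (\ref{lu3}), the dyadic pair $(U^\mu,V^\mu)=(P_\mu u,P_\mu v)$ satisfies the same system with remainders $F_{U^\mu}$, $F_{V^\mu}$ given by (\ref{puuv}); these are commutators with the spatial coefficients $Y^m$, $n$, $ng$ and require only $\hn Y$, $\hn n$, $\hn g$ for their estimation via Appendix III --- $\p_t Y$ never appears. Passing to the second-order equation (\ref{wave1}) for $P_\mu u$, the right-hand side has a single dangerous piece, $-n\bd_\bT F_{U^\mu}$. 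The crucial trick --- absent from your account --- is that this piece is absorbed into the Duhamel representation (\ref{pmug}) by an integration by parts in time: the initial velocity becomes $\p_t P_\mu u(t_{k-1})-F_{U^\mu}(t_{k-1})$, the inhomogeneity acquires the term $W(t,s)(F_{U^\mu}(s),0)$ with $F_{U^\mu}$ in the \emph{position} slot, and the leftover $R_\mu$ (displayed after (\ref{ppg})) retains only the $\hn_Y F_{U^\mu}$ part of $n\bd_\bT F_{U^\mu}$ together with $F_{V^\mu}$ and lower-order pieces, none of which involves $\p_t Y$. Your phrase ``absorption of the resulting $e_0\phi$-dependence into the initial data of a subsequent Duhamel iterate'' does not describe this: what is absorbed is $\p_t F_{U^\mu}$ (a time derivative of the \emph{commutator remainder}, not of $\phi$), and there is no iteration. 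Without articulating this structure, Lemmas \ref{rmd.1}--\ref{rmd.11} and Proposition \ref{remaindd} have nothing to apply to and the argument does not close.
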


In order to carry out the proof of Proposition \ref{mainpro}, we pick a sufficiently small
$\epsilon_0>0$ and for each $\mu>1$ we partition the  interval $[0,T]$ into disjoint union of
subintervals $I_k=[t_{k-1}, t_k)$ with the properties that
\begin{equation}\label{part}
|I_k|\les \mu^{-8\ep_0}T \quad \mbox{and}\quad
\|k, \hn g, \hn Y, \bd n\|_{L^2_{I_k} L_x^\infty}\le \mu^{-4\ep_0}.
\end{equation}
Such partition is always possible. Let $\kappa_\mu$ denote the total number of subintervals
in the partition. It is even possible to make $\kappa_\mu\les \mu^{8\ep_0}$.

We first consider any pair $(u, v)$ satisfying (\ref{lu3}). Then $(U^\mu, V^\mu):=
(P_\mu u, P_\mu v)$ also satisfies the system (\ref{lu3}) with $F_{U^\mu}$ and $F_{V^\mu}$
given by (\ref{puuv}), i.e.
\begin{align}
&F_{U^\mu}=[P_\mu, Y^m]\p_m u+[P_\mu, n]v+P_\mu F_u, \label{gmu}\\
&F_{V^\mu}=[P_\mu, ng]\hn^2 u+P_\mu F_v+[P_\mu, Y^m] \p_m v.   \label{fvmu}
\end{align}
Consequently, it follows from (\ref{wave1}) that
\begin{align*}
n^2\Box_\bg P_\mu u&=-n\bd_{\bT} F_{U^\mu}+n(-F_{V^\mu}-n\pi_{0a}\nab^a {U^\mu}+e_0(\ln n) F_{U^\mu}+n\Tr k e_0 U^\mu).
\end{align*}

Now we will use the Duhamel principle to represent $P_\mu u$. To simplify the notation, we use
$W(t,s)$ to denote the operator defined on $\H$ such that, for each $\omega:=(\omega_0, \omega_1)\in \H$,
$\phi:=W(t,s)(\omega)$ is the unique solution of the initial value problem
\begin{equation}\label{eqn23}
\Box_\bg \phi=0,\quad  \phi(t;s,x)=\omega_0,\quad  \p_t \phi(t;s,x)=\omega_1.
\end{equation}
Then, by the Duhamel principle, we have for $t\in I_k=[t_{k-1}, t_k]$ that
\begin{align}
P_\mu u(t)&=W(t,{t_{k-1}})\left(P_\mu u(t_{k-1}), \p_t P_\mu u(t_{k-1})- F_{U^\mu}(t_{k-1})\right)\nn\\
&+\int_{t_{k-1}}^t W(t,s)(0, - R_\mu(s)) +W(t,s)(F_{U^\mu}(s), 0)ds.\label{pmug}
\end{align}
Now we apply $P_\mu$ to the both sides and take the spacial derivative.
Writing $P_\mu^2=P_\mu$ by abuse of notation a little, we have
\begin{align}
P_\mu \p_m u(t)&=\int_{t_{k-1}}^t \left\{\p_m P_\mu W(t,s)(0, - R_\mu(s))
  +\p_m P_\mu W(t,s)(F_{U^\mu}(s), 0)\right\}ds\nn\\
&\quad \, +\p_m P_\mu W(t,{t_{k-1}})\left(P_\mu u(t_{k-1}),
  \p_t P_\mu u(t_{k-1})-F_{U^\mu}(t_{k-1})\right).\label{ppg}
\end{align}
where
$$
R_\mu=n(-F_{V^\mu}-n\pi_{0a}\nab^a {U^\mu}+e_0(\ln n) F_{U^\mu}+n\Tr k e_0 U^\mu)-Y^i \p_i F_{U^\mu} .
$$

By using (\ref{str3}) in Theorem \ref{ttstar} with suitable change of coordinates,
we have for any one-parameter family of data $\omega(s):=(\omega_0(s), \omega_1(s))\in \H$ with
$s\in I_k:=[t_{k-1}, t_k]$ that
$$
\mu^{-1+\frac{1}{q}}\|P_\mu \p W(t,s)(\omega(s))\|_{L_{[s,t_k]}^{q}L_x^\infty}
\les \mu^{\f12} \|\omega(s)\|_{\H}.
$$
In view of the Minkowski inequality we then obtain
\begin{align*}
\left\|\int_{t_{k-1}}^t P_\mu \p W(t,s)(\omega(s))ds\right\|_{L_{I_k}^2 L_x^\infty}
&\les \int_{t_{k-1}}^{t_k} \|P_\mu  W(t,s)(\omega(s))\|_{L^2_{[s,t_k]} L_x^\infty} ds\\
&\les |I_k|^{\frac{1}{2}-\frac{1}{q}}\mu^{\frac{3}{2}-\frac{1}{q}}\int_{I_k}\|\omega(s)\|_{\H} ds.
\end{align*}
Since $|I_k|\les T \mu^{-8\ep_0}$, it follows that
\begin{equation*}
\left\|\int_{t_{k-1}}^t P_\mu \p W(t,s)(\omega(s))ds\right\|_{L_{I_k}^2 L_x^\infty}
\les T^{\f12-\frac{1}{q}}\mu^{(\frac{1}{2}-\frac{1}{q})(1-8\ep_0)}
\int_{I_k} \mu \|\omega(s)\|_{\H} ds.
\end{equation*}
Applying the above inequality to (\ref{ppg}) gives, with
$\delta_0:=(\frac{1}{2}-\frac{1}{q})(1-8\epsilon_0)$, that
\begin{align}
\|P_\mu \p_m u\|_{L^2_{I_k} L_x^\infty}
&\les T^{\f12-\frac{1}{q}}\mu^{\delta_0} \left(\|\mu(0,R_\mu)\|_{L^1_{I_k} \H}
+\|\mu(F_{U^\mu},0)\|_{L^1_{I_k}\H}\right)\nn\\
&\quad \, + T^{\f12-\frac{1}{q}} \left(B_\mu(t_{k-1})+ C_\mu(t_{k-1}) \right), \label{sumu}
\end{align}
where
\begin{align*}
B_{\mu}(t):=\mu^{\delta_0}\|\mu\left(P_\mu u(t), \p_t P_\mu u(t)\right)\|_{\H}, \quad
C_{\mu}(t):=\mu^{\delta_0}\|\mu (0,-F_{U^\mu}(t))\|_{\H}.
\end{align*}
In the following we will give the estimates on $R_\mu$, $F_{U^\mu}$, $B_\mu(t_{k-1})$
and $C_\mu(t_{k-1})$ separately.  Positive indices $\ep_0, q, \delta$ are chosen such that $4\ep_0+\delta_0+\delta<s-2$, and $\delta_0+\delta<4\ep_0.$

\subsubsection{ Estimates for $ R_\mu$, $F_{U^\mu}$ }

\begin{lemma}\label{rmd.1}
For any $\delta_1>\delta>0$ satisfying  $b:=\delta_0+\delta_1<4\epsilon_0$,  there holds
\begin{align}
\left(\sum_{\mu>\La}\sum_{k=1}^{\kappa_\mu} \|\mu^{1+\delta_0+\delta}R_\mu\|_{L^1_{I_k}L_x^2}^2\right)^\f12
&\le T \|\hn u, v\|_{L_t^\infty H^{1+b}} +\|\mu^{1+b} P_\mu F_u\|_{L_I^1 l_\mu^2 H^1}\nn\\
&+\|\mu^{1+b} P_\mu F_v\|_{L_I^1 l_\mu^2 L_x^2}+\| \hn^2 u\|_{L_I^\infty L_x^2}.\label{rem1.1}\\
\sum_{\mu>\La}\sum_{k=1}^{\kappa_\mu}\|\mu^{1+\delta_0+\delta}( F_{U^\mu}(s), 0)\|_{L_{I_k}^1 \H}^2
&\les \left(\|\p u, v\|_{L_I^1 H^{1+b}}
+\|\mu^{1+b}P_\mu \p F_u\|_{L_I^1 l_\mu^2 L_x^2}\right)^2\label{gmuu}
\end{align}
\end{lemma}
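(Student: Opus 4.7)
The plan is to expand $R_\mu$ and $F_{U^\mu}$ into their constituent terms, bound each on a fixed dyadic block $\mu$ and subinterval $I_k$ using the smallness afforded by (\ref{part}), and then assemble the result in $l^2_\mu l^2_k$. Throughout I will use three basic tools: H\"older in time on $I_k$, which combined with (\ref{part}) gives $\|A\|_{L^1_{I_k}L_x^\infty}\les |I_k|^{1/2}\|A\|_{L^2_{I_k}L_x^\infty}\les T^{1/2}\mu^{-8\ep_0}$ for any coefficient $A\in\{k,\hn g,\hn Y,\hn n\}$; the Bernstein-type bound $\|\hn P_\mu w\|_{L_x^\infty}\les\mu^{3/2}\|\hn P_\mu w\|_{L_x^2}$; and the commutator/product estimates from Appendix III.

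For (\ref{gmuu}) I would decompose $F_{U^\mu}=[P_\mu,Y^m]\p_m u+[P_\mu,n]v+P_\mu F_u$ and apply the commutator estimates (analogues of (\ref{lem2eq})--(\ref{lem4eq}) and (\ref{comm10})) to dominate $\|\hn[P_\mu,n]v\|_{L_x^2}+\|\hn[P_\mu,Y^m]\p_m u\|_{L_x^2}$ by a sum of pieces of the form $\|\hn n,\hn Y\|_{L_x^\infty}\|P_\mu(\hn u,v)\|_{L_x^2}$ plus a lower-order tail controlled by $\|\hn^2 n,\hn^2 Y\|_{H^{1/2+\ep}}\|(\hn u,v)\|_{L_x^2}$. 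The first type absorbs the partition gain $T^{1/2}\mu^{-8\ep_0}$ after H\"older, and the second type is bounded uniformly in $t$ thanks to Proposition \ref{eplem}. The inhomogeneous $P_\mu\hn F_u$ contribution is passed through directly via the elementary inequality $\sum_k\|g\|_{L^1_{I_k}L^2}^2\le\|g\|_{L^1_I L^2}^2$ (since $\sum a_k^2\le(\sum a_k)^2$ for $a_k\ge 0$) followed by Minkowski in $\mu$, yielding the $\|\mu^{1+b}P_\mu F_u\|_{L^1_I l^2_\mu H^1}$ term on the right.

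For (\ref{rem1.1}) the same scheme applies to each of the five pieces of $R_\mu$. The purely multiplicative terms $n^2\pi_{0a}\nab^a P_\mu u$, $ne_0(\ln n)F_{U^\mu}$, $n^2\Tr k\,e_0 P_\mu u$, and $Y^i\p_i F_{U^\mu}$ (after expanding $F_{U^\mu}$) all take the shape $A\cdot P_\mu B$ with $A$ a controlled coefficient and $B\in\{\hn u,v\}$; H\"older combined with Bernstein gives $T^{1/2}\mu^{-8\ep_0}\|P_\mu(\hn u,v)\|_{L^\infty_I L^2}$, which after multiplying by $\mu^{1+\delta_0+\delta}$, squaring, and summing over $k\le\kappa_\mu\les\mu^{8\ep_0}$ assembles into $T\|\hn u,v\|_{L^\infty_I H^{1+b}}^2$, provided $\delta_1\ge\delta+4\ep_0$. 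The piece $nF_{V^\mu}$ is split via (\ref{fvmu}) into two commutators (handled exactly as in the argument for (\ref{gmuu})) plus $nP_\mu F_v$, the latter contributing the $\|\mu^{1+b}P_\mu F_v\|_{L^1_I l^2_\mu L^2}$ term; the auxiliary $\|\hn^2 u\|_{L^\infty_I L^2}$ absorbs the bare $\hn^2 u$ factor inside the commutator $[P_\mu,ng]\hn^2 u$ in the case where no derivative has been placed on the coefficient.

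The main obstacle is the delicate book-keeping: one must simultaneously track which norm of the coefficient each commutator error requires --- $L^\infty_t H^{1/2+\ep}$ for $\hn^2 n,\hn^2 Y$ via Proposition \ref{eplem}, versus $L^2_t L_x^\infty$ for $k,\hn g,\hn Y,\hn n$ via (\ref{part}) --- and verify that the slack $\delta_1-\delta\ge 4\ep_0$, consistent with the paper's choice $\delta_0+\delta<4\ep_0$, is sufficient to absorb the $\mu^{8\ep_0}$ loss from summing over $\kappa_\mu$-many subintervals into the stronger weight $\mu^{1+b}$ on the right-hand side. Without the fractional regularity of Section \ref{sec3}, the commutator tail pieces would be out of reach and this scheme would fail.
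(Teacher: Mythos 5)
Your overall scheme --- decompose $R_\mu$ and $F_{U^\mu}$, exploit the partition (\ref{part}) on each $I_k$, and reassemble in $l^2_\mu l^2_k$ --- is the same as the paper's, and most of the individual term estimates are in the right spirit. But there is a concrete error in the exponent bookkeeping that makes the proof, as written, vacuous: you require $\delta_1\ge \delta+4\ep_0$ (stated twice, as ``provided $\delta_1\ge\delta+4\ep_0$'' and as ``the slack $\delta_1-\delta\ge 4\ep_0$''). Since $\delta>0$, this forces $\delta_1>4\ep_0$, while the lemma's hypothesis $\delta_0+\delta_1<4\ep_0$ with $\delta_0>0$ forces $\delta_1<4\ep_0$. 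Your auxiliary condition is therefore never satisfied under the lemma's hypotheses, so nothing is proved.

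The condition reflects a misunderstanding of where the $\kappa_\mu\les\mu^{8\ep_0}$ loss is absorbed; it is never absorbed by the gap $\delta_1-\delta$. For the one term where the partition smallness is genuinely needed, $[P_\mu,ng]\hn^2 u$, the per-subinterval bound $\les \mu^{\delta_0+\delta-8\ep_0}\|\hn^2 u\|_{L_t^\infty L^2}$ gives $\mu^{-16\ep_0}$ after squaring, so multiplying by $\kappa_\mu\les\mu^{8\ep_0}$ still leaves a net $\mu^{2(\delta_0+\delta)-8\ep_0}$, summable over $\mu>\La$ precisely because $\delta_0+\delta<b<4\ep_0$; no relation between $\delta_1$ and $\delta$ beyond $\delta<\delta_1$ enters. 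For all the remaining terms ($n\pi_{0a}\nab^a U^\mu$, $\Tr k\, e_0 U^\mu$, $e_0(\ln n)F_{U^\mu}$, $[P_\mu,Y^m]\p_m v$, $P_\mu F_v$, $Y^i\p_i F_{U^\mu}$) the paper does not invoke the partition smallness at all: it uses the elementary reassembly (\ref{amu1}), $\sum_{\mu}\sum_k\|\mu^{\delta}a_\mu\|_{L^1_{I_k}L^2}^2\les \bigl(\int_I\|\mu^{\delta_1}a_\mu\|_{l_\mu^2 L^2}\bigr)^2$, where the gap $\delta_1>\delta$ serves only to make the Cauchy--Schwarz weight $\sum_\mu\mu^{2(\delta-\delta_1)}$ converge, and the factor $T$ in front of $\|\hn u,v\|_{L_t^\infty H^{1+b}}$ comes from $\|\cdot\|_{L^1_I}\le T\|\cdot\|_{L^\infty_I}$ rather than from $|I_k|^{1/2}$. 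Relatedly, the commutator bounds (\ref{epeq3}), (\ref{dfu}) are stated only after summation in $l^2_\mu$, with the full norm $\|G\|_{H^\ep}$ on the right rather than a frequency-localized $\|P_\mu G\|$; your plan to pair a per-$\mu$ partition gain with $\|P_\mu(\hn u,v)\|$ would need frequency-localized versions of these estimates that the appendix does not provide in that form. Once the spurious condition is dropped and these terms are routed through (\ref{amu1}) together with (\ref{epeq3}), (\ref{dfu}) and (\ref{ell.1}), the argument closes under exactly the lemma's hypotheses.
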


\begin{proof} We first write
$
R_\mu=-n[P_\mu, ng]\hn^2 u-Y^i \p_i F_{U^\mu}+\ckk R_\mu,
$
where
$$
\ckk R_\mu:=n\left(-P_\mu F_v-[P_\mu, Y^m] \p_m v-n\pi_{0a}\nab^a {U^\mu}+e_0(\ln n) F_{U^\mu}+n\Tr k e_0 U^\mu\right).
$$
Let us set
$$
\I_1:=\sum_{\mu>\La}\sum_{k=1}^{\kappa_\mu}\|\mu^{1+\delta_0+\delta}[P_\mu, ng]\hn^2 u\|_{L_{I_k}^1 L_x^2}^2, \quad
\I_2:=\sum_{\mu>\La}\sum_{k=1}^{\kappa_\mu}\|\mu^{1+\delta_0+\delta}\ckk R_\mu\|_{L_{I_k}^1 L_x^2}^2.
$$
It suffices to show that
\begin{align}
\I_1^{\f12}&\les \| \hn^2 u\|_{L_I^\infty L_x^2}, \label{rem1.2} \\
\I_2^{\f12}&\les \|\hn u, v\|_{L_t^\infty H^{1+b}}T+\|\mu^{1+b} P_\mu F_u\|_{L_I^1 l_\mu^2 L_x^2}
+\|\mu^{1+b} P_\mu F_v\|_{L_I^1 l_\mu^2 L_x^2}. \label{gnr.1}\\
\sum_{\mu>\La}\sum_{k=1}^{\kappa_\mu}&\|\mu^{1+\delta_0+\delta}\hn F_{U^\mu}(s)\|_{L_{I_k}^1 L_x^2}^2
\les \left(\|\p u, v\|_{L_I^1 H^{1+b}}
+\|\mu^{1+b}P_\mu \p F_u\|_{L_I^1 l_\mu^2 L_x^2}\right)^2\label{gmu1.1}.
\end{align}
By (\ref{part}), we have  $\|\hn(ng)\|_{L_{I_k}^1 L_x^\infty} \les \mu^{-8\epsilon_0}$. We can apply
Corollary \ref{base2com} to obtain
\begin{align*}
\|\mu^{1+\delta+\delta_0} [P_\mu, ng]\hn^2 u\|_{L^1_{I_k} L_x^2}
&\les \mu^{\delta+\delta_0}\|\hn (ng)\|_{L^1_{I_k}L_x^\infty}\|\hn^2 u\|_{L_t^\infty L_x^2}\\
&\les \mu^{\delta+\delta_0-8\epsilon_0} \|\hn^2 u\|_{L_t^\infty L_x^2}.
\end{align*}
Recall also that $\kappa_\mu\les \mu^{8\epsilon_0}$. We can obtain
\begin{align*}
\sum_{k=1}^{\kappa_\mu}&\|\mu^{1+\delta+\delta_0}[P_\mu, ng]\hn^2 u\|_{L^1_{I_k}L_x^2}^2
\le C\mu^{2(\delta+\delta_0-4\ep_0)}\|\hn^2 u\|_{L_t^\infty L_x^2}^2.
\end{align*}
Since $0<\delta<\delta_1$ and $b:=\delta_0+\delta_1<4\epsilon_0$, we have
$\I_1\les \La^{2(b-4\epsilon_0)} \|\hn^2 u\|_{L_t^\infty L_x^2}\les \|\hn^2 u\|_{L_t^\infty L_x^2}^2$
which gives (\ref{rem1.2}).

Next we prove (\ref{gnr.1}). Since $0<\delta<\delta_1$, we observe that for any function $a_\mu$ there holds
\begin{align}
\sum_{\mu>\La} \sum_{k=1}^{\kappa_\mu} \|\mu^\delta a_\mu\|_{L_{I_k}^1 L_x^2}^2
&\le \sum_{\mu>\La} \|\mu^\delta a_\mu\|_{L_I^1 L_x^2}^2
\le \left(\int_I \sum_{\mu>\La}\|\mu^\delta a_\mu\|_{L_x^2}\right)^2 \nn\\
&\les \left(\int_I \|\mu^{\delta_1} a_\mu\|_{l_\mu^2 L_x^2} \right)^2.\label{amu1}
\end{align}

(\ref{gmu1.1}) and (\ref{gmuu}) can be derived  immediately by using (\ref{amu1}) and (\ref{1dfu1}).

In view of (\ref{amu1}), it suffices to estimate $\int_I \|\mu^{1+b}\ckk R_\mu\|_{l_\mu^2 L_x^2}$.
From (\ref{epeq3}) it follows that
\begin{align*}
\|\mu^{1+b}[P_\mu, Y^m]\p_m v\|_{l_\mu^2 L_x^2}&\les \| \hn Y\|_{L_x^\infty}\|\hn v\|_{ H^{b}}+\|\hn^2 Y\|_{H^{\f12+b}}\|\hn v\|_{ L_x^2}
\end{align*}
In view of (\ref{gmu}), (\ref{dfu}) and (\ref{ell.1}) we have
\begin{align*}
\|\mu^{1+b} e_0\ln n F_{U^\mu}\|_{l_\mu^2 L_x^2}
&\les \|e_0(\ln n)\|_{L^\infty} \left(\|\hn Y, \hn n\|_{L_x^\infty}\|\p u, v\|_{H^b} \right.\nn\\
&\quad \, \left. +\|\hn^2 Y, \hn^2n \|_{H^{\f12+b}}\|\p u, v\|_{L_x^2}+\|\mu^{1+b}P_\mu F_u\|_{l_\mu^2 L_x^2}\right)\nn\\
&\les \|\p u, v\|_{H^b}+\|\mu^{1+b}P_\mu F_u\|_{l_\mu^2 L_x^2}. 
\end{align*}
Recall that $\|\nab n , \Tr  k\|_{L_I^\infty L^\infty}\le C$, we can derive that
\begin{align*}
\|\mu^{1+b} (|\Tr k \, e_0 (U^\mu)|+|\nab_i n \bd^i U^\mu| )\|_{l_\mu^2 L_x^2}
&\les \|\nab n, \Tr k\|_{L_x^\infty}\|\La^b\hn (\hn u , v)\|_{L_x^2}\\
&\les \|\La^b\hn (\hn u , v)\|_{L_x^2}
\end{align*}
Combining the above three estimates we thus obtain (\ref{gnr.1}).
\end{proof}

\subsubsection{Estimate for  $B_{\mu}(t_{k-1})$ and  $C_\mu(t_{k-1})$.}

\begin{lemma}\label{rmd.111}
For any $\delta>0$ satisfying $\a:=4\ep_0+\delta_0+\delta<s-2$ there holds
\begin{align*}
 \sum_{\mu>\La}\sum_{k=1}^{\kappa_\mu} \mu^{2\delta} B_{\mu}(t_{k-1})^2
 &\les \sum_{\mu>\La} \mu^{2\a} \sup_{t\in I} \ei_\mu(t)\\
&+\sup_{t} \left(\|\hn u, v\|_{H^{\f12+\a}}^2+\|\mu^{\frac{3}{2}+\a} P_\mu  F_u\|_{l_\mu^2 L_x^2}^2\right).
\end{align*}
\end{lemma}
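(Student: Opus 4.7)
The plan is, first, to trade the sum over $k$ for a supremum in $t$ using the cardinality bound $\kappa_\mu\les\mu^{8\ep_0}$ and then to observe that the resulting exponent collapses exactly to $2\a$ by the definition $\a=4\ep_0+\delta_0+\delta$. Recalling $\|(P_\mu u,\p_t P_\mu u)\|_\H^2\approx\|\hn P_\mu u\|_{L_x^2}^2+\|\p_t P_\mu u\|_{L_x^2}^2$, this reduces the left-hand side to bounding
\begin{equation*}
\sum_{\mu>\La}\mu^{2\a}\sup_{t\in I}\mu^2\Big(\|\hn P_\mu u(t)\|_{L_x^2}^2+\|\p_t P_\mu u(t)\|_{L_x^2}^2\Big).
\end{equation*}
For the time-derivative factor I would use the commuted system (\ref{lu3})--(\ref{puuv}) in the form $\p_t P_\mu u=nP_\mu v+\hn_Y P_\mu u+F_{U^\mu}$, splitting the work into four pieces.

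For the first three, namely $\|\hn P_\mu u\|_{L_x^2}$, $\|nP_\mu v\|_{L_x^2}$, and $\|\hn_Y P_\mu u\|_{L_x^2}$, my strategy is to commute $\hn$ past $P_\mu$ --- noting that $[\hn,P_\mu]=-[P_\mu,\hat{\Gamma}]$ involves only the smooth Christoffel symbols of $\hg$ and hence is of negligible order after standard LP kernel bounds --- and then to invoke the finite-band identity $\mu\|P_\mu f\|_{L_x^2}\approx\|\hn P_\mu f\|_{L_x^2}$ applied to $f=\hn u$ or $f=v$. Combined with $\|n,Y\|_{L_x^\infty}\les 1$ from (\ref{q1}), this yields
\begin{equation*}
\mu^{2+2\a}\Big(\|\hn P_\mu u\|_{L_x^2}^2+\|nP_\mu v\|_{L_x^2}^2+\|\hn_Y P_\mu u\|_{L_x^2}^2\Big)\;\les\;\mu^{2\a}\ei_\mu(t)+\textup{l.o.t.},
\end{equation*}
where the lower-order tail coming from the smooth commutators embeds harmlessly into $\|\hn u,v\|_{H^{\f12+\a}}^2$ after summing in $\mu$.

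The one delicate contribution is the commutator forcing $F_{U^\mu}$. Here I would apply (\ref{dfu}) with $\ep=\a$ (admissible since $\a<s-2<\f12$), giving
\begin{equation*}
\|\mu^{1+\a}F_{U^\mu}\|_{l_\mu^2L_x^2}\;\les\;\|\hn^2Y,\hn^2n\|_{H^{\f12+\a}}\|\hn u,v\|_{L_x^2}+\|\hn n,\hn Y\|_{L_x^\infty}\|\hn u,v\|_{H^\a}+\|\mu^\a\hn P_\mu F_u\|_{l_\mu^2L_x^2}.
\end{equation*}
By Proposition~\ref{eplem} together with (\ref{ell.1}) and the a priori bound (\ref{eng4}), the prefactors $\|\hn^2Y,\hn^2n\|_{H^{\f12+\a}}$ and $\|\hn n,\hn Y\|_{L_x^\infty}$ reduce to universal constants, so the first two terms collapse to $\|\hn u,v\|_{H^{\f12+\a}}^2$ after squaring, while the last term becomes $\|\mu^{1+\a}P_\mu F_u\|_{l_\mu^2L_x^2}^2\le\|\mu^{3/2+\a}P_\mu F_u\|_{l_\mu^2L_x^2}^2$ by the finite-band identity and $\mu\ge 1$. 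Summing over $\mu>\La$ and taking $\sup_t$ then closes the estimate. The main subtlety of the argument is precisely this $F_{U^\mu}$ step: it is only because Proposition~\ref{eplem} provides $H^{\f12+\a}$ control of $\hn^2n$ and $\hn^2Y$ --- a gain whose proof crucially bypasses the uncontrolled $\bd_\bT Y$ in the elliptic equations for $n$ and $Y$ --- that the commutator forcing can be absorbed into the stated right-hand side.
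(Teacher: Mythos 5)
Your reduction of the left-hand side — using $\kappa_\mu\les\mu^{8\ep_0}$ to pass to $\sum_{\mu>\La}\mu^{2+2\a}\sup_t\|(P_\mu u,\p_t P_\mu u)\|_{\H}^2$, and then handling $\hn P_\mu u$, $nP_\mu v$, $\hn_Y P_\mu u$ via the finite-band property and absorbing them into $\mu^{2\a}\ei_\mu(t)$ — is exactly the paper's argument. The gap is in the $F_{U^\mu}$ term, and it is an order-of-quantifiers problem. After your reduction you must bound $\sum_{\mu>\La}\sup_{t\in I}\mu^{2(1+\a)}\|F_{U^\mu}(t)\|_{L_x^2}^2$, with the supremum \emph{inside} the $\mu$-sum, whereas the stated right-hand side carries $\sup_t\|\mu^{3/2+\a}P_\mu F_u\|_{l_\mu^2 L_x^2}^2$, with the supremum \emph{outside}. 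Your estimate via (\ref{dfu}) with $\ep=\a$ controls $\sup_t\sum_\mu\mu^{2(1+\a)}\|F_{U^\mu}(t)\|^2$ at each fixed $t$, and "summing over $\mu$ and taking $\sup_t$" silently interchanges $\sum_\mu$ and $\sup_t$ in the wrong direction: in general $\sum_\mu\sup_t a_\mu(t)\gg\sup_t\sum_\mu a_\mu(t)$ (each dyadic piece may peak at a different time $t_{k-1}$). Your final step $\|\mu^{1+\a}P_\mu F_u\|_{l_\mu^2}\le\|\mu^{3/2+\a}P_\mu F_u\|_{l_\mu^2}$ is a termwise monotonicity bound that enlarges the right-hand side but does nothing to justify the interchange.

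The paper's fix is precisely the reason the exponent $\tfrac32+\a$ appears: one writes
\begin{equation*}
\sup_{t}\,\mu^{2(1+\a)}\|F_{U^\mu}(t)\|_{L_x^2}^2\;=\;\mu^{-1}\sup_{t}\,\mu^{2(\frac32+\a)}\|F_{U^\mu}(t)\|_{L_x^2}^2\;\le\;\mu^{-1}\sup_{t}\sum_{\la}\la^{2(\frac32+\a)}\|F_{U^\la}(t)\|_{L_x^2}^2,
\end{equation*}
so that summing the convergent factor $\mu^{-1}$ over $\mu>\La$ legitimately pulls the supremum outside the sum. The commutator estimate must therefore be applied at regularity $\tfrac12+\a$ rather than $\a$ — the paper invokes (\ref{epnew}) with $\ep=\tfrac12+\a$ to get $\|\la^{3/2+\a}F_{U^\la}\|_{l_\la^2 L_x^2}\les\|\hn u,v\|_{H^{1/2+\a}}+\|\la^{3/2+\a}P_\la F_u\|_{l_\la^2 L_x^2}$ — which is also where the $H^{\f12+\a}$ norm on the right-hand side comes from. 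With this one-line insertion your argument closes; the rest (the role of Proposition \ref{eplem} and (\ref{eng4}) in making the coefficient norms universal, and the restriction $\a<s-2<\f12$) is correct as you state it.
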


\begin{proof}
Since $\kappa_\mu\les \mu^{8\epsilon_0}$, we have from the expression of $B_\mu(t)$ that
\begin{align*}
\sum_{\mu>\La} &\sum_{k=1}^{\kappa_\mu}\mu^{2\delta}B_\mu(t_{k-1})^2\\
&\les \sum_{\mu>\La} \sup_{t\in I}(\mu^{4\ep_0+\delta} B_{\mu}(t))^2
\les \sum_{\mu>\La} \mu^{2(1+\a)} \sup_{t\in I} \|(P_\mu u(t), \p_t P_\mu u(t))\|_{\H}^2
\end{align*}
According to the definition of $\ei_\mu(t):=\ei_\mu(u(t), v(t))$ and the equation
for $\p_t P_\mu u$ we obtain
\begin{align*}
\sum_{\mu>\La}\sum_{k=1}^{\kappa_\mu}\mu^{2\delta}B_\mu(t_{k-1})^2
&\les \sum_{\mu>\La} \mu^{2\a} \sup_{t\in I} \ei_\mu(t)
+\sum_{\mu>\La} \sup_{t\in I} \|\mu^{1+\a} F_{U^\mu}(t)\|_{L_x^2}^2.
\end{align*}
With the help of (\ref{epnew}) we have
\begin{align*}
\|\mu^{1+\a}  F_{U^\mu}\|_{L_x^2}^2
&\le \mu^{-1}  \sum_{\la}\| \la^{\frac{3}{2}+\a}F_{U^\la}(t)\|_{L_x^2}^2\\
&\les \mu^{-1}(\|\hn u, v\|_{H^{\frac{1}{2}+\a}}^2+\|\la^{\frac{3}{2}+\a} P_\la F_u\|_{l_{\la}^2 L_x^2}^2).
\end{align*}
Plugging this into the above inequality and summing over $\mu>\La$ gives the desired estimate.
\end{proof}

\begin{lemma}\label{rmd.11}
For any $\delta_1>\delta>0$ satisfying $b:=\delta_0+\delta_1<4\epsilon_0$ there hold
\begin{align*}
&\sum_{\mu>\La}\sum_{k=1}^{\kappa_\mu} \left(\mu^{\delta}C_\mu(t_{k-1})\right)^2
\le \|\p_m u, v\|_{L^\infty_t H^{4\ep_0+b}}^2
+\sup_{t\in I} \sum_{\mu>1}\|\mu^{1+4\ep_0+b}P_\mu F_u\|_{L_x^2}^2.  
\end{align*}
\end{lemma}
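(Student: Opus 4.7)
The plan is to unwind the definition of $C_\mu(t)$, exploit the bound $\kappa_\mu \les \mu^{8\ep_0}$ to trade the sum over subintervals for a supremum in time, and then apply the product/commutator estimate (\ref{dfu}) for $F_{U^\mu}$ together with the elliptic estimates of Proposition \ref{eplem}. This closely mirrors the structure of Lemma \ref{rmd.111}, but with $B_\mu$ replaced by $C_\mu$, and is actually simpler because $C_\mu$ involves only $\|F_{U^\mu}\|_{L_x^2}$ rather than a full energy $\ei_\mu$.

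First, by the definition of the $\H$-inner product we have $\|(0,-F_{U^\mu}(t))\|_\H = \|F_{U^\mu}(t)\|_{L_x^2}$, so that $C_\mu(t) = \mu^{1+\delta_0}\|F_{U^\mu}(t)\|_{L_x^2}$. Since $\kappa_\mu \les \mu^{8\ep_0}$ and $B_\mu, C_\mu$ are nonnegative,
\[
\sum_{k=1}^{\kappa_\mu}\mu^{2\delta}C_\mu(t_{k-1})^2 \;\les\; \mu^{8\ep_0+2\delta}\sup_{t\in I} C_\mu(t)^2 \;=\; \sup_{t\in I}\|\mu^{1+4\ep_0+\delta+\delta_0}F_{U^\mu}(t)\|_{L_x^2}^2 .
\]
Setting $\ep := 4\ep_0+\delta+\delta_0$, the hypothesis $\delta<\delta_1$ with $b=\delta_0+\delta_1$ gives $\ep \le 4\ep_0 + b$, and the condition $\a = 4\ep_0+\delta_0+\delta<s-2$ keeps $\ep$ in the range where Propositions \ref{eg1} and \ref{eplem} are available.

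Second, I apply (\ref{dfu}) at height $\ep$ to the dyadic norm $\|\mu^{1+\ep}F_{U^\mu}\|_{l_\mu^2 L_x^2}^2$, which after summing over $\mu>\La$ bounds the previous display by
\[
\|\hn^2 Y,\hn^2 n\|_{H^{\f12+\ep}}^2 \|\hn u,v\|_{L_x^2}^2 + \|\hn n,\hn Y\|_{L_x^\infty}^2 \|\hn u,v\|_{H^\ep}^2 + \|\mu^\ep \hn P_\mu F_u\|_{l_\mu^2 L_x^2}^2 .
\]
By (\ref{elp.1}) the factor $\|\hn^2 Y,\hn^2 n\|_{H^{1/2+\ep}}$ is controlled by $\|\hn g,k\|_{H^{1+\ep}}+1$, which is uniformly bounded via (\ref{eng4}); by (\ref{ell.1}) the $L_x^\infty$ factor on $\hn n,\hn Y$ is uniformly bounded. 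Hence both of the first two terms are absorbed into $\|\p_m u,v\|_{L_t^\infty H^{4\ep_0+b}}^2$, using $\ep\le 4\ep_0+b$ and the equivalence of $\hn$ and $\p_m$ derivatives in view of (\ref{11.5.1}). The finite band property converts $\|\mu^\ep \hn P_\mu F_u\|_{l_\mu^2 L_x^2}$ into the stated $\|\mu^{1+4\ep_0+b}P_\mu F_u\|_{l_\mu^2 L_x^2}$, taking $\sup_t$ to land in the form written in the lemma.

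The only delicate point, and the place to keep the accounting honest, is the exponent bookkeeping: the factor $\mu^{8\ep_0}$ produced by $\kappa_\mu$ must fit underneath the $l_\mu^2$ summation without leaving a positive power of $\mu$ in the sum. This is what forces the strict inequality $\delta<\delta_1$, which provides a small gap so that $4\ep_0+\delta+\delta_0 < 4\ep_0+b$ and the $l_\mu^2$ sum closes. No additional ideas beyond those in Lemma \ref{rmd.111} and Lemma \ref{rmd.1} are needed, and the argument avoids the $\bd_\bT Y$ difficulty by never differentiating $F_u$ in time—only $F_{U^\mu}$ itself appears, for which (\ref{dfu}) is tailor-made.
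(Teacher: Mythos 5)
Your argument is correct and follows the same architecture as the paper's: bound the sum over subintervals $k$ by $\kappa_\mu \sup_{t\in I}(\cdot)$ with $\kappa_\mu\les\mu^{8\ep_0}$, use the gap $\delta<\delta_1$ to pass from $\sum_\mu \sup_t(\cdot)$ to $\sup_t\|\cdot\|_{l_\mu^2}^2$, and then control $\|\mu^{1+4\ep_0+b}F_{U^\mu}\|_{l_\mu^2 L_x^2}$ by a commutator estimate applied to the $[P_\mu,Y^m]\p_m u$ and $[P_\mu,n]v$ parts of $F_{U^\mu}$. The one genuine point of departure is the choice of commutator estimate. The paper's proof invokes (\ref{epnew}) directly, which at height $\ep=4\ep_0+b$ yields $(\|\hn^2 F\|_{H^{1/2}}+\|\hn F\|_{L^\infty})\|G\|_{H^\ep}$ plus the $\|\mu^{1+\ep}P_\mu F_u\|_{l_\mu^2 L_x^2}$ term, which is exactly the form of the stated right-hand side after invoking the boundedness of $\|\hn^2 Y,\hn^2 n\|_{H^{1/2}}$ and $\|\hn Y,\hn n\|_{L_x^\infty}$. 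You use (\ref{dfu}) instead, which rests on (\ref{epeq3}) and produces the right-hand side $\|\hn^2 Y,\hn^2 n\|_{H^{1/2+\ep}}\|\hn u,v\|_{L_x^2}+\|\hn n,\hn Y\|_{L_x^\infty}\|\hn u,v\|_{H^\ep}+\|\mu^\ep\hn P_\mu F_u\|_{l_\mu^2 L_x^2}$; you then need (\ref{elp.1}) and (\ref{eng4}) to absorb the elliptic factor at the slightly higher Sobolev index $1/2+\ep$, and the finite band property to trade $\mu^\ep\hn P_\mu F_u$ for $\mu^{1+\ep}P_\mu F_u$. Both routes are valid since both commutator estimates are established in Appendix III and the required elliptic bounds are available; the paper's choice of (\ref{epnew}) is the more economical one because its output already matches the conclusion without the intermediate conversions.

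One small imprecision worth flagging: you characterize the role of $\delta<\delta_1$ as preventing a ``positive power of $\mu$ in the sum,'' but the actual mechanism is an interchange of $\sup_t$ and $\sum_\mu$, which does not hold trivially. Writing the summand as $\mu^{-2(\delta_1-\delta)}\cdot\sup_t\|\mu^{1+4\ep_0+\delta_0+\delta_1}F_{U^\mu}(t)\|_{L_x^2}^2$ and using summability of the geometric factor, one bounds $\sum_{\mu>\La}\sup_t(\cdot)$ by $\sup_\mu\sup_t(\cdot)=\sup_t\sup_\mu(\cdot)\le\sup_t\sum_\mu(\cdot)$. The paper is equally terse at this step, but it is worth making explicit since a naive $\sum_\mu\sup_t\le\sup_t\sum_\mu$ is false, and the strict inequality $\delta<\delta_1$ is precisely what licenses the replacement of the outer sum by a supremum.
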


\begin{proof}
Since
\begin{equation*}
\sum_{k=1}^{\kappa_\mu} \left(\mu^{\delta} C_\mu(t_{k-1})\right)^2
\le \sup_{t\in I}\|\mu^{4\ep_0+\delta_0+\delta+1} F_{U^\mu}(t)\|_{L_x^2}^2.
\end{equation*}
with $0<\delta<\delta_1$, we have
$$
\sum_{\mu>\La} \sum_{k=1}^{\kappa_\mu} \left(\mu^{\delta} C_\mu(t_{k-1})\right)^2
\le \sup_t\|\mu^{4\ep_0+\delta_0+\delta_1+1}F_{U^\mu}(t)\|_{l_\mu^2 L_x^2}^2.
$$
In view of (\ref{epnew}), we complete the proof of Lemma \ref{rmd.11}.
\end{proof}

In view of (\ref{sumu}), Lemma \ref{rmd.1}, Lemma \ref{rmd.11}, Lemma \ref{rmd.111}
and writing
$$
\sum_{\mu>\La} \|\mu^{\delta} P_\mu \p_m u\|_{L_{I}^2 L_x^\infty}^2
=\sum_{\mu>\La} \sum_{k=1}^{\kappa_\mu} \|\mu^{\delta} P_\mu \p_m u\|_{L_{I_k}^2 L_x^\infty}^2,
$$
we can obtain the following result.

\begin{proposition}\label{smau}
For any $q>2$ sufficiently close to $2$ and any $\delta>0$ sufficiently small such that
$\a:=4\epsilon_0+\delta_0+\delta<s-2$, where $\delta_0:=(\f12-\frac{1}{q})(1-8\epsilon_0)$.
Then for any pair $(u, v)$ satisfying (\ref{lu3}) there holds
\begin{align*}
&\sum_{\mu>\La}  \|\mu^{\delta} P_\mu \p_m u\|_{L_{I}^2 L_x^\infty}^2\\
&\les T^{1-\frac{2}{q}} \left(\|\mu^{2+\a} P_\mu F_u\|_{L_I^1 l_\mu^2 L_x^2}^2
+\|\mu^{1+\a} P_\mu F_v\|_{L_I^1 l_\mu^2 L_x^2}^2
+\|\mu^{\f12+\a} P_\mu \hn F_u\|_{L_I^\infty l_\mu^2 L_x^2}^2\right)\\
&+ T^{1-\frac{2}{q}} \left(\sup_{t\in I} \E^{({1+\a})}(u,v)(t)
+\sum_{\mu>\La} \mu^{2\a} \sup_{t\in I} \ei_\mu(t)  \right).
\end{align*}
\end{proposition}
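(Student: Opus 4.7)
The plan is to read off the conclusion by squaring the dyadic bound (\ref{sumu}) on each frequency-dependent subinterval $I_k$, and then resumming in $k$ and $\mu$ using the three technical lemmas already at hand. More precisely, square (\ref{sumu}) and multiply by $\mu^{2\delta}$ to obtain, for each $k$ and each $\mu$,
\begin{align*}
\mu^{2\delta}\|P_\mu \p_m u\|_{L^2_{I_k} L_x^\infty}^2
&\les T^{1-\tfrac{2}{q}}\mu^{2(\delta_0+\delta)}
\bigl(\|\mu R_\mu\|_{L^1_{I_k} L^2_x}^2+\|\mu F_{U^\mu}\|_{L^1_{I_k} L^2_x}^2\bigr)\\
&\quad + T^{1-\tfrac{2}{q}}\mu^{2\delta}\bigl(B_\mu(t_{k-1})^2+C_\mu(t_{k-1})^2\bigr).
\end{align*}
Then invoke the elementary identity
$\sum_{k=1}^{\kappa_\mu}\|\mu^{\delta} P_\mu \p_m u\|_{L^2_{I_k} L_x^\infty}^2 = \|\mu^{\delta} P_\mu \p_m u\|_{L^2_I L_x^\infty}^2$
and sum in $\mu>\La$ to put the left-hand side of the proposition in place.

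Next, I would bound each of the four resulting pieces on the right by the corresponding lemma. The $R_\mu$-piece and the $F_{U^\mu}$-piece are exactly what Lemma \ref{rmd.1} controls: (\ref{rem1.1}) yields the contributions $\|\mu^{1+b} P_\mu F_u\|_{L_I^1 l_\mu^2 H^1}^2$, $\|\mu^{1+b} P_\mu F_v\|_{L_I^1 l_\mu^2 L_x^2}^2$, together with the lower-order terms $T^2\|\hn u, v\|_{L_t^\infty H^{1+b}}^2$ and $\|\hn^2 u\|_{L_I^\infty L_x^2}^2$; similarly, (\ref{gmuu}) handles the $F_{U^\mu}$-piece. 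The $B_\mu$-piece is controlled directly by Lemma \ref{rmd.111}, producing $\sum_{\mu>\La}\mu^{2\a}\sup_t \ei_\mu(t)$ together with $\sup_t\|\hn u, v\|_{H^{1/2+\a}}^2$ and $\sup_t\|\mu^{3/2+\a} P_\mu F_u\|_{l_\mu^2 L_x^2}^2$, the last of which is dominated by the $\|\mu^{1/2+\a}P_\mu \hn F_u\|_{L_I^\infty l_\mu^2 L_x^2}^2$ term in the proposition after using $\mu P_\mu \sim P_\mu \hn$ on frequency support. The $C_\mu$-piece is controlled by Lemma \ref{rmd.11}, producing $\|\p u, v\|_{L_t^\infty H^{4\ep_0+b}}^2$ plus $\sup_t\|\mu^{1+4\ep_0+b} P_\mu F_u\|_{l_\mu^2 L_x^2}^2$, both again dominated by the corresponding quantities in the statement since $4\ep_0+b<1+\a$.

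Finally, collect these estimates and absorb the lower-order pieces. The Sobolev-norm terms $\|\hn u, v\|_{H^{1+b}}$, $\|\hn^2 u\|_{L_x^2}$, $\|\hn u,v\|_{H^{1/2+\a}}$ and $\|\p u,v\|_{H^{4\ep_0+b}}$ are all bounded (up to a multiplicative constant depending only on the implicit parameters) by $\sup_{t\in I}\E^{(1+\a)}(u,v)(t)^{1/2}$, since $\a=4\ep_0+\delta_0+\delta$ dominates each of the exponents $1+b$, $1/2+\a$, $4\ep_0+b$ by the constraint $b=\delta_0+\delta_1<4\ep_0$ and $\delta<\delta_1$. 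The Besov norms in $F_u$ and $F_v$ reduce in the same way: $\|\mu^{1+b} P_\mu F_u\|_{l_\mu^2 H^1}\les \|\mu^{2+\a} P_\mu F_u\|_{l_\mu^2 L_x^2}$ by the finite-band property, and analogously for the $F_v$ and $\hn F_u$ terms. These fit exactly into the right-hand side announced by the proposition.

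The only nontrivial bookkeeping, and the one point where I would be most careful, is to verify the chain of strict inequalities $\delta<\delta_1$, $b=\delta_0+\delta_1<4\ep_0$, and $\a=4\ep_0+\delta_0+\delta<s-2$ simultaneously so that all absorption steps go through without losing a factor of $\ln\mu$ or creating an untamable exponent at the top end. Once the parameters are chosen consistently (which is possible by choosing $q$ close to $2$ and $\delta$ small, and then $\delta_1\in(\delta,4\ep_0-\delta_0)$), the remainder of the argument is a mechanical rearrangement.
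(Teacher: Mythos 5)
Your proof is correct and follows exactly the paper's route: the paper deduces Proposition \ref{smau} by precisely the mechanical resummation you describe — squaring (\ref{sumu}), multiplying by $\mu^{2\delta}$, writing $\|\mu^{\delta}P_\mu\p_m u\|_{L^2_I L_x^\infty}^2$ as a sum over the subintervals $I_k$, and then invoking Lemmas \ref{rmd.1}, \ref{rmd.111} and \ref{rmd.11} term by term, with the Sobolev-norm remainders absorbed into $\sup_t\E^{(1+\a)}(u,v)(t)$ and the $P_\mu F_u$, $P_\mu F_v$ Besov norms upgraded via the finite-band property and the inequalities $b\le\a$, $4\ep_0+b\le 1+\a$. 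Your parameter bookkeeping ($\delta<\delta_1$, $b=\delta_0+\delta_1<4\ep_0$, $\a<s-2$) is exactly the constraint chain the paper relies on, so no gap.
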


Now we are ready to derive the estimates on the spacial derivative part in Proposition
\ref{mainpro}. Recall that $(u, v):=(g, -2k)$ satisfies (\ref{lie3}). Recall also that
Proposition \ref{remaindd} implies
\begin{align*}
\sum_{\mu>\La}\sup_{t} \mu^{2\a}\ei_\mu(t)
\les \E^{(1+\a)}(u, v)(0)+ \|\mu^{2+\a} P_\mu F_u\|_{L_I^1 l_\mu^2 L_x^2}
+\|\mu^{1+\a} P_\mu F_v\|_{L_I^1 l_\mu^2 L_x^2}.
\end{align*}
In view of $\E^{(1+\a)}(u, v)(t)\les \E^{(1+\a)}(u, v)(0)$, we thus obtain from
Proposition \ref{smau} that
\begin{align*}
\sum_{\mu>\La}  \|\mu^{\delta} P_\mu \p_m u\|_{L_{I}^2 L_x^\infty}^2
&\les T^{1-\frac{2}{q}} \left(\|\mu^{2+\a} P_\mu F_u\|_{L_I^1 l_\mu^2 L_x^2}^2
+\|\mu^{1+\a} P_\mu F_v\|_{L_I^1 l_\mu^2 L_x^2}^2 \right.\\
& \left. +\|\mu^{\f12+\a} P_\mu \hn F_u\|_{L_I^\infty l_\mu^2 L_x^2}^2
+ \E^{({1+\a})}(u,v)(0) \right).
\end{align*}
With the help of (\ref{estfu.1}), (\ref{estfu.2}), (\ref{eng4}) and the bootstrap assumption (\ref{ba1}), it follows that
$\|\mu^{\f12+\a} P_\mu \hn F_u\|_{L_I^\infty l_\mu^2 L_x^2}\les 1$ and
$$
\|\mu^{1+\a} P_\mu F_u\|_{L_I^1 l_\mu^2 L_x^2}+\|\mu^{1+\a} P_\mu F_v\|_{L_I^1 l_\mu^2 L_x^2}
\les \|\hn g, k,\hn Y,\hn n\|_{L_I^1 L_x^\infty}+ 1\les 1.
$$
Therefore
\begin{equation*}
\|\p_m g\|_{L_I^2 L_x^\infty}^2+\sum_{\mu>\La} \|\mu^{\delta} P_\mu \p_m g\|_{L_{I}^2 L_x^\infty}^2
\le C T^{1-\frac{2}{q}}.
\end{equation*}

For a solution $\phi$ of the equation $\Box_\bg \phi=0$, we recall that $(u,v)=(\phi, e_0\phi)$
satisfies (\ref{wv3}) with $W=0$.  Thus, in view of (\ref{dfv}) and $F_u=0$, we may use the same argument
as above to conclude that
\begin{equation*}
\|\p_m \phi\|_{L_I^2 L_x^\infty}^2
+\sum_{\mu>\La} \|\mu^{\delta} P_\mu \p_m\phi\|_{L_{I}^2 L_x^\infty}^2
  \le C T^{1-\frac{2}{q}}\|\hn \phi, e_0 \phi\|^2_{H^{1+\ep}(0)}.
\end{equation*}

\subsubsection{Estimate for $P_\mu \p_t u$}

From (\ref{pmug}) it follows that
 \begin{align}\label{eqn25.5}
P_\mu \p_t u(t)&=P_\mu F_{U^\mu}(t)+P_\mu \p_t W(t,{t_{k-1}})
\left(P_\mu u(t_{k-1}), \p_t P_\mu u(t_{k-1})-F_{U^\mu}(t_{k-1})\right) \nn \\
&+\int_{t_{k-1}}^t P_\mu \{\p_t W(t,s)(0, - R_\mu(s)) +\p_t W(t,s)(F_{U^\mu}(s), 0)\}ds.
\end{align}
We can use the same argument for dealing with $P_\mu \p_m u$ to estimate the terms
on the right hand side except the first term $P_\mu F_{U^\mu}(t)$.

\begin{lemma}\label{eqn25}
For sufficiently small $\d>0$ there holds
\begin{equation*}
\sum_{\mu>\La} \|\mu^\d P_\mu F_{U^\mu}\|_{L^2_{I} L_x^\infty}^2
\les\sum_{\mu>\La}\|\mu^\d P_\mu F_u\|_{L^2_I L_x^\infty}^2
+T\|\hn^2 u, \hn v \|_{L_t^\infty H^\d}^2
\end{equation*}
\end{lemma}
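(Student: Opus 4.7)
The plan is as follows. In view of the identity (\ref{puuv}), we decompose
\[
F_{U^\mu} \;=\; [P_\mu,Y^m]\,\p_m u \;+\; [P_\mu,n]\,v \;+\; P_\mu F_u,
\]
so that the contribution of $P_\mu F_u$ produces the first term on the right hand side directly after taking $L^2_I L^\infty_x$ norms and summing with weight $\mu^{2\d}$. The task is therefore reduced to controlling the two commutator pieces by $T\,\|\hn^2 u,\hn v\|^2_{L^\infty_t H^\d}$.

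For this I would invoke the commutator machinery of Appendix~\ref{apiii} together with the smoothness of $n$ and $Y$ established in Proposition~\ref{cor2} and Proposition~\ref{eplem}: in particular $\|\hn n,\hn Y\|_{L^\infty_x}\les 1$ and $\|\La^{1/2+\ep}(\hn^2 n,\hn^2 Y)\|_{L^2_x}\les 1$. The key step is a dyadic splitting $\p_m u=\sum_\nu P_\nu\p_m u$ and a case analysis of $[P_\mu,Y^m] P_\nu\p_m u$ according to the three regimes $\nu\ll\mu$, $\nu\sim\mu$, and $\nu\gg\mu$. In the off-diagonal regimes only the high-frequency part of $Y$ contributes and one absorbs it via $\|\La^{1/2+\ep}\hn^2 Y\|_{L^2_x}\les 1$ combined with Bernstein at the appropriate small scale, gaining summability. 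In the diagonal regime $\nu\sim\mu$ a Calderon-type commutator gain $\|[P_\mu,Y^m]h\|_{L^2_x}\les\mu^{-1}\|\hn Y\|_{L^\infty}\|h\|_{L^2_x}$ paired with Bernstein at scale $\mu$ yields $\|[P_\mu,Y^m]P_\nu\p_m u\|_{L^\infty_x}\les \mu^{1/2}\|\hn Y\|_{L^\infty}\|P_\nu\hn u\|_{L^2_x}$, which is exactly borderline and relies on the full $\mu^{-1}$ smoothing of the commutator.

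Assembling the three cases and summing in $\nu$, pointwise in $t$, I expect an estimate of the form
\[
\|[P_\mu,Y^m]\p_m u\|_{L^\infty_x} \;\les\; \sum_\nu c_{\mu\nu}\,\nu^{\d}\|P_\nu\hn^2 u\|_{L^2_x},
\]
where $\{c_{\mu\nu}\}$ is an almost-diagonal Schur matrix with $\mu$-summable rows and columns. Squaring, weighting by $\mu^{2\d}$, and applying Schur's test (or Cauchy--Schwarz in the off-diagonal case) then yields a pointwise-in-$t$ bound by $\|\hn^2 u\|^2_{H^\d}$. Taking $L^2_I$ and pulling out $L^\infty_t$ produces the factor $T$. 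The $[P_\mu,n]\,v$ commutator is treated identically, producing a control by $T\|\hn v\|^2_{L^\infty_t H^\d}$, which together with the previous bound closes the claim.

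The main technical obstacle is that $[P_\mu,f]g$ is not frequency-localized at level $\mu$, so Bernstein cannot be applied to it as a single object; one is forced to distinguish the dyadic frequency interactions. The critical diagonal regime $\nu\sim\mu$ has no low-frequency gain in $f$, and one must spend exactly the $\mu^{-1}$ of the commutator smoothing together with the half-derivative improvement on $\hn^2 n,\hn^2 Y$ from Proposition~\ref{eplem} to ensure $\mu$-summability; this is the place where one might worry the estimate could fail, and where the book-keeping has to be done carefully.
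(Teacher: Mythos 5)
Your proposal is correct and follows essentially the same route as the paper: the paper's proof simply cites the pre-packaged commutator estimate (\ref{eqn21}) of Lemma \ref{err12} from Appendix III, which already encodes the trichotomy/Bernstein case analysis you reconstruct inline, and then invokes the bounds on $\hn Y,\hn n$ and $\hn^2 Y,\hn^2 n$ from (\ref{ell.1}) and Proposition \ref{eplem}. One minor overstatement worth flagging: the diagonal regime $\nu\sim\mu$ is not in fact borderline, since applying Corollary \ref{base2com} in $L^\infty$ and Bernstein to $P_\nu\p_m u$ gives $\mu^{1/2}\|\hn Y\|_{L^\infty}\|P_\nu\hn u\|_{L^2}\approx \mu^{-1/2}\|\hn Y\|_{L^\infty}\|P_\nu\hn^2 u\|_{L^2}$, leaving a full $\mu^{-1/2}$ margin for summability (which is exactly the $\mu^{-\eta}$ decay the paper extracts from (\ref{eqn21})).
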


\begin{proof}
From (\ref{eqn21}) we can find $0<\eta<1/2$ such that
\begin{align*}
 \mu^\d\|&[P_\mu, Y^m]\p_m u\|_{L_x^\infty}+\mu^\d\|[P_\mu, n]v \|_{L_x^\infty}\\
 &\les \mu^{-\eta} \left(\|\hn Y, \hn n \|_{L_x^\infty}\|\hn^2 u, \hn v \|_{H^\d}
 +\|\hn u, v\|_{H^1}\|\hn^2 Y, \hn^2 n\|_{H^{\delta+\f12}} \right).
\end{align*}
This together with (\ref{gmu}) implies that
\begin{align*}
\sum_{\mu>\La} \|\mu^\d P_\mu F_{U^\mu}\|_{L^2_{I} L_x^\infty}^2
&\les \sum_{\mu>\La}\|\mu^\d P_\mu F_u\|_{L^2_I L_x^\infty}^2
 + T \|\hn Y, \hn n \|_{L_I^\infty L_x^\infty}^2 \|\hn^2 u, \hn v \|_{L_I^\infty H^\d}^2\\
& \quad \, +T \|\hn u, v\|_{L_I^\infty H^1}^2 \|\hn^2 Y, \hn^2 n\|_{L_I^\infty H^{\f12+\d}}^2.
\end{align*}
In view of Proposition \ref{eplem}, we therefore obtain the desired estimate.
\end{proof}

By using (\ref{eqn25.5}), Lemma \ref{eqn25} and Proposition \ref{geoeg} for the solution $\phi$ of $\Box_\bg \phi=0$,
in view of $F_u=0$ we derive that
\begin{equation*}
\|\p_t\phi\|_{L_I^2 L_x^\infty}^2+\sum_{\mu>\La} \|\mu^{\delta} P_\mu \p_t\phi\|_{L_{I}^2 L_x^\infty}^2
\le C T^{1-\frac{2}{q}}\|\hn \phi, e_0 \phi\|^2_{H^{1+\ep}(0)}.
\end{equation*}
Next we consider $(u,v)=(g, -2k)$ which satisfies (\ref{lie3}). Recall that
$F_u=\hn Y\c g$ in (\ref{lie3}), we have
\begin{align*}
\|\mu^\d P_\mu F_u\|_{L^2_I L_x^\infty}
&\les \|\mu^{\d}[P_\mu, g]\hn Y\|_{L_t^2 L_x^\infty}+\|\mu^\d P_\mu \hn Y\|_{L_t^2 L_x^\infty}.
\end{align*}
By using (\ref{eqn20}) we have with $0<\eta<1/2$ that
\begin{equation*}
\|\mu^{\d}[P_\mu, g]\hn Y\|_{L_x^\infty}
\les \mu^{-\eta} \|\hn Y\|_{L_x^\infty}\|\hn^2 g\|_{H^\d}.
\end{equation*}
Therefore
\begin{align*}
 \sum_{\mu>\La}\|\mu^\d P_\mu F_u\|_{L^2_I L_x^\infty}^2
 &\les T\left(\| \hn^2 Y\|_{L_t^\infty H^{\f12+\d}}^2
 +\|\hn^2 g, \hn k \|_{L_t^\infty H^\d}^2\right)
\end{align*}
From this,  (\ref{eqn25.5}), (\ref{eng4})  and (\ref{ell.1}),  we conclude that
$\sum_{\mu>\La} \|\mu^\delta P_\mu \p_t g\|_{L^2_{I} L_x^\infty}^2 \le C T^{1-\frac{2}{q}}$.

\subsection{\bf Boundedness Theorem  $\Rightarrow$  Decay estimates}\label{convv}

In this subsection we give the proof of Theorem \ref{decayth} under the
rescaled coordinates. A time interval $I=[0,T]$ becomes $I_*=[0, \la T]$ after rescaling.
Let $\tau_*$ denote a number such that $t_*\le \tau_*\le \la T$ and let $t_0$ be 
certain number satisfying  $1\approx t_0< \tau_*$.  We may take a sequence of balls $\{B_J\}$ of radius $1/2$
such that their union covers $\Sigma_{t_0}$ and any ball in this collection
intersect at most $10$ other balls.
Let $\{\chi_J\}$ be a partition of unity subordinate to the cover $\{B_J\}$.
We may assume that $|\nab\chi_{J}|_{L_x^\infty}\le C_1$ uniformly in $J$.
By using this partition of unity and a standard argument we can reduce the proof
of Theorem \ref{decayth} by establishing the following dispersive estimate result
with initial data supported on a ball of radius $1/2$.

\begin{proposition}\label{lcestimate}
There exists a large constant $\La$ such that for any $\la\ge \La$ and
any solution $\psi$ of
\begin{equation*}
\Box_\bg \psi=0
\end{equation*}
on the time interval $[0, \tau_*]$ with $\tau_*\le \la T$, with certain $t_0\in [1,C]$
and any initial data
$\psi[t_0]=(\psi(t_0), \p_t \psi(t_0))$ supported in the geodesic ball $B_{1/2}$ of radius
$\f12$, there is a function $d(t)$ satisfying
\begin{equation}\label{correc}
 \|d\|_{L^\frac{q}{2}[0,\tau_*]}\les 1, \mbox{ for  } q>2 \mbox{ sufficiently close to }2
 \end{equation}
 such that for all $t_0\le t\le \tau_*$,
 \begin{equation}\label{decaycp}
 \|P e_0 \psi(t)\|_{L_x^\infty}\le \left(\frac{1}{{(1+|t-t_0|)}^{\frac{2}{q}}}+d(t)\right)(\|\psi[t_0]\|_{H^1}+\|\psi(t_0)\|_{L^2}).
 \end{equation}
\end{proposition}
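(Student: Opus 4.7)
My plan is to follow the roadmap laid out in Step 3 of the introduction. Since the data $\psi[t_0]$ is supported in $B_{1/2}$, by finite propagation $\psi$ is supported in the causal future $\J_0^+$ of this ball. I would pick a time axis $\Ga^+$ through its center and construct the optical function $u$ whose level sets $C_u$ are the outgoing null cones with vertex on $\Ga^+$. This produces the foliation of $\J_0^+$ by spheres $S_{t,u}=C_u\cap \Sigma_t$ with null frame $\{L,\Lb,e_1,e_2\}$ given by $L=\bT+N$, $\Lb=\bT-N$. Setting $\ub=2t-u$, I introduce the Morawetz vector field $K=\tfrac{1}{2}n(u^2 \Lb+\ub^2 L)$ and the generalized energy $\ti Q[\phi](t)=\int_{\Sigma_t}\bar Q(K,\bT)[\phi]$ based on the modified energy-momentum tensor (\ref{gneng1}) with $\Omega=4t$.

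The first main step is to apply the energy identity (\ref{eniden}) to $f=\phi$ itself, not to $\bd_\bT\phi$. Since $\Box_\bg\phi=0$, this reduces to controlling $\int_{\J_0^+}{}^{(K)}\bar\pi_{\a\b}Q[\phi]_{\a\b}$. The components of ${}^{(K)}\bar\pi$ are expressible through ${}^{(\bT)}\pi$, $\chi$, $\zeta$ and other Ricci coefficients. Assuming a suitable bootstrap on the Strichartz-type norms $\|\chih,\zeta\|_{L_t^{q/2}L_x^\infty}$ together with an $L^\infty$ control on $\tr\chi-\frac{2}{n(t-u)}$ (these are precisely the estimates to be proven in Sections \ref{sec5} and onward via flux bounds and the modified Calder\'on--Zygmund inequality), a Gronwall argument yields the boundedness $\ti Q[\phi](t)\les \ti Q[\phi](t_0)\les \|\psi[t_0]\|_{H^1}^2+\|\psi(t_0)\|_{L^2}^2$.

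The second main step is to extract (\ref{decaycp}) from the boundedness of $\ti Q[\phi]$. Let $\varpi$ be a cutoff supported in the exterior region where $\ub\gtrsim 1+t$. In the interior region $\ub\les 1+t$, the weight $\ub^2$ in $K$ is comparable to $(1+t)^2$, so a direct Sobolev embedding on $\Sigma_t$ combined with Bernstein for $P$ yields the $(1+|t-t_0|)^{-2/q}$ factor. For the exterior piece, I would write $P(\varpi\bd_\bT\phi)=P(\varpi L\phi)-P(\varpi N\phi)$; the $L\phi$-piece is controlled by Bernstein together with the $\ub^2 L$ contribution in $\ti Q$, producing the decay factor. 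For the $N\phi$-piece I would split
$$
P(\varpi N\phi)=\varpi N^l\p_l P\phi+[P,\varpi N^l]\p_l\phi,
$$
handling the first summand via Sobolev on $\Sigma_t$ and $\ti Q[\phi]$, and the commutator via the product/commutator machinery of Appendix III, reducing everything to control of $\|\p(\varpi N)\|_{L_x^\infty}$. Since schematically $\p N\sim \bg\c(\chi,\zeta,\hn g,k)$, this contribution is absorbed into $d(t)$ through the Strichartz-type norms of $\chih,\zeta,\hn g,k$; the $L^{q/2}$ integrability of $d$ follows from the bootstrap (\ref{BA3}) and the bounds on Ricci coefficients.

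The main obstacle is the control of the null Ricci coefficients $\chih,\zeta,\tr\chi-\frac{2}{n(t-u)}$ relative to the non-smoothed rough Einstein metric. Unlike in the wave-coordinate treatments \cite{KR1}--\cite{KRd}, I cannot frequency-truncate $\bg$, so these estimates must be obtained from Hodge systems such as $\div\chih=\tfrac12\sn\tr\chi-\b+\cdots$. The new wrinkle under CMCSH gauge, flagged in Step 4 of the introduction, is that the standard decomposition $\b=\sn\ti\pi+\cdots$ must be modified to exclude the uncontrolled component $\bd_\bT Y$; this is carried out in the decomposition analysis of Section \ref{decmp}. Combined with an $L^{2+}$-type flux (Proposition \ref{fluxg}) and a Calder\'on--Zygmund inequality modified for the rough metric, these ingredients close the bootstrap and yield (\ref{decaycp}) with $\|d\|_{L^{q/2}[0,\tau_*]}\les 1$.
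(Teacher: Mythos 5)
Your proposal is correct and follows essentially the same route as the paper: prove boundedness of the conformal Morawetz energy $\ti Q[\phi]$ for $\phi$ itself (Theorem \ref{BT}), then extract the dispersive estimate by writing $e_0\psi=L\psi-N\psi$ in the exterior, controlling the commutator $[P,\varpi N^l]\p_l\psi$ through $\p(\varpi N)\sim\bg\c(\chih,\zeta,\hn g, k)$ and the Ricci-coefficient estimates, and absorbing the rough remainder into $d(t)$ (this is Lemma \ref{err11}). Two minor imprecisions worth flagging: the $(1+t)^{-2/q}$ rate is produced by the two-dimensional Sobolev interpolation on the spheres $S_{t,u}$ (Lemma \ref{basic1}(ii)), not a Sobolev embedding on $\Sigma_t$, and the Ricci-coefficient Strichartz norms used in the boundedness and error estimates (Proposition \ref{rices}) are $L_t^2 L_x^\infty$ rather than $L_t^{q/2}L_x^\infty$.
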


\begin{proof}[Proof of Theorem \ref{decayth}]
 To derive Theorem \ref{decayth}, we apply the above result to $\psi_I$,
\begin{equation*}
\Box_\bg \psi_I=0, \psi_I(t_0)=\chi_I \c \psi(t_0)
\end{equation*}
with $\psi$ the solution of (\ref{wave.4}) with initial data $\psi[0]:=(\psi(0), \p_t \psi(0))$.
It is easy to see that $\psi(t,x)=\sum_I \psi_I(t,x)$.
By summing over $I$, we derive  for $t_0\le t\le \tau_*$
\begin{equation}\label{12.04.2}
 \|P e_0 \psi(t)\|_{L_x^\infty}\le \left(\frac{1}{{(1+|t-t_0|)}^{\frac{2}{q}}}+d(t)\right)(\|\psi[t_0]\|_{H^1}+\|\psi(t_0)\|_{L^2}).
\end{equation}
Then (\ref{decay}) follows by applying (\ref{h1ena}) to the solution $\psi$ of (\ref{wave.4})
\begin{equation*}
\|\psi[t_0]\|_{H^1}+\|\psi(t_0)\|_{L^2}\les \|\psi[0]\|_{H^1}+\|\psi(0)\|_{L^2}.
\end{equation*}
For $0<t<t_0$, it follows immediately from Bernstein inequality and (\ref{h1en}) that
\begin{equation}\label{12.04.1}
\|P e_0 \psi(t)\|_{L_x^\infty}\les \|e_0 \psi(t)\|_{L_x^2}\les\|\psi[0]\|_{H^1}.
\end{equation}
Combining (\ref{12.04.2}) with (\ref{12.04.1}) implies  Theorem \ref{decayth}.
\end{proof}

We will prove Proposition \ref{lcestimate} by establishing boundedness theorem 
for conformal energy. For this purpose, we introduce the setup and notation.
We denote by $\Ga^+$  the portion in $[0,\la T]$ of the integral curve of
$\bT$ passing through the center of $B_{\f12}$. We define
the optical function $u$ to be the solution of eikonal equation
$\bg^{\a\b}\p_\a u\p_\b u=0$ with
 $u=t  \mbox { on }\Ga^+$. We denote the outgoing null cone initiating
from $\Ga^+$ by $C_u$ with $0\le u\le \la T$.  Let
$S_{t,u}=C_u\cap \Sigma_t$.
 Let us set $\D^+_0=\cup_{\{t\in [t_0,\tau_*],
0\le u\le t\}} S_{t,u}$ and $\D^+=\cup_{\{t\in [0,\tau_*],
0\le u\le t\}} S_{t,u}$.   We denote the exterior region on 
$\Sigma_t, \, t\ge t_0$  by $\Et_t=\{0\le u\le 3t/4\}$. By $C^{-1}<n<C$, we can always choose
$t_0\in [1,2C]$ such that
$B_{\f12}\subset(\D^+_0\cap\Sigma_{t_0})$.

Next we extend the time axis $\Ga^+: u=t$  backward by
following the integral curve of $\bT$ to $t=-\la T$.
Let us denote the extended portion of the integral curve of $\bT$ by
$\Ga^-$. Let $C_u$ be the outgoing null cone initiating from vertex
$p(t)\in \Ga^-$ with $u=t$. We also foliate the null hypersurfaces
by time foliation, $C_u=\cup_{u\le t\le \tau_*}S_{t,u}$.

Let $\underline{\varpi}$ and $\varpi$ be  smooth cut-off functions 
depending only on two variables $t,u$. For $t>0$, they are defined as follows
\begin{equation*}
\underline{\varpi}=\left\{\begin{array}{lll}
 1 \quad\mbox{ on }\,  0\le u\le t\\
0 \quad \mbox { off } -\frac{t}{4} \le
u\le t,
\end{array}\right.
\quad
 \varpi=\left\{\begin{array} {lll}1 \quad \quad\mbox { on } 0\le\frac{u}{t}\le \f12 \\
 0 \quad\quad\mbox{ if } \frac{u}{t}\ge \frac{3}{4} \mbox { or } u\le
 -\frac{t}{4}\end{array}\right..
\end{equation*}
We also suppose $\varpi$ and $\underline{\varpi}$ coincide in the region $\cup_{\{t\in [t_0, \tau_*],-\frac{t}{4}<u\le 0 \}}S_{t,u}.$

 Let us denote by $N$ the outward unit normal of $S_{t,u}\in
\Sigma_t$. Define $\theta_{AB}=\l \bd_A N, e_B\r$ and  $\chi_{AB}=\l \bd_{e_A}L, e_B\r$. We decompose $\chi$ as  $\chi_{AB}=\chih_{AB}+\f12\tr\chi\ga_{AB}.$
\begin{equation}\label{thetan}
\sn_N N=-(\sn\log \bb) e_A, \qquad  \sn_A N_B=\theta_{AB}e_B, \qquad  \chi_{AB}=\theta_{AB}-k_{AB}
\end{equation}
Under the rescaled  coordinates we  recall some useful results in Proposition \ref{dfk},
Proposition \ref{fluxg} and those established  in
\cite[Sections 4 and 8]{Wang10}. Under (\ref{ba1}) and (\ref{BA2}) we have:

\begin{enumerate}
\item[(i)]there exists $\delta_*>0$ depending only on $B_1$ and the norm
of initial data $\|(g, k)\|_{H^2\times H^1(\Sigma_0)}$ such that if $T\le \delta_*$
then  the outgoing null radius of injectivity satisfies $i_*(p)> T-t(p)$ for any
$p\in [-\la T,\la T]\times \Sigma$.

\item[(ii)] Let $\N^+(p)$ be an outgoing null cone  initiating from $p\in [-\la T,\la  T]\times
\Sigma$ and contained therein. Then on every $\N^+(p)$ there holds $ \F^{\f12}[\hn g]\les \la^{-\f12}$, 
the curvature flux $\R$  together with flux type norm  of components of $\pi$ on $\N^+(p)$ satisfies (for definition we refer to \cite{Wang10} and Section \ref{ricc})
\begin{equation}\label{fluxdef}
 \R+\N_1[\slashed{\pi}]\les \la^{-\f12}.
 \end{equation}

\item[(iii)] For $
 0\le T\le\delta_*,
 $ consider  $C_u\subset[0,\la T]\times \Sigma$, with $S_{t,u}=C_u\cap \Sigma_t$ 
 and $r(t,u)=\sqrt{\frac{|S_{t,u}|}{4\pi}}$. As a consequence of (\ref{fluxdef}) and $C^{-1}<n<C$
 the metric $\gamma_{t,u}$ on ${\mathbb
 S}^2$, obtained by restricting the metric $g$ on $\Sigma_t$ to
 $S_{t,u}$ and then pulling it back to ${\mathbb S}^2$ by the
 exponential map ${\mathcal G}(t,u,\cdot)$, verifies with small quantity $0<\ep< 1/2$ that
$$
|r^{-2}\gamma_{t,u}(X, X)-\gas (X, X)|< \epsilon \gas(X, X), \quad
\forall X\in T{\mathbb S}^2,
$$
where $\gas$ is the standard metric on ${\mathbb
S}^2$; there holds  $(t-u)\approx r(t,u)$. There hold 
\begin{align}
&\left|\frac{\bb}{n}-1\right| \le \frac{1}{2}, \quad\quad r\tr\chi\approx 1, \quad\quad v_{t,u}:=\sqrt{\frac{|\ga_{t,u}|}{|\gas|}}\approx r^2\label{recl1}\\
&\|\ti \pi, \chih, \sn\log \bb\|_{L^4(S_{t,u})}+\|r^{-\f12}(\ti \pi, \chih, \sn\log \bb)\|_{L^2(S_{t,u})} \les  \la^{-\f12}\nn.
\end{align}
\end{enumerate}

 We will constantly employ the following result, where all the constants suppressed in $\les$ are independent of frequency $\la$.



\begin{lemma}\label{trace2}
For any $\Sigma$-tangent tensor field $F$ there hold for $-\tau_*/2\le u< t$
\begin{equation*}
\int_{S_{t,u}}|F|^2\les \|
F\|_{H^1(\Sigma_t)}\|F\|_{L^2(\Sigma_t)},\quad\quad \|F\|_{L^4(S_{t,u})}+\|r^{-\f12} F\|_{L^2(S_{t,u})}\les \| F\|_{H^1(\Sigma_t)}.
\end{equation*}
\end{lemma}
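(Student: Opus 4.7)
The plan is to exploit the foliation of $\Sigma_t$ by the $2$-surfaces $S_{t,u}$ (which collapse to the vertex on $\Gamma^+$ as $u\nearrow t$) together with a fundamental-theorem-of-calculus argument in the transverse direction $N$. Since $L(u)=0$ with $L=\bT+N$ and $\bT(u)=\bb^{-1}$, we have $N(u)=-\bb^{-1}$ and the co-area formula reads $d\mu_g=\bb\,dA_{t,u}\,du$. The divergence theorem applied to the vector field $fN$ on the region $\{u\ge u_0\}\cap\Sigma_t$ (with no boundary contribution at the vertex since $|S_{t,u}|\to 0$ as $u\to t$) yields the master identity
\begin{equation*}
\int_{S_{t,u_0}} f\,dA \;=\; \int_{\{u\ge u_0\}\cap\Sigma_t}\bigl(Nf + f\,\tr\theta\bigr)\,d\mu_g,\qquad \tr\theta=\tr\chi+\tr k,
\end{equation*}
for any smooth scalar $f$, using \eqref{thetan}. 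By \eqref{recl1} we have the pointwise bound $|\tr\theta|\lesssim 1/r + \|k\|_{L_x^\infty}$, and the Hardy inequality about the smooth curve $\Gamma^+\cap\Sigma_t$ on the compact $3$-manifold $\Sigma_t$ gives $\|F/r\|_{L^2(\Sigma_t)}\lesssim\|F\|_{H^1(\Sigma_t)}$.

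For the first inequality I apply the identity with $f=|F|^2$, use $N|F|^2 = 2\langle F,\nabla_N F\rangle$ together with Cauchy-Schwarz on the gradient term, and bound the curvature term by $\int|F|^2|\tr\theta|\lesssim \|F\|_{L^2}\|F/r\|_{L^2} + \|k\|_{L_x^\infty}\|F\|_{L^2}^2$. All three contributions combine to $\lesssim \|F\|_{L^2(\Sigma_t)}\|F\|_{H^1(\Sigma_t)}$ after Cauchy-Schwarz and Hardy.

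For $\|F\|_{L^4(S_{t,u})}$ the same identity with $f=|F|^4$ yields
\begin{equation*}
\int_{S_{t,u_0}}|F|^4 \;\lesssim\; \int_{\Sigma_t}\bigl(|F|^3|\nabla F| + |F|^4/r + |F|^4|k|\bigr)d\mu_g,
\end{equation*}
and each summand is controlled by $\|F\|_{H^1}^4$ via H\"older, the $3$-dimensional Sobolev embedding $H^1(\Sigma_t)\hookrightarrow L^6(\Sigma_t)\cap L^4(\Sigma_t)$, and Hardy (for instance, $\int|F|^3|\nabla F|\le \|F\|_{L^6}^3\|\nabla F\|_{L^2}$ and $\int|F|^4/r\le \|F\|_{L^6}^3\|F/r\|_{L^2}$). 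For the weighted bound $\|r^{-1/2}F\|_{L^2(S_{t,u_0})}$ a direct division of the first inequality by $r_0$ would lose near the axis; instead I use the monotonicity $r(t,u)\approx t-u$ (with corrections controlled by \eqref{recl1}), which gives $r_0^{-1}\le r(t,u)^{-1}$ on $\{u\ge u_0\}$. Multiplying the master identity by $r_0^{-1}$ and taking absolute values inside the integrand yields
\begin{equation*}
r_0^{-1}\int_{S_{t,u_0}}|F|^2\,dA \;\le\; \int_{\Sigma_t}\bigl(|F||\nabla F|/r + |F|^2|\tr\theta|/r\bigr)d\mu_g,
\end{equation*}
and the right-hand side is $\lesssim \|F\|_{H^1}^2$ by Cauchy-Schwarz, Hardy applied once to give $\|F/r\|_{L^2}\lesssim \|F\|_{H^1}$, and Hardy applied again to absorb the $|F|^2/r^2$ piece coming from $|\tr\theta|/r$.

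The main technical point is the Hardy inequality about $\Gamma^+$ on $(\Sigma_t, g)$, uniformly in the bootstrap parameters. This is handled by passing to geodesic tubular coordinates about the axis, whose existence on the time interval in question is guaranteed by the null radius-of-injectivity lower bound of property (i) in the rescaled setup, combined with the metric equivalence $C^{-1}\hat g \le g\le C\hat g$ from \eqref{11.5.1}; with these coordinates in hand, Hardy reduces to the standard one for a codimension-$2$ singular set on a compact $3$-manifold, with constants depending only on the bootstrap constants and the initial data.
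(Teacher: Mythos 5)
Your overall plan---derive a master identity from the divergence theorem applied to $fN$ over the interior region, then apply a Hardy inequality together with H\"older and Sobolev---is a sensible way to prove these trace estimates directly; the paper itself simply cites \cite[Proposition 7.5]{Wang10}. However, two concrete points prevent the argument as written from closing.

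The more serious one is the Hardy step. You justify $\|F/r\|_{L^2(\Sigma_t)}\lesssim\|F\|_{H^1(\Sigma_t)}$ by calling $\Ga^+\cap\Sigma_t$ a ``smooth curve'' and invoking Hardy ``for a codimension-$2$ singular set on a compact $3$-manifold.'' That is the wrong object and the wrong codimension: $\Ga^+$ is a curve in spacetime, but its intersection with the slice $\Sigma_t$ is a single point, of codimension $3$ in $\Sigma_t$. The distinction is not cosmetic, because the Hardy inequality with weight $d^{-2}$ for a codimension-$2$ set in a $3$-manifold is precisely the borderline \emph{failing} case: in cylindrical coordinates about an axis, $\int \rho^{-2}|F|^2\,\rho\,d\rho\,d\phi\,dz=\int\rho^{-1}|F|^2\,d\rho\,d\phi\,dz$ has a logarithmic divergence at $\rho=0$ for any $F$ that does not vanish on the axis, so the inequality you invoke is simply false. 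The codimension-$3$ (point) Hardy inequality does hold and is what your argument actually needs, provided one also records that $r\approx d(\cdot,\Ga_t)$ on the foliated region (using $\bb\approx n\approx 1$ from (\ref{recl1}) and (\ref{q1})) and a uniform lower bound on the injectivity radius at $\Ga_t$. But as stated your reasoning rests on a false inequality, and if the singular set really were a curve the interior-region approach would break down completely.

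The second issue is the appearance of $\|k\|_{L_x^\infty}$ in your bound for $\int|F|^2|\tr\theta|$ and $\int|F|^4|\tr\theta|$. The bootstrap assumption (\ref{BA2}) only controls $\|k\|_{L^2_t L_x^\infty}$, not $\|k(t)\|_{L_x^\infty}$ pointwise in $t$, and this does not improve after the rescaling by $\la$; so this factor cannot be treated as a universal constant. Keep $|k|$ in the integrand and apply H\"older against $\|k\|_{L^3(\Sigma_t)}$, which \emph{is} universally bounded via (\ref{q1}) and Sobolev embedding and is scale-invariant under $(t,x)\to(t/\la,x/\la)$. This is exactly how the $k$-dependence is tracked in the closely related trace inequality (\ref{fq}).

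Finally, a variant of your argument avoids Hardy entirely: apply the divergence theorem over the exterior annulus $\{u'\le u\}$ rather than the interior. There the $\tr\chi\approx 2/r>0$ part of $\tr\theta$ enters with a favorable sign and can simply be dropped, leaving only the $k$-part of $\tr\theta$ (handled by H\"older against $\|k\|_{L^3}$) and the transverse-derivative term.
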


\begin{proof}
This is \cite[Proposition 7.5]{Wang10}.
\end{proof}

More results that can be
established under (\ref{ba1}), (\ref{BA2}) and the bounded $H^2$ norm of data will be revisited in Section
\ref{ricc}.
Now we prove  a commutator estimate for $P$, the Littlewood Paley projection with frequency $1$.  This estimate is slightly more general than needed.
\begin{lemma}\label{comh1}
For  scalar function $f$ and $G$, with $3<p\le\infty$,
\begin{equation*}
\|[P, G]\p_m f\|_{L_x^\infty}+\|[P,G]\p_m f\|_{H^1}\les \|\hn G\|_{L_x^p}\|\hn f\|_{L_x^2}
\end{equation*}
\end{lemma}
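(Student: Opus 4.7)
The plan is to exploit the representation of $P$ as convolution with a Schwartz kernel $K$, and to combine this with Morrey's embedding (available because $p>3$) to reduce the commutator to a weighted convolution that is controlled by Young's and Cauchy--Schwarz inequalities.

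First I would write
\[
[P, G]\p_m f(x) = \int K(x-y)\bigl(G(y) - G(x)\bigr)\p_m f(y)\,dy,
\]
using that the symbol of $P$ is supported in $\{1/2 \le |\xi| \le 2\}$, so $K$ is Schwartz. Since $[P, G+c]=[P, G]$ for any constant $c$, I may subtract the mean of $G$, after which Morrey's inequality yields
\[
|G(y) - G(x)| \les |y-x|^{\alpha}\|\hn G\|_{L^p}, \qquad \alpha := 1 - 3/p \in (0,1].
\]
Plugging this in, $|[P,G]\p_m f(x)| \les \|\hn G\|_{L^p}(K_\alpha * |\hn f|)(x)$ with $K_\alpha(z) := |K(z)||z|^\alpha$ still Schwartz, hence in every $L^r$. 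Cauchy--Schwarz in $y$ then controls the $L_x^\infty$ norm by $\|K_\alpha\|_{L^2}\|\hn f\|_{L^2}$, while Young's inequality ($L^1 \ast L^2 \subset L^2$) handles the $L^2$ part of the $H^1$ norm.

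For the derivative part of the $H^1$ norm, I would differentiate the kernel representation in $x$ and rearrange to obtain
\[
\p_k [P,G]\p_m f = [\tilde P_k, G]\p_m f - \p_k G \cdot P\p_m f,
\]
where $\tilde P_k$ is convolution with $\p_k K$, another frequency-$1$ localized operator with Schwartz kernel. The first term is handled by the identical Morrey-plus-convolution argument applied to $\tilde P_k$. For the second term, H\"older yields
\[
\|\p_k G \cdot P\p_m f\|_{L^2} \le \|\hn G\|_{L^p}\|P\p_m f\|_{L^{2p/(p-2)}},
\]
and because $P\p_m f$ is frequency-localized at scale $1$, Bernstein's inequality gives $\|P\p_m f\|_{L^{2p/(p-2)}} \les \|\p_m f\|_{L^2} \le \|\hn f\|_{L^2}$, closing the estimate uniformly in $3 < p \le \infty$.

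The main mild technical point is to confirm Morrey's embedding on the coordinate patch underlying the Littlewood--Paley calculus; this is standard in the rescaled Euclidean setting of Section \ref{convv}, where $\hn$ is comparable to the flat gradient by (\ref{11.5.1}), so no genuine obstacle arises beyond routine bookkeeping.
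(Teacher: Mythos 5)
Your proof is correct, and it takes a genuinely different route from the paper's. The paper first splits $\p_m f$ by frequency via (\ref{bdcp}) into a low part $(\p_m f)_{\le 1}$ and a tail $\sum_{\ell>1}P(G_\ell P_\ell \p_m f)$; for the low part it applies the Sobolev embedding $\|u\|_{L^\infty}\les\|\hn u\|_{L^b}+\|u\|_{L^2}$ (for some intermediate $3<b<p$) together with the kernel representation (\ref{bdcp2}), while the tail is controlled via Bernstein and the finite band property, exploiting $-1+3/p<0$ to sum the series. You instead feed $p>3$ into the problem through Morrey's embedding, replacing the difference quotient $G(y)-G(x)$ by the pointwise Hölder bound $|x-y|^{1-3/p}\|\hn G\|_{L^p}$, which converts the commutator directly into convolution against the rapidly decaying weight $|K(z)||z|^{1-3/p}$; then Cauchy--Schwarz gives $L^\infty$ and Young's $L^1\ast L^2$ gives $L^2$, with no decomposition of $\p_m f$ at all. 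Both proofs hinge on the same fact --- that $\hn G\in L^p$ for $p>3$ buys enough regularity to absorb a loss of one derivative near $|x-y|\approx 1$ --- but your version extracts this via Morrey and is shorter; the paper's version stays inside the paradifferential toolbox (trichotomy, Bernstein, finite band) that it develops and reuses throughout Appendix III, which is why it phrases the low/high split rather than appealing to an external embedding theorem. Your derivative identity $\p_k[P,G]\p_m f=[\tilde P_k,G]\p_m f-\p_k G\cdot P\p_m f$ and the Hölder--Bernstein treatment of the second term are correct and closely parallel the paper's estimates for $J$; the caveat you flag about Morrey on the rescaled manifold is genuinely harmless since $P$ is defined as a Euclidean Fourier multiplier in (\ref{LP2012}) and the kernel decays rapidly, so only $|x-y|\les 1$ contributes. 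One minor point: the Morrey constant blows up as $p\to 3^{+}$, so the implied constant in your estimate is $p$-dependent; this is consistent with the paper, which also fixes $p$ in applications (it uses $p=\infty$ in Lemma \ref{err11}).
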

\begin{proof}
We first establish $L^\infty$ estimates in this lemma.
Using (\ref{bdcp}), we have
\begin{equation}\label{err6}
[P, G]\p_m f= [P, G](\p_m f)_{\le 1}+\sum_{\ell>1}P\left( G_{\ell}\c P_\ell (\p_m f)\right).
\end{equation}
By Sobolev embedding,  with $b>3$ sufficiently close to $3$,
\begin{align}
\|[P,& G](\p_m f)_{\le 1}\|_{L_x^\infty}\les \|\hn [P, G](\p_m f)_{\le 1}\|_{L_x^b}+\|[P, G](\p_m f)_{\le 1}\|_{L_x^2}.\label{err5}
\end{align}
Using Corollary \ref{base2com},  we can obtain for the second term with $\frac{1}{p}+\frac{1}{p'}=\f12$ that
\begin{align}
\|[P, G](\p_m f)_{\le 1}\|_{L_x^2}&\les \|\hn G\|_{L_x^p}\|\hn f_{\le 1}\|_{L_x^{p'}}
\les\|\hn G\|_{L_x^p}\|\hn f\|_{L_x^2}.\label{err8}
\end{align}
Apply (\ref{bdcp2}) to $(G, \p_m f)$ and $\mu=1$  we can obtain
\begin{align}
\hn [P, G] (\p_m f)_{\le 1} &=\int \hn M_1(x-y) (x-y)^j \int_0^1 \p_j G(\tau y+(1-\tau) x) d\tau (\p_m f)_{\le 1}(y) dy\nn\\
&- \int M_1 (x-y) \hn G(x) (\p_m f)_{\le 1}(y) dy.\label{red1}
\end{align}
Then with $\frac{1}{b*}+\frac{1}{p}=\frac{1}{b}$,
\begin{align}
&\|\hn [P, G] (\p_m f)_{\le 1}\|_{L_x^b}\les \|\hn G\|_{L_x^p}\|\p f_{\le 1}\|_{L_x^{b*}}.\label{err7}
\end{align}
By (\ref{err6}), using (\ref{err7}) and  (\ref{err8}), Bernstein inequality,
\begin{align*}
\|[P, G]\p_m f\|_{L_x^\infty}&\les \|\hn G\|_{L_x^p}\|\p_m f\|_{L_x^2}+\sum_{\ell>1}\|P(G_\ell P_\ell (\p_m f))\|_{L_x^\infty}.
\end{align*}
Using Bernstein inequality and finite band property,
\begin{align*}
\sum_{\ell>1}\|P\left(G_\ell P_\ell (\p_m f)\right)\|_{L_x^\infty}&\les\sum_{\ell>1}\ell^{-1} \|P_\ell \p G\|_{L_x^p}\|P_\ell \p_m f\|_{L_x^{p'}}\\
&\les\|\hn G\|_{L_x^p}\|\p_m f\|_{L_x^2}\sum_{\ell}\ell^{-1+\frac{3}{p}}\les\| \hn G\|_{L_x^p} \|\hn f\|_{L_x^2}.
\end{align*}
Thus  the proof of the first part of the lemma is completed.  Now we consider $H^1$ estimates in this lemma.
Let $I=\hn [P, G ](\p_m f)_{\le 1}$. Similar to (\ref{err7}), it follows from (\ref{red1}) that
\begin{equation*}
\|I\|_{L_x^2}\les \|\hn G\|_{L_x^p} \|(\p_m f)_{\le 1}\|_{L_x^{p'}}.
\end{equation*}
Now consider $J=
\sum_{\ell>1}\hn P\left(G_\ell P_\ell (\p_m f)\right)$, by finite band property,
\begin{align*}
\|J\|_{L_x^2}&\les \sum_{\ell>1} \ell^{-1} \|\hn G\|_{L_x^p} \|P_{\ell} (\p_m f)\|_{L_x^{p'}}\\
&\les \sum_{\ell>1}\ell^{-1+\frac{3}{p}}\|\p_m f\|_{L_x^2}\|\hn G\|_{L_x^p}\les \|\hn G\|_{L_x^p} \|\hn f\|_{L_x^2}.
\end{align*}
Combining the estimates for $I$ and $J$, in view of (\ref{err6}), we can complete the proof.
\end{proof}

For ease of exposition, let us introduce the first version of
conformal energy and state its boundedness theorem whose proof
occupies the rest of the paper.

\begin{theorem}[Boundedness theorem]\label{BT}
Let $\psi$ be a solution of $\Box_\bg \psi=0$ whose initial data is
supported in $B_{\f12}\subset(\D_0^+\cap \Sigma_{t_0})$.  In the
region $\D_0^+$,
\begin{equation}\label{confen}
C[\psi](t):=\int_{\Sigma_t}\{t^2(|\sn
\psi|^2+|\sn_L \psi|^2)+u^2|\nab\psi|^2+(\frac{t^2}{(t-u)^2}+1)\psi^2\} d\mu_g
\end{equation}
there holds  for $t\in [t_0, \tau_*]$, $C[\psi](t)\les
\|\psi[t_0]\|_{H^1}^2+\|\psi(t_0)\|_{L^2(\Sigma)}^2$.
\end{theorem}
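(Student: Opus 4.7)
The plan is to establish the boundedness of $C[\psi](t)$ by running the classical Morawetz energy estimate for the conformal Killing-like vector field $K = \frac{1}{2}n(u^2\Lb + \ub^2 L)$ (with $\ub = 2t-u$), adapted to the cut-off region $\{\underline{\varpi}=1\}$ so that $K$ is defined globally and behaves well. The first step is to set up the generalized energy $\tilde Q[\psi](t) := \int_{\Sigma_t}\underline{\varpi}\,\bar Q(K,\bT)[\psi]\,d\mu_g$, where $\bar Q$ is the modified energy-momentum tensor defined in (\ref{gneng1}) with $X=K$, $Y=\bT$, $\Omega = 4t$, and $Q[\psi]_{\mu\nu}$ is the standard energy-momentum tensor. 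The next step is to verify the pointwise equivalence $\tilde Q[\psi](t) \approx C[\psi](t)$ on $\D_0^+$. This relies on decomposing $Q(K,\bT)[\psi]$ in the null frame $\{L, \Lb, e_A\}$ and using the geometric bounds $(t-u)\approx r$, $r\tr\chi\approx 1$, $|\bb/n - 1|\le 1/2$ from (\ref{recl1}), together with the positivity structure built into $\bar Q$ by the $\Omega f\,\bT(f)$ and $f^2\bT(\Omega)$ modifiers (which absorb the zeroth-order terms in $C[\psi]$).

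The core computation is the divergence identity (\ref{eniden}) applied on the spacetime slab $[t_0,t]\times \Sigma$ restricted to $\{\underline{\varpi}=1\}$. Since $\Box_\bg \psi = 0$, the Duhamel-type term drops out and we are left with
\begin{equation*}
\tilde Q[\psi](t) - \tilde Q[\psi](t_0) = -\tfrac{1}{2}\int_{[t_0,t]\times\Sigma}\underline{\varpi}\,{}^{(K)}\bar\pi^{\a\b}Q[\psi]_{\a\b} + \mathrm{(cut\text{-}off\ and\ lower\ order)}.
\end{equation*}
The boundary contribution at $t_0$ is bounded by $\|\psi[t_0]\|_{H^1}^2 + \|\psi(t_0)\|_{L^2}^2$ since $\psi[t_0]$ is supported in $B_{1/2}$, and the cut-off derivative terms produce at worst flux integrals on $\{u = -t/4\}$-type hypersurfaces where $\underline{\varpi}$ transitions, which are controlled by the standard energy estimate (\ref{h1en}).

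The main obstacle is controlling the deformation tensor integral $\int \underline{\varpi}\,{}^{(K)}\bar\pi^{\a\b}Q[\psi]_{\a\b}$ relative to the rough Einstein metric. Decomposing ${}^{(K)}\bar\pi = {}^{(K)}\pi - 4t\bg$ in the null frame produces components expressible in terms of the Ricci coefficients $\chi, \zeta, \underline{\chi}, \omega, \underline{\omega}$, the lapse derivative $\sn\log \bb$, and the $\bT$-deformation tensor $\pi$. Pairing these against $Q[\psi]_{\a\b}$ and applying Cauchy-Schwarz on each $\Sigma_t$ reduces matters to controlling factors of the form $t\|\chih\|_{L^\infty_u L^\infty_\omega}$, $t\|\zeta\|_{L^\infty_u L^\infty_\omega}$, $\|\tr\chi - \tfrac{2}{n(t-u)}\|_{L^\infty}$, and $\|\pi\|_{L^\infty}$, integrated in $t$. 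The key point is that only $L^1_t$ norms of these Ricci coefficients are needed, and the flux bound (\ref{fluxdef}) gives $\R + \N_1[\sl\pi] \lesssim \la^{-1/2}$; together with the estimates outlined in Step~4 of the introduction (in particular the $L^\infty_u L^2_t L^\infty_\omega$ type estimates for $\chih, \zeta$ and the control of $\mu$ and $\tr\chi-\tfrac{2}{n(t-u)}$), this yields an integrable-in-$t$ bound with constant independent of $\la$, producing
\begin{equation*}
\tilde Q[\psi](t) \lesssim \tilde Q[\psi](t_0) + \int_{t_0}^t A(s)\tilde Q[\psi](s)\,ds, \qquad \|A\|_{L^1_{[t_0,\tau_*]}} \lesssim 1.
\end{equation*}

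Finally, Gronwall's inequality closes the argument: $\tilde Q[\psi](t)\lesssim \tilde Q[\psi](t_0)\lesssim \|\psi[t_0]\|_{H^1}^2 + \|\psi(t_0)\|_{L^2}^2$, and invoking the comparability $\tilde Q \approx C[\psi]$ gives the claimed bound. The delicate step here, which is presumably established in Sections 5--7 as indicated in the introduction, is the rigorous verification that the requisite norms of $\chih, \zeta$, and $\pi$ are truly finite relative to the non-smoothed metric $\bg$ under bootstrap assumptions (\ref{ba1})--(\ref{BA3}); the modified Calderon-Zygmund machinery and the null curvature decomposition $\b = \sn\tilde\pi + \cdots$ mentioned in Step~4 of the introduction are what make this possible without losing the $\bd_\bT Y$ term.
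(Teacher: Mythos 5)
Your overall architecture (energy identity for the modified current $\bar Q(K,\bT)$, comparison with $\C[\psi]$, control of the deformation tensor via the Ricci coefficient estimates) matches the paper's, which splits the statement into a Comparison Theorem and a Boundedness Theorem in Section 5. However, there is a genuine gap at the step where you claim that ``applying Cauchy--Schwarz on each $\Sigma_t$'' reduces the deformation-tensor integral to $L^1_t$ norms of the Ricci coefficients times $\sup_t \C[\psi]$. This works for every component of $\bkpi_{\a\b}Q^{\a\b}$ except the trace part: $\tr\bpi=4tn(t-u)\bigl(\tr\chi-\frac{2}{n(t-u)}\bigr)-2u^2n\Tr k$ pairs with $\f12\Lb\psi L\psi-|\sn\psi|^2$, and the term
\[
\B=\int 2t'n(t'-u)\Bigl(\tr\chi-\tfrac{2}{n(t'-u)}\Bigr)\Lb\psi\, L\psi
\]
cannot be absorbed by Cauchy--Schwarz. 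The conformal energy only controls $u^2|\Lb\psi|^2+\ub^2|L\psi|^2$, so the pointwise bound one would need is $t(t-u)|z|\,|\Lb\psi||L\psi|\les \frac{t(t-u)}{u\ub}|z|\bigl(u^2|\Lb\psi|^2+\ub^2|L\psi|^2\bigr)$, and the weight $\frac{t(t-u)}{u\ub}$ blows up as $u\to 0$ in the exterior region. No $L^1_tL^\infty_x$ bound on $z$ rescues this; the degeneracy of the Morawetz energy in $\Lb\psi$ near the cone $u=0$ is exactly the obstruction.

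The paper handles $\B$ by a separate, much more delicate argument: it splits $\B=\B^i+\B^e$, treats the interior piece directly (there $u\approx \ub\approx t$ so Cauchy--Schwarz does work), and for $\B^e$ integrates by parts in $\Lb$, uses the wave equation to rewrite $\Lb L\psi$ as $\sD\psi$ plus lower order terms, and then integrates by parts again on the sphere. This converts the problem into controlling $\sn z$ and $\Lb z$ in $L^1_tL^\infty_uL^p_\omega$ with weight $r^{3/2}$ (estimate (\ref{ricp})), combined with the exterior Sobolev inequality (\ref{sobex}) to put $\psi$ itself (not $\Lb\psi$) into the energy. These $L^p_\omega$ estimates on derivatives of $\tr\chi-\frac{2}{n(t-u)}$ are precisely what Sections 6--7 are built to supply, and they do not appear anywhere in your outline. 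A secondary, smaller point: the resulting bound has the form $\bar Q[\psi](t)\le \bar Q[\psi](t_0)+CT^{1/2}\sup_{t}\C[\psi](t)$, so the argument closes by absorption using the smallness of $T$ and the comparison theorem rather than by Gronwall; a Gronwall kernel $A(s)$ multiplying $\tilde Q(s)$ pointwise is not what the estimates actually produce.
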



\begin{lemma}\label{err11}
Let $q>2$ and $0<\delta\le 1-\frac{2}{q}$ be two numbers.
Assuming (\ref{BA2}) and
\begin{equation}\label{aricc}
\|\varpi(\chih, \sn \log \bb)\|_{L^2[0,\tau_*] L_x^\infty}\les \la^{-\f12},\quad \mbox{ with } \tau_*\le \la T,
\end{equation}
 for any solution
$\psi$ of $\Box_\bg \psi=0$, there holds
\begin{equation*}
\|[P, \varpi N^m]\p_m \psi\|_{L_x^\infty}+\|[\varpi\sn, P]\psi\|_{H^1}\les \ti d(t)(\|\psi[t_0]\|_{H^1}+\|\psi(t_0)\|_{L_x^2}),
\end{equation*}
where
\begin{equation*}
(1+t)^{\delta}\ti d(t)\les (1+t)^{-\frac{2}{q}}+d(t)
\end{equation*}
with $d(t)$ being a function  satisfying (\ref{correc}).
\end{lemma}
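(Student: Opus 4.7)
The plan is to reduce both quantities to direct applications of Lemma~\ref{comh1}. Since for a scalar $\psi$ we have $\sn_A\psi=e_A^m\p_m\psi$, a short calculation yields $[\varpi\sn,P]\psi=-[P,\varpi e_A^m]\p_m\psi$, so that both terms in the lemma take the form $[P,\varpi Z^m]\p_m\psi$ with $Z\in\{N,e_1,e_2\}$. Applying Lemma~\ref{comh1} and the standard energy estimate (\ref{h1en}) then gives, for any $p\in(3,\infty]$,
\[
\|[P,\varpi Z^m]\p_m\psi\|_{L_x^\infty}+\|[P,\varpi Z^m]\p_m\psi\|_{H^1}\les \|\hn(\varpi Z)\|_{L_x^p}\bigl(\|\psi[t_0]\|_{H^1}+\|\psi(t_0)\|_{L^2}\bigr).
\]
The task is therefore to construct $\ti d(t)$ that controls $\|\hn(\varpi Z)\|_{L_x^p}$; the choice $p=\infty$ will suffice.

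Next I would split $\hn(\varpi Z)=(\hn\varpi)Z+\varpi\hn Z$. The cutoff $\varpi$ is a smooth function of $t$ and $u$ supported in the exterior region $\{0\le u\le 3t/4\}$, with $|\p_u\varpi|\les 1/t$ and $|\hn u|\les 1$, so $|\hn\varpi|\les (1+t)^{-1}$ on $\operatorname{supp}\varpi$. For $\varpi\hn Z$, the identities (\ref{thetan}) decompose $\hn N$ into tangential components governed by $\theta_{AB}=\chih_{AB}+\tfrac12\tr\chi\,\gamma_{AB}+k_{AB}$ and a normal component given by $\sn\log\bb$; the analogous relations for $\hn e_A$ within the orthonormal frame $\{N,e_1,e_2\}$ reduce to the same geometric quantities.

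Combining these with the available bounds: on $\operatorname{supp}\varpi$ one has $t-u\approx t$, hence $\|\varpi\tr\chi\|_{L_x^\infty}\les(1+t)^{-1}$ by (\ref{recl1}); hypothesis (\ref{aricc}) gives $\|\varpi(\chih,\sn\log\bb)\|_{L^2_{[0,\tau_*]}L_x^\infty}\les\la^{-1/2}$; and (\ref{BA2}) after the rescaling that brought us to $[0,\tau_*]$ provides $\|k\|_{L^2_{[0,\tau_*]}L_x^\infty}\les\la^{-1/2}$. Summing, $\|\hn(\varpi Z)(t)\|_{L_x^\infty}\les (1+t)^{-1}+a(t)$ with $\|a\|_{L^2[0,\tau_*]}\les\la^{-1/2}$, so I set $\ti d(t):=(1+t)^{-1}+a(t)$.

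Finally I would verify that $(1+t)^\delta\ti d(t)\les(1+t)^{-2/q}+d(t)$ with $\|d\|_{L^{q/2}}\les 1$. Since $\delta\le 1-2/q$, the polynomial piece satisfies $(1+t)^{\delta-1}\le(1+t)^{-2/q}$. For the rough piece, set $d(t):=(1+t)^\delta a(t)$; by H\"older with $\tfrac{1}{r}+\tfrac{1}{2}=\tfrac{2}{q}$ (so $r=2q/(4-q)$), noting $\delta+\tfrac{1}{r}\le (1-\tfrac{2}{q})+(\tfrac{2}{q}-\tfrac12)=\tfrac12$ and $\tau_*\le\la T$, we obtain
\[
\|d\|_{L^{q/2}}\le \|(1+t)^\delta\|_{L^r[0,\tau_*]}\|a\|_{L^2}\les \tau_*^{1/2}\la^{-1/2}\les T^{1/2},
\]
which is bounded by a universal constant. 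The main obstacle lies in confirming that $\hn e_A$ and $\hn N$ reduce cleanly to the Ricci components $\chih,\tr\chi,k,\sn\log\bb$ already controlled by the hypotheses, without introducing $\bd_\bT Y$-type terms excluded by the CMCSH framework; this is precisely why the cutoff $\varpi$ localizes to the exterior region where the null-frame structure is compatible with the available geometric estimates.
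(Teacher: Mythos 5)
Your treatment of the first commutator $[P,\varpi N^m]\p_m\psi$ and of the function $\ti d(t)$ is essentially the paper's argument: reduce to Lemma \ref{comh1} with $G=\varpi N^i$, bound $\|\hn(\varpi N)\|_{L_x^\infty}$ by $(1+t)^{-1}$ plus $\|\varpi(\chih,\sn\log\bb),k,\hn g\|_{L_x^\infty}$ via (\ref{thetan}) and (\ref{recl1}), and then run H\"older in $t$ using (\ref{aricc}) and the rescaled (\ref{BA2}); your verification that $\delta+\tfrac1r\le\tfrac12$ and $\|d\|_{L^{q/2}}\les T^{1/2}$ is correct and in fact slightly more explicit than the paper's.

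The one step that does not go through as written is the reduction of $[\varpi\sn,P]\psi$ to $-[P,\varpi e_A^m]\p_m\psi$. The identity $\sn_A\psi=e_A^m\p_m\psi$ is fine pointwise, but to apply Lemma \ref{comh1} you need the scalar coefficients $G=\varpi e_A^m$ to have $\hn G$ controlled in $L_x^p$ \emph{globally on} $\Sigma_t$, and $\hn e_A$ is not a geometric quantity: an orthonormal frame on the spheres $S_{t,u}$ only exists locally, and its derivatives contain the frame connection coefficients $\sn_B e_A$, which are pure gauge and are not bounded by $\chih$, $\tr\chi$, $k$, $\sn\log\bb$ (any global choice degenerates somewhere on each sphere). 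Moreover, since $P$ is nonlocal, $[P,\varpi e_A^m]\p_m\psi$ for fixed $A$ is genuinely frame-dependent, so one cannot argue that only the invariant magnitude matters. The paper avoids this by writing $\sn_j\psi=\Pi^i_j\p_i\psi$ with the globally defined projection $\Pi^i_j=\delta^i_j-N^iN_j$, so that $[P,\varpi\sn_j]\psi=[P,\varpi N_jN^i]\p_i\psi$ (up to the harmless $[P,\varpi]\p_j\psi$), and all derivatives reduce to $\hn N$ and $\hn g$, which are exactly the quantities your hypotheses control. With that substitution your argument closes; you should also keep the $\hn g$ contribution (coming from raising/lowering indices on $N$) in $\ti d^{(2)}(t)$, which is again handled by (\ref{BA2}).
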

The condition
(\ref{aricc}) is incorporated in  (\ref{ric3})  in Proposition \ref{rices} and is proved in Section \ref{sec7}.
\begin{proof}
We first claim
for $t>t_0$ there hold
\begin{align}\label{h1theta}
\|\hn (\varpi N)\|_{L_x^\infty}&\les \|\varpi (\chih, \sn \log \bb), k,\hn g\|_{L^\infty}+(1+t)^{-1}
\end{align}
Indeed,
for $t\ge t_0$, on the support of $\varpi$, i.e. $\cup_{-\frac{t}{4}\le u\le \frac{3t}{4}} S_{t,u}$, the radius $r$ of $S_{t,u}$ within the support of $\varpi$ satisfies $r\approx (1+t)$. (\ref{h1theta}) follows by using (\ref{thetan}), (\ref{recl1}) and
 the fact that $\|\hn \varpi\|_{L_x^\infty}\les (1+t)^{-1}$ .

Let $\Pi_{ij}=g_{ij}-N_i N_j$ denote the projection tensor on $\Sigma$.
Then for any scalar function $f$, we have $\sn_j f=\Pi^{i}_{j}\p_i f$ and
\begin{equation}\label{psnf}
[P, \varpi\sn_j]f=-(\varpi N^i N_j) P\p_i  f+ P((\varpi N_j N^i) \p_i f)=[P, \varpi N_j N^i]\p_i f.
\end{equation}
Applying Lemma \ref{comh1} to $(G, f)=(\varpi N^i, \psi), \, (\varpi N_j N^i, \psi)$, and
using Proposition \ref{geoeg} for $\psi$, we have
\begin{equation*}
\|[P, G]\p_i f\|_{L_x^\infty} + \|[P, G]\p_i f\|_{H^1}
\les \|\hn G\|_{L_x^\infty}(\|\bd \psi(t_0)\|_{L_x^2}+\|\psi(t_0)\|_{L_x^2}).
\end{equation*}
Now for $q>2$, we set
$\ti d(t)=\|\hn G\|_{L_x^\infty}$.
We have from (\ref{h1theta}) that
\begin{align*}
\ti d(t)&\les(1+t)^{-1}+\|\varpi(\chih,\sn\log \bb), \hn g, k\|_{L_x^\infty}
= (1+t)^{-1}+\ti d^{(2)}(t).
 \end{align*}
By using (\ref{aricc}) and H\"{o}lder inequality, we have
$$
 \|\ti d^{(2)}(t)\|_{L^{\frac{q}{2}}}\les \la^{\frac{2}{q}-1} T^{\frac{2}{q}-\frac{1}{2}}.
$$
Thus,   with $0< \delta\le 1-\frac{2}{q}$ and $d(t)=(1+t)^\delta \ti d^{(2)}(t)$, we can complete the proof.
\end{proof}

\begin{lemma}\label{basic1}
(i) Let  $S_t=\Sigma_t\cap\N^+(p)$, with $p\in [-\la T, \la T]\times\Sigma$.   For $S_t$ tangent tensor $F$, there holds
\begin{equation}\label{sob.12}
\|r^{1-2/q}F\|_{L^q(S_t)}\les\|r\sn F\|_{L^2(S_t)}^{1-2/q}\|F\|_{L^2(S_t)}^{2/q}+\|F\|_{L^2(S_t)}.
\quad 2\le q<\infty.
\end{equation}
(ii) For any $\delta\in (0,1)$, any $q\in (2,\infty)$ and any scalar function $f$
there hold
\begin{align*}
\sup_{S_{t,u}}|f|\les  r^{\frac{2\delta(q-2)}{2q+\delta(q-2)}}
&\left(\int_{S_{t,u}} \left(|\sn f|^2+r^{-2} |f|^2\right)\right)^{\f12-\frac{\delta q}{2q+\delta(q-2)}} \\
& \qquad\qquad \times \left(\int_{S_{t,u}} \left(|\sn f|^q+r^{-q}|f|^q\right) \right)^{\frac{2\delta}{2q+\delta(q-2)}}.
\end{align*}
\end{lemma}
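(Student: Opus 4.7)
Both inequalities are proved by transplanting well-known 2-dimensional Sobolev/Gagliardo–Nirenberg inequalities to the 2-surface in question via the exponential-map diffeomorphism $\mathcal G(t,u,\cdot):\mathbb S^2\to S$ (with $S=S_t$ or $S_{t,u}$), and carefully tracking the powers of $r$ produced by the metric equivalence $r^{-2}\ga_{t,u}\approx\gas$ recorded in \eqref{recl1}. Throughout I may freely reduce the tensor case to a scalar one by applying the Kato inequality $|\nabla|F||\le|\nabla F|$ pointwise.

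For part (i), I would start from the standard Gagliardo–Nirenberg inequality on $(\mathbb S^2,\gas)$,
\[
\|\phi\|_{L^q(\mathbb S^2)}\les\|\nabla_{\mathbb S^2}\phi\|_{L^2(\mathbb S^2)}^{1-2/q}\|\phi\|_{L^2(\mathbb S^2)}^{2/q}+\|\phi\|_{L^2(\mathbb S^2)},
\]
applied to $\phi:=|F|_\ga$. Pulling back to $S$ via $\mathcal G$ and using $d\mu_\ga\approx r^2\, d\mu_{\mathbb S^2}$ and $|\sn F|_\ga^2\approx r^{-2}|\nabla_{\mathbb S^2}F|_{\gas}^2$ produces the comparisons $\|\phi\|_{L^p(\mathbb S^2)}\approx r^{-2/p}\|\phi\|_{L^p(S)}$ and $\|\nabla_{\mathbb S^2}\phi\|_{L^2(\mathbb S^2)}\approx\|\sn\phi\|_{L^2(S)}$. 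Plugging these into the spherical inequality and multiplying by $r$ rearranges precisely to \eqref{sob.12}.

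For part (ii), my plan is to combine 2D Morrey–Sobolev embedding with $L^p$ interpolation. The same pullback argument applied to $W^{1,p}(\mathbb S^2)\hookrightarrow L^\infty$, $p>2$, gives the scale-invariant form
\[
\|f\|_{L^\infty(S_{t,u})}\les r^{1-2/p}\,I_p^{1/p},\qquad I_p:=\int_{S_{t,u}}\bigl(|\sn f|^p+r^{-p}|f|^p\bigr).
\]
Next I apply the elementary $L^p$ interpolation to the nonnegative scalar $g:=|\sn f|+r^{-1}|f|$, which satisfies $\|g\|_{L^p}^p\approx I_p$: for $\tfrac1p=\tfrac{\theta}{2}+\tfrac{1-\theta}{q}$,
\[
I_p^{1/p}\approx\|g\|_{L^p}\le\|g\|_{L^2}^{\theta}\|g\|_{L^q}^{1-\theta}\les I_2^{\theta/2}I_q^{(1-\theta)/q}.
\]
It then remains to match exponents: with $D:=2q+\delta(q-2)$, the choice $\theta:=1-2\delta q/D$ yields $\theta/2=\tfrac12-\delta q/D$ and $(1-\theta)/q=2\delta/D$, while the induced $p=2D/(2q-\delta(q-2))$ satisfies $1-2/p=2\delta(q-2)/D$. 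For $\delta\in(0,1)$ and $q>2$ one checks $\theta\in(0,1)$ and $p>2$, so Morrey applies; substituting gives exactly the stated bound.

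The main obstacle here is purely bookkeeping: verifying the exponent identifications and confirming the admissibility conditions $\theta\in(0,1)$, $p>2$ (these are what force the hypotheses $\delta<1$ and $q>2$ respectively). No deep analytical input is required beyond the standard 2D Sobolev/Gagliardo–Nirenberg inequalities and the metric-equivalence information already established for $S_{t,u}$.
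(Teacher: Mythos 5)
The paper does not actually prove this lemma---it simply cites it as \cite[Theorem 5.2]{KRduke}---so there is no in-text argument to compare against. Your proof is correct and supplies the missing details. For part (i), the pullback to $(\mathbb S^2,\gas)$ by the exponential map, together with the metric equivalence $r^{-2}\ga\approx\gas$ from (\ref{recl1}), gives exactly the comparisons $\|\phi\|_{L^p(\mathbb S^2)}\approx r^{-2/p}\|\phi\|_{L^p(S)}$ and $\|\nabla_{\mathbb S^2}\phi\|_{L^2(\mathbb S^2)}\approx\|\sn\phi\|_{L^2(S)}$, and multiplying the spherical Gagliardo--Nirenberg inequality through by $r$ lands on \eqref{sob.12}; the Kato reduction to scalars is legitimate since $|\sn|F||\le|\sn F|$ a.e. One small point worth making explicit: the version of Gagliardo--Nirenberg you invoke on the compact manifold $\mathbb S^2$ must carry the extra $+\|\phi\|_{L^2}$ term (equivalently, is proved by applying the scale-invariant inequality to $\phi-\bar\phi$ and then using Poincar\'e), which you did correctly include and which is the source of the additive $\|F\|_{L^2(S)}$ in the statement. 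For part (ii), your exponent bookkeeping checks out: with $D=2q+\delta(q-2)$, one has $\theta=1-2\delta q/D\in(0,1)$ because $\delta<1<2q/(q+2)$ for $q>2$, and the induced $p=2D/(2q-\delta(q-2))>2$ since $\theta<1$; substituting $\theta/2=\tfrac12-\delta q/D$, $(1-\theta)/q=2\delta/D$, and $1-2/p=2\delta(q-2)/D$ reproduces the stated inequality exactly.
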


\begin{proof}
This is \cite[Theorem 5.2]{KRduke}
\end{proof}

Now we are ready to complete the proof of Proposition \ref{lcestimate}.

\begin{proof}[Proof of Proposition \ref{lcestimate}]
We first claim that
\begin{equation}\label{eqn16}
\|\varpi P \psi\|_{L^\infty(\Sigma_t)}\les\left(\frac{1}{{(1+|t-t_0|)}^{\frac{2}{q}}}+d(t)\right)(\|\psi[t_0]\|_{H^1}+\|\psi(t_0)\|_{L^2}).
\end{equation}
Since $\varpi$ vanishes outside the region $-t/4\le u< 3t/4$,
this claim is trivial there. Thus we may restrict our consideration to the region
$-t/4\le u< 3t/4$. In view of $r\approx t-u$, we thus have $r\approx t$ for $t>0$.
Recall that $\varpi$ is constant on each $S_{t,u}$,  from Lemma \ref{basic1} (ii), we can obtain
\begin{align*}
\sup_{S_{t,u}}|\varpi P\psi|^2
& \les r^\delta \left( \int_{S_{t,u}} \left(|\varpi \sn P \psi|^2
+r^{-2}|\varpi P\psi|^2\right)\right)^{1-\delta} \\
& \qquad \qquad\qquad \times \left(\int_{S_{t,u}} \left(|\varpi \sn P\psi|^4
+r^{-4}|\varpi P\psi|^4\right) \right)^{\f12\delta}.
\end{align*}
Applying Lemma \ref{trace2} and the finite band property, we then obtain
\begin{align*}
\sup_{S_{t,u}}|\varpi P\psi|^2 & \les r^\delta \left(\int_{S_{t,u}} \left(|P(\varpi\sn
\psi)|^2+r^{-2}|\varpi P\psi|^2+|[P,\varpi\sn]\psi|^2\right) \right)^{1-\delta}\\
&\quad \, \times \left(\int_{S_{t,u}} \left(|P(\varpi \sn \psi)|^4+r^{-4}|\varpi P\psi|^4
+| [P,\varpi\sn]\psi|^4\right)\right)^{\f12\delta}\\
&\les r^\delta \left(t^{-2} C[\psi](t)+\|[P,\varpi\sn]\psi\|^2_{H^1}\right)
\end{align*}
By letting $0<\delta\le 2( 1-\frac{2}{q})$,  (\ref{eqn16}) then follows  from Theorem \ref{BT} and Lemma \ref{err11}.

In order to derive the estimate on $P (e_0 (\psi))$, we can write
$\|P \left(e_0 \psi\right)\|_{L_x^\infty}\le \|P (\varpi e_0 \psi)\|_{L_x^\infty}
+\|P\left((\underline{\varpi}-\varpi)e_0 \psi\right)\|_{L_x^\infty}$.
By  Bernstein inequality and  Theorem \ref{BT}, we have
\begin{align*}
\|P\left( (\underline{\varpi}-\varpi) e_0 \psi\right)\|_{L_x^\infty}
& \les\|(\underline{\varpi}-\varpi) e_0 \psi\|_{L_x^2}
\les (1+t)^{-1} C[\psi]^\f12(t)\\
& \les(1+t)^{-1} (\|\psi[t_0]\|_{H^1(\Sigma)}+\|\psi(t_0)\|_{L^2(\Sigma)})
\end{align*}
and
\begin{align*}
\|P(\varpi e_0\psi)\|_{L_x^\infty}&\les \|P(\varpi L\psi)\|_{L_x^\infty}
+\|P(\varpi N\psi)\|_{L_x^\infty}
\les \|\varpi L\psi\|_{L_x^2}+\|P(\varpi N\psi)\|_{L_x^\infty}.
\end{align*}
From Theorem \ref{BT} we have
$$\|\varpi L\psi\|_{L_x^2}\les (1+t)^{-1} C[\psi]^{\f12}(t)
\les (1+t)^{-1}(\|\psi[t_0]\|_{H^1(\Sigma)}+\|\psi(t_0)\|_{L^2_\Sigma}).$$
Moreover
\begin{align}
\|P(\varpi N\psi)\|_{L_x^\infty}&\le \|[P, \varpi N^l]\p_l \psi\|_{L_x^\infty}
+\|\varpi N^l P \p_l \psi\|_{L_x^\infty}. \label{eqn19}
\end{align}
The first term in (\ref{eqn19}) can be estimated by Lemma \ref{err11}.
By using (\ref{eqn16}), the second term  in (\ref{eqn19}) can be estimated as
\begin{equation*}
\|\varpi N^l P \p_l \psi(t)\|_{L_x^\infty}\le \|\varpi \ti P\psi(t)\|_{L_x^\infty}\les
\left((1+t)^{-\frac{2}{q}}+d(t)\right)(\|\psi[t_0]\|_{H^1}+\|\psi(t_0)\|_{L_\Sigma^2}),
\end{equation*}
where $\ti P$ denotes a Littlewood Paley projection with frequency $1$ associated to
a different symbol. Putting the above estimates together completes the proof.
\end{proof}

\section{\bf Boundedness theorem for Conformal energy}\label{sec5}

In this section we will present the proof of Theorem \ref{BT}. We will work under
the rescaled coordinates. Let $M_*=[0,\tau_*]\times \Sigma$, where $\tau_*\le \la T$,
where $\la\ge \La$ and $\La$ is a sufficiently large number.

Recall the definition of the optical function $u$. We will set $\ub:=2t-u$ and
introduce the Morawetz vector field $K:=\frac{1}{2} n (u^2 \Lb+\ub^2 L)$.
Associated to $K$ we introduce the modified energy density
\begin{equation*}
\bar Q(K, \bT)=\bar Q[\psi](K, \bT)=Q[\psi](K,\bT)+2t \psi
\bT\psi-\psi^2 \bT(t),
\end{equation*}
and the total conformal energy
\begin{equation*}
\bar Q[\psi](t)=\int_{\Sigma_t} \bar Q[\psi](K, \bT).
\end{equation*}


\begin{definition}
We define  $\C[F]$ for a scalar function $F$ with $\mbox{supp} F\subseteq\D_0^+$,   by
\begin{equation*}
\C[F](t)=\C[F]^\bi(t)+\C[F]^\be(t)
\end{equation*}
where $ t_0\le t\le \tau_* $, and
\begin{align*}
&\C[F]^\bi(t)=\int_{\Sigma_t} (\underline{\varpi}-\varpi)
\left(t^2|\bd F|^2+\left(1+\frac{t^2}{(t-u)^2}\right)|F|^2\right) d\mu_g,\\
 &\C[F]^\be(t)=\int_{\Sigma_t}\varpi\left(\ub^2|\bd_L F|^2+ u^2 |\bd_\Lb
 F|^2+\ub^2|\sn F|^2+|F|^2\right) d\mu_g.
 \end{align*}
\end{definition}

From the definition it is easy to see that $\bar Q[\psi](t)\les
\C[\psi](t)$ and $C[\psi](t)\les \C[\psi](t)$. We will prove the
following results.

\begin{theorem}[Comparison Theorem]\label{cprsthm}
$T>0$ can be chosen appropriately small but depending on universal constants,
such that for any function $\psi$ supported in $\D_0^+$ and any $1\le t\le \tau_*$ there holds
\begin{equation*}
\C[\psi](t)\approx \bar Q[\psi](t).
\end{equation*}
\end{theorem}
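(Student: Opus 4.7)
The plan is to expand $Q[\psi](K,\bT)$ pointwise in the null frame $\{L,\Lb,e_A\}$, compare the result term-by-term on the supports of $\varpi$ and $\underline{\varpi}-\varpi$, and recover the zero-order contributions to $\C$ by a Hardy--Poincar\'e inequality on each $\Sigma_t$ relative to the foliation $\{S_{t,u}\}$. Using $K=\tfrac{n}{2}(u^2\Lb+\ub^2 L)$, $\bT=\tfrac{1}{2}(L+\Lb)$ together with the standard null components $Q[\psi]_{LL}=|L\psi|^2$, $Q[\psi]_{\Lb\Lb}=|\Lb\psi|^2$ and $Q[\psi]_{L\Lb}=|\sn\psi|^2$, one obtains the pointwise identity
\[
Q(K,\bT)\;=\;\tfrac{n}{4}\bigl(\ub^2\,|L\psi|^2+u^2\,|\Lb\psi|^2+(u^2+\ub^2)\,|\sn\psi|^2\bigr),
\]
which, since $n\approx 1$ by (\ref{q1}), is comparable with universal constants to its bracketed expression. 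This identity will serve as the common source for both directions of the equivalence.

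For the upper bound $\bar Q[\psi](t)\les\C[\psi](t)$ I would argue pointwise: on $\mathrm{supp}(\varpi)$ one has $\ub\approx t$ (since $u\le 3t/4$), so each of the three terms in $Q(K,\bT)$ is majorized by the integrand of $\C^{\be}$; on $\mathrm{supp}(\underline{\varpi}-\varpi)$ one has $u\approx\ub\approx t$, hence $u^2+\ub^2\approx t^2$ and $Q(K,\bT)\les t^2|\bd\psi|^2$, which fits into $\C^{\bi}$. The cross terms are then absorbed via $|2t\psi\,\bT\psi|\le\epsilon\,t^2|\bT\psi|^2+\epsilon^{-1}|\psi|^2$ and $|\psi^2\bT(t)|=|\psi|^2/n\les|\psi|^2$, the first sliding into the derivative parts of $\C$ and the second into its explicit $|\psi|^2$ parts.

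For the harder direction $\C[\psi](t)\les\bar Q[\psi](t)$, the derivative pieces of $\C^{\be}$ sit pointwise inside $Q(K,\bT)$, and in $\C^{\bi}$ the weight $t^2|\bd\psi|^2$ is dominated by $(u^2+\ub^2)|\sn\psi|^2+\ub^2|L\psi|^2+u^2|\Lb\psi|^2$ using $t\approx u\approx\ub$ on the interior support. The zeroth-order weights $\bigl(1+\tfrac{t^2}{(t-u)^2}\bigr)|\psi|^2$ in $\C^{\bi}$ and $|\psi|^2$ in $\C^{\be}$ would be produced by a Hardy--Poincar\'e inequality on $\Sigma_t$,
\[
\int_{\Sigma_t}\frac{|\psi|^2}{r^2}\,d\mu_g\;\les\;\int_{\Sigma_t}\bigl(|L\psi|^2+|\Lb\psi|^2+|\sn\psi|^2\bigr)\,d\mu_g,
\]
valid because $\psi$ is compactly supported in $\D_0^+$ and in particular vanishes on $S_{t,0}$, with $r\approx t-u$ by (\ref{recl1}). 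Multiplying by $t^2$ and using $t\approx u\approx\ub$ on the interior support converts the right-hand side into $\int Q(K,\bT)$; the flat $|\psi|^2$-term in $\C^{\be}$ follows a fortiori since $t^2/r^2\gtrsim 1$ in $\mathrm{supp}(\varpi)$. Finally, the cross contribution $2t\psi\bT\psi-\psi^2\bT(t)=\tfrac{t}{2}(L\psi^2+\Lb\psi^2)-\psi^2/n$ can be absorbed by one integration by parts on $\Sigma_t$ using the splitting $L=\bT+N$, $\Lb=\bT-N$ to isolate the $N\psi^2$-piece, whose integral is controlled by Cauchy--Schwarz followed by the same Hardy bound; the resulting error of the shape $\epsilon\int Q(K,\bT)+C_\epsilon\int\bigl(1+\tfrac{t^2}{(t-u)^2}\bigr)|\psi|^2$ can then be closed by taking $\epsilon$ small and reabsorbing through the already-established weighted estimate.

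The main obstacle will be executing the Hardy step uniformly in the frequency parameter $\la$. Since the foliation $\{S_{t,u}\}$ is defined through the optical function $u$ of the rough metric $\bg$, the coarea formula and integration-by-parts identities underlying Hardy bring in the Ricci coefficients $\tr\chi,\chih,\sn\log\bb$ and the area form $v_{t,u}$; one must verify that the resulting Hardy constant, and the commutator errors arising when $N$-derivatives are swapped with $\Sigma_t$-volume integration, remain absolute and do not degrade as $\la\to\infty$. The bounds $r\tr\chi\approx 1$, $v_{t,u}\approx r^2$ and the $L^4(S_{t,u})$-control of $\chih,\sn\log\bb$ supplied by (\ref{recl1}), together with the Ricci-coefficient estimates developed in Section \ref{ricc}, will furnish this uniformity; it is essential, since Theorem \ref{BT} feeds into the decay estimate (\ref{decaycp}) only if the comparison constants in $\C\approx\bar Q$ are independent of $\la$.
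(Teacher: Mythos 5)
Your strategy is the right one in outline — expand $Q(K,\bT)$ in the null frame, handle the cross term $2t\psi\bT\psi-\psi^2\bT(t)$ by integrating the $N$-component by parts on $\Sigma_t$, and use a Hardy-type inequality relative to the $S_{t,u}$-foliation to recover the weighted $\psi^2$-terms. The paper's proof also follows this route, via the reference to \cite[Section 6]{KR1}. But there is a genuine gap in your absorption step, and it is not a technicality.

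You write that the cross contribution produces an error ``of the shape $\epsilon\int Q(K,\bT)+C_\epsilon\int\bigl(1+\tfrac{t^2}{(t-u)^2}\bigr)\psi^2$'' which can be ``closed by taking $\epsilon$ small and reabsorbing through the already-established weighted estimate.'' This is circular: the Hardy bound gives $\int\bigl(1+\tfrac{t^2}{(t-u)^2}\bigr)\psi^2\les\int Q(K,\bT)$ with an absolute constant, so your estimate reads $|\text{cross}|\le(\epsilon+C\cdot C_\epsilon)\int Q(K,\bT)$, and since $C_\epsilon\sim\epsilon^{-1}$ the prefactor cannot be made $<1$ by any choice of $\epsilon$. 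Cauchy--Schwarz followed by Hardy simply cannot give the one-sided bound $\int(\text{cross})\ge-\tfrac12\int Q(K,\bT)$ that you need. The actual mechanism, which the paper imports from \cite{KR1}, is structurally different: integrating $t\,N(\psi^2)$ by parts on $\Sigma_t$ produces $\tr\theta\approx\tr\chi\approx\tfrac{2}{n(t-u)}$, whose leading part supplies the weight $\tfrac{2t}{n(t-u)}\psi^2$ with a definite sign and a coefficient that, after completing the squares against $\ub^2|L\psi|^2$ and $u^2|\Lb\psi|^2$, matches exactly the zero-order content of $\C$; the only residue is of the form $\int(1+\tfrac{t^2}{(t-u)^2})\psi^2(t-u)\Theta$, where $\Theta\in\{z,\Tr k,\Omega,\hat k_{NN}\}$. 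The entire point — and the part your proposal does not supply — is that these $\Theta$-errors carry the smallness $\|r^{-1/2}\Theta\|_{L^2(S_{t,u})}\les\la^{-1/2}$ from Proposition~\ref{rices}, which combined with $\tau_*\le\la T$ yields a factor $T^{1/2}$, so the error is $\les T^{1/2}\C[\psi](t)$ and is absorbed by shrinking $T$. Your closing paragraph correctly flags that the Hardy/IBP constants involve $\tr\chi,\chih,\sn\log\bb$ and must not degrade with $\la$, but mere $\la$-uniform boundedness of those coefficients would not close the argument; it is their $\la^{-1/2}$-decay (equivalently $T^{1/2}$-smallness) that does, and nothing in your Cauchy--Schwarz scheme produces such a factor.
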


\begin{theorem}[Boundedness theorem]\label{bdness}
There exists a large universal number $\La$ and a small universal number $T>0$
such that for any $\la>\La$, $\tau_*\le \la T$ and any function $\psi$ satisfying
the geometric wave equation
\begin{equation}\label{wave.1}
\Box_\bg \psi= \frac{1}{\sqrt{|\bg|}} \p_\a
(\bg^{\a\b}\sqrt{|\bg|}\p_\b \psi)=0 \quad \mbox{in } [0,\tau_*]\times {\Bbb R}^3
\end{equation}
with initial data $\psi[t_0]$ supported on the ball $B_{1/2}(0)$  there holds
$$
\bar Q[\psi](t)\les \bar Q[\psi](t_0) \qquad \forall\, t_0\le t\le \tau_*.
$$
\end{theorem}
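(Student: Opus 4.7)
My plan is to derive the energy identity associated to the Morawetz multiplier $K = \tfrac{1}{2}n(u^2 \Lb + \ub^2 L)$ applied to $\psi$, and use $\Box_\bg \psi = 0$ together with the Comparison Theorem \ref{cprsthm} and suitable Ricci coefficient estimates to close a Gronwall argument. Concretely, applying the divergence theorem to the modified energy current built from $\bar Q[\psi](K,\bT)$ on the spacetime slab $[t_0,t] \times \Sigma$ (as described schematically in (\ref{eniden}) in the introduction), and invoking $\Box_\bg \psi = 0$ to eliminate the source term, I get an identity of the form
\begin{equation*}
\bar Q[\psi](t) - \bar Q[\psi](t_0) \;=\; -\tfrac{1}{2}\int_{t_0}^t \int_{\Sigma_{t'}} {}^{(K)}\bar\pi^{\a\b} Q[\psi]_{\a\b}\, d\mu_g\, dt' + \mathrm{l.o.t.},
\end{equation*}
where the lower order terms come from the modifications $2t\psi \bT\psi - \psi^2 \bT(t)$ in $\bar Q$ and are controlled by $\|{}^{(\bT)}\pi\|_{L^1_t L^\infty_x}\bar Q[\psi](t')$ (through (\ref{BA2}) and Proposition \ref{eplem}) after integrating by parts in the $\psi \bT\psi$ factor.

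The main work is to dominate $\int {}^{(K)}\bar\pi^{\a\b} Q[\psi]_{\a\b}$ by $\int_{t_0}^t a(t') \bar Q[\psi](t') dt'$ for some $a\in L^1_{t'}$ with $\int a \lesssim 1$. I would decompose ${}^{(K)}\bar\pi$ in the null frame $\{L,\Lb,e_1,e_2\}$. Its components expand into $\ub^2, u^2$ multiples of components of ${}^{(L)}\pi$ and ${}^{(\Lb)}\pi$, plus $u\ub$ cross terms, plus contributions from ${}^{(\bT)}\pi$ and from the conformal factor $\Omega = 4t$. Via $L = \bT + N$, $\Lb = \bT - N$, the null deformation tensor components reduce to the Ricci coefficients $\tr\chi - \tfrac{2}{n(t-u)}$, $\chih$, $\zeta$, $\sn \log \bb$, $\bT \log \bb$, together with $k_{AB}$ and $\hn \log n$ (i.e.\ components of ${}^{(\bT)}\pi$). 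In the exterior region (where $\varpi$ is present) I pair each component with the correct weight from $Q[\psi](K,\bT) \sim \ub^2(|L\psi|^2 + |\sn\psi|^2) + u^2 |\Lb\psi|^2$ so that the resulting quadratic form is dominated by the density defining $\C[\psi]$; in the interior region the underlying quadratic form is manifestly comparable to $t^2|\bd\psi|^2$ and the trapped term $\tr\chi - 2/(n(t-u))$ is not singular. Theorem \ref{cprsthm} then lets me pass freely between $\bar Q[\psi]$ and $\C[\psi]$.

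The estimates on coefficients I need are: an $L^\infty_t$ bound on $\tr\chi - \tfrac{2}{n(t-u)}$ (which gives pointwise control of the dangerous $r^{-2}$ piece), and the Strichartz-type bounds $\|\varpi(\chih, \zeta, \sn\log \bb)\|_{L^2_t L^\infty_x} \lesssim \la^{-1/2}$ of the type stated in (\ref{aricc}) and to be established in Section \ref{ricc}, together with $\|k, \hn n\|_{L^1_t L^\infty_x} \le \la^{-4\ep_0}$ from (\ref{BA2}) and Proposition \ref{eplem}. After Cauchy--Schwarz and H\"older in $t$, these feed into $\int a(t') \bar Q[\psi](t') dt'$ with $a \in L^1_{[t_0,\tau_*]}$ and $\int a \lesssim \la^{-1}\tau_* + \la^{-4\ep_0} \lesssim 1$ (using $\tau_* \le \la T$ with $T$ chosen small). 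The Gronwall inequality then yields $\bar Q[\psi](t) \lesssim \bar Q[\psi](t_0)$ on $[t_0,\tau_*]$.

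The main obstacle is the treatment of the components of ${}^{(K)}\bar\pi$ that involve $\chih$ and $\zeta$, because these Ricci coefficients are at the edge of what can be controlled relative to a rough, non-smoothed Einstein metric. Moreover, as emphasized in the introduction, the components of ${}^{(\bT)}\pi$ entering through the multiplier $K = \tfrac{n}{2}(u^2\Lb + \ub^2 L)$ would ordinarily bring in $\bd_\bT Y$, which cannot be controlled; I will need to arrange the decomposition so that only $k$ and $\hn \log n$ appear (which are controlled), using the same circumvention strategy flagged in the introduction. Once the Ricci coefficient estimates from Section \ref{ricc} (in particular (\ref{aricc})) are in hand, the Gronwall step closes for $T$ small and $\la$ large, and the theorem follows.
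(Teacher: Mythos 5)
Your overall architecture is the right one and matches the paper up to a point: the energy identity $\bar Q[\psi](t)-\bar Q[\psi](t_0)=-\tfrac12\J_1+\J_2$ with $\J_1=\int \bkpi^{\a\b}Q_{\a\b}$, the null-frame expansion of $\bkpi$, the reduction to the Ricci-coefficient bounds of Proposition \ref{rices} (which is exactly Proposition \ref{deformofk}), the harmless treatment of the interior region, and the final absorption via Theorem \ref{cprsthm} and smallness of $T$. But there is a genuine gap at the step where you claim that in the exterior region you can ``pair each component with the correct weight'' and close by Cauchy--Schwarz. This fails for the trace part of $\bpi_{AB}$, i.e.\ for the term
\begin{equation*}
\B=\int 2t'n(t'-u)\Bigl(\tr\chi-\tfrac{2}{n(t'-u)}\Bigr)\,\Lb\psi\,L\psi .
\end{equation*}
The conformal energy only controls $\ub\,L\psi$ and $u\,\Lb\psi$, so the best pointwise bound is $|t(t-u)z\,\Lb\psi L\psi|\le |z|\cdot\frac{t(t-u)}{|u|\,\ub}\cdot|u\Lb\psi|\,|\ub L\psi|$, and the factor $t(t-u)/(|u|\ub)\approx t/|u|$ blows up as $u\to 0$ inside the support of $\varpi$. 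Since $z$ is only in $L^2_tL^\infty_x$, no H\"older/Gronwall argument closes here; this is precisely the ``trapped'' term, and it is singular in the exterior region, not the interior one.

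The paper's proof spends essentially all of its effort on this one term: it writes $\B=\B^i+\B^e$, and for $\B^e$ integrates by parts along $\Lb$ (producing $I_1,\dots,I_4$), replaces $\Lb L\psi$ by the wave equation (\ref{wave.2}) since $\Box_\bg\psi=0$, integrates by parts again on $S_{t,u}$ via (\ref{in.b.p}), and then needs not just $\|z\|_{L^2_tL^\infty_x}\les\la^{-1/2}$ but the refined angular estimates $\|r^{3/2}\sn z\|_{L^\infty_tL^\infty_uL^p_\omega}+\|r^{3/2}\Lb z\|_{L^\infty_tL^\infty_uL^p_\omega}\les\la^{-1/2}$ of (\ref{ricp}), combined with the trace/Sobolev inequality (\ref{sobex}), to control the resulting terms by $T^{1/2}\sup_t\C[\psi](t)$. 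Your proposal neither identifies this obstruction nor invokes (\ref{ricp}) (you list only the $L^\infty$ bound on $z$ and the $L^2_tL^\infty_x$ bounds on $\chih,\zeta,\sn\log\bb$), so as written the argument does not close. A secondary, minor point: your worry about $\bd_\bT Y$ entering through ${}^{(\bT)}\pi$ in this particular step is unfounded, since ${}^{(\bT)}\pi$ has components only $k$ and $\nab\log n$; the $\bd_\bT Y$ issue lives elsewhere in the paper.
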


\subsubsection{Canonical null pair $L,\Lb$}

Recall that in (\ref{bb1}) we have introduced along the null hypersurface $C_u$
the canonical null frame $\{L, \Lb, e_1, e_2\}$, where $\{e_1, e_2\}$ is an orthonormal
frame on $S_{t,u}$. Let $e_3=\Lb=\bT-N$ and $e_4=L=\bT+N$. Then
from (\ref{bb1}) it follows that
$$
\bd_3 u=2\bb^{-1}, \quad \bd_3 \ub=2(n^{-1}-\bb^{-1}),\quad \bd_4
\ub=2n^{-1}, \quad \bd_4 u=0.
$$
Associated to this canonical null frame, we define on each null cone $S_{t,u}$ the Ricci
coefficients
\begin{eqnarray*}
\chi_{AB}=\l \bd_A e_4, e_B\r, && \chib_{AB}=\l \bd_A e_3, e_B\r\\
\zeta_A=\f12 \l \bd_3 e_4, e_A\r, &&\zb_A=\f12\l \bd_4 e_3, e_A\r\\
\xi_A=\f12 \l\bd_3 e_3, e_A\r. &&
\end{eqnarray*}
It is well-known (\cite{KR1}) that there hold the identities
\begin{eqnarray*}
\chib_{AB}=-\chi_{AB}-2 k_{AB}, && \zb_A=-k_{AN}+\sn_A \log  n, \\
\xi_A=k_{AN}-\zeta_A+\sn_A\log n, &&\zeta_A=\sn_A\log\bb+k_{AN}.\label{idd1}
\end{eqnarray*}
and the frame equations
\begin{eqnarray*}
\bd_4 e_4=-(k_{NN}+\pi_{0N}) e_4,&& \bd_A e_4 =\chi_{AB}e_B-k_{AN}e_4,\\
\bd_A e_3=\chib_{AB} e_B+k_{AN} e_3,&& \bd_4 e_3=2\zb_A
e_A+(k_{NN}+\pi_{0N})e_3,\\
\bd_3 e_4=2\zeta_A e_A+(k_{NN}-\pi_{0N})e_4, &&\bd_3 e_3=2\xi_A
e_A-(k_{NN}-\pi_{0N}) e_3,\\
\bd_4 e_A=\sn_4 e_A+\zb_A e_4,&& \bd_3 e_A=\sn_3e_A+\zeta_A
e_3+\xi_A e_4.
\end{eqnarray*}
Let
\begin{equation}\label{zomega}
z=\tr\chi-\frac{2}{n(t-u)}, \qquad \Omega=\frac{\bb^{-1}-n^{-1}}{t-u}.
\end{equation}

We  rely on the following result to prove the boundedness theorem.  (\ref{ric4})-(\ref{ric3})  will be proved in Sections \ref{ricc} and \ref{sec7}, and (\ref{comp2}) is  (\ref{recl1}). (\ref{pi.2}) consists of  (\ref{BA2}) and the $L_t^2 L_x^\infty$ estimates on $e_0 n , \hn n , \hn Y$ that can be derived immediately by using (\ref{BA2}), (\ref{q3}) and (\ref{shift3.3}), under the rescaled coordinates.

\begin{proposition}\label{rices}
Under the bootstrap assumptions (\ref{ba1}), (\ref{BA2}) and (\ref{BA3}), on $\D^+\subset[0,\tau_*]\times \Sigma$ there hold the estimates
\begin{align}
&(t-u)\tr\chi\approx 1\label{comp2}\\
&\| \ti\pi \|_{L_t^2 L_x^\infty}\les
\la^{-1/2}\label{pi.2}\\
&\|\Omega\|_{L_t^2 L_x^\infty}+\|\Omega\|_{L^4({S_{t,u}})}
+\|(t-u)^{-1/2}\Omega\|_{L^2(S_{t,u})}\les \la^{-1/2}\label{ric4} \\
&\|z,\zeta\|_{L^4(S_{t,u})}+\|r^{-1/2}(z,\zeta,k)\|_{L^2(S_{t,u})}
\les \la^{-1/2}\label{ricstu}\\
&\|z\|_{L_t^2 L_x^\infty}\les \la^{-1/2},\label{ric1}\\
&\|r^{3/2}\sn z, r^{3/2}\Lb z\|_{L_t^\infty L_u^\infty L_\omega^p}\les \la^{-1/2}\label{ricp}\\
&\|\chih, \zeta\|_{L_t^2 L_x^\infty}\les\la^{-1/2},\label{ric3}
\end{align}
where $p>2$ is such that $0<1-2/p<s-2$.
\end{proposition}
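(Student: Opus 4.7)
The six assertions split naturally into three groups of increasing difficulty. The first, \eqref{comp2}, is already recorded in \eqref{recl1} under the standing bootstrap assumptions, so no further argument is required. The second, \eqref{pi.2}, splits by component: the $k$ and $\hn g$ parts are (BA2) itself, and the $\hn n$, $\bd_\bT n$, $\hn Y$ parts follow from the pointwise elliptic bounds \eqref{q3} and \eqref{shift3.3} after $L^2_t$ integration, using (BA2) to convert the $L^\infty_x$ gain into the $\la^{-1/2}$ scaling.

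For \eqref{ric4}--\eqref{ricp} the plan is to run the by-now standard transport--Hodge package, but applied directly to the non-smoothed Einstein metric $\bg$ so as to benefit from $\bR=0$. One writes the renormalized Raychaudhuri equation along $L$ for $z=\tr\chi-\frac{2}{n(t-u)}$, expressing $L(z)$ in terms of $z$, $\chih$, $k$ and lapse data; integration along the null generators from the cone vertex controls $z$ pointwise once the $|\chih|^2$ contribution is absorbed. For the latter one feeds the Hodge system
\begin{equation*}
\div\chih = \tfrac{1}{2}\sn\tr\chi -\b + \chi\cdot \ti\pi
\end{equation*}
into the trace inequality of Lemma \ref{trace2}, combined with the tensorial $k$-flux of Proposition \ref{dfk} and the curvature / $\hn g,k$ flux of Proposition \ref{fluxg}; this produces \eqref{ric1} and the $L^4(S_{t,u})$ part of \eqref{ricstu}, while the commuted transport equation together with Sobolev on $S_{t,u}$ yields \eqref{ricp}. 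The lapse--shift quantity $\Omega$ in \eqref{ric4} is handled analogously via transport equations for $n^{-1}$ and $\bb^{-1}$, where the only shift ingredient is the $L$-component of $\p Y$, itself admissible as a $\ti\pi$ entry.

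The main obstacle is \eqref{ric3}, the Strichartz-type norm $\|\chih,\zeta\|_{L^2_tL^\infty_x}\lesssim \la^{-1/2}$. The natural approach is to apply the $L^\infty(S)$ Calderon--Zygmund inequality \eqref{czin} to the Hodge system above, but two obstructions appear in the rough-metric setting. First, the natural representation of the right-hand side involves the null curvature component $\b$, whose expression in terms of $\p\bg$ drags in the uncontrolled $\bd_\bT Y$. Second, a direct application of \eqref{czin} with $G$ of the form $\ti\pi$ would demand $\|\sn\ti\pi\|_{L^{2+}(S_{t,u})}$, which corresponds to $H^{3/2+}$ regularity of $\hn g,k$ and is unavailable below $s=5/2$. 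The plan is to dissolve the first obstruction by the invariant decomposition of Section \ref{decmp}, yielding a representation
\begin{equation*}
\div\chih = \sn z +\sn\ti\pi +\mathrm{err}
\end{equation*}
in which $\ti\pi$ is cleansed of $\bd_\bT Y$; and to dissolve the second by the modified Calderon--Zygmund inequality of Section \ref{czz}, which replaces the $L^{2+}(S)$ demand on $\sn G$ by a norm tailored to the rough metric that can be fed with the sliver of extra $L^2_tL^\infty_x$ differentiability of $\hn g,k$ encoded in (BA3). Inserting \eqref{ric1}--\eqref{ricp}, the curvature flux and the dyadic $\hn g,k$-flux of Proposition \ref{fluxg} then closes \eqref{ric3}; this last step is where essentially all the difficulty resides.
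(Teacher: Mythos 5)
Your high-level outline is sound and you have correctly located the crux of the proposition in \eqref{ric3}: the two obstructions you name (the appearance of $\bd_\bT Y$ inside $\b$, and the $H^{3/2+}$ demand hidden in a naive application of the $L^\infty$ Calderon--Zygmund inequality) are exactly the ones the paper addresses via Proposition \ref{dcmp2} and Proposition \ref{cz.2}, and the closure via (BA3) and the dyadic flux of Proposition \ref{fluxg} is the paper's argument. Your treatment of \eqref{comp2} and \eqref{pi.2} also matches.

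For \eqref{ric4}--\eqref{ricp} your route is parallel but differently packaged. You propose integrating the Raychaudhuri equation directly for $z=\tr\chi-\tfrac{2}{n(t-u)}$; the paper instead writes $z=aV+2a\U$ with $V=\tr\chi-\tfrac{2}{s}$, controls $V$ by quoting \eqref{mt0} of Theorem \ref{recalll2} (where the Raychaudhuri analysis in the affine parameter $s$, together with the curvature flux, has already absorbed $|\chih|^2$), and then decomposes $\U=z_1-z_2$ into a purely lapse/null-lapse quantity controlled by transport plus the Hardy--Littlewood maximal function in \eqref{hlm}. Your direct route for $z$ is feasible (the $|\chih|^2$ term can indeed be absorbed using \eqref{mt2}, not \eqref{ric3}, which would be circular), but you should say explicitly which version of the $|\chih|^2$ flux is used: \eqref{mt2} gives only $L^\infty_\omega$ along null generators of the geodesic-integral of $|\chih|^2$, which is what the transport argument actually needs, whereas your phrasing ("Hodge system fed into the trace inequality") suggests something stronger and also conflates the input (control on $\chih$) with the output (control on $z$). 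Two further small corrections: the shift vector field $Y$ plays no role in the analysis of $\Omega=\tfrac{\bb^{-1}-n^{-1}}{t-u}$, since $\Omega$ depends only on $a$ and $n$ through $\bb=an_p$, so the remark about "the $L$-component of $\p Y$" is spurious; and \eqref{ricp} does not come from a "commuted transport equation" but from the algebraic relation $\Lb z = \mu + \tr\chi(k_{NN}+\nab_N\log n) + \cdots$ together with \eqref{lbr} and the $L^p_\omega L^\infty_t$ bound on $\mu$ and $\sn\tr\chi$ provided by Proposition \ref{smr}.
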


Let ${}^{(K)}\pi$ denote the deformation tensor of $K$ and let $\bkpi:={}^{(K)}\pi-4t\bg$.
Then we have
\begin{align*}
\bkpi_{34}&=-4un(\bb^{-1}-n^{-1})+n u^2(k_{NN}-\pi_{0N}- \bd_3\log n)\\
&\quad \,     +n\ub^2(k_{NN}+\pi_{0N}-\bd_4\log n),\\
\bkpi_{44} & =-2u^2n(\nab_L \log n+k_{NN}+\pi_{0N}),\\
\bkpi_{4A} & =u^2n(\zb_A-k_{AN}-\sn_A\log n),\\
\bkpi_{33} & =-8\ub n(n^{-1}-\bb^{-1})-2n\ub^2(k_{NN}-\pi_{0N}+\bd_3\log n),\\
\bkpi_{3A} & =n\ub^2(\zeta_A+k_{AN}-\sn_A\log n)+n u^2\xi_A,\\
\bkpi_{AB} & = -2nu^2 {\hat k}_{AB}-nu^2 \tr k \delta_{AB}+4tn(t-u)\chih_{AB}\\
&\quad\,     +2tn(t-u)\left(\tr\chi-\frac{2}{n(t-u)}\right)
\delta_{AB}.
\end{align*}
For simplicity of presentation, we will dropped the superscript $K$ in $\bkpi$.
In view of (\ref{idd1}), as an immediate consequence of Proposition \ref{rices}, we have

\begin{proposition}\label{deformofk}
Under the conditions in Proposition \ref{rices} we have on $[0, \tau_*]\times \Sigma$ that
\begin{align*}
& \|u^{-2} \,\, \bpi_{44}\|_{L_t^2 L_x^\infty}+\|(u \ub)^{-1} \,\, \bpi_{34}\|_{L_t^2 L_x^\infty}
+\| \ub^{-2} \,\, \bpi_{33}\|_{L_t^2 L_x^\infty}\\
&\quad  +\|u^{-2} \,\, \bpi_{4A}\|_{L_t^2 L_x^\infty}+\|\ub^{-2} \,\, \bpi_{3A}\|_{L_t^2 L_x^\infty}
+\|\ub^{-2} \,\, \bpi_{AB}\|_{L_t^2 L_x^\infty}\les \la^{-1/2}.
\end{align*}
\end{proposition}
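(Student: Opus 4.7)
The proof is essentially a direct substitution: each component of $\bpi$ is given explicitly in the excerpt as a linear combination of Ricci coefficients and components of $\ti\pi$, and all the needed bounds on these objects are collected in Proposition \ref{rices} together with (\ref{pi.2}). The plan is therefore to go through the six weighted components one at a time, inserting the identities (\ref{idd1}) and the definition $\Omega=(\bb^{-1}-n^{-1})/(t-u)$ to convert every expression into quantities controlled by Proposition \ref{rices}, then to absorb the normalizing powers of $u$ and $\ub$ using the elementary inequalities valid in $\D^+$:
\begin{equation*}
0\le u\le t,\qquad \ub=2t-u\ge t\ge u,\qquad 0\le t-u\le \ub,\qquad t(t-u)\le \ub^2,
\end{equation*}
together with $C^{-1}\le n\le C$.

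First I would handle the ``easy'' pair $\bpi_{44}$ and $\bpi_{4A}$. For these the factor $u^2$ in front cancels exactly against the weight $u^{-2}$, so
\begin{equation*}
u^{-2}\bpi_{44}=-2n\bigl(\nab_L\log n+k_{NN}+\pi_{0N}\bigr),\qquad u^{-2}\bpi_{4A}=n\bigl(\zb_A-k_{AN}-\sn_A\log n\bigr).
\end{equation*}
Using the identity $\zb_A=-k_{AN}+\sn_A\log n$ from (\ref{idd1}), the second expression reduces to $-2nk_{AN}$. Both are then linear combinations of components of $\ti\pi$ (i.e.\ $\hn n$, $k$, $\hn Y$ written in the null frame) and hence bounded in $L_t^2 L_x^\infty$ by $\la^{-1/2}$ via (\ref{pi.2}).

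Next I would treat $\bpi_{33}$ and $\bpi_{34}$. The only subtlety is the appearance of $\bb^{-1}-n^{-1}$ with an insufficient power of $\ub$ (respectively $u\ub$). Writing $\bb^{-1}-n^{-1}=\Omega(t-u)$ and using $(t-u)/\ub\le 1$ (and $(t-u)/u$ compared to $\ub/u\ge 1$ after re-expressing the $u$-weighted piece), each such term becomes a bounded multiple of $n\Omega$, which is controlled by (\ref{ric4}). The remaining terms in $\ub^{-2}\bpi_{33}$ and $(u\ub)^{-1}\bpi_{34}$ are, after cancellation, bounded multiples of $n(k_{NN}\pm\pi_{0N}\pm\bd_3\log n\pm\bd_4\log n)$ times a prefactor of modulus $\le 1$ (for instance $u/\ub$ or $\ub/u\cdot(u/\ub)^2$), so they are controlled by (\ref{pi.2}). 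For $\bpi_{3A}$, (\ref{idd1}) gives $\xi_A=k_{AN}-\zeta_A+\sn_A\log n$, hence
\begin{equation*}
\ub^{-2}\bpi_{3A}=n(\zeta_A+k_{AN}-\sn_A\log n)+n(u/\ub)^2(k_{AN}-\zeta_A+\sn_A\log n),
\end{equation*}
and since $u/\ub\le 1$, this is bounded by $\|\zeta\|_{L_t^2 L_x^\infty}+\|\ti\pi\|_{L_t^2 L_x^\infty}\les \la^{-1/2}$ by (\ref{ric3}) and (\ref{pi.2}).

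Finally, for the tangential component I would use $t(t-u)\le \ub^2$, $(u/\ub)^2\le 1$, to estimate
\begin{equation*}
\ub^{-2}\bpi_{AB}=-2n(u/\ub)^2\hat k_{AB}-n(u/\ub)^2\tr k\,\delta_{AB}+4n\,\tfrac{t(t-u)}{\ub^2}\,\chih_{AB}+2n\,\tfrac{t(t-u)}{\ub^2}\,z\,\delta_{AB},
\end{equation*}
whose $L_t^2 L_x^\infty$ norm is bounded by $\|k\|_{L_t^2 L_x^\infty}+\|\chih\|_{L_t^2 L_x^\infty}+\|z\|_{L_t^2 L_x^\infty}\les \la^{-1/2}$ using (\ref{pi.2}), (\ref{ric3}) and (\ref{ric1}). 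There is no real obstacle here — the proposition is, as the author indicates, a direct corollary of Proposition \ref{rices}; the only care needed is in verifying that every weight like $(t-u)/\ub$, $u/\ub$, $t(t-u)/\ub^2$ is indeed $\le 1$ throughout $\D^+$, which follows from the elementary inequalities above.
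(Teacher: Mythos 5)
The paper itself gives no proof of Proposition~\ref{deformofk} beyond the sentence ``In view of (\ref{idd1}), as an immediate consequence of Proposition \ref{rices}'', so your plan of direct substitution is exactly what is intended, and five of the six weighted components are handled correctly: the algebraic reductions via (\ref{idd1}), the use of $\Omega$, and the elementary weight inequalities $u/\ub\le 1$, $(t-u)/\ub\le 1$, $(u/\ub)^2\le 1$, $t(t-u)/\ub^2\le 1$ are all right, and the appeals to (\ref{pi.2}), (\ref{ric4}), (\ref{ric1}), (\ref{ric3}) match what is available.

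However, your treatment of $(u\ub)^{-1}\bpi_{34}$ has a genuine gap. The term $n\ub^2(k_{NN}+\pi_{0N}-\bd_4\log n)$, divided by $u\ub$, carries the prefactor $\ub/u$, not $u/\ub$ nor your example $(\ub/u)(u/\ub)^2=u/\ub$; no power of $(u/\ub)$ ever appears to beat it down. The ratio $\ub/u$ is not bounded on $\D^+$ (it blows up as $u\to 0$, and even on the support of $\psi$ where $u$ is bounded below it grows like $t$), and the factor it multiplies, which simplifies to $k_{NN}-\bd_\bT\log n$ after the cancellation $\pi_{0N}-\sn_N\log n=0$, has no structural reason to vanish. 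So the argument you give only yields a bound on $\ub^{-2}\bpi_{34}$, not on $(u\ub)^{-1}\bpi_{34}$. In fact $\ub^{-2}$ is the weight that is actually used in the boundedness theorem, since $\bpi_{34}$ multiplies $Q_{34}=|\sn\psi|^2$ and the conformal energy controls $\ub^2|\sn\psi|^2$; the $(u\ub)^{-1}$ in the statement is in all likelihood a typo for $\ub^{-2}$. You should either prove the weaker (and sufficient) $\ub^{-2}$ bound, or explicitly point out that the stated $(u\ub)^{-1}$ bound is not obtainable from the estimates in Proposition~\ref{rices} for the reason above, rather than asserting a prefactor $\le 1$ that does not in fact occur.
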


Now we are ready to give the proof of Theorem \ref{bdness} and Theorem \ref{cprsthm}.

\subsection{Proof of Theorem \ref{bdness}}

By calculating $\p_t \bar Q[\psi]$ and integrating over the interval $[t_0,t]$, we have
\begin{equation*}
\bar Q[\psi](t)=\bar Q[\psi](t_0)-\f12 \J_1+\J_2
\end{equation*}
where
\begin{equation*}
\J_1=\int_{[t_0, t]\times\Sigma}Q^{\a\b}[\psi] \, \, \bkpi_{\a\b},
\qquad \J_2=\int_{[t_0, t]\times \Sigma} \psi^2 \Box_{\bg}t.
\end{equation*}
It is easy to see that
\begin{align*}
\J_1&=\int_{[t_0,t]\times \Sigma} \left(\frac{1}{4} \bpi_{33} (
L\psi)^2+\frac{1}{4}\bpi_{44} (\Lb \psi)^2+\frac{1}{2}\bpi_{34}|\sn
\psi|^2-\bpi_{4A}\Lb \psi\sn_A \psi\right.\\
&\quad\quad \left.-\bpi_{3A} L\psi\sn_A \psi+\bpi_{AB}\sn_A \psi\sn_B
\psi+\tr\bpi\left(\f12 \Lb \psi L\psi-|\sn\psi|^2\right)\right).
\end{align*}
Observe that
\begin{equation*}
\tr\bpi=\delta^{AB} \bpi_{AB}=4 tn(t-u)
(\tr\chi-\frac{2}{n(t-u)})-2u^2 n \tr k.
\end{equation*}
It is easy to derive from (\ref{pi.2}) that
\begin{equation*}
\int_{[t_0,t]\times \Sigma}|u^2 n \tr k L\psi \Lb \psi| \les
T^{\f12} \sup_t \C[\psi](t).
\end{equation*}
Thus, by letting
\begin{equation*}
\B=\int_{[t_0,t]\times \Sigma} 2 t' n (t'-u)
\left(\tr\chi-\frac{2}{n(t'-u)}\right) \Lb \psi L \psi,
\end{equation*}
we have from Proposition \ref{deformofk} that
\begin{equation}\label{j1}
|\J_1-\B|\les T^{\f12}\sup_{t}\C[\psi](t).
\end{equation}
Since $\Box_\bg t=-e_0(n^{-1})+n^{-1}\Tr k$, we can conclude from (\ref{pi.2}) that
\begin{equation*}
|\J_2|\les T^{\f12}\sup_{t}\C[\psi](t)
\end{equation*}

In the following we will show that
\begin{align}\label{b11}
|\B|\les T^{\f12}\sup_{t}\C[\psi](t)
\end{align}
We can write $\B=\B^i+\B^e$, where
\begin{align*}
\B^e&=\int_{[t_0, t]\times \Sigma} 2 t'n(t'-u)
\left(\tr\chi-\frac{2}{n(t'-u)}\right) L\psi\Lb \psi \varpi, \nn\\
\B^i&=\int_{[t_0,t]\times \Sigma} 2 t'n (t'-u)
\left(\tr\chi-\frac{2}{n(t'-u)}\right) L\psi\Lb
\psi(\underline{\varpi}-\varpi).
\end{align*}
In view of (\ref{ric1}), we have
\begin{equation}\label{bi}
\B^i\les T^{\f12}\sup_t \C[\psi](t).
\end{equation}
We still need to estimate $\B^e$.  With the help of \cite[p. 1162]{KR1} and
the integration by part we have
\begin{align}
\f12 \B^e&=-I_1+I_2+I_3-I_4,\nn
\end{align}
where
\begin{align}
I_1&=\int_{[t_0,t]\times \Sigma} \varpi n t'(t'-u) z (\Lb L \psi)\psi, \nn\\
I_2&=\int_{[t_0, t]\times \Sigma} \left(-\Lb(\varpi nt'(t'-u)
z)+(\tr\theta+N\log n -\Tr k-\div Y\right) \nn  \varpi nt'(t'-u)z) L\psi \psi, \nn\\
I_3&=\int_{\Sigma_t}\varpi n t'(t'-u) z L\psi \psi,\nn\\
I_4&=\int_{\Sigma_{t_0}} \varpi n t'(t'-u)z L\psi \psi.\nn
\end{align}

Recall that in the exterior region $\{0\le u\le 3t'/4\}$ we have $r(t',u)\approx t'$.
Thus with the help of the Sobolev inequality (\ref{sob.12}) on $S_{t',u}$,
for any function $\psi$ there holds
\begin{equation}\label{sobex}
\int_{0\le u\le 3t'/4}\|{t'}^{1-2/q}\psi\|_{L^q({S_{t',u}})}^2 du\les \C[\psi](t'), \quad 2\le q<\infty.
\end{equation}
Therefore, by using (\ref{ricstu}) and (\ref{sobex}), for the boundary term $I_3$
and $I_4$ we have the estimate
\begin{align*}
|I_3|+|I_4|\les \|t L\psi\|_{L_t^\infty
L^2_\Sigma}\|r^{\f12}\psi\|_{L_t^\infty L_u^2 L_x^{4}}\sup_{t,
0<u<\frac{3 t}{4}}\| r^{\f12}z\|_{L^4(S_{t,u})}\les
T^{\f12}\sup_{t}\C[\psi](t).
\end{align*}

Now we consider $I_2$. We write $I_2=I_2^{(1)} +I_2^{(2)}$, where
\begin{align*}
I_2^{(1)}=&\int_{[t_0, t]\times \Sigma} \Lb (\varpi n t'(t'-u) z)L\psi \psi,\\
I_2^{(2)}=&\int_{[t_0,t]\times \Sigma}(\tr\theta+N\log n -\Tr k-\div
Y) \varpi nt'(t'-u)z L\psi \psi.
\end{align*}
In view of (\ref{comp2}), (\ref{pi.2}) and (\ref{ric1}) in Proposition \ref{rices},
by H\"older inequality we have
\begin{align*}
|I_2^{(2)}|&\les (\tau_*\|\pi, \nab Y,z\|_{L_t^2
L_x^\infty}+\tau_*^{\f12})\|z\|_{L_t^2
L_x^\infty}\sup_{t}\C[\psi](t)\les T^{\f12}\sup_{t}\C[\psi](t).
\end{align*}
Observe that
\begin{equation}\label{lbvp}
|\Lb\varpi|\les r^{-1},\quad  \Lb t=n^{-1},\quad  \Lb u=2\bb^{-1}.
\end{equation}
Let $p>2$ be close to $2$ such that $0<1-2/p<s-2$ and let $1/p+1/q=1/2$.
Then it follows from (\ref{ricp}) that
{\small
\begin{align*}
|I_2^{(1)}|&\les \left(\|(|(t'-u)\Lb \varpi|+1)z\|_{L_t^1
L_x^\infty}+\tau_*\|\Lb n \|_{L_t^2 L_x^\infty}\|z\|_{L_t^2
L_x^\infty}\right) \|t' L\psi\|_{L_t^\infty
L_\Sigma^2}\|\psi\|_{L_t^\infty L_\Sigma^2}\\
&+\|t'^{\frac{2}{q}}\varpi\Lb z\|_{L_t^1 L^\infty_uL_x^{p}}
\sup_{t'} \left(\int_{0\le u\le \frac{3t'}{4}}
\|{t'}^{1-\frac{2}{q}}\psi\|_{ L^q({S_{t',u}})}^2 du \right)^{\f12}
\|t'L\psi\|_{L_t^\infty L_\Sigma^2}.
\end{align*}
}
In view of (\ref{sobex}), (\ref{pi.2}), (\ref{ric1}) and (\ref{ricp}), we obtain
\begin{equation*}
|I_2^{(1)}|\les T^{\f12}\sup_{t} \C[\psi](t)
\end{equation*}

Finally we will use (\ref{wave.1}) to estimate $I_1$.
We first rewrite (\ref{wave.1}) as
\begin{equation}\label{wave.2}
\Box_\bg \psi=-\Lb L\psi+\sD \psi+2\zeta_A \sn_A \psi-\f12 \tr\chi
\Lb\psi-(\f12 \tr\chib+\nu) L\psi.
\end{equation}
Then $I_1$ can be written as $I_1=I_{11}+I_{12}+I_{13}$, where
\begin{align*}
I_{11}&=\int_{[t_0,t] \times \Sigma} \varpi n t'(t'-u) z\sD\psi \psi\\
I_{12}&=-\f12 \int_{[t_0,t]\times \Sigma} \varpi n t'(t'-u) z \tr\chi \psi \Lb \psi \\\
I_{13}&=\int_{[t_0,t]\times \Sigma}\varpi n t'(t'-u) z
\left(2\zeta_A \sn_A \psi-\left(\frac{1}{2}\tr\chib+\nu\right)L\psi\right)\psi.
\end{align*}
Recall that for any vector field $X$ tangent to $S_{t,u}$ there holds
\begin{equation}\label{in.b.p}
\int_{\Sigma_t} F \div X =-\int_{\Sigma_t} \{\sn+(\zeta+\zb)\}F\c X.
\end{equation}
In view of (\ref{in.b.p}) we have
\begin{align*}
I_{11}&=-\int_{[t_0,t]\times \Sigma} \sn(\varpi n t'(t'-u) z \psi)
\sn \psi+(\zeta+\zb) \varpi nt'(t'-u) z \psi\c \sn \psi.
\end{align*}
Now we introduce the following types of terms:
\begin{align*}
\Er_1&=nt'(t'-u)\left(\pi, z, \frac{\bb^{-1}-n^{-1}}{(t-u)}\right) \c (L\psi,
\sn \psi)\c z,\\
\Er_2&=n z t' L\psi,\qquad\qquad \qquad\qquad \,\,\Er_3=n t'(t'-u)\sn z \c \sn \psi,\\
\Er_4&=nt'z ((t'-u) \sn \psi, \psi), \qquad \quad \Er_5=nt'(t'-u) \zeta \c  \sn \psi\c z.
\end{align*}
Then, symbolically we can write $I_{11}$ and $I_{13}$ as
\begin{align*}
|I_{11}|+|I_{13}|=\left|\int_{[t_0,t]\times \Sigma}\varpi(\Er_1+\Er_2+\Er_3+\Er_5)\psi\right|
+\left|\int_{[t_0,t]\times\Sigma}\varpi \Er_4\c \sn \psi\right|.
\end{align*}
By using (\ref{pi.2}), (\ref{ric1}), (\ref{ric4}) and H\"older inequality, we can derive
\begin{align*}
\left|\int_{[t_0,t]\times\Sigma} |\varpi \Er_1\c \psi|\right|
&\les \|z\|_{L_t^2 L_x^\infty}
\left\|z, \pi, \frac{\bb^{-1}-n^{-1}}{n(t'-u)}\right\|_{L_t^2 L_x^\infty}\tau_*\sup_t\C[\psi](t)\\
&\les T \sup_{t}\C[\psi](t)
\end{align*}
and
\begin{align*}
&\left|\int_{[t_0,t]\times \Sigma}|\varpi \Er_2 \psi|+|\varpi\Er_4 \sn\psi|\right|\\
&\qquad \qquad \les \|z\|_{L_t^1 L_x^\infty} \sup_{t} \left\{\|t (|L\psi|+|\sn
\psi|)\|_{L^2_{\Sigma}}(\|\psi\|_{L^2_\Sigma}+\|t \sn \psi\|_{L^2_\Sigma})\right\}\\
&\qquad \qquad \les T^{\f12}\sup_{t} \C[\psi](t).
\end{align*}
By using (\ref{ricp}) and (\ref{sobex}) with $0<1-2/p< s-2$ and $1/q+1/p=\f12$, we have
\begin{align*}
&\left|\int_{[t_0,t]\times \Sigma}| \varpi\Er_3 \c\psi|\right|\\
&\les \sup_t \|t^{\frac{2}{q}}\varpi\sn z\|_{L_t^1 L_u^\infty L_x^p}
\sup_{t'} \left\{\C[\psi](t')^{\f12}\left(\int_{0\le u\le
\frac{3t'}{4}}\|{t'}^{1-\frac{2}{q}}\psi\|_{ L^q({S_{t',u}})}^2
du\right)^{\f12}\right\}\\
&\les T^{\f12}\sup_{t} \C[\psi](t).
\end{align*}
With the help of (\ref{ric1}), (\ref{sobex}) with $q=4$ and (\ref{ricstu}), we obtain
\begin{align}
&\left|\int_{[t_0,t]\times \Sigma}|\varpi \Er_5 \psi|\right|\\
&\les \int_{t_0}^t \sup_{u}\|r^{\f12} \varpi z\c \zeta\|_{L_x^{4}}\c
\sup_{t'}\left\{\|t'\sn\psi\|_{ L_\Sigma^2} \left(\int_{0\le u\le
\frac{3t'}{4}}\|{r'}^{\f12}\psi\|_{ L^4({S_{t',u}})}^2
du\right)^{\f12}\right\}\nn\\
&\les \|r^{\f12}\zeta\|_{ L_t^\infty L_u^\infty L_x^4}\|z\|_{L_t^1
L_x^\infty}\sup_t \C[\psi](t)\les T\sup_t \C[\psi](t).\nn
\end{align}
Now we consider $I_{12}$ by integration by part.
\begin{align}
I_{12}&=\frac{1}{4}\int_{[t_0,t]\times \Sigma} -\Lb (\varpi n
t'(t'-u) z\tr\chi)\psi^2\label{tp6}\\
&+\frac{1}{4} \int_{[t_0, t]\times \Sigma} \left(\tr\theta+N\log n-\Tr k-\div Y\right) \varpi n t'(t'-u)
z\tr\chi \, \psi^2\label{tp4}\\
&+\frac{1}{4}\int_{\Sigma_t} \varpi n t'(t'-u) z \tr\chi
\psi^2-\frac{1}{4}\int_{\Sigma_0} \varpi n t'(t'-u) z\tr\chi
\psi^2.\label{tp5}
\end{align}
Using (\ref{comp2}), (\ref{pi.2}) and  (\ref{ric1}), the terms in (\ref{tp4}) is bounded by
\begin{align*}
\int_{[t_0,t]\times \Sigma}& \left|\varpi(z+\pi+\nab
Y+\frac{1}{n(t-u)})z\right|t'n \psi^2\\&\les \|z\|_{L_t^2
L_x^\infty}\||\psi|^2\|_{L_t^\infty L_\Sigma^1}(\tau_*^{1/2}+\tau_*\|\pi,
\nab Y, z\|_{L_t^2 L_x^\infty})\\&\les T^\f12\sup_{t}\C[\psi](t).
\end{align*}
For the terms in (\ref{tp5}), using (\ref{comp2}), (\ref{ricstu})
and (\ref{sobex}) with $q=4$,  they can be bounded by
\begin{align*}
\sup_{t,u}&\| z\|_{L^2(S_{t,u})}\int_{0<u\le
\frac{3t}{4}}\|t'|\psi|^2\|_{L^2(S_{t,u})}\les T^{\f12}\sup_t
\C[\psi](t).
\end{align*}
It remains only to consider the terms in (\ref{tp6}). We have
\begin{align}
&\left|\int_{[t_0,t]\times \Sigma} \Lb (\varpi n t'(t'-u) z\tr\chi)
\psi^2\right|\nn\\
&\qquad\qquad \les\large|\int_{[t_0,t]\times\Sigma}\{\Lb \varpi n
t'(t'-u)\tr\chi+\varpi n \tr\chi\Lb(t'(t'-u))\nn\\
&\qquad\qquad +\varpi\tr\chi\Lb n
t'(t'-u)+\varpi \Lb(\frac{1}{n(t'-u)})  n t' (t'-u)\}z\psi^2\large|\label{tp9}\\
&\qquad \qquad +\int_{[t_0,t]\times\Sigma}|\varpi\Lb z n t'(t'-u)| (|\tr\chi
|+|z|)\psi^2.\label{tp8}
\end{align}
Since $r\approx (t-u)$ and  $t(|\tr\chi|+|z|)\approx 1$  on $\Et_t$,
with  $\frac{1}{p}+\frac{1}{q}=1$ and $p$ is slightly greater than
$2$, also using H\"older inequality, we obtain
\begin{align*}
(\ref{tp8})&\les \|\varpi r^{1-\frac{2}{p}}\Lb z\|_{L_t^1 L_u^\infty
L^p_x}\sup_{t'}(\int_{0\le u\le \frac{3t'}{4}}\|r^{\frac{2}{p}}|\psi|^2\|_{ L^q(S_{t',u})}du)\\
&\les  \|\varpi r^{1-\frac{2}{p}}\Lb z\|_{L_t^1 L_u^\infty
L^p_x}\sup_{t'}(\int_{0\le u\le
\frac{3t'}{4}}\|r^{1-\frac{2}{q_1}}\psi\|_{ L^{q_1}(S_{t',u})}
\|r^{1-\frac{2}{q_2}}\psi\|_{L^{q_2}(S_{t',u})}d u)
\end{align*}
where $\frac{1}{q_1}+\frac{1}{q_2}=\frac{1}{q}$ .  Using
(\ref{sobex}) and (\ref{ricp}) we obtain
\begin{equation*}
(\ref{tp8})\les T^{\f12}\sup_{t'} \C[\psi](t').
\end{equation*}

 To estimate (\ref{tp9}), in view of  (\ref{lbvp}) and
 \begin{equation}\label{lbr}
\Lb(\frac{1}{n(t-u)})=\frac{n\bb^{-1}}{n^2(t-u)^2}-\frac{\Lb \log n
}{n(t-u)}-\frac{n(n^{-1}-\bb^{-1})}{n^2(t-u)^2},
\end{equation}
 using (\ref{ric1}), (\ref{pi.2}) and (\ref{ric4}), we have
\begin{align*}
|(\ref{tp9})|&\les\|z\psi^2\|_{L_t^1 L_\Sigma^1}+\|\Lb n,
\frac{\bb^{-1}-n^{-1}}{n(t-u)} \|_{L_t^2
L_\Sigma^\infty}\|z\|_{L_t^2 L_\Sigma^\infty}\tau_*\sup_{t}\int_{0\le
u\le \frac{3 t}{4}}\|\psi^2\|_{L^1(S_{t,u})}\\& \les T^\f12\sup_t
\C[\psi](t).
\end{align*}
The proof is therefore complete.

\subsection{Proof of Comparison theorem}

We  will  adapt the argument in \cite{KR1} to prove Theorem \ref{cprsthm}.
For simplicity, we use $\Theta$ to denote any term from the collection
$$
\left\{\tr\chi-\frac{2}{n(t-u)}, \,\,\, \Tr k, \,\,\,
\frac{\bb^{-1}-n^{-1}}{t-u},\,\,\,{\hat k}_{NN}  \right\}.
$$
According to (\ref{ricstu}) and (\ref{ric4}) in Proposition \ref{rices} we have
\begin{equation}\label{riccm}
\|r^{-\f12}\Theta\|_{ L^2(S_{t,u})}\les \la^{-\f12}.
\end{equation}
By following the argument in \cite[Section 6]{KR1} we can derive
\begin{align*}
\bar Q[\psi](t)&\gtrsim \int_\Sigma  \left(\ub^2 (L\psi)^2+u^2
(\Lb\psi)^2+(u^2+\ub^2)|\sn\psi|^2+\left(1+\frac{t^2}{(t-u)^2}\right)\psi^2\right)\\
&\quad \, -\int_\Sigma \left(1+\frac{t^2}{(t-u)^2}\right)\psi^2(t-u)\Theta\\
&\gtrsim \C[\psi](t)-\int_\Sigma \left(1+\frac{t^2}{(t-u)^2}\right)\psi^2(t-u)\Theta.
\end{align*}
By using (\ref{riccm}) and the inequality, which can be derived in view of (\ref{sob.12}), $$\|t\psi\|_{L_u^2 L_\omega^4}\les \|t\sn
\psi\|_{L_x^2}+\|\frac{t}{t-u} \psi\|_{L_x^2}\les \C[\psi](t),$$ we can obtain
\begin{align*}
\int_\Sigma \frac{t^2}{t-u} \Theta\psi^2
&\les \|r^\f12 \Theta\|_{L_u^\infty L_\omega^2}\|r^{\f12} t^2 \psi^2\|_{L_u^1 L_\omega^2}
\les \la^{-\f12} \tau_*^{\f12}\|t^2 \psi^2\|_{L_u^1 L_\omega^2}
\les T^{\f12} \C[\psi](t).
\end{align*}
Similarly we have
\begin{align*}
\int_{\Sigma} \psi^2(t-u) \Theta &\les \la^{-\f12} \tau_*^{\f12} \|r^2 \psi^2\|_{L_u^1 L_\omega^2}
\les T^{\f12} \C[\psi](t).
\end{align*}
Therefore, there is a universal constant $C_0>0$ such that
\begin{equation*}
\C[\psi](t)\le C_0\bar Q[\psi](t)+ C_0 T^{\f12} \C[\psi](t).
\end{equation*}
This implies the desired conclusion by taking $T$ to be small universal constant.

\section{\bf Estimates for Ricci coefficients}\label{ricc}

In Sections \ref{ricc} and \ref{sec7}, we will complete the proof  of Proposition \ref{rices}. We
will consider only the case $\la=1$ since the general case can be obtained by a simple rescaling.
Thus, we will work on the spacetime $[0, T]\times \Sigma$.


Consider an outgoing null cone $C_u$
in $[0,T]\times \Sigma$ initiating from a point $p$ on $\Ga^+$ with $u=t(p)$.
Let $\D^+$ be the domain enclosed by $C_u$ and $\Sigma_T$ and let
$S_{t,u}:=C_u\cap \Sigma_t$. For simplicity, we will supress $u$ and write $S_t:=S_{t,u}$.
Instead of the canonical null pair $\{L, \Lb\}$, we will work under a new pair of null vector
fields $\{L', \Lb'\}$ which is defined as follows.
The null vectors ${l'}_{\omega}$ in $T_p\bM$ parametrized with $\omega\in {\Bbb S}^2$
are normalized by $\bg(l'_\omega, \bT_p)=-1$. We denote by $\Gamma_\omega(s)$ the
outgoing null geodesic from $p$ with $\Gamma_\omega(0)=p$ and
$\frac{d}{ds}\Gamma_{\omega}(0)=l'_\omega$ and define the null
vector field $L'$ by
$$
L'(\Ga_\omega(s))=\frac{d}{ds}\Ga_\omega(s).
$$
Then $\bd_{L'} L'=0$. The affine parameter $s$ of null geodesic is
chosen such that $s(p)=0$ and $L'(s)=1$. We will assume that the
exponential map $\G_t: \omega\rightarrow \Ga_\omega(s(t))$ is a
global diffeomorphism  from ${\Bbb S}^2$ to $S_t$ for any $t\in(u,
T]$. Along $C_u$ we introduce the null lapse function
$a^{-1}=-\bg( L', \bT)$. Then $a>0$ on $C_u$ with $a(p)=1$.
Moreover, along any null geodesic $\Ga_\omega$ there holds
\begin{equation}\label{st}
\frac{dt}{ds}=n^{-1}a^{-1},\quad t(p)=0.
\end{equation}
We can define a conjugate null vector $\underline{L'}$ on $C_u$ with
$\bg(L', \underline{L'})=-2$ and such that $\underline{L'}$ is
orthogonal to the leafs $S_t$. Let  $N$ be the outward unit normal
of $S_t$ in $\Sigma_t$. Then in terms of $L'$, $\Lb'$ and $a$ we
have
\begin{equation}\label{TN}
L'=a^{-1}(\bT+N), \quad \quad \Lb'=a(\bT-N).
\end{equation}

Relative to the canonical null frame ${L, \Lb}$, we recall the null components of the
Riemannian curvature tensor ${\bR}$ as follows
\begin{eqnarray}
\alpha_{AB}=\bR(L, e_A, L, e_B),&&\qquad  \beta_A=\frac{1}{2} \bR(e_A, L,
\underline{L}, L),\nonumber\\
\rho=\frac{1}{4} \bR(\underline{L},L, \underline{L},L),&&\qquad
\sigma=\frac{1}{4}{{}^\star \bR}(\underline{L}, L, \underline{L}, L),\label{f14}\\
\underline{\beta}_A=\frac{1}{2} \bR(e_A, \underline{L},
\underline{L},L),&& \qquad \underline{\a}_{A B}=\bR(\underline{L}, e_A,
\underline{L}, e_B).\nonumber
\end{eqnarray}
and  $\a', \b', \rho', \sig', \udb', \ab'$ can be similarly defined with ${L, \Lb}$ replaced by ${L',\Lb'}$.
We define
$$
\R(C_u)^2=\int_{C_u} \left(|\a|^2+|\b|^2+|\udb|^2+|\rho|^2+|\sigma|^2\right).
$$
It follows from Proposition \ref{dfk} and the  standard flux type estimate by
using the Bel-Robinson tensor and Einstein vacuum equation,
\begin{equation}\label{pi.3}
\||\nab_L k , \sn k|_g\|_{L^2(C_u)}+\R(C_u)\le C.
\end{equation}

Let us denote by  $\hk$ the traceless part of $k$, $\hk=k-\frac{1}{3}\Tr k \c g$.
We decompose $\hk$ on each $S_t$ by introducing components
\begin{equation}\label{compo}
\e_{AB}=\hk_{AB},\quad \quad \ep_A=\hk_{AN},\quad\quad
{\mathfrak{\delta}}= \hk_{NN}
\end{equation}
where $(e_A)_{A=1,2}$ is an  orthonormal frame on $S_t$. Let $\eh_{AB}$ be the traceless part of $\e$. Since
$\delta^{AB} \e_{AB}=-\delta$, we have
$
\eh_{AB}=\e_{AB}+\frac{1}{2}\delta_{AB}\delta.
$
The Ricci coefficients $\chi', \chib', \zeta', \zb', \xi', \mu'$ and null components of curvature
associated with the null frame $\{e_1, e_2, L', \Lb'\}$ are related to their
counterparts associated with $\{e_1, e_2, L, \Lb\}$ as follows:
\begin{align*}
&\bb=a n_p,\quad \sn\log \bb=\sn \log a, \quad \chi'=a^{-1}
\chi,\quad \chib'=a\chib,\quad \zeta'=\zeta,\\
&\zb'=\zb,\quad \xi'=a^2 \xi, \quad -2\mu'= \mu:=\bd_3 \tr\chi-\f12
(\tr\chi)^2-(k_{NN}+\nab_N \log
n)\tr\chi,\\
&\a'=a^{-2} \a,\quad\b'=a^{-1}\b, \quad \rho'=\rho, \quad \sigma'=\sigma,\quad
\udb'=a\udb
\end{align*}
In the following we will consider the Ricci coefficients relative to the
null frame $\{e_1, e_2, L', \Lb'\}$, and we will drop the prime  for convenience.
We will also fix the following conventions:
\begin{enumerate}
\item[$\bullet$]$\sl{\pi}$ denotes the collection of $\eh,\,\ep,\,\delta,\, \nab_N\log n, \, \sn\log
n, \,\Tr k,$
\item[$\bullet$]$\ckk{\pi}$ denotes the collection of $\sl{\pi}$, $\hn g$,
\item[$\bullet$]  $\iota:=\tr\chi-\frac{2}{r}$,\,$V:=\tr\chi-\frac{2}{s}$, \, $\kappa:=\tr\chi-(an)^{-1}\ovl{an\tr\chi},$
\item[$\bullet$] $A$ denotes the collection of $ \chih,\,\zeta,\, \zb,\, \nu=-\frac{da}{ds}=\pi_{0N}+\delta+\frac{\Tr k}{3},$
\item[$\bullet$]$\Ab$ denotes the collection of $A$ and  $\chibh,\, \sn\log a, \,\sl{\pi},$
\item[$\bullet$]The pair of quantities $(M, \D_0 M)$ denotes either $ (\sn \tr\chi,\sn \chih),$ or $(\mu,\sn \zeta),$
\item[$\bullet$] $R_0$ denotes the collection of $\a,\,\b,\rho, \sigma, \udb$,
\item[$\bullet$]$\bar R$ denotes the collection of $ R_0, \tr\chi \Ab,
A\c\Ab,$
\item[$\bullet$] $S:=S_t$,\, $\gac:=r^{-2}\ga$,\, $\ga^{(0)}:=r^2{\gas}$.
\end{enumerate}
We define the norm of  $\N_1[\cdot]$ on null cone $C_u$ for $S$ tangent tensor fields $F$ by
\begin{equation}
\N_1[F]=\|r^{-1} F\|_{L^2(C_u)}+\|\sn F\|_{L^2(C_u)}+\|\sn_L F\|_{L^2(C_u)}.
\end{equation}
We first recall the following results that hold under the bounded $H^2$ data condition.
The proofs can be found in can be found in \cite{Wang10,Wang1} and  rely on  the bounded
flux (\ref{pi.3}) and (\ref{q1})-(\ref{q3}).

\begin{theorem}\label{recalll2}
There exists a universal constant $C_1\ge 1$ such that on $C_u$ there holds
\begin{equation}\label{calc0}
\C_0:=\R(C_u)+\N_1[\slashed{\pi}]+\|r^{-\f12}\bd n \|_{L^2(S_t)}+\|\bd n\|_{L^4(S_t)}
+\|\bd n\|_{L_t^2 L_\omega^\infty}\le C_1.
\end{equation}
Moreover, let $\R_0:=\R(C_u)+\N_1[\slashed{\pi}]$, there exists a universal constant
$\delta_0>0$ such that if $T-u\le \delta_0$ then on $C_u$ there hold
\begin{align}
\left\|\tr \chi -\frac{2}{s}\right\|_{L^\infty}&\lesssim
\R_0^2,\label{mt0}\\
|a-1|&\le \frac{1}{2},\label{mt7}
\end{align}
\begin{align}
&\left\|\int_0^1 (|\chih|^2+|\zeta|^2+|\nu|^2+|\zb|^2) na dt
\right\|_{L_\omega^\infty}\les \R_0^2,\label{mt2}\\
&\|\sn \tr\chi\|_{L^2(C_u)}+\|\mu\|_{L^2(C_u)} +\left\|\sup_{t\le
1}|r^{3/2}\sn \tr \chi|\right\|_{L^2_\omega}+ \left\|\sup_{t\le
1}r^{\frac{3}{2}}|\mu| \right\|_{L^2_\omega}\les \R_0\label{mt3}\\
 &{\mathcal N}_1(\chih)+{\mathcal
N}_1(\zeta)+{\mathcal N}_1\left(\tr
\chi-\frac{2}{r}\right)+\N_1\left(\tr\chi-(an)^{-1}\ovl{an\tr\chi}\right)\les \R_0,\label{mt4}\\
&\left\|\tr\chi-\frac{2}{r}\right\|_{L_t^2L_\omega^\infty}
+\left\|\tr\chi-(an)^{-1}\ovl{an\tr\chi}\right\|_{L_t^2 L_\omega^\infty}\les \R_0.\label{mt5}
\end{align}
\end{theorem}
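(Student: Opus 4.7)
The plan is to derive these estimates through a bootstrap along the null cone $C_u$, chaining transport equations for the Ricci coefficients with Codazzi/Bianchi-type Hodge systems, feeding in the curvature flux bound $\R(C_u) \le C_1$ from (\ref{pi.3}) and the $H^2$-based controls (\ref{q1})--(\ref{q3}). Throughout, I would exploit the choice of null pair $\{L',\Lb'\}$ with $\bd_{L'}L'=0$, which eliminates the connection coefficient $\xi$ and turns null transport equations into honest ODEs in the affine parameter $s$.

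First I would establish (\ref{calc0}). The flux bound $\R(C_u) \le C_1$ is (\ref{pi.3}) directly. For the $\slashed{\pi}$ components, I would combine the spacetime $H^1$-estimates $\|\pi\|_{H^1}, \|\hn g\|_{H^1}\le C$ from (\ref{q1}) with the foliated trace inequality of Lemma \ref{trace2} to pick up $L_t^2 L^4(S_t)$ and $L_t^2 L_\omega^\infty$ control for $\sn\slashed{\pi}$ and $\sn_L\slashed{\pi}$. For $\bd n$, the elliptic equation (\ref{n1}) combined with (\ref{q2})--(\ref{q3}) gives the required $L^4(S_t)$ and $L_t^2 L_\omega^\infty$ bounds.

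Next, with $T-u \le \delta_0$ small, I would bootstrap (\ref{mt0}) and (\ref{mt7}) from the Raychaudhuri-type equation
\[
\tfrac{d}{ds}\tr\chi + \tfrac{1}{2}(\tr\chi)^2 = -|\chih|^2 - (k_{NN}+\nab_N\log n)\tr\chi + \cdots
\]
By writing $V := \tr\chi - 2/s$ and integrating from the vertex, the dangerous $(\tr\chi)^2$ self-interaction is tamed by exploiting the $2/s$ leading behavior at the vertex, while the nonlinear remainders are absorbed using the $L_t^2$ bounds on $\chih$ and $\sl{\pi}$ from (\ref{calc0}). The transport equation $\frac{da}{ds} = -\nu a$ with $a(p)=1$ directly gives $|a-1|\le \tfrac12$ once the $L_t^1$ norm of $\nu$ along geodesics is controlled by $\R_0^2$ via Cauchy-Schwarz. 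Estimate (\ref{mt2}) is obtained by integrating the Bianchi identities and the transport equations for $\chih,\zeta,\zb$ along null geodesics, paired with the curvature flux and (\ref{mt0}).

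For the $L^2(C_u)$ and $\N_1$ estimates (\ref{mt3})--(\ref{mt4}), I would use the Hodge system approach: the Codazzi equation
\[
\div\chih = \tfrac{1}{2}\sn\tr\chi - \b + \chih\c\zeta + \cdots
\]
together with its $\Lb$-companion allow one to trade $\sn\chih$ for $\sn\tr\chi$ modulo curvature and quadratic terms. Coupling this with the transport equation for $\mu = \bd_3\tr\chi - \tfrac12(\tr\chi)^2 - (k_{NN}+\nab_N\log n)\tr\chi$ — whose source is a genuine null curvature component plus $\Ab\c\Ab$ terms — yields the $L^2(C_u)$ bounds on $\sn\tr\chi$ and $\mu$ via Gronwall. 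The pointwise $r^{3/2}$-weighted estimates for $\sup_t |r^{3/2}\sn\tr\chi|$ and $\sup_t r^{3/2}|\mu|$ require a more delicate argument of transporting along geodesics after multiplying by the correct weight so that the $s^{-1}$ singularity at the vertex is precisely compensated. Finally, (\ref{mt5}) follows from the isoperimetric/Sobolev inequality on $S_t$ (Lemma \ref{trace2}) applied to the $\N_1$ estimates in (\ref{mt4}).

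The hard part will be the pointwise weighted estimates in (\ref{mt3}) and the $\N_1$ bound on $\kappa$, since the averaging operator $(an)^{-1}\overline{an\tr\chi}$ is nonlocal on $S_t$ and its regularity depends on controlling the variation of $a$ and $n$ on $S_t$ in tandem with $\tr\chi$; this requires tracking commutators between $\sn, \sn_L$ and the averaging, which is precisely where the delicate interplay between curvature flux and $\slashed{\pi}$-flux established in (\ref{calc0}) becomes essential. As the statement notes, the full argument is carried out in \cite{Wang10,Wang1}, and the sketch above just indicates the structural pattern—transport $+$ Hodge $+$ flux $+$ Gronwall, bootstrapped in the small time $T-u\le\delta_0$.
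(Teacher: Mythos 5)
The paper does not actually prove Theorem~\ref{recalll2}: immediately before the statement it says the proofs ``can be found in \cite{Wang10,Wang1} and rely on the bounded flux (\ref{pi.3}) and (\ref{q1})--(\ref{q3}).'' So the comparison is with those references, and your sketch correctly captures the overall pattern used there (transport along null geodesics with the geodesic null pair so that $\xi'$ drops out, Hodge systems via Codazzi, Gronwall, and a bootstrap on $T-u\le\delta_0$).

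There is, however, a concrete gap in your treatment of $\N_1[\slp]$ inside (\ref{calc0}). You propose to obtain control of $\sn\slp$ and $\sn_L\slp$ by combining $\|\pi\|_{H^1},\|\hn g\|_{H^1}\le C$ with the trace inequality of Lemma~\ref{trace2}. But Lemma~\ref{trace2} bounds $\|F\|_{L^4(S_{t,u})}+\|r^{-1/2}F\|_{L^2(S_{t,u})}$ by $\|F\|_{H^1(\Sigma_t)}$; applied to $F=\sn\slp$ or $F=\sn_L\slp$ this would require $\slp\in H^2(\Sigma_t)$, which is exactly one derivative more than the $H^2$-data hypothesis supplies, and in any case the trace inequality controls sphere integrals, not the null-cone $L^2(C_u)$ flux that defines $\N_1$. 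What actually carries the estimate is the bulk $k$-flux
\begin{equation*}
\| |\nabla_L k,\sn k|_g\|_{L^2(C_u)}\le C
\end{equation*}
from (\ref{pi.3})/Proposition~\ref{dfk}, derived via a divergence-theorem/Bel-Robinson-type energy argument, and the corresponding flux for the lapse-related components (coming from the elliptic equation for $n$ together with (\ref{q2})); these are then converted to bounds on $\sn\slp$ and $\sn_L\slp$ through the transport relation (\ref{trans1}), $\sn_*\slp=\nabla_*\pi+\er$, up to lower-order errors absorbed using $r\tr\chi\approx 1$ and trace inequalities for the \emph{undifferentiated} $\slp$. Without invoking the cone-flux input, your sketch for $\N_1[\slp]$ does not close; the rest of your bootstrap chain, which feeds $\C_0$ into (\ref{mt0})--(\ref{mt5}), would then be built on an unavailable premise. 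The subsequent steps of your proposal are structurally sound given $\C_0\le C_1$, though I'd note that (\ref{mt2}) also requires a pointwise-along-geodesics argument exploiting $\|rR_0\|_{L_\omega^\infty L_t^2}$-type control, not just Gronwall on $L^2$-flux.
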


\begin{proposition}\label{cmps}
 On each $S_t$ we introduce the ratio of area elements
\begin{equation}\label{vt1}
v_t(\omega):=\frac{\sqrt{|\ga_t|}}{\sqrt{|\gas|}}, \qquad
\omega\in {\Bbb S}^2.
\end{equation}
Then there hold
\begin{equation}\label{cmps1}
C^{-1}\le v_t/s^2\le C,\qquad C^{-1}<r/s<C,
\end{equation}
where $C$ is a positive universal constant.
\end{proposition}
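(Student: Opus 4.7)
\textbf{Proof proposal for Proposition \ref{cmps}.} The plan is to derive a transport equation for $v_t$ along the null geodesic $\Gamma_\omega$ and then integrate, using the estimates from Theorem \ref{recalll2} to control the integrand. Since $\Gamma_\omega$ is parameterized by affine parameter $s$ with $L' = d/ds$, pulling back the area form on $S_t$ via $\mathcal G_t : \omega \mapsto \Gamma_\omega(s(t,\omega))$ gives
\[
\frac{d v_t}{ds} \;=\; \tr\chi'(s,\omega)\, v_t,
\qquad \lim_{s\to 0}\, v_t/s^2 \;=\; 1,
\]
where the initial condition follows because the geometry at the vertex $p$ is asymptotically Minkowskian, so the pullback area form of an infinitesimally small sphere of affine radius $s$ approaches $s^{2}$ times that of $\gas$.

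Rewriting $\tr\chi' = a^{-1}\tr\chi$ and using the identity $\frac{d}{ds}(v_t/s^2) = (\tr\chi' - 2/s)\,v_t/s^2$, I would integrate from $0$ to $s$ to get
\[
\log\!\bigl(v_t(s,\omega)/s^2\bigr)
\;=\; \int_0^s\! \bigl((a^{-1}-1)\tr\chi + (\tr\chi - 2/s')\bigr)\,ds'.
\]
The second term is easy: by \eqref{mt0}, $|\tr\chi - 2/s'|\lesssim \R_0^2$ is bounded, so its contribution is $\lesssim s$. For the first term I would control $a$ by its own transport equation $\frac{da}{ds} = -a\nu$ with $a(0)=1$, using the key Ricci bound \eqref{mt2} and the identity $ds = na\,dt$ to convert:
\[
\int_0^s |\nu|^2\,ds' \;=\; \int_0^{t(s)} |\nu|^2\, na\,dt \;\lesssim\; \R_0^2.
\]
Cauchy--Schwarz then yields $\int_0^{s'}|\nu|\,ds'' \lesssim (s')^{1/2}$, hence by a continuity/bootstrap argument $|a-1|(s') \lesssim (s')^{1/2}$ on $[0,s]$, which in particular secures \eqref{mt7}. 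Combined with $|\tr\chi|\lesssim 1/s'$, this gives
\[
\int_0^s |(a^{-1}-1)\tr\chi|\,ds' \;\lesssim\; \int_0^s (s')^{-1/2}\,ds' \;\lesssim\; s^{1/2},
\]
so $|\log(v_t/s^2)|\lesssim s^{1/2}+s \lesssim T^{1/2}$ is bounded uniformly in $\omega$, yielding the first inequality $C^{-1}\le v_t/s^2\le C$.

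For the comparison $r/s\approx 1$, I would use $s(t,\omega) = \int_u^t (na)(\tau,\omega)\,d\tau$ from \eqref{st} together with $C^{-1}<n<C$ and $a\approx 1$ to conclude $s(t,\omega)\approx t-u$ uniformly in $\omega$; then
\[
r^2 \;=\; \frac{1}{4\pi}\int_{\mathbb S^2} v_t\,d\mu_{\gas} \;\approx\; \frac{1}{4\pi}\int_{\mathbb S^2} s(t,\omega)^2\,d\mu_{\gas} \;\approx\; (t-u)^2,
\]
and hence $r/s\approx 1$. The main obstacle is the term $(a^{-1}-1)\tr\chi$, which is only borderline integrable near the vertex: my control relies crucially on the $L^2$-in-$s$ bound for $\nu$ inherited from \eqref{mt2}, and on running a continuity argument to legitimize $a\approx 1$ from the outset. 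Everything else reduces to the sharp pointwise Ricci estimate \eqref{mt0} and the fact that the tangent-space geometry at $p$ is flat to leading order.
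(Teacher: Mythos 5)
Your argument is essentially correct, and it is worth noting that the paper itself offers no proof of Proposition \ref{cmps}: it is recalled from \cite{Wang10,Wang1} together with Theorem \ref{recalll2}, so there is nothing internal to compare against. Your reconstruction is the standard one — integrate $\frac{d}{ds}\log(v_t/s^2)=\tr\chi-\frac{2}{s}$ from the vertex, control the right-hand side by (\ref{mt0}) and the $L^2_s$ bound on $\nu$ from (\ref{mt2}) via $ds=na\,dt$, and then get $r\approx s\approx t-u$ by integrating $v_t$ over $\mathbb S^2$ and using $C^{-1}<n<C$. Three small points. First, the paper's convention (stated just before Theorem \ref{recalll2}) is that all Ricci coefficients there are the primed ones with primes dropped, so (\ref{mt0}) already bounds $\tr\chi'-\frac{2}{s}$ directly and your splitting $\tr\chi'-\frac2s=(a^{-1}-1)\tr\chi+(\tr\chi-\frac2s)$ is not needed for the $v_t/s^2$ bound (your control of $a-1$ is still needed for $s\approx t-u$, however, and it is correct). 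Second, the paper defines $\nu=-\frac{da}{ds}$, so the transport equation is $\frac{da}{ds}=-\nu$ rather than $-a\nu$; this is immaterial since either version gives $|a-1|\lesssim s^{1/2}\R_0$ by Cauchy--Schwarz. Third, you should be aware that in the references (\ref{mt0}), (\ref{mt2}), (\ref{mt7}) and the comparison $v_t\approx s^2$ are established simultaneously by a continuity argument, so deriving Proposition \ref{cmps} as a corollary of Theorem \ref{recalll2} inverts the logical order of the original proof; within this paper's presentation, where Theorem \ref{recalll2} is taken as given first, your deduction is legitimate. The initial condition $\lim_{s\to0}v_t/s^2=1$ deserves one more line (it follows from the normalization $\bg(l'_\omega,\bT_p)=-1$ and the expansion of the exponential map at the vertex), but only two-sided boundedness of the limit is actually used.
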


Finally we recall (\cite{Wang10,Wang1}) that for any $\Sigma$-tangent tensor field $F$
there holds the trace inequality
\begin{equation}\label{trc1}
\|r^{-\frac{1}{2}}F\|_{L^2(S_{t,u})}+\|F\|_{L^4(S_{t,u})}\les
\|F\|_{H^1(\Sigma)},
\end{equation}
 and  for any $-\frac{t}{4}\le u\le t$ and any $\Sigma$-tangent tensor field $F$,
there holds
\begin{equation}\label{fq}
\|F\|_{L^2(S_{t,u})}^2 \les(\|k\|_{L^3(\itt({S_{t,u}}))}+1)
\|F\|_{H^1(\Sigma)}\|F\|_{L^2(\itt(S_{t,u}))}.
\end{equation}
where $Int(S_{t,u})$ denotes the domain in $\Sigma_t$ enclosed by $S_{t,u}$.

We recall also  the transport Sobolev-type inequality for any $S_t$-tangent tensor field $F$,
\begin{equation}\label{trc2}
\|r^{\f12}F\|_{L_\omega^p L_t^\infty}\les \|r^{-\f12} F\|_{L_x^2 L_t^\infty(C_u)}+\|F\|_{L_x^4
L_t^\infty(C_u)}\les \N_1[F]
\end{equation}
with $2\le p\le 4$. Similar to \cite[Proposition 8.1]{Wang10}  and also using (\ref{trc1}),
we have for any $\Sigma$-tangent tensor field $F$ that
\begin{align}
\|s^{-1} F\|_{L^2(C_u)}&\les \|s^{-1/2}
F\|_{L^2(S_T)}+\|\sn_L F\|_{L^2(C_u)}+\|F\|_{L^2(C_u)}\nn\\
&\les\|F\|_{H^1(\Sigma_T)}+\|\sn_L F\|_{L^2(C_u)},\label{trr1}
\end{align}
As a immediate consequence of (\ref{trc2}), (\ref{calc0}) and (\ref{mt4}), on $C_u$ there holds
for $F=\chih, \zeta, \ckk \pi$ that
\begin{equation}\label{cor4}
\|r^{\f12}F\|_{L_\omega^p L_t^\infty}\les\N_1[F]\le  C, \mbox{ with }  2\le p\le 4.
\end{equation}

\begin{lemma}
For  $q>2$, there hold the following Sobolev inequalities on $C_u$:
\begin{align}
&\|F\|_{L_t^2 L_\omega^q}\les (\|\sn
F\|_{L^2(C_u)}^{1-\frac{2}{q}}+\|r^{-1}F\|_{L^2(C_u)}^{1-\frac{2}{q}})\|r^{-1}
F\|_{L^2(C_u)}^{\frac{2}{q}}\label{sob.2}\\
&\|F\|_{L_\omega^\infty}\les \|r\sn F\|_{L_\omega^{q}}
+\|F\|_{L_\omega^{q}}\label{sob.3}
\end{align}
\end{lemma}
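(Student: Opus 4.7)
The plan is to reduce both inequalities to standard Sobolev/Gagliardo-Nirenberg estimates on $\mathbb{S}^2$, using Proposition \ref{cmps} to transfer the sphere metric comparisons $\ga \approx r^2 \gas$ (and hence $v_t \approx r^2$) uniformly across leaves $S_t \subset C_u$. For a scalar or $S_t$-tangent tensor $F$, the key scaling relations are $\|F\|_{L^q(S_t)} \approx r^{2/q}\|F\|_{L_\omega^q}$ and $|\sn F|_\ga \approx r^{-1}|\nabla_{\mathbb{S}^2} F|_{\gas}$, which follow from pulling back by the exponential map $\G_t$.

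For (\ref{sob.3}), since $q > 2$, the classical Sobolev embedding $W^{1,q}(\mathbb{S}^2)\hookrightarrow L^\infty(\mathbb{S}^2)$ gives
\[
\|F\|_{L_\omega^\infty} \les \|\nabla_{\mathbb{S}^2}F\|_{L_\omega^q} + \|F\|_{L_\omega^q}.
\]
Because $|\nabla_{\mathbb{S}^2}F|_{\gas} \approx r|\sn F|_\ga$ on $S_t$ (with $r$ constant on $S_t$), the gradient term becomes $\|r\sn F\|_{L_\omega^q}$, yielding (\ref{sob.3}). One only needs (\ref{cmps1}) to ensure that the implicit constants from $\mathbb{S}^2$ are not degraded.

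For (\ref{sob.2}), I first apply Gagliardo-Nirenberg on each $S_t$. Transferring the standard inequality on $(\mathbb{S}^2,\gas)$ through the scaling above, I obtain for $q>2$
\[
\|F\|_{L_\omega^q}^2 \les \|\sn F\|_{L^2(S_t)}^{2(1-2/q)}\,\|r^{-1}F\|_{L^2(S_t)}^{4/q} + \|r^{-1}F\|_{L^2(S_t)}^2,
\]
where the weights arise simply from substituting $\|F\|_{L_\omega^q}\approx r^{-2/q}\|F\|_{L^q(S_t)}$ and $\|F\|_{L^2(S_t)} = r\|r^{-1}F\|_{L^2(S_t)}$ into the unscaled GN inequality on $\mathbb{S}^2$. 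Then I integrate in $t$ and apply H\"older with conjugate exponents $q/(q-2)$ and $q/2$ on the first term:
\[
\int_u^T \|\sn F\|_{L^2(S_t)}^{2(1-2/q)}\,\|r^{-1}F\|_{L^2(S_t)}^{4/q}\,dt
\le \Big(\int\|\sn F\|_{L^2(S_t)}^2 dt\Big)^{1-2/q}\Big(\int\|r^{-1}F\|_{L^2(S_t)}^2 dt\Big)^{2/q}.
\]
Using that the $C_u$ measure (up to the bounded factor $a$) agrees with $dt\,d\mu_\ga$, the right-hand side becomes $\|\sn F\|_{L^2(C_u)}^{2(1-2/q)}\|r^{-1}F\|_{L^2(C_u)}^{4/q}$, and taking square roots produces (\ref{sob.2}).

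The only non-routine point is the \emph{uniformity} of constants in $t$ and $u$ across the null cone: the Gagliardo-Nirenberg constant on $\mathbb{S}^2$ is fixed, but I am using it through the diffeomorphism $\G_t:\mathbb{S}^2\to S_t$ whose pulled-back metric deviates from $r^2\gas$. I would expect this to be the one place needing care, and it is handled precisely by (\ref{cmps1}) together with the smallness of $\epsilon$ in the statement $|r^{-2}\ga_{t,u}(X,X) - \gas(X,X)|<\epsilon\,\gas(X,X)$ recalled in the text, which guarantees that the Sobolev/GN constants of $(S_t,\ga)$ (after rescaling by $r$) are comparable to those of $(\mathbb{S}^2,\gas)$ up to universal multiplicative factors.
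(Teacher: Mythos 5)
Your proof is correct and follows essentially the only natural route: transfer the standard Sobolev embedding $W^{1,q}(\mathbb{S}^2)\hookrightarrow L^\infty$ and the 2D Gagliardo--Nirenberg interpolation to $(S_t,\ga)$ via the weak sphericality $|r^{-2}\ga_{t,u}-\gas|<\epsilon\,\gas$ together with the volume/radius comparisons in Proposition \ref{cmps}, then for (\ref{sob.2}) integrate in $t$ using H\"older with exponents $q/(q-2)$ and $q/2$ and the fact that $ds=na\,dt$ with $n,a$ bounded above and below so that $dt\,d\mu_\ga$ is comparable to the $C_u$ measure. The paper states the lemma without a written proof (it is implicitly inherited from \cite{Wang10,Wang1}), and your derivation is exactly the argument one would expect there; in particular you correctly identify that the only delicate point is the $t$-uniformity of the GN/Sobolev constants, which is supplied by the weakly spherical condition and the fact that the rescaled metric converges to $\gas$ at the vertex.
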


\begin{lemma}\label{trans2}
Let $0\le 1-\frac{2}{p}<s-2$. For any $\Sigma$ tangent tensor $F$ there holds
\begin{align*}
\|r^{1-\frac{2}{p}}|\nab_L F, \sn F|_g\|_{L_t^2 L_x^p(C_u)}
&\les\sum_{i}\|r^{1-\frac{2}{p}}(\sn_L (F_{i}), \sn (F_{i}))\|_{L_t^2 L_x^p(C_u)}+ \|F\|_{L_t^\infty H^1}.
\end{align*}
\end{lemma}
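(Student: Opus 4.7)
The plan is to reduce the tensorial derivatives on the left-hand side to component-wise scalar derivatives, isolating Christoffel-symbol corrections that can be absorbed into $\|F\|_{L_t^\infty H^1}$ by the bootstrap control on $\hn g$ and the $L_t^2 L_x^\infty$ bounds on $\bd n, \hn Y$ established earlier.

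First, work in a fixed coordinate frame $\{\partial_i\}$ on $\Sigma$ and let $F_i$ be the components of $F$. For the $1$-form case (higher-order tensors being analogous), schematically
\begin{equation*}
(\nab_L F)_i = L(F_i) + (\p\bg)\cdot F, \qquad (\sn_A F)_i = e_A(F_i) + (\p\bg)\cdot F,
\end{equation*}
where $\p\bg$ stands for any of $\hn g, \bd n, \hn Y$. Consequently
\begin{equation*}
|\nab_L F,\, \sn F|_g \;\les\; \sum_i (|\sn_L F_i| + |\sn F_i|) \;+\; |\p\bg|\,|F|.
\end{equation*}
Integrating against the weight $r^{1-\frac{2}{p}}$ in $L_t^2 L_x^p(C_u)$, the first sum produces the explicit main term on the right-hand side of the lemma, so the claim reduces to proving
\begin{equation*}
\bigl\|r^{1-\frac{2}{p}}\,|\p\bg|\,|F|\bigr\|_{L_t^2 L_x^p(C_u)} \;\les\; \|F\|_{L_t^\infty H^1}.
\end{equation*}

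For this residual estimate, I would apply H\"older on each leaf $S_{t,u}$ to place $\p\bg$ in $L_x^\infty(\Sigma_t)$ and $F$ in $L^p(S_{t,u})$. Since $0\le 1-\frac{2}{p}<s-2$ with $s>2$ only slightly larger than $2$, the admissible $p$ lies in $[2,4]$, and interpolating the two parts of the trace inequality (\ref{trc1}) --- namely $\|F\|_{L^4(S_{t,u})}\les \|F\|_{H^1(\Sigma_t)}$ and $\|F\|_{L^2(S_{t,u})}\les r^{1/2}\|F\|_{H^1(\Sigma_t)}$ --- yields
\begin{equation*}
\|F\|_{L^p(S_{t,u})} \;\les\; r(t,u)^{\frac{2}{p}-\frac{1}{2}} \|F\|_{H^1(\Sigma_t)}.
\end{equation*}
Hence $\|r^{1-\frac{2}{p}}\p\bg\cdot F\|_{L^p(S_{t,u})}\les r^{1/2}\|\p\bg\|_{L_x^\infty(\Sigma_t)}\|F\|_{H^1(\Sigma_t)}$; taking $L_t^2$ in time, using $r\les t\les T$ together with (\ref{BA2}) and the $L_t^2 L_x^\infty$ control on $\bd n, \hn Y$ from Section \ref{h2es}, one arrives at $\les T^{1/2}\|F\|_{L_t^\infty H^1}$, which closes the estimate.

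The main technical point is just bookkeeping: $\nab_L F$ forces one to include all spacetime connection coefficients contracted with $L$, so the error picks up contributions from $\hn g$, $\bd n$, and $\hn Y$; each has the same schematic form $\p\bg\cdot F$ and is handled by the identical H\"older--trace chain. The restriction $p\le 4$ coming from the use of (\ref{trc1}) is harmless in the regime $s\approx 2$ that concerns us; for larger $s$ one would substitute the sphere-Sobolev inequality (\ref{sob.12}) for the higher-$p$ range, but no new difficulty is anticipated.
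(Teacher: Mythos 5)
Your proposal is correct and follows the same route as the paper: reduce to scalar components, express the discrepancy as a schematic $\p\bg\c F$ error term, and control that error by H\"older in space ($\p\bg$ in $L_x^\infty$, $F$ in $L^p(S_{t,u})$ via interpolating the trace inequality (\ref{trc1})), picking up the $T^{1/2}$ and $\|\ti\pi\|_{L_t^2 L_x^\infty}$ factors. The paper writes the error coefficient as $\bg\c(\hn g, \hn Y, k)$ rather than your $\hn g, \bd n, \hn Y$, but both are bounded by $\|\ti\pi\|_{L_t^2 L_x^\infty}$ and the discrepancy is immaterial.
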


Indeed,  we can check for $\Sigma$ tangent tensor $F_{ij}$ the following symbolic identities,
\begin{equation*}
 \nab_L F_{i}=\nab_L (F_{i})+\er(F),\qquad \sn F_{i}=\sn(F_{i})+\er(F)
\end{equation*}
where $|\er(F)|\le |\bg\c (\hn g, \hn Y, k)\c F|$, which gives
\begin{align*}
\|r^{1-\frac{2}{p}}|\nab_L F, \sn F|_g\|_{L_t^2 L_x^p(C_u)}&\les \sum_{i}\|r^{1-\frac{2}{p}}(\sn_L (F_{i}), \sn (F_{i}))\|_{L_t^2 L_x^p(C_u)}\\&+(T-u)^{\f12}\|\ti \pi\|_{L_t^2 L_x^\infty}\|r^{\f12-\frac{2}{p}}F\|_{L_t^\infty L^p(C_u)}
\end{align*}
where to derive the last inequality, we employed (\ref{trc1}).

\begin{lemma}
Let $f$ be the components of the $\Sigma$-tangent tensor fields  $\nab^2 n$, $\nab (ne_0 (n))$, and $\hn^2 Y$.
Then for $2\le q<4$ there holds
\begin{equation}\label{sobinff}
\|r^{1-\frac{2}{q}}f\|_{L_t^2 L_x^q(C_u)}\le C.
\end{equation}
For the $\Sigma$ tangent tensor $\pi_0=-\nab\log n $, there holds
\begin{footnote}{We can obtain stronger estimates than (\ref{sobinf}), but this estimate is sufficient for our purpose.}
\end{footnote}
\begin{equation}\label{sobinf}
\|r^{1-\frac{2}{q}}(\nab_L \pi_0, \nab \pi_0)\|_{L_t^2 L_x^q(C_u)}\le C .
\end{equation}
\end{lemma}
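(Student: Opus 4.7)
My plan for (\ref{sobinff}) is to obtain an $L_t^2 L^4(C_u)$ endpoint estimate via the trace inequality (\ref{trc1}), pair it with the trivial $L_t^2 L^2(C_u)$ bound from Lemma \ref{trace2}, and interpolate on each leaf $S_t$. The weight $r^{1-2/q}$ is harmless since on $C_u\subset [0,T]\times \Sigma$ one has $r\lesssim T\lesssim 1$. The critical input is that $\hn f\in L_t^2 L^2_x$, which will come from the elliptic estimates of Section \ref{h2es}--\ref{sec3} coupled with the bootstrap (\ref{BA2}).

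Concretely, for $f$ a component of $\nab^2 n$, $\nab(ne_0(n))$, or $\hn^2 Y$, first I would record the uniform-in-time bound $\|f\|_{L_t^\infty L^2(\Sigma)}\lesssim 1$: for $\nab^2 n$ this comes from the elliptic equation (\ref{n1}) and Lemma \ref{Cor8.12.1}, for $\nab(ne_0(n))$ from (\ref{q1}), and for $\hn^2 Y$ from Proposition \ref{cor2}. Next, the elliptic estimate (\ref{q2}), the equation (\ref{hdtn2}) through Lemma \ref{Cor8.12.1}, and (\ref{shift3}) all give the pointwise-in-$t$ bound
\begin{equation*}
\|\hn^3 n\|_{L^2(\Sigma_t)}+\|\hn^2(ne_0(n))\|_{L^2(\Sigma_t)}+\|\hn^3 Y\|_{L^2(\Sigma_t)}\lesssim \|k,\hn g\|_{L^\infty(\Sigma_t)}+1,
\end{equation*}
so time-integration against (\ref{BA2}) yields $\|\hn f\|_{L_t^2 L^2(\Sigma)}\lesssim 1$. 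Then (\ref{trc1}) gives $\|f\|_{L^4(S_t)}\lesssim \|f\|_{H^1(\Sigma_t)}$, whence
\begin{equation*}
\|r^{1/2} f\|_{L_t^2 L^4(C_u)}^2\lesssim T\int_u^T \|f\|_{H^1(\Sigma_t)}^2\,dt\lesssim 1.
\end{equation*}
Combined with $\|f\|_{L^2(C_u)}\lesssim 1$ (from Lemma \ref{trace2} plus time-integration), the interpolation
\begin{equation*}
\|r^{1-2/q} f\|_{L^q(S_t)}\le \|f\|_{L^2(S_t)}^{1-\theta}\|r^{1/2} f\|_{L^4(S_t)}^\theta,\qquad \theta=2(1-2/q)\in[0,1),
\end{equation*}
followed by H\"older in $t$ completes (\ref{sobinff}).

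For (\ref{sobinf}), I would reduce to (\ref{sobinff}) by decomposition. Writing $\nab\pi_0=-n^{-1}\nab^2 n+n^{-2}\nab n\otimes \nab n$, the leading term is one of the $f$ above and the quadratic correction is uniformly bounded thanks to $\nab n\in L_x^\infty$ from (\ref{q3}). For $\nab_L \pi_0=\nab_{\bT}\pi_0+\nab_N\pi_0$, the $N$-part is a spatial derivative of $\pi_0$, again producing $\nab^2 n$, while the $\bT$-part, after commuting $\nab_{\bT}$ past $\nab$ (generating a harmless $k\cdot\pi_0$ term), contains $\nab(e_0\log n)\sim \nab(ne_0(n))/n$ modulo bounded lower-order terms. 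Every leading term thus falls under the scope of (\ref{sobinff}).

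The main obstacle is really the $L_t^2 L_x^2$ bound on the third-order quantities $\hn^3 n$, $\hn^3 Y$, $\hn^2(ne_0(n))$: we only have $H^{1+\epsilon}$ control on $(\hn g, k)$, not $H^{3/2+\epsilon}$, so the argument depends essentially on trading the missing half-derivative for time-integrability through (\ref{BA2}). Once this input is in hand, the rest is a soft combination of trace, interpolation and H\"older, with no Calderon-Zygmund or Ricci-coefficient analysis needed at this stage.
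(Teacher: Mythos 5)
Your route to (\ref{sobinff}) is correct and genuinely different from the paper's. The paper applies a Littlewood--Paley decomposition to $f$, invokes the two-dimensional Sobolev inequality (\ref{sob.12}) on each $S_{t,u}$, and then controls the null-cone $L^2$ trace of $P_\mu f$ through the refined trace inequality (\ref{fq}), gaining a factor $\mu^{-1/2}$ and summing the resulting series $\sum_\mu \mu^{1/2-2/q}$ (which is where the restriction $q<4$ enters there). You replace this by a direct Lebesgue interpolation on each leaf between the $L^2(S_t)$ and $L^4(S_t)$ traces of $f$ itself, avoiding the frequency decomposition entirely; both arguments ultimately rest on the same input $\|\hn f\|_{L_t^2 L_x^2(\Sigma)}\les 1$, which, as you correctly identify, is extracted by time-integrating the elliptic bounds (\ref{q2}), (\ref{shift3}) against the bootstrap (\ref{BA2}). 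Your version is arguably cleaner since it sidesteps the interaction between the three-dimensional LP projections and the two-surface $S_{t,u}$, at the price of no new information (e.g.\ it does not give a dyadic version of the estimate); the paper's proof is designed to fold into the same framework that produces the $L^p$ flux bounds in Proposition \ref{fluxpb}.

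For (\ref{sobinf}), however, your justification of the lower-order terms has a gap. You assert that the quadratic correction $n^{-2}\nab n\otimes\nab n$ is ``uniformly bounded'' because $\nab n\in L_x^\infty$ by (\ref{q3}); but (\ref{q3}) only gives $\|\nab n\|_{L_x^\infty(\Sigma_t)}\les 1+\|k\|_{L_x^\infty(\Sigma_t)}^{3/2-3/p}$, a pointwise-in-$t$ statement whose right-hand side is not uniformly bounded along the evolution. The same issue affects the commutator term $k\cdot\pi_0$ produced when you swap $\nab_\bT$ past $\nab$, which you label ``harmless'' without argument. What is actually needed --- and what the paper does --- is to place one factor in $L_t^2 L_x^\infty$ (bounded by combining (\ref{q3}) with (\ref{BA2})) and the other in $L_t^\infty L^q(S_{t,u})$ through the trace inequality (\ref{trc1}) together with $\|\pi_0\|_{H^1(\Sigma)}\les 1$ from (\ref{q1}): this is precisely the role of the factor $\|\pi_0\|_{L_t^2 L_x^\infty}\|r^{1/2-2/q}\pi_0\|_{L_t^\infty L_x^q}$ in the paper's display. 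Your decomposition into a leading $\nab^2 n$ or $\nab(n e_0(n))$ term covered by (\ref{sobinff}) plus lower-order products is structurally the same as the paper's, so the repair is routine, but the reasoning as given does not close.
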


\begin{proof}
We decompose a scalar function $f$ by  $f=\sum_{\mu>1} P_\mu f+P_{\le 1} f$.
By the Sobolev inequality  (\ref{sob.12}) on $S_{t,u}$ we have
\begin{equation*}
r^{1-\frac{2}{q}}\|P_\mu f\|_{L_x^q}\les \|r\sn P_\mu f\|_{L_x^2}^{1-\frac{2}{q}}\|P_\mu f\|_{L_x^2}^{\frac{2}{q}}+\|P_\mu f\|_{L_x^2}.
 \end{equation*}
Using the finite band property of LP projections and (\ref{fq}), we derive that
\begin{align*}
 \|r^{1-\frac{2}{q}}P_\mu f\|_{L_t^2 L_x^q}&\les (\mu^{1-\frac{2}{q}}+1)\|P_\mu f\|_{L^2(C_u)}\\
 &\les(\mu^{1-\frac{2}{q}}+1)\|P_\mu f\|_{L_t^2 H^1([0,T]\times \Sigma)}^\f12\|P_\mu f\|_{L_t^2 L^2([0,T]\times \Sigma)}^\f12\\
 &\les \mu^{-\f12}(\mu^{1-\frac{2}{q}}+1)\|f\|_{L_t^2 H^1([0,T]\times \Sigma)}.
\end{align*}
The lower order part $P_{\le 1}f$ can be treated similarly. Therefore   summing over $\mu>1$, we obtain
\begin{equation*}
 \|r^{1-\frac{2}{q}}f\|_{L_t^2 L_x^q(C_u)}\les \|f\|_{L_t^2 H^1([0,T]\times \Sigma)}.
 \end{equation*}
The estimate (\ref{sobinff}) then follows by using (\ref{q2}) and (\ref{shift3}).

We next prove (\ref{sobinf}). It is straightforward to derive that
$\nab\pi_0=n^{-1}\nab^2 n+\pi_0\c\pi_0$.  Then using (\ref{sobinff}) and (\ref{trc1})
\begin{align*}
\|r^{1-\frac{2}{q}} \nab\pi_0\|_{L_t^2 L_x^q(C_u)}&\les \|r^{1-\frac{2}{q}} \nab^2 n\|_{L_t^2 L_x^q(C_u)}+\|\pi_0\|_{L_t^2 L_x^\infty}\|r^{\f12-\frac{2}{q}}\pi_0\|_{L_t^\infty L_x^q}\les 1.
\end{align*}
Note that we have the following symbolic identity
\begin{equation*}
\nab_{n\bT} \nab_i \log n =\nab_i \nab_{n\bT} \log n+n k \c \nab\log n.
\end{equation*}
In view of $L=\bT+N$, and using (\ref{sobinff}), we can derive
\begin{align*}
\|r^{1-\frac{2}{q}}\nab_L \pi_0\|_{L_t^2 L_x^q(C_u)}
&\les \|r^{1-\frac{2}{q}} (\nab (ne_0(\log n)), \nab\pi_0 )\|_{L_t^2 L_x^q(C_u)}\\
&+\|\pi\|_{L_t^2 L_x^\infty}\| r^{\frac{1}{2}-\frac{2}{q}}\pi\|_{L_t^\infty L_x^q(C_u)}
\les 1.
\end{align*}
Hence we complete the proof.
\end{proof}

\subsection{$L^p$ flux type estimates}

In this subsection, we will use Proposition \ref{fluxg} to derive $L^p$ flux type
estimates on $\hn g, k$ and null components of curvature
$R_0=(\a, \b, \rho, \sigma, \udb)$.
Let $0<1-\frac{2}{p}<s-2$.  For any $S_t$-tangent tensor $F$ along null hypersurface $C_u$ we define the
Sobolev norm
\begin{equation*}
\np[F]=\|r^{-\frac{2}{p}}F\|_{L_t^2 L_x^p}+\|r^{1-\frac{2}{p}} \sn
F\|_{L_t^2 L_x^p}+\|r^{1-\frac{2}{p}} \nab_L F\|_{L_t^2 L_x^p}.
\end{equation*}
We denote by
$$
\np[\ckk\pi]=\np[\slp]+\np[\hn g],
$$
where $\hn g$ in the last term denotes the scalar components of $\hn g$ relative to
arbitrary time independent coordinate frame in $\Sigma$.
When $p=2$, $\N_{1,2}[F]=\N_1[F]$. We have from Proposition \ref{fluxg} together with
$\N_1[\slp]\le C$ in Theorem \ref{recalll2} that
\begin{equation}\label{pi.31}
\N_1[\ckk \pi](C_u)\le C.
\end{equation}

We now state the main result of this subsection.

\begin{proposition}\label{fluxpb}
Let $f$ be the components of $k$ and $\hn g$. Then on $C_u$ there hold
\begin{align}
&\|r^{1-\frac{2}{p}}(\sn_L f, \sn f)\|_{L_t^2 L_x^p(C_u)}\le C, \label{pflux}\\
&\np[\ckk \pi]\le C, \label{npflux}\\
&\|r^{1-\frac{2}{p}} (\a, \b, \rho, \sigma, \b, \udb)\|_{L_t^2
L_x^{p}(C_u)}\le C,\label{cflux}
\end{align}
where $0< 1-\frac{2}{p}< s-2.$
\end{proposition}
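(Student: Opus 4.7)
The plan is to prove (\ref{pflux}) first and then extract (\ref{npflux}) and (\ref{cflux}) from it together with the flux and $\N_1$-bounds already established in Proposition \ref{fluxg} and Theorem \ref{recalll2}. For (\ref{pflux}), let $f$ denote a scalar component of $k$ or $\hn g$; by Lemma \ref{trans2} this is equivalent to working with the tensorial derivatives modulo controllable lower-order quadratic terms. I decompose $f = P_{\le 1} f + \sum_{\mu > 1} P_\mu f$. For each dyadic block $P_\mu f$, apply the sphere-Sobolev inequality (\ref{sob.12}) to the $S_t$-tangent one-form $\sn P_\mu f$, and bound $\|r \sn^2 P_\mu f\|_{L^2(S_t)}$ by transferring the finite-band property of $P_\mu$ from $\Sigma_t$ to $S_t$ via the trace inequalities (\ref{trc1})--(\ref{fq}), to obtain the pointwise-in-$t$ estimate
\begin{equation*}
r^{1-2/p}\|\sn P_\mu f\|_{L^p(S_t)}\les (r\mu)^{1-2/p}\|\sn P_\mu f\|_{L^2(S_t)}+\|\sn P_\mu f\|_{L^2(S_t)}.
\end{equation*}
Integrating in $L_t^2$ along $C_u$ replaces the right-hand side by $\mu^{1-2/p}\F^{1/2}[P_\mu f]$ up to an innocuous $T$-factor. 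By Cauchy--Schwarz in $\mu$,
\begin{equation*}
\sum_{\mu>1}\mu^{1-2/p}\F^{1/2}[P_\mu f]
\le \Bigl(\sum_\mu\mu^{2\ep}\F[P_\mu f]\Bigr)^{1/2}\Bigl(\sum_\mu\mu^{-2(\ep-(1-2/p))}\Bigr)^{1/2},
\end{equation*}
where the first factor is bounded by Proposition \ref{fluxg} for any $0<\ep\le s-2$, and the second is finite provided $\ep>1-2/p$; the existence of such an $\ep$ is precisely the content of the hypothesis $1-2/p<s-2$. The argument for $\sn_L f=Lf$ (with $f$ scalar) is identical, the low-frequency block is handled by Bernstein together with $\N_1$-control, and Lemma \ref{trans2} lifts the estimate back to tensorial $\sn_L$ and $\sn$ derivatives.

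To prove (\ref{npflux}), write $\np[\ckk\pi]=\np[\slp]+\np[\hn g]$. The $\sn$ and $\sn_L$ components of $\np$ reduce, via Lemma \ref{trans2}, to (\ref{pflux}) applied to the scalar components of $k$ and $\hn g$, together with (\ref{sobinf}) to handle the $\hn\log n$ terms hidden in $\slp$. For the zeroth-order piece $\|r^{-2/p}F\|_{L_t^2L_x^p(C_u)}$, the comparison $v_t\approx r^2$ in (\ref{cmps1}) identifies this norm with $\|F\|_{L_t^2L_\omega^p}$ up to constants, which by the null-cone Sobolev inequality (\ref{sob.2}) is controlled by $\N_1[F]$; the latter is uniformly bounded for $F=\slp$ and $F=\hn g$ through (\ref{pi.31}) and the $\C_0$-bound (\ref{calc0}) of Theorem \ref{recalll2}.

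For (\ref{cflux}), I invoke the Gauss--Codazzi form of the vacuum Einstein constraints, which give the schematic identities $E=\Ric+k\star k$ and $H=\curl k$; combined with (\ref{ricid}) for the spatial Ricci tensor in CMCSH gauge, this yields $R_0=\hn^2 g+\hn k+(\hn g)^2+k^2+\hR$ up to algebraic contractions with null-frame vectors. Tangential second derivatives $\sn\hn g$ and $\sn k$ are covered by (\ref{pflux}); the $\hn^2 n$ part of $\Ric$ falls under (\ref{sobinff}); the transverse derivatives are split through $N=L-\bT$, so that $\nab_L$ comes from the flux (again (\ref{pflux})) while $\nab_\bT$ is absorbed by the time-derivative bound (\ref{eg}) together with (\ref{q1})--(\ref{q2}); the quadratic remainders $k^2$ and $(\hn g)^2$ are dispatched by H\"older against $L_t^2L_x^\infty$ bounds from (\ref{BA2}) and (\ref{cor4}).

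The essential obstacle sits in the dyadic step of (\ref{pflux}): the Bernstein-type loss $\mu^{1-2/p}$ must be dominated by the flux gain $\mu^{-\ep}$ supplied by Proposition \ref{fluxg}, which is exactly where the threshold $s>2$ becomes sharp, since $\ep<s-2$ and summability forces $\ep>1-2/p$. A secondary but nontrivial point is the legitimacy of the transferred finite-band property of the spatial projection $P_\mu$ on the two-surface $S_t$, which I intend to secure by using (\ref{fq}) in tandem with the classical Bernstein bound on $\Sigma_t$, together with a careful accounting of the lower-order terms generated by the second fundamental form of $S_t$ inside $\Sigma_t$.
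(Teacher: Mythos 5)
Your treatment of (\ref{pflux}) and (\ref{npflux}) is essentially the paper's argument. The paper factors the Sobolev step through a second, \emph{angular} Littlewood--Paley decomposition $\slashed{P}_\mu$ on the spheres $S_{t,u}$ and then compares angular to spatial frequencies; you instead apply (\ref{sob.12}) directly to $\sn P_\mu f$ for each spatial dyadic block. Either way the whole weight of the argument rests on the same commutation fact (the paper's estimate $\|r\sn\sn P_\ell f\|_{L^2}+\|r\sn\D_t P_\ell f\|_{L^2}\les \ell\,(\F^{1/2}[\ti P_\ell f]+\|\ti P_\ell f\|_{H^1})$): writing $\sn=\Pi\hn$, using the symbolic finite-band identity $\hn P_\ell=\ell\,\ti P_\ell$, and controlling the second-fundamental-form terms by the trace inequalities and (\ref{cor4}). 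You correctly flag this as the step to be secured, and your Cauchy--Schwarz summation $\ep>1-2/p$, $\ep\le s-2$ is the same $l^1$-versus-$l^2$ bookkeeping as the paper's $\|\mu^{1-2/p}P_\mu f\|_{l_\mu^1 H^1}\le\|f\|_{H^{1+\ep}}$. Your route for (\ref{npflux}) via Lemma \ref{trans2}, (\ref{sobinf}) and the $\N_1$ bounds also matches the paper (which uses the identity (\ref{trans1})).

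The proof of (\ref{cflux}), however, has a genuine gap. You propose to expand $R_0$ by Gauss--Codazzi as $\hn^2g+\hn k+\cdots$ and to split the transverse derivative through $N=L-\bT$, absorbing $\nab_\bT$ by (\ref{eg}) and (\ref{q1})--(\ref{q2}). This cannot close at $H^{2+\ep}$ regularity. The norm in (\ref{cflux}) is $L_t^2L^p(S_{t,u})$ with $p>2$, and the only derivatives of $\hn g,k$ controlled in that norm are the cone-tangential ones $\sn f$ and $\sn_Lf$ from (\ref{pflux}); the transverse derivative $\nab_{\Lb}f$ (equivalently $\nab_Nf$ modulo $\nab_Lf$) and the temporal derivative $\nab_\bT f$ are not. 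Indeed (\ref{eg}) is only an $L^2(\Sigma_t)$ bound on $\p_t\hn g$, and the trace inequality (\ref{trc1}) needs a full $H^1(\Sigma_t)$ bound to reach even $L^4(S_{t,u})$; getting $\sup_u\|\cdot\|_{L^{2+}(S_{t,u})}$ for a full second derivative of $g$ or a first derivative of $k$ would require roughly $H^{3/2+}$ control of $\hn g,k$, i.e.\ $H^{5/2+}$ data --- exactly the obstruction the paper discusses at length. Worse, in CMCSH gauge the naive expansion also produces $\bd_\bT Y$, which is not controlled in any useful norm. This is precisely why the paper proves Proposition \ref{dcmp2} (Appendix I): each null component $\a,\b,\rho,\sigma,\udb$ is rewritten as $\sn_*\slp+g\c\sn(\hn g)+\er$, i.e.\ purely cone-tangential derivatives of frame components $\cpi$ of $(g\c\hn g,k)$ plus quadratic errors, with $\bd_\bT Y$ excluded by construction. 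That algebraic decomposition, not Gauss--Codazzi alone, is the missing ingredient; once it is in hand, (\ref{cflux}) follows from (\ref{npflux}) and Lemma \ref{error} as in the paper.
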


As the first step, in what follows we build a connection between the dyadic flux in
Proposition \ref{fluxg} and the ones that will be used to derive (\ref{pflux}).

\begin{proposition}
Let $\slP_\mu$ be the classical Littlewood-Paley
decomposition on 2-dimensional slice $S_{t,u}$ on $C_u$. Let $\D_t=an\frac{d}{ds}$
along each null geodesic $\Ga_\omega$ on $C_u$. Then for any $0<\ep<1$ and any scalar function $f$ there hold
\begin{align}
\sum_{\mu>1}\|\mu^{\ep}\slP_\mu \sn f\|_{L^2(C_u)}^2
&\les \sum_{\ell>1}\ell^{2\ep}\left(\F[P_\ell f](C_u)+\|P_{\ell} f\|_{H^1(\Sigma_{T})}^2\right)\nn\\
&\quad \, +\F[f]+\|f\|^2_{H^1(\Sigma_{T})},\label{com1}\\
\sum_{\mu>1}\|\mu^{\ep}\slP_\mu \D_t f\|_{L^2(C_u)}^2
&\les \sum_{\ell>1}\ell^{2\ep}\left(\F[P_\ell f](C_u)+\|P_{\ell}
f\|_{H^1(\Sigma_{T})}^2\right)\nn\\
& \quad \, +\F[f]+\|f\|^2_{H^1(\Sigma_{T})}.\label{com2}
\end{align}
\end{proposition}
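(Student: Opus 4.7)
The plan is to decompose $f = P_{\le 1}f + \sum_{\ell > 1} P_\ell f$ using the three-dimensional Littlewood--Paley projections on $\Sigma_T$, apply $\slP_\mu\sn$ (resp.\ $\slP_\mu \D_t$) to each dyadic piece separately, and then reassemble. The contribution of the low-frequency piece $P_{\le 1}f$ will be absorbed into the terms $\F[f]+\|f\|_{H^1(\Sigma_T)}^2$ by the almost-orthogonality of $\slP_\mu$ on each slice $S_{t,u}$ together with the finite-band property on $S_{t,u}$. For each high-frequency piece $P_\ell f$ I would estimate $\|\slP_\mu \sn P_\ell f\|_{L^2(C_u)}$ by splitting into the regimes $\mu\lesssim\ell$ and $\mu\gg\ell$.

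In the regime $\mu\lesssim\ell$, one simply uses the $L^2$-boundedness of $\slP_\mu$ on each $S_{t,u}$ to pass to $\|\sn P_\ell f\|_{L^2(C_u)}\le \F[P_\ell f]^{1/2}$, and then notes that $\sum_{\mu\lesssim \ell}\mu^{2\ep}\lesssim \ell^{2\ep}$, so the contribution is bounded by $\ell^{2\ep}\F[P_\ell f]$, which is exactly the desired right-hand side after summing in $\ell$. In the regime $\mu\gg\ell$, the gain has to come from the two-dimensional Bernstein/finite-band inequality $\|\slP_\mu G\|_{L^2(S_{t,u})}\lesssim \mu^{-1}\|\sn G\|_{L^2(S_{t,u})}$ applied to $G=\sn P_\ell f$. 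Integrating in $t$ and applying the trace inequality (\ref{trc1}) on $\Sigma_T$, the result is controlled by $\mu^{-1}\|\sn^2 P_\ell f\|_{L^2(C_u)}$, which after commuting $\sn$ past the projection onto $S_{t,u}$ (producing Ricci-coefficient errors controlled by (\ref{pi.31}) and (\ref{calc0})) is bounded by $\ell \|P_\ell f\|_{H^1(\Sigma_T)}$ using the $3$D finite-band property. Summing $\mu^{2\ep}\mu^{-2}\ell^{2}$ over $\mu>\ell$ yields a factor $\ell^{2\ep}$ and therefore the bound $\ell^{2\ep}\|P_\ell f\|_{H^1(\Sigma_T)}^2$.

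With the dyadic pieces controlled, a Cauchy--Schwarz argument with weights $(\min(\mu,\ell)/\max(\mu,\ell))^\delta$ for small $\delta>0$ handles the off-diagonal terms when re-assembling via
\[
\slP_\mu\sn f=\sum_{\ell>1}\slP_\mu\sn P_\ell f + \slP_\mu\sn P_{\le 1}f,
\]
and then squaring, weighting by $\mu^{2\ep}$, and summing in $\mu$. This yields (\ref{com1}). The estimate (\ref{com2}) is treated in parallel: since $\D_t=anL$ up to constants, in the $\mu\lesssim\ell$ regime one uses $\|\D_t P_\ell f\|_{L^2(C_u)}\lesssim \|L P_\ell f\|_{L^2(C_u)}\le \F[P_\ell f]^{1/2}$ directly, while in the $\mu\gg\ell$ regime one applies the two-dimensional Bernstein to $G=\D_t P_\ell f$, at which point the angular derivative $\sn G$ is handled by commuting $[\sn,\D_t]$, which introduces Ricci-coefficient terms already controlled by (\ref{calc0})--(\ref{mt4}), reducing matters to the same $\ell \|P_\ell f\|_{H^1(\Sigma_T)}$ bound as before.

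The main obstacle is the high-angular regime $\mu\gg\ell$: passing from a purely angular Bernstein gain on each slice to a three-dimensional norm on $\Sigma_T$ requires a careful trace step, and controlling $\sn^2 P_\ell f$ (or $\sn\D_t P_\ell f$) on $S_{t,u}$ by $\|P_\ell f\|_{H^1(\Sigma_T)}$ forces one to commute angular derivatives past the tangential projection and past $\D_t$, picking up Ricci coefficient terms in $\slashed{\pi}$, $A$, and $\chi$. These must be handled uniformly in $\ell$ using (\ref{pi.31}), which is why the flux quantities $\F[P_\ell f]$ on the right-hand side must be weighted dyadically to absorb the errors without losing the $\ep$-gain.
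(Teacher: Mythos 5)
Your proposal follows essentially the same approach as the paper: decompose $f$ into $3$D dyadic pieces, split into the regimes $\ell>\mu$ (use $L^2$-boundedness of $\slP_\mu$) and $\ell\le\mu$ (use the $2$D finite-band property to gain $\mu^{-1}$), and control $\|r\sn\sn P_\ell f\|_{L^2(C_u)}$ and $\|r\sn\D_t P_\ell f\|_{L^2(C_u)}$ by commuting $\sn$ past the tangential projection $\Pi$ and past $\D_t$. One small imprecision: in the $\mu\gg\ell$ regime you state the intermediate bound as $\ell\|P_\ell f\|_{H^1(\Sigma_T)}$, but the correct bound, which is the paper's (\ref{d4lst}), is $\ell\big(\F^{1/2}[\ti P_\ell f]+\|\ti P_\ell f\|_{H^1(\Sigma_T)}\big)$ — the flux term is genuinely needed here, since after the commutation the leading terms are $\|\sn \ti P_\ell f\|_{L^2(C_u)}$ and $\|\D_t\ti P_\ell f\|_{L^2(C_u)}$, which are flux quantities over the whole cone, and the lower-order term $\|r^{-1}\ti P_\ell f\|_{L^2(C_u)}$ requires the transport/trace inequality (\ref{trr1}), which also produces a flux contribution. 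You do include the flux in the final statement, so the overall plan is sound.
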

\begin{proof}
We first prove (\ref{com1}) and (\ref{com2}) by assuming the following inequality, with $\ell>1$ being dyadic
\begin{equation}\label{d4lst}
\|r\sn  \sn P_\ell
f\|_{L^2(C_u)}+\|r\sn \D_t P_\ell f\|_{L^2(C_u)}\les \ell \left(\F^\f12[\ti
P_{\ell}f]+\|\ti P_\ell f\|_{H^1(\Sigma_{T})}\right).
\end{equation}
We decompose $f=\sum_{l>1} P_\ell f+P_{\le 1} f$. First we have
\begin{align*}
\mu^{\ep}\left\|\slP_\mu\sn\sum_{\ell>\mu} P_\ell f \right\|_{L^2(C_u)}
&\les \sum_{\ell>\mu} \left(\frac{\mu}{\ell}\right)^{\ep}\ell^{\ep}\|\sn
P_\ell f\|_{L^2(C_u)}.
\end{align*}
Summing over $\mu>1$, we obtain
\begin{equation}\label{lowh}
\sum_{\mu>1}\mu^{2\ep} \left\|\slP_\mu\sn\sum_{\ell>\mu} P_\ell f \right\|_{L^2(C_u)}^2
\les\sum_{\ell} \ell^{2\ep}\|\sn P_\ell f\|_{L^2(C_u)}^2.
\end{equation}
Now we consider  the part $\sum_{1<\ell<\mu}P_\ell f$ in the
decomposition of $f$. The term $P_{\le 1} f$ can be treated
similarly. By the finite band property of $\slP_\mu$, we have
\begin{align*}
\|\slP_\mu\sn P_\ell f\|_{L^2(C_u)}&\les \mu^{-1}\|r\sn  \sn P_\ell
f\|_{L^2(C_u)}.
\end{align*}
In view of (\ref{d4lst}), it follows that
\begin{equation*}
\|\slP_\mu\sn P_\ell f\|_{L^2(C_u)}\les \mu^{-1}\ell\left(\F^\f12[\ti
P_{\ell}f]+\|\ti P_{\ell} f\|_{H^1(\Sigma_{t_M})}\right)
\end{equation*}
Hence  we have
\begin{align*}
\mu^{\ep} \left\|\slP_\mu\sn\sum_{\ell\le \mu} P_\ell f \right\|_{L^2(C_u)}
&\les\sum_{\ell\le\mu} \left(\frac{\ell}{\mu}\right)^{1-\ep}\ell^{\ep}\left(\F^\f12[\ti
P_\ell f]+\|\ti P_{\ell} f\|_{H^1(\Sigma_{t_M})}\right).
\end{align*}
Taking  $l_\mu^2$  and also combining with (\ref{lowh})  gives (\ref{com1}).

To see (\ref{com2}),  we first derive by the basic property of $\sl{P}_\mu$ that
\begin{equation*}
\mu^{\ep} \left\|\slP_\mu\D_t\sum_{\ell>\mu} P_\ell f \right\|_{L^2(C_u)}
\les\sum_{\ell>1} \ell^{\ep} \left(\frac{\mu}{\ell}\right)^\ep\|\D_t P_\ell
f\|_{L^2(C_u)}.
\end{equation*}
For the case $\ell\le \mu$, we obtain by the finite band property of $\slP_\mu$ and (\ref{d4lst}) that
\begin{align*}
\mu^{\ep} \left\|\slP_\mu\D_t\sum_{1<\ell\le \mu} P_\ell f \right\|_{L^2(C_u)}
& \les\sum_{1<\ell\le \mu}\mu^{\ep-1}\|r\sn\D_t P_\ell f\|_{L^2(C_u)}\nn\\
&\les \sum_{1<\ell\le \mu} \left(\frac{\ell}{\mu}\right)^{1-\ep}\ell^{\ep}\left(\F^\f12[\ti P_{\ell}
f](C_u)+\|\ti P_\ell f\|_{H^1(\Sigma_{t_M})}\right).\label{hlow2}
\end{align*}
Taking $l_\mu^2$ for $\mu>1$, we can obtain (\ref{com2}).

We next prove (\ref{d4lst}). Recall that  $\Pi_{m n}=g_{mn}-N_m N_n$ and
$\sn_m=\Pi_{m}^{n'}\hn_{n'}$. Note that for functions
$f$ on $C_u$ we have
\begin{align*}
\sn\sn P_\ell f&=\sn(\Pi_m^n\hn_n P_\ell f)=\sn(\delta_m^n-N_m\c
N^n)\hn P_\ell f+\Pi_m^n \sn\hn P_\ell f\\
&=-\sn N_m\c N^n \hn_n P_l f-N_m \sn N^n \hn_n P_l f+\Pi_m^n \sn\hn P_\ell f.
\end{align*}
Therefore
\begin{equation*}
|\sn\sn P_\ell f|_{\ga}\le \ell r^{-1}\left(|(\tr\theta+\hat \theta)\c \ti P_{\ell} f|_{\ga}
+|\Pi_m^n \sn \ti P_{\ell}f|_\ga\right),
\end{equation*}
where $\ti P$ denote a classical Littlewood-Paley projection on ${\Bbb R}^3$ induced by a different symbol.
By the commutation formula $[\sn,\D_t] f=an\chi\c \sn f$ for scalar functions $f$, we obtain
\begin{equation*}
\sn \D_t P_\ell f=[\sn, \D_t]P_\ell f+\D_t \sn P_{\ell} f=an \chi
\sn P_\ell f+\D_t \sn P_{\ell}f.
\end{equation*}
For the last term on the right, we have
\begin{align*}
\D_t \sn P_\ell f&=\D_t (\Pi_m^n \hn_n P_{\ell}
f)=\D_t(\Pi_m^n)\hn_n
P_{\ell} f+\Pi_m^n \D_t\hn_n P_\ell f\\
&=\D_t(\delta_m^n-N_m N^n)\hn_n P_\ell f+\ell\Pi_m^n \D_t \ti P_\ell f.
\end{align*}
Thus, in view of $\bd_{L'} N^\rho=a^{-1}L'(a)\bT^\rho-a^{-1}\zb_A e_A^\rho$ and
$\D_t=an\bd_{L'}$, we have
\begin{equation*}
|\sn\D_t P_\ell f|_{\ga}\les \ell r^{-1} \left(|\zb\ti P_\ell
f|_{\ga}+|\D_t\ti P_\ell f|_{\ga}+|(\tr\chi+\hat \chi)\c \ti P_{\ell} f|_{\ga}\right).
\end{equation*}
With the help of the symbolic identity $\tr\theta=\tr\chi+k$ and $\hat\theta=\chih+k$, we obtain
\begin{align}
\|r\sn^2& P_\ell f\|_{L^2(C_u)}+\|r\sn\D_t P_\ell f\|_{L^2(C_u)} \nn\\
&\les \ell \left(\|\sn\ti P_{\ell}f\|_{L^2(C_u)}+\|\D_t \ti P_\ell f\|_{L^2(C_u)}+\|(\tr\chi+\chih+\ckk\pi)\ti
P_\ell f\|_{L^2(C_u)}\right)\nn\\
&\les \ell \left(\F^\f12[\ti P_\ell f]+\|r^{-1}\ti P_\ell f\|_{L^2(C_u)}
+\|r^{-\f12} \ti P_\ell f\|_{L_t^2 L_x^4(C_u)}\right)\label{11.20.1}
\end{align}
where we employed the H\"{o}lder inequality, (\ref{cor4}) and $r \tr\chi \approx 1$.
By the Sobolev embedding, we deduce that
\begin{align*}
\|r^{-\f12}\ti P_{\ell} f\|_{L_t^2 L_x^4(C_u)}&\les \|\sn \ti P_\ell f\|_{L^2(C_u)}^{\f12}\|r^{-1} \ti P_\ell f\|_{L^2(C_u)}^{\f12}+\|r^{-1}\ti P_{\ell}f\|_{L^2(C_u)}.
\end{align*}
Applying (\ref{trr1}) to $F=\ti P_{\ell} f$ gives
\begin{equation}\label{lst1}
\|r^{-1}\ti P_{\ell} f\|_{L^2(C_u)}\les \|\sn_L \ti
P_{\ell}f\|_{L^2(C_u)}+\|\ti P_{\ell} f\|_{H^1(\Sigma_{t_M})},
\end{equation}
Therefore
\begin{equation}\label{lst3}
 \|r^{-\f12}\ti P_{\ell} f\|_{L_t^2 L_x^4(C_u)}\les \F^\f12[\ti
P_{\ell}f](C_u)+\|\ti P_{\ell}f\|_{H^1(\Sigma_{t_M})}.
\end{equation}
In view of (\ref{11.20.1}), (\ref{lst1}) and (\ref{lst3}), we thus obtain  (\ref{d4lst}).
\end{proof}

Note that the energy estimate on the Bel-Robinson tensor does not implies the
$L^p, p>2$, type curvature flux in (\ref{cflux}).  As the second step,
we now build the connection between (\ref{npflux}) and (\ref{cflux}).

\begin{proposition}\label{dcmp2}
Let $\er$ denote the type of error terms taking the form  $(\ckk\pi+F)\c \ckk\pi+\tr\chi\ckk\pi$
with $F=\chih, \sn \log a, \hat \theta$. Relative to canonical null frame $\{e_1. e_2, L, \Lb\}$,
there hold the following results for null components of curvature:
\begin{enumerate}
\item[(a)] There exist  scalar functions $\cpi$,   $S$-tangent $2$-tensor fields
$\cpi_{AB}$  such that the components $\b, \udb$
admit the decompositions
\begin{equation*}
\b,\, \udb=\sn_A \cpi+\sn^B \cpi_{AB}+\slashed{\curl}\cpi+\er.
\end{equation*}
\item[(b)] There exist two $S$-tangent vector fields $\cpi^{(1)}_A$ and $\cpi^{(2)}_A$ such that
\begin{equation*}
\rho=\div \cpi^{(1)}+\er,\quad
\sigma=\curl \cpi^{(2)}+\er.
\end{equation*}
\item[(c)] There holds the decomposition $ \a=\sn_L \eh+\sn \slashed{\pi}+\er$.
\end{enumerate}
where $\cpi, \cpi^{(1)}$, and $\cpi^{(2)}$ denote the terms formed by the sums of
$N^\mu(g\c \hn g, k)_{\mu\cdots}$ and $\Pi^{\mu'}_{\mu}(g\c \hn g, k)_{\mu'\cdots}$.
\end{proposition}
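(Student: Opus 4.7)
\medskip

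The plan is to derive each decomposition directly from the Gauss--Codazzi equations together with the vacuum condition $\bf{Ric}(\bg)=0$ and the CMCSH gauge identity \eqref{dell2}. The crucial structural observation is that none of these equations involve $\bd_\bT Y$ at the principal order: Codazzi expresses $\bR_{0ijk}$ as pure spatial covariant derivatives of $k$; Gauss expresses $\bR_{ijkl}$ through the spatial Ricci tensor $R_{ij}$ plus $k\star k$; and in CMCSH gauge \eqref{dell2} further writes $R_{ij}=\hn_m U^m_{ij}+\ldots$, i.e.\ as a spatial divergence of a quantity quadratic in $\hn g$ plus $\hn g\cdot\hn g$ terms. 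All components of $U^i_{jl}$ are made of $\hn g$, so any contraction with $N^\mu$ or $\Pi^{\mu'}_\mu$ yields precisely the $\cpi$-type quantities described in the statement, and no time derivative of $Y$ ever enters.

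For part (a), I would begin by expanding $\beta_A=\f12\bR(e_A,L,\Lb,L)$ and $\udb_A=\f12\bR(e_A,\Lb,\Lb,L)$ into the spacetime null frame using $L=\bT+N$, $\Lb=\bT-N$. Each bracket isolates components of $\bR_{0ijk}$, i.e.\ $\bR_{0\mu\nu\rho}$ contracted with $N,e_A$, which by the Codazzi equation equals $\hn_{[j}k_{k]i}$ up to the $k\ast k$ quadratic Riemannian identity. Splitting the resulting $\hn k$ expression into $\sn$-derivatives of the components $N^\mu k_{\mu\nu}$, $\Pi^{\mu'}_\mu k_{\mu'\nu}$, the $(\sn_A\,\cpi)$ and $(\sn^B \cpi_{AB})$ and $\slashed{\curl}\,\cpi$ pieces arise from the algebraic decomposition of the symmetric tensor $\sn k$ on $S_{t,u}$ (gradient, divergence, curl parts), while the commutators between $\hn$ and the projectors $\Pi$, together with $\bd_\bT N$-terms, generate exactly the error class $(\ckk\pi+F)\cdot\ckk\pi+\tr\chi\,\ckk\pi$ since $\bd N$ involves $\chi,\zb,\sn\log\bb$ and $k$.

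For part (b), write $\rho=\f14\bR(\Lb,L,\Lb,L)=E_{NN}+\text{stuff}$ and $\sigma=\f14{}^\star\bR(\Lb,L,\Lb,L)=H_{NN}+\text{stuff}$, where $E_{ij}=\bR_{0i0j}$, $H_{ij}={}^\star\bR_{0i0j}$. Use the Einstein-vacuum Gauss equation $E_{ij}=R_{ij}+\Tr k\,k_{ij}-k_{im}k_j^m$ and the Codazzi consequence $H_{ij}=\curl k_{ij}$. For $\rho$, substitute the gauge identity \eqref{dell2} to obtain $R_{NN}=\hn_m(N^i N^j U^m_{ij})+\text{l.o.t.}$, modulo commutator terms where $\hn$ falls on $N^iN^j$ which are absorbed into errors (they produce $\chih,\zeta,\zb,\sn\log a,k$ factors). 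Casting the leading term as $\div\cpi^{(1)}$ with $\cpi^{(1)}_A=\Pi^{\mu'}_A(g\cdot\hn g)_{\mu'\,\cdots}$ proves (b) for $\rho$. For $\sigma$, the curl of $k$ directly gives the $\curl\cpi^{(2)}$ structure with $\cpi^{(2)}$ a contraction of $k$ with $N$ and $\Pi$.

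For part (c), expand $\alpha_{AB}=\bR(L,e_A,L,e_B)$ in the same way; the principal part is $E_{AB}=R_{AB}+k\star k$, and I would again use \eqref{dell2} to write $R_{AB}$ through $\hn U$ and then further replace one $\hn g$-component by $\nabla_L k$ using the Codazzi relation $\bR_{L A L B}=\sn_L k_{AB}-\sn_A k_{LB}+(\text{Ricci coeff})\cdot k$, producing the $\sn_L\eh$ principal term (since $\eh$ is the traceless angular part of $k$) and the $\sn\slp$ principal term from the $\sn k$ remainder. All further terms, produced by projector commutators and quadratic products of $k$ and $\hn g$, fall into the admissible error class. The one genuine obstacle in all three parts is the careful book-keeping showing that after isolating the $\cpi$ divergence/gradient/curl structures, every residue really is of the form $(\ckk\pi+F)\cdot\ckk\pi+\tr\chi\,\ckk\pi$ with $F\in\{\chih,\sn\log a,\hat\theta\}$; this amounts to systematically rewriting $\bd N$, $\bd L$, $\bd\Lb$ via the frame equations and verifying that no $\bd_\bT Y$ is hidden in $N^\mu\bd_\mu$ when the contraction is with \emph{spatial} Christoffel-difference tensors $U^i_{jl}$ or spatial Codazzi $\hn k$.
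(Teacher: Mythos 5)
Your overall strategy --- combine the Gauss and Codazzi equations with the CMCSH Ricci identity \eqref{dell2} --- is the right one, and you correctly identify that no $\bd_\bT Y$ can arise because $U^i_{jl}$ and $\hn k$ are purely spatial. However, the proposal has a genuine gap precisely where the paper invests most of its effort.

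In part (a), the proposal asserts that the null frame expansion of $\b,\udb$ ``isolates components of $\bR_{0ijk}$'' and then attributes the whole decomposition to Codazzi plus the algebraic split of $\sn k$. This is not quite right: expanding $\b\pm\udb$ in the frame $\{L,\Lb,e_A\}$ produces both Codazzi components $\bR_{0iNA}$ (which are $\hn k$) \emph{and} the Gauss component $E_{AN}=\bR_{0A0N}=R_{AN}+k\c k$. The latter is a second derivative of $g$, not of $k$, and it is where the real work lies. The paper handles it through Lemma \ref{L12.29.1}, which writes
$$R_{AN}=\slashed{\curl}\cpi_1+\sn_A\cpi_2+\er$$
via a nontrivial antisymmetrization argument: after inserting \eqref{dell2} one is left with the antisymmetric combination $A_{iab}=\hn_a g_{bi}-\hn_b g_{ai}$, and the identity \eqref{adcomp} --- expressing $A_{iab}$ through the 2-tensor $\Omega_{ij}$ and the 1-form $\varrho_k$ via the volume form $\ep_a{}^{cd}$ --- is what produces the $\slashed{\curl}$ structure on $S_{t,u}$ with a $\cpi$ of the admissible type. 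Nothing in your proposal recovers that step; without it, you do not know that the normal--normal piece $N^k e_A^j\hn_m U^m_{jk}$ actually organizes itself as a tangential curl plus a tangential gradient of a $\cpi$-type quantity, rather than as a raw, unstructured $\hn^2 g$.

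The route you sketch for $\rho$ in part (b) is essentially the paper's (write $R_{ijkl}$ through $\hn U$ and project), and for part (c) the principal $\sn_L\eh$ term can indeed be obtained from a Codazzi-type relation in the spirit of what you describe, equivalently from the $\bd_4\eh$ structure equation the paper cites. But the Hodge-like reorganization of $\hn\hn g$ into $\sn\cpi+\slashed{\curl}\cpi$, which is the genuinely delicate part and is what guarantees the absence of raw second derivatives of $g$ in (a) and of $\bd_\bT Y$ everywhere, is precisely the step your proposal leaves as ``careful book-keeping.'' That step does not follow from generic commutator considerations; it requires the antisymmetrization identity \eqref{adcomp} or an equivalent observation.
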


As a key feature required by the proof of (\ref{ric3}) (see Section \ref{sec8}),
the higher order terms in the decompositions for $\b$, $\rho$ and $\sigma$ can only
contain  $\sn \ti \pi$. Since time derivative of shift has to be excluded in our
decompositions,  Proposition \ref{dcmp2} does not follow from the approach given
in \cite[Section 4]{KR2}.  We will derive in Appendix I the decompositions in a more
invariant fashion.

\begin{lemma}\label{error}
Let $\er$ be the error term in Proposition \ref{dcmp2}.
Let $0\le 1-2/p<s-2$.  There holds
\begin{equation*}
\|r \c\er\|_{L_t^2 L_\omega^{p}(C_u)}\les
(T-u)^{\f12}+1.
\end{equation*}
\end{lemma}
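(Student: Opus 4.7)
The plan is to unpack the pointwise bound
\[
|\er|\;\lesssim\;|\ckk\pi|^{2} \;+\;(|\chih|+|\sn\log a|+|\hat\theta|)\,|\ckk\pi|\;+\;|\tr\chi|\,|\ckk\pi|
\]
into three types of quadratic products and estimate each by a H\"older--Sobolev argument on the cone. A preliminary observation is that each Ricci-coefficient factor $F\in\{\chih,\sn\log a,\hat\theta\}$ is either in the explicit list of (\ref{cor4}) or, via the identities $\sn\log a=\sn\log\bb=\zeta-k_{AN}$ and $\theta_{AB}=\chi_{AB}+k_{AB}$ (so $|\hat\theta|\lesssim|\chih|+|k|$), is pointwise dominated by the sum of $|\chih|$ and $|\ckk\pi|$. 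In particular each such $F$ inherits the key bound $\|r^{1/2}F\|_{L_\omega^4 L_t^\infty}\lesssim \N_1[F]\lesssim 1$ from (\ref{cor4}), Theorem \ref{recalll2}, and (\ref{pi.31}).

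The simplest term is $r\,\tr\chi\cdot\ckk\pi$: by (\ref{comp2}) we have $r\tr\chi\approx 1$, reducing the problem to $\|\ckk\pi\|_{L_t^2 L_\omega^p}$, which (\ref{sob.2}) together with (\ref{pi.31}) bounds by a universal constant. This gives the ``$+1$'' in the target inequality.

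For the remaining quadratic products $G\cdot H$ in which at least one factor is $\ckk\pi$ (the other being either $\ckk\pi$ or one of the $F$'s), I would H\"older on each $S_{t,u}$ via
\begin{equation*}
\|r\,GH\|_{L_\omega^p}\;\le\;\|r^{1/2}G\|_{L_\omega^4}\,\|r^{1/2}H\|_{L_\omega^{p_2}},\qquad \tfrac{1}{p}=\tfrac{1}{4}+\tfrac{1}{p_2},
\end{equation*}
assigning the larger exponent $p_2=4p/(4-p)>4$ to the $\ckk\pi$ factor, and then H\"older in $t$:
\begin{equation*}
\|r\,GH\|_{L_t^2 L_\omega^p}\;\le\;\|r^{1/2}G\|_{L_t^\infty L_\omega^4}\,\|r^{1/2}H\|_{L_t^2 L_\omega^{p_2}}.
\end{equation*}
The first factor is controlled by Minkowski's integral inequality (valid since $4\le\infty$), which gives $\|r^{1/2}G\|_{L_t^\infty L_\omega^4}\le\|r^{1/2}G\|_{L_\omega^4 L_t^\infty}$, followed by (\ref{cor4}). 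For the second factor I would pull out $r^{1/2}\lesssim(T-u)^{1/2}$ (valid on $C_u$ by (\ref{recl1})) and apply (\ref{sob.2}) with $q=p_2>2$ to get $\|r^{1/2}H\|_{L_t^2 L_\omega^{p_2}}\lesssim (T-u)^{1/2}\N_1[H]\lesssim (T-u)^{1/2}$. Combining yields the $(T-u)^{1/2}$ part of the target bound for every product of types $\ckk\pi\cdot\ckk\pi$ and $F\cdot\ckk\pi$.

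The main technical obstacle is that $p>2$ strictly pushes $p_2$ just above $4$, which sits outside the endpoint $p=4$ of (\ref{cor4}); the H\"older split is arranged so that the large $L_\omega^{p_2}$ exponent falls on $\ckk\pi$, for which (\ref{sob.2}) supplies the $L_t^2 L_\omega^{q}$ bound for arbitrary finite $q>2$, while the Ricci factor $G$ stays at exponent $4$ where (\ref{cor4}) applies directly. This is precisely why the decompositions of Proposition \ref{dcmp2} were engineered so that $\ckk\pi$ (rather than $\chih$ or $\zeta$) is the differentiated factor in the error remainders.
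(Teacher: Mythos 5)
Your proof is correct, but it is not the paper's route. The paper's treatment of the quadratic products $\ckk\pi\cdot\ckk\pi$ and $F\cdot\ckk\pi$ (with $F\in\{\chih,\sn\log a,\hat\theta\}$) uses a simpler H\"older split, namely
$\|r\,\ckk\pi\cdot\ckk\pi\|_{L_\omega^p}\les\|\ckk\pi\|_{L_x^\infty}\|r\ckk\pi\|_{L_\omega^p}$ and
$\|r\,F\cdot\ckk\pi\|_{L_\omega^p}\les\|\ckk\pi\|_{L_x^\infty}\|rF\|_{L_\omega^p}$,
pulling the full $L_x^\infty(\Sigma_t)$ norm out on the $\ckk\pi$ factor. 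The remaining factor is kept at the modest exponent $p\le 4$, where (\ref{trc1})--(\ref{trc2}) supply $\|r\ckk\pi\|_{L_\omega^p}\les(t-u)^{1/2}\|\ckk\pi\|_{H^1(\Sigma)}$ and $\|rF\|_{L_\omega^p}\les(t-u)^{1/2}\N_1[F]$. The time integration then produces $(T-u)^{1/2}$ while $\|\ckk\pi\|_{L_t^2 L_x^\infty}$ is absorbed using the bootstrap assumption (\ref{BA2}). Your version instead distributes both factors over angular Lebesgue norms, sending $G$ to $L_\omega^4 L_t^\infty$ via (\ref{cor4}) and $\ckk\pi$ to $L_t^2 L_\omega^{p_2}$ with $p_2=4p/(4-p)>4$ via (\ref{sob.2}), extracting the $(T-u)^{1/2}$ from the pointwise factor $r^{1/2}$. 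This is a genuinely different argument: it is self-contained in the cone geometry and avoids invoking the bootstrap assumption entirely, at the cost of slightly more exponent bookkeeping. Both yield the stated $(T-u)^{1/2}+1$ bound.

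Two small remarks. First, in justifying that each $F\in\{\chih,\sn\log a,\hat\theta\}$ satisfies the (\ref{cor4})-type bound, you should also list $\zeta$ among the controlling quantities: $\sn\log a=\sn\log\bb=\zeta-k_{AN}$, so it is dominated by $|\zeta|+|\ckk\pi|$, not $|\chih|+|\ckk\pi|$ alone; the needed bound on $\zeta$ is of course part of (\ref{cor4}). Second, your closing remark misattributes the motivation: the decompositions of Proposition \ref{dcmp2} were engineered so that the \emph{main} terms $\b,\rho,\sig$ appear as $\sn\cpi+\cdots$ with $\cpi$ built from $\hn g, k$ (rather than a derivative of $\chih$ or $\zeta$). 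The error remainder $\er$ carries no derivatives; the relevant structural feature there is merely that one factor is always $\ckk\pi$, which is what makes both your H\"older assignment and the paper's $L_x^\infty$ extraction on $\ckk\pi$ viable.
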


\begin{proof}
It follows from (\ref{pi.31}), $r\tr\chi\approx 1$  and (\ref{sob.2}) that
$\|r\tr\chi \ckk\pi\|_{L_t^2 L_\omega^p(C_u)}\les \N_1[\ckk\pi]\le C$.
In view of (\ref{trc1}) and (\ref{trc2}),  we can obtain
\begin{equation*}
\left\{\begin{array} {lll}\|r\ckk\pi \c \ckk\pi\|_{L_\omega^{p}}\les
\|\ckk\pi\|_{ L_x^\infty}\|r\ckk\pi\|_{L_\omega^{p}}\les
(t-u)^{\f12} \|\ckk\pi\|_{H^1(\Sigma)}\|\ckk\pi\|_{L_x^\infty}\\
 \|rF\c\ckk\pi\|_{ L_\omega^{p}}\les (t-u)^{\f12}
\N_1[F]\|\ckk\pi\|_{ L_x^\infty}\end{array}\right.
\end{equation*}
where $F=\chih, \sn \log a, \hat\theta$. By integrating in $t$ and using (\ref{BA2}),
$\|\ti \pi\|_{H^1(\Sigma)}\le C$ and (\ref{cor4}) we can  complete the proof.
\end{proof}

\begin{proof}[Proof of Proposition \ref{fluxpb}]
First, applying (\ref{sob.12}) to $F=\slP_\mu \sn f$ gives
\begin{equation*}
\|r^{1-\frac{2}{p}} \sn f\|_{L_t^2 L_x^p(C_u)}\les
\sum_{\mu>1}(\mu^{1-\frac{2}{p}}+1)\| \slP_\mu \sn
f\|_{L^2(C_u)}+\|\sn f\|_{L^2(C_u)}
\end{equation*}
With the help of (\ref{com1}), we then obtain
\begin{equation*}
\|r^{1-\frac{2}{p}} \sn f\|_{L_t^2 L_x^p(C_u)}\les
\F^\f12[f](C_u)+\sum_{\ell>1}\ell^{1-\frac{2}{p}} \left(\F^\f12[P_\ell
f](C_u)+\|P_\ell f\|_{H^1(\Sigma_{T})}\right).
\end{equation*}
On the other hand, applying (\ref{sob.12}) to $F=\slP_\mu \D_t f$ and using (\ref{com2})
we can obtain
\begin{equation*}
\|r^{1-\frac{2}{p}} \D_t f\|_{L_t^2 L_x^p(C_u)}\les
\F^\f12[f](C_u)+\sum_{\ell>1}\ell^{1-\frac{2}{p}}\left (\F^\f12[P_\ell
f](C_u)+\|P_\ell f\|_{H^1(\Sigma_{T})}\right).
\end{equation*}
Thus, applying the above two inequalities with $f=k_{ij}, \hn g_{ij}$, and using
\begin{equation*}
\|\mu^{1-\frac{2}{p}}P_\mu f\|_{l_\mu^1H^1_\Sigma}\le
\|f\|_{H^{1+\ep}} \mbox{ with }1-\frac{2}{p}<\ep<s-2
\end{equation*}
and Proposition \ref{fluxg}, we have
\begin{align*}
 \|\mu^{1-\frac{2}{p}}r(\sn,
\nab_L)P_\mu f\|_{l_\mu^1 L_t^2 L_\omega^p(C_u)}\le C.
\end{align*}
This completes the proof of (\ref{pflux}).

We next prove (\ref{npflux}). Let $\sn_*$ denote either $\sn_A$ or $\sn_{L'}$ for $S$-tangent
tensor, and $\nab_*$ be either $\nab_L$ or $\sn_{A}$ for $\Sigma$ tangent tensor as
in (\ref{ddf1}). We recall from \cite[(8.14), (8.15)]{Wang10} that with $\pi$ being $\Sigma$ tangent tensor $\pi_0$ or $k$,
\begin{equation}\label{trans1}
\sn_*\slp=\nab_*\pi+\tr\theta\c\slp+(\hat \theta, \zb)\c \slp+(an)^{-1}=\nab_*\pi+\er.
\end{equation}
By using (\ref{pflux}) and (\ref{sobinf}),we have $\||r^{1-\frac{2}{p}}\nab_*\pi|_g\|_{L^p(C_u)}\le C,$
 which together with (\ref{trans1}) implies that,
\begin{align*}
\|r^{1-\frac{2}{p}}\sn_*\slp\|_{L_t^2L^p(C_u)}&\les \||r^{1-\frac{2}{p}}\nab_*\pi|_g\|_{L^p(C_u)}+1+(T-u)^{\f12}\le C,
\end{align*}
where to derive the last inequality we employed  Lemma \ref{error}.
Hence we obtain (\ref{npflux}).

In view of the decompositions in Proposition \ref{dcmp2}, we have $R_0=\sn_* \slp+g \c\sn(\hn g)+\er$.
By using (\ref{npflux}) and Lemma \ref{error}, we can obtain (\ref{cflux}).
\end{proof}

\subsection{$L^{p}$ estimates for Ricci coefficients}

We fix a null cone $C_u$ contained in $\D^+$ with $t=u$ at its vertex on $\Ga^+$.
By rescaling the space-time coordinate as $(t,x) \to (\frac{t-u}{d},\frac{x}{d})$ with
$d=(T-u)$, we may restrict our consideration to the time interval $[0,1]$. We denote by $\H$,
the null cone $C_u$ after change of coordinates. Then $\H=\cup_{0<t\le 1}S_t$ with $S_t$
being the intersection  of $\H$ and $\Sigma_t$. We also denote
$\H_t:=\cup_{t'\in [0,t]} S_{t'}$ for $0<t\le 1$.

We will rely on  the quantities
\begin{equation}\label{bkm1}
\Rp:=\nps[\ckk \pi]+\|rR_0\|_{L_t^2 L_\omega^{p}(\H)}
\end{equation}
and
\begin{equation}\label{cond1}
\RR:= \R(\H)+\N_1[\sl{\pi}].
\end{equation}

We will prove an $L^{p}$ version of weakly spherical property on $S_t$ in
Proposition \ref{swsph} and  the following estimates

\begin{proposition}\label{smr}
Let $M$ denote either $\mu$ or $\sn \tr\chi$.  If $\Rp+\RR$ is sufficiently small,  there holds on $\H$
\begin{equation}\label{smry1}
\nps(\Ab)+\|\Ab\|_{L_t^2
L_\omega^\infty}+\|r^{\f12}\Ab\|_{L_\omega^{2p}
L_t^\infty}+\|r^{\frac{3}{2}}M\|_{L_{\omega}^{p}
L_t^\infty}+\|rM\|_{L_t^2 L_\omega^{p}}\les \Rp+\RR.
\end{equation}
\end{proposition}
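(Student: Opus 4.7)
The plan is to prove Proposition \ref{smr} by coupling Hodge systems on the leaves $S_t$ to transport equations along the null generators of $\H$, run as a simultaneous bootstrap absorbed by the smallness of $\Rp+\RR$. The indispensable platform is the $L^p$ weakly spherical property supplied by Proposition \ref{swsph}, which yields uniform $L^p$ Hodge/Calderon--Zygmund bounds on each $(S_t,\ga_t)$. Within this framework the components of $\Ab$ split by origin: $\tr\chi$ comes from Raychaudhuri with vertex data $\tr\chi\sim 2/s$; $\chih$ and $\zeta$ from Hodge systems on $S_t$; $\chibh$ via the identity $\chib=-\chi-2k$ combined with Proposition \ref{fluxpb}; $\sn\log a$ from the transport $\D_t\log a=-an\nu$; $\nu$ as the scalar combination $\pi_{0N}+\delta+\tfrac13\Tr k$; and $\sl\pi$ directly from the flux estimate $\nps[\sl\pi]\les\Rp$ of Proposition \ref{fluxpb}.

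The first substantive step would be to close the $M$-estimates. Differentiating Raychaudhuri, respectively using the defining identity for $\mu$, produces transport equations of the schematic form
\begin{equation*}
\D_t(r^{3/2}M)= r^{3/2}\bigl(\sn_L\b\ \text{or}\ R_0\bigr)+r^{3/2}\cdot(\text{bilinear in }\Ab)+r^{3/2}\sn\sl\pi ,
\end{equation*}
which after integration along null geodesics and H\"older bound $\|r^{3/2}M\|_{L_\omega^p L_t^\infty}$ by $\|rR_0\|_{L_t^2L_\omega^p}+\nps[\sl\pi]+\|\Ab\|_{L_t^2L_\omega^\infty}\cdot\nps[\Ab]$; the analogous manipulation yields $\|rM\|_{L_t^2L_\omega^p}$. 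Both are then controlled by $\Rp+\RR$ plus a bilinear error in $\Ab$ to be absorbed at the end. Next I would run the Hodge systems for $A$: the Codazzi-type identity $\div\chih=\tfrac12\sn\tr\chi-\b+\tfrac12\tr\chi\,\zeta-\zeta\cdot\chih$, together with $\div\zeta=-\mu-\rho+\tfrac12|\chih|^2-|\zeta|^2$ and $\curl\zeta=\sigma+\tfrac12\chih\wedge\chibh$, feeds only on $M$, on $R_0$, and on bilinear terms in $\Ab$. Applying the $L^p$ Hodge inequality on each $S_t$ slicewise and integrating in $t$ produces $\nps[\chih]+\nps[\zeta]$; the $\sn_L$ components of $\nps$ are read off from the null structure equations. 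The embedding (\ref{sob.3}) with $p>2$ then delivers $\|\Ab\|_{L_t^2L_\omega^\infty}$, and the transport--Sobolev inequality (\ref{trc2}) gives $\|r^{1/2}\Ab\|_{L_\omega^{2p}L_t^\infty}$.

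The main obstacle will be twofold. First, the $L^p$ Hodge elliptic theory on each $S_t$ must be stable under the mild roughness of $\ga_t$, which forces Proposition \ref{swsph} to supply not only the pointwise comparison $\ga_t\approx r^2\gas$ but also the relevant isoperimetric/Poincar\'e constants in $L^p$ uniformly in $t$ and throughout the bootstrap. Second, the triple $(M,\chih,\zeta)$ is genuinely coupled: the transport equations for $M$ involve $\chih\cdot\chih$ and other bilinear terms in $\Ab$, while the Hodge systems for $\chih,\zeta$ feed back through $\|M\|_{L_t^2L_\omega^p}$. Closing this loop without loss forces every nonlinear error to be strictly bilinear in $\Ab$, so that the smallness of $\Rp+\RR$ (combined with the short time $T-u$ built into the rescaling to $[0,1]$) absorbs the self-interaction. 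Working directly in vacuum avoids the defected Ricci of frequency-truncated metrics and is what keeps the error terms clean enough for this $L^p$ absorption to go through.
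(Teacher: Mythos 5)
Your plan follows essentially the same route as the paper's Section \ref{ricc}: the $L^p$ weakly spherical property (Proposition \ref{swsph}, itself run as a bootstrap) underwrites the slicewise $L^p$ Hodge estimates of Lemma \ref{hdgm1}; the transport equations for $M=\sn\tr\chi$ or $\mu$ are integrated along the generators via Lemma \ref{tsp2} with the vertex data of Lemma \ref{inii}; the Codazzi/$\D_1$ systems (\ref{stc1})--(\ref{stc3}) convert $\sn\chih,\sn\zeta$ into $M$ plus $\bar R$ (this is (\ref{czm})); and the $L_\omega^\infty$ and $L_t^\infty$ norms come from (\ref{sob.3}) and (\ref{intp1}), with the loop closed by absorption using the smallness of $\RR+\Rp$.

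Two points in your schematic transport equation would, however, derail the argument if executed literally. First, no term $\sn_L\b$ (or $\div\b$) may appear in the source: derivatives of curvature are not controlled by $\Rp$ at this regularity, and the entire reason for working with the renormalized mass aspect $\mu$ rather than $\div\zeta$ directly, and with $\sn\tr\chi$ (whose equation (\ref{s4}) is curvature-free in vacuum), is that only undifferentiated $R_0$ enters, through $H_3=r^{-1}\bar R,\ \iota\,\bar R$. Second, the nonlinear error is not merely ``bilinear in $\Ab$'': the critical term is $\chih\cdot\sn\chih$ (resp. $\chih\cdot\sn\zeta$ for $\mu$), i.e. $\chih\cdot\D_0 M$, which after the Hodge substitution (\ref{czm}) reinserts $\|rM\|_{L_t^2L_\omega^p}$ into its own transport estimate; this is the one term that genuinely couples the two systems, and it is absorbed only because the factor $\|\chih\|_{L_\omega^\infty L_t^2}\les\RR$ is small. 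If the errors were really only $\Ab\cdot\Ab$, the system would decouple and no absorption would be needed at all, so your stated ``main obstacle'' is correct in spirit but is not visible in the equation you wrote down.
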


We remark the the smallness on $\RR+\Rp$ can be obtained by requiring $T-u$ sufficiently
small but depends only on universal constants.

Let us record the structure equations for the Ricci coefficients relative to $\{e_1, e_2, \Lb', L'\}$:
\begin{align}
&\frac{d\tr \chi}{ds}+\frac{1}{2}(\tr\chi)^2=-|\chih|^2 \label{s1}\\
&\frac{d}{ds}\chih+\tr\chi\chih=-\a\label{s2}\\
&\frac{d}{ds}\zeta=-\chi\c\zeta+\chi\c\zb-\b\label{s3}\\
&\frac{d}{ds}\sn \tr\chi+\frac{3}{2}\tr\chi \sn\tr\chi=-\chih\c \sn
\tr\chi-2\chih\c
\sn\chih-(\zeta+\zb)(|\chih|^2+\frac{1}{2}(\tr\chi)^2) \label{s4}\\
&\div \chih=\frac{1}{2}\sn \tr \chi +\frac{1}{2}\tr\chi\c\zeta-\chih\cdot \zeta-\beta \label{stc1}\\
&\div\zeta=-\mu-\rho+\frac{1}{2}\chih\c \chibh-|\zeta|^2+\frac{1}{2}a\delta \tr\chi-\frac{1}{3}a\Tr k \tr\chi\label{stc2}\\
&\curl\zeta=\sigma-\frac{1}{2}\chih\wedge\chibh.\label{stc3}
\end{align}

Let us record the following transport equation for $\mu'$ where $'$
will be dropped for convenience,
\begin{align}
L\mu+\tr\chi\mu &=2\chih\c \sn \zeta+(\zeta-\zb) \c (\sn
\tr\chi+\tr\chi\zeta)-\frac{1}{2}\tr\chi(\chih\c
\chibh-2\rho+2\zb\c \zeta)\nn\\&\quad\quad+2\zeta\c \chih\c \zeta
+\left(\frac{1}{4}a^2 \tr\chi+a
(-\delta+\frac{2}{3}\Tr k)\right)|\chih|^2-\frac{1}{2}a \nu
(\tr\chi)^2\label{mumu}
\end{align}


Recall the following result on initial data (see \cite{Wang09}).

\begin{lemma}\label{inii}
There holds
\begin{align*}
&V,\sn a,  r\sn \tr\chi, r^2\mu\rightarrow 0 \mbox{ as }
t\rightarrow 0,\quad \lim_{t\rightarrow 0}\|\chih,
\zeta,\zb,\nu\|_{L^\infty(S_t)}<\infty.
\end{align*}
\end{lemma}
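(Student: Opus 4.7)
The plan is to run a weighted transport argument along the outgoing null geodesics $\Gamma_\omega(s)$ from the vertex $p$, using the affine parameter $s$ as a degenerate radial coordinate. The starting observation is that near $p$ the exponential map gives $a(p)=1$ and $\gamma_{AB}(s,\omega) = s^2 \gamma^{(0)}_{AB}(\omega)+o(s^2)$ with $r\approx s$, so that $\tr\chi = 2/s + o(1/s)$ while $\chih,\zeta,\zb,\nu$ have finite values at $s=0$. These serve as initial conditions for the structure equations (\ref{s1})--(\ref{stc3}) and (\ref{mumu}).

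For the vanishing statements, I would rewrite each equation multiplied by a power of $s$ tuned to the vertex weight. For $V=\tr\chi-2/s$, equation (\ref{s1}) gives $\frac{d}{ds}(s^2 V) = -\frac{1}{2} s^2 V^2 - s^2|\chih|^2$; integrating from $s=0$ and using $\int_0^s |\chih|^2\,ds' \lesssim \RR^2$ from (\ref{mt2}) yields $V\to 0$ via Gronwall. For $\sn a$, one uses $\nu = -\frac{da}{ds}$ and the commutator $[\sn,\frac{d}{ds}]\sim a n\,\chi\cdot\sn$ to obtain a linear transport equation for $\sn a$ with initial data $\sn a(p)=0$ and source $\sn\nu$, handled analogously. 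The quantities $r\sn\tr\chi$ and $r^2\mu$ are controlled via (\ref{s4}) and (\ref{mumu}): after multiplication by $s^3$ and $s^4$ they take the form $\frac{d}{ds}(s^3\sn\tr\chi) = s^3\, S_1$ and $\frac{d}{ds}(s^4\mu) = s^4\, S_2$, where the sources $S_1, S_2$ involve $\sn\chih, \sn\zeta, \chih\cdot\chibh$ and are controlled in $L^2_s$ on each geodesic by (\ref{mt2})--(\ref{mt4}). Integrating from $s=0$ and dividing by the weight produces the $o(1)$ decay via Cauchy--Schwarz.

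The boundedness of $\chih,\zeta,\zb,\nu$ at the vertex follows from the same framework applied to (\ref{s2}), (\ref{s3}) and the corresponding equations for $\zb,\nu$: each is a transport equation in which the coefficient $\tr\chi\approx 2/s$ is absorbed into the weight $s^2$, with curvature and quadratic sources that are integrable in $s$ by the flux bounds in Theorem \ref{recalll2}. The main technical obstacle is that the curvature terms $\alpha,\beta$ are controlled only in $L^2(\H)$ and not pointwise; this is handled by weighted Hardy-type transport estimates on each geodesic that exploit the $\N_1$ control of $\slp$ together with the smallness of $T-u$, closing the Gronwall estimates and allowing passage to the limit $s\to 0$, exactly as carried out in \cite{Wang09}.
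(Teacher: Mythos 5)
The paper itself does not prove Lemma \ref{inii}; it is recalled verbatim from \cite{Wang09} with no argument given, so there is no internal proof to compare your proposal against. Judged on its own terms, your sketch reconstructs the expected methodology — a weighted transport argument in the affine parameter $s$ along the null geodesics $\Gamma_\omega$, with the vertex expansion $\gamma = s^2\gamma^{(0)}+o(s^2)$, $a(p)=1$, $\tr\chi = 2/s + o(1/s)$ supplying the initial conditions, and the structure equations (\ref{s1})--(\ref{s4}), (\ref{mumu}) propagating the remaining quantities. The computation $\frac{d}{ds}(s^2 V) = -\tfrac12 s^2V^2 - s^2|\chih|^2$ is correct, and the strategy of multiplying (\ref{s4}) and (\ref{mumu}) by $s^3$ and $s^4$ before integrating is the right one.

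There is, however, a logical concern you should address: you invoke the cone estimates (\ref{mt2})--(\ref{mt4}) from Theorem \ref{recalll2} to control the sources when integrating from $s=0$, but those estimates are themselves obtained by integrating the same transport equations from the vertex, and hence rest on precisely the vertex asymptotics that Lemma \ref{inii} is meant to supply. Using them here risks circularity. The correct logical order in the CK--Wang framework is the reverse: one first derives the vertex behavior from \emph{local} regularity of the metric near $p$ (the smoothness of the exponential map, the pointwise curvature bound that the available Sobolev regularity gives locally near a point, and the resulting Taylor expansion of $\gamma_{AB}$, $\chi_{AB}$, $\zeta_A$ in $s$), and only afterwards propagates these as initial conditions along the cone to establish (\ref{mt2})--(\ref{mt5}). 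You do gesture at this — the "starting observation" from the exponential map and the statement that $\chih,\zeta,\zb,\nu$ "have finite values at $s=0$" — but then you treat that boundedness both as given and as the thing to be proved in the final paragraph, and the closing step relies on flux control of $\alpha,\beta$ which is strictly weaker than what is needed pointwise near the vertex. If you want a self-contained argument rather than a deferral to \cite{Wang09}, you should isolate the purely local vertex analysis (showing for instance $s^2\chih(s) = -\int_0^s (s')^2\alpha\,ds' + \cdots$ with $\alpha$ pointwise controlled in a neighborhood of $p$, yielding $\chih=O(s)$ and hence the stated boundedness), and only then feed the result into the transport estimates for $V$, $\sn a$, $r\sn\tr\chi$, $r^2\mu$.
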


Let us recall the transport lemma (see \cite{KR2,Wang10}).

\begin{lemma}\label{tsp2}
For an $S$ tangent tensor field $F$ verifying
\begin{equation*}
\sn_L F+\frac{m}{2} \tr\chi F= G\c F+ H
\end{equation*}
with $m\ge 1$ certain integer, if $\lim_{t\rightarrow 0}
r(t)^{m} F=0$ and $\|G\|_{L_\omega^\infty L_t^2}\le C$, then
there holds
\begin{equation}\label{tran.2}
|F|\les v_t^{-\frac{m}{2}}\int_0^t v_\tt^{\frac{m}{2}} |H|na d\tt.
\end{equation}
\end{lemma}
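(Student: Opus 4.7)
The plan is to use the integrating factor $v_t^{m/2}$ to reduce the inhomogeneous transport equation to one whose right-hand side can be controlled by a straightforward Gronwall argument along null geodesics.

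First, I would record the first variation formula along $L=L'$, namely $L(v_t) = \tr\chi \cdot v_t$, so that $L(v_t^{m/2}) = \frac{m}{2}\tr\chi\,v_t^{m/2}$. Setting $\tilde F := v_t^{m/2} F$, and using the hypothesis $\sn_L F+\tfrac{m}{2}\tr\chi\,F = G\c F + H$, a direct computation gives
\begin{equation*}
\sn_L \tilde F \;=\; v_t^{m/2}\bigl(\sn_L F + \tfrac{m}{2}\tr\chi\,F\bigr) \;=\; G\c \tilde F + v_t^{m/2} H.
\end{equation*}
Thus the factor $\tfrac{m}{2}\tr\chi\,F$ is absorbed into the integrating factor and we are left with a pure transport equation along $L$.

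Next, I would pass to the affine parameter $s$ on each null geodesic $\Gamma_\omega$, using $\frac{d}{ds}=L'$ and (\ref{st}), i.e.\ $\frac{dt}{ds}=(an)^{-1}$. Pairing the transport equation with $\tilde F$ itself and using Cauchy–Schwarz yields
\begin{equation*}
\frac{d}{ds}|\tilde F| \;\le\; |G|\,|\tilde F| + v_t^{m/2}|H|
\end{equation*}
(away from the zero set of $|\tilde F|$, with the standard approximation argument otherwise). Since $v_t \approx r^2 \approx s^2$ from Proposition \ref{cmps} and the hypothesis $\lim_{t\to 0} r^m F=0$ translates to $\tilde F(0)=0$, Gronwall's inequality along $\Gamma_\omega$ gives
\begin{equation*}
|\tilde F(s)| \;\le\; \exp\!\Bigl(\textstyle\int_0^s |G|\,d\tau\Bigr)\int_0^s v_\tau^{m/2} |H|\,d\tau.
\end{equation*}

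The Gronwall exponent is controlled using the hypothesis $\|G\|_{L^\infty_\omega L^2_t}\le C$: by Cauchy–Schwarz and the change of variables $d\tau = na\,dt'$ together with $C^{-1}<na<C$,
\begin{equation*}
\int_0^s |G|\,d\tau \;\le\; s^{1/2}\Bigl(\int_0^s |G|^2\,d\tau\Bigr)^{1/2} \;\les\; \Bigl(\int_0^t |G|^2\,dt'\Bigr)^{1/2} \;\le\; C,
\end{equation*}
so the exponential factor is absorbed into $\les$. Converting the remaining integral back to the $t$ variable via $d\tau = na\,dt'$ and dividing by $v_t^{m/2}$ produces exactly (\ref{tran.2}). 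The only conceptual obstacle is the change of variables between affine parameter and $t$ and checking that the vanishing condition $\lim_{t\to 0} r^m F=0$ correctly yields $\tilde F(0)=0$; both are routine given $v_t \approx r^2 \approx s^2$ near the vertex.
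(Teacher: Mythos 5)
Your argument is correct and is the standard integrating-factor proof of this transport lemma; the paper states Lemma~\ref{tsp2} without giving a proof (it cites \cite{KR2,Wang10}), and the argument in those sources is essentially the one you gave. All the key points are in place: $L'(v_t)=\tr\chi\,v_t$ from the first variation of area makes $v_t^{m/2}$ the right integrating factor so that $\sn_{L'}(v_t^{m/2}F)=G\c(v_t^{m/2}F)+v_t^{m/2}H$; the vertex condition $\lim_{t\to 0}r^m F=0$ together with $v_t\approx s^2\approx r^2$ gives $v_t^{m/2}F\to 0$; Gronwall's exponential is controlled via Cauchy--Schwarz and $\|G\|_{L_\omega^\infty L_t^2}\le C$ (using $ds=na\,dt'$ with $na$ bounded and $s\les 1$ on the rescaled cone); and the same change of variables converts the remaining integral to the form in (\ref{tran.2}).
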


We also need the following useful Sobolev inequality.

\begin{lemma}
For any $q\ge 2$ and any $S_t$ tangent tensor field $F$, there holds the estimate
\begin{equation}\label{intp1}
\|r^{\f12-\frac{1}{q}} F\|_{L_x^{2q} L_t^\infty(\H)}^2\les
\left(\|r\sn_L F\|_{L_\omega^{q} L_t^2} + \| F\|_{L_\omega^{q}
L_t^2}\right)\|F\|_{L_\omega^\infty L_t^2}.
\end{equation}
\end{lemma}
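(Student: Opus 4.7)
The plan is to reduce the inequality, via Minkowski and the surface-area asymptotic, to a weighted one-dimensional transport estimate along null generators followed by Hölder-type splittings on $\mathbb{S}^2$. Since $d\mu_g \approx r^2 d\omega$ on $S_t$ by \eqref{recl1}, I would first observe
\[
\|r^{\f12-\frac{1}{q}}F\|_{L_x^{2q}(S_t)}^{2q} \approx \int_{\mathbb{S}^2} r^{(q-2)+2}|F|^{2q} d\omega = \int_{\mathbb{S}^2}(r|F|^2)^q d\omega,
\]
so that $\|r^{\f12-\frac{1}{q}}F\|_{L_x^{2q}(S_t)}^2 \approx \|r|F|^2\|_{L_\omega^q}$. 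By Minkowski's inequality $\sup_t \|\cdot\|_{L_\omega^q} \le \|\sup_t |\cdot|\|_{L_\omega^q}$, it then suffices to prove
\[
\|\sup_t r|F|^2\|_{L_\omega^q} \lesssim \|F\|_{L_\omega^\infty L_t^2}\bigl(\|F\|_{L_\omega^q L_t^2} + \|r\sn_L F\|_{L_\omega^q L_t^2}\bigr).
\]

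The key step is a pointwise-in-$\omega$ transport identity exploiting that $r(0)=0$ at the vertex of $\H$. For each fixed $\omega \in \mathbb{S}^2$, since $r|F|^2$ vanishes at $t=0$, the fundamental theorem of calculus gives
\[
r(t)|F(t,\omega)|^2 = \int_0^t \partial_{t'}(r|F|^2)\,dt'.
\]
I would expand $\partial_{t'}(r|F|^2) = r'|F|^2 + r\,\partial_{t'}|F|^2$; the factor $r' = naL(r)$ is bounded since $L(r) \approx r\tr\chi \lesssim 1$ by \eqref{comp2}, and the tensorial identity $\partial_t |F|^2 = 2na\langle F, \sn_L F\rangle + (\partial_t\ga)\cdot F\cdot F$ holds, with the metric-evolution term controlled in $L^\infty$ by $\chi$, $k$ under \eqref{ba1}--\eqref{BA2} and Theorem~\ref{recalll2}. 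This yields
\[
\sup_{t\in[0,1]} r|F|^2(t,\omega) \lesssim \int_0^1 |F|^2\,dt + \int_0^1 r|F||\sn_L F|\,dt,
\]
with all implicit constants depending only on universal ones via the bootstrap bounds.

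Finally I would take the $L^q(\mathbb{S}^2)$ norm in $\omega$. By Cauchy--Schwarz in $t$ at fixed $\omega$, $\int_0^1|F|^2 dt = \|F(\omega,\cdot)\|_{L_t^2}^2$ and $\int_0^1 r|F||\sn_L F|\,dt \le \|F(\omega,\cdot)\|_{L_t^2}\|r\sn_L F(\omega,\cdot)\|_{L_t^2}$. Applying the Hölder-type splittings $\|g^2\|_{L_\omega^q} \le \|g\|_{L_\omega^\infty}\|g\|_{L_\omega^q}$ and $\|gh\|_{L_\omega^q} \le \|g\|_{L_\omega^\infty}\|h\|_{L_\omega^q}$ with $g(\omega) = \|F(\omega,\cdot)\|_{L_t^2}$ completes the proof. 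The main subtlety (rather than a serious obstacle) is verifying that all metric-evolution corrections to $\partial_t |F|^2$, which appear because $F$ is $S_t$-tangent rather than defined on a fixed manifold, contribute only $|F|^2$-type terms with bounded coefficients, and hence are absorbed cleanly into the first integral on the right-hand side; in particular no singular $r$-weights are generated, which is what would have derailed a naive attempt to place the $r^{1/2-1/q}$ weight directly on $F$ inside the transport argument.
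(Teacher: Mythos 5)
Your proof is correct and is the argument the paper refers to: the paper gives no proof of its own but cites \cite[Lemma 8.2]{Wang10}, where precisely this transport-from-vertex strategy is used — integrate $\partial_t(r|F|^2)$ along a null generator from the vertex where $r=0$, apply Cauchy--Schwarz in $t$ at fixed $\omega$, then split by H\"older on $\mathbb{S}^2$ to pull out one $L_\omega^\infty$ factor.

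One clarification is worth making. You posit $\partial_t|F|^2 = 2na\langle F,\sn_L F\rangle + (\partial_t\gamma)\cdot F\cdot F$ and then assert that the metric-evolution contribution has ``bounded coefficients.'' In fact, for $S_t$-tangent tensors the operator $\sn_L$ is the $\gamma$-compatible projected null covariant derivative, so $\sn_L\gamma = 0$ and hence $\partial_t|F|^2_\gamma = na\,L\bigl(|F|^2_\gamma\bigr) = 2na\,\gamma(F,\sn_L F)$ exactly; the $(\partial_t\gamma)\cdot F\cdot F$ piece cancels against the Christoffel corrections built into $\sn_L F$, and no extra term survives. This matters because your ``bounded coefficients'' claim would not withstand scrutiny if a genuine $\chih\cdot F\cdot F$ correction did remain: $\chih$ is controlled only in $L_\omega^\infty L_t^2$ by \eqref{mt2}, not in $L^\infty$, so such a term would have to be treated by Young's inequality and absorption using the smallness of $\R_0$, not by pointwise boundedness. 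The Minkowski step is likewise redundant: once the intrinsic measure $d\mu_\gamma\approx r^2\,d\omega$ is folded into the $L_x^{2q}$ norm, $\|r^{1/2-1/q}F\|_{L_x^{2q}L_t^\infty}^2$ is literally $\|\sup_t r|F|^2\|_{L_\omega^q}$, which is exactly what your transport estimate controls; passing through $\sup_t\|\cdot\|_{L_\omega^q}$ first would only bound the weaker $L_t^\infty L_x^{2q}$ norm. Neither point affects the validity of the conclusion.
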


\begin{proof} The estimate with $q=2$ is proved in \cite[Lemma 8.2]{Wang10}; the same
argument can be used to the result for any $q>2$.
\end{proof}

We now prove the following result on null cone $\H$,

\begin{proposition}\label{prl1}
For any $p\ge 2$ there hold
\begin{align}
&\|r\sn_L(\chih,\zeta)\|_{L_t^2 L_\omega^{p}}\les
\Rp+\RR\label{prl2}\\
&\|\Ab\|_{L_t^2 L_\omega^{p}}+\|r\nab_L \Ab\|_{L_t^2
L_\omega^{p}}\les \Rp+\RR\label{simp3}
\end{align}
\end{proposition}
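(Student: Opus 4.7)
The strategy is to extract each estimate from a null structure or transport equation on $\H$, controlling the right-hand side by the flux $\|rR_0\|_{L_t^2L_\omega^p}\les \Rp$, the $\slp$-flux $\nps[\slp]\les\Rp$, and the auxiliary bounds of Theorem \ref{recalll2}, while absorbing quadratic feedback using the smallness of $\Rp+\RR$.

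For (\ref{prl2}), equations (\ref{s2}) and (\ref{s3}) give directly $r\sn_L\chih=-r\a-r\tr\chi\,\chih$ and $r\sn_L\zeta=-r\b+r\chi\cdot\zb-r\chi\cdot\zeta$. The curvature terms $r\a,r\b\in rR_0$ are bounded in $L_t^2L_\omega^p$ by $\Rp$ from the definition. For the Ricci products, $r\tr\chi\approx 1$ from (\ref{mt0}) reduces $\|r\tr\chi\,F\|_{L_t^2L_\omega^p}$ to $\|F\|_{L_t^2L_\omega^p}$; the $\chih\cdot\zeta$ and $\chih\cdot\zb$ contributions are treated by H\"older in conjunction with the Sobolev bound (\ref{sob.2}) which yields $\|\chih,\zeta\|_{L_t^2L_\omega^q}\les\N_1[\chih,\zeta]\les\RR$ for any finite $q$, the $L_\omega^\infty L_t^2$ control (\ref{mt2}), and (\ref{cor4}). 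Thus (\ref{prl2}) follows once the $L_t^2L_\omega^p$ control of $\chih,\zeta,\zb$ is in place.

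For (\ref{simp3}), I would treat each component of $\Ab=\{\chih,\zeta,\zb,\nu,\chibh,\sn\log a,\slp\}$ in turn. The $\slp$ part is immediate from $\nps[\slp]\les \Rp$, which also controls $\|r\sn_L\slp\|_{L_t^2L_\omega^p}$ (and hence $\|r\nab_L\slp\|_{L_t^2L_\omega^p}$ by Lemma \ref{trans2}). The components $\zb$ and $\nu$ are linear combinations of $\slp$ via (\ref{idd1}) and the definition of $\nu$, hence inherit the bound, while $\chibh=-\chih-2\eh$ reduces $\chibh$ to $\chih$ and $\slp$. For $\sn\log a$ I would use the transport relation $L\log a=-n\nu$ along $\H$, commute $\sn$ through $L$ (generating a $\chi$-type correction), and integrate from the vertex via Lemma \ref{tsp2} to bound $\sn\log a$ by $\sn\nu$ plus lower-order terms. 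For $\chih$ and $\zeta$ themselves, I integrate (\ref{s2}) and (\ref{s3}) via Lemma \ref{tsp2}: with $v_t\approx r^2$ from Proposition \ref{cmps} and the finite initial limit in Lemma \ref{inii}, one obtains $|\chih|(t)\les v_t^{-1}\int_0^t v_\tau|\a|na\,d\tau+\text{l.o.t.}$; taking $L_\omega^p$ then $L_t^2$ by Minkowski reduces everything to $\|r\a\|_{L_t^2L_\omega^p}\les\Rp$ together with subordinate Ricci products. The $r\nab_L\Ab$ piece then follows directly from differentiating the same transport or structure equations, using the just-derived $L_t^2L_\omega^p$ bounds.

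The principal technical obstacle is in closing the quadratic Ricci products such as $\chih\cdot\zeta$ and $\chih\cdot\zb$ in $L_t^2L_\omega^p$ for $p>2$, a norm strictly stronger than the $L^2$-flux controls provided by $\N_1[\chih],\N_1[\zeta]\les\RR$. I would use the Sobolev inequality (\ref{sob.2}) to derive $\|\chih,\zeta\|_{L_t^2L_\omega^q}\les\RR$ for any finite $q$, interpolate with the $L_\omega^\infty L_t^2$ bound from (\ref{mt2}) and the trace-Sobolev estimates (\ref{cor4}), (\ref{intp1}), and absorb the residual self-coupling $(\Rp+\RR)\|\chih,\zeta\|_{L_t^2L_\omega^p}$ back into the left-hand side by the smallness of $\Rp+\RR$ ensured by taking $T-u$ small and universal, thereby closing a bootstrap.
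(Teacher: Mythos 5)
Your proposal is correct in outline, and both your route and the paper's pass through the structure equations (\ref{s2}), (\ref{s3}), but you miss a one-line observation that collapses most of your work. The first bound in (\ref{simp3}), namely $\|\Ab\|_{L_t^2 L_\omega^{p}}\les\RR$, follows \emph{immediately} from the Sobolev inequality (\ref{sob.2}): for any finite $p$, $\|\Ab\|_{L_t^2 L_\omega^p}\les \N_1[\Ab]$, and $\N_1[\Ab]\les\RR$ is already available from Theorem \ref{recalll2} ((\ref{mt4}) for $\chih,\zeta$, the definition of $\RR$ for $\slp$, and the algebraic identities (\ref{idd1}) for $\zb,\nu,\chibh,\sn\log a$). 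With this in hand, (\ref{prl2}) is a direct reading of (\ref{s2}), (\ref{s3}): $\|r\nab_L\chih\|_{L_t^2 L_\omega^p}\les\|\chih\|_{L_t^2 L_\omega^p}+\|r\a\|_{L_t^2 L_\omega^p}$ and similarly for $\zeta$, with the bilinear terms controlled by (\ref{blin1}). The second half of (\ref{simp3}) then reduces, via the definition of $\Rp$ in (\ref{bkm1}) and the same algebraic identities, to (\ref{prl2}) plus $\nps[\ckk\pi]\les\Rp$.

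By contrast, you propose to re-derive $\|\chih,\zeta\|_{L_t^2 L_\omega^p}$ by integrating (\ref{s2}), (\ref{s3}) through Lemma \ref{tsp2} from the vertex, and to obtain $\sn\log a$ from a transport equation for $\log a$. The transport integration does in fact close (a Hardy-type inequality is needed to pass from $v_t^{-m/2}\int_0^t v_{t'}^{m/2}|H|$ to the weightless $L_t^2 L_\omega^p$ norm; naive Cauchy--Schwarz in $t'$ produces a spurious logarithmic divergence), and the transport route for $\sn\log a$ is superfluous since $\sn\log a=\sn\log\bb=\zeta-k_{AN}$ algebraically. The bootstrap absorption you invoke at the end is likewise unnecessary once one observes that the bilinear term $r\chih\cdot\zeta$ is estimated, as in (\ref{blin1}), by $\|r^{1/2}A\|_{L_t^\infty L_\omega^4}\|r^{1/2}\Ab\|_{L_t^2 L_\omega^{p''}}\les\RR^2$, a closed bound not involving $\|\zeta\|_{L_t^2 L_\omega^p}$ itself. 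So your proof works, but at the cost of reinventing in transport form a bound that the paper extracts in one line from the flux data of Theorem \ref{recalll2}.
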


\begin{proof}
By the Sobolev embedding (\ref{sob.2}) we have for $p>2$ that
$$\|\Ab\|_{L_t^2 L_\omega^{p}}\les \N_1[\Ab]\les \RR.$$
 The second inequality in (\ref{simp3}) will follow from (\ref{prl2})  and
(\ref{bkm1}). We need only to prove (\ref{prl2}).
 In view of (\ref{s2}) and (\ref{bkm1}), we have
\begin{equation*}
\|r\nab_L \chih\|_{L_t^2 L_\omega^p}\les \|\chih\|_{L_t^2
L_\omega^p}+\|r\a\|_{L_t^2 L_\omega^p}\les \Rp.
\end{equation*}
Let $p''$ be such that $\frac{1}{p}=\frac{1}{4}+\frac{1}{p{''}}$. By using
(\ref{sob.2}) we have
\begin{equation}\label{blin1}
\|rA\c \Ab\|_{L_t^2 L_\omega^p}\les\|r^{\f12} A\|_{L_t^\infty
L_\omega^4}\|r^{\frac{1}{2}}\Ab\|_{L_t^2 L_\omega^{p{''}}}\les
\N_1(A)\N_1(r^{\f12} \Ab)\les\RR^2.
\end{equation}
In view of (\ref{s3}), (\ref{blin1}), (\ref{bkm1}) and the first part of (\ref{simp3}),
we obtain
\begin{align*}
\|r\nab_L \zeta\|_{L_t^2 L_\omega^p}&\les \|\zeta\|_{L_t^2
L_\omega^p}+\|r\chih\c \zb\|_{L_t^2 L_\omega^p}+\|\zb\|_{L_t^2
L_\omega^p}+\|r\b\|_{L_t^2 L_\omega^p}\les \Rp+\RR^2.
\end{align*}
The proof is thus complete.
\end{proof}

\subsubsection{Weakly spherical condition}

Let $\ga$ be the induced metric on $S_t$, and define the rescaled metric $\gac$ on
$S_t$ by $\gac=r^{-2}\ga$.  Note that
\begin{equation}\label{8.1.1}
\lim_{t\rightarrow 0}\stackrel{\circ}
\gamma_{ij}={\gas}_{ij},\qquad  \lim_{t\rightarrow
0}\p_k\stackrel{\circ}\ga_{ij}=\p_k{\gas}_{ij}
\end{equation}
where $i, j, k=1,2$.  Recall that we
have proved in \cite{Wang1} that
\begin{align}
&\|\stackrel{\circ}\ga_{ij}(t)-{\gas}_{ij}\|_{L^\infty}\les \RR\label{ws1}\tag{WS1}\\
&\|\p_k \stackrel{\circ}\gamma_{ij}(t)-
\p_k{\gas}_{ij}\|_{L^{2}_\omega L_t^\infty}\les\RR.\label{ws2}
\end{align}
Now we make the following bootstrap assumption
which will be improved at the end of this section.
\begin{assumption}\label{l8.1}
For the transport local coordinates $(t,\omega)$, the following
properties hold true for all surfaces ${S}_t$ of the time foliation
on the null cone $\H$: the metric $\stackrel{\circ}\gamma_{ij}(t)$
on each ${S}_t$ verifies weakly spherical conditions i.e.
\begin{align*}
\|\p_k \stackrel{\circ}\gamma_{ij}(t)-
\p_k{\gas}_{ij}\|_{L^{p}_\omega L_t^\infty}\le \Delta_0 \mbox{ with } 0<1-\frac{2}{p}<s-2,
\end{align*}
where we assume $0<\Delta_0< \f12.$
\end{assumption}
Let $\sn^{(0)}$ be the covariant differentiation relative to $\gaz_{ij}$.
 We will improve (WS2) with the right side replaced by
$\RR+\Rp$. By choosing $\Delta_0<\Rp+\RR$ sufficiently small, we can
close the bootstrap argument.

\begin{lemma}\label{frm1}
Under the Assumption \ref{l8.1}, for $2<q\le p$,
\begin{equation*}
\|\sn F\|_{L_x^{q}}+\|r^{-1}F\|_{L_x^{q}}\approx
\|\sn^{(0)}F\|_{L_x^{q}}+\|r^{-1}F\|_{L_x^{q}}.
\end{equation*}
\end{lemma}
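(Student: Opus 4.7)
The plan is to reduce the claim to a pointwise identity expressing $\sn - \sn^{(0)}$ as multiplication by the Christoffel difference tensor, control this difference using the bootstrap Assumption \ref{l8.1}, and close the argument via the Sobolev embedding on the $2$-sphere.

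The key algebraic observation is that $\ga = r^2 \gac$ and $\gaz = r^2 \gas$ with $r = r(t)$ constant on each $S_t$, so in the transport coordinates $(\omega^1,\omega^2)$ the Christoffel symbols satisfy $\Gamma(\ga)=\Gamma(\gac)$ and $\Gamma(\gaz)=\Gamma(\gas)$. Hence for any $S$-tangent tensor $F$ one has schematically
$$(\sn-\sn^{(0)})F = \Lambda \star F, \qquad \Lambda := \Gamma(\gac)-\Gamma(\gas) = \tfrac{1}{2}\gac^{-1}(\partial\gac-\partial\gas) + \tfrac{1}{2}(\gac^{-1}-\gas^{-1})\partial\gas.$$
Combining Assumption \ref{l8.1}, which bounds $\partial\gac-\partial\gas$ in $L^p_\omega$ by $\Delta_0$, with the unconditional pointwise closeness $\|\gac-\gas\|_{L^\infty_\omega}\les \RR$ coming from (\ref{ws1}) (which makes $\gac^{-1}-\gas^{-1}$ small in $L^\infty_\omega$ once $\RR$ is small), we obtain $\|\Lambda\|_{L^p_\omega}\les \Delta_0+\RR.$

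The main estimate then follows by H\"older and Sobolev on $\mathbb{S}^2$. Since $q>2$, the embedding $W^{1,q}(\mathbb{S}^2)\hookrightarrow L^\infty(\mathbb{S}^2)$ holds with respect to any metric uniformly equivalent to $\gas$ -- and $\gac$ qualifies by (WS1) -- giving $\|F\|_{L^\infty_\omega}\les \|\sn^{(0)} F\|_{L^q_\omega}+\|F\|_{L^q_\omega}$. Applying H\"older with $1/q=1/p+1/s$ (or directly at the $L^\infty$-endpoint when $q=p$) together with the above embedding yields
$$\|\Lambda \star F\|_{L^q_\omega}\les (\Delta_0+\RR)\left(\|\sn^{(0)} F\|_{L^q_\omega}+\|F\|_{L^q_\omega}\right).$$
Translating from the round $L^q_\omega$ norms back to the intrinsic $L^q(S_t)$ norms by tracking the powers of $r$ coming from the tensor weights via $\ga=r^2\gac$ and from the volume form $d\mu_\ga\approx r^2\,d\omega$, the bare $\|F\|_{L^q_\omega}$ factor reassembles into $\|r^{-1}F\|_{L^q(S_t)}$, giving
$$\|(\sn-\sn^{(0)})F\|_{L^q(S_t)}\les (\Delta_0+\RR)\left(\|\sn^{(0)} F\|_{L^q(S_t)}+\|r^{-1} F\|_{L^q(S_t)}\right).$$
Choosing $\Delta_0$ and $\RR$ small enough (permitted by the bootstrap and by taking $T-u$ suitably small), the error is absorbed into the leading term, yielding both directions of the asserted two-sided equivalence.

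The main obstacle is the careful bookkeeping of the $r$-scalings -- tensor by tensor -- so that the perturbative error reassembles precisely as $\|r^{-1}F\|_{L^q(S_t)}$, matching the lower-order term already present in the statement; and verifying that the Sobolev constant on $S_t$ is uniform in $t$, which is what (WS1) is there to provide.
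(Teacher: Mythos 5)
Your proof is correct and follows essentially the same route as the paper: reduce to the Christoffel-difference identity $(\sn-\sn^{(0)})F = (\Ga-\Gz)\star F$, bound $\Ga - \Gz$ in $L^p_\omega$ via Assumption \ref{l8.1} and (WS1), combine H\"older with the Sobolev embedding on $\mathbb{S}^2$ to control the error by $(\Delta_0+\RR)(\|\sn F\|+\|r^{-1}F\|)$, and absorb. The only cosmetic difference is that the paper phrases the Sobolev inequality through $\sn$ (via (\ref{sob.3})) so the absorption lands on $\|\sn F\|$, while you phrase it through $\sn^{(0)}$ so the absorption lands on $\|\sn^{(0)}F\|$; both close since $\Delta_0$ and $\RR$ are small.
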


\begin{proof}
It suffices to consider  $F$ to be 1-form. Relative to any coordinates on $S$,
$\sn_i F_j =\sn^{(0)}_i F_j +(\Ga_{ij}^{\dum\dum l} -{\Gz_{ij}}^ l )\c F_l .$
 Note that using (\ref{ws1}) and Assumption \ref{l8.1} we have $\|r^{1-\frac{2}{q}}|\Ga-\Gz|_\ga\|_{L_x^q}\les \Delta_0$. Also in view of  (\ref{sob.3}) and H\"older inequality, we obtain
\begin{align}
\||(\Ga-\Gz)_{ij}^l F_l|_\ga\|_{L_x^{q}}&\les \|r^{1-\frac{2}{q}}|\Ga-\Gz|_\ga\|_{L_x^q} \|r^{\frac{2}{q}-1}
F\|_{L_\omega^\infty}\nn\\&\les \Delta_0 (\|\sn F\|_{L_x^q}+\|r^{-1}
F\|_{L^q_x}).\label{sobinf1}
\end{align}
 Note that we can choose $0<\Delta_0<\f12$ sufficiently
small, then for $2<q\le p$,
\begin{equation*}
\|\sn F\|_{L_x^q}+\|r^{-1} F\|_{L_x^q}\les
\|\sn^{(0)}F\|_{L_x^q}+\|r^{-1} F\|_{L_x^q}.
\end{equation*}
The other direction can be proved in the same way.
\end{proof}

We recall from \cite{Wang1}  the following result which uses the smallness of $\RR$ given in the
remark immediately after Proposition \ref{smr}.

\begin{proposition}
 On null cone $\H$ under the condition (\ref{cond1}) and
 $C^{-1}<n<C$ that for $S$ tangent tensor $F$, there holds
\begin{equation}\label{elpl2}
\|\sn F\|_{L^2(S)}+\|r^{-1}F\|_{L^2(S)}\les \|\D F\|_{L^2(S)}.
\end{equation}
\end{proposition}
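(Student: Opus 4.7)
The plan is to establish this $L^2$ elliptic estimate via the Bochner--Weitzenb\"ock identity on the $2$-surface $S = S_t$, combined with the lower bound $K_S \gtrsim r^{-2}$ on the Gauss curvature of $S$ supplied by the near-sphericity of the foliation. For an $S$-tangent tensor $F$, integration by parts on the closed surface $S$ together with the Weitzenb\"ock identity on a $2$-manifold gives an identity schematically of the form
\begin{equation*}
\int_S |\sn F|^2 + c\, K_S |F|^2 = \int_S |\D F|^2,
\end{equation*}
where $c > 0$ is a constant depending on the tensor type of $F$ (for $1$-forms $c=1$, with $\D$ the Hodge operator $(\div,\curl)$; analogous identities hold for higher-rank $S$-tangent tensors). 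Given a lower bound $K_S \gtrsim r^{-2}$, the desired estimate follows at once, since then
\begin{equation*}
\|\sn F\|_{L^2(S)}^2 + c\, r^{-2}\|F\|_{L^2(S)}^2 \les \|\D F\|_{L^2(S)}^2.
\end{equation*}

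The required positivity of the Gauss curvature comes from the near-spherical geometry of $S$. Rather than trying to compute $K_S$ from the intrinsic metric directly (which is delicate given our low regularity), I would use the Gauss equation for the $2$-surface $S$ embedded in spacetime, which expresses $K_S$ as a combination of the null second fundamental forms ($\tr\chi$, $\tr\chib$, $\chih$, $\chibh$) and the null curvature component $\rho$. Using (\ref{mt0}) and the analogous control of $\chib$, together with $r\tr\chi \approx 1$ from Proposition \ref{rices}, the dominant contribution behaves like $r^{-2}$ while the remaining terms ($\rho$, $\chih\cdot\chibh$) are small in the regime of small $\RR + \Rp$ (and for $T-u$ sufficiently small). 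This yields $K_S = r^{-2}(1 + o(1))$ in a suitable integrated sense, which suffices. An alternative route is to pull the metric $\ga$ back to the standard sphere via the exponential map $\mathcal{G}(t,u,\cdot)$ and use that, by (\ref{ws1}), (\ref{ws2}) and Assumption \ref{l8.1}, the rescaled metric $\gac = r^{-2}\ga$ is close to $\gas$ in $W^{1,p}$, whose Gauss curvature is identically $1$; the two routes give the same conclusion.

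The main obstacle is exactly the low regularity: $K_S$ is defined from second derivatives of $\ga$, for which we have no pointwise control. The Gauss-equation route sidesteps this, as it expresses $K_S$ through quantities ($\chi,\chib,\rho$) for which solid $L^2$ and $L^p$ bounds are already in place from Theorem \ref{recalll2}, Proposition \ref{fluxpb}, and Proposition \ref{smr}. All of this needs to be coupled to a careful derivation of the Bochner identity under rough metric — one carries out integration by parts against a smooth approximation of $F$ and then passes to the limit, the error being absorbed using the same curvature/Ricci-coefficient estimates. Once $K_S \gtrsim r^{-2}$ is established integrally, the Bochner--Weitzenb\"ock identity immediately delivers the claimed $L^2$ elliptic estimate for every $S$-tangent tensor $F$, and this forms the base case for the $L^p$ extensions that the paper goes on to develop.
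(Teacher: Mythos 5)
This proposition is simply recalled from \cite{Wang1}; the paper gives no proof here, so there is no in-paper argument to compare against. That said, your outline is the standard and, as far as one can tell, the correct route: the Bochner--Weitzenb\"ock identity on $S_t$ (with $c=1$ for $1$-forms and $c=2$ for traceless symmetric $2$-tensors, as in \cite[Ch.~2]{CK}) reduces the estimate to a lower bound on $\int_S K_S |F|^2$, and the Gauss equation is the right way to express $K_S$ without taking two rough derivatives of $\ga$.

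The step you gloss over is the one that actually carries the weight. The hypotheses you cite (Theorem \ref{recalll2}, Propositions \ref{fluxpb}, \ref{smr}) control the curvature components and Ricci coefficients by \emph{flux-type} norms, i.e.\ integrated in $t$ along $C_u$; they do not directly give a fixed-$t$ bound on $K_S - r^{-2}$ in the $L^2(S_t)$ (or weighted $L^2(S_t)$) norm needed to absorb $\int_S (K_S - r^{-2})|F|^2$ against $\|\sn F\|_{L^2(S)}^2 + r^{-2}\|F\|_{L^2(S)}^2$ via the Sobolev inequality on $S_t$. Writing ``$K_S = r^{-2}(1+o(1))$ in a suitable integrated sense'' is exactly the part that must be made precise: the term $\tr\chi\,\tr\chib + 4r^{-2}$ and the quadratic term $\chih\cdot\chibh$ are under control at fixed $t$ via (\ref{trc2}) and \N_1 bounds, but the $\rho$ contribution is only known in $L^2(C_u)$, and one has to use the structure equation $\div\zeta = -\mu - \rho + \ldots$ (i.e.\ (\ref{stc2})) to rewrite $\rho$ as a divergence plus controlled terms, then integrate by parts inside the Bochner argument so that $\rho$ is never tested directly against $|F|^2$. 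Also note that (\ref{elpl2}) is stated under (\ref{cond1}) alone, before Assumption \ref{l8.1}, so only (\ref{ws1})--(\ref{ws2}) are available at this stage; the $W^{1,p}_\omega$, $p>2$, comparison you invoke in your ``alternative route'' is precisely what Assumption \ref{l8.1} and Lemma \ref{hdgm1} supply \emph{later}, and cannot be used here without circularity. With that said, the framework you propose is the one the result rests on, and the gaps are of a technical rather than conceptual nature.
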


By Assumption \ref{l8.1}, let us prove  the following $L^p$
estimate for Hodge system.

\begin{lemma}\label{hdgm1}
Let $\D$ denote either $\D_1$ or $\D_2$ and  $F$ be $S$ tangent tensor in the domain  of $\D$,  there holds for $2<q\le p,$
\begin{equation*}
\|\sn  F\|_{L^q(S)}+\|r^{-1}F\|_{L^q(S)}\les \|\D F\|_{L^q(S)}
\end{equation*}
\end{lemma}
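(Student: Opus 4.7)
The plan is to reduce the $L^q$ Hodge estimate on $(S, \ga)$ to the classical $L^q$ elliptic estimate on the standard sphere $(\mathbb{S}^2, \gas)$ (after a rescaling by $r$), exploiting the weakly spherical Assumption \ref{l8.1} to treat the difference between the true metric and the background as a small perturbation.

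First I would rescale: on $(S, \gac)$ with $\gac = r^{-2}\ga$, both the original Hodge operator $\D$ and the norms $\|\sn F\|_{L^q(S)}$, $\|r^{-1}F\|_{L^q(S)}$ rescale in a uniform way, so after pulling back by the exponential map $\mathcal G(t,u,\cdot)$ to $\mathbb{S}^2$, the problem becomes an $L^q$ Hodge estimate on $\mathbb{S}^2$ equipped with a metric $\gaz$ whose Christoffel symbols differ from those of $\gas$ by an amount controlled by $\Delta_0$ in $L^p_\omega L^\infty_t$ (by Assumption \ref{l8.1}) and with $q\le p$. Next, invoking Lemma \ref{frm1} I would replace $\|\sn F\|_{L^q(S)}$ by $\|\sn^{(0)} F\|_{L^q(S)}+\|r^{-1}F\|_{L^q(S)}$ up to equivalent norms.

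Then I would write symbolically
\begin{equation*}
\D F = \D^{(0)} F + (\Ga-\Gz)\cdot F,
\end{equation*}
where $\D^{(0)}$ is the corresponding Hodge operator built from $\sn^{(0)}$. For $\D^{(0)}$ on the (almost-standard) background sphere, the classical Calderon--Zygmund theory on $\mathbb{S}^2$ yields
\begin{equation*}
\|\sn^{(0)} F\|_{L^q(S)}+\|r^{-1}F\|_{L^q(S)}\lesssim \|\D^{(0)} F\|_{L^q(S)},\qquad 2<q\le p,
\end{equation*}
uniformly in $t$ (the constants depend only on $q$ and on the fact that $\gaz$ is a small $C^0$ perturbation of $\gas$ by (WS1)). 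The error term is estimated exactly as in \eqref{sobinf1}: using Assumption \ref{l8.1} and the Sobolev embedding (\ref{sob.3}),
\begin{equation*}
\|(\Ga-\Gz)\cdot F\|_{L^q(S)}\lesssim \|r^{1-\frac{2}{q}}|\Ga-\Gz|_\ga\|_{L^q_\omega}\, \|r^{\frac{2}{q}-1}F\|_{L^\infty_\omega}\lesssim \Delta_0\bigl(\|\sn F\|_{L^q(S)}+\|r^{-1}F\|_{L^q(S)}\bigr).
\end{equation*}

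Combining these two ingredients gives, for some universal constant $C$,
\begin{equation*}
\|\sn F\|_{L^q(S)}+\|r^{-1}F\|_{L^q(S)}\le C\|\D F\|_{L^q(S)}+C\Delta_0\bigl(\|\sn F\|_{L^q(S)}+\|r^{-1}F\|_{L^q(S)}\bigr).
\end{equation*}
Since $\Delta_0$ can be chosen sufficiently small (by taking $T-u$ small so that $\RR+\Rp$ is small), the last term is absorbed into the left-hand side, completing the argument. The main obstacle — and what requires the weakly spherical framework to be set up carefully — is the uniformity of the classical Calderon--Zygmund constant on $\mathbb{S}^2$ when the metric is only controlled in $L^p_\omega L^\infty_t$ (not smooth); this is exactly why one first descends to $\sn^{(0)}$ on an almost-round background, where the perturbation argument is legal, rather than attempting $L^q$ boundedness directly on the rough metric $\ga$.
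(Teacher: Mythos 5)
Your overall strategy is the same as the paper's: pass to the round background connection $\sn^{(0)}$ via Lemma \ref{frm1}, apply the classical Calderon--Zygmund estimate for the background Hodge operator, control the difference by $(\Ga-\Gz)\c F$ exactly as in (\ref{sobinf1}), and absorb using the smallness of $\Delta_0$. However, there is a genuine gap in the case $\D=\D_2$. The classical $L^q$ estimate for $\D_2^{(0)}$ on the sphere applies to tensors that are symmetric and \emph{traceless with respect to the background metric} $\gaz$, whereas $F$ is traceless only with respect to $\ga$; a symmetric $2$-tensor with nontrivial $\gaz$-trace is not in the domain of $\D_2^{(0)}$, and its divergence does not control its full gradient (the trace part must be estimated separately). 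In addition, $\div F$ contracts indices with $\ga^{AB}$ while $\dvz$ contracts with $\gaz^{AB}$, so the difference $\D F-\D^{(0)}F$ is not purely of the form $(\Ga-\Gz)\c F$ but also contains a term $(\ga-\gaz)\c\sz F$. So the step ``the classical Calderon--Zygmund theory yields $\|\sn^{(0)}F\|_{L^q}+\|r^{-1}F\|_{L^q}\les\|\D^{(0)}F\|_{L^q}$'' is not justified as stated.

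The paper repairs both points at once by introducing $\ti F_{AB}=\gaz_{AC}\ga^{CD}F_{DB}$ and its symmetrization $2\bar F_{AB}=\ti F_{AB}+\ti F_{BA}$, which satisfies $\Tr^{(0)}\bar F=0$ and
\begin{equation*}
\dvz\bar F=\div F+(\Ga-\Gz)F\c\ga .
\end{equation*}
One then applies the Calderon--Zygmund estimate to $\bar F$ (which \emph{is} in the domain of $\D_2^{(0)}$), estimates the error term as in (\ref{sobinf1}), and finally converts $\|\sn^{(0)}\bar F\|_{L^q}+\|r^{-1}\bar F\|_{L^q}$ back into the corresponding norms of $F$ using (\ref{ws1}) and Assumption \ref{l8.1}. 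You would need to insert this intermediate tensor (or an equivalent device, e.g.\ decomposing $F$ into its $\gaz$-traceless part plus a trace part and estimating the latter separately) to make your argument complete for $\D_2$; for $\D_1$ your argument goes through once the additional $(\ga-\gaz)\c\sz F$ error, which is small by (\ref{ws1}) and hence absorbable, is accounted for.
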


\begin{proof}
We only give the proof for  $\D=\D_2$. The proof for the case that $\D=\D_1$ follows similarly.  
Relative to orthonormal frame associated to $\gaz$,
for $S$-tangent, symmetric, traceless 2-tensor $F$,  
let us define $\ti F_{A B}=\gaz_{A C} \ga^{C D}F_{D B}$ and $2\bar F_{A B}=\ti F_{AB}+\ti F_{BA}$.
Then we have
\begin{equation}\label{barf}
  \dvz {\bar F}=\div F+(\Ga-\Gz)F\c \ga,\quad \quad\Tr^{(0)}(\bar F)=0.
\end{equation}
Indeed, under the orthonormal frame associated to $\gaz$,
\begin{align*}
\sn_C(\ga^{CD}F_{DA})=\sz_C(\ga^{CD}F_{DA})+(\Ga-\Gz)\ga\c F,\\
\sn_C (F^{CD} \ga_{DA})= \sz_C(F^{CD}\ga_{DA})+(\Ga-\Gz)F\c \ga.
 \end{align*}
By Calderon Zygmund theory  on $2$-sphere with  $\gaz$ and also using (\ref{barf}),
 we  obtain
\begin{align*}
\int(|\sz \bar F|^q&+| r^{-1} \bar F|^q)\les \int |\D^{(0)} \bar F|^q \\
&\le \int(|\D F|^q+|(\Ga-\Gz)F\c \ga|^q )\les \int|\D F|^q
\end{align*}
where to derive the last inequality, we employed (\ref{sobinf1}),
Lemma \ref{frm1} and the smallness of $\Delta_0$. By (\ref{ws1}) and Assumption \ref{l8.1}
we can show that 
$$
\|\sn^{(0)}F\|_{L_x^q}+\|r^{-1} F\|_{L_x^q}\les \|\sn^{(0)} \bar F\|_{L_x^q}+\|r^{-1} \bar F\|_{L_x^q}.
$$
Therefore, Lemma \ref{hdgm1} follows by using Lemma \ref{frm1}.
\end{proof}

Since
$$
\D_2\chih=\sn\tr\chi+\bar R,\,\,\quad  \D_1 \zeta=(\mu,0)+\bar R,
$$
using Lemma \ref{hdgm1}, we can derive that for $2\le q\le p$
\begin{align*}
\|\sn \chih\|_{L^q(S)}+\|r^{-1}\chih\|_{L^q(S)} &\les
\|\sn\tr\chi\|_{L^q(S)}+\|\bar R\|_{L^q(S)},\nn\\
\|\sn \zeta\|_{L^q(S)}+\|r^{-1}\zeta\|_{L^q(S)} &\les \|\mu\|_{L^q(S)}+\|\bar
R\|_{L^q(S)}.
\end{align*}
Since using (\ref{blin1}), (\ref{simp3}) and (\ref{bkm1})
 $$\|r\bar R\|_{L_t^2 L_\omega^{p}}\les\|\Ab\|_{L_t^2
L_\omega^{p}}+\|rR_0\|_{L_t^2 L_\omega^{p}}+\|rA\c \Ab\|_{L_t^2
L_\omega^{p}}\les\Rp+\RR^2,$$ then on $\H_t$
\begin{equation}\label{czm}
\|r\D_0 M\|_{L_t^2 L^{p}_\omega}\les \|r M\|_{L_t^2
L^{p}_\omega}+\Rp+\RR.
\end{equation}

Recall that $M=\sn \tr\chi \mbox{ or } \mu$.  (\ref{s4}) and
(\ref{mumu}) can be symbolically
recast as
\begin{equation}\label{LM}
\sn_L M+\frac{m}{2}\tr\chi M=\chih\c M+H_1+H_2+H_3
\end{equation}
where \begin{footnote} {In view of $|a-1|\le 1/2$ in (\ref{mt7}), the
factors $a^m, \,m\in {\Bbb N}$  can be ignored in $H_i$ when we
employ (\ref{LM}) to prove Proposition \ref{MN1}.}
\end{footnote}
$$
H_1=\chih\c \D_0 M+F\c A, \quad H_2=\Ab\c A\c A,\quad
\mbox{and}\quad H_3= r^{-1}\bar R, \iota \c \bar R,
$$
with
\begin{equation*} 
(m,F)=\left \{\begin{array}{lll}(2,  \sn \tr\chi), & \mbox{if } M=\mu,\\
(3, 0),  & \mbox{if } M=\sn\tr\chi.
\end{array}\right.
\end{equation*}

We now establish the following estimates, which,
in view of (\ref{sob.3}) and (\ref{intp1}), imply Proposition \ref{smr}.

\begin{proposition}\label{MN1}
Let $M$ denote either $\sn \tr\chi$ or $\mu$, there hold
\begin{equation}\label{M1}
\|r^{\frac{3}{2}} M\|_{L_\omega^{p} L_t^\infty}+ \|r M\|_{L_t^2
L_\omega^{p}}\les \Rp+\RR,
\end{equation}
\begin{equation}\label{n1cz}
\|r\sn \zeta, r\sn \chih\|_{L_t^2L_\omega^{p}}\les \Rp+\RR.
\end{equation}
\end{proposition}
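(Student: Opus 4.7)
The plan is to integrate the transport equation (\ref{LM}) via Lemma \ref{tsp2}, take $L_\omega^p$ norms, and couple the resulting estimate with the Hodge estimate (\ref{czm}) using the smallness of $\RR+\Rp$. By the inequality (\ref{intp1}) and the Sobolev embedding (\ref{sob.3}), once (\ref{M1}) and (\ref{n1cz}) are established in the stated norms, the full Proposition \ref{smr} follows from (\ref{simp3}) and (\ref{M1}); so the focus is on (\ref{M1}) and (\ref{n1cz}).

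First I would apply Lemma \ref{tsp2} to (\ref{LM}) with $m\in\{2,3\}$, treating $\chih$ as the coefficient $G$ (the required bound $\|\chih\|_{L_\omega^\infty L_t^2}\les \RR$ is supplied by (\ref{mt2})). Using $v_t\approx r^2$ from Proposition \ref{cmps}, this gives the pointwise bound
\[
r^{m}|M(t,\omega)| \les \int_0^t r_\tau^{m}\big(|H_1|+|H_2|+|H_3|\big)(\tau,\omega)\, na\, d\tau.
\]
Dividing by $r^{m-3/2}$ and taking $L_t^\infty L_\omega^p$, and separately performing the $L_t^2 L_\omega^p$ integration after multiplying by $r^{1-m}$, the problem reduces to estimating each $H_i$ in a weighted $L_t^1 L_\omega^p$-type norm.

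Next I would dispose of $H_2,H_3$ and the $F\cdot A$ piece of $H_1$. For $H_3= r^{-1}\bar R+\iota\bar R$, the decomposition in Proposition \ref{dcmp2}, Lemma \ref{error}, the flux estimate (\ref{cflux}) and (\ref{bkm1}) give $\|r\bar R\|_{L_t^2 L_\omega^p}\les \Rp+\RR$. For $H_2=\Ab\cdot A\cdot A$, the trace Sobolev (\ref{intp1}) together with Proposition \ref{prl1} yields $\|rH_2\|_{L_t^2 L_\omega^p}\les (\Rp+\RR)\RR$ in the style of (\ref{blin1}). The term $F\cdot A$ in $H_1$ (present only in the $\mu$-case, with $F=\sn\tr\chi$) is handled by a boot-strapped use of (\ref{simp3}) together with (\ref{intp1}). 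After these steps I obtain schematically
\[
\|r^{3/2}M\|_{L_\omega^p L_t^\infty}+\|rM\|_{L_t^2 L_\omega^p}\les \Rp+\RR+ \mathcal J,\qquad \mathcal J:=\|r\,\chih\cdot \D_0 M\|_{L_t^2 L_\omega^p}.
\]

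The hard part is the remaining coupling term $\mathcal J$, because it ties the transport estimate back to $\D_0 M$, which by the Hodge estimate (\ref{czm}) is controlled only by $\|rM\|_{L_t^2 L_\omega^p}+\Rp+\RR$; this creates an apparent feedback loop. My plan is to split the $L_\omega^p$ product by H\"older with exponents $(2p, 2p)$ paired against a suitable $L_t$ splitting so that the $\chih$ factor is measured in a norm controlled by Proposition \ref{prl1} and Proposition \ref{smr}'s earlier outputs, namely the part already established. Specifically, H\"older in $\omega$ and $t$ gives
\[
\mathcal J\les \|r^{1/2}\chih\|_{L_t^\infty L_\omega^{2p}}\,\|r^{1/2}\D_0 M\|_{L_t^2 L_\omega^{2p}}.
\]
The first factor is $\les \RR$ by (\ref{cor4}). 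For the second factor I would interpolate between (\ref{czm}) and a boundary $L^p(S_t)$ estimate (via Lemma \ref{hdgm1} applied to $\chih$ or $\zeta$), absorbing the resulting $\|rM\|_{L_t^2L_\omega^p}$ into the left-hand side thanks to the smallness of $\RR+\Rp$. This closes (\ref{M1}). Feeding (\ref{M1}) back into (\ref{czm}) then yields (\ref{n1cz}).

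The principal obstacle is precisely this closure: one must extract smallness from $\chih$ or $\zeta$ (using (\ref{mt2}) and the existing $L_\omega^{2p} L_t^\infty$ control from (\ref{cor4})) and use (\ref{czm}) to reduce the auxiliary $\D_0 M$ norm to a quantity absorbable into the left. Every other term in the transport estimate is bounded directly by the curvature flux $\Rp$, the Ricci bound $\RR$, or a product of two such factors with a universally small prefactor coming from the H\"older pairing in time, which is why the assumption that $\Rp+\RR$ is small (guaranteed by taking $T-u$ small, as remarked after the statement) suffices to conclude.
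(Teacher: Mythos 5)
Your overall strategy coincides with the paper's: integrate (\ref{LM}) along the null geodesics via Lemma \ref{tsp2} with $\chih$ as the Gronwall coefficient (using (\ref{mt2})), estimate $H_2$ and $H_3$ directly in terms of $\Rp+\RR$, and close the coupling term $\chih\cdot \D_0 M$ by combining the Hodge estimate (\ref{czm}) with the smallness of $\RR+\Rp$, absorbing the resulting multiple of $\|rM\|_{L_t^2L_\omega^{p}}$ into the left side and then deducing (\ref{n1cz}) from (\ref{czm}). Your handling of $H_2$ and $H_3$ is essentially the paper's.

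The genuine gap is in your H\"older splitting of the coupling term $\mathcal J$. Pairing the two factors as $L_\omega^{2p}\times L_\omega^{2p}$ forces you to control $\|r^{1/2}\D_0 M\|_{L_t^2L_\omega^{2p}}$, i.e. $\sn\chih,\sn\zeta$ in $L_\omega^{2p}$ with $2p>p$. The Calderon--Zygmund/Hodge theory of Section 6 (Lemma \ref{hdgm1}, and hence (\ref{czm})) is established only for exponents $q\le p$, because the weakly spherical condition of Assumption \ref{l8.1} is an $L^p_\omega$ statement; and there is no higher endpoint (in particular no $L^\infty$ bound on $\sn\chih$ at this regularity) from which one could interpolate \emph{down} to $2p$, so the proposed ``interpolation between (\ref{czm}) and a boundary $L^p(S_t)$ estimate'' cannot produce the needed factor. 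The correct splitting is the one the paper uses: H\"older in $t$ first, then $(\infty,p)$ in $\omega$ --- put $A=\chih$ in $L_\omega^\infty L_t^2$, which is precisely the norm bounded by $\RR$ via (\ref{mt2}) and supplies the smallness, and keep $\D_0 M$ in $L_\omega^{p}L_t^2$, which is dominated by $\|r\D_0 M\|_{L_t^2L_\omega^{p}}$ by Minkowski and then estimated by (\ref{czm}); the term $\RR\,\|rM\|_{L_t^2L_\omega^{p}}$ is absorbed into the second inequality of (\ref{M1}), after which the $L_\omega^{p}L_t^\infty$ bound follows. Two smaller points: the $F\cdot A$ term (present only for $M=\mu$, with $F=\sn\tr\chi$) is not controlled by (\ref{simp3}), since $\sn\tr\chi$ is not a component of $\Ab$; it must be handled by first running the whole argument for $M=\sn\tr\chi$ (where $F=0$) and feeding that bound in. Also (\ref{cor4}) is stated only for $2\le p\le 4$, so your first factor $\|r^{1/2}\chih\|_{L_\omega^{2p}L_t^\infty}$ should be obtained from (\ref{intp1}) together with (\ref{prl2}) rather than from (\ref{cor4}).
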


\begin{proof}
 (\ref{n1cz}) can be obtained immediately by combining the second
estimate in (\ref{M1}) with (\ref{czm}), we  consider the first norm
in (\ref{M1}). By  (\ref{LM}), Lemma \ref{tsp2} and \ref{inii},
integrating along the null geodesic $\Ga_\omega$ initiating from the
vertex, we obtain
\begin{equation*} 
|M|\le \sum_{i=1}^3 \left|v_t^{-\frac{m}{2}}\int_0^t
v_{t'}^{\frac{m}{2}}|H_i|na dt'\right|=I_1(t)+I_2(t)+I_3(t).
\end{equation*}
 We have
\begin{align}
v_t^{\frac{3}{4}} I_3(t) &\le v_t^{\frac{3}{4}-\frac{m}{2}}
\int_0^{s(t)} v_\tt^{\frac{m}{2}-\f12} |\bar R| ds(\tt) \les
s^{\frac{3}{2}-m} \int_0^{s(t)} s^{m-2} |\bar R r| d s(\tt)\label{i1.2}\\
&\les\left(\int_0^{s(t)} |\bar R r|^2 d s(t')
\right)^{\f12}\nn
\end{align}
where we used  $s^2\approx v_t$, $r\approx s$ in Proposition
\ref{cmps} to derive the second inequality. Thus
\begin{equation}\label{i32}
\left\|\sup_{t\in(0,1]}|v_t^{\frac{3}{4}}
I_3(t)|\right\|_{L_\omega^{p}} \les \|r\bar R\|_{L_\omega^{p}
L_t^2}\les \Rp +\RR.
\end{equation}
We can proceed in a similar fashion for the other two terms,
\begin{align}
v_t^{\frac{3}{4}} I_2(t) &\le
v_t^{\frac{3}{4}-\frac{m}{2}}\int_0^{s(t)} v_\tt^{\frac{m}{2}} |A|^2
|\Ab| d s(\tt) \les s^{\frac{3}{2}-m} \int_0^{s(t)} s^{m}|A|^2 |\Ab|
d s(\tt)\label{i2.1}\\
&\les \|A\|_{L_t^2}^2\|r^{3/2}\Ab\|_{L_t^\infty}\nn,
\end{align}
then
\begin{equation}\label{i2t}
\left\|\sup_{0<t\le 1}|v_t^{\frac{3}{4}}
I_2(t)|\right\|_{L_\omega^{p}} \les \|A\|_{L_\omega^\infty
L_t^2}^2\|r^{3/2} \Ab\|_{L_\omega^{p} L_t^\infty}\les \RR^3.
\end{equation}
Similarly,
\begin{align}
&v_t^{\frac{3}{4}} I_1(t) \les r(t)^{\frac{3}{2}-m}  \int_0^t s^m
|(\D_0 M+F)\c A |dt'.\label{vti1}
\end{align}
Taking $L_\omega^{p}$, using (\ref{czm}) for $\D_0 M$,
\begin{align}
\left\|\sup_{0<t\le 1}|v_t^{\frac{3}{4}} I_1(t)|
\right\|_{L_\omega^{p}}&\les (\|v_t^{\frac{3}{4}}\D_0
M\|_{L_\omega^{p} L_t^2}+\|v_t^{\frac{3}{4}}F\|_{L_\omega^{p}
L_t^2})\|A\|_{L_\omega^\infty L_t^2}\nn\\
&\les (\|v_t^{\frac{3}{4}} M\|_{L_t^2
L_\omega^{p}}+\|v_t^{\frac{3}{4}}F\|_{L_t^2
L_\omega^{p}}+\Rp)\|A\|_{L_\omega^\infty L_t^2} \label{i1t1}.
\end{align}
Note that $\|A\|_{L_\omega^\infty L_t^2}\les \RR$ can be
sufficiently small, then
 we combine (\ref{i32}), (\ref{i2t}) and (\ref{i1t1}) to obtain
  $$\|r^{3/2} M\|_{L_\omega^{p} L_t^\infty(\H)}\les \RR+\Rp.$$

Using Minkowski inequality, we can obtain in view of (\ref{i1.2})
\begin{align*}
r(t)^{-\f12}\|v_t^{\frac{3}{4}}I_3(t)\|_{L_\omega^{p}}&\les
r(t)^{-1}\int_0^t r(t') \|\bar R\|_{L_\omega^p} dt'
\end{align*}
then
\begin{equation*}
\|r I_3(t)\|_{L_t^2 L_\omega^{p}}\les \|r\bar R\|_{L_t^2[0,1]
L_\omega^{p}}\les \Rp.
\end{equation*}
We can also obtain in view of (\ref{i2.1}) that
\begin{equation*}
v_t^{\f12} I_2(t)\les\|A\|_{L_t^2}^2 \|r \Ab\|_{L_t^\infty}
\end{equation*}
by taking $L_t^2 L_\omega^{p}$ norm and using (\ref{trc2}), we have
\begin{align*}
\|v_t^{\f12}I_2\|_{L_t^2 L_\omega^{p}}&\les\|A\|_{L_\omega^\infty
L_t^2}^2 \|r \Ab\|_{L_\omega^{p} L_t^\infty}\les \RR^3.
\end{align*}
For $I_1(t)$, in view of (\ref{vti1})
\begin{align*}
&v_t^{\frac{1}{2}} I_1(t) \les r(t)^{1-m}  \int_0^t s^m
|(\D_0 M+F)\c A |dt'.
\end{align*}
Now taking $L_t^2([0,1])L_\omega^{p}$ by H\"{o}lder inequality,
\begin{align*}
\|v_t^{\f12} I_1\|_{L_t^2 L_\omega^{p}}&\les\|A\|_{L_\omega^\infty
L_t^2}\|r(|\D_0 M|+|F|)\|_{
L_\omega^{p} L_t^2}\\&\les\RR(\|rM\|_{L_t^2
L_\omega^{p}}+\|rF\|_{L_t^2 L_\omega^{p}}+\Rp)
\end{align*}
where for the last step we employed Minkowski inequality and (\ref{czm}). Since $\RR$ can be sufficiently small, the second inequality in
(\ref{M1}) can be proved.
\end{proof}

 Now we prove Assumption \ref{l8.1}.
\begin{proposition}\label{swsph}
For the transport local coordinates $(t,\omega)$, the following
properties hold true for all surfaces ${S}_t$ of the time foliation
on the null cone $\H$: the metric $\stackrel{\circ}\gamma_{ij}(t)$
on each ${S}_t$ verifies weakly spherical conditions i.e.
\begin{equation}\label{8.0.3}
\|\p_k \stackrel{\circ}\gamma_{ij}(t)-
\p_k{\gas}_{ij}\|_{L^{p}_\omega L_t^\infty}\lesssim \RR+\Rp,
\end{equation}
where $ i,j,k=1,2$.
\end{proposition}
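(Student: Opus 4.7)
The plan is to adapt the proof of (WS2) from \cite{Wang1} to the $L^p_\omega$ setting, using the strengthened $L^p$ estimates on Ricci coefficients established in Proposition \ref{smr}. In the transport coordinates $(t,\omega)$ on $\H$, the pullback metric components $\gac_{ij}(t,\omega)$ are scalar functions on $[0,1]\times {\Bbb S}^2$, and the angular coordinate vector fields $\p_k$ commute with $\p_t$. From the transport equation $\mathcal{L}_L \ga = 2\chi$ combined with the normalization $\gac = r^{-2}\ga$ and the identity $r\tr\chi \approx 2$, the leading-order term in $\p_t \gac$ is algebraically cancelled, yielding a schematic equation of the form
\begin{equation*}
\p_t \gac_{ij} = an\,\bigl( r^{-2}\chih_{ij} + V \cdot \gac_{ij} \cdot (\cdots)\bigr)
\end{equation*}
where $V = \tr\chi - 2/s$ and the remainder involves quadratic terms in $A,\Ab$ times $\gac$.

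Applying $\p_k$ and integrating in $t$ from the vertex, where (\ref{8.1.1}) gives $\p_k \gac \to \p_k \gas$, produces the representation
\begin{equation*}
\p_k \gac_{ij}(t) - \p_k \gas_{ij} = \int_0^t \mathcal{S}_k(t')\,dt'
\end{equation*}
in which the principal part of $\mathcal{S}_k$ consists of the angular derivatives $\p_k \chih$ and $\p_k V$ (equivalent, modulo bounded-coefficient rearrangements using Lemma \ref{frm1}, to $\sn\chih$ and $\sn\tr\chi$ with appropriate $r$-weights), together with terms of the form $V \cdot \p_k\gac$, $A\cdot\Ab \cdot \p_k \gac$, and contributions from $\p_k (an)$ that are absorbed via (\ref{calc0}) and (\ref{sobinf}). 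Taking $L^p_\omega$, applying Minkowski in $t'$, and using H\"older to pair $L^2_t L^p_\omega$ sources with $L^2_t L^\infty_\omega$ coefficients, the leading terms are controlled by $\|r\sn\chih\|_{L^2_t L^p_\omega}$ and $\|r\sn\tr\chi\|_{L^2_t L^p_\omega}$, both bounded by $\RR+\Rp$ thanks to Proposition \ref{MN1} and (\ref{n1cz}); the quadratic coefficient terms are controlled by $\|V,A\|_{L^2_t L^\infty_\omega}\les \RR+\Rp$ from (\ref{smry1}) and (\ref{mt5}).

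This leads to an estimate of the schematic form
\begin{equation*}
\sup_{t\in(0,1]}\|\p_k \gac(t) - \p_k \gas\|_{L^p_\omega} \les (\RR+\Rp) + C(\RR+\Rp)\,\|\p_k\gac - \p_k\gas\|_{L^p_\omega L^\infty_t},
\end{equation*}
where the second term comes from the $V\cdot\p_k\gac$ type self-coupling and is bounded via the bootstrap Assumption \ref{l8.1} giving $\|\p_k\gac-\p_k\gas\|_{L^p_\omega L^\infty_t}\le \Delta_0$. Choosing $T-u$ sufficiently small, hence $\RR+\Rp$ sufficiently small, absorbs the coupling term into the left side and yields (\ref{8.0.3}). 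Since Assumption \ref{l8.1} only required $\Delta_0<1/2$, we may in particular take $\Delta_0 = C(\RR+\Rp)$ for the universal $C$ produced above, closing the bootstrap.

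The main obstacle I expect is bookkeeping: carefully translating the coordinate-derivative $\p_k$ (which is natural for stating weak sphericity) into the covariant $\sn$ (which is natural for the Ricci-coefficient estimates), ensuring that the conversion errors — which involve $(\Ga - \Gz)\cdot\gac$ with $\Ga$ depending on $\p_k\gac$ itself — enter only as perturbations absorbable by the small bootstrap constant. The cancellation in the transport equation that isolates $V$ rather than $\tr\chi$ is critical, since $r\sn\tr\chi$ is in $L^2_t L^p_\omega$ but $\sn\tr\chi$ alone would force a weight that is not available from Proposition \ref{smr}.
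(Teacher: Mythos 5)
Your proposal is correct and follows essentially the same route as the paper: differentiate the transport equation for $\gac_{ij}$, integrate from the vertex using (\ref{8.1.1}), convert $\p_k$ into $\sn$ at the cost of $\Gamma\cdot\chih$ self-coupling terms, bound the principal terms $\sn\tr\chi$ and $\sn\chih$ in $L^{p}_\omega$ via Proposition \ref{MN1} and (\ref{czm}), and absorb the coupling by the smallness of $\RR$ (equivalently of $\Delta_0$ in Assumption \ref{l8.1}). One small correction: the exact cancellation in $\frac{d}{dt}\gac_{ij}$ produces $\kp=\tr\chi-(an)^{-1}\ovl{an\tr\chi}$ as in (\ref{dtga}), which is controlled in $L_t^2 L_\omega^\infty$ by (\ref{mt5}), rather than $V=\tr\chi-\frac{2}{s}$; if one insists on $V$, a residual coefficient of the form $s^{-1}(an-\ovl{an})$ remains and requires a separate (though available) oscillation estimate of $\Omega$-type.
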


\begin{proof}
Since relative to the transport coordinate on $\H$,
$\frac{d}{ds}\ga_{ij}=2\chi_{ij}$,
\begin{equation}\label{dtga}
\frac{d}{dt}(\gac_{ij})=an(\kp \c \ga_{ij}+2\chih_{ij})r^{-2}.
\end{equation}
Differentiate the above equation again, we obtain,
\begin{align*}
\frac{d}{dt}\p_k \gac_{ij}&=an\left(\p_k \log(an)\kp \gac_{ij}+\p_k
\tr\chi \gac_{ij}\right.\\
&\quad \, \left. +\kp \p_k \gac_{ij}+2 \p_k \chih_{ij} r^{-2}+2\p_k
\log(an) \chih_{ij}r^{-2}\right)
\end{align*}
where $i, j, k=1,2$,  with the initial condition given by
(\ref{8.1.1}). Integrating the following transport equation along a
null geodesic initiating from vertex,
 by $\|\kp\|_{L_t^2 L_\omega^\infty}\les \RR$ in
(\ref{mt5}) and a similar argument to Lemma \ref{tsp2}, we can
obtain
\begin{align*}
\left|\p_k\stackrel{\circ}\gamma_{ij}(t)-\p_k{\gas}_{ij}\right|
&\les\Big|\int_0^{s(t)}\left(\p_k \tr\chi\cdot
{\stackrel{\circ}\ga}_{ij}+r^{-2}(\sn_k\chih_{ij}-\Gamma\cdot
\chih)\right.\\
&\quad \, \left. +\p_k \log(an)\gac_{ij}\kp+2\p_k \log(an)
\chih_{ij}r^{-2}+\kp \p_k {\gas}_{ij}\right)  d s(\tt)\Big|,
\end{align*}
where $\Gamma$ represents Christoffel symbols, and $\Gamma \cdot
\chih$ stands for the terms $\sum_{l=1}^2\Ga_{ki}^l\chih_{lj}$ with
$l=1,2$. Then with the help of  $\|\kappa\|_{L_t^2
L_\omega^\infty(\H)}\les \RR$ in (\ref{mt5}) and Lemma \ref{tsp2},
\begin{align}
& \left\|\sup_{0<t\le 1}\left|\p_k
\stackrel{\circ}\gamma_{ij}(t)-\p_k{\gas}_{ij}\right|\right\|_{L_\omega^{p}} \nn\\
&\qquad \qquad \lesssim\|\Gamma\|_{L_\omega^{p}
L_t^2}\||\chih|_\ga\|_{L_\omega^\infty L_t^2}+\|r|\sn \tr \chi|_\ga
\|_{L_\omega^{p} L_t^1}+\|r|\sn \chih|_\ga\|_{L_\omega^{p}
L_t^1}\nn\\
&\qquad \qquad +\|r|\zeta+\zb|_\ga\c\kp\|_{L_\omega^{p} L_t^1}+
\left\|r|(\zeta+\zb)\c\chih|_\ga \right\|_{L_\omega^{p} L_t^1}
+\|\kp\|_{L_\omega^{p}L_t^1}\label{li2}.
\end{align}
We estimate the terms in the line of (\ref{li2}) by (\ref{mt2}) and
(\ref{blin1})
\begin{equation*}
(\ref{li2})\les\|\kp\|_{ L_\omega^{p} L_t^2}(\|
A\|_{L_\omega^\infty L_t^2}+1)+\|r A\c
A\|_{L_\omega^{p}L_t^1}\les\RR^2+\RR
\end{equation*}
 Using
Proposition \ref{MN1},
\begin{equation*}
\left\|\sup_{0<t\le 1}\left|\p_k
\stackrel{\circ}\gamma_{ij}(t)-\p_k{\gas}_{ij}\right|\right\|_{L_\omega^{p}}\les\Rp+\|\chih\|_{L_\omega^\infty
L_t^2} \|\Gamma\|_{L_\omega^{p} L_t^2}.
\end{equation*}

Summing over all $i,j,k=1,2$, we have for the component of Christoffel Symbol
$$
\|\Ga\|_{L_\omega^{p}}\les\sum_{i,j,k=1,2} \|\p_k
(\gac_{ij}-{\gas}_{ij})\|_{L_\omega^{p}}+C,
$$
where $C$ is the constant such that  the Christoffel symbol of
$\gas$ satisfies $|\p \gas|\le C$, (\ref{8.0.3}) then
follows by   $\|\chih\|_{L_\omega^\infty L_t^2}\le C \RR$ which can
be sufficiently small.
\end{proof}

\section{\bf {Proof of Proposition \ref{rices}}}\label{sec7}

In this section, we give the proofs of (\ref{pi.2})--(\ref{ric3}) in Proposition \ref{rices}.
Recall the definition of the region $\D^+$ from Section \ref{convv}.  We consider $\D^+$ under
the rescaling  coordinates according to $(t, x)\rightarrow (\frac{t}{\la}, \frac{ x}{\la})$.
The quantities $\C_0$ and $\R_0$ introduced in Theorem \ref{recalll2} in the frequency dependent
coordinates  then satisfy $\C_0\les \la^{-\f12}$, $\R_0\les \la^{-\f12}$ and  the quantity
$\R_{0,p}\les \la^{-\f12}$.  Estimates in Theorem \ref{recalll2} hold with $\R_0$ replaced
by $\la^{-\f12}$ and Proposition \ref{smr}  holds with $\R_0+\R_{0,p}$ replaced by $\la^{-\f12}$.
The estimates for $\zeta, k$ in (\ref{ricstu}) were derived by combining (\ref{mt4}), (\ref{calc0}) and (\ref{trc2}).

\subsection{Estimates for $z$ and $\Omega$}

We first derive the estimates on $z$ and $\Omega$ given in Proposition \ref{rices},
where $z$ and $\Omega$ are defined by (\ref{zomega}), i.e. $z=a\tr\chi'-\frac{2}{n(t-u)}$
and $\Omega=\frac{\bb^{-1}-n^{-1}}{t-u}$.

\begin{proposition}
Let $\U=\frac{1}{s}-\frac{1}{an(t-u)}$, and let $z$ and $\Omega$ be defined by (\ref{zomega}).
then there holds on $S:=S_{t,u}$ that
\begin{equation}\label{u2}
\|r^{-\f12}(\U,z, \Omega)\|_{L^2(S)}+\|\U,z, \Omega\|_{L^4(S)}\les \la^{-\f12}.
\end{equation}
\end{proposition}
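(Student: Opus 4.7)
The plan is to reduce all three quantities to a common form and extract the needed estimates from pointwise bounds of type $|\,\cdot\,| \lesssim s^{-1/2}\lambda^{-1/2}$ along the null generators of $C_u$. First, I would exploit the algebraic identity
$$z = \tr\chi - \frac{2}{n(t-u)} = a\Big(\tr\chi' - \frac{2}{s}\Big) + 2a\Big(\frac{1}{s} - \frac{1}{an(t-u)}\Big) = aV' + 2a\U,$$
where $V' = \tr\chi' - 2/s$. Since (\ref{mt0}) after rescaling gives $\|V'\|_{L^\infty(S)} \lesssim \R_0^2 \lesssim \lambda^{-1}$, and the area of $S$ is $\approx r^2 \lesssim \lambda^2$, this contributes $\|r^{-1/2}aV'\|_{L^2(S)} + \|aV'\|_{L^4(S)} \lesssim r^{1/2}\lambda^{-1} \lesssim \lambda^{-1/2}$. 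The problem thus reduces to estimating $\U$ and, by an analogous argument, $\Omega$.

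For $\U$, I would set $q(s) := an(t-u) - s$ along the null generator $\Gamma_\omega$. Using $dt/ds = (an)^{-1}$ from (\ref{st}) and $q(0)=0$, a direct calculation gives the integral representation
$$q(s) = \int_0^s L'(an)(t'-u)\,ds', \qquad \U = \frac{q}{s\cdot an(t-u)}.$$
Near the vertex $an(t-u) \approx s$, so Cauchy--Schwarz yields
$$|\U(s)| \lesssim s^{-2}\int_0^s s'\,|L'(an)|\,ds' \lesssim s^{-1/2}\,\|L'(an)\|_{L^2(\Gamma_\omega \cap \{s' \le s\})}.$$
Now $L'(an) = -n\nu + aL'(n)$ with $L'(n) = a^{-1}(\bT(n) + N(n))$, so the flux bound $\|\nu\|_{L^\infty_\omega L^2_t} \lesssim \lambda^{-1/2}$ from (\ref{mt2}) together with $\|\bd n\|_{L^2_t L^\infty_\omega} \lesssim \lambda^{-1/2}$ from (\ref{calc0}) gives $\|L'(an)\|_{L^\infty_\omega L^2_s} \lesssim \lambda^{-1/2}$, hence the pointwise bound $|\U| \lesssim s^{-1/2}\lambda^{-1/2}$ on $S$.

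An entirely parallel argument treats $\Omega = (n - an_p)/(n\bb(t-u))$: since $n_p$ is constant on the fixed null cone $C_u$, the numerator vanishes at the vertex, $\frac{d}{ds}(n - an_p) = L'(n) + \nu a n_p$, and the same Cauchy--Schwarz step produces $|\Omega(s)| \lesssim s^{-1/2}\lambda^{-1/2}$. Translating these pointwise bounds into the norms on $S$ via $s \approx r$ from Proposition \ref{cmps} and the area element $dA \approx r^2 d\omega$ gives
$$\|\U,\,\Omega\|_{L^4(S)}^4 \lesssim \int_{\mathbb{S}^2} r^2 \cdot r^{-2}\lambda^{-2}\,d\omega \lesssim \lambda^{-2}, \qquad \|r^{-1/2}(\U,\Omega)\|_{L^2(S)}^2 \lesssim \int_{\mathbb{S}^2} \lambda^{-1}\,d\omega \lesssim \lambda^{-1},$$
and combining with the estimate on $V'$ closes the bound on $z$.

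The main obstacle will be verifying that the apparent vertex singularities in the transport equations for $\U$ and $\Omega$ (whose naive forms carry explicit $1/(t-u)$ sources) are exactly cancelled by the second-order vanishing of the numerators $q$ and $n - an_p$ in $s$; this requires careful bookkeeping of the uniform equivalences $s \approx t-u \approx r$ on $S$ and of $L^2_t \sim L^2_s$ along null generators, both of which follow from $C^{-1} < an < C$. Once these reductions are in place, the estimate reduces entirely to flux-type bounds on quantities ($\nu$, $\bd n$) that have already been controlled in Theorem \ref{recalll2}.
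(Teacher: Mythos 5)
Your proof is correct but takes a genuinely cleaner route than the paper. The paper splits $\U = z_1 - z_2$ with $z_1 = \frac{1}{s}-\frac{1}{an_p(t-u)}$ and $z_2 = \frac{1}{a(t-u)}(\frac{1}{n}-\frac{1}{n_p})$: the second piece is integrated directly, but $z_1$ obeys the Riccati-type transport equation (\ref{dz1}) with a $z_1^2$ source, forcing a continuity/bootstrap argument (around (\ref{z1est})) before Lemma \ref{tsp2} can be invoked. You instead write $\U = q/(s\,an(t-u))$ with $q := an(t-u)-s$, whose transport equation along a generator is \emph{linear}, $\frac{dq}{ds} = L'(an)\,(t'-u)$, since the $+1$ and $-1$ in $\frac{d}{ds}[an(t-u)]$ and $\frac{d}{ds}s$ cancel exactly. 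This removes the Riccati nonlinearity and the attendant bootstrap; a single Cauchy--Schwarz on the integral (using $(t'-u)\approx s'$) gives the pointwise bound $|\U|\lesssim s^{-1/2}\|L'(an)\|_{L^\infty_\omega L^2_s}$, and the rest follows from the flux bounds in (\ref{mt2}), (\ref{calc0}) and the area-element comparison as you describe. The treatment of $\Omega$ is exactly the paper's (same decomposition, same integral), and the reduction of $z$ via $z=aV+2a\U$ is also the same. Two small points: in $\frac{d}{ds}(n-an_p)$ the extra $a$ in "$\nu a n_p$" is a slip --- since $\nu=-\frac{da}{ds}$, the correct identity is $L'(n)+n_p\nu$ (harmless, as $a\approx 1$) --- and the equivalence $an(t-u)\approx s$ you need is not circular, as it follows from (\ref{mt7}), $C^{-1}<n<C$, and $(t-u)\approx r\approx s$ in Proposition \ref{cmps}, none of which depend on the bound being proved.
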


\begin{proof}
Observe that $z=aV+2a\U$ and $\|V\|_{L^4(S)}+\|r^{-\f12} V\|_{L^2(S)}\les \tau_*^{\f12}\RR^2\les\la^{-\f12}$.
Thus the estimates for $z$ follow immediately from those estimates on $\U$.

In order to derive the estimates on $\U$, we write $\U=z_1-z_2$, where
\begin{equation}\label{z12}
z_1=\frac{1}{s}-\frac{1}{an_p (t-u)}, \quad
z_2=\frac{1}{a(t-u)} \left(\frac{1}{n}-\frac{1}{n_p}\right)
\end{equation}
with $n_p$ being the value of the lapse function $n$ at the vertex $p$ of the null cone $C_u$.
Integrating along any null geodesic initiating from vertex, we obtain,
\begin{equation}\label{z2e}
|z_2|\le \frac{1}{a (t-u)} \left|\int_u^t \frac{d}{ds} (\frac{1}{n})na dt'\right|.
\end{equation}
Therefore
\begin{equation}\label{tn1}
\|z_2\|_{L_\omega^\infty L_t^2}\les \|\nab_L n\|_{L_\omega^\infty L_t^2}\les\C_0.
 \end{equation}
Recall that $|a-1|\le 1/2$, $C^{-1}<n<C$ and $t-u\approx s$. We can derive from (\ref{z2e}) that
\begin{equation}\label{z2e2}
\|r^{-\f12}z_2\|_{L^2(S)}\les\|r^{-\f12}\nab_L n\|_{L^2(S)}\les \C_0\les \la^{-\frac{1}{2}},
\quad \|z_2\|_{L^4(S)}\les \|\nab_L n\|_{L^4(S)}\les \C_0 \les \la^{-\frac{1}{2}}.
\end{equation}

We next consider $z_1$. On the outgoing null cone $C_u$ with vertex $p$, we have the transport
equation
\begin{equation}\label{dz1}
\frac{d}{ds}z_1+\frac{2}{s}z_1=z_1^2+\frac{1}{an_p (t-u)}z_2+\frac{{\frac{da}{ds}}}{a^2n_p (t-u)}
\end{equation}
Integrating along any null geodesic initiating from the vertex, we obtain,
\begin{equation}\label{tspz1}
|z_1|\les \left|\frac{1}{s^2}\int_u^t s^2 \left( z_1^2+
\frac{1}{an_p (t-u)}z_2+\frac{{\frac{da}{ds}}}{a^2n_p (t-u)}\right)\right|.
\end{equation}
In view of (\ref{tn1}), it follows that
\begin{align}
\|z_1\|_{L_\omega^\infty L_t^2}&\les \tau_*^{\f12}\|z_1\|_{L_\omega^\infty
L_t^2}^2 +\|z_2\|_{L_\omega^\infty L_t^2}+\|\nab_L
a\|_{L_\omega^\infty L_t^2}\nn\\&\le C_2 \left(\tau_*^\f12 \|z_1\|_{L_\omega^\infty L_t^2}^2+\C_0\right).\label{z1est}
\end{align}
Recall that $\C_0\le C_1 \la^{-1/2}$ for some universal constant $C_1>0$. We claim that
$\|z_1\|_{L_\omega^\infty L_t^2} \le 2 C_2 \C_0$  on any time interval $[0, T]$ with
$T<(4 C_1 C_2^2)^{-2}$. Observe that $\|z_1\|_{L_\omega^\infty L_t^2} \rightarrow 0$ as $T\rightarrow 0$,
the estimate holds on sufficiently small interval $[0, T]$. Thus, it follows from (\ref{z1est}) and
$\tau_*\le \la T$ that on the same time interval $[0, T]$ with $T<(4C_1 C_2^2)^{-2}$ there holds
$\|z_1\|_{L_\omega^\infty L_t^2}< 2 C_2 \C_0$. Consequently the estimate holds on larger interval and thus
we must have the claim on any interval $[0, T]$ with $T<(4C_1 C_2^2)^{-2}$. Therefore
\begin{equation}\label{tn2}
\|z_1\|_{L_\omega^\infty L_t^2}\les \C_0\les \la^{-1/2}.
\end{equation}
With the help of (\ref{dz1}), (\ref{tn2}) and Lemma \ref{tsp2} we thus obtain
\begin{equation*}
\|r^{-\f12}z_1\|_{L^2(S)}\les
\|r^{-\f12}z_2\|_{L^2(S)}+\|r^{-\f12}\nab_L a\|_{L^2(S)}
\end{equation*}
which together with (\ref{z2e2}) gives
\begin{equation*}
\|r^{-\f12}z_1\|_{L^2(S)}\les\C_0+\N_1[\nu]\les\C_0\les \la^{-1/2}.
\end{equation*}
In the same way, we can also derive that
$$
\|z_1\|_{L^4(S)}\les \|z_2\|_{L_t^\infty L^4_x(C_u)}+\|\nu\|_{L_t^\infty
L_x^4(C_u)} \les\N_1[\nu]+\C_0\les\C_0\les \la^{-1/2}.
$$
Combining this with (\ref{z2e2}), we obtain the estimate for $\U$ in (\ref{u2}).

Finally we consider $\Omega$. Observe that $\Omega=\frac{a^{-1}-1}{n_p(t-u)}+\frac{n_p^{-1}-n^{-1}}{t-u}$,
we have
\begin{equation*}
|\Omega|\le \frac{1}{n_p(t-u)} \left|\int_u^t L(a^{-1}) ds \right|
+\frac{1}{(t-u)} \left|\int_u^t L(n^{-1}) ds \right|.
\end{equation*}
Similar to (\ref{z2e2}) we can derive that
\begin{align*}
&\|r^{-\f12}\Omega\|_{L^2(S)}\les \|r^{-\f12}(|\nab_L n|+|\nab_L a|)\|_{L^2(S)}\les\C_0 \les \la^{-1/2}, \\
&\|\Omega\|_{L^4(S)}\les \|\nab_L n\|_{L^4(S)}+\|\nab_L a\|_{L^4(S)}\les \C_0 \les \la^{-1/2}.
\end{align*}
Thus we complete the proof of (\ref{u2}).
\end{proof}

\begin{proposition}
Relative to the canonical null frame, there holds on every $S_{t,u}$
that
\begin{equation}\label{ric6}
\|r^{\frac{3}{2}}\Lb z\|_{L^{p}_\omega}+\|r^{\frac{3}{2}}\sn z\|_{L^{p}_\omega}\les \la^{-\f12}.
\end{equation}
\end{proposition}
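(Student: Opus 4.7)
The plan is to reduce (\ref{ric6}) to the previously established estimates (\ref{smry1}), (\ref{cor4}), (\ref{u2}), and (\ref{ric4}) by exhibiting clean algebraic identities for $\sn z$ and $\Lb z$ whose right-hand sides consist only of quantities small of order $\lambda^{-1/2}$ in the appropriate $r$-weighted $L^p_\omega$ norm on $S_{t,u}$.

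For $\sn z$ the identity is immediate: since both $t$ and $u$ are constant on $S_{t,u}$, we have $\sn(t-u)=0$, and therefore
\[
\sn z \;=\; \sn\tr\chi \;+\; \frac{2\,\sn n}{n^2(t-u)}.
\]
The first term is covered by $\|r^{3/2}\sn\tr\chi\|_{L^p_\omega L^\infty_t}\lesssim \lambda^{-1/2}$ from (\ref{smry1}). For the second, the weight $r^{3/2}/(n^2(t-u))$ reduces to $r^{1/2}$ up to bounded factors; since $\sn n$ is a component of $\ckk\pi$, the bound (\ref{cor4}) together with the rescaled form $\N_1[\ckk\pi]\lesssim \lambda^{-1/2}$ of (\ref{pi.31}) delivers the required control.

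For $\Lb z$ I would first rewrite $\Lb\tr\chi$ via the definition of $\mu$ as $\mu+\frac{1}{2}(\tr\chi)^2+(k_{NN}+\nab_N\log n)\tr\chi$, and then compute $\Lb(t-u)=n^{-1}-2\bb^{-1}$ from $\bT t=n^{-1}$, $Nt=0$, $Lu=0$, $\bT u=\bb^{-1}$. This gives
\[
\Lb\!\left(\frac{2}{n(t-u)}\right) \;=\; -\frac{2\,\Lb n}{n^2(t-u)} \;-\; \frac{2}{n^2(t-u)^2} \;+\; \frac{4}{n\bb(t-u)^2}.
\]
Substituting the expansion $\tr\chi=\frac{2}{n(t-u)}+z$, so that $\frac{1}{2}(\tr\chi)^2=\frac{2}{n^2(t-u)^2}+\frac{2z}{n(t-u)}+\frac{z^2}{2}$, the two apparently dangerous $r^{-2}$-scale pieces combine into
\[
\frac{4}{n^2(t-u)^2} - \frac{4}{n\bb(t-u)^2} \;=\; \frac{4}{n(t-u)^2}\left(n^{-1}-\bb^{-1}\right) \;=\; -\frac{4\Omega}{n(t-u)},
\]
using precisely the defining relation $\bb^{-1}-n^{-1}=\Omega(t-u)$. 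This produces the clean identity
\[
\Lb z \;=\; \mu + \frac{2z}{n(t-u)} + \frac{z^2}{2} + (k_{NN}+\nab_N\log n)\tr\chi + \frac{2\Lb n}{n^2(t-u)} - \frac{4\Omega}{n(t-u)}.
\]

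The right-hand side is then estimated term by term. The principal term $\mu$ is bounded by $\|r^{3/2}\mu\|_{L^p_\omega L^\infty_t}\lesssim \lambda^{-1/2}$ from (\ref{smry1}). Every remaining term has the schematic form $r^{-1}\cdot(\text{small})$, so the weight $r^{3/2}$ reduces to $r^{1/2}\cdot(\text{small})$; each of the factors $z$, $\slp$, $\Lb n$, and $\Omega$ is then handled in $L^p_\omega$ using (\ref{u2}), (\ref{cor4}), (\ref{cor4}), and (\ref{ric4}) respectively (noting $k_{NN},\,\nab_N\log n\in\slp\subset\ckk\pi$, and that $\Lb n=\bT n - Nn$ has both summands contained in $\ckk\pi$). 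The only slightly nontrivial piece is the quadratic $z^2$, which is bounded via H\"older by $\|z\|_{L^{2p}(S_{t,u})}^2$: interpolating between (\ref{u2}) and a Sobolev estimate on $S_{t,u}$ that uses the $\sn z$-bound of the previous step gives the required $L^{2p}(S)$ control with one factor of $\lambda^{-1/2}$ to spare. The piece $(k_{NN}+\nab_N\log n)\tr\chi$ is split as $(k_{NN}+\nab_N\log n)\bigl(\tfrac{2}{n(t-u)}+z\bigr)$ and treated identically. The conceptual heart of the argument, and the only real obstacle, is the $r^{-2}$-cancellation driven by $\bb^{-1}-n^{-1}=\Omega(t-u)$; this is precisely the reason the denominator $n(t-u)$ (rather than the affine parameter $s$ or $an(t-u)$) is chosen in the definition of $z$, and why $\Omega$ is the natural smallness parameter rather than the raw difference $\bb^{-1}-n^{-1}$.
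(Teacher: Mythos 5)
Your derivation is the paper's argument in essentially every particular: the same identity for $\Lb z$ obtained by expanding $\f12(\tr\chi)^2$ together with $\Lb\bigl(\tfrac{2}{n(t-u)}\bigr)$, the same crucial $r^{-2}$-cancellation $\tfrac{4}{n^2(t-u)^2}-\tfrac{4}{n\bb(t-u)^2}=-\tfrac{4\Omega}{n(t-u)}$, and the same term-by-term control using (\ref{smry1}), (\ref{u2}), (\ref{ric4}) and the trace/interpolation inequality (\ref{intpl})/(\ref{cor4}). Two small points of precision worth fixing: for $\sn z$, the estimate in (\ref{smry1}) controls the \emph{primed} $\sn\tr\chi'$ (the paper drops primes only as a notational convention), and since $\tr\chi=a\tr\chi'$ you must also account for the $\tr\chi'\,\sn a$ contribution, which the paper handles by applying (\ref{intpl}) to $\sn a$; and for the $z^2$ term the paper's route is more direct than your Sobolev interpolation --- since $r\tr\chi\approx 1$ and $r/(n(t-u))\approx 1$ force $|z|\les r^{-1}$, one has $r^{3/2}z^2\les r^{1/2}|z|$ and (\ref{u2}) with (\ref{intpl}) finishes immediately.
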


\begin{proof}
We consider $\Lb z$ first. By the definition of $\mu$ and  using (\ref{lbr}), we derive that
\begin{align*}
\Lb z&-\left(\mu+\tr\chi(k_{NN}+\nab_N \log n )\right)\\
& =\Lb
z-\Lb\tr\chi+\frac{1}{2}(\tr\chi)^2=\f12 (\tr\chi)^2-2 \Lb
\left(\frac{1}{n(t-u)}\right)\\
&=\f12 \left(z+\frac{2}{n(t-u)}\right)^2-2\left(\frac{n\bb^{-1}}{n^2(t-u)^2}-\frac{\Lb
\log n }{n(t-u)}-\frac{n(n^{-1}-\bb^{-1})}{n^2(t-u)^2}\right)\\
&=\f12 \left(z^2+\frac{4z}{n(t-u)}\right)+4\frac{1-n\bb^{-1}}{n^2(t-u)^2}+2\frac{\Lb
\log n }{n(t-u)}.
\end{align*}
In view of  $r\tr\chi\approx 1$ and (\ref{u2}), we have $\|r^{\frac{3}{2}}z^2\|_{L_\omega^{p}}\les \RR \les \la^{-1/2}$.
Therefore, by using (\ref{smry1}) and applying
\begin{equation}\label{intpl}
  \|r^{\f12} F\|_{L_\omega^{p}}\les
  \|r^{-\f12}F\|_{L^2(S)}+\|F\|_{L^4(S)},
\end{equation}
to $F=\Omega,\, \Lb \log n,\, z$, we can deduce in view of (\ref{u2}), (\ref{calc0}) and
$\|r^{\frac{3}{2}}\mu\|_{L_\omega^p}\les \la^{-\f12}$ in Proposition \ref{smr} that
\begin{align*}
\|r^{\frac{3}{2}} \Lb z\|_{L^{p}_\omega}&\les
\|r^{\frac{3}{2}}\mu\|_{L^{p}_\omega}+\|r^{\frac{3}{2}}z^2\|_{L_\omega^{p}}+\|r^{\f12}(z,
\Lb \log n, \Omega)\|_{L_\omega^{p}}\les\la^{-\f12}.
\end{align*}

In order to obtain the estimate for $\sn z$, we apply (\ref{intpl}) to $\sn a$ and $\sn n$
and use (\ref{smry1}) for $\sn \tr\chi'$. It then follows that
\begin{align*}
\|r^{\frac{3}{2}} \sn z\|_{L_\omega^{p}}&\les \|r^{\frac{3}{2}}
\tr\chi' \sn a\|_{L_\omega^{p}}+\|r^{\frac{1}{2}}\sn (n^{-1})
\|_{L_\omega^{p}}+\|r^{\frac{3}{2}}a\sn
\tr\chi'\|_{L_\omega^{p}}\les \RR+\Rp.
\end{align*}
This completes the proof.
\end{proof}

We define Hardy-Littlewood maximal function for scalar function $f(t)$  by
\begin{equation*}
\M(f)(t)=\sup_{t'} \frac{1}{|t-t'|}\int_{t'}^t |f(\tau)| d\tau.
\end{equation*}
It is well-known that for any $1<q<\infty$ there holds
\begin{equation}\label{hlm}
\|\M(f)\|_{L_t^q} \les \|f\|_{L_t^q}.
\end{equation}

\begin{proposition}
There holds on $\D^+$ that $\|z\|_{L_t^2 L_x^\infty}\les \la^{-\f12}$.
\end{proposition}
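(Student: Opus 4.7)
The plan is to follow the decomposition $z = aV + 2a(z_1 - z_2)$ already used to prove \eqref{u2}, where $V = \tr\chi - 2/s$ and $z_1, z_2$ are given in \eqref{z12}, and estimate each of the three summands separately in $L_t^2 L_x^\infty(\D^+)$. The $aV$ contribution is handled by \eqref{mt0}, which in the rescaled variables reads $\|V\|_{L^\infty(\D^+)}\les \R_0^2\les \la^{-1}$; combined with $|a-1|\le 1/2$ and $\tau_*\le \la T$ for universal small $T$, a trivial time integration gives $\|aV\|_{L_t^2 L_x^\infty}\les \tau_*^{1/2}\la^{-1}\les \la^{-1/2}$.

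For $z_2 = \frac{1}{a(t-u)}(n^{-1} - n_p^{-1})$ I will integrate $\tfrac{d}{ds}(n^{-1}) = -a^{-1}n^{-2}L(n)$ along the null geodesic $\Ga_\omega$ starting from the vertex of $C_u$ and convert $ds = na\,dt'$, obtaining $z_2(t,u,\omega) = -\frac{1}{a(t-u)}\int_u^t n^{-1}L(n)\,dt'$ with the integrand evaluated along $\Ga_\omega$. Since $\Ga_\omega(t')\in\Sigma_{t'}$, the integrand is dominated pointwise by $\|L(n)(t')\|_{L^\infty(\Sigma_{t'})}$ uniformly in $u$ and $\omega$, so that $\|z_2(t)\|_{L^\infty(\Sigma_t)}\les \M\bigl(\|L(n)\|_{L^\infty_x}\bigr)(t)$. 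The Hardy--Littlewood inequality \eqref{hlm} with $q=2$, together with $\|L(n)\|_{L_t^2 L_x^\infty}\les \|\ti\pi\|_{L_t^2 L_x^\infty}\les \la^{-1/2}$ from \eqref{pi.2}, then yields $\|z_2\|_{L_t^2 L_x^\infty}\les \la^{-1/2}$.

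For $z_1$ I multiply the transport equation \eqref{dz1} by $s^2$ and integrate from the vertex (where $s^2 z_1 \to 0$ by Lemma \ref{inii}); changing variable $ds' = na\,dt'$ and using $s'\approx t'-u$ along with $\tfrac{da}{ds} = -\nu$ and $|a-1|\le 1/2$, I deduce the pointwise bound $|z_1(t,x)|\les \int_u^t z_1(t',x')^2\,dt' + (t-u)^{-1}\int_u^t (|z_2|+|\nu|)(t',x')\,dt'$, where $x'\in\Sigma_{t'}$ traces the backward null geodesic from $(t,x)$ to $(u,\Ga^+(u))$. Since $x'$ lies in $\Sigma_{t'}$, dominating $z_1(t',x')^2 \le \|z_1(t')\|_{L_x^\infty}^2$ and similarly for $z_2, \nu$ is uniform in the starting point $x$; taking $L_x^\infty$ on $\Sigma_t$ and then $L_t^2$, with \eqref{hlm} absorbing the maximal-function average in the second term, gives $\|z_1\|_{L_t^2 L_x^\infty}\les \tau_*^{1/2}\|z_1\|_{L_t^2 L_x^\infty}^2 + \|z_2\|_{L_t^2 L_x^\infty} + \|\nu\|_{L_t^2 L_x^\infty}$. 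Since $\nu\in\slp$ obeys $\|\nu\|_{L_t^2 L_x^\infty}\les \la^{-1/2}$ by \eqref{pi.2}, while $\tau_*^{1/2}\la^{-1}\les T^{1/2}\la^{-1/2}$, a bootstrap under the hypothesis $\|z_1\|_{L_t^2 L_x^\infty}\le B\la^{-1/2}$ closes for a universally small $T$, yielding $\|z_1\|_{L_t^2 L_x^\infty}\les \la^{-1/2}$ and hence the claim.

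The main obstacle is a reversal of norm orders: the prior estimate $\|z_1\|_{L_\omega^\infty L_t^2}\les \la^{-1/2}$ from \eqref{tn2} cannot be converted to $\|z_1\|_{L_t^2 L_x^\infty}\les \la^{-1/2}$ by Minkowski, since the Minkowski inequality fails in this direction. The resolution, made available by the Hardy--Littlewood maximal inequality \eqref{hlm} recorded just before the proposition, is to exploit that the null geodesic pulling back from $(t,x)$ to the vertex of $C_u$ always remains in $\Sigma_{t'}$ at intermediate times $t'$, so the $L^\infty(\Sigma_{t'})$ control can be inserted pointwise in $t'$ before the outer $L_t^2$ is taken; the averages $(t-u)^{-1}\int_u^t(\cdot)\,dt'$ appearing then are precisely one-sided Hardy--Littlewood maximal averages in time, which are bounded on $L^2$.
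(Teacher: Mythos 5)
Your proof follows the same decomposition $z = aV + 2a(z_1-z_2)$ and the same key mechanism as the paper: bounding $z_2$ and $z_1$ along backward null geodesics by Hardy--Littlewood maximal averages of $\|\nab_L n\|_{L_x^\infty}$, $\|\nu\|_{L_x^\infty}$, then invoking \eqref{hlm}. The one genuine variation is your treatment of the quadratic term $z_1^2$ in \eqref{tspz1}: you retain it and close a bootstrap directly in $L_t^2 L_x^\infty$, using $\tau_*^{1/2}\|z_1\|_{L_t^2 L_x^\infty}^2 \les T^{1/2}\lambda^{-1/2}\cdot (\lambda^{1/2}\|z_1\|_{L_t^2 L_x^\infty})^2$ and a universally small $T$; the paper instead drops this term in the display preceding \eqref{z1}, which is justified because the already-established bound \eqref{tn2} gives $\int_u^t z_1^2\,dt' \le \|z_1\|_{L_\omega^\infty L_t^2}^2 \les \lambda^{-1}$ pointwise in $(t,u,\omega)$, contributing $\lambda^{-1}\tau_*^{1/2}\les T^{1/2}\lambda^{-1/2}$ after the $L_t^2$ integration. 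Both are correct; the paper's route reuses the earlier continuity argument that produced \eqref{tn2}, while yours runs a new continuity/bootstrap in the target norm and is correspondingly more self-contained but requires separately confirming that $\|z_1\|_{L_t^2([0,\tau_*'])L_x^\infty}$ is finite, continuous in $\tau_*'$, and vanishes as $\tau_*'\to 0$ (which follows from $z_1\to 0$ at the vertex, Lemma \ref{inii}). Your concluding remark about the failure of Minkowski in reversing $L_\omega^\infty L_t^2$ to $L_t^2 L_x^\infty$ is exactly the right diagnosis of why the maximal-function device is needed.
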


\begin{proof}
Recall the estimate (\ref{mt0}) in Theorem \ref{recalll2}. We then have
\begin{align*}
\|z\|_{L_t^2 L_x^\infty}&\les
\|aV\|_{L_t^\infty L_x^\infty}\tau_*^{\f12}
+\left\|\frac{1}{s}-\frac{1}{an(t-u)}\right\|_{L_t^2 L_x^\infty}
\les \la^{-\f12}T^{\f12}+\|\U\|_{L_t^2 L_x^\infty} .
\end{align*}
Assuming   the following estimate on $\D^+$
\begin{equation}\label{sandt}
\|\U\|_{L_t^2 L_x^\infty}
\les \|\nab_L n,\nab_L a\|_{L_t^2 L_x^\infty},
\end{equation}
noting that  (\ref{pi.2}) implies $\|\nu, \nab_L n \|_{L_t^2 L_x^\infty} \les \la^{-1/2}$,
therefore we can obtain  (\ref{ric1}).

To see (\ref{sandt}), due to $\U=z_1-z_2$, let us consider $z_2$ first.
In view of  (\ref{z2e}), $|a-1|\le 1/2$, $C^{-1}<n<C$ and
$t-u\approx s$, we can obtain
\begin{equation}\label{z2}
\sup_{\omega\in {\Bbb S}^2, u}|z_2|(t)\les \M(\|\nab_L n \|_{L^\infty_x}).
\end{equation}
In view of (\ref{tspz1}), we also have
\begin{align*}
|z_1|&\les \frac{1}{s(t)^2}\int_{u}^t s(t')^2 \left|\frac{1}{an_p(t-u)}z_2
+\frac{1}{a^2n_p(t-u)} {\frac{da}{ds}}\right| dt'.
\end{align*}
Therefore
\begin{equation}\label{z1}
\sup_{\omega\in {\Bbb S}^2, u}|z_1|(t)\les\M(\|z_2\|_{L_x^\infty})+\M(\|\nab_L a\|_{L_x^\infty}).
\end{equation}
Combining (\ref{z2}) and (\ref{z1}), taking $L_t^2$ norm, and using (\ref{hlm}) implies (\ref{sandt}).
\end{proof}

\subsection{\bf $L_t^2 L_x^\infty$ estimate for $\chih$ and $\zeta$}\label{sec8}

We will use the equations (\ref{stc1})-(\ref{stc3}) to derive the estimates for
$\chih$ and $\zeta$. We first recall from \cite{KR2} the following  version of Calderon-Zygmund-type result
for Hodge systems.

\begin{proposition}\label{cz}
Let $F$ be a covariant traceless symmetric $2$-tensor satisfying the Hodge system
\begin{equation}\label{fe1}
\div F=\sn G+e \qquad \mbox{on } S_{t,u}
\end{equation}
for some scalar function $G$ and $1$-form $e$.
Then for $2<p<\infty$ and $\frac{1}{q}=\frac{1}{2}+\frac{1}{p}$ there holds
\begin{equation}\label{lpp2}
\|F\|_{L^p(S_{t,u})}\les\|G\|_{L^p(S_{t,u})}+\|e\|_{L^q(S_{t,u})}
\end{equation}
and for $p>2$ there holds
\begin{equation}\label{cz0}
\|F\|_{L^\infty(S_{t,u})}\les\|G\|_{L^\infty(S_{t,u})}\ln {\small \left(2+r^{\frac{3}{2}-\frac{2}{p}}\la^{\f12}\|\sn
G\|_{L^p(S_{t,u})}\right)}+r^{1-\frac{2}{p}}\|e\|_{L^p(S_{t,u})}.
\end{equation}
Similarly, for the Hodge system
\begin{equation}\label{divcurl}
\left\{
\begin{array}{lll}
\div F= \sn\c G_1+e_1,\\
\curl F=\sn\c G_2+e_2,
\end{array}
\right.
\end{equation}
where $G=(G_1, G_2)$ are $1$-forms and $e=(e_1, e_2)$ are scalar functions, we have
(\ref{lpp2}) and for any $p>2$ there holds
\begin{align}
\|F\|_{L^\infty(S_{t,u})}\les\|G\|_{L^\infty(S_{t,u})}\ln {\small \left(2+r^{\frac{3}{2}-\frac{2}{p}}\la^{\f12}\|\sn
G, r^{-1}G\|_{L^p(S_{t,u})}\right)}+r^{1-\frac{2}{p}}\|e\|_{L^p(S_{t,u})}\label{cz1}.
\end{align}
\end{proposition}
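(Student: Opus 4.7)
The plan is to reduce the Hodge system on $(S_{t,u},\ga)$ to the standard model on $({\Bbb S}^2,\gas)$ via the conformally rescaled metric $\gac=r^{-2}\ga$, exploit the weakly spherical property (\ref{8.0.3}) to bound the perturbation, and then invoke classical elliptic $L^p/L^\infty$ Calderon--Zygmund theory on the round sphere. More precisely, rescale by $r$ so that the leaf has unit radius; by Proposition \ref{swsph} the rescaled metric satisfies $\|\p\gac-\p\gas\|_{L^p_\omega}\les \la^{-1/2}$, and hence $\|\Ga-\Gz\|_{L^p_\omega}\les\la^{-1/2}$. Mimicking the derivation of (\ref{barf}), rewrite $\div F$ (resp.\ $\div F$, $\curl F$) in terms of the standard connection $\sn^{(0)}$, producing the model Hodge system on $({\Bbb S}^2,\gas)$ with right-hand side $\sn^{(0)} G+e+\mathcal E$, where $\mathcal E = (\Ga-\Gz)\star F$ is a correction term of Christoffel--tensor type.

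For the $L^p$ bound (\ref{lpp2}), one applies the classical sphere $L^p$-Calderon--Zygmund estimate to the model system, which gives
\[
\|F\|_{L^p(S)}\les \|G\|_{L^p(S)}+\|e\|_{L^q(S)}+\|(\Ga-\Gz)\star F\|_{L^q(S)}.
\]
H\"older's inequality with $\tfrac{1}{q}=\tfrac{1}{2}+\tfrac{1}{p}$ and $\|\Ga-\Gz\|_{L^p}\les\la^{-1/2}$ bound the error by $\la^{-1/2}\|F\|_{L^2(S)}$. The $L^2$ norm is controlled by $\|G\|_{L^2}+\|e\|_{L^{q'}}$ via Lemma \ref{hdgm1} (or directly via (\ref{elpl2})), and for $\la$ sufficiently large the $\la^{-1/2}$ factor absorbs the Sobolev constant in embedding $L^p\hookrightarrow L^2$ on the unit sphere, allowing the error to be moved to the right-hand side.

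For the $L^\infty$ bound (\ref{cz0}), start from the standard sphere Calderon--Zygmund inequality
\[
\|F\|_{L^\infty(S)}\les \|G\|_{L^\infty(S)}\ln\!\bigl(2+\|\sn^{(0)} G\|_{L^p(S)}/\|G\|_{L^\infty(S)}\bigr)+\|e+(\Ga-\Gz)\star F\|_{L^p(S)},
\]
applied after pull-back to $({\Bbb S}^2,\gas)$ (here we rescale back, producing the natural power $r^{1-2/p}$ on the $e$-term and $r^{3/2-2/p}$ on the $\sn G$-term). The perturbation term is handled by H\"older: $\|(\Ga-\Gz) F\|_{L^p}\le \|\Ga-\Gz\|_{L^p}\|F\|_{L^\infty}\les \la^{-1/2}\|F\|_{L^\infty}$. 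This would close if we could absorb the $\la^{-1/2}\|F\|_{L^\infty}$ into the left-hand side, but the $L^\infty$ CZ operator is only of weak type; the correct fix is to redo the Littlewood--Paley argument of \cite{KR2} with the cutoff frequency chosen dyadically depending on $\la$, which effectively multiplies the $L^p$-norm of $\sn G$ by $\la^{1/2}$ inside the log (coming from balancing the high/low frequency split against the $\la^{-1/2}$ error from $\Ga-\Gz$). The div--curl system (\ref{divcurl}) is treated in the same fashion, decomposing $F$ into div-free and curl-free parts on $({\Bbb S}^2,\gas)$ and invoking the sphere $L^\infty$ theory for each.

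The main obstacle is the $L^\infty$ bound: in the rough regime the perturbation $(\Ga-\Gz)\star F$ controls itself only in $L^p$, not $L^\infty$, so a direct perturbation argument off the classical CZ inequality loses the endpoint. The resolution is to carry out the Littlewood--Paley-based proof of the classical $L^\infty$ CZ from \cite{KR2} from scratch in the rough setting, tracking the $\la^{-1/2}$ gain from the weakly spherical property against the logarithmic divergence, which is precisely how the extra factor $\la^{1/2}$ appears inside the logarithm.
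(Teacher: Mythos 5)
Your overall strategy matches the paper's: rescale, use the weakly spherical bounds (\ref{ws1}) and (\ref{8.0.3}) together with the symmetrization identity (\ref{barf}) to transfer the Hodge system to the round sphere $(\mathbb{S}^2,\gas)$, and then cite the classical Calderon--Zygmund theory there. The paper indicates exactly this route and gives no further detail, so on the structural level your proposal is sound, and the $L^p$ argument you give does close.

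The concern you raise about the $L^\infty$ endpoint, and the ``fix'' you propose of re-running the Littlewood--Paley argument of \cite{KR2}, are however misplaced. In the classical CZ inequality $\|F\|_{L^\infty}\les\|G\|_{L^\infty}\ln(2+\dots)+\|e\|_{L^p}$ on the round sphere, the $e$-slot is a genuine bounded operator $\Delta^{-1}\colon L^p\to L^\infty$ for $p>2$ in two dimensions; only the $\Delta^{-1}\sn$ acting on $G$ is of logarithmic type, which is what produces the $\ln$ term. The perturbation $(\Ga-\Gz)\star F$ goes into the $e$-slot, not the $G$-slot, so it contributes $C_p\,\|\Ga-\Gz\|_{L^p}\|F\|_{L^\infty}$. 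Since $\|\Ga-\Gz\|_{L^p}\les\RR+\Rp$ (by (\ref{8.0.3}), (\ref{ws1})), and this is assumed sufficiently small (equivalently, of size $\la^{-1/2}$ after rescaling), the absorption into the left-hand side is immediate. You do not ``lose the endpoint'' and there is nothing to fix with a $\la$-dependent frequency cutoff.

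Relatedly, the $\la^{1/2}$ inside the logarithm in (\ref{cz0}) is not something the proof must produce: your argument (and the paper's sketch) in fact yields the inequality without it, and inserting $\la^{1/2}$ only enlarges the argument of $\ln$, i.e. weakens the estimate. It is a harmless relaxation kept because in the application (cf.\ (\ref{schi}) and (\ref{ric6})) one has $r^{3/2}\|\sn z\|_{L^p}\les\la^{-1/2}$, so the $\la^{1/2}$ exactly cancels and the logarithm is bounded. Your speculative derivation of the $\la^{1/2}$ from a dyadic cutoff is therefore unnecessary scaffolding; the rest of your proposal is essentially the paper's proof.
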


Relative to rough metric, the proof of Proposition \ref{cz} can follow from the standard Calderon-Zygmund theory
with the help of weakly spherical condition (\ref{8.0.3}) and (\ref{ws1}).
Consider the application of (\ref{cz0}) to  $\div \chih=\sn \cpi+\sn \tr\chi+\cdots$
which follows from the Hodge system (\ref{stc1}) and Proposition \ref{dcmp2} (a),
where $\cpi$ is the major part of $G$.
In principle we only have the bounds of  $\|r\sn \cpi\|_{L_t^2 L_\omega^p}$ on null cones,
which needs one-half more derivative to give the estimate of
$\sup_{u}r^{\frac{3}{2}}\|\sn G\|_{L^p_\omega}$ for $p>2$ in (\ref{cz0}). This
poses an issue of lack of  half-derivative. Note that,  by Strichartz estimate,
we may expect to  control   $\|\mu^{0+}P_\mu(\hn g, k) \|_{l_\mu^1 L_t^2  L_x^\infty}$, which is slightly stronger  than $\|\cpi\|_{L_t^2 L_x^\infty}$. On the other hand,  since  $\cpi$ takes the form of  $N^\mu(g\hn g, k)_{\mu\cdots}$ or $\Pi^{\mu'}_{\mu}(g\hn g, k)_{\mu'\cdots}$ relative to spacial harmonic coordinates, to control $\|\mu^{0+}P_\mu\cpi\|_{l_\mu^1 L_t^2  L_x^\infty}$ causes a serious technical baggage due to the factor $N^\mu$ involved in the paradifferential calculation. To solve all the potential issues, our strategy is  to establish (\ref{cz2}) so as to take advantage of  the extra
differentiability of $k, \hn g$.

\begin{proposition}\label{cz.2}
Let $F$ and $G$ be $S$-tangent tensor fields of suitable type satisfying (\ref{fe1}) or
(\ref{divcurl}) with certain term $e$. Suppose $G$ is  a projection of $\Sigma$ tangent
tensor $\ti G$ to tangent space of $S$ by $ \Pi_{\mu}^{\mu'}{\ti G}_{\mu'\cdots}$ or
takes the form of $N^\mu{\ti G}_{\mu\cdots}$. Then for $p>2$, $\delta>0$ sufficiently close to $0$,
and $1\le c<\infty$ there holds
\begin{align}
\|F\|_{L^\infty(S_{t,u})}\les \|\mu^{\delta}P_\mu \ti G\|_{l_\mu^c L^\infty(S_{t,u})} +\|\ti G\|_{L^\infty(S_{t,u})}+r^{1-\frac{2}{p}}\|e\|_{L^p(S_{t,u})}.\label{cz2}
\end{align}
\end{proposition}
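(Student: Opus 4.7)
The idea is to dyadically decompose $\ti G$, apply the classical Calderon--Zygmund estimate \eqref{cz0} or \eqref{cz1} frequency-by-frequency, and trade the logarithmic loss against a small power $\mu^{\delta}$. Using linearity of the Hodge system, I would split $F = F_e + F_{\mathrm{low}} + \sum_{\mu>1} F_\mu$ where: $F_e$ solves the Hodge system with source $(0,e)$ and contributes the term $r^{1-2/p}\|e\|_{L^p}$ via \eqref{cz0}/\eqref{cz1} directly; $F_{\mathrm{low}}$ is driven by the projection of $P_{\le 1}\ti G$ (which is smooth) and absorbs into the $\|\ti G\|_{L^\infty(S)}$ term; and $F_\mu$ solves the divergence (or div--curl) system with source $\sn G_\mu$, where $G_\mu := \Pi^{\mu'}_\mu (P_\mu \ti G)_{\mu'\cdots}$ or $G_\mu := N^\mu (P_\mu\ti G)_{\mu\cdots}$ according to the two forms of $G$ listed in the hypothesis.

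\textbf{Frequency piece.} Apply \eqref{cz0} to each $F_\mu$:
\[
  \|F_\mu\|_{L^\infty(S_{t,u})} \lesssim \|G_\mu\|_{L^\infty(S_{t,u})}\ln\bigl(2+r^{3/2-2/p}\la^{1/2}\|\sn G_\mu\|_{L^p(S_{t,u})}\bigr).
\]
Since $|\Pi|,|N|\le C$, the leading factor is bounded by $\|P_\mu \ti G\|_{L^\infty(S_{t,u})}$. For $\sn G_\mu$, expand $\sn G_\mu = \sn(\Pi)\cdot P_\mu\ti G+\Pi\cdot\sn P_\mu\ti G$ (resp.\ with $N$), and control each term: the finite band property of $P_\mu$ on $\Sigma$ combined with Bernstein on $S$ (using $|S_{t,u}|\approx r^2$) yields $\|\sn P_\mu\ti G\|_{L^p(S)}\lesssim \mu\, r^{2/p}\|P_\mu\ti G\|_{L^\infty(S)}$, while $\sn\Pi, \sn N$ reduce through \eqref{thetan} to combinations of $\chi$, $\zeta$, $k$ whose $L^p(S)$ norms are controlled by Proposition \ref{rices}, \eqref{ricstu} and \eqref{cor4}. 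Altogether, the argument of the logarithm is at most $C(1 + r^{3/2}\la^{1/2}\mu)\|P_\mu\ti G\|_{L^\infty(S)}$.

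\textbf{Summation.} Using $\ln(2+AB)\le C+\ln(1+A)+\ln(1+B)$ and $\ln(1+A)\le C_\delta A^{\delta/2}$, the log argument is bounded by $C\mu^{\delta/2}(1+\|P_\mu\ti G\|_{L^\infty(S)}^{\delta/2})$, so
\[
  \|F_\mu\|_{L^\infty(S)} \lesssim \mu^{\delta/2}\,\|P_\mu\ti G\|_{L^\infty(S)} + \text{ lower-order}.
\]
Then for dyadic $\mu=2^k$, apply discrete H\"older with exponents $c$ and $c'$:
\[
  \sum_{\mu>1}\mu^{\delta/2}\|P_\mu\ti G\|_{L^\infty(S)} \le \Bigl(\sum_\mu \mu^{-c'\delta/2}\Bigr)^{1/c'}\,\|\mu^\delta P_\mu\ti G\|_{l^c_\mu L^\infty(S)},
\]
and the first factor is finite for every $1\le c<\infty$ and every $\delta>0$ thanks to the dyadic structure. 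Adding the low-frequency and $e$-pieces gives \eqref{cz2}.

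\textbf{Main obstacle.} The only delicate step is controlling $\sn\Pi$ (or $\sn N$) in $L^p(S)$, since these are Ricci-type quantities that do not lie in $L^\infty(S)$ but only in weaker spaces like $L^p(S)$ with $p$ slightly above $2$. One must carefully distribute H\"older exponents between $\sn\Pi$ and $P_\mu\ti G$: since only $\|P_\mu\ti G\|_{L^\infty(S)}$ is available (no spherical $L^p$ bound in the final statement), $\sn\Pi$ itself must carry the integrability, which is exactly where Proposition \ref{rices} and the $L^p$-flux Proposition \ref{fluxpb} are used. This also explains why the hypothesis insists that $G$ be a projection of a $\Sigma$-tangent tensor $\ti G$: without that structure, no pointwise factoring of $\sn G$ is available, and the classical CZ estimate would force a full $H^{3/2+}$ regularity on $\ti G$ which is unavailable under the rough metric.
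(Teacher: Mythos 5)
Your plan is a genuinely different route from the paper's, and while it is plausible in spirit, the central ``trade a log against $\mu^{\delta}$'' step has real gaps that are not easily repaired.

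The paper does not apply the log-loss Calderon--Zygmund estimate \eqref{cz0}/\eqref{cz1} to anything in the proof of Proposition~\ref{cz.2}. Instead it develops a two-step argument in Section~\ref{czz}: Lemma~\ref{cz1.2} decomposes $\bar F$ (not $G$) in the \emph{geometric} Littlewood--Paley projections $\Pz{\mu}$ built from the heat flow of $\sD^{(0)}$ on $S_{t,u}$, and uses the identity $\mu^2 \Pz{\mu}=r^2 \sD^{(0)} \ti P^{(0)}_\mu$ together with the Hodge formula $-\f12 \sD^{(0)}\bar F={}^\ast\D_2^{(0)}\D_2^{(0)}\bar F-r^{-2}\bar F$ and Bernstein on $S_{t,u}$ (cf.\ \eqref{invbern}) to bound $\|\Pz{\mu}\bar F\|_{L^\infty}$ by a \emph{power} of $\mu$ times $\|\Pz{\mu}G\|_{L^q}$; no logarithm appears, and the sum over $\mu$ is immediate. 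A second step (Lemma~\ref{lemtrans}) then relates $\Pz{\mu}G$ on $S_{t,u}$ to the ambient flat projections $P_\mu\ti G$, which is exactly where the structural hypothesis that $G=\Pi\cdot\ti G$ or $G=N\cdot\ti G$ enters and where $\sz N$ is estimated. This transfer between intrinsic and ambient LP projections is the technically hard content, and it is what your approach silently bypasses.

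The concrete difficulty with your step is the log absorption. After plugging in $\|\sn G_\mu\|_{L^p(S)}\les \mu r^{2/p}\|P_\mu\ti G\|_{L^\infty}+\|\sn\Pi\|_{L^p(S)}\|P_\mu\ti G\|_{L^\infty}$, the argument of the logarithm in \eqref{cz0} is (up to constants) $r^{3/2}\la^{1/2}\mu\,\|P_\mu\ti G\|_{L^\infty(S_{t,u})}$. You then claim it is dominated by $C\mu^{\delta/2}\big(1+\|P_\mu\ti G\|^{\delta/2}\big)$, but this drops the factor $r^{3/2}\la^{1/2}$, which in the rescaled coordinates is of size up to $(\la T)^{3/2}\la^{1/2}$ and so contributes an unabsorbed $\ln\la$; and even disregarding that, the $\|P_\mu\ti G\|^{\delta/2}$ piece forces a final bound of the form $\|\ti G\|_{L^\infty}^{\delta/2}\cdot\|\mu^\delta P_\mu\ti G\|_{l^c_\mu L^\infty}$, i.e.\ a multiplicative $\|\ti G\|^{\delta/2}$ factor, rather than the additive form in \eqref{cz2}. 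There is also a milder but real trace issue: the finite band property gives $\|\hn P_\mu\ti G\|_{L^\infty(\Sigma)}\les \mu\|P_\mu\ti G\|_{L^\infty(\Sigma)}$, but what you want to close the argument is a bound in terms of $\|P_\mu\ti G\|_{L^\infty(S_{t,u})}$ as required in \eqref{cz2}, which is strictly weaker; the two do not commute, and the paper's Lemma~\ref{lemtrans} is designed precisely to handle the passage between $\Pz{\mu}$ on $S_{t,u}$ and $P_\mu$ on $\Sigma$ without this loss. In short: your strategy of frequency-localizing $G$ and reapplying the scalar log-CZ estimate per piece incurs a logarithmic overhead with $\la$-dependent and $\|\ti G\|$-dependent constants that the claimed estimate does not permit, whereas the paper's geometric LP + Hodge identity argument is log-free from the start and this is why it was adopted.
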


The proof of (\ref{cz2}) is  presented  in the Appendix II.

We slightly extend the definition of $\er$ in  Proposition \ref{dcmp2} to
\begin{equation}\label{tie}
 \er: =\Ab\c\Ab+\tr\chi\c \ckk\pi.
\end{equation}
where $\Ab$ was defined in Section \ref{ricc}.  By using (\ref{smry1}) we have
$
\|r^{1-\frac{2}{p}} \Ab\c \Ab\|_{L^{p}(S_{t,u})}\les(\Rp+\RR)^2.
$
Also using the fact $r\tr\chi\approx 1$,
 we can obtain the following estimate on $\er$.
\begin{lemma}
For any $p>2$ there holds
\begin{equation}\label{err3}
\|r^{1-\frac{2}{p}} \er\|_{L^{p}(S_{t,u})}\les
\|\ckk\pi\|_{L_\omega^\infty}+(\Rp+\RR)^2
\end{equation}
\end{lemma}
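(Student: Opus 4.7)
The plan is to estimate the two pieces of $\er$ separately using the definitions $\er = \Ab\c\Ab + \tr\chi\c \ckk\pi$, and to exploit (\ref{smry1}) together with the volume scaling on $S_{t,u}$.

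First I would handle the quadratic term $\Ab\c\Ab$. This is essentially already flagged as a consequence of (\ref{smry1}) in the text immediately preceding the lemma. The key observation is that $\|r^{1/2}\Ab\|_{L^{2p}_\omega L_t^\infty}\les \Rp+\RR$ from (\ref{smry1}), so by Hölder on $S_{t,u}$ and writing $r^{1-\frac{2}{p}} = r^{\frac{1}{2}-\frac{1}{p}}\cdot r^{\frac{1}{2}-\frac{1}{p}}$, one gets
\[
\|r^{1-\frac{2}{p}}\Ab\c\Ab\|_{L^p(S_{t,u})}\les \|r^{\frac{1}{2}-\frac{1}{p}}\Ab\|_{L^{2p}(S_{t,u})}^2 \les \|r^{\frac{1}{2}}\Ab\|_{L^{2p}_\omega}^2 \les (\Rp+\RR)^2,
\]
where in the middle step I used that the surface measure on $S_{t,u}$ is comparable to $r^2 d\omega$, so that $r^{-\frac{1}{p}}$ absorbs the Jacobian when converting the $L^{2p}(S_{t,u})$ norm to the $L^{2p}_\omega$ norm on the unit sphere.

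For the second piece $\tr\chi\c \ckk\pi$, the idea is simply to use the pointwise bound $r\tr\chi\approx 1$ from (\ref{comp2}), giving $|\tr\chi\c \ckk\pi|\les r^{-1}|\ckk\pi|$. Then the same volume-element computation yields
\[
\|r^{1-\frac{2}{p}}\tr\chi\c \ckk\pi\|_{L^p(S_{t,u})}\les \|r^{-\frac{2}{p}}\ckk\pi\|_{L^p(S_{t,u})} \les \|\ckk\pi\|_{L^p_\omega}\les \|\ckk\pi\|_{L^\infty_\omega},
\]
since $\int_{S_{t,u}} r^{-2}|\ckk\pi|^p\, d\mu_{S_{t,u}} \approx \int_{\S2}|\ckk\pi|^p\, d\omega$. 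Adding the two estimates gives (\ref{err3}).

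There is no real obstacle here: both steps are direct applications of Hölder on the $2$-surface together with the scaling $d\mu_{S_{t,u}}\approx r^2\, d\omega$, once (\ref{smry1}) and (\ref{comp2}) are in hand. The only point requiring a little care is making sure one invokes the right norm from (\ref{smry1}); the norm $\|r^{1/2}\Ab\|_{L^{2p}_\omega L_t^\infty}$ is exactly tailored so that after squaring one lands on $r^{1-2/p}$, matching the weight in the conclusion.
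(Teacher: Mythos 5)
Your proof is correct and follows the paper's own route: the paper simply asserts, immediately before the lemma, that (\ref{smry1}) gives $\|r^{1-\frac{2}{p}}\Ab\c\Ab\|_{L^p(S_{t,u})}\les(\Rp+\RR)^2$ and then invokes $r\tr\chi\approx 1$ for the remaining term, exactly the two steps you carry out. You merely make explicit the surface-measure conversion $d\mu_{S_{t,u}}\approx r^2\,d\omega$ that the paper leaves implicit.
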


\subsubsection{Estimate of $\chih$}

In view of the equation (\ref{stc1}) and  Proposition \ref{dcmp2} (a), we have
\begin{equation*}
\div \chih=\frac{1}{2}\sn \tr \chi +\sn\cpi+\er.
\end{equation*}
We define $\schih$, a symmetric traceless 2-tensor, to be the solution of
\begin{equation}\label{hdge1}
\div\schih=\frac{1}{2}\sn\left(\tr\chi-\frac{2}{n(t-u)}\right).
\end{equation}
It is straightforward to see that
\begin{equation}\label{hdge2}
\div (\chih-\schih)=\sn \cpi+\er.
\end{equation}
Applying (\ref{cz0}) in  Proposition \ref{cz} to (\ref{hdge1}) and  also using (\ref{ric6}), we obtain
\begin{equation}\label{schi}
\|\schih\|_{L^\infty(S_{t,u})}\les \|z\|_{L^\infty(S_{t,u})}\ln(2+\|r^{\frac{3}{2}-\frac{2}{p}}\la^{\f12}\sn z\|_{L^{p}(S_{t,u})})\les \|z\|_{L^\infty(S_{t,u})}
\end{equation}
Note that with $ 2\le q\le 4$,
\begin{equation}\label{pi.i1}
\|r^{\f12-\frac{2}{q}}\ckk \pi\|_{L^q(S_{t,u})}\les\N_1[\ckk \pi] \la^{-\f12}.
\end{equation}
Applying  (\ref{cz2}) in  Proposition \ref{cz.2} to (\ref{hdge2}) and using (\ref{pi.i1}) give
\begin{equation}\label{dchi}
\|\chih-\schih\|_{L^\infty(S_{t,u})}\les \|\mu^{0+}P_\mu \cpi\|_{l_\mu^2
L^\infty (S_{t,u})}+\|\cpi\|_{L^\infty(S_{t,u})}+\|r^{1-\frac{2}{p}}\er\|_{L^{p}(S_{t,u})}
\end{equation}
where $p>2$ sufficiently close to $2$.

Combining (\ref{schi}) and (\ref{dchi}), and using  (\ref{err3}) and (\ref{ricp}), we can obtain \begin{footnote}{We denote by $0+$ a constant $b>0$ arbitrarily close to $0$}\end{footnote}
\begin{align*}
\|\chih\|_{L_t^2 L^\infty_x} &\les \|\mu^{0+}P_\mu \cpi\|_{L_t^2
l_\mu^2 L_x^\infty}+\|\ckk\pi\|_{L_t^2 L_x^\infty}+(\Rp+\RR)^2 \tau_*^{\f12}.
\end{align*}
By using (\ref{BA3}), we conclude $\|\chih\|_{L_t^2 L_x^\infty(\D^+)}\les \la^{-1/2}$, which is the first part of (\ref{ric3}).

\subsubsection{Estimate for $\zeta$}

In view of (\ref{stc2}), (\ref{stc3}) and Proposition \ref{dcmp2}, we can write
\begin{equation*}
\D_1\zeta=\sn \cpi+\er+(\mu,0)
\end{equation*}
where $\cpi=(\cpi^{(1)},\cpi^{(2)})$.

Let $\smu_A$ be an $S$-tangent co-vector which is defined as a solution of the Hodge
system
\begin{equation}\label{hge1}
\div \smu=\mu-\bar\mu\quad \curl \smu=0.
\end{equation}
It follows from (\ref{elpl2}) and (\ref{mt3}) that
\begin{equation}\label{l2smu}
\|\sn
\smu\|_{L^2(C_u)}+\|r^{-1}\smu\|_{L^2(C_u)}\les\|\mu\|_{L^2(C_u)}\les
\RR.
\end{equation}
Observe that by (\ref{stc2}), Proposition \ref{dcmp2} (b) and the divergence theorem, we can treat
$\bar\mu$ as $\overline{n A\c A+n \tr\chi \pi}$. By ignoring the average sign, in view of (\ref{tie}),
$\bar\mu$ can also be incorporated to $\er$. With the help of (\ref{hge1}), we have
\begin{equation}\label{d1zeta}
\D_1(\zeta-\smu)=\sn\cpi+\er.
\end{equation}
In view of (\ref{cz2}) in Proposition \ref{cz.2} and (\ref{pi.i1}), we obtain
\begin{equation}\label{zein}
\|\zeta\|_{L^\infty(S_{t,u})}\les \|\smu\|_{L^\infty(S_{t,u})}+
\|\mu^{0+}P_\mu \cpi\|_{l_\mu^2 L_x^\infty}+\|\cpi\|_{L^\infty(S_{t,u})}+\|r^{1-\frac{2}{p}}\er\|_{L^{p}(S_{t,u})}.
\end{equation}

\begin{lemma}\label{smu3}
In the region $\D^+$, there holds
\begin{align*}
\left(\int_{0}^{\tau_*} \sup_{u}\|\smu\|_{L^\infty(S_{t,u})}^2dt'\right)^{1/2}
&\les \|\mu^{0+}P_\mu \cpi\|_{L_t^2 l_\mu^2 L_x^\infty}+\|\ckk\pi\|_{L_t^2 L_x^\infty}
 +T^{\f12}\la^{-\f12}.
\end{align*}
\end{lemma}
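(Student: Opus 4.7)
The plan is to reformulate the Hodge system (\ref{hge1}) defining $\smu$ in a form that the modified Calderon--Zygmund estimate (\ref{cz2}) can absorb, and then to convert a pointwise spherical bound into the desired $L^2_t \sup_u L^\infty$ estimate. First I would substitute $\mu$ via the structure equation (\ref{stc2}),
$$
\mu = -\div\zeta - \rho + \tfrac12\chih\c\chibh - |\zeta|^2 + \tfrac12 a\delta\,\tr\chi - \tfrac13 a\Tr k\,\tr\chi,
$$
and then apply Proposition \ref{dcmp2}(b), $\rho = \div\cpi^{(1)} + \er$, to collapse the two divergence terms into one:
$$
\mu - \bar\mu = -\div\bigl(\zeta + \cpi^{(1)}\bigr) + \tilde e,
$$
where $\tilde e$ collects $\er$, $-\bar\mu$, and the quadratic Ricci remainders. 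The system for $\smu$ is thereby recast as $\div\smu = \sn\c G + e$, $\curl\smu = 0$ with $G = -(\zeta + \cpi^{(1)})$ and $e = \tilde e$.

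Next I would apply (\ref{cz2}) with the $\Sigma$-tangent lift $\tilde G$ of $G$, built from the $\hn g, k$ components of $\cpi^{(1)}$ together with $N^j k_{ij}$ and $\hn\log \bb$ coming from $\zeta_A = \sn_A\log\bb + k_{AN}$. This yields, pointwise on each $S_{t,u}$,
$$
\|\smu\|_{L^\infty(S_{t,u})} \les \|\mu^{0+}P_\mu\tilde G\|_{l_\mu^c L^\infty(S_{t,u})} + \|\tilde G\|_{L^\infty(S_{t,u})} + r^{1-2/p}\|e\|_{L^p(S_{t,u})}.
$$
Taking $\sup_u$ and then $L^2_t$ over $[0,\tau_*]$, the first two terms produce the $\|\mu^{0+}P_\mu \cpi\|_{L^2_t l^2_\mu L^\infty_x}$ and $\|\ckk\pi\|_{L^2_t L^\infty_x}$ norms, using the Ricci coefficient estimates of Section \ref{ricc} to absorb the $\sn\log\bb$ piece of $\zeta$ and noting that $k_{AN}$ is already part of $\ckk\pi$. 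For the error $r^{1-2/p}\|e\|_{L^p}$ I would invoke (\ref{err3}), the $L^p$-flux bounds of Proposition \ref{fluxpb}, and the improved Ricci estimates of Proposition \ref{smr}; integration in time over $[0,\tau_*]$ with $\tau_* \le \la T$ combined with the scaling $\Rp + \RR \les \la^{-1/2}$ then produces the $T^{1/2}\la^{-1/2}$ tail.

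The main obstacle is fitting $\zeta$ into the $\tilde G$-framework of Proposition \ref{cz.2}, since $\zeta$ is a genuine Ricci coefficient rather than a projection of a $\Sigma$-tangent tensor built purely from $\hn g, k$. The identity $\zeta = \sn\log\bb + k_{AN}$ reduces this to controlling $\sn\log\bb$: one can either lift it to the $\Sigma$-tangent $1$-form $\hn\log\bb$ and treat its Littlewood--Paley projections using the commutator machinery of Appendix III together with the transport estimates for $\log\bb$ along $L$, or split $\smu = \smu_1 + \smu_2$ so that $\smu_1 + \zeta$ has vanishing divergence modulo $\er$ and can be estimated via the div--curl version of (\ref{cz2}), removing $\zeta$ afterwards. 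In either route, one must also ensure that $\bar\mu$ enters only through $e$ and with no derivative loss, which follows from the divergence theorem applied in view of (\ref{tie}) combined with (\ref{elpl2}) and the $L^2$-flux bound for $\mu$ in (\ref{mt3}).
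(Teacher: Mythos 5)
Your reformulation $\div\smu=-\div(\zeta+\cpi^{(1)})+\tilde e$ is algebraically correct, but it places $\zeta$ --- and hence $\sn\log\bb=\sn\log a$ --- into the $G$-slot of Proposition \ref{cz.2}, and this is exactly where the argument breaks. First, Proposition \ref{cz.2} requires $G$ to be a projection of a $\Sigma$-tangent tensor $\ti G$ whose Littlewood--Paley pieces are controlled in $L_x^\infty$; the only such control in the paper is (\ref{BA3}) for $\hn g$ and $k$. The null lapse $\bb$ is defined through the optical function, and no bound on $\|\mu^{0+}P_\mu(\hn\log\bb)\|_{l_\mu^2 L_x^\infty}$ exists or is derivable from the transport equation for $a$ with the machinery of Section \ref{apiii}. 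Second, and more seriously, the term $\|\ti G\|_{L^\infty(S_{t,u})}$ produced by (\ref{cz2}) then contains $\|\zeta\|_{L^\infty(S_{t,u})}$ with an absolute (not small) constant; after taking $\sup_u$ and then $L_t^2$ this is precisely the norm $\|\zeta\|_{L_t^2 L_u^\infty L_\omega^\infty}$ that Lemma \ref{smu3} is designed to produce via (\ref{zein})--(\ref{zees1}). The circle cannot be closed by absorption, and the non-circular bound available from Sections \ref{ricc}--\ref{sec7} is only the cone-wise estimate $\|\zeta\|_{L_u^\infty L_t^2 L_\omega^\infty}\les \la^{-\f12}$, with the $\sup_u$ on the wrong side of the time integral. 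Neither of your suggested workarounds escapes this: (a) founders on the missing LP control of $\hn\log\bb$, and (b) still requires subtracting $\zeta$ in $L^\infty(S_{t,u})$ at the end, which is the same circularity.

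The paper avoids the problem by never eliminating $\mu$ through (\ref{stc2}). It instead derives a Hodge system for the transported quantity $\bd_4\smu=\D_t\smu+\f12 n\tr\chi\,\smu-n\chih\c\smu$ (Lemma \ref{lem4}), whose principal source is $\frac{1}{t-u}\sn\cpi$ with $\cpi$ built only from $\hn g, k$, while every Ricci coefficient is relegated to the $L^p$ error slot of (\ref{cz2}), where cone-wise flux bounds suffice and no $L^\infty$ information on $\zeta$ or $\smu$ is needed. The transport lemma then yields
\begin{equation*}
\|\smu\|_{L^\infty(S_{t,u})}\les \left\|\frac{1}{t-u}\int_{u}^{t} s\,\bd_4\smu\right\|_{L^\infty(S_{t,u})},
\end{equation*}
and the Hardy--Littlewood maximal function estimate (\ref{hlm}) converts the resulting averages, after $\sup_u$, into $L_t^2$-bounded quantities. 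This last device is also absent from your proposal: even for the harmless terms one needs a mechanism to move $\sup_u$ inside $\int dt'$, and the maximal function applied to the transport average is that mechanism.
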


Assuming this result, we can obtain from (\ref{zein}) and (\ref{err3}) that
\begin{equation}\label{zees1}
\left(\int_{0}^{\tau_*} \sup_{u}\|\zeta\|_{L^\infty(S_{t,u})}^2\right)^{\f12}\les \la^{-\f12}
\end{equation}
which, in view of (\ref{BA3}), gives the second part in (\ref{ric3}).

In order to prove  Lemma \ref{smu3}, we first derive an equation for
\begin{equation}\label{4smu}
\bd_4 \smu:=\D_t\smu+\f12n\tr\chi
\smu-n \chih\c \smu,
\end{equation}
 where $\D_t F:=n \bd_L F$ for any tensor field $F$. We need the commutation
formula (\cite[Chapter 13]{CK}, \cite{Wang1})
\begin{equation}\label{commt4}
[\D_t, \sn _B] F_A=-n\chi_{BC}\sn_CF_A+n(\chi_{AB}\zb_C-\chi_{BC}
\zb_A+\ep_{AC}{}^\star \b_B)F_C.
\end{equation}

\begin{lemma}\label{lem4}
$\bd_4 \smu$ satisfies the following Hodge system.
\begin{equation}\label{hge2}
\left\{
\begin{array}{lll}
\div\bd_4 \smu=\frac{1}{t-u}\sn
\cpi+e_1'+n(\tr\chi)^2\pi \\
\curl\bd_4 \smu=n(\chi \c \pi+R_0) \smu+n\Ab\c (A+\tr\chi) \smu+n \sn
A \c \smu,
\end{array}\right.
\end{equation}
where
$
e'_1=n\left(A\c \sn A+(A+\smu)\c( A\c \Ab+\Ab\c \tr\chi)+(A+z)\c R_0\right).
$
\end{lemma}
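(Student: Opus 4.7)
My plan is to derive both identities in (\ref{hge2}) by applying the angular operators $\div$ and $\curl$ directly to the expression (\ref{4smu}) and reducing to known identities: the Hodge system (\ref{hge1}) defining $\smu$, the transport equation (\ref{mumu}) for $\mu$, the commutator formula (\ref{commt4}), the structure equations (\ref{s1})--(\ref{stc3}), and the curvature decompositions of Proposition \ref{dcmp2}. The key algebraic fact used repeatedly is $\tr\chi=\frac{2}{n(t-u)}+z$ from (\ref{zomega}), which is what eventually converts a $\tr\chi\sn\cpi$ contribution into the explicit $\frac{1}{t-u}\sn\cpi$ appearing in the statement.

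First, for the divergence equation, I split
\begin{equation*}
\div\bd_4\smu=\div(\D_t\smu)+\frac{1}{2}\div(n\tr\chi\,\smu)-\div(n\chih\c\smu).
\end{equation*}
The commutator (\ref{commt4}) rewrites $\div(\D_t\smu)=\D_t\div\smu-n\chi^{BC}\sn_B\smu_C+n(\text{Ricci and }{}^\star\b\text{-terms})\c\smu$. Using (\ref{hge1}) and the transport equation (\ref{mumu}), $\D_t\div\smu=nL\mu-\D_t\bar\mu$ is expanded, and its leading $-n\tr\chi\mu$ term cancels against the $\frac{1}{2}n\tr\chi(\mu-\bar\mu)$ piece produced by $\frac{1}{2}\div(n\tr\chi\smu)=\frac{1}{2}n\tr\chi\,\div\smu+\frac{1}{2}\sn(n\tr\chi)\c\smu$. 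The surviving $2\chih\c\sn\zeta$ term on the right of (\ref{mumu}) is processed through the $\zeta$-Hodge system (\ref{stc2})--(\ref{stc3}) combined with Proposition \ref{dcmp2}(b), which writes $\rho=\div\cpi^{(1)}+\er$ and $\sigma=\curl\cpi^{(2)}+\er$; this yields a term of shape $\chih\c\sn\cpi$. Combining this with the $\frac{1}{2}\sn(n\tr\chi)\c\smu$ contribution and inserting $\tr\chi=\frac{2}{n(t-u)}+z$ produces the advertised $\frac{1}{t-u}\sn\cpi$ leading term along with the $n(\tr\chi)^2\pi$ term (the latter arising from the $\tr\chi\c\nu\c\tr\chi$ piece in (\ref{mumu}) and from $\sn(n\tr\chi)$). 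All remaining pieces---the commutator remainders, $\D_t\bar\mu$, the $\chih\c\sn\smu$ piece (controlled by (\ref{dchi}) modulo $A\c A$-type errors), and products $(A+\smu)\c(A\c\Ab+\Ab\c\tr\chi)$ and $(A+z)\c R_0$---pack into $e_1'$.

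For the curl equation, $\curl\smu=0$ forces $\D_t\curl\smu=0$, so $\curl(\D_t\smu)=[\curl,\D_t]\smu$, which by (\ref{commt4}) is a sum of $-n\chi^{BC}\sn_B\dual\smu_C$ and terms $n(\chi\c\zb+{}^\star\b)\c\smu$. Adding $\curl(\frac{1}{2}n\tr\chi\smu-n\chih\c\smu)$ produces contributions of type $n\sn A\c\smu$ together with $n\chi\c\sn\smu$ and $n\sn\chih\c\smu$; the first contributes directly to $n\sn A\c\smu$, while the ${}^\star\b\c\smu$ term is rewritten by Proposition \ref{dcmp2}(a). The higher-derivative $\sn\cpi$ part of $\b$ here multiplies $\smu$ pointwise and, after integration by parts against smooth factors, lands in $n\sn A\c\smu$; the undifferentiated curvature piece contributes $nR_0\c\smu$; and the bilinear Ricci products assemble into $n\Ab\c(A+\tr\chi)\smu$ and $n\chi\c\pi\c\smu$.

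The main obstacle will be the bookkeeping in the divergence equation: one must track how the $-n\tr\chi\mu$ from (\ref{mumu}) cancels the commutator and structural $\mu$-contributions so as to leave exactly the pattern $\frac{1}{t-u}\sn\cpi+e_1'+n(\tr\chi)^2\pi$, and one must ensure that $\sn\zeta$ is converted into $\sn\cpi$ (rather than $\sn$ of a curvature component) so that Proposition \ref{dcmp2}(b) can be applied cleanly. Substituting $\tr\chi=\frac{2}{n(t-u)}+z$ at the right place---so that the $z$-part is absorbed into $e_1'$ via (\ref{smry1}) and the main part produces the required $\frac{1}{t-u}$ weight on $\sn\cpi$---is the delicate step in the argument.
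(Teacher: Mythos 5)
Your overall strategy is the right one and matches the paper: apply $\div$ and $\curl$ to the definition (\ref{4smu}) of $\bd_4\smu$, commute $\D_t$ past the angular operators via (\ref{commt4}), substitute the transport equation for $\mu-\bar\mu$ obtained from (\ref{hge1}) and (\ref{mumu}), and invoke Proposition \ref{dcmp2} at the end. There is, however, a genuine gap in how you produce the leading term.

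The term $\frac{1}{t-u}\sn\cpi$ in the divergence equation does \emph{not} come from the $2\chih\c\sn\zeta$ contribution in (\ref{mumu}); it comes from the $\tr\chi\c\rho$ term there. After the cancellations, the paper reduces $\div\bd_4\smu$ schematically to $n\bigl(A\c\sn A+(A+\smu)\c(A\c\Ab+\Ab\tr\chi)+(\tr\chi)^2\pi+\tr\chi\c\rho\bigr)$. Applying Proposition \ref{dcmp2}(b) to $\rho=\div\cpi^{(1)}+\er$ and splitting $\tr\chi=\frac{2}{n(t-u)}+z$ yields $n\tr\chi\rho=\frac{2}{t-u}\sn\cpi+nz\c R_0+\cdots$, where $nz\c R_0$ is absorbed into the $(A+z)\c R_0$ part of $e_1'$ and the $\tr\chi\c\er$ remainder feeds $e_1'$ and $n(\tr\chi)^2\pi$. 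By contrast, the $2\chih\c\sn\zeta$ term is simply filed as $A\c\sn A$ inside $e_1'$. Your proposed mechanism---``processing'' $\chih\c\sn\zeta$ through the Hodge system (\ref{stc2})--(\ref{stc3})---cannot work: those equations control only $\D_1\zeta=(\div\zeta,\curl\zeta)$, not the full tensor $\sn\zeta$ contracted against $\chih$, and there is no pointwise inversion available. Moreover, $\chih$ carries no $\frac{1}{t-u}$ weight, so even a term of shape $\chih\c\sn\cpi$ could not produce $\frac{1}{t-u}\sn\cpi$; only $\tr\chi\approx\frac{2}{n(t-u)}$ supplies that factor.

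A related issue affects your treatment of the curl equation. You suggest rewriting ${}^\star\b\c\smu$ via Proposition \ref{dcmp2}(a) and then invoking an integration by parts against smooth factors---but (\ref{hge2}) is a pointwise Hodge system, not an integral identity, so no integration by parts is available. The paper keeps $n{}^\star\b\c\smu$ directly as the $nR_0\c\smu$ contribution; the pieces $\sn(n\chih)\smu$ and $\sn(n\tr\chi)\smu$ (the latter expanded through (\ref{stc1})) supply the $n\sn A\c\smu$ and $n\Ab\c(A+\tr\chi)\smu$ terms. Finally, a minor bookkeeping remark: the cancellation of the $\tr\chi\div\smu$ terms needs all three contributions---the $-n\tr\chi(\mu-\bar\mu)$ from the transport equation, the $\frac{1}{2}n\tr\chi\div\smu$ from the $\chi^{BC}\sn_C\smu_B$ commutator piece, and the $\frac{1}{2}n\tr\chi\div\smu$ from $\frac{1}{2}\div(n\tr\chi\smu)$---not just the first and third, whose coefficients alone leave a $-\frac{1}{2}n\tr\chi\div\smu$ uncancelled.
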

\begin{proof}
Let $G:=L\mu+\tr\chi\mu$. We can derive
\begin{equation}
 \D_t (\mu-\bar\mu)+n\tr\chi(\mu-\bar\mu)=n G-\overline{nG}-n\kappa\bar\mu
\end{equation}
In view of (\ref{commt4}) we then obtain
\begin{align*} \div \D_t \smu&=\D_t \div \smu+
\delta^{AB}n\chi_{BC} \sn_C
\smu_A-\delta^{AB}n(\chi_{AB}\zb_C-\chi_{BC} \zb_A+\ep_{AC}{}^\star
\b_B)\smu_C\\
&=\D_t\div \smu+\f12 n\tr\chi\div\smu+n\chih\c \sn\smu-\f12n\tr\chi \zb\c
\smu+n\chih\c \zb\c \smu-n \b\c \smu\\
&=n G-\overline{nG}-n\kappa\bar\mu-\frac{1}{2}n \tr\chi\div \smu+n\chih\c \sn\smu-\f12n\tr\chi \zb\c \smu\\
& \quad \, +n\chih\c \zb\c \smu-n \b\c \smu.
\end{align*}
Consequently
\begin{align*}
\div\bd_4 \smu & =n G-\overline{nG}-n\kappa\bar\mu+\f12 \sn (n \tr\chi) \smu-\div(n \chih)\smu\\
&\quad \, -\f12n\tr\chi \zb\c \smu+n\chih\c \zb\c \smu-n \b\c \smu.
\end{align*}
As explained before (\ref{d1zeta}), $
\kappa\bar\mu$  can be treated as $\kappa\c(an A\c A+an \tr\chi \pi)$, where the average sign has been ignored.
We also will not distinguish $\overline{nG}$ with $nG$. Then,
in view of (\ref{stc1}) and (\ref{mumu}), symbolically there holds
\begin{align*}
\div\bd_4 \smu &=n\left(A\c \sn A+(A+\smu)\c( A\c \Ab+\Ab\c \tr\chi)  +(\tr\chi)^2\pi+\tr\chi\c \rho\right).
\end{align*}
In view of (\ref{commt4}) and (\ref{hge1}), we also have
\begin{align*}
&\curl\bd_4\smu=n(\chi\c {}^\star \zb+{}^\star \b)\c \smu+\sn( n
\chih)\smu+\sn(n \tr\chi) \smu.
\end{align*}
Thus, with the help of Proposition \ref{dcmp2} (b), we obtain (\ref{hge2}).
\end{proof}

\begin{proof}[Proof of Lemma \ref{smu3}]
In view of  (\ref{sob.2}) and (\ref{l2smu}), there holds
$\| \smu\|_{L_t^2 L_\omega^{p}}\les \RR$.
By Lemma \ref{hdgm1}, (\ref{hge1}) and (\ref{smry1}), we can obtain
\begin{equation}\label{lqmu}
\|r\sn \smu\|_{L_t^2 L^{p}_\omega}\les \|r\mu\|_{L_t^2
L_\omega^{p}}+\|\smu\|_{L_t^2 L_\omega^{p}}\les \RR+\Rp.
\end{equation}
Let
$$
\E:=\|r\sn(\Ab,\smu)\|_{L_t^2 L_\omega^{p}}+\|rR_0\|_{L_t^2
L_\omega^{p}}+\|r\tr\chi (\Ab, \smu)\|_{L_t^2 L_\omega^{p}}+\|r\Ab\c
\Ab\|_{L_t^2 L_\omega^{p}},
$$
we have  $\E \les \RR+\Rp$.
By (\ref{sob.3}), we also have
\begin{equation}\label{lqmu2}
\left(\int_{u}^{\tau_*}\|\smu\|_{ L^\infty(S_{t,u})}^2\right)^{\f12}\les\RR+\Rp.
\end{equation}
By (\ref{4smu}) and Lemma \ref{tsp2},
 \begin{equation*}
\|\smu\|_{L^\infty(S_{t,u})}\les \left\|\frac{1}{t-u}\int_{u}^t s\bd_4\smu\right\|_{L^\infty(S_{t,u})}.
 \end{equation*}
Using (\ref{cz2}) in Proposition \ref{cz.2} , (\ref{pi.i1}), (\ref{hge2}) and $C^{-1}<s\tr\chi<C$ on $C_u$, we can obtain
\begin{align}
\left\|\frac{1}{t-u}\int_{u}^t s\bd_4\smu\right\|_{L^\infty(S_{t,u})}
&\les \int_{u}^tr^{1-\frac{2}{p}} \left(\|e_1'\|_{L^{p}(S_{t,u})}+\|\curl
\bd_4\smu\|_{L^{p}(S_{t,u})}\right)\nn\\
&+\frac{1}{t-u}\int_{u}^t \|\mu^{0+}P_\mu\cpi\|_{l_\mu^2 L_\omega^\infty}+\frac{1}{t-u}
\int_{u}^t \|\pi\|_{L_\omega^{p}}\label{lsmu2}
\end{align}

 Thus, by using (\ref{lqmu2}), we can derive that
\begin{align*}
\int_{u}^t \left\|r^{1-\frac{2}{p}}(|e_1'|+|\curl \bd_4\smu|)\right\|_{L^{p}(S_{t,u})}
& \les\||A|+|z|+|\smu|\|_{L_t^2 L_\omega^{\infty}(C_u)}\cdot \E\\
& \les (\Rp+\RR)\la^{-\f12}.
\end{align*}
In view of (\ref{pi.2}) and (\ref{hlm}), we have
\begin{align*}
\left(\int_{0}^{\tau_*} \sup_u \|\smu\|_{L^\infty(s_{t,u})}^2 dt'\right)^{\f12}
& \les\|\mu^{0+}P_\mu \cpi\|_{L_t^2 l_\mu^2 L_x^\infty}+\|\ckk\pi\|_{L_t^2 L_x^\infty}\\
&\quad \, +(\RR+\Rp) (\la^{-1}\tau_*)^{\f12}
\end{align*}
This completes the proof.
\end{proof}

\section{\bf Appendix I: Proof of Proposition \ref{dcmp2}}\label{decmp}

\subsection{Decomposition for $\b, \udb$.}

We need the following decomposition result on $R_{AN}$. 

\begin{lemma}\label{L12.29.1}
On $S_{t,u}\subset\Sigma_t,$  relative to orthonormal frame $\{N, e_1, e_2\}$,where
$N$ denotes the unit normal of $S_{t,u}$ in $\Sigma_t$,  there holds
\begin{footnote}
{We ignored the frame coefficients $e_A^j, N^k$ in the last three terms, since they are harmless for the purpose of application. }
\end{footnote}
\begin{equation}\label{ran}
R_{AN}=\slashed{\curl}\cpi_1+\sn_A \cpi_2+g\c \theta\c\hn g+g\c\hn g\c\hn g+\hat R\c g
\end{equation}
where the 2-tensor $\cpi_1$ and the scalar $\cpi_2$ are terms formed by the sums of
$N^\mu(g\c \hn g, k)_{\mu\cdots}$ and $\Pi^{\mu'}_{\mu}(g\c \hn g, k)_{\mu'\cdots}$.
\end{lemma}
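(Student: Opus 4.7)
The plan is to start from the spacial harmonic gauge identity~(\ref{dell2}), which expresses the spatial Ricci tensor purely in terms of the tensor $U^i_{jl}=\Ga^i_{jl}-\hat\Ga^i_{jl}$, a first-order quantity in $g\c\hn g$, together with $\hat R$ and a quadratic error:
\[
R_{jl} \;=\; \hat R_{jl} + \hn_i U^i_{jl} - \hn_j U^i_{il} + U^p_{jl}U^i_{pi} - U^p_{il}U^i_{pj}.
\]
Contracting with $e_A^j N^l$ immediately distributes three of these: $\hat R_{AN}$ enters the $\hat R\c g$ bin; the quadratic pair $U^p_{jl}U^i_{pi}-U^p_{il}U^i_{pj}$ contracted with $e_A^j N^l$ is a contraction of $(g\c\hn g)\otimes(g\c\hn g)$ and so fits into the $g\c\hn g\c\hn g$ bin. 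This leaves the two derivative terms to analyze.

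For the term $(\hn_j U^i_{il})e_A^j N^l$, the plan is to move $N^l$ inside the derivative using the product rule and then exchange $e_A^j\hn_j$ for $\sn_A$ acting on scalars. Setting $\cpi_2:=U^i_{il}N^l$ (an $N$-projection of a $(g\c\hn g)$-type quantity, hence admissible), the leading piece becomes exactly $\sn_A\cpi_2$, while the correction $U^i_{il}\hn_j N^l \cdot e_A^j$ produced by the product rule contains $\hn N \sim \theta$ and is therefore absorbed into the $g\c\theta\c\hn g$ error bin.

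The remaining principal term is $\hn_i U^i_{jl}\cdot e_A^j N^l$. Again pulling the frame vectors inside the derivative introduces only acceptable $g\c\theta\c\hn g$ corrections, reducing the analysis to a $\Sigma_t$-divergence $\hn_i V^i$ of the vector $V^i := U^i_{jl}e_A^j N^l$. Decomposing $V = V^{\!N}N + V^{\!B}e_B$ in the adapted frame on $S_{t,u}$ splits this divergence into a tangential piece $\sn^B V^{\!B}$ plus a transversal piece $\hn_N V^{\!N}$, modulo further $\theta$-type errors generated by derivatives of the frame vectors. The tangential piece is then rearranged under the interchange $A\leftrightarrow B$: the symmetric-in-$AB$ part yields an additional $\sn_A\cpi_2$ contribution (updating $\cpi_2$ by a $\Pi$-projected $(g\c\hn g)$ scalar), while the antisymmetric-in-$AB$ part yields $\slashed\curl\cpi_1$ with a scalar $\cpi_1$ that is likewise a $\Pi$-projection of $(g\c\hn g)$, via the standard identity $\sn^B(\epsilon_{AB}f) = -(\slashed\curl f)_A$ on the $2$-surface $S_{t,u}$.

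The main obstacle is the transversal term $\hn_N V^{\!N}$, which is a genuine normal derivative of a scalar built from $g\c\hn g$ and which cannot be rewritten as $\sn_A$ or $\slashed\curl$ of a $\cpi$-scalar in any direct way. The resolution exploits the spacial harmonic constraint $U^i = g^{jl}U^i_{jl}=0$: applying $\hn_N$ to this identity, commuting the derivative through $g^{jl}$, and separating out the $e_A^j$ and $N^l$ contractions, produces a formula for $\hn_N V^{\!N}$ as a sum of tangential divergences of $V$ plus quadratic $(g\c\hn g)^2$-terms plus $\theta$-corrections. Folding this back into the previous step absorbs the transversal contribution into the already-established $\sn_A\cpi_2$, $\slashed\curl\cpi_1$, $g\c\theta\c\hn g$ and $g\c\hn g\c\hn g$ pieces. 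This step is the critical one and relies decisively on the spacial harmonic gauge; this is precisely how $\bd_\bT Y$ is prevented from entering the final decomposition, in accordance with the emphasis of the paper.
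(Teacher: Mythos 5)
Your frame decomposition $\hn_m V^m=\sn^B V^B+\hn_N V^N+\theta$-corrections with $V^m=U^m_{jk}e_A^j N^k$ correctly isolates the normal piece $\hn_N V^N$ as the crux, but your proposed remedy does not close the gap. Differentiating $g^{jl}U^i_{jl}=0$ along $N$ gives only an identity of the form $\hn_N U^i_{NN}=-\ga^{BC}\hn_N U^i_{BC}+\mbox{l.o.t.}$: it exchanges normal derivatives of some $U$-components for normal derivatives of others at the same differential order, and offers no mechanism for trading $\hn_N$ for $\sn$. Moreover it controls the wrong contraction: $V^N=N_m V^m=U_{NAN}$ (with $U_{ijl}=g_{ip}U^p_{jl}$, symmetric only in $j,l$), whereas the constraint governs $U_{\mu NN}+\ga^{BC}U_{\mu BC}$, the trace over the \emph{last} two slots; $U_{NAN}$ and $U_{ANN}$ are distinct components. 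What actually renders $\hn_N V^N$ harmless is a direct computation: $U_{ijl}=\tfrac12(\hn_j g_{il}+\hn_l g_{ij}-\hn_i g_{jl})$ gives $V^N=U_{NAN}=\tfrac12 e_A^j N^i N^l\hn_j g_{il}$, so the scalar $V^N$ already carries the tangential derivative $e_A^j\hn_j$, and commuting $\hn_N$ past it (at the cost of $\hat R$- and $\theta$-corrections) turns $\hn_N V^N$ into $\sn_A$ of an admissible $\cpi_2$ scalar. That is a calculation, not a consequence of the harmonic gauge. There is a secondary gap in the tangential piece as well: the symmetric-in-$A,B$ part of $U_{BAN}$ is schematically $\hn_N g_{AB}$, a general symmetric $2$-tensor on $S$ and not a pure trace, so $\sn^B U_{(BA)N}$ is not a gradient $\sn_A\cpi_2$; a genuine $\sn^B\cpi_{AB}$-type remainder survives that is not among the terms on the right side of~(\ref{ran}).

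The paper avoids both problems by reorganising $\hn_m U^m_{jk}$ algebraically \emph{before} contracting with the frame. It first extracts a $\tfrac12 g^{ml}\hn_j\hn_m g_{kl}$ piece, whose outer derivative is $\hn_j$ and hence becomes $\sn_A$ after contraction with $e_A^j$, and then rewrites the antisymmetric remainder of $\hn g$ through the volume-form identity~(\ref{adcomp}), $A_{jlk}=(\Omega_{js}+\tfrac12\varrho_n\ep_{js}{}^n)\ep_{lk}{}^s$. Contracting $\ep_{lk}{}^s$ with $N^k$ yields the intrinsic area form on $S_{t,u}$, so the relevant term becomes a $\slashed{\curl}$ of a $\cpi_1$-type tensor directly, and no normal derivative ever arises. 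The harmonic gauge does enter, but only to cast the trace quantity $\varrho_k$ in the required $(g\c\hn g)$ form. Note finally that $\bd_\bT Y$ cannot appear in a decomposition of the purely spatial Ricci tensor in any case; its exclusion is relevant for the null curvature components $\b,\rho,\sigma$ in Proposition~\ref{dcmp2}, which this lemma then feeds into via the Gauss equation.
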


By the Gauss equation $E_{AN}=R_{AN}+k\c k$, the identity $E_{AN}=\f12(\b+\udb)$ (see 
\cite[(7.3.3e)]{CK}),  and 
\begin{equation*}
\div \eh=\f12(\b-\udb)-\f12 \sn \delta+\hat \theta\c \ep-\f12\tr\theta\c \ep
\end{equation*}
(see \cite[(11.1.2d)]{CK}), we have symbolically that
\begin{equation*}
\b=\div \eh+\sn\delta+R_{AN}+\theta \c \ep+k\c k.
\end{equation*}
Hence Proposition \ref{dcmp2} (a) can be proved by using (\ref{ran}).

\begin{proof}[Proof of Lemma \ref{L12.29.1}]
We write $R_{AN}=N^k e_A^j R_{jk}$. Recall that
\begin{equation*}
R_{jk}=\hR_{jk}+\hn_m U_{jk}^{\dum m}-\hn_j U_{m k}^{\dum
m}-U_{jk}^{\dum n} U_{nm}^{\dum m}-U_{mk}^{\dum n}U_{nj}^{\dum m}.
\end{equation*}
Then we have 
\begin{align*}
R_{AN}
&=N^k e_A^j \hn_m U_{jk}^{\dum m}-\left(e_A^j \hn_j(N^k U_{mk}^{\dum m})-e_A^j \hn_j N^k U_{mk}^{\dum m}\right)+N^k e_A^j {\hat R}_{jk}-U\c U.
\end{align*}
For simplicity of exposition, we set $U_{kj l}=U_{kj}^{\dum p}\c g_{pl}$ and $A_{klm}:=\hn_m g_{lk}-\hn_l g_{mk}$. 
Then we have 
\begin{align*}
\hn_m U_{jk}^{\dum m}&=\f12 \hn_m \{g^{ml}(\hn_k g_{jl}+\hn_j g_{kl}-\hn_l g_{jk})\}\\
&=\hn_m g^{ml} U_{kjl}+\f12 g^{ml}(\hn_m \hn_k
g_{jl}+\hn_m\hn_j g_{kl}-\hn_m\hn_l g_{jk})\\
 &=\hn_m g^{ml} U_{kj l}+\frac{1}{2}g^{ml}(\hn_j \hn_m g_{kl}+\hat R)+\f12 g^{ml}\hn_m A_{jlk}.
\end{align*}
We now define 
\begin{align*}
\Omega_{ij}&=\f12(\ep_{i}^{\dum lm}\hn_l g_{mj}+\ep_j^{\dum lm}\hn_l
g_{mi}), \qquad \varrho_k=g^{mn}\hn_k g_{mn}-g^{mn}\hn_m g_{kn}.
\end{align*}
Since the spacial harmonic condition implies that $g^{ij}\hn_i g_{jl}=\f12 g^{ij} \hn_l g_{ij}$,
 we can derive that $\varrho_k=g^{mn}\hn_m g_{nk}$.  We claim that
\begin{equation}\label{adcomp}
A_{iab}= \left(\Omega_{ij}+\frac{1}{2}\varrho_n \ep_{ij}^{\dum n} \right)
\ep_{ab}^{\dum j}.
\end{equation}
Using (\ref{adcomp}), we can deduce that
\begin{align*}
g^{ml}\hn_m A_{jlk}
&=\ep_{lk}^{\dum s}g^{ml}\hn_m \left(\Omega_{js}+\f12 \varrho_n\ep_{js}^{\dum n } \right)
+g^{ml}\hn_m \ep_{lk}^{\dum s} \left(\Omega_{js}+\f12 \varrho_n \ep_{js}^{\dum n }\right).
\end{align*}
Hence, with $\dot{\pi}_{jC}=\f12(\Omega_{jC}+\f12 \varrho_n \ep_{jC}^{\dum n})$, we have
\begin{align*}
e_A^j & N^k \hn_m U_{jk}^{\dum m}\\
&=e_A^j N^k \ep^m{}_k{}^s \hn_m (\Omega_{js}+\f12 \varrho_n \ep_{js}^{\dum n})+\f12 e_A^j N^k g^{ml} \hn_j \hn_m g_{kl}+g\hn g\c \hn g +g\c \hat R\\
&=-e_A^j \ep^{BC} \sn_B \dot{\pi}_{jC}+\f12 e_A^j N^k g^{ml}  \hn_j\hn_m g_{kl}+\theta\c g\c \hn g+g \hn g\c \hn g+g\c\hat R\\
&=-(\slashed{\curl} \dot{\pi})_{A}+\f12 e_A^j \hn_j(N^k g^{ml} \hn_m  g_{kl})+\theta\c g\c \hn g+g \hn g\c \hn g+g\c \hat R.
\end{align*}
We then conclude symbolically that
\begin{equation*}
R_{AN}= \sn_A \left(\f12 g^{ml} \hn_m g_{kl}N^k+N^k U_{mk}^{\dum m}\right)
+\slashed{\curl}\dot{\pi}+g\c\theta\c\hn g +g\hn g\c\hn g +g\c \hat R
\end{equation*}
as desired.

It remains to verify (\ref{adcomp}). By the definition of $\Omega_{ij}$ we have
\begin{align*}
2\Omega_{ij}\ep_{ab}^{\dum \dum \dum j}&=\ep_{i}^{\dum \dum lm}
\ep_{ab}^{\dum \dum j} \hn_l g_{mj}+\ep_j^{\dum\dum
lm}\ep_{ab}^{\dum\dum\dum j} \hn_l g_{mi}.
\end{align*}
From the properties of the volume form components ${\ep_i}^{jl}$, we have
\begin{equation*}
\ep_j^{\dum\dum lm}\ep_{ab}^{\dum\dum\dum j} \hn_l g_{mi}=\hn_a g_{bi}-\hn_b g_{ai},
\quad 
\ep_i^{\dum \dum lm}\ep_{ab}^{\dum\dum j}\hn_l g_{mj}
=\hn_a g_{bi}-\hn_b g_{ai}+g_{ia}\varrho_b-g_{ib}\varrho_a.
\end{equation*}
It is easy to check that $g_{ia}\varrho_b-g_{ib}\varrho_a=- \varrho_n
\ep_{ij}^{\dum\dum n} \ep_{ab}^{\dum \dum j}$. Hence we have
\begin{equation*}
\Omega_{ij}\ep_{ab}^{\dum \dum \dum j}=\hn_a g_{bi}-\hn_b
g_{ai}-\f12 \varrho_n \ep_{ij}^{\dum\dum n} \ep_{ab}^{\dum \dum j}.
\end{equation*}
This completes the proof of (\ref{adcomp}).
\end{proof}

\subsection{Decompositions of $\a$ and $\sigma$}

The decompositions of $\a$ and $\sigma$ follow immediately from the structure equations
\begin{align*}
\frac{1}{2}\a&=\bd_4\eh+\frac{1}{2}\tr\chi \eh+n^{-1}
{\hat\sn^2}n-\frac{1}{2}\sn\hot\ep+\f12\delta \eh-a^{-1}\sn a \hot
\ep-\frac{3}{2} \hat\theta \delta\\
& \quad \, -\ep\hot \ep+(\zeta-\sn \log
n )\hot \ep
\end{align*}
and $\sigma=\slashed{\curl} \ep-\hat\theta\wedge\eh$ 
which can be found in \cite[Chapter 11]{CK}.

\subsection{Decomposition for $\rho$}

Let $(e_A)_{A=1,2}$ be an orthonormal frame on $S$, and let ${\cpi}_{AB}^C$ be 
the projection of $U_{ij}^l$ onto $S$, i.e. 
$$
{\cpi}_{AB}^C=e_A^i e_B^j (e_C)_k U_{i'j'}^{\dum\dum k'}\Pi_{i}^{i'}\Pi_j^{j'} \Pi_{k'}^{k}.
$$
where $U_{ij}^l$ is the tensor defined by (\ref{11.5.2}) and $\Pi_i^j=\delta_i^j -N_i N^j$ is the 
projection operator. Recall that 
\begin{equation*}
R^i{}_{jkl}=\hn_k U_{lj}^{\dum\dum i}-\hn_l U_{kj}^{\dum\dum i}
+U_{km}^{\dum\dum i}U_{lj}^{\dum\dum m}-U_{lm}^{\dum\dum i} U_{kj}^{\dum\dum m}+\hat{R}^i{}_{jkl}
\end{equation*}
By using $\hn_k U_{lj}^{\dum\dum i}=\nab_k U_{lj}^{\dum\dum i}+U\c U$, we have
\begin{equation}\label{dcmp1}
R_{ABCD}=e_{Ai}e_B^j e_C^k e_D^l R^i_{jkl}=\sn_C \cpi_{DBA}-\sn_D \cpi_{CBA}+\ti E_{ABCD}.
\end{equation}
where $\ti E$ consists of the projections $U\c U$  to $S$ and the terms 
$\theta\c (N^l U_{l\cdots}, \Pi^l_{m}U_{l\cdots})$
and for $|\ti E|:=\max_{A,B,C,D} |\ti E_{ABCD}|$ we have 
$|\ti E|\les |\hn g\c \hn g|+|\theta\c \hn g|$.
Recall from \cite[P.167]{CK} that $\ga^{AC} \ga^{BD} \bR_{ADCB}=-2\rho$.
By the Gauss equation, we then have $-2\rho=\ga^{AC}\ga^{BD}(R_{ABCD}+k\c k)$. 
This together with (\ref{dcmp1}) gives the decomposition for $\rho$ in Proposition \ref{dcmp2}.

\section{\bf Appendix II: Proof of  Proposition \ref{cz.2} }\label{czz}

In this section we will complete the proof of Proposition \ref{cz.2} by employing the geometric
Littlewood-Paley theory on the surfaces $S_t$ developed in \cite{KRsurf}.  In Section 6 we introduced
the metric $\gaz:=r^2 \gamma_{\mathbb S^2}$ on $S_{t,u}$, where $r$ denotes the the radius of
$S_{t,u}$ with respect to $\gamma$, i.e $r:=\sqrt{\frac{1}{4\pi} \int_{S_{t,u}} d\mu_\ga}$.
Let $m$ be a function in the Schwartz class defined on $[0,\infty)$ having finite number of vanishing moments
and set $m_\mu(\tau):=\mu^2 m(\mu^2 \tau)$ for any dyadic numbers $\mu>0$. The geometric Littlewood-Paley
projection $\Pz\mu$ associated with $\gaz$ is defined by
\begin{equation*}
\Pz{\mu} H=\int_0^\infty m_\mu(\tau) U(\tau)H d\tau
\end{equation*}
for any $S$-tangent tensor $H$, where $U(\tau') H$ denotes the solution of the heat flow
\begin{equation*}
\frac{d}{d\tau}U(\tau)H=r^2\sD^{(0)} U(\tau)H, \quad\quad U(0)H=H.
\end{equation*}
One can refer to \cite{KRsurf} for various properties of $U(\tau)$ and $\Pz\mu$. In particular,
it has been shown that one can always find an $m$ such that the associated $\Pz\mu$ satisfies
$\sum_{\mu>1} {P_\mu^{(0)}}^2 +{P_{\le 1}^{(0)}}^2=Id$.

\begin{lemma}
For $2\le p<\infty$, and any $S$-tangent tensor $H$ there hold on $S_t$ that
\begin{align}
&\|[\Pz{\mu},{\sz}^2]H\|_{L^p}\les r^{-2}\|H\|_{L^p}, \label{cmm.1}\\
&[\Pz{\mu}, \sz]H\|_{L^p}\les r^{-2+\frac{2}{p}}\mu^{-\frac{2}{p}}\|H\|_{L^2}. \label{cmm.2}
\end{align}
\end{lemma}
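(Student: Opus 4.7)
The plan is to prove both commutator bounds by writing $\Pz{\mu}=\int_0^\infty m_\mu(\tau)U(\tau)\,d\tau$ and exploiting the fact that the rescaled round metric $\gaz=r^2\gas$ has covariantly constant Ricci curvature equal to $r^{-2}\gaz$, so the Ricci identity gives
\begin{equation*}
r^2[\sD^{(0)},\sz_A]H = O(1)\cdot\sz H+O(1)\cdot H,\qquad r^2[\sD^{(0)},\sz_A\sz_B]H=O(1)\cdot\sz^2 H+O(1)\cdot\sz H +O(1)\cdot H,
\end{equation*}
with all the $O(1)$'s coming from the purely geometric round-sphere curvature, without any hidden power of $r$. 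Since $r$ is constant along each $S_t$, the heat flow equation $\frac{d}{d\tau}U(\tau)=r^2\sD^{(0)}U(\tau)$ yields, for any differential operator $X$ on $S_t$, the Duhamel identity
\begin{equation*}
[U(\tau),X]H=\int_0^\tau U(\tau-s)\cdot r^2[\sD^{(0)},X]U(s)H\,ds,
\end{equation*}
which reduces everything to $L^p$-boundedness and heat-smoothing for $U(s)$ on the (uniformly) round sphere $(S_t,\gaz)$. These in turn are available from \cite{KRsurf} and ultimately rest on the classical kernel bounds on $\mathbb{S}^2$ transported back to $S_t$ via the exponential map $\G_t$ (whose uniform bi-Lipschitz character is guaranteed by \eqref{ws1} and Proposition~\ref{cmps}).

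For \eqref{cmm.1} I take $X=\sz_A\sz_B$. The Duhamel formula and the commutator identity above give $[U(\tau),\sz^2]H$ as a time integral of $U(\tau-s)\cdot(\sz^2 U(s)H+\sz U(s)H+U(s)H)$. Using the standard $L^p$-contractivity $\|U(\tau-s)\cdot\|_{L^p}\les\|\cdot\|_{L^p}$ together with the smoothing bounds $\|\sz^k U(s)H\|_{L^p}\les(r^2 s)^{-k/2}\|H\|_{L^p}$ for $k=0,1,2$, one obtains
\begin{equation*}
\|[U(\tau),\sz^2]H\|_{L^p}\les \min\bigl(1,(r^2\tau)^{0}\bigr)\|H\|_{L^p}\quad\text{(modulo logarithms)},
\end{equation*}
in the useful regime. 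To avoid the apparent logarithmic divergence at $s\to 0$, I would exploit the finitely many vanishing moments of $m$: decompose $\sz_A\sz_B = \tfrac12\gaz_{AB}\sD^{(0)}+\bigl(\sz_A\sz_B-\tfrac12\gaz_{AB}\sD^{(0)}\bigr)$, the first piece commuting with $\Pz{\mu}$ exactly, the second piece being a trace-free second-order operator that, on the round sphere, satisfies a sharp mixed-norm Schauder estimate controlling it by $\sD^{(0)}$ plus $r^{-2}$. Feeding this back through the heat representation of $\Pz{\mu}$ and integrating against $m_\mu(\tau)$ — whose $\tau$-integral of $1$ vanishes up to high order — collapses the contribution of $\sz^2 U(s)H$ to the pure curvature remainder, leaving $\|[\Pz{\mu},\sz^2]H\|_{L^p}\les r^{-2}\|H\|_{L^p}$.

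For \eqref{cmm.2} I take $X=\sz_A$ and run the same scheme, but now I exchange the $L^p$-to-$L^p$ bound on $H$ for the Bernstein-type $L^2$-to-$L^p$ smoothing on the sphere,
\begin{equation*}
\|\sz U(s)H\|_{L^p(S)}\les (r^2 s)^{-\frac12-(\frac12-\frac1p)}\|H\|_{L^2(S)},\qquad 2\le p<\infty,
\end{equation*}
proved by composing the $L^2\to L^p$ heat estimate $\|U(s/2)H\|_{L^p}\les(r^2 s)^{-(\frac12-\frac1p)}\|H\|_{L^2}$ with $\|\sz U(s/2)\cdot\|_{L^p}\les(r^2 s)^{-\frac12}\|\cdot\|_{L^p}$. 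Substituting into the Duhamel formula for $[U(\tau),\sz]H$ and integrating against $m_\mu(\tau)$, which is concentrated at $\tau\sim\mu^{-2}$ with $\|m_\mu\|_{L^1}\les 1$, produces
\begin{equation*}
\|[\Pz{\mu},\sz]H\|_{L^p}\les \mu^{2}\int_0^{\sim\mu^{-2}}\!\!\int_0^\tau (r^2 s)^{-1+\frac1p}\,ds\,d\tau\,\|H\|_{L^2}\sim r^{-2+\frac2p}\mu^{-\frac2p}\|H\|_{L^2},
\end{equation*}
which is \eqref{cmm.2}.

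The main obstacle, as already alluded to, is not the book-keeping but securing the heat-kernel estimates on $(S_t,\gaz)$ uniformly in $t$ under only the weakly spherical hypothesis rather than true roundness. I plan to address this by pushing the metric to $\mathbb{S}^2$ via $\G_t$, where \eqref{ws1} and Proposition~\ref{cmps} yield a metric comparable to $\gas$ with $L^p$-bounded Christoffel symbols (Assumption~\ref{l8.1}); the $\mathbb{S}^2$ heat kernel estimates then transfer back to $S_t$ with constants depending only on the universal comparison constants. Once this is in place, the two estimates drop out mechanically from the Duhamel computation sketched above.
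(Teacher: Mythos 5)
You have the right framework: the heat-flow Duhamel identity
\begin{equation*}
[\Pz{\mu},X]H=\int_0^\infty m_\mu(\tau)\int_0^\tau U(\tau-\tau')\,r^2[\sD^{(0)},X]U(\tau')H\,d\tau'\,d\tau,
\end{equation*}
the Ricci commutation identity on the round model, and the heat-smoothing bounds in $L^p(\gaz)$. Your treatment of (\ref{cmm.2}) is essentially the paper's computation, differing only in where you place the $L^2\to L^p$ gain.

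For (\ref{cmm.1}), however, there is a genuine gap. You load both derivatives onto $U(\tau')$, obtain $(r^2\tau')^{-1}$, and then face the logarithmic divergence of $\int_0^\tau{\tau'}^{-1}d\tau'$. The remedy you propose does not close it: writing $\sz_A\sz_B=\f12\gaz_{AB}\sD^{(0)}+T$ with $T$ trace-free only kills the already-vanishing piece, since $[\sD^{(0)},T]=[\sD^{(0)},\sz^2]$; the Duhamel integrand is unchanged, and the vanishing moments of $m$ act on the outer $\tau$-integral, not on the $\tau'\to 0$ singularity inside it. The fix is much simpler and is what the paper does: split $\sz^2=\sz\cdot\sz$, put one derivative on each heat factor, i.e.\ use $\|U(\tau-\tau')\sz\,G\|_{L^p}\les r^{-1}(\tau-\tau')^{-\f12}\|G\|_{L^p}$ together with $\|\sz U(\tau')H\|_{L^p}\les r^{-1}{\tau'}^{-\f12}\|H\|_{L^p}$, so the inner integral becomes the convergent Beta integral $\int_0^\tau(\tau-\tau')^{-\f12}{\tau'}^{-\f12}d\tau'$, a constant independent of $\tau$. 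Then $\int_0^\infty|m_\mu(\tau)|\,d\tau\les 1$ yields (\ref{cmm.1}) with the factor $r^{-2}$ and no logarithm.

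A second, minor, misconception: you spend a paragraph worrying about securing heat-kernel bounds on $(S_t,\gaz)$ uniformly in $t$ ``under only the weakly spherical hypothesis.'' There is nothing to secure. By the paper's conventions $\gaz=r^2\gas$ is literally the scaled round metric pulled back to $S_t$ by $\G_t$, and the flow $\frac{d}{d\tau}U(\tau)=r^2\sD^{(0)}U(\tau)$ is run with respect to this exactly round metric, not the induced $\ga$; the needed smoothing estimates are therefore the standard scaled round-sphere bounds. The weakly spherical hypotheses (\ref{ws1}), Assumption~\ref{l8.1} and Proposition~\ref{cmps} enter elsewhere, to compare $\ga$ with $\gaz$ as in Lemma~\ref{frm1} and Proposition~\ref{swsph}, not in this lemma.
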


\begin{proof}
Symbolically we have $[\sD^{(0)}, {\sz}^2]H= r^{-2} \gaz\c {\sz}^2 H$.
By the definition of $\Pz\mu H$ and the Duhumal principle for heat flow, we have
\begin{align*}
[\Pz\mu, {\sz}^2]H&=r^2\int_0^\infty m_\mu(\tau) \int_0^\tau U(\tau-\tau')[\sD^{(0)}, {\sz}^2]U(\tau') H d\tau'\\
&= \int_0^\infty m_\mu(\tau) \int_0^\tau U(\tau-\tau') {\sz}^2 U(\tau') H d\tau'.
\end{align*}
Taking the $L^p$ norm gives
\begin{align*}
\|[\Pz\mu, {\sz}^2]H\|_{L^p}
&\les r^{-2}\|H\|_{L^p}\int_0^\infty m_\mu(\tau)\int_0^\tau (\tau-\tau')^{-\f12} {\tau'}^{-\f12}d\tau'
&\les r^{-2}\|H\|_{L^p}.
\end{align*}

We next prove (\ref{cmm.2}). In view of
\begin{align*}
[\Pz{\mu}, \sz]H&=r^2\int_0^\infty m_\mu(\tau)\int_0^\tau U(\tau-\tau')[\sD^{(0)}, \sz]U(\tau') H \\
&=\int_0^\infty m_\mu(\tau)\int_0^\tau U(\tau-\tau')\sz U(\tau')H.
\end{align*}
We then derive by the Sobolev inequality and the $L^2$ estimate of heat operator $U(\tau)$  that
\begin{align*}
\|[\Pz{\mu}, \sz]H\|_{L^p}
&\les r^{-2+\frac{2}{p}}\|H\|_{L^2}\int_0^\infty m_\mu(\tau)\int_0^\tau {\tau'}^{-\f12} (\tau-\tau')^{-\f12+\frac{1}{p}} d\tau'\\
&\les r^{-2+\frac{2}{p}}\mu^{-\frac{2}{p}}\|H\|_{L^2}
\end{align*}
as desired.
\end{proof}

Now we divide the proof of (\ref{cz2}) into two steps. The first step is to prove

\begin{lemma}\label{cz1.2}
For $S$-tangent tensor fields $F$ and $G$ satisfying the Hodge system in
Proposition \ref{cz}, there holds with $\delta>0$ and $p>2$ that
\begin{equation*}
\|F\|_{L^\infty(S_{t,u})}\les \|\mu^{\delta}\Pz{\mu} G\|_{l_\mu^1 L^\infty(S_{t,u})}
+\|G\|_{L^\infty(S_{t,u})}+r^{1-\frac{2}{p}}\|e\|_{L^p(S_{t,u})}
\end{equation*}
\end{lemma}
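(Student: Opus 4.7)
The plan is to bypass the logarithmic loss in the classical Calder\'on--Zygmund bound (\ref{cz0}) (or (\ref{cz1}) in the div-curl case) by dyadically decomposing the source $G$ via the geometric Littlewood--Paley projectors $\Pz\mu$ associated with $\gaz$, and applying the Hodge estimate to each frequency-localized piece separately. Applied to a source of single dyadic frequency $\mu$, the logarithmic factor in (\ref{cz0}) degenerates to $\ln(2+C\mu)$, which is harmless after accepting an arbitrarily small polynomial loss $\mu^\delta$ and summing in $\ell_\mu^1$.

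\textbf{Main steps.} First, invoking the reproducing identity $\sum_{\mu>1}(\Pz\mu)^2 + (\Pz{\le 1})^2 = \mathrm{Id}$ recalled just before Lemma \ref{cz1.2}, split
\begin{equation*}
G = \Pz{\le 1} G + \sum_{\mu > 1} \Pz\mu G.
\end{equation*}
By linearity of the Hodge system and its $L^p$ solvability (\ref{lpp2}), decompose $F = F_0 + \sum_{\mu > 1} F_\mu$, where $F_0$ solves the Hodge system with source $(\Pz{\le 1} G, e)$ while each $F_\mu$ ($\mu>1$) solves the corresponding homogeneous Hodge system with source $\Pz\mu G$ alone. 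For $F_0$, direct application of (\ref{cz0}) together with the $L^\infty$-boundedness of $\Pz{\le 1}$ yields
\begin{equation*}
\|F_0\|_{L^\infty(S_{t,u})} \les \|G\|_{L^\infty(S_{t,u})} + r^{1-\frac{2}{p}}\|e\|_{L^p(S_{t,u})}.
\end{equation*}

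Second, for each dyadic piece with $\mu > 1$, applying (\ref{cz0}) produces
\begin{equation*}
\|F_\mu\|_{L^\infty(S_{t,u})} \les \|\Pz\mu G\|_{L^\infty} \ln\!\left(2 + r^{\frac{3}{2}-\frac{2}{p}} \la^{\frac{1}{2}} \|\sn \Pz\mu G\|_{L^p}\right).
\end{equation*}
The finite-band property of $\Pz\mu$ coming from its heat-flow construction gives $\|\sn \Pz\mu G\|_{L^p} \les \mu r^{-1} \|\Pz\mu G\|_{L^p}$, while Bernstein gives $\|\Pz\mu G\|_{L^p} \les r^{2/p}\|\Pz\mu G\|_{L^\infty}$. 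By a scaling normalization reducing to $\|G\|_{L^\infty} \le 1$, the log-argument collapses to a quantity $\les \mu$ up to universal constants, so that the elementary inequality $\ln(2+C\mu) \le C_\delta \mu^\delta$ produces
\begin{equation*}
\|F_\mu\|_{L^\infty(S_{t,u})} \les \mu^\delta \|\Pz\mu G\|_{L^\infty(S_{t,u})}.
\end{equation*}
Summing over $\mu > 1$ and combining with the $F_0$ bound yields the desired inequality.

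\textbf{Main obstacle.} The principal technical point is verifying the finite-band and Bernstein inequalities for $\Pz\mu$ uniformly with respect to the rough metric $\ga$ on $S_{t,u}$; this rests on the commutator estimates (\ref{cmm.1})--(\ref{cmm.2}), the $L^p$ comparability of $\sn$ and $\sz$ from Lemma \ref{frm1}, and the weakly spherical condition of Proposition \ref{swsph}. A secondary subtlety is controlling the $r$- and $\la$-dependences inside the logarithm so that only the benign $\mu^\delta$ loss survives; this uses that $r$ is comparable to $s \le 1$ in the rescaled null cone regime and that the $\la^{1/2}$ factor contributes only a bounded constant once combined with the finite-band estimate above.
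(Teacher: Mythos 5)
Your approach is genuinely different from the paper's. The paper decomposes the solution $\bar F$ dyadically via $\Pz{\mu}$, feeds the Hodge equation into the elliptic identity $-\tfrac{1}{2}\sD^{(0)}\bar F={}^\ast\D_2^{(0)}\D_2^{(0)}\bar F-r^{-2}\bar F$, and then estimates each $\Pz{\mu}\bar F$ directly by Bernstein, the commutator bounds (\ref{cmm.1})--(\ref{cmm.2}), and $L^q$ elliptic theory, thereby avoiding the logarithmic Calder\'on--Zygmund inequality entirely and paying only the $\mu^{2/q}$ from Bernstein. You instead decompose the \emph{source} $G$, solve a frequency-localized Hodge system for each $\Pz{\mu}G$, and apply the logarithmic bound (\ref{cz0}) piece by piece; normalizing $\|G\|_{L^\infty}=1$ by linearity and converting $\ln(2+C\mu)$ into $C_\delta\mu^\delta$ is a plausible strategy.

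However, there is a genuine gap in your treatment of the $\la^{1/2}$ factor inside the logarithm. After the finite-band and Bernstein reductions you describe, the argument of the logarithm in (\ref{cz0}) applied to the source $\Pz{\mu}G$ is $\les r^{1/2}\la^{1/2}\mu\,\|\Pz{\mu}G\|_{L^\infty}$; even after normalizing $\|G\|_{L^\infty}=1$ and using $r\les1$, this is $\les\la^{1/2}\mu$, not $\les\mu$. The finite-band estimate produces a factor $\mu r^{-1}$, not a $\la^{-1/2}$, so your claim that ``the $\la^{1/2}$ factor contributes only a bounded constant once combined with the finite-band estimate'' is incorrect. Applying $\ln(2+X)\les_\delta 1+X^\delta$ then produces the extraneous factor $\la^{\delta/2}$, which is fatal: $\la$ is the large rescaling parameter and the conclusion of Lemma \ref{cz1.2} must be $\la$-independent. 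The argument can be repaired by invoking the underlying Calder\'on--Zygmund estimate from \cite{KR2} \emph{without} the inserted $\la^{1/2}$ (recorded informally as (\ref{czin}); the version (\ref{cz0}) is a deliberate weakening tailored to the later application, valid because the logarithm is increasing); with that sharper bound the log argument is $\les\mu$ and the summation closes. As written, relying on (\ref{cz0}), your proof does not.
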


\begin{proof}
Let $\bar F$ be defined by (\ref{barf}). Using  the following  standard identity for
Hodge operators (see \cite[Page 38]{CK})
\begin{equation*}
-\f12 \sD^{(0)}\bar F={}^\ast\D_2^{(0)}\D_2^{(0)}\bar F-\frac{1}{r^2}\bar F,
\end{equation*}
we can obtain
\begin{equation*}
-\f12 \sD^{(0)}\bar F={}^\ast\D_2^{(0)}(\D_2 F+(\Ga-\Gz)F\c\ga)-\frac{1}{r^2}\bar F.
\end{equation*}
We can always find the Littlewood Paley projection $\ti P^{(0)}_\mu$ associated to
a different symbol $\ti m$ such that
\begin{equation}\label{invlp}
\mu^2 P^{(0)}_\mu =r^2 \sD^{(0)} \ti P^{(0)}_\mu
\end{equation}
we can derive symbolically that
\begin{equation}\label{pobarf}
 P_{\mu}^{(0)}\bar F=\mu^{-2}r^2\left(\tPz{\mu}{}^\ast\D_2^{(0)}(\sz G+(\Ga-\Gz)(F\c \ga+G)+e)\right)+\mu^{-2}\tPz{\mu}\bar F.
\end{equation}
In what follows,  we will not distinguish $\ti P_{\mu}^{(0)}$ with $\Pz{\mu}$.
By the Bernstein inequality and the finite band property, for $p>2$ satisfying $0<1-2/p<s-2$, we
can derive on $S$ that
\begin{align*}
I_\mu&=:\mu^{-2}r^2\|\Pz{\mu}\sz\left((\Ga-\Gz)(F\c \ga+G)+e\right)\|_{L^\infty}+\mu^{-2}\|\Pz{\mu} \bar F\|_{L^\infty}\\
&\les\mu^{-2+\frac{2}{p}+1}r^{1-\frac{2}{p}}\| (\Ga-\Gz)(F\c \ga+G)+e\|_{L^p}+\mu^{-2} \|\Pz{\mu}\bar F\|_{L^\infty}\\&\les \mu^{\frac{2}{p}-1}\left(r^{1-\frac{2}{p}}(\|e\|_{L^p}+\|F\|_{L_x^\infty}\|\Ga-\Gz\|_{L^p})+\|G\|_{L^\infty}\right)
\end{align*}
where, with
 the help of the Bernstein inequality and (\ref{lpp2}), the lowest  order term
 $\mu^{-2}\|\Pz{\mu} \bar F\|_{L_x^\infty}$ has been treated as
\begin{align*}
 \|\Pz{\mu} \bar F\|_{L_x^\infty}
 &\les \mu^{\frac{2}{p''}}r^{-\frac{2}{p''}}\| F\|_{L_x^{p''}}
 \les \mu^{\frac{2}{p''}}(\|G\|_{L_\omega^{p''}}+\|r e\|_{L_\omega^q}),
\end{align*}
with $p''>2$ being a large number and $1/q=1/2+1/{p''}$. Therefore, we obtain that
\begin{equation*}
\sum_{\mu>1}I_\mu\les \|r^{1-\frac{2}{p}}|\Ga-\Gz|_\ga\|_{L^p}\|F\|_{L_x^\infty}
+\|G\|_{L_x^\infty}+r^{1-\frac{2}{p}}\|e\|_{L^p}.
\end{equation*}
It remains to consider the first term in (\ref{pobarf}). Using the Bernstein inequality with $q>2$, we have
$
\mu^{-2}r^2\| \Pz{\mu}{}^\ast\D_2^{(0)}\sz G\|_{L^\infty}\les a_\mu+b_\mu
$
where
$$
a_\mu=(\mu^{-1}r)^{2-\frac{2}{q}}\|{}^\ast \D_2^{(0)}\sz \Pz{\mu}G\|_{L^q} \quad \mbox{ and } \quad
b_\mu=(\mu^{-1}r)^{2-\frac{2}{q}}\|[\Pz{\mu}, {\sz}^2]G\|_{L^q}.
$$
By the $L^q$ theory for elliptic equations, the finite band property, and (\ref{cmm.1}), we have
\begin{eqnarray}
a_\mu\les \mu^{\frac{2}{q}}r^{-\frac{2}{q}}\|\Pz{\mu}G\|_{L^q}, &&
b_\mu \les \mu^{-2+\frac{2}{q}}r^{-\frac{2}{q}}\|G\|_{L^q},
\end{eqnarray}
which together with the estimate on $I_\mu$ imply
\begin{align*}
\sum_{\mu>1}\|\Pz{\mu} \bar F\|_{L^\infty}
&\les \sum_{\mu>1}\mu^{\frac{2}{q}}\|\Pz{\mu}G\|_{L_\omega^q}
    +\|r^{1-\frac{2}{p}}|\Ga-\Gz|_{\gaz}\|_{L^p}\|F\|_{L_x^\infty}\\
&+\|G\|_{L_x^\infty}+r^{1-\frac{2}{p}}\|e\|_{L^p}.
\end{align*}
Using Proposition \ref{swsph} and (\ref{ws1}), we therefore complete the proof.
\end{proof}

The second step is to prove Lemma \ref{lemtrans}, for which we need the following result.

\begin{lemma}
For any $p>2$ and any $S$-tangent tensor $F$, there holds on $S:=S_{t,u}$ that
\begin{equation}\label{invbern}
\|\Pz{\mu}F\|_{L^\infty}\les \mu^{-1+\frac{2}{p}}r^{1-\frac{2}{p}}\|\Pz{\mu} \sz F\|_{L^p}+\mu^{-2}\|\sz F\|_{L^2}.
\end{equation}
\end{lemma}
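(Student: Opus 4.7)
The plan is to exploit the relation $\mu^2 \Pz{\mu} = r^2 \sD^{(0)}\tPz{\mu}$ given in (\ref{invlp}) to trade two factors of $\mu^{-1}$ for two factors of $r\sz$, then move one $\sz$ outside the projector using the commutator bound (\ref{cmm.2}) so that only $\Pz{\mu}\sz F$ remains inside. First I would write
\[
\Pz{\mu} F \;=\; \mu^{-2}r^2\,\tPz{\mu}\,\sD^{(0)} F \;=\; \mu^{-2}r^2\,\tPz{\mu}\,\sz^A(\sz_A F),
\]
using that $\sD^{(0)}F=\sz^A\sz_A F$ is the definition of the rough Laplacian on $S$-tangent tensors (no curvature enters at this stage since we are not commuting two $\sz$'s). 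Then I would apply the Bernstein inequality for $\tPz{\mu}$ to pass from $L^p$ to $L^\infty$, obtaining the preliminary bound
\[
\|\Pz{\mu}F\|_{L^\infty}\;\lesssim\;\mu^{-2+\frac{2}{p}}\,r^{2-\frac{2}{p}}\,\|\tPz{\mu}\sz(\sz F)\|_{L^p}.
\]

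Next I would integrate the interior $\sz$ through the projector by writing
\[
\tPz{\mu}\,\sz(\sz F) \;=\; \sz\bigl(\tPz{\mu}(\sz F)\bigr)+[\tPz{\mu},\sz](\sz F).
\]
The first piece is handled by the finite band property, $\|\sz(\tPz{\mu}\sz F)\|_{L^p}\lesssim \mu r^{-1}\|\tPz{\mu}\sz F\|_{L^p}$, after which $\tPz{\mu}$ can be identified with $\Pz{\mu}$ modulo a different admissible symbol (as is done tacitly in the previous lemma). This yields the main term $\mu^{-1+\frac{2}{p}}r^{1-\frac{2}{p}}\|\Pz{\mu}\sz F\|_{L^p}$. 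For the commutator I would invoke (\ref{cmm.2}) with $H=\sz F$, giving $\|[\tPz{\mu},\sz](\sz F)\|_{L^p}\lesssim r^{-2+\frac{2}{p}}\mu^{-\frac{2}{p}}\|\sz F\|_{L^2}$. Multiplying by the outer factor $\mu^{-2+\frac{2}{p}}r^{2-\frac{2}{p}}$ collapses to exactly $\mu^{-2}\|\sz F\|_{L^2}$, matching the second term of (\ref{invbern}).

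Since the two estimates add to the desired inequality, the work essentially reduces to carefully tracking the powers of $\mu$ and $r$ through Bernstein, finite band, and the commutator estimate. The main obstacle I anticipate is ensuring that the intermediate replacement of $\tPz{\mu}$ by $\Pz{\mu}$ in the finite band step is legitimate for general $S$-tangent tensor $F$; this requires the heat-kernel calculus of \cite{KRsurf} to produce an auxiliary projector $\tPz{\mu}$ satisfying $\mu^2\Pz{\mu}=r^2\sD^{(0)}\tPz{\mu}$ whose symbol still enjoys Bernstein and finite band bounds. Apart from this bookkeeping, no new ingredient beyond (\ref{invlp}), (\ref{cmm.2}), Bernstein, and finite band is needed.
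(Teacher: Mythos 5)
Your proposal is correct and follows essentially the same route as the paper: apply the identity $\mu^2\Pz{\mu}=r^2\sD^{(0)}\tPz{\mu}$ together with Bernstein, split $\tPz{\mu}\sz(\sz F)$ into $\sz\tPz{\mu}\sz F$ plus the commutator $[\tPz{\mu},\sz]\sz F$, and treat these by the finite band property and (\ref{cmm.2}) respectively. The only point the paper handles tacitly, as you anticipated, is the identification of $\tPz{\mu}$ with $\Pz{\mu}$ in the final statement, which it justifies by the standing convention of not distinguishing projections associated with different admissible symbols.
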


\begin{proof}
With the help of (\ref{invlp}) and the Bernstein inequality, we have
\begin{align*}
\|\Pz{\mu}F\|_{L^\infty}&=\mu^{-2}r^2 \|\sD\tPz{\mu}F\|_{L^\infty}
\les \mu^{-2+\frac{2}{p}}r^{2-\frac{2}{p}}\|\tPz{\mu}\sD^{(0)} F\|_{L^p}\\
&\les \mu^{-2+\frac{2}{p}}r^{2-\frac{2}{p}}
\left(\|[\tPz{\mu}, \sz]\sz F\|_{L^p}+\|\sz\tPz{\mu}\sz F\|_{L^p}\right)\\
&:=\A^{(1)}_\mu+\A^{(2)}_\mu.
\end{align*}
From (\ref{cmm.2}) it follows that
\begin{equation*}
r^{2-\frac{2}{p}}\|[\tPz{\mu}, \sz]\sz F\|_{L^p}\les \mu^{-\frac{2}{p}}\|\sz F\|_{L^2},
\end{equation*}
which  implies
$
\A^{(1)}_\mu\les \mu^{-2} \|\sz F\|_{L^2}.
$
In view of the finite band property, we also have
\begin{equation*}
\A^{(2)}_\mu \les \mu^{-1+\frac{2}{p}}r^{1-\frac{2}{p}}\|\tPz{\mu}\sz F\|_{L^p}.
\end{equation*}
Thus, we complete the proof of (\ref{invbern}).
\end{proof}

\begin{lemma}\label{lemtrans}
(i) For $0<\ep<1$, $\mu>1$ and any scalar function $f$, there holds on $S:=S_{t,u}$ that
\begin{equation}\label{compr}
\mu^\ep\|\Pz\mu f\|_{L^\infty}\les \left(\sum_{\ell \ge \mu} \left(\frac{\mu}{\ell}\right)^\ep
+ \sum_{\ell<\mu} \left(\frac{\ell}{\mu}\right)^{1-\ep}\right)\|\ell^\ep P_\ell f\|_{L^\infty}+\mu^{-1+\ep}\|f\|_{L^\infty}.
\end{equation}

(ii) Let $\delta, \delta'>0$ be such that $\delta+\delta'<1$ and let $q>2$ be such that
$2/q<1-\delta$. Then for any scalar functions $f$ and $H$ and any $\mu>1$ there holds
\begin{align}
\mu^{\delta}\|\Pz{\mu} (f\c H)\|_{L^\infty}
&\les \|H\|_{L^\infty} \left(\sum_{\ell\ge \mu} \left(\frac{\mu}{\ell}\right)^\delta
+ \sum_{\ell<\mu} \left(\frac{\ell}{\mu}\right)^{1-\delta'-\delta}\ell^{\delta'}\right)
\|\ell^\delta P_\ell f\|_{L_x^\infty}\nn\\
&+\mu^{\frac{2}{q}-1+\delta}\|f\|_{L^\infty} \|r\sz H\|_{L^q_\omega}.\label{sumlem}
\end{align}
\end{lemma}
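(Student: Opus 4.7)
The plan is to prove the two parts of Lemma \ref{lemtrans} in parallel via a dyadic splitting $f = P_{<\mu}f + P_{\ge\mu}f$ in the ambient three-dimensional Littlewood--Paley decomposition, treating the regimes $\ell\ge\mu$ and $\ell<\mu$ separately. The high-frequency regime $\ell\ge\mu$ is controlled in both parts by the uniform $L^\infty$-boundedness of $\Pz\mu$, which is immediate from the heat-kernel representation $\Pz\mu G = \int m_\mu(\tau)U(\tau)G\,d\tau$ together with $L^\infty$-contractivity of the heat semigroup $U(\tau)$ and $\|m_\mu\|_{L^1} = \|m\|_{L^1}$. This yields $\mu^\ep\|\Pz\mu P_\ell f\|_{L^\infty} \les (\mu/\ell)^\ep\|\ell^\ep P_\ell f\|_{L^\infty}$ for part (i), and with $\|H\|_{L^\infty}$ pulled out, the analogous dyadic summand for part (ii); summation over $\ell\ge\mu$ produces the first sums in \eqref{compr} and \eqref{sumlem}.

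The low-frequency regime $\ell<\mu$ is where the inverse-Bernstein bound \eqref{invbern} does the real work. For part (i), I would apply \eqref{invbern} to $F = P_\ell f$ with a large exponent $p$; after bounding $\|\sz P_\ell f\|_{L^p(S)}\les r^{2/p}\ell\|P_\ell f\|_{L^\infty}$ by controlling the surface tangential derivative by the ambient one and invoking the Bernstein property of $P_\ell$ on $\Sigma$, one obtains $\|\Pz\mu P_\ell f\|_{L^\infty}\les r\mu^{-1+2/p}\ell\|P_\ell f\|_{L^\infty}$ modulo a strictly lower-order contribution coming from $\mu^{-2}\|\sz P_\ell f\|_{L^2}$. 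For $r$ of order one and $p$ large this collapses to $(\ell/\mu)\|P_\ell f\|_{L^\infty}$, which after multiplication by $\mu^\ep$ rewrites as $(\ell/\mu)^{1-\ep}\|\ell^\ep P_\ell f\|_{L^\infty}$ and yields the second sum in \eqref{compr}. The $\Pz\mu P_{\le 1}f$-piece is absorbed into the residual $\mu^{-1+\ep}\|f\|_{L^\infty}$ after one application of \eqref{invlp}, since $\sD^{(0)}P_{\le 1}f = O(\|f\|_{L^\infty})$ by ambient Bernstein.

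For part (ii), I would apply \eqref{invbern} to $F = g\cdot H$ with $g := P_{<\mu}f$ and expand $\sz(gH) = (\sz g)H + g\,\sz H$ by Leibniz. The summand carrying $g\,\sz H$ is handled directly at exponent $p = q$: H\"older on $S$ gives $\|\Pz\mu(g\,\sz H)\|_{L^q}\les \|g\|_{L^\infty}\|\sz H\|_{L^q(S)}$, and the identity $r^{1-2/q}\|\sz H\|_{L^q(S)} = \|r\sz H\|_{L^q_\omega}$ together with $\|g\|_{L^\infty}\les\|f\|_{L^\infty}$ produces the final term $\mu^{2/q-1+\delta}\|f\|_{L^\infty}\|r\sz H\|_{L^q_\omega}$ after multiplication by $\mu^\delta$. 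The summand carrying $(\sz g)H$ is decomposed dyadically via $\sz g = \sum_{\ell<\mu}\sz P_\ell f$ and estimated $\ell$-by-$\ell$ as in part (i), but with the inverse-Bernstein exponent tuned to $p_1 = 2/\delta'$; this produces the weight $(\ell/\mu)^{1-\delta-\delta'}\ell^{\delta'}$ paired with $\|H\|_{L^\infty}\|\ell^\delta P_\ell f\|_{L^\infty}$, matching the second sum in \eqref{sumlem}.

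The main technical obstacle is the $g\,\sz H$-piece in part (ii): a dyadic decomposition of $f$ \emph{before} applying \eqref{invbern} would produce $\sum_{\ell<\mu}\|P_\ell f\|_{L^\infty}$, which is not controlled by $\|f\|_{L^\infty}$ for generic $f$. Applying \eqref{invbern} directly to $F = g\cdot H$ with $g = P_{<\mu}f$ sidesteps this by exploiting uniform $L^\infty$-boundedness of the low-frequency cutoff. The strict inequalities $\delta+\delta'<1$ and $2/q<1-\delta$ provide exactly the room in the exponents required by the two applications of \eqref{invbern} (at $p = p_1 = 2/\delta'$ and at $p = q$) so that all powers of $\ell$ and $\mu$ reassemble into the stated bound.
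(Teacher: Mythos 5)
Your treatment of the high-frequency range $\ell\ge\mu$ is correct in both parts and coincides with the paper's. The genuine gap is in part (i), in the intermediate range $1<\ell<\mu$, and it stems from reaching for the lossy inverse-Bernstein inequality \eqref{invbern} rather than the exact reproducing identity \eqref{invlp}. Applying \eqref{invbern} to $F=P_\ell f$ with any finite $p>2$ and bounding $\|\sz P_\ell f\|_{L^p(S)}\les r^{2/p}\ell\|\ti P_\ell f\|_{L^\infty}$ by ambient Bernstein gives, after discarding the $L^2$ remainder,
\begin{equation*}
\mu^\ep\|\Pz\mu P_\ell f\|_{L^\infty}\les r\,\mu^{\ep-1+2/p}\,\ell\,\|P_\ell f\|_{L^\infty}
=r\,\mu^{2/p}\left(\frac{\ell}{\mu}\right)^{1-\ep}\|\ell^\ep P_\ell f\|_{L^\infty}.
\end{equation*}
The factor $\mu^{2/p}$ persists for every finite $p$; \eqref{invbern} is stated only for finite $p>2$ and its proof already spends a Bernstein step costing $(\mu/r)^{2/p}$, so the claim that the bound ``collapses to $(\ell/\mu)\|P_\ell f\|_{L^\infty}$ for $p$ large'' is not justified. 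Consequently \eqref{compr} does not follow from this route: you obtain a sum with an uncontrolled $\mu^{2/p}$ prefactor, which is strictly weaker than the stated estimate. The paper's route avoids this entirely by using \eqref{invlp} directly: $\Pz\mu P_\ell f=\mu^{-2}r^2\sD^{(0)}\tPz\mu P_\ell f$, and commuting $\tPz\mu$ past $\sD^{(0)}$ and using $L^\infty$-boundedness of $\tPz\mu$ gives $\|\Pz\mu P_\ell f\|_{L^\infty}\les\mu^{-2}r^2\|\sD^{(0)}P_\ell f\|_{L^\infty}\les(\ell/\mu)^2\|\ti P_\ell f\|_{L^\infty}$, a clean two-derivative gain that after multiplying by $\mu^\ep$ is in fact $(\ell/\mu)^{2-\ep}$ -- strictly stronger than the claimed $(\ell/\mu)^{1-\ep}$, with no $\mu^{2/p}$ loss. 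You already invoke \eqref{invlp} for the $P_{\le1}f$ tail; the same identity should be used over the entire range $\ell<\mu$ in place of \eqref{invbern}.

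Part (ii) of your plan is essentially the paper's, but there is one imprecision: \eqref{invbern} carries a single exponent $p$, so one application fixes both Leibniz summands $(\sz g)H$ and $g\,\sz H$ in the \emph{same} $L^p$ norm. You cannot evaluate the first ``at $p_1=2/\delta'$'' and the second ``at $p=q$'' in a single application. The paper applies \eqref{invbern} once with the large $p=2/\delta'$, and then for the $g\,\sz H$ piece invokes the surface Bernstein inequality $\|\Pz\mu G\|_{L^p}\les(\mu/r)^{2/q-2/p}\|\Pz\mu G\|_{L^q}$ (valid since $p>q$) to pass from $L^p$ down to $L^q$; this bridge is missing from your description. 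With that repair and the choice $2/p=\delta'$, the exponents do reassemble into \eqref{sumlem} exactly as you indicate.
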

\begin{proof}
We first prove (i). We can write
$$
\Pz\mu f=\Pz\mu \sum_{\ell\ge\mu} P_\ell f+\Pz\mu \sum_{1\le \ell< \mu} P_\ell f
+\Pz\mu P_{\le 1} f=I^{(1)}_\mu+I^{(2)}_\mu+I^{(3)}_\mu.
$$
By using the  properties of the geometric Littlewood-Paley projections, we have on $S_{t,u}$ that
\begin{align*}
\mu^\ep\|I^{(1)}_\mu\|_{L^\infty}
   &\les \sum_{\ell \ge \mu} \left(\frac{\mu}{\ell}\right)^{\ep}\|\ell^\ep P_\ell f\|_{L^\infty},\\
\mu^\ep\|I^{(2)}_\mu\|_{L^\infty}
   &\les \sum_{1\le \ell<\mu}\mu^{-2+\ep}r^2\|\Delta^{(0)} \ti P_\ell f\|_{L^\infty}
   \les \sum_{1\le \ell<\mu} \left(\frac{\ell}{\mu}\right)^{2-\ep} \|\ell^{\ep}\ti P_\ell f\|_{L^\infty},\\
\|I^{(3)}_\mu\|_{L^\infty}&\les \mu^{-2} r^2\|\Delta^{(0)} P_{\le 1} f\|_{L^\infty}\les \mu^{-2} \|f\|_{L^\infty}.
\end{align*}
Combining the above estimates gives (\ref{compr}).

Now we prove (ii).  In view of $\sum_{\ell} P_\ell f= f$ and set $f_\ell:=P_\ell f$,
we can write
\begin{align*}
\Pz{\mu}(f\c H)&=J^{(1)}_\mu+J^{(2)}_\mu+J^{(3)}_\mu\\
&=\sum_{1<\ell<\mu}\Pz{\mu}(f_{\ell}\c H)+\sum_{\ell\ge \mu}\Pz{\mu}(f_{\ell}\c H)+\Pz{\mu}(f_{\le 1}\c H).
\end{align*}
It is straightforward to derive that
\begin{align*}
\mu^{\delta}\| J^{(2)}_\mu\|_{L^\infty}
\les \|H\|_{L^\infty} \sum_{\ell\ge \mu} \left(\frac{\mu}{\ell} \right)^{\delta} \ell^{\delta}\|f_{\ell}\|_{L^\infty}.
\end{align*}
We now estimate $J_\mu^{(1)}$ by using (\ref{invbern}). Let $f_{1<\cdot<\mu}:=\sum_{1<\ell<\mu}f_{\ell}$.
Then for sufficiently large $p$ we have
\begin{align*}
\mu^\delta \|J^{(1)}_\mu\|_{L^\infty}
&\les \mu^{\delta-1+\frac{2}{p}} r^{1-\frac{2}{p}}\|\Pz{\mu} \sz(f_{1<\cdot<\mu}\c H)\|_{L^p}
+\mu^{-2+\delta}\|\sz(f_{1<\cdot<\mu}\c H)\|_{L^2}\\
&\les \mu^{\delta-1+\frac{2}{p}}r^{1-\frac{2}{p}}\|\Pz{\mu}(\sz f_{1<\cdot<\mu} \c H
  +f_{1<\cdot<\mu} \c \sz H)\|_{L^p}\\
&\quad\quad \quad  +\mu^{-2+\delta}\|\sz(f_{1<\cdot<\mu} \c H)\|_{L^2}\\
&\les \mu^{\delta-1+\frac{2}{p}} \|H\|_{L^\infty} \sum_{1<\ell<\mu}\ell\|f_{\ell}\|_{L_\omega^p}
+\mu^{\delta-2}\|f\|_{L^\infty}\|\sz H\|_{L^2}\\
&\quad\quad\quad+\mu^{\delta-1+\frac{2}{p}} r^{1-\frac{2}{p}}\|\Pz{\mu} (f_{1<\cdot<\mu}\c \sz H)\|_{L^p}.
\end{align*}
For the last term we proceed with $q>2$ sufficiently close to $2$ and the Bernstein inequality to obtain
\begin{align*}
\mu^{\delta-1+\frac{2}{p}}&r^{1-\frac{2}{p}}\|\Pz{\mu}(f_{1<\cdot<\mu}\c \sz H)\|_{L^p}\les \mu^{\delta-1+\frac{2}{q}}r^{1-\frac{2}{q}}\|f\|_{L^\infty}\|\sz H\|_{L^q}.
\end{align*}
Thus
\begin{equation*}
\mu^\delta \|J^{(1)}_\mu\|_{L^\infty}
\les \|H\|_{L^\infty} \sum_{1<\ell<\mu} \left(\frac{\ell}{\mu}\right)^{1-\delta-\frac{2}{p}}
\|\ell^{\delta+\frac{2}{p}}f_{\ell}\|_{L_\omega^p}+ \mu^{\delta-1+\frac{2}{q}}
\|f\|_{L^\infty}\|r\sz H\|_{L^q_\omega}
\end{equation*}
The low frequency part
$J^{(3)}_\mu$ can be treated similarly. Hence we obtain (\ref{sumlem}).
\end{proof}

Now we prove (\ref{cz2}). By abuse of notation, we can write $G$ as the sum of two types of terms
$ \ti G_\mu N^\mu$ and $\ti G_\mu$. We may apply (\ref{compr}) to the components of $\ti G_\mu$
to treat the second type of terms. To treat the first type of terms, we may apply (\ref{sumlem}) to
$f=\ti G_{\mu}$ and $H=N^{\mu}$. Note that  $|\sz N|\le |\tr\theta, \hat\theta, g\c \hn g|$
for which we have
\begin{equation*}
  r^{1-\frac{2}{p}}\|\sz N\|_{L^p(S_{t,u})}\les 1,\, \,
  \mbox{ with } 2\le p\le 4.
\end{equation*}
This together with Lemma \ref{lemtrans} and Lemma \ref{cz1.2} implies (\ref{cz2}).

\section{\bf Appendix III: Commutator estimates}\label{apiii}

In this section we derive various commutator estimates involving the LP projections $P_\la$ in fractional
Sobolev spaces that are extensively used in this paper, where $P_\la$ is defined by  (\ref{LP2012}).
One can refer to \cite{Stein2,KRsurf} for various properties of LP projections. In view of the LP decomposition,
the norm in the Sobolev space $H^\ep$ with $0\le \ep<1$ is defined by
$$
\|F\|_{H^\ep} :=\|F\|_{L^2} + \left(\sum_{\la> 1} \la^{2\ep} \|P_\la F\|_{L^2}^2 \right)^{1/2}
$$
for any scalar function $F$. For any nonnegative integer $m$ and $0\le \ep <1$, we define 
$\|F\|_{H^{m+\ep}} :=\|F\|_{H^m} +\|\hn^m F\|_{H^\ep}$. 
For simplicity of exposition, we will write  $F_\la:=P_\la F$,
$F_{\le\la} : =\sum_{\mu\le \la} P_\mu F$, and  
$\|\La^r F\|_{L^2}: = \left(\sum_{\la> 1} \la^{2r} \|P_\la F\|_{L^2}^2 \right)^{1/2}$
. For any sequence $(a_\la)$ we will use $\|a_\la\|_{L_\la^2}^2$
to denote $\sum_{\la\ge 1} |a_\la|^2$. 

\subsection{Product estimates}

We first derive some useful product estimates. Let $P_\mu$ denote the LP projection.
According to the Littlewood-Paley (LP) decomposition, one has the trichotomy law which schematically says that
for any scalar functions $F$ and $G$ there holds
\begin{equation}\label{trico}
P_\mu(F\c G)=P_\mu(F_{\le \mu} \c G_\mu)+P_\mu(F_\mu\c G_{\le \mu}) +\sum_{\la>\mu}P_\mu(F_\la\c G_\la).
\end{equation}
We will use this decomposition repeatedly.

\begin{lemma}\label{lem3}
For any $0<\ep <1$ and any scalar functions $F$ and $G$ there hold
\begin{align}
&\|\La^\ep(F\c G)\|_{L^2}  \les \| F\|_{H^{1/2+\ep}}\|G\|_{H^1}+\|G\|_{H^{1/2+\ep}}\|F\|_{H^1},\label{simlp2}\\
&\|\mu^{-1/2+\ep}P_\mu (F\c G)\|_{l_\mu^2 L^2}\les \|G\|_{H^\ep}\|F\|_{H^1}. \label{ebeq.1}
\end{align}
\end{lemma}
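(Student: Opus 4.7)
The plan is to prove both estimates by applying the Littlewood--Paley trichotomy (\ref{trico}) to $P_\mu(F\cdot G)$, splitting into low--high ($F_{\le\mu}G_\mu$), high--low ($F_\mu G_{\le\mu}$), and high--high ($\sum_{\la>\mu}F_\la G_\la$) pieces, estimating each by Hölder with Bernstein and Sobolev embedding, and then summing in $l^2_\mu$. The three-dimensional embeddings $H^1\hookrightarrow L^6$, $H^{1/2}\hookrightarrow L^3$, and more generally $H^\ep\hookrightarrow L^{6/(3-2\ep)}$, together with the dual Bernstein inequality $\|P_\mu H\|_{L^2}\lesssim \mu^{3/2}\|H\|_{L^1}$, provide all the analytic ingredients; the rest is a Schur-test argument for the resulting dyadic convolutions.

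For (\ref{simlp2}), the low--high piece is treated with the pair $(L^6,L^3)$: $\|F_{\le\mu}\|_{L^6}\lesssim \|F\|_{H^1}$ and $\|G_\mu\|_{L^3}\lesssim \mu^{1/2}\|G_\mu\|_{L^2}$, so $\mu^\ep\|P_\mu(F_{\le\mu}G_\mu)\|_{L^2}\lesssim \|F\|_{H^1}\,\mu^{1/2+\ep}\|G_\mu\|_{L^2}$, which sums in $l^2_\mu$ to $\|F\|_{H^1}\|G\|_{H^{1/2+\ep}}$. The high--low piece is symmetric and contributes $\|G\|_{H^1}\|F\|_{H^{1/2+\ep}}$. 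For the high--high piece I would use $\|P_\mu(F_\la G_\la)\|_{L^2}\lesssim \mu^{3/2}\|F_\la\|_{L^2}\|G_\la\|_{L^2}$ (via the $L^1\to L^2$ Bernstein for $P_\mu$ together with Cauchy--Schwarz), after which $\mu^\ep \sum_{\la>\mu}\mu^{3/2}\|F_\la\|_{L^2}\|G_\la\|_{L^2}$ becomes $\sum_{\la>\mu}(\mu/\la)^{3/2+\ep}a_\la b_\la$ with $a_\la=\la^{1/2+\ep}\|F_\la\|_{L^2}$ and $b_\la=\la\|G_\la\|_{L^2}$; Schur's test against the dyadically $l^1$ kernel $r^{3/2+\ep}\mathbf{1}_{r\le 1}$ reduces to $\|a_\la b_\la\|_{l^2_\la}\le \|a\|_{l^2}\|b\|_{l^\infty}\lesssim \|F\|_{H^{1/2+\ep}}\|G\|_{H^1}$. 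The residual low-frequency contribution is handled directly by Hölder and Sobolev, $\|P_{\le 1}(FG)\|_{L^2}\le \|F\|_{L^3}\|G\|_{L^6}\lesssim \|F\|_{H^{1/2+\ep}}\|G\|_{H^1}$.

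For (\ref{ebeq.1}) the same three-piece decomposition applies, but tuned so that $F$ carries one full derivative while $G$ carries only $\ep$. The low--high piece reuses the $(L^6,L^3)$ pair and gives $\mu^{-1/2+\ep}\|P_\mu(F_{\le\mu}G_\mu)\|_{L^2}\lesssim \mu^\ep\|F\|_{H^1}\|G_\mu\|_{L^2}$, summable to $\|F\|_{H^1}\|G\|_{H^\ep}$. The high--high piece again combines $\|P_\mu H\|_{L^2}\lesssim \mu^{3/2}\|H\|_{L^1}$ with $\|F_\la G_\la\|_{L^1}\le \|F_\la\|_{L^2}\|G_\la\|_{L^2}$, producing the dyadically convergent kernel $(\mu/\la)^{1+\ep}\mathbf{1}_{\la>\mu}$, and a final Schur-type bound by $\|F\|_{H^1}\|G\|_{H^\ep}$. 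The delicate step is the high--low piece, because $G\in H^\ep$ with $\ep$ possibly arbitrarily small need not embed into $L^3$; I would therefore select the Hölder pair $p=3/\ep$ and $p'=6/(3-2\ep)$ so that Bernstein yields $\|F_\mu\|_{L^p}\lesssim \mu^{3/2-\ep}\|F_\mu\|_{L^2}$ while the sharp Sobolev embedding $H^\ep\hookrightarrow L^{6/(3-2\ep)}$ gives $\|G_{\le\mu}\|_{L^{p'}}\lesssim \|G\|_{H^\ep}$; multiplying yields $\mu^{-1/2+\ep}\|P_\mu(F_\mu G_{\le\mu})\|_{L^2}\lesssim \mu\|F_\mu\|_{L^2}\|G\|_{H^\ep}$, which is precisely $l^2_\mu$-summable to $\|F\|_{H^1}\|G\|_{H^\ep}$.

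The main obstacle is this high--low interaction in (\ref{ebeq.1}): the asymmetric regularity forces $p\to\infty$ as $\ep\to 0$, so the proof cannot rely on the softer $L^6$--$L^3$ duality used for (\ref{simlp2}) but must instead invoke the sharp three-dimensional embedding $H^\ep\hookrightarrow L^{6/(3-2\ep)}$ with a matching Bernstein exponent on $F_\mu$. All other pieces are routine bookkeeping once the correct Hölder pair is in place.
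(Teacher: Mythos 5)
Your proof is correct, and the overall skeleton (trichotomy, Bernstein/H\"older on each piece, Schur's test) is the same as the paper's. Where you differ is in the choice of H\"older pairs: the paper runs the low--high and high--low pieces of (\ref{simlp2}) through $L^\infty\times L^2$ with $\|F_\la\|_{L^\infty}\les\la^{3/2}\|F_\la\|_{L^2}$ summed dyadically, and the high--high pieces through $L^{3/2}$ or $L^{6/5}$ duals, while you use $L^6\times L^3$ and the $L^1\to L^2$ Bernstein bound. Both routes land on the same Schur kernels and produce the same bounds.

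The one place your account should be corrected is the claim that the high--low piece of (\ref{ebeq.1}) ``cannot rely on the softer $L^6$--$L^3$ duality'' and ``must instead invoke the sharp three-dimensional embedding $H^\ep\hookrightarrow L^{6/(3-2\ep)}$.'' Your computation with $p=3/\ep$, $p'=6/(3-2\ep)$ is perfectly correct and yields $\mu^{-1/2+\ep}\|P_\mu(F_\mu G_{\le\mu})\|_{L^2}\les\mu\|F_\mu\|_{L^2}\|G\|_{H^\ep}$, which is $l^2_\mu$-summable; but the sharp embedding is not forced. The paper sidesteps it entirely by estimating $\|G_{\le\mu}\|_{L^\infty}\les\sum_{\la\le\mu}\la^{3/2}\|G_\la\|_{L^2}$ frequency-by-frequency and pairing with $\|F_\mu\|_{L^2}\les\mu^{-1}\|\hn F\|_{L^2}$; after factoring, the resulting Schur kernel is $(\la/\mu)^{3/2-\ep}\mathbf{1}_{\la\le\mu}$, and because $\ep<1$ one has $3/2-\ep>1/2>0$, so the kernel is dyadically $l^1$ and Schur applies directly. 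Your route buys you a cleaner one-line bound with no need to track a dyadic sum inside the H\"older step; the paper's route buys freedom from the $\ep$-dependent Lebesgue exponent, which degenerates as $\ep\to 0$ and may be slightly awkward if one wants the implicit constant uniform near that endpoint (here it is, but the $L^2\times L^\infty$ pairing makes this manifest).
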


\begin{proof}
We first prove (\ref{simlp2}). By using the Bernstein inequality and the finite
band property of the LP projections, we have
\begin{align*}
\mu^{\ep}& \|P_\mu(F_{\le \mu} \c G_\mu)\|_{L^2}\\
&\les \mu^\ep \sum_{\la\le \mu} \|F_\la\|_{L^\infty}\|G_\mu\|_{L^2}
\les \sum_{\la\le \mu} \left(\frac{\la}{\mu}\right)^{1-\ep} \|\la^{1/2+\ep} F_\la\|_{L^2}\|\hn G_\mu\|_{L^2}.
\end{align*}
Therefore
\begin{equation}\label{prd1}
\|\mu^{\ep} P_\mu(F_{\le \mu} \c G_\mu)\|_{l_\mu^2 L_x^2}
\les \| F\|_{H^{1/2+\ep}}\|\hn G\|_{L^2}.
\end{equation}
Similarly we have
\begin{equation}\label{prd1.5}
\|\mu^{\ep} P_\mu(F_\mu \c G_{\le \mu})\|_{l_\mu^2 L^2}
\les \| G\|_{H^{1/2+\ep}}\|\hn F\|_{L^2}.
\end{equation}
Moreover, we have
\begin{align*}
\mu^\ep\|P_\mu( F_\la\c G_\la)\|_{L^2}
&\les \mu^{1/2+\ep} \| F_\la\c G_\la\|_{L^{\frac{3}{2}}} \les \mu^{1/2+\ep}\|F_\la\|_{L^2}\|G_\la\|_{L^6}\\
&\les \left(\frac{\mu}{\la}\right)^{1/2+\ep} \|\la^{1/2+\ep} F_\la\|_{L^2}\|\hn G_\la\|_{L^2}.
\end{align*}
This implies that
\begin{equation}\label{prd2}
\left\|\mu^\ep\sum_{\la>\mu} P_\mu(F_\la\c G_\la)\right\|_{l_\mu^2 L^2}
\les \|\La^{1/2+\ep} F\|_{L^2}\|\hn G\|_{L^2}.
\end{equation}
Combining the above estimates and using the trichotomy law we obtain (\ref{simlp2}).

Next we prove (\ref{ebeq.1}) by using again the trichotomy. We have
\begin{align*}
&\|\mu^{-1/2+\ep} P_\mu (F_{\le \mu} \cdot G_\mu)\|_{l_\mu^2 L^2}
\les \left\|\mu^{-1/2+\ep} \|G_\mu\|_{L^2} \|F_{\le \mu} \|_{L^\infty} \right\|_{l_\mu^2}\\
&\qquad \qquad \les \left\|\sum_{\la\le \mu} \left(\frac{\la}{\mu}\right)^{1/2} \|\mu^\ep G_\mu\|_{L^2} \|\hn F_\la\|_{L^2}\right\|_{l_\mu^2}
\les \|F\|_{H^1} \|G\|_{H^\ep},
\end{align*}
\begin{align*}
&\|\mu^{-1/2+\ep} P_\mu (F_\mu\cdot G_{\le \mu})\|_{l_\mu^2 L^2}
\les \left\|\mu^{-1/2+\ep} \|F_\mu\|_{L^2} \|G_{\le \mu}\|_{L^\infty} \right\|_{l_\mu^2}\\
&\qquad \qquad \les \left\|\sum_{\la\le \mu} \left(\frac{\la}{\mu}\right)^{3/2-\ep} \|\hn F\|_{L^2} \|\la^\ep G_\la\|_{L^2} \right\|_{l_\mu^2}
\les \|\hn F\|_{L^2} \|G\|_{H^\ep}
\end{align*}
and
\begin{align*}
&\left\|\sum_{\la>\mu} \mu^{-1/2+\ep} P_\mu(F_\la\cdot G_\la)\right\|_{l_\mu^2 L^2} \\
& \quad \qquad \les \left\|\mu^{1/2+\ep} \sum_{\la>\mu} \|F_\la\cdot G_\la\|_{L^{6/5}}\right\|_{l_\mu^2}
\les \left\|\mu^{1/2+\ep} \sum_{\la\ge \mu}  \|F_\la\|_{L^3} \|G_\la\|_{L^2} \right\|_{l_\mu^2}\\
&\quad \qquad \les \left\|\sum_{\la\ge \mu} \left(\frac{\mu}{\la}\right)^{1/2+\ep} \|\hn F_\la\|_{L^2} \|\la^\ep G_\la\|_{L^2} \right\|_{l_\mu^2} \les \|F\|_{H^1} \|G\|_{H^\ep}.
\end{align*}
Combining the above estimates yields (\ref{ebeq.1}).
\end{proof}

\begin{lemma}\label{epeq2}
For any $\ep>0$ and any scalar functions $G_1$, $G_2$ and $G_3$ there holds
\begin{equation*}
\left\|\La^\ep (G_1 G_2 G_3)\right\|_{L^2}\les\sum_{j=1}^3 \left(\| G_j\|_{H^{1+\ep}}
\prod_{l\neq j}\|G_l\|_{H^1}\right).
\end{equation*}
\end{lemma}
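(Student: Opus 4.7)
My plan is to iterate the bilinear product estimate (\ref{simlp2}) of Lemma~\ref{lem3}. Applying (\ref{simlp2}) with $F=G_1$ and $G=G_2 G_3$ reduces the problem to bounding the two quantities $\|G_2 G_3\|_{H^1}$ and $\|G_2 G_3\|_{H^{1/2+\ep}}$ by $\|G_2\|_{H^{1+\ep}}\|G_3\|_{H^1}+\|G_2\|_{H^1}\|G_3\|_{H^{1+\ep}}$, using the trivial monotonicity $\|G_1\|_{H^{1/2+\ep}}\le\|G_1\|_{H^{1+\ep}}$ on the first factor. Symmetrizing the roles of $G_1$ with $G_2$ and $G_3$ at the end then produces the symmetric sum on the right-hand side.

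The $H^{1/2+\ep}$-bilinear estimate is the easier one: reducing without loss of generality to $0<\ep<1/2$ so that the exponent $1/2+\ep$ lies in $(0,1)$ (the range $\ep\ge 1/2$ is simpler since then $H^{1+\ep}\hookrightarrow L^\infty$ in three dimensions and standard H\"older immediately applies), I apply (\ref{simlp2}) a second time with exponent $1/2+\ep$, and complement it with $\|G_2 G_3\|_{L^2}\le \|G_2\|_{L^4}\|G_3\|_{L^4}\les \|G_2\|_{H^1}\|G_3\|_{H^1}$ from the Sobolev embedding $H^1\hookrightarrow L^4$ in three dimensions.

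The main obstacle is the bound for $\|G_2 G_3\|_{H^1}$, because $H^1(\Sigma)$ is critical and fails to be an algebra in three dimensions; the naive iteration of (\ref{simlp2}) with exponent approaching $1$ would require a full extra half-derivative on one of the factors, which is not available. To overcome this I would bypass the intermediate $H^1$ norm and perform the product expansion directly on the trilinear expression, dyadically decomposing $G_i=\sum_{\la_i}P_{\la_i}G_i$ and using the trichotomy law on $P_\mu(G_{1,\la_1}G_{2,\la_2}G_{3,\la_3})$. For each frequency configuration, Bernstein's inequality $\|P_{\la}G\|_{L^\infty}\les \la^{3/2}\|P_{\la}G\|_{L^2}$ distributes the $L^\infty$ losses among the factors, while the output weight $\mu^\ep$ is absorbed by the largest input frequency $\la_\ast\gtrsim \mu$ into a single $H^{1+\ep}$-norm; the remaining two factors, placed in $H^1$ via either Bernstein or the Sobolev embedding $H^1\hookrightarrow L^6$, give convergent geometric dyadic sums after Cauchy--Schwarz thanks to the strict positivity of $\ep$ and the frequency gaps forced by the trichotomy. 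This yields the claimed trilinear bound with a single $H^{1+\ep}$-factor and two $H^1$-factors, which after summing over the three possible assignments of the ``highest'' factor produces the desired symmetric estimate.
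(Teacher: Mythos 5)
Your diagnosis of the obstruction is correct, and this is the most valuable part of your proposal: $\|G_2 G_3\|_{H^1}$ cannot in general be bounded by $\|G_2\|_{H^{1+\ep}}\|G_3\|_{H^1}+\|G_3\|_{H^{1+\ep}}\|G_2\|_{H^1}$ for small $\ep$, since $H^1$ is critical rather than an algebra in three dimensions and the product would require the full $H^{3/2}$ regularity on one factor; so a naive two-fold application of (\ref{simlp2}) fails. Your pivot to a direct trilinear paraproduct is sound and would close if executed carefully: attribute the $\mu^\ep$ weight to the highest input frequency, put the other two factors in $L^\infty$ or $L^6$ by Bernstein/Sobolev, and use Cauchy--Schwarz and geometric decay $(\mu/\la)^{1/2+\ep}$ in the high-high case. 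The paper, however, follows a different and lighter route. It applies trichotomy only once, to $G_1\cdot(G_2 G_3)$ with $G_2 G_3$ kept as a single block. In the low-high piece $P_\mu\big((G_1)_{\le\mu}(G_2 G_3)_\mu\big)$, Bernstein on the low factor gives $\la^{1/2}$ and the finite-band property converts $\|(G_2 G_3)_\mu\|_{L^2}$ into $\mu^{-1}\|P_\mu\hn(G_2 G_3)\|_{L^2}$, so after summing over $\la\le\mu$ the residual weight on $P_\mu\hn(G_2 G_3)$ is exactly $\mu^{-1/2+\ep}$ --- and the subsidiary estimate (\ref{ebeq.1}), with its negative $\mu$-weight, is calibrated precisely to absorb that. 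The high-low and high-high pieces close with plain H\"older and $H^1\hookrightarrow L^6$, so no bound on $\|G_2 G_3\|_{H^1}$ is ever needed. In short, what you get around by going fully trilinear, the paper gets around by nesting (\ref{ebeq.1}) rather than (\ref{simlp2}) at the inner step; the negative exponent in (\ref{ebeq.1}) supplies exactly the half-derivative your naive iteration was missing. Your route is self-contained and conceptually transparent but entails more cases; the paper's is shorter but hinges on recognizing which bilinear building block to nest.
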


\begin{proof}
By using the properties of LP projections and the estimate (\ref{ebeq.1}) in Lemma \ref{lem3}
we have
\begin{align*}
&\|\mu^\ep P_\mu \left((G_1)_{\le \mu} (G_2 G_3)_\mu\right) \|_{l_\mu^2 L^2}
\les \left\|\sum_{\la\le \mu} \mu^\ep \|(G_1)_\la\|_{L^\infty} \|(G_2 G_3)_\mu\|_{L^2} \right\|_{l_\mu^2} \\
& \les \left\|\sum_{\la\le \mu} \mu^\ep \la^{1/2} \|\hn (G_1)_\la\|_{L^2} \|(G_2 G_3)_\mu\|_{L^2} \right\|_{l_\mu^2}
\les \|\hn G_1\|_{L^2}  \left\|\mu^{-1/2+\ep} P_\mu \hn (G_2 G_3)\right\|_{l_\mu^2 L^2} \\
&\les \|G_1\|_{H^1} \left(\| G_2\|_{H^{1+\ep}}\|G_3\|_{H^1}+\|G_3\|_{H^{1+\ep}}\|G_2\|_{H^1} \right),
\end{align*}
\begin{align*}
\left\|\mu^\ep P_\mu\left((G_1)_\mu (G_2 G_3)_{\le \mu}\right) \right\|_{l_\mu^2 L^2}
& \les \left\|\mu^\ep \|(G_1)_\mu\|_{L^3} \|(G_2 G_3)_{\le \mu} \|_{L^6}\right\|_{l_\mu^2}\\
& \les \left\|\mu^{\ep-1/2}\|\hn (G_1)_\mu\|_{L^3}\right\|_{l_\mu^2} \|G_2 G_3\|_{L^3}\\
&\les \|\mu^\ep \hn (G_1)_\mu\|_{l_\mu^2 L^2} \|G_2\|_{L^6} \|G_3\|_{L^6}\\
&\les \| G_1\|_{H^{1+\ep}} \|G_2\|_{H^1} \|G_3\|_{H^1}
\end{align*}
and
\begin{align*}
\left\|\sum_{\la>\mu} \mu^\ep P_\mu \left((G_1)_\la (G_2 G_3)_\la\right) \right\|_{l_\mu^2 L^2}
& \les \left\|\sum_{\la>\mu} \mu^\ep \|(G_1)_\la\|_{L^6} \|G_2 G_3\|_{L^3} \right\|_{l_\mu^2}\\
& \les \left\|\sum_{\la>\mu} \left(\frac{\mu}{\la}\right)^\ep \|\la^\ep \hn (G_1)_\la\|_{L^2} \right\|_{l_\mu^2}
\|G_2\|_{H^1} \|G_3\|_{H^1}\\
&\les \|G_1\|_{H^{1+\ep}} \|G_2\|_{H^1}\|G_3\|_{H^1}.
\end{align*}
In view of the trichotomy and the above estimates, we thus complete the proof.
\end{proof}

\begin{lemma}\label{epp1}
For any $0<\ep<1$ there hold
\begin{align}
&\|\La^\ep(F\c \hn G)\|_{L^2}\les \|F\|_{L^\infty}\|G\|_{H^{1+\ep}}
+\|G\|_{L^\infty}\| F\|_{H^{1+\ep}},\label{epeq1}
\end{align}
\end{lemma}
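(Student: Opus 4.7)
The plan is to apply the Littlewood--Paley trichotomy (\ref{trico}) to the product $F\c \hn G$ and to handle the three resulting pieces separately via Bernstein and finite band inequalities, with final $l_\mu^2$-summation. Since $P_\la$ commutes with $\hn$, I may freely write $(\hn G)_\la=\hn G_\la$; in particular $\mu^{\ep}\|(\hn G)_\mu\|_{L^2}\approx \mu^{1+\ep}\|G_\mu\|_{L^2}$, which is $l_\mu^2$-summable to $\|G\|_{H^{1+\ep}}$.

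For the low--high piece $P_\mu(F_{\le \mu}(\hn G)_\mu)$, I would use $\|F_{\le \mu}\|_{L^\infty}\le \|F\|_{L^\infty}$ to obtain $\mu^\ep\|P_\mu(F_{\le \mu}(\hn G)_\mu)\|_{L^2}\les \|F\|_{L^\infty}\mu^{1+\ep}\|G_\mu\|_{L^2}$, and then sum in $l_\mu^2$ to produce the contribution $\|F\|_{L^\infty}\|G\|_{H^{1+\ep}}$. For the high--low piece $P_\mu(F_\mu(\hn G)_{\le \mu})$, the Bernstein/finite band inequality gives $\|(\hn G)_{\le \mu}\|_{L^\infty}\les \mu\|G\|_{L^\infty}$ (splitting $(\hn G)_{\le\mu}=(\hn G)_{\le 1}+\sum_{1<\la\le\mu}\hn G_\la$ and using $\|\hn G_\la\|_{L^\infty}\les \la\|G\|_{L^\infty}$), so that
\[
\mu^\ep\|P_\mu(F_\mu(\hn G)_{\le\mu})\|_{L^2}\les \mu^{1+\ep}\|F_\mu\|_{L^2}\|G\|_{L^\infty},
\]
whose $l_\mu^2$-norm is bounded by $\|F\|_{H^{1+\ep}}\|G\|_{L^\infty}$.

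The remaining high--high piece $\sum_{\la>\mu}P_\mu(F_\la(\hn G)_\la)$ is the only term that couples frequencies across scales. I would estimate it by the crude pointwise bound $\|F_\la\|_{L^\infty}\le \|F\|_{L^\infty}$, giving
\[
\mu^\ep\|P_\mu(F_\la(\hn G)_\la)\|_{L^2}\le \|F\|_{L^\infty}\Bigl(\tfrac{\mu}{\la}\Bigr)^{\!\ep}\la^\ep\|(\hn G)_\la\|_{L^2},
\]
and then sum over $\la>\mu$ and take $l_\mu^2$ via Schur's test applied to the kernel $K(\mu,\la)=(\mu/\la)^{\ep}\mathbf{1}_{\la>\mu}$. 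Since $\ep>0$, both marginals $\sup_\mu\sum_\la K(\mu,\la)$ and $\sup_\la\sum_\mu K(\mu,\la)$ are geometric series and hence finite, so $K$ defines a bounded operator on $l^2$; this yields the contribution $\|F\|_{L^\infty}\|G\|_{H^{1+\ep}}$.

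The main (mild) technical point is the Schur-type summation in the high--high term, which crucially uses $\ep>0$; the remaining two pieces are direct consequences of Bernstein and finite band, in the spirit of (\ref{prd1})--(\ref{prd2}) in Lemma \ref{lem3}. Combining the three estimates and invoking the trichotomy yields (\ref{epeq1}).
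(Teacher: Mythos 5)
Your proposal is correct and takes essentially the same route as the paper: the trichotomy from (\ref{trico}), Bernstein/finite band on the low--high and high--low pieces, and a Schur/Young summation on the high--high piece. The only cosmetic difference is that you spell out the Schur-test kernel $K(\mu,\la)=(\mu/\la)^\ep\mathbf{1}_{\la>\mu}$ explicitly, where the paper absorbs this into a one-line bound by $\|\La^\ep\hn G\|_{L^2}$.
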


\begin{proof}
We will use the trichotomy law to derive the estimates. To derive (\ref{epeq1}), we
use the properties of the LP projections to obtain
\begin{align*}
\|\mu^\ep P_\mu (F_{\le\mu} \cdot \hn G_\mu)\|_{l_\mu^2 L^2}
&\les \|F\|_{L^\infty} \|\mu^{\ep}(\hn G)_\mu\|_{l_\mu^2 L^2}
\les \|F\|_{L^\infty} \|\La^\ep \hn G\|_{L^2},
\end{align*}
\begin{align*}
\|\mu^\ep P_\mu (F_\mu \cdot \hn G_{\le \mu})\|_{l_\mu^2 L^2}
&\les \left\|\mu^\ep \|F_\mu\|_{L^2} \sum_{\la <\mu} \la \|G_\la\|_{L^\infty}\right\|_{l_\mu^2}
\les  \|\mu^{1+\ep} F_\mu\|_{l_\mu^2 L^2}  \|G\|_{L^\infty} \\
&\les \|\La^\ep \hn F\|_{L^2} \|G\|_{L^\infty}
\end{align*}
and
\begin{align*}
\left\|\mu^{\ep} \sum_{\la>\mu} P_\mu(F_\la \cdot \hn G_\la)\right\|_{l_\mu^2 L^2}
&\les \|F\|_{L^\infty} \left\| \sum_{\la>\mu} \left(\frac{\mu}{\la}\right)^\ep \|\la^\ep \hn G_\la\|_{L^2}\right\|_{l_\mu^2}\\
&\les \|F\|_{L^\infty} \|\La^\ep \hn G\|_{L^2}.
\end{align*}
Combining the above three estimates we therefore obtain (\ref{epeq1}).
\end{proof}


\subsection{Commutator estimates}

In this subsection we will derive various estimates related to the commutators
$[P_\mu, F] G$. We first consider the general setting. Let $m(\xi)$ define a multiplier
\begin{equation}\label{multiplier1}
\mathcal{P} f(x)=\int e^{i x\xi} m(\xi) \hat f(\xi) d\xi.
\end{equation}
By introducing the function $P(x)$ defined by
\begin{equation}\label{multiplier2}
M(x)=\int e^{ix\cdot \xi} m(\xi) d\xi=\hat{m}(-x),
\end{equation}
then for any scalar functions $F$ and $G$ we can write
\begin{align}\label{expression}
[\P, F]G(x)&=\int M(x-y) (F(y)-F(x))G(y) dy \nonumber\\
&=\int M(x-y)(x-y)^j \int_0^1 \p_j F(\tau y+(1-\tau)x)d\tau G(y) dy \nonumber\\
&=\int M(h) h^j \int_0^1 \p_j F(x-\tau h) d\tau G(x-h) dh.
\end{align}
By taking the $L^q$-norm with $1\le q\le \infty$ and using the Minkowski inequality we obtain
\begin{equation}\label{commutator}
\|[\P, F]G\|_{L^q} \le \int |M(h)| |h| \int_0^1 \|\p F(\cdot-\tau h) G(\cdot-h)\|_{L^q} d\tau d h
\end{equation}
An application of the H\"{o}lder inequality gives the following result whose special case with $p=\infty$ and
$q=r=2$ is \cite[Lemma 8.2]{KRduke}.

\begin{lemma}\label{basecom}
Let $\P$ be the multiplier operator defined by (\ref{multiplier1}) and let $M$ be the function given by
(\ref{multiplier2}). Then, for any $1\le p, q, r\le \infty$ satisfying $1/p+1/r=1/q$ and any scalar
functions $F$ and $G$, there holds
\begin{equation*}
\|[\mathcal{P}, F]G\|_{L^q}\le \|\p F\|_{L^p}\|G\|_{L^r}\int |x||M(x)| dx.
\end{equation*}
\end{lemma}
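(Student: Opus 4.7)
The proof is essentially already laid out in the paragraph preceding the lemma statement; what remains is to carry out the Hölder estimate and exploit translation invariance of Lebesgue norms. I would proceed as follows.

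First, I would take (\ref{expression}) as the starting point, which gives the pointwise representation
\begin{equation*}
[\mathcal{P}, F]G(x)=\int M(h)\, h^j \int_0^1 \p_j F(x-\tau h)\, d\tau\, G(x-h)\, dh,
\end{equation*}
obtained by writing $F(y)-F(x)$ as an integral of $\p F$ along the segment joining $x$ and $y$ and then changing variables $h=x-y$. Next, taking the $L^q_x$-norm and applying the Minkowski integral inequality (moving the $L^q_x$ norm inside the $dh\, d\tau$ integrals) yields (\ref{commutator}):
\begin{equation*}
\|[\mathcal{P}, F]G\|_{L^q_x} \le \int |M(h)|\,|h| \int_0^1 \|\p F(\cdot-\tau h)\,G(\cdot-h)\|_{L^q_x}\, d\tau\, dh .
\end{equation*}

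The main step is then to bound the inner $L^q$ norm. Since $1/p+1/r=1/q$, Hölder's inequality gives
\begin{equation*}
\|\p F(\cdot-\tau h)\,G(\cdot-h)\|_{L^q_x} \le \|\p F(\cdot-\tau h)\|_{L^p_x}\,\|G(\cdot-h)\|_{L^r_x}.
\end{equation*}
Using translation invariance of the Lebesgue norms, the right-hand side equals $\|\p F\|_{L^p}\|G\|_{L^r}$, a quantity independent of $h$ and $\tau$. Substituting back,
\begin{equation*}
\|[\mathcal{P}, F]G\|_{L^q} \le \|\p F\|_{L^p}\|G\|_{L^r}\int |h||M(h)|\, dh ,
\end{equation*}
which is the desired inequality.

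There is no real obstacle: the only thing to verify is that all integrals are well-defined under the hypothesis $\int |x||M(x)|\,dx<\infty$ (implicit in the statement, since otherwise the bound is vacuous), and that $\p F\in L^p$, $G\in L^r$ so that the pointwise representation makes sense after, say, a standard density argument. The proof is a two-line computation once the commutator identity (\ref{expression}) is in hand.
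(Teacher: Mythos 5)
Your proof is correct and follows exactly the route the paper takes: start from the pointwise representation (\ref{expression}), pass to (\ref{commutator}) via Minkowski, then apply H\"older with exponents $p,r$ and translation invariance of the Lebesgue norms. Nothing to add.
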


Recall that the LP projection $P_\mu$ is a multiplier operator with $m(\xi)=\Psi(\mu^{-1}\xi)$, where $\Psi$ is
a mollifier with support on $\{1/2<|\xi|<2\}$. Observe that $M(x)=\mu^3 \widehat{\Psi}(-\mu x)$.
We have $\int |x| |M(x)| dx \les \mu^{-1}$. Therefore, from Lemma \ref{basecom} we obtain the following commutator estimate.

\begin{corollary}\label{base2com}
For any $1\le p, q, r\le \infty$ satisfying $1/p+1/r=1/q$ and any scalar functions $F$ and $G$ there holds
\begin{equation*}
\|[P_\mu, F]G\|_{L^q}\les \mu^{-1}\|\hn F\|_{L^p} \|G\|_{L^r}.
\end{equation*}
\end{corollary}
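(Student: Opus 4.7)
The plan is to obtain Corollary~\ref{base2com} as a direct application of Lemma~\ref{basecom} to the multiplier $P_\mu$, with the only remaining task being a quantitative bound on the first moment of its kernel. Since the hypothesis $1/p+1/r=1/q$ is exactly the one required by Lemma~\ref{basecom}, and $P_\mu$ is a multiplier operator in the sense of (\ref{multiplier1}) with symbol $m(\xi)=\Psi(\mu^{-1}\xi)$, all I need to verify is the scaling identity
\[
\int |x|\,|M(x)|\,dx \lesssim \mu^{-1},
\]
where $M$ is defined via (\ref{multiplier2}).

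To carry this out, first I would write $M$ explicitly by Fourier inversion:
\[
M(x) = \int e^{ix\cdot\xi}\Psi(\mu^{-1}\xi)\,d\xi = \mu^{3}\,\widehat{\Psi}(-\mu x),
\]
using the standard dilation rule for the Fourier transform on $\mathbb{R}^{3}$. Next, a change of variables $y=\mu x$ gives
\[
\int |x|\,|M(x)|\,dx \;=\; \mu^{3}\int |x|\,|\widehat{\Psi}(-\mu x)|\,dx \;=\; \mu^{-1}\int |y|\,|\widehat{\Psi}(-y)|\,dy,
\]
and the remaining integral is a universal constant because $\Psi$ is smooth with compact support in $\{1/2<|\xi|<2\}$, so $\widehat{\Psi}$ is a Schwartz function and in particular $|y|\,\widehat{\Psi}(y)$ is integrable. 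Feeding this bound into Lemma~\ref{basecom} with $\mathcal{P}=P_\mu$ yields the claimed inequality
\[
\|[P_\mu,F]G\|_{L^q}\lesssim \mu^{-1}\|\hn F\|_{L^p}\|G\|_{L^r}.
\]

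There is no real obstacle here; the corollary is essentially a dimensional consequence of the Littlewood–Paley rescaling. The only minor point worth noting is that the bound on the universal constant $\int |y|\,|\widehat{\Psi}(-y)|\,dy$ depends on the choice of $\Psi$, which is fixed once and for all in the definition (\ref{LP2012}), so the implicit constant in $\lesssim$ is indeed universal and the statement is uniform in $\mu$.
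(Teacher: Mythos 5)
Your proposal is correct and follows precisely the paper's own route: apply Lemma~\ref{basecom} to the multiplier $P_\mu$ with symbol $\Psi(\mu^{-1}\xi)$, identify the kernel as $M_\mu(x)=\mu^{3}\widehat\Psi(-\mu x)$, and verify $\int|x||M_\mu(x)|\,dx\lesssim\mu^{-1}$ by rescaling. You simply spell out the change of variables that the paper leaves implicit.
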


In the following we will give further estimates related to the commutator $[P_\mu, F]G$
for any scalar functions $F$ and $G$. We can write
$$
[P_\mu, F] G=[P_\mu, F] G_{\le 2\mu} +[P_\mu, F] G_{>2\mu}.
$$
By the orthogonality of the LP projections, we have
\begin{align*}
[P_\mu, F]G_{>2\mu} &= P_\mu (F\cdot G_{>2\mu})-F P_\mu G_{>2\mu}=\sum_{\mu_1>2\mu} P_\mu (F\cdot G_{\mu_1})\\
&=\sum_{\mu_1>2\mu} \sum_{\frac{\mu_1}{2}\le \mu_2 \le 2 \mu_1} P_\mu (F_{\mu_2}\cdot G_{\mu_1}).
\end{align*}
Thus, schematically we can write
\begin{align}\label{bdcp}
[P_\mu, F]G=[P_\mu, F] G_{\le \mu} + \sum_{\la>\mu} P_\mu (F_\la G_\la)
\end{align}
which is not quite accurate but harmless to derive estimates.

\begin{lemma}\label{lem1}
For $0<\ep<1$, there holds
\begin{align}
\mu^{\ep}\|[P_\mu, F]\hn G\|_{L^2}
&\les \|\hn F\|_{L^\infty} \sum_{\la\le \mu} \left(\frac{\la}{\mu}\right)^{1-\ep} \|\la^\ep G_\la\|_{L^2} \nonumber\\
&+ \|\hn F\|_{L^\infty} \sum_{\la>\mu} \left(\frac{\mu}{\la}\right)^\ep \|\la^\ep G_\la\|_{L^2}. \label{com3}
\end{align}
\end{lemma}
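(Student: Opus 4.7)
The plan is to split the commutator according to the schematic trichotomy already used in (\ref{bdcp}), applied with $\hn G$ in place of $G$:
\[
[P_\mu, F]\hn G = [P_\mu, F](\hn G)_{\le \mu} + \sum_{\la>\mu} P_\mu\bigl(F_\la \cdot (\hn G)_\la\bigr).
\]
This reduces (\ref{com3}) to two independent dyadic estimates, one producing the sum over $\la\le \mu$ and one producing the sum over $\la>\mu$.

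For the low-frequency piece, I would further dyadically decompose $(\hn G)_{\le\mu}=\sum_{\la\le\mu}(\hn G)_\la$ and apply the base commutator estimate, Corollary \ref{base2com}, with $p=\infty$ and $q=r=2$ to each dyadic slice:
\[
\|[P_\mu, F](\hn G)_\la\|_{L^2}\les \mu^{-1}\|\hn F\|_{L^\infty}\|(\hn G)_\la\|_{L^2}.
\]
Because $P_\la$ is a Fourier multiplier that commutes with $\hn$, the finite band property gives $\|(\hn G)_\la\|_{L^2}\les \la\|G_\la\|_{L^2}$, so multiplying by $\mu^{\ep}$ yields the bound $(\la/\mu)^{1-\ep}\|\la^\ep G_\la\|_{L^2}\cdot\|\hn F\|_{L^\infty}$. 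Summing in $\la\le\mu$ produces the first term on the right of (\ref{com3}).

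For the high-frequency piece, I would simply use Bernstein together with the inverse finite-band estimate $\|F_\la\|_{L^\infty}\les \la^{-1}\|\hn F\|_{L^\infty}$, obtaining
\[
\|P_\mu(F_\la(\hn G)_\la)\|_{L^2}\les \|F_\la\|_{L^\infty}\|(\hn G)_\la\|_{L^2}\les \|\hn F\|_{L^\infty}\|G_\la\|_{L^2},
\]
so that $\mu^\ep\|P_\mu(F_\la(\hn G)_\la)\|_{L^2}\les (\mu/\la)^{\ep}\|\hn F\|_{L^\infty}\|\la^\ep G_\la\|_{L^2}$. Summing in $\la>\mu$ produces the second term on the right of (\ref{com3}).

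There is no real obstacle beyond checking that the schematic decomposition (\ref{bdcp}) captures all relevant interactions; the only point deserving care is that the heuristic grouping $\sum_{\la>\mu}P_\mu(F_\la(\hn G)_\la)$ actually absorbs cross-terms with $F$ at frequency $\la$ and $\hn G$ at a comparable dyadic frequency (say within a factor of $2$), a standard consequence of the support of $\widehat{P_\mu}$ on the shell $\{|\xi|\sim\mu\}$. Once this is granted, the two bounds assemble directly to (\ref{com3}).
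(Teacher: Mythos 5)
Your proposal is correct and follows essentially the same route as the paper's own proof: split via the schematic decomposition (\ref{bdcp}), use Corollary \ref{base2com} on the low-frequency piece and then the finite band property to convert $\|\hn G_\la\|_{L^2}$ into $\la\|G_\la\|_{L^2}$, and on the high-frequency piece trade a factor of $\la^{-1}$ from $\|F_\la\|_{L^\infty}$ against a factor of $\la$ from $\|\hn G_\la\|_{L^2}$. The only cosmetic difference is that you dyadically decompose $(\hn G)_{\le\mu}$ before invoking Corollary \ref{base2com}, whereas the paper applies the corollary once to the whole low-frequency block and decomposes the resulting $L^2$ norm afterward; the outcome is identical by the triangle inequality.
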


\begin{proof}
By using Corollary \ref{base2com} we have
\begin{align*}
 \|[P_\mu, F] \hn G_{\le \mu} \|_{L^2}
& \les  \mu^{-1} \|\hn F\|_{L^\infty} \sum_{\la\le \mu} \|\hn G_\la\|_{L^2}
\les \mu^{-1} \|\hn F\|_{L^\infty} \sum_{\la \le \mu} \la \|G_\la\|_{L^2}.
\end{align*}
On the other hand, by using the properties of the LP projections we have
\begin{align*}
\sum_{\la>\mu} \|P_\mu (F_\la \cdot \hn G_\la)\|_{L^2}
& \les \sum_{\lambda>\mu} \|F_\la\|_{L^\infty} \|\hn G_\la\|_{L^2}
\les \|\hn F\|_{L^\infty} \sum_{\la>\mu} \|G_\la\|_{L^2}.
\end{align*}
Combining these two estimates and using the decomposition (\ref{bdcp}) we therefore complete the proof.
\end{proof}

\begin{lemma}\label{lem2}
For $0<\ep<1/2$ and $\mu\ge 1$ there hold
\begin{align}
\mu^{-\f12+\ep}\|\hn [P_\mu, F]G\|_{L^2}
&\les \|\hn F\|_{L^6} \sum_{\la\le \mu} \left(\frac{\la}{\mu}\right)^{1/2-\ep} \|\la^\ep G_\la\|_{L^2} \nonumber\\
&+ \|\hn F\|_{L^6} \sum_{\la>\mu} \left(\frac{\mu}{\la}\right)^{1/2+\ep} \|\la^\ep G_\la\|_{L^2} \label{lem2eq}
\end{align}
and
\begin{align}
\mu^{\f12+\ep} \|[P_\mu, F]G\|_{L^2}
& \les \|\hn F\|_{L^6} \sum_{\la\le \mu} \left(\frac{\la}{\mu}\right)^{1/2-\ep} \|\la^\ep G_\la\|_{L^2} \nonumber\\
& +\|\hn F\|_{L^6} \sum_{\la\ge \mu} \left(\frac{\mu}{\la}\right)^{1+\ep} \|\la^\ep G_\la\|_{L^2}.\label{lem4eq}
\end{align}
\end{lemma}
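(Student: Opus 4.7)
My plan is to reduce both estimates to Corollary \ref{base2com} and the Bernstein/finite-band properties, by splitting $[P_\mu, F]G$ according to the decomposition (\ref{bdcp}): $[P_\mu,F]G=[P_\mu,F]G_{\le\mu}+\sum_{\la>\mu}P_\mu(F_\la G_\la)$. The two terms on the right are of genuinely different character (low-high vs.\ high-high interactions) and will need to be handled separately, but by analogous recipes.

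For (\ref{lem4eq}), this is routine. On the low-frequency piece, Corollary \ref{base2com} with $(p,r,q)=(6,3,2)$ yields $\|[P_\mu, F]G_{\le\mu}\|_{L^2}\les \mu^{-1}\|\hn F\|_{L^6}\|G_{\le\mu}\|_{L^3}$, and then Bernstein $\|G_\la\|_{L^3}\les \la^{1/2}\|G_\la\|_{L^2}$ splits the $L^3$ norm dyadically, producing the factor $(\la/\mu)^{1/2-\ep}$ after multiplication by $\mu^{1/2+\ep}$. On the high-frequency piece, I bound $\|P_\mu(F_\la G_\la)\|_{L^2}\les \mu^{1/2}\|F_\la G_\la\|_{L^{3/2}}\les \mu^{1/2}\|F_\la\|_{L^6}\|G_\la\|_{L^2}\les \mu^{1/2}\la^{-1}\|\hn F\|_{L^6}\|G_\la\|_{L^2}$ using the reverse Bernstein and the finite-band property; multiplication by $\mu^{1/2+\ep}$ then produces the factor $(\mu/\la)^{1+\ep}$.

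For (\ref{lem2eq}), the main difficulty will be the low-frequency commutator $\hn[P_\mu, F]G_{\le\mu}$, because Corollary \ref{base2com} does not directly control the gradient of a commutator. The key observation is the algebraic identity
\[
\hn[P_\mu,F]G_{\le\mu}=[\hn P_\mu,F]G_{\le\mu}-\hn F\cdot P_\mu G_{\le\mu}.
\]
The operator $\hn P_\mu$ is again a Fourier multiplier, whose kernel $\hn M_\mu$ satisfies $\int |x|\,|\hn M_\mu(x)|\,dx=O(1)$ (no negative power of $\mu$, since the derivative costs $\mu$ and the first-moment scaling gains $\mu^{-1}$). Applying Lemma \ref{basecom} directly to the symbol $i\xi\,\Psi(\mu^{-1}\xi)$ gives $\|[\hn P_\mu, F]G_{\le\mu}\|_{L^2}\les\|\hn F\|_{L^6}\|G_{\le\mu}\|_{L^3}$, and H\"older with Bernstein handles the other piece identically. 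For the high-frequency part $\sum_{\la>\mu}\hn P_\mu(F_\la G_\la)$, the finite-band property gives a factor $\mu$, then Bernstein $\|F_\la\|_{L^\infty}\les \la^{-1/2}\|\hn F_\la\|_{L^6}$ combined with H\"older yields $\|\hn P_\mu(F_\la G_\la)\|_{L^2}\les \mu\la^{-1/2}\|\hn F\|_{L^6}\|G_\la\|_{L^2}$, producing the factor $(\mu/\la)^{1/2+\ep}$ after multiplying by $\mu^{-1/2+\ep}$.

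The step I expect to need the most care is the derivation of $\|[\hn P_\mu,F]G_{\le\mu}\|_{L^2}\les \|\hn F\|_{L^6}\|G_{\le\mu}\|_{L^3}$ with a $\mu$-independent constant: one must verify that the first moment of $\hn M_\mu$ is indeed $O(1)$ and that the $L^1$ boundedness of the kernel translates correctly through the $L^p$--$L^r$ H\"older step. Once this is established, the rest of the proof is a bookkeeping exercise combining Bernstein inequalities and dyadic summation, entirely parallel to the proof of Lemma \ref{lem1}.
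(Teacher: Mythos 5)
Your proposal is correct and follows essentially the same route as the paper: the decomposition $[P_\mu,F]G=[P_\mu,F]G_{\le\mu}+\sum_{\la>\mu}P_\mu(F_\la G_\la)$, Corollary~\ref{base2com} with H\"older exponents $(6,3,2)$ plus Bernstein for the low-frequency piece, and the finite-band property plus Bernstein for the high-high piece. Your commutator identity $\hn[P_\mu,F]G_{\le\mu}=[\hn P_\mu,F]G_{\le\mu}-\hn F\cdot P_\mu G_{\le\mu}$ is exactly what the paper's kernel manipulation (\ref{bdcp2}) computes (with a typographical sign discrepancy in the paper), and the $O(1)$ first-moment bound $\int|x|\,|\hn M_\mu(x)|\,dx\les 1$ that you flag as the delicate point is indeed verified by the scaling $M_\mu(x)=\mu^3\widehat\Psi(-\mu x)$.
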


\begin{proof}
From the expression (\ref{expression}) we can obtain
\begin{align}
\hn [P_\mu, F] G_{\le \mu} &=\int \hn M_\mu(x-y) (x-y)^j \int_0^1 \p_j F(\tau y+(1-\tau) x) d\tau G_{\le \mu}(y) dy\nn\\
&+ \int M_\mu (x-y) \hn F(x) G_{\le \mu}(y) dy,\label{bdcp2}
\end{align}
where $M_\mu$ is given by (\ref{multiplier2}) with $m(\xi)=\Psi(\mu^{-1} \xi)$. One can check
$\int |M_\mu (x)| dx\les 1$ and $\int |x||\hn M_\mu(x)| dx \les 1$. Thus, it follows from the Minkowski inequality that
\begin{equation*}
\|\hn [P_\mu, F] G_{\le \mu} \|_{L^2} \les \|\hn F\|_{L^6} \|G_{\le \mu} \|_{L^3}
\les \|\hn F\|_{L^6} \sum_{\la\le \mu} \la^{1/2} \|G_\la\|_{L^2}.
\end{equation*}
On the other hand, by the finite band property and the Bernstein inequality of LP projections, we have
\begin{align*}
\sum_{\la> \mu} \|\hn P_\mu (F_\la\cdot G_\la)\|_{L^2}
& \les \mu \sum_{\la>\mu} \|F_\la\|_{L^6} \|G_\la\|_{L^3} \les \mu \|\hn F\|_{L^6} \sum_{\la>\mu} \la^{-1/2} \|G_\la\|_{L^2}.
\end{align*}
With the help of the decomposition (\ref{bdcp}) we thus complete the proof of (\ref{lem2eq}).

Next we prove (\ref{lem4eq}). By using Corollary \ref{base2com} we have
\begin{align*}
\|\mu^{\f12+\ep} [P_\mu, F]G_{\le \mu}\|_{L^2}
& \les \mu^{-\f12+\ep} \|\hn F\|_{L^6} \|G_{\le \mu}\|_{L^3}
 \les \mu^{-\f12+\ep} \|\hn F\|_{L^6} \sum_{\la \le \mu} \la^{\f12} \|G_\la\|_{L^2},
\end{align*}
while by using the properties of the LP projections we have
\begin{align*}
\mu^{\f12+\ep} \sum_{\la>\mu} \|P_\mu(F_\la\cdot G_\la)\|_{L^2}
& \les \mu^{1+\ep} \sum_{\la>\mu} \|F_\la \cdot G_\la\|_{L^\frac{3}{2}}
 \les \mu^{1+\ep} \|\hn F\|_{L^6} \sum_{\la>\mu} \la^{-1} \|G_\la\|_{L^2}.
\end{align*}
The estimate (\ref{lem4eq}) thus follows from an application of (\ref{bdcp}).
\end{proof}

\begin{lemma}\label{Lem3.7}
For $0<\ep <3/2$ there holds
\begin{equation}\label{commt8}
\|\mu^{-1/2+\ep} [P_\mu, F]\hn G\|_{l_\mu^2 L_x^2}\les \|\hn F\|_{H^{1}}\|G\|_{H^\ep}.
\end{equation}
\end{lemma}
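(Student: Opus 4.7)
The plan is to mimic the proof of Lemma \ref{lem2}, splitting the commutator via (\ref{bdcp}) into a low--low and a high--high interaction, and absorbing the extra derivative on $G$ using Bernstein and the finite band property. Specifically, I would start from
$$
[P_\mu, F]\hn G=[P_\mu, F]\hn G_{\le \mu}+\sum_{\la>\mu}P_\mu(F_\la\hn G_\la).
$$

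For the low--low piece, I would apply Corollary \ref{base2com} with the H\"older pair $(L^6,L^3)$ to get
$$
\|[P_\mu, F]\hn G_{\le \mu}\|_{L^2}\les \mu^{-1}\|\hn F\|_{L^6}\|\hn G_{\le \mu}\|_{L^3}.
$$
Sobolev embedding gives $\|\hn F\|_{L^6}\les \|\hn F\|_{H^1}$, and Bernstein together with the finite band property yields $\|\hn G_{\le \mu}\|_{L^3}\les \sum_{\la\le \mu}\la^{1/2}\|\hn G_\la\|_{L^2}\les \sum_{\la\le \mu}\la^{3/2}\|G_\la\|_{L^2}$. Multiplying by $\mu^{-1/2+\ep}$ then produces
$$
\mu^{-1/2+\ep}\|[P_\mu,F]\hn G_{\le \mu}\|_{L^2}\les \|\hn F\|_{H^1}\sum_{\la\le \mu}\Big(\frac{\la}{\mu}\Big)^{3/2-\ep}\|\la^\ep G_\la\|_{L^2}.
$$

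For the high--high piece I would use the Bernstein inequality $\|P_\mu h\|_{L^2}\les \mu^{1/2}\|h\|_{L^{3/2}}$ together with the H\"older bound $\|F_\la\hn G_\la\|_{L^{3/2}}\les \|F_\la\|_{L^6}\|\hn G_\la\|_{L^2}$. Finite band on the frequency-localized factors gives $\|F_\la\|_{L^6}\les \la^{-1}\|\hn F\|_{H^1}$ and $\|\hn G_\la\|_{L^2}\les \la\|G_\la\|_{L^2}$, whose $\la$-factors cancel; hence
$$
\mu^{-1/2+\ep}\sum_{\la>\mu}\|P_\mu(F_\la\hn G_\la)\|_{L^2}\les \|\hn F\|_{H^1}\sum_{\la>\mu}\Big(\frac{\mu}{\la}\Big)^\ep\|\la^\ep G_\la\|_{L^2}.
$$
Taking $\|\cdot\|_{l_\mu^2}$ of the sum of the two pieces and applying a standard Schur test, the low-frequency kernel $(\la/\mu)^{3/2-\ep}$ summates provided $3/2-\ep>0$, while the high-frequency kernel $(\mu/\la)^\ep$ summates provided $\ep>0$; each contribution is then controlled by $\|\hn F\|_{H^1}\|\la^\ep G_\la\|_{l_\la^2 L^2}\les \|\hn F\|_{H^1}\|G\|_{H^\ep}$.

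The only real obstacle is book-keeping the extra factor of $\la$ that the derivative $\hn$ contributes to the low-frequency piece: compared with the analogous step in Lemma \ref{lem2}, the Schur exponent on $\{\la\le \mu\}$ decreases from $1/2-\ep$ to $3/2-\ep$, which is exactly the mechanism that forces the upper endpoint $\ep<3/2$. Once this point is handled, the rest is a routine combination of Bernstein, finite band and Schur summation.
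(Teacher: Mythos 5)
Your proof is correct. The top-level splitting via (\ref{bdcp}) and your treatment of the high--high sum $\sum_{\la>\mu}P_\mu(F_\la\hn G_\la)$ --- Bernstein from $L^{3/2}$ to $L^2$, H\"older $L^6\times L^2$, and the cancellation of the two finite-band factors of $\la$ --- coincide exactly with the paper's estimate of its term $a_\mu$. Where you genuinely diverge is the low--low commutator $[P_\mu,F]\hn G_{\le\mu}$: you dispose of it in one step via Corollary \ref{base2com} with the pair $(L^6,L^3)$, Sobolev embedding for $\|\hn F\|_{L^6}$, and Bernstein plus the finite band property for $\|\hn G_{\le\mu}\|_{L^3}$, arriving at the Schur kernel $(\la/\mu)^{3/2-\ep}$ on $\{\la\le\mu\}$, which sums precisely when $\ep<3/2$. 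The paper instead keeps the kernel representation, reduces to $\mu^{-1}\sup_{h,\tau}\|\hn F_{h,\tau}\cdot\hn G_{\le\mu}\|_{L^2}$, and then runs a second Littlewood--Paley decomposition of that product (its terms $b_\mu$, $c_\mu$) so as to distribute derivatives between the two factors. Your route is shorter and transparently covers the whole stated range $0<\ep<3/2$; the paper's extra layer is the kind of refinement needed elsewhere when one cannot afford to load derivatives onto a single factor, but here it buys nothing --- in fact the paper's final kernel for $c_\mu$ is $(\la'/\mu)^{1/2-\ep}$, which closes only for $\ep<1/2$ (the range actually used in the applications), so your argument if anything justifies the stated range more cleanly. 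Two minor remarks: the Schur exponent on $\{\la\le\mu\}$ \emph{increases} from $1/2-\ep$ in Lemma \ref{lem2} to $3/2-\ep$ here (the extra $\la$ from $\hn G_\la$ is more than offset by the weight dropping from $\mu^{1/2+\ep}$ to $\mu^{-1/2+\ep}$), which is why the admissible range enlarges --- your wording says ``decreases'' but your conclusion is the right one; and the block $P_{\le 1}G$ hidden inside $G_{\le\mu}$ should be noted separately, though it only contributes $\les\mu^{-3/2+\ep}\|G\|_{L^2}$ and is harmless.
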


\begin{proof}
We will use the decomposition (\ref{bdcp}). We first consider the term
$$
a_\mu:=\sum_{\la>\mu} \mu^{-1/2+\ep} P_\mu(F_\la \cdot \hn G_\la).
$$
By the Bernstein inequality and the finite band property of LP projections, we have
\begin{align*}
\mu^{-1/2+\ep} \|P_\mu (F_\la \cdot \hn G_\la)\|_{L^2}
&\les \mu^\ep \|F_\la \cdot \hn G_\la\|_{L^{3/2}} \les \mu^\ep \|F_\la\|_{L^6} \|\hn G_\la\|_{L^2} \\
&\les \mu^\ep \|\hn F_\la\|_{L^6} \|G_\la\|_{L^2}
\les \left(\frac{\mu}{\la}\right)^\ep \|\la^\ep G_\la\|_{L^2} \|\hn F\|_{H^1}.
\end{align*}
Therefore
\begin{equation}\label{2ndt1}
\|a_\mu \|_{l_\mu^2L^2}\les \|\hn F\|_{H^1} \|\La^\ep G\|_{L^2}.
\end{equation}

Next we consider the term $\mu^{-1/2+\ep} [P_\mu, F] \hn G_{\le \mu}$. By using (\ref{commutator}) and
setting $F_{h, \tau} (x)=F(x-\tau h)$, we obtain
\begin{equation*}
\|[P_\mu, F] \hn G_{\le \mu} \|_{L^2} \les \mu^{-1} \sup_{h, \tau}
\|\hn  F_{h, \tau} \cdot \hn G_{\le \mu} \|_{L^2}.
\end{equation*}
By using the orthogonality of the LP projections, we can write
$\hn F_{h, \tau} \cdot \hn G_{\le \mu} =b_\mu+ c_\mu$, where
\begin{align*}
b_\mu &=\sum_{\la > \mu} P_\la ((\hn F_{h, \tau})_{\la} \cdot \hn G_{\le \mu})\quad \mbox{and}\quad
c_\mu = \sum_{\la\le \mu} P_\la (\hn F_{h,\tau} \cdot \hn G_{\le \mu}).
\end{align*}
We first have from the H\"{o}lder inequality, the finite band property and the Bernstein inequality that
\begin{align*}
\|b_\mu\|_{L^2} &\les \sum_{\la>\mu} \|(\hn F_{h, \tau})_\la\|_{L^2} \|\hn G_{\le \mu}\|_{L^\infty}
 \les \sum_{\la>\mu, \la'\le \mu} \la^{-1} \la'^{5/2} \|\hn F_{h,\tau}\|_{H^1} \|G_{\la'}\|_{L^2} \\
&\les \|\hn F\|_{H^1} \sum_{\la'\le \mu} \mu^{-1} \la'^{5/2} \|G_{\la'}\|_{L^2}
\end{align*}
Therefore
\begin{equation}\label{8.12.1}
\|\mu^{-3/2+\ep} b_\mu\|_{l_\mu^2 L^2}\les \|\hn F\|_{H^1}\|G\|_{H^\ep}.
\end{equation}

Next we consider $c_\mu$. By using the orthogonality of the LP projections we can write
\begin{align*}
c_\mu & =\sum_{\la\le\mu} \sum_{\la'\le \la} P_\la\left( (\hn F_{h,\tau})_\la\cdot \hn G_{\la'}
+(\hn F_{h,\tau})_{\la'}\c\hn G_{\la}\right) \\
& \quad \, + \sum_{\la\le \mu} \sum_{\la<\la'\le \mu} P_\la \left((\hn F_{h,\tau})_{\la'}\cdot \hn G_{\la'}\right).
\end{align*}
By using the Bernstein inequality and the finite band property, we obtain
\begin{align*}
\|c_\mu\|_{L^2} &\les \sum_{\la\le\mu} \sum_{\la'\le\la}
\left( \la^{-1} \|(\hn F_{h, \tau})_\la\|_{H^1} \|\hn G_{{\la}'}\|_{L^\infty}
+{\la'}^{\f12}\|\hn (\hn F_{h,\tau})_{\la'}\|_{L_x^2}\|\hn G_{\la} \|_{L_x^2} \right)\\
&\qquad \qquad \qquad + \sum_{\la\le \mu} \sum_{\la<\la'\le \mu} \la \|(\hn F_{h,\tau})_{\la'}\|_{L^2} \|\hn G_{\la'}\|_{L^3} \\
&\les \|\hn F\|_{H^1} \sum_{\la\le \mu} \sum_{\la'\le\la\le \mu} (\la^{-1} \la'^{5/2}+{\la'}^{\frac{1}{2}}\la) \|G_{\la'}\|_{L^2}\\
&\qquad \qquad \qquad + \|\hn F\|_{H^1} \sum_{\la\le \mu} \sum_{\la<\la'\le \mu} \la \la'^{1/2} \|G_{\la'}\|_{L^2} \\
&\les \|\hn F\|_{H^1}\c \left(\sum_{\la\le \mu}\sum_{\la'\le \mu}\la'^{1/2} \la \|G_{\la'}\|_{L^2}\right).
\end{align*}
Therefore
\begin{align}\label{8.12.2}
 \|\mu^{-3/2+\ep} c_\mu\|_{l_\mu^2 L^2} \les \|\hn F\|_{H^1} \| G\|_{H^\ep}.
\end{align}
Combining (\ref{8.12.1}) and (\ref{8.12.2}) yields 
$$
\|\mu^{-1/2+\ep} [P_\mu, F] \hn G_{\le \mu}\|_{l_\mu^2 L^2}
\les \|\hn F\|_{H^1} \|G\|_{H^\ep}
$$ 
which together with (\ref{2ndt1}) gives the desired estimate.
\end{proof}

\begin{lemma}
For $0<\ep<1$ there holds
\begin{align}
&\|\mu^{1+\ep}[P_\mu, F]G\|_{l_\mu^2 L^2}+\|\mu^\ep\hn[P_\mu, F]G\|_{l_\mu^2 L^2}
\les \|\hn^2 F\|_{H^{\f12+\ep}}\|G\|_{L^2}+\|\hn F\|_{L^\infty} \|G\|_{H^\ep}\label{epeq3}\\
&\|\mu^{1+\ep} [P_\mu, F] G\|_{l_\mu^2 L^2}\les (\|\hn^2 F\|_{H^{\f12}}+\|\hn F\|_{L^\infty})\|G\|_{H^\ep}\label{epnew}
\end{align}
\end{lemma}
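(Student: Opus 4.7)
The plan is to run the Littlewood–Paley trichotomy \eqref{bdcp}, which splits the commutator into three regimes: low–high ($F_{\ll\mu}$ paired with $G_\mu$), high–low ($F_{\sim\mu}$ paired with $G_\la$, $\la\le\mu/4$), and high–high ($\sum_{\la>\mu}P_\mu(F_\la G_\la)$). First I would treat the low–high term of \eqref{epeq3} by Corollary \ref{base2com}: $\|[P_\mu,F_{\le\mu}]G_\mu\|_{L^2}\lesssim \mu^{-1}\|\hn F\|_{L^\infty}\|G_\mu\|_{L^2}$, so $\|\mu^{1+\ep}[P_\mu,F_{\le\mu}]G_\mu\|_{l^2_\mu L^2}\lesssim\|\hn F\|_{L^\infty}\|G\|_{H^\ep}$. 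For the high–high piece, Bernstein/finite band give $\|P_\mu(F_\la G_\la)\|_{L^2}\lesssim\la^{-1}\|\hn F\|_{L^\infty}\|G_\la\|_{L^2}$, and Schur's test on the kernel $K(\mu,\la)=(\mu/\la)^{1+\ep}\mathbf 1_{\la>\mu}$ (which is $l^2_\mu\to l^2_\la$-bounded since $\ep>0$) again produces $\|\hn F\|_{L^\infty}\|G\|_{H^\ep}$.

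The decisive high–low case of \eqref{epeq3} exploits that $P_\mu G_\la=0$ for $\la\le\mu/4$, so $[P_\mu,F_{\sim\mu}]G_\la=P_\mu(F_{\sim\mu}G_\la)$. The Bernstein inequality combined with finite band yields the key bound $\|F_{\sim\mu}\|_{L^\infty}\lesssim\mu^{3/2}\|F_{\sim\mu}\|_{L^2}\lesssim\mu^{-1-\ep}\|\mu^{1/2+\ep}\hn^2 F_{\sim\mu}\|_{L^2}$, which exactly absorbs the $\mu^{1+\ep}$ weight; summing $\sum_{\la\le\mu/4}\|G_\la\|_{L^2}\lesssim\|G\|_{L^2}$ and taking $l^2_\mu$ returns $\|\hn^2 F\|_{H^{1/2+\ep}}\|G\|_{L^2}$. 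The gradient piece $\|\mu^\ep\hn[P_\mu,F]G\|_{l^2_\mu L^2}$ is handled via the identity $\hn[P_\mu,F]G=[P_\mu,\hn F]G+[P_\mu,F]\hn G$: the second commutator is estimated by Lemma \ref{lem1} together with Schur's test (geometric kernel) to give $\|\hn F\|_{L^\infty}\|G\|_{H^\ep}$, and the first is a commutator with $\hn F$ so the same trichotomy applies, using the milder Bernstein bound $\|\hn F_{\sim\mu}\|_{L^\infty}\lesssim\mu^{-\ep}\|\mu^{1/2+\ep}\hn^2 F_{\sim\mu}\|_{L^2}$ in the high–low regime, where again the $\mu^\ep$ prefactor is absorbed.

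For \eqref{epnew} the low–high and high–high contributions are again bounded by $\|\hn F\|_{L^\infty}\|G\|_{H^\ep}$ exactly as above. The main obstacle is the high–low piece: we want $\|\hn^2 F\|_{H^{1/2}}\|G\|_{H^\ep}$ rather than $\|\hn^2 F\|_{H^{1/2+\ep}}\|G\|_{L^2}$, so the $\ep$ must be shifted from $F$ onto $G$. My approach is to write, for $\la\le\mu/4$,
\[
\mu^{1+\ep}\|P_\mu(F_{\sim\mu}G_\la)\|_{L^2}\lesssim A_\mu\,(\mu/\la)^\ep\,B_\la,\qquad A_\mu:=\mu^{1/2}\|\hn^2 F_{\sim\mu}\|_{L^2},\ \ B_\la:=\|\la^\ep G_\la\|_{L^2},
\]
from the Bernstein bound $\|F_{\sim\mu}\|_{L^\infty}\lesssim\mu^{-1/2}\|\hn^2 F_{\sim\mu}\|_{L^2}$, and then to apply Cauchy–Schwarz in the inner sum with a carefully chosen splitting $(\mu/\la)^\ep=(\mu/\la)^{\ep+\theta/2}\cdot(\mu/\la)^{-\theta/2}$ for a small $\theta>0$, absorbing the divergent first factor into a convergent dyadic geometric series after swapping order of summation — this is precisely the step that needs the frequency separation $\la\le\mu/4$. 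An alternative route, which is cleaner, is to interpolate: the $\ep=0$ instance of \eqref{epeq3} gives $\|\mu[P_\mu,F]G\|_{l^2_\mu L^2}\lesssim(\|\hn^2 F\|_{H^{1/2}}+\|\hn F\|_{L^\infty})\|G\|_{L^2}$, and Lemma \ref{lem1} (via finite band rewriting $\mu P_\mu h\approx\hn\tilde P_\mu h$) yields $\|\mu^{1+\eta}[P_\mu,F]G\|_{l^2_\mu L^2}\lesssim\|\hn F\|_{L^\infty}\|G\|_{H^\eta}$ for any $\eta>0$, and real interpolation between these two endpoints produces \eqref{epnew}. The hard part will be ensuring the Schur/Cauchy–Schwarz splitting in the high–low range is tight enough to avoid the spurious $\mu^\ep$-loss that naive summation introduces; handling this via the interpolation route circumvents the combinatorial bookkeeping.
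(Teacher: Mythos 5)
Your treatment of \eqref{epeq3} is essentially correct and takes a genuinely different route from the paper: you run the standard paraproduct trichotomy and, in the high--low regime, apply Bernstein to the high-frequency factor, $\|F_{\sim\mu}\|_{L^\infty}\les\mu^{-\f12}\|\hn^2 F_{\sim\mu}\|_{L^2}$, whereas the paper works from the explicit kernel $\int M_\mu(x-y)(F(x)-F(y))G(y)\,dy$ and a second-order Taylor expansion of $\p F$ to extract the $\mu^{-2}\hn^2F$ gain (plus the observation that the first-order term survives only for $\la\sim\mu$). Both yield $\|\hn^2F\|_{H^{\f12+\ep}}\|G\|_{L^2}$ there, and your identity $\hn[P_\mu,F]G=[P_\mu,\hn F]G+[P_\mu,F]\hn G$ together with (\ref{com3}) disposes of the gradient part. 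One small imprecision: $\sum_{\la\le\mu/4}\|G_\la\|_{L^2}$ is \emph{not} $\les\|G\|_{L^2}$ (it loses $(\log\mu)^{\f12}$); you should sum inside the norm and use $\|G_{\le\mu/4}\|_{L^2}\les\|G\|_{L^2}$.

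The proof of \eqref{epnew}, however, has a genuine gap, and neither of your routes closes it. In the first route the bound $A_\mu(\mu/\la)^\ep B_\la$ produces the kernel $(\mu/\la)^\ep\mathbf 1_{\la\le\mu/4}$, which is not bounded on $l^2$: with $B_\la=\delta_{\la,1}$ one has $\sum_{\la\le\mu}(\mu/\la)^\ep B_\la=\mu^\ep\to\infty$, so no Cauchy--Schwarz splitting of the exponent can rescue the estimate $\|A_\mu\sum_{\la\le\mu}(\mu/\la)^\ep B_\la\|_{l_\mu^2}\les\|A\|_{l^2}\|B\|_{l^2}$ --- it is simply false. The defect is structural: by putting $F_{\sim\mu}$ in $L^\infty$ via Bernstein at the \emph{high} frequency you spend all the available decay on $F$ and have nothing left to pay for the factor $\la^{-\ep}$ needed to form $\|\la^\ep G_\la\|_{L^2}$. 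The interpolation route fails as well, because its second endpoint $\|\mu^{1+\eta}[P_\mu,F]G\|_{l_\mu^2L^2}\les\|\hn F\|_{L^\infty}\|G\|_{H^\eta}$ is false: take $G=P_1G$ a unit-frequency bump; then for large $\mu$ the commutator reduces to $P_\mu(F_{\sim\mu}G)$ and the left side is comparable to $\|F\|_{\dot H^{1+\eta}}$, which $\|\hn F\|_{L^\infty}$ does not control --- this is precisely why $\|\hn^2F\|_{H^{\f12}}$ must appear in \eqref{epnew}. (Moreover the two ``endpoints'' carry different norms on \emph{both} arguments, so real interpolation does not apply off the shelf.) The repair --- and what the paper's $c_\mu$ term implements --- is to apply Bernstein to the \emph{low}-frequency factor instead: $\|G_\la\|_{L^\infty}\les\la^{\frac{3}{2}}\|G_\la\|_{L^2}$ and $\|F_{\sim\mu}\|_{L^2}\les\mu^{-2}\|\hn^2F_{\sim\mu}\|_{L^2}$ give
\begin{equation*}
\mu^{1+\ep}\|P_\mu(F_{\sim\mu}G_\la)\|_{L^2}\les\left(\frac{\la}{\mu}\right)^{1-\ep}\cdot\mu^{\f12}\|\hn^2F_{\sim\mu}\|_{L^2}\cdot\la^{\ep}\|G_\la\|_{L^2},
\end{equation*}
and the kernel $(\la/\mu)^{1-\ep}\mathbf 1_{\la\le\mu}$ \emph{is} Schur-bounded for $0<\ep<1$, yielding the required $\|\hn^2F\|_{H^{\f12}}\|G\|_{H^\ep}$.
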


\begin{proof}
We first write $\mu^{1+\ep} [P_\mu, F] G=a_\mu+b_\mu+c_\mu$, where
$$
a_\mu=\mu^{1+\ep} [P_\mu, F] G_{>\mu},
\quad b_\mu =\mu^{1+\ep} \sum_{\la\le \mu} [P_\mu, F_{\le \la}] G_\la,
\quad c_\mu=\mu^{1+\ep} \sum_{\la\le \mu} [P_\mu, F_{>\la}] G_\la.
$$
By Corollary \ref{base2com}, we obtain
\begin{equation*}
\|a_\mu\|_{l_\mu^2 L^2}\les \|\hn F\|_{L^\infty}
\left\|\sum_{\la>\mu} \left(\frac{\mu}{\la}\right)^\ep \|\la^\ep G_\la\|_{L^2}\right\|_{l_\mu^2}
\les \|\hn F\|_{L^\infty} \|\La^\ep G\|_{L^2}.
\end{equation*}

In order to estimate $b_\mu$ and $c_\mu$, we introduce the function $M_\mu(x)=\int e^{ix\cdot \xi} m_\mu(\xi) d\xi$
with $m_\mu(\xi)=\Psi(\mu^{-1}\xi)$. It is easy to see that $\int |x|^q |M_\mu(x)| dx\les \mu^{-q}$ for any $q>-3$.
It follows from (\ref{expression}) that
\begin{align*}
[P_\mu, F_{\le \la}]G_\la(x)=A_{\mu, \la}(x) +B_{\mu, \la}(x),
\end{align*}
where
\begin{align*}
A_{\mu, \la}(x) &=\p_j F_{\le \la} (x) \int M_\mu(x-y)(x-y)^j G_\la (y) dy,\\
B_{\mu, \la}(x) &= \int M_\mu (x-y)(x-y)^j \int_0^1 \left[\p_j F_{\le\la} (x-\tau(x-y))-\p_j F_{\le\la}(x) \right] d\tau G_\la (y) dy.
\end{align*}
For the term $A_{\mu, \la}(x)$, it is nonzero only if $\mu$ and $\la$ are at the same magnitude since both
$M_\mu$ and $G_\la$ are frequency localized at the level $\mu$ and $\la$ respectively. Thus
$$
\sum_{\la\le \mu} \|A_{\mu, \la}\|_{L^2} \les \mu^{-1} \|\hn F_{\le \mu}\|_{L^\infty} \|G_\mu\|_{L^2}
\les \mu^{-1} \|G_\mu\|_{L^2} \|\hn F\|_{L^\infty}.
$$
For the term $B_{\mu,\la}$ we write
\begin{align*}
B_{\mu,\la}(x)&=\int M_\mu(x-y)(x-y)^j (x-y)^l \\
&\times \int_0^1 \int_0^1 -\tau \p_l\p_j F_{\le\la}\left(x-\tau\tau'(x-y)\right) d\tau d\tau' G_\la(y) dy.
\end{align*}
Thus, by the Minkowski inequality we obtain
\begin{align*}
\|B_{\mu, \la}\|_{L^2} &\les \mu^{-2} \|\hn^2 F_{\le \la}\|_{L^\infty}\|G_\la\|_{L^2}
\les \mu^{-2} \sum_{\la'\le\la} \la' \|\hn F_{\la'}\|_{L^\infty} \|G_\la\|_{L^2} \\
&\les \mu^{-2} \la \|G_\la\|_{L^2} \|\hn F\|_{L^\infty}.
\end{align*}
We therefore obtain
$$
\|b_\mu\|_{L^2}\les \mu^\ep \| G_\mu\|_{L^2} \|\hn F\|_{L^\infty}
+\sum_{\la\le \mu} \left(\frac{\la}{\mu}\right)^{1-\ep} \|\la^\ep G_\la\|_{L^2} \|\hn F\|_{L^\infty}
$$
which implies that
$$
\|b_\mu\|_{l_\mu^2 L^2} \les \|\hn F\|_{L^\infty} \|G\|_{H^\ep}.
$$

In order to estimate the term $c_\mu$, we write
\begin{align*}
[&P_\mu, F_{>\la}]G_\la(x)\\
&=\int M_\mu(x-y)(x-y)^j \int_0^1 \{\p_j F_{>\la}(x-\tau(x-y))-\p_j F_{>\la}(x) \}d \tau G_{\la}(y) dy\\
&+\p_j F_{>\la}(x) \int M_\mu(x-y)(x-y)^j G_{\la} (y) dy.
\end{align*}
By using the similar argument as above, we can obtain
\begin{align*}
\sum_{\la\le\mu} & \|[P_\mu, F_{>\la}] G_\la\|_{L^2} \\
&\les \mu^{-2} \sum_{\la\le\mu} \|\hn^2 F_{>\la}\|_{L^2} \|G_\la\|_{L^\infty} +\mu^{-1} \|G_\mu\|_{L^2} \|\hn F\|_{L^\infty}\\
&\les \sum_{\la\le\mu} \sum_{\la'>\la} \mu^{-2} \la^{3/2} \|\hn^2 F_{\la'}\|_{L^2} \|G_\la\|_{L^2}
+\mu^{-1} \|G_\mu\|_{L^2} \|\hn F\|_{L^\infty}.
\end{align*}
Therefore
\begin{align*}
\|c_\mu\|_{L^2} & \les \sum_{\la\le \mu} \sum_{\la'>\la} \left(\frac{\la}{\mu}\right)^{1-\ep} \left(\frac{\la}{\la'}\right)^{1/2+\ep}
\|\la'^{1/2+\ep} \hn^2 F_{\la'}\|_{L^2} \|G_\la\|_{L^2} \\
& \quad \, +\mu^\ep \|G_\mu\|_{L^2} \|\hn F\|_{L^\infty}
\end{align*}
which implies that $\|c_\mu\|_{l_\mu^2 L^2} \les \|\hn F\|_{L^\infty} \|G\|_{H^\ep} +\|\hn^2 F\|_{H^{1/2+\ep}} \|G\|_{L^2}$.
This together with the estimates on $a_\mu$ and $b_\mu$ gives the first inequality of (\ref{epeq3}).
Note that $c_\mu$ can also be estimated as
\begin{align}
\|c_\mu\|_{L^2}&\les\sum_{\la'>\la} \left(\frac{\la}{\mu}\right)^{1-\ep} \left(\frac{\la}{\la'}\right)^{1/2}
\|\la'^{1/2} \hn^2 F_{\la'}\|_{L^2} \|\la^\ep G_\la\|_{L^2}\nn \\
& \quad \, +\mu^\ep \|G_\mu\|_{L^2} \|\hn F\|_{L^\infty}\nn
\end{align}
which implies $\|c_\mu\|_{l_\mu^2 L^2}\les (\|\hn F\|_{L^\infty}+\|\hn^2 F\|_{H^{\f12}})\|\La^\ep G\|_{L^2}$.
This together with the estimates for $a_\mu$ and $b_\mu$ gives (\ref{epnew}).

Next we prove the second part of (\ref{epeq3}). By using (\ref{bdcp}) we can write
$\mu^\ep \hn[P_\mu, F] G =I_\mu+J_\mu$, where
\begin{eqnarray*}
I_\mu:=\mu^\ep \sum_{\la>\mu} \hn P_\mu (F_\la \c G_\la) \quad \mbox{and} \quad J_\mu:=\mu^\ep \hn [P_\mu, F] G_{\le \mu}.
\end{eqnarray*}
We first have from the finite band property of the LP projections that
$$
\|I_\mu\|_{L^2} \les \mu^{1+\ep} \sum_{\la>\mu} \|F_\la\cdot G_\la\|_{L^2}
\les \|\hn F\|_{L^\infty} \sum_{\la>\mu} \left(\frac{\mu}{\la}\right)^{1+\ep} \|\la^\ep G_\la\|_{L^2}.
$$
This gives
\begin{equation*}
\|I_\mu \|_{l_\mu^2 L^2} \les \|\hn F\|_{L^\infty}\|\La^\ep G\|_{L^2}.
\end{equation*}

In order to estimate $J_\mu$, we write
\begin{align*}
J_\mu(x) &= \mu^\ep \int \hn_x \left(M_\mu(x-y) (F(x)-F(y))\right) G_{\le \mu}(y) dy\\
&=\mu^\ep \hn F(x) \int M_\mu(x-y) G_{\le \mu}(y) dy\\
&\quad\, +\mu^\ep \int \hn M_\mu(x-y) (F(x)-F(y)) G_{\le \mu}(y) dy.
\end{align*}
By writing $F(x)-F(y)=(x-y)^j \int_0^1 \p_j F(x-\tau(x-y)) d\tau$, we can decompose $J_\mu$ as
$J_\mu=J_\mu^{(1)}+J_\mu^{(2)} +J_\mu^{(3)}$, where
\begin{align*}
J_\mu^{(1)} &=\mu^\ep \hn F(x) \int M_\mu(x-y) G_{\le \mu}(y) dy,\\
J_\mu^{(2)} &=\mu^\ep \p_j F(x) \int \hn M_\mu(x-y) (x-y)^j G_{\le \mu}(y) dy,\\
J_\mu^{(3)} &=\mu^\ep \int \hn M_\mu(x-y) (x-y)^j \int_0^1 \left[\p_j F(x-\tau(x-y))-\p_j F(x)\right] d\tau G_{\le \mu}(y) dy.
\end{align*}
By using $\int |x||\hn M_\mu(x)| dx \les 1$ and the frequency localization of $M_\mu$ and $G_\la$ we can obtain
$$
\|J_\mu^{(1)}\|_{L^2}+\|J_\mu^{(2)}\|_{L^2} \les \mu^\ep \|G_\mu\|_{L^2} \|\hn F\|_{L^\infty},
$$
which gives
$$
\|J_\mu^{(1)}\|_{l_\mu^2 L^2} +\|J_\mu^{(2)}\|_{l_\mu^2 L^2}\les \|\hn F\|_{L^\infty} \|\La^\ep G\|_{L^2}.
$$
For the term $J_\mu^{(3)}$, we can write
\begin{align*}
J_\mu^{(3)}(x) &= \mu^\ep \sum_{\la\le\mu} \int \hn M_\mu(x-y) (x-y)^j \\
&\qquad \times \int_0^1 \left[\p_j F_{\le \la}(x-\tau (x-y))-\p_j F_{\le \la}(x)\right]d\tau G_\la (y) dy\\
&+ \mu^\ep \sum_{\la\le \mu} \int \hn M_\mu (x-y) (x-y)^j \\
&\qquad \times \int_0^1 \left[\p_j F_{>\la}(x-\tau (x-y))-\p_j F_{> \la}(x)\right]d\tau G_\la (y) dy.
\end{align*}
Now we can use the similar arguments in the proof of the first part of this lemma to obtain
\begin{align*}
\|J_\mu^{(3)}\|_{L^2} &\les \mu^{-1+\ep} \sum_{\la\le \mu} \|\hn^2 F_{\le\la}\|_{L^\infty} \|G_\la\|_{L^2}
+\mu^{-1+\ep} \sum_{\la\le\mu} \|\hn^2 F_{>\la}\|_{L^2} \|G_\la\|_{L^\infty}\\
&\les \|\hn F\|_{L^\infty} \sum_{\la\le\mu} \left(\frac{\la}{\mu}\right)^{1-\ep} \|\la^\ep G_\la\|_{L^2}  \\
&\quad \, +\sum_{\la\le\mu} \sum_{\la\le \la'} \left(\frac{\la}{\mu}\right)^{1-\ep} \left(\frac{\la}{\la'}\right)^{1/2+\ep}
\|\la'^{1/2+\ep} \hn^2 F_{\la'}\|_{L^2} \|G_\la\|_{L^2}.
\end{align*}
By taking the $l_\mu^2$-norm we obtain
$$\|J_\mu^{(3)}\|_{l_\mu^2 L^2}\les \|\hn^2 F\|_{H^{1/2+\ep}} \|G\|_{L^2}
+\|\hn F\|_{L^\infty}\|G\|_{H^\ep}.$$ This together with the estimates on
$J_\mu^{(1)}$ and $J_\mu^{(2)}$ gives
$$
\|J_\mu\|_{l_\mu^2 L^2} \les \|\hn F\|_{L^\infty} \|G\|_{H^\ep} +\|\hn^2 F\|_{H^{1/2+\ep}} \|G\|_{L^2}.
$$
Combining this with the estimate on $I_\mu$ completes the proof of the second part of (\ref{epeq3}).
\end{proof}

\begin{proposition}
For $0<\ep <1/2$ there holds
\begin{equation}\label{comm10}
\|\mu^{1+\ep}\hn [P_\mu, F] G\|_{l_\mu^2 L^2}
\les \|\hn F\|_{L^\infty} \|G\|_{H^{1+\ep}} +\|\hn^2 F\|_{H^{\f12+\ep}}\|G\|_{H^1}.
\end{equation}
\end{proposition}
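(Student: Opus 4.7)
My plan is to reduce the bound via the identity
\[\hn[P_\mu, F]G = [P_\mu, \hn F]G + [P_\mu, F]\hn G,\]
valid because constant-coefficient differentiation commutes with $P_\mu$, and handle the two resulting commutators separately. The second piece $[P_\mu, F]\hn G$ is immediate from (\ref{epnew}) applied with $G$ replaced by $\hn G$: since $\|\hn G\|_{H^\ep}$ is comparable to $\|G\|_{H^{1+\ep}}$ by the finite band property, and $\|\hn^2 F\|_{H^{1/2}} \le \|\hn^2 F\|_{H^{1/2+\ep}}$, this supplies precisely the claimed right-hand side for that contribution.

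For the remaining piece $[P_\mu, \hn F]G$ I would adapt the proof strategy of (\ref{epnew}) rather than invoke it directly with $F\to\hn F$, because such a substitution would demand $\|\hn^3 F\|_{H^{1/2}}$ --- one derivative too many. Dyadically splitting $G = \sum_\lambda G_\lambda$, I would analyze three frequency regimes. When $\lambda \gg \mu$, the term reduces to $P_\mu((\hn F)_\lambda G_\lambda)$; Bernstein together with the finite band identity $\|(\hn F)_\lambda\|_{L^2} \sim \lambda^{-1}\|(\hn^2 F)_\lambda\|_{L^2}$ produces a geometric decay factor $(\mu/\lambda)^{5/2+\ep}$ after multiplication by $\mu^{1+\ep}$, which is summable by Schur's test together with Cauchy--Schwarz to give $\|\hn^2 F\|_{H^{1/2+\ep}}\|G\|_{H^1}$. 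When $\lambda \ll \mu$, frequency matching forces $[P_\mu, \hn F]G_\lambda \approx P_\mu((\hn F)_{\sim\mu} G_\lambda)$; Bernstein extracts the extra derivative from $(\hn F)_\mu$ via $\|(\hn F)_\mu\|_{L^2} = \mu^{-1}\|(\hn^2 F)_\mu\|_{L^2}$, precisely cancelling the outer weight $\mu^{1+\ep}$, after which an $l^\infty \cdot l^2$ bound on the resulting double sum closes the argument. The diagonal $\lambda \sim \mu$ contributes at most $\|\hn F\|_{L^\infty}\|G_\mu\|_{L^2}$ per scale, yielding $\|\hn F\|_{L^\infty}\|G\|_{H^{1+\ep}}$ after $l_\mu^2$ summation.

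The main obstacle is the low-high regime $\lambda \ll \mu$: a naive application of Corollary \ref{base2com} to $[P_\mu, \hn F]G_\lambda$ would introduce $\|\hn^2 F\|_{L^\infty}$, which is not controlled by the right-hand side on a three-dimensional manifold. The key point beyond the earlier lemmas is the recognition that the commutator selects precisely the $\mu$-frequency piece of $\hn F$, and that Bernstein on this isolated piece transfers the derivative onto $F$ while contributing the balancing factor $\mu^{-1}$; this is what makes the bound $\|\hn^2 F\|_{H^{1/2+\ep}}\|G\|_{H^1}$ accessible without demanding higher regularity on $F$.
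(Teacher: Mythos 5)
Your algebraic split $\hn[P_\mu,F]G = [P_\mu,\hn F]G + [P_\mu,F]\hn G$ is a genuinely different route from the paper's. The paper stays at the kernel level: it writes $[P_\mu,F]G$ as an integral against $M_\mu(x-y)\bigl(F(x)-F(y)\bigr)$, differentiates under the integral, and Taylor-expands the increment $\p_j F(x-\tau(x-y))-\p_j F(x)$ once more to manufacture $\hn^2F$, producing the three pieces $a_\mu^{(1)},a_\mu^{(2)},a_\mu^{(3)}$. You instead distribute $\hn$ by the product rule at the operator level and then run the trichotomy separately on each commutator. Both routes can work; yours is arguably cleaner. Your sketch for $[P_\mu,\hn F]G$ --- frequency matching forcing $(\hn F)_{\sim\mu}$ in the low-high regime, then finite band trading $\|(\hn F)_\mu\|_{L^2}$ for $\mu^{-1}\|(\hn^2F)_\mu\|_{L^2}$ --- is essentially sound and correctly identifies why a direct substitution $F\to\hn F$ into (\ref{epnew}) would overshoot.

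There is, however, a gap in the second piece. Invoking (\ref{epnew}) with $G\to\hn G$ gives
$\bigl(\|\hn^2F\|_{H^{\f12}}+\|\hn F\|_{L^\infty}\bigr)\|\hn G\|_{H^\ep}$,
and the cross term $\|\hn^2F\|_{H^{\f12}}\|\hn G\|_{H^\ep}$ is \emph{not} controlled by the right-hand side of (\ref{comm10}). The point of the proposition is that the extra $\ep$ regularity lands on only one factor at a time: on $G$ when paired with $\|\hn F\|_{L^\infty}$, or on $F$ when paired with $\|G\|_{H^1}$. Your bound places $\ep$-weight on $G$ together with $\hn^2F$ at $H^{\f12}$, which is a genuinely larger quantity: taking $F$ Fourier-supported near a single frequency $\la$ (but spatially spread, so $\|\hn F\|_{L^\infty}$ is far from saturating Bernstein) and $G$ at frequency $\la'\gg\la$, one finds $\|\hn^2F\|_{H^{\f12}}\|\hn G\|_{H^\ep}\sim\la^{\f52}\la'^{\,1+\ep}\|F\|_{L^2}\|G\|_{L^2}$, which dominates $\|\hn^2F\|_{H^{\f12+\ep}}\|G\|_{H^1}\sim\la^{\f52+\ep}\la'\|F\|_{L^2}\|G\|_{L^2}$ and can also dominate $\|\hn F\|_{L^\infty}\|G\|_{H^{1+\ep}}$. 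The fix is immediate and stays within your strategy: use the first inequality of (\ref{epeq3}) instead of (\ref{epnew}). Applied with $G\to\hn G$ it yields
$\|\hn^2F\|_{H^{\f12+\ep}}\|\hn G\|_{L^2}+\|\hn F\|_{L^\infty}\|\hn G\|_{H^\ep}\les\|\hn^2F\|_{H^{\f12+\ep}}\|G\|_{H^1}+\|\hn F\|_{L^\infty}\|G\|_{H^{1+\ep}}$,
which is exactly the required form. With that substitution your argument is complete.
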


\begin{proof}
As can be seen from the proof of the second part of (\ref{epeq3}), it suffices to estimate
the term
\begin{align*}
a_\mu:=\mu J^{(3)}_\mu
&= \mu^{1+\ep} \int \hn M_\mu (x-y) (x-y)^j(x-y)^l \\
&\qquad \times \int_0^1 \int_0^1 -\tau \p^2_{jl} F(x-\tau \tau'(x-y)) d\tau d\tau' G_{\le \mu}(y) dy
\end{align*}
which can be written as $a_\mu=a_\mu^{(1)}+a_\mu^{(2)} +a_\mu^{(3)}$, where
\begin{align*}
a_\mu^{(1)} &=\mu^{1+\ep}\int\hn M_\mu (x-y)(x-y)^j (x-y)^l \\
&\qquad \times \int_0^1\int_0^1-\tau \p^2_{jl} F_{\ge \mu}\left(x-\tau\tau'(x-y)\right) d\tau d\tau' G_{\le \mu}(y)dy\\
a_\mu^{(2)} &=\mu^{1+\ep} \p_{jl}^2 F_{<\mu}(x) \int\hn M_\mu(x-y)(x-y)^j (x-y)^l G_{\le \mu}(y)dy\\
a_\mu^{(3)} &= \mu^{1+\ep} \int\hn M_\mu(x-y)(x-y)^j (x-y)^l \\
&\qquad \times \int_0^1\int_0^1-\tau \left[\p^2_{jl} F_{<\mu}(x-\tau\tau'(x-y))-\p_{jl}^2 F_{<\mu}(x)\right] d\tau d\tau'
G_{\le \mu}(y)dy.
\end{align*}
By using the properties of the LP projections, it is easy to derive that
\begin{align*}
\|a_\mu^{(1)}\|_{L^2}
& \les \mu^\ep \sum_{\la\ge \mu} \|\hn^2 F_\la\|_{L^2}\|G_{\le \mu}\|_{L^\infty}
\les \sum_{\la>\mu} \left(\frac{\mu}{\la}\right)^{1/2+\ep} \|\la^{1/2+\ep}\hn^2 F_\la\|_{L^2} \|\hn G\|_{L^2},\\
\|a_\mu^{(2)}\|_{L^2}
&\les \mu^\ep \|\hn^2 F_{<\mu}\|_{L^\infty} \|G_\mu\|_{L^2}\les \mu^{1+\ep} \|G_\mu\|_{L^2} \|\hn F\|_{L^\infty},\\
\|a_\mu^{(3)}\|_{L^2}
& \les \mu^{-1+\ep} \|\hn^3 F_{<\mu}\|_{L^2} \|G_{\le \mu}\|_{L^\infty}
\les\sum_{\la< \mu} \left(\frac{\la}{\mu}\right)^{1/2-\ep} \|\la^{1/2+\ep} \hn^2 F_\la\|_{L^2} \|\hn G\|_{L^2}.
\end{align*}
Therefore, by taking the $l_\mu^2$-norm, we obtain
\begin{equation*}
\|a_\mu^{(1)}\|_{l_\mu^2 L^2} +\|a_\mu^{(3)}\|_{l_\mu^2 L^2} \les \|\hn^2 F\|_{H^{1/2+\ep}}\|G\|_{H^1}, \quad
\|a_\mu^{(2)}\|_{l_\mu^2 L^2}\les \|\hn F\|_{L^\infty} \|G\|_{H^{1+\ep}}.
\end{equation*}
The proof is thus complete.
\end{proof}

\begin{lemma}\label{prd3}
For $0<\ep<1$ there holds
\begin{equation*}
\|\mu^\ep [P_\mu, F]G\|_{l_\mu^2 L^2}\les \|F\|_{L^\infty}\| G\|_{H^\ep}+\|\hn F\|_{H^{1/2+\ep}} \|G\|_{L^2}.
\end{equation*}
\end{lemma}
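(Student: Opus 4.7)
The plan is to adapt the $a_\mu + b_\mu + c_\mu$ decomposition used in the proof of \eqref{epeq3}, calibrated so that commuting $[P_\mu, F]G$ only costs one derivative on $F$ rather than two. Specifically, I would write
\[
\mu^\ep[P_\mu, F]G = a_\mu + b_\mu + c_\mu,
\]
where $a_\mu = \mu^\ep[P_\mu, F]G_{>\mu}$, $b_\mu = \mu^\ep\sum_{\la\le\mu}[P_\mu, F_{\le\la}]G_\la$, and $c_\mu = \mu^\ep\sum_{\la\le\mu}[P_\mu, F_{>\la}]G_\la$.

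For $a_\mu$, decomposition \eqref{bdcp} yields $a_\mu = \mu^\ep\sum_{\la>\mu}P_\mu(F_\la G_\la)$; each term is bounded by $(\mu/\la)^\ep\|F\|_{L^\infty}\|\la^\ep G_\la\|_{L^2}$, and a dyadic Schur-type convolution estimate gives $\|a_\mu\|_{l_\mu^2 L^2}\les\|F\|_{L^\infty}\|G\|_{H^\ep}$. For $b_\mu$, I would follow closely the strategy in the proof of \eqref{epeq3}: apply the integral representation \eqref{expression} and Taylor expand $F_{\le\la}$ around $x$ to write $[P_\mu, F_{\le\la}]G_\la = A_{\mu,\la}+B_{\mu,\la}$, with $A_{\mu,\la}$ linear in $\hn F_{\le\la}$ and $B_{\mu,\la}$ quadratic through $\hn^2 F_{\le\la}$. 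The kernel inside $A_{\mu,\la}$ is Fourier-localized at frequency $\mu$, forcing $A_{\mu,\la}=0$ unless $\la\sim\mu$; the Bernstein--Cauchy--Schwarz estimate $\|\hn F_{\le\mu}\|_{L^\infty}\les\mu^{1-\ep}\|\hn F\|_{H^{1/2+\ep}}$ then delivers $\mu^\ep\|A_{\mu,\mu}\|_{L^2}\les\|\hn F\|_{H^{1/2+\ep}}\|G_\mu\|_{L^2}$, which sums in $l_\mu^2$ to the desired $\|\hn F\|_{H^{1/2+\ep}}\|G\|_{L^2}$. For $B_{\mu,\la}$, the crude Bernstein bound $\|\hn^2 F_{\le\la}\|_{L^\infty}\les\la^2\|F\|_{L^\infty}$ produces the dyadic kernel $(\la/\mu)^{2-\ep}$, and Young's convolution in dyadic $l^2$ closes the bound with $\|F\|_{L^\infty}\|G\|_{H^\ep}$.

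The main obstacle lies in controlling $c_\mu$. A naive estimate $\|F_{>\la}\|_{L^\infty}\les\la^{-\ep}\|\hn F\|_{H^{1/2+\ep}}$ (obtained by summing the per-dyadic Bernstein inequalities $\|F_{\la'}\|_{L^\infty}\les\la'^{-\ep}\|\hn F_{\la'}\|_{H^{1/2+\ep}}$ with Cauchy--Schwarz) produces a factor $(\mu/\la)^\ep$ whose $\la$-sum over $\la\le\mu$ is comparable to $\mu^\ep$, which destroys the $l_\mu^2$-summability. The remedy is to exploit frequency orthogonality by splitting the $\la$-sum. For $\la\le\mu/4$ one has $P_\mu G_\la=0$, so $[P_\mu,F_{>\la}]G_\la = P_\mu(F_{>\la}G_\la)$; moreover, frequency matching between $F_{\la'}$ and $G_\la$ forces only the pieces with $\la'\sim\mu$ to contribute, so the sum collapses to a single expression
\[
\sum_{\la\le\mu/4}[P_\mu,F_{>\la}]G_\la \;=\; P_\mu(F_{\sim\mu}\,G_{\le\mu/4}).
\]
The refined Bernstein bound $\|F_{\sim\mu}\|_{L^\infty}\les \mu^{-\ep}\bigl(\sum_{\la'\sim\mu}\la'^{1+2\ep}\|\hn F_{\la'}\|_{L^2}^2\bigr)^{1/2}$, applied to the \emph{single} dyadic piece $F_{\sim\mu}$ rather than the full tail, produces an $l_\mu^2$ sequence whose norm reproduces $\|\hn F\|_{H^{1/2+\ep}}$ by orthogonality across $\mu$, yielding the term $\|\hn F\|_{H^{1/2+\ep}}\|G\|_{L^2}$. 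The remaining range $\mu/4<\la\le\mu$ contains only $O(1)$ values of $\la$ and is handled directly using the tail bound above at $\la\sim\mu$. Assembling the three contributions produces the stated inequality.
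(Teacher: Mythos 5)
Your proof is correct, but it takes a noticeably more elaborate route than the paper's for both cross-terms. For the term $\sum_{\la\le\mu}[P_\mu, F_{\le\la}]G_\la$, the paper simply invokes Corollary \ref{base2com} with $p=\infty,r=2$ (giving the kernel bound $\mu^{-1}\|\hn F_{\le\la}\|_{L^\infty}\|G_\la\|_{L^2}$) followed by the Bernstein estimate $\|\hn F_{\le\la}\|_{L^\infty}\les\la\|F\|_{L^\infty}$, which already produces the Schur kernel $(\la/\mu)^{1-\ep}$ and closes with $\|F\|_{L^\infty}\|G\|_{H^\ep}$; there is no need to perform the second-order Taylor expansion $A_{\mu,\la}+B_{\mu,\la}$ from the proof of (\ref{epeq3}). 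More importantly, the ``obstacle'' you identify in $c_\mu=\sum_{\la\le\mu}[P_\mu,F_{>\la}]G_\la$ is an artifact of using the trivial $L^\infty$-$L^2$ bound $\|[P_\mu,F_{>\la}]G_\la\|_{L^2}\les\|F_{>\la}\|_{L^\infty}\|G_\la\|_{L^2}$, which discards the commutator gain; the paper instead applies Corollary \ref{base2com} with the opposite pairing $p=2,r=\infty$, obtaining $\mu^{-1}\|\hn F_{>\la}\|_{L^2}\|G_\la\|_{L^\infty}$, then Bernstein on $G_\la$ and Cauchy--Schwarz on $\|\hn F_{>\la}\|_{L^2}\les\sum_{\la'>\la}(\la/\la')^{1/2+\ep}\la'^{-1/2-\ep}\|\la'^{1/2+\ep}\hn F_{\la'}\|_{L^2}$, yielding a double Schur kernel $(\la/\mu)^{1-\ep}(\la/\la')^{1/2+\ep}$ that sums cleanly to $\|\hn F\|_{H^{1/2+\ep}}\|G\|_{L^2}$. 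Your alternative — using $P_\mu G_\la=0$ for $\la\le\mu/4$ to collapse the sum to $P_\mu(F_{\sim\mu}G_{\le\mu/4})$ and then exploiting orthogonality of the sequence $(\|\hn F_{\la'}\|_{L^2}^2)_{\la'\sim\mu}$ across $\mu$ — is a valid alternative, and the frequency-matching observation is a nice piece of structure; but it is harder to check in detail (the almost-orthogonality bookkeeping across $\mu$, the boundary range $\mu/4<\la\le\mu$), whereas the paper's mechanism works uniformly over the full range $\la\le\mu$ with a single application of the commutator estimate. Both arguments produce the stated conclusion.
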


\begin{proof}
First we have
\begin{align*}
\mu^\ep \sum_{\la>\mu} \|P_\mu(F_\la\cdot G_\la)\|_{L^2}
&\les \mu^\ep \sum_{\la>\mu} \|F_\la\|_{L^\infty} \|G_\la\|_{L^2}
\les \|F\|_{L^\infty} \sum_{\la>\mu} \left(\frac{\mu}{\la}\right)^\ep \|\la^\ep G_\la\|_{L^2}
\end{align*}
which implies that
\begin{equation*}
\left\|\mu^\ep \sum_{\la>\mu} P_\mu(F_\la\cdot G_\la)\right\|_{l_\mu^2 L^2} \les
\|F\|_{L^\infty}\|G\|_{H^\ep}.
\end{equation*}
Next we consider the term
$$
[P_\mu, F] G_{\le \mu}=\int M_\mu(x-y) [F(x)-F(y)] G_{\le \mu}(y) dy
$$
which can be decomposed as follows
\begin{align*}
[P_\mu, F] G_{\le \mu}(x) &= \sum_{\la\le \mu} \int M_\mu(x-y)
\left(F_{\le \la}(x)-F_{\le \la}(y)\right) G_\la(y) dy\\
&+ \sum_{\la\le \mu} \int M_\mu(x-y) \left(F_{>\la}(x)-F_{>\la}(y)\right) G_\la(y) dy.
\end{align*}
We can estimate as before to obtain
\begin{align*}
\mu^\ep  \|& [P_\mu, F] G_{\le \mu} \|_{L^2} \\
& \les \mu^{-1+\ep} \sum_{\la\le \mu} \|\hn F_{\le \la}\|_{L^\infty} \|G_\la\|_{L^2}
+\mu^{-1+\ep} \sum_{\la\le \mu} \|\hn F_{>\la}\|_{L^2} \|G_\la\|_{L^\infty} \\
&\les \sum_{\la\le \mu} \left(\frac{\la}{\mu}\right)^{1-\ep} \|\la^\ep G_\la\|_{L^2} \|F\|_{L^\infty}\\
&+\sum_{\la\le \mu} \sum_{\la'>\la} \left(\frac{\la}{\mu}\right)^{1-\ep} \left(\frac{\la}{\la'}\right)^{1/2+\ep}
\|\la'^{1/2+\ep}\hn F_{\la'}\|_{L^2} \|G_\la\|_{L^2}.
\end{align*}
Taking the $l_\mu^2$-norm gives
\begin{equation*}
\|\mu^\ep [P_\mu, F] G_{\le \mu}\|_{l_\mu^2 L^2}
\les \|F\|_{L^\infty} \|G\|_{H^\ep} + \|\hn F\|_{H^{1/2+\ep}} \|G\|_{L^2}.
\end{equation*}
The proof is therefore complete.
\end{proof}

\begin{lemma}\label{prodad1}
For any $\ep>0$ and any scalar functions $F$ and $G$, there holds
\begin{equation}\label{prodad.1}
\|\mu^\ep P_\mu(\hn F\c G)\|_{l_\mu^2 L^2}
\les \|\hn F\|_{H^{\f12+\ep}}\|G\|_{H^1}+\|F\|_{L^\infty}\|G\|_{H^{1+\ep}}.
\end{equation}
\end{lemma}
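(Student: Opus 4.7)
The plan is to apply the Littlewood--Paley trichotomy (\ref{trico}) to decompose
\begin{equation*}
P_\mu(\hn F\c G)=P_\mu((\hn F)_{\le\mu}\c G_\mu)+P_\mu((\hn F)_\mu\c G_{\le\mu})+\sum_{\la>\mu}P_\mu((\hn F)_\la\c G_\la),
\end{equation*}
and to treat the three pieces separately. The two summands on the right of (\ref{prodad.1}) correspond precisely to these interactions: the low-high piece will produce $\|F\|_{L^\infty}\|G\|_{H^{1+\ep}}$ by transferring the derivative onto the low-frequency factor, while the high-low and high-high pieces will both produce $\|\hn F\|_{H^{\f12+\ep}}\|G\|_{H^1}$ by spending half a derivative of the high-frequency $\hn F$ via Bernstein.

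For the low-high piece I would use the finite-band property together with Bernstein: $\|(\hn F)_{\le\mu}\|_{L^\infty}\les \mu\|F_{\le\mu}\|_{L^\infty}\les \mu\|F\|_{L^\infty}$, so
\begin{equation*}
\mu^\ep\|P_\mu((\hn F)_{\le\mu}\c G_\mu)\|_{L^2}\les \mu^{1+\ep}\|F\|_{L^\infty}\|G_\mu\|_{L^2},
\end{equation*}
and an $l_\mu^2$-sum delivers the contribution $\|F\|_{L^\infty}\|G\|_{H^{1+\ep}}$. For the high-low piece I would pair $L^3\times L^6$: Bernstein gives $\|(\hn F)_\mu\|_{L^3}\les \mu^{\f12}\|(\hn F)_\mu\|_{L^2}$, while Sobolev gives $\|G_{\le\mu}\|_{L^6}\les \|G\|_{H^1}$, hence
\begin{equation*}
\mu^\ep\|P_\mu((\hn F)_\mu\c G_{\le\mu})\|_{L^2}\les \mu^{\f12+\ep}\|(\hn F)_\mu\|_{L^2}\|G\|_{H^1},
\end{equation*}
which after taking $l_\mu^2$ yields $\|\hn F\|_{H^{\f12+\ep}}\|G\|_{H^1}$.

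For the high-high piece, Bernstein with the $L^{3/2}\subset L^2$ embedding combined with $\|G_\la\|_{L^6}\les \la\|G_\la\|_{L^2}$ produces, for each $\la>\mu$,
\begin{equation*}
\mu^\ep\|P_\mu((\hn F)_\la\c G_\la)\|_{L^2}\les (\mu/\la)^{\f12+\ep}\,\|\la^{\f12+\ep}(\hn F)_\la\|_{L^2}\,\|\la G_\la\|_{L^2}.
\end{equation*}
Since the weight $(\mu/\la)^{\f12+\ep}$ is dyadically summable in the ratio $\mu/\la<1$, writing $\la=2^k\mu$ with $k\ge 1$ and applying Minkowski in $k$, together with the trivial shift-invariant bound $\|a_{2^k\mu}b_{2^k\mu}\|_{l_\mu^2}\les \|a\|_{l^2}\|b\|_{l^\infty}$, converts this into
\begin{equation*}
\left\|\mu^\ep\sum_{\la>\mu}P_\mu((\hn F)_\la\c G_\la)\right\|_{l_\mu^2 L^2}\les \|\hn F\|_{H^{\f12+\ep}}\sup_\la \|\la G_\la\|_{L^2}\les \|\hn F\|_{H^{\f12+\ep}}\|G\|_{H^1}.
\end{equation*}
No serious obstacle appears; the only delicate point is the choice of H\"older triple in each case, and in particular the use of $L^3\times L^6$ (rather than $L^\infty\times L^2$) for the high-low interaction, which would otherwise require an additional half-derivative of $F$ that is not available on the right-hand side of (\ref{prodad.1}).
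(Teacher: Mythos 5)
Your proof is correct and follows essentially the same route as the paper's: the trichotomy (\ref{trico}) followed by Bernstein/H\"older on each interaction, with the low--high piece absorbed into $\|F\|_{L^\infty}\|G\|_{H^{1+\ep}}$ and the remaining pieces into $\|\hn F\|_{H^{\f12+\ep}}\|G\|_{H^1}$. The only cosmetic differences are that the paper handles the high--low piece by first splitting off a commutator $[P_\mu, G_{\le\mu}]\hn F_\mu$ before applying the same $L^3\times L^6$ pairing (your direct application of H\"older inside $P_\mu$ is simpler and equally valid), and it sends the high--high piece to the other term on the right-hand side, which also works.
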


\begin{proof}
By the trichotomy law, we can write
\begin{align*}
P_\mu(\hn F\c G)&=a_\mu+b_\mu+c_\mu\\
&=P_\mu\left((\hn F)_\mu\c G_{\le \mu}\right)+\sum_{\la>\mu}P_\mu\left(\hn F_\la \c G_\la\right)
+P_\mu\left(\hn F_{\le \mu} \c G_\mu\right).
\end{align*}
For the terms $b_\mu$ and $c_\mu$, it is easy to derive that
\begin{align*}
\|\mu^\ep b_\mu\|_{l_\mu^2 L^2} \les \|F\|_{L^\infty}\|G\|_{H^{1+\ep}}, \qquad
\|\mu^\ep c_\mu\|_{l_\mu^2 L^2} \les \|F\|_{L^\infty}\|G\|_{H^{1+\ep}}.
\end{align*}
For the term $a_\mu$, we can write
\begin{equation}\label{decompa}
a_\mu=[P_\mu, G_{\le\mu}] \hn F_\mu+G_{\le \mu} P_\mu (\hn F)_\mu.
\end{equation}
By the Bernstein inequality for LP projections, it is easy to obtain
\begin{equation*}
\|\mu^\ep G_{\le \mu} P_\mu (\hn F)_\mu\|_{L^2}
\les \mu^\ep \|G_{\le \mu}\|_{L^6}\|P_\mu (\hn F)_\mu\|_{L^3}\les \|G\|_{L^6}\|\mu^{\f12+\ep}(\hn F)_\mu\|_{L^2},
\end{equation*}
while by using Corollary \ref{base2com} we have
\begin{align*}
\|\mu^\ep [P_\mu, G_{\le\mu}](\hn F)_\mu\|_{L^2}
&\les \mu^{\ep-1}\sum_{\la \le \mu} \|\hn G_\la\|_{L^\infty} \|(\hn F)_\mu\|_{L^2}\\
&\les \sum_{\la \le \mu} \left(\frac{\la}{\mu}\right)^{\frac{3}{2}}\|\hn G_\la\|_{L^2}
\|\mu^{\f12+\ep}(\hn F)_\mu\|_{L^2}.
\end{align*}
Therefore
\begin{equation*}
\|\mu^\ep a_\mu\|_{l_\mu^2 L_x^2}\les\|\hn F\|_{H^{\f12+\ep}}\|G\|_{H^1}.
\end{equation*}
The combination of the estimates for $a_\mu, b_\mu$ and $c_\mu$ gives (\ref{prodad.1}).
\end{proof}

By using Lemma \ref{prd3} and (\ref{prodad.1}), we can derive the following product estimate.

\begin{lemma}\label{lem22}
For $0<\ep<1$ there holds
\begin{align}
\|\La^\ep\hn(F\c G)\|_{L^2} \les \|\hn F\|_{H^{\f12+\ep}}\| G\|_{H^1}
+\|F\|_{L^\infty}\|G\|_{H^{1+\ep}}\label{prodd.1}
\end{align}
\end{lemma}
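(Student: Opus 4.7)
The plan is to apply the Leibniz rule $\hn(F\cdot G)=\hn F\cdot G+F\cdot \hn G$ and to estimate the two resulting pieces separately using the results already established in this appendix. The key observation is that the product estimate (\ref{prodad.1}) is asymmetric---it places the ``$H^{1/2+\ep}$'' regularity requirement on the differentiated factor---so the potentially bad term is $F\cdot \hn G$, for which simply swapping roles in (\ref{prodad.1}) would cost $\|G\|_{L^\infty}\|F\|_{H^{1+\ep}}$, a combination not permitted by the hypothesis.

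First, for the contribution $\hn F\cdot G$, I would invoke (\ref{prodad.1}) directly. Since $\|\La^\ep H\|_{L^2}=\|\mu^\ep P_\mu H\|_{l_\mu^2 L^2}$, this immediately gives
$$\|\La^\ep(\hn F\cdot G)\|_{L^2}\les \|\hn F\|_{H^{1/2+\ep}}\|G\|_{H^1}+\|F\|_{L^\infty}\|G\|_{H^{1+\ep}},$$
which is exactly the claimed right-hand side of (\ref{prodd.1}).

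For the second piece $F\cdot \hn G$, I would introduce the commutator decomposition
$$P_\mu(F\cdot \hn G)=F\cdot P_\mu\hn G+[P_\mu,F]\hn G.$$
The first term is trivially controlled in $l_\mu^2 L^2$ by $\|F\|_{L^\infty}\,\|\mu^\ep P_\mu\hn G\|_{l_\mu^2 L^2}\les \|F\|_{L^\infty}\|G\|_{H^{1+\ep}}$. For the commutator, Lemma \ref{prd3} applied with its second variable taken to be $\hn G$ yields
$$\|\mu^\ep[P_\mu,F]\hn G\|_{l_\mu^2 L^2}\les \|F\|_{L^\infty}\|\hn G\|_{H^\ep}+\|\hn F\|_{H^{1/2+\ep}}\|\hn G\|_{L^2},$$
and both terms are absorbed into the right-hand side of (\ref{prodd.1}).

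There is no genuine obstacle: the whole purpose of replacing a direct application of (\ref{prodad.1}) on $F\cdot\hn G$ by the commutator split is precisely to shift one derivative off of $G$ at the cost of commuting it through $P_\mu$, so that only $H^{1+\ep}$ (rather than $H^2$) regularity on $G$ is needed. Summing the three contributions completes the proof.
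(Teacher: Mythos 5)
Your proposal is correct and reproduces the paper's own argument essentially verbatim: Leibniz rule, direct application of (\ref{prodad.1}) to the $\hn F\cdot G$ piece, and the commutator split $P_\mu(F\cdot\hn G)=F\,P_\mu\hn G+[P_\mu,F]\hn G$ combined with Lemma \ref{prd3} for the $F\cdot\hn G$ piece.
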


\begin{proof}
Observe that
$$
\|\La^\ep \hn (F\cdot G)\|_{L^2}\les \|\mu^\ep P_\mu (\hn F \c G)\|_{L_\mu^2 L^2}
+ \|\mu^\ep P_\mu (F\c \hn G)\|_{l_\mu^2 L^2}.
$$
From Lemma \ref{prd3} it follows that
\begin{align*}
\|\mu^\ep P_\mu(F\c \hn G)\|_{l_\mu^2 L^2}
&\le \|\mu^\ep [P_\mu, F] \hn G\|_{l_\mu^2 L^2}+\|F\|_{L_x^\infty}\|\mu^\ep P_\mu \hn G\|_{l_\mu^2 L^2}\\
&\les \|F\|_{L^\infty}\|\hn G\|_{H^\ep}+\|\hn F\|_{H^{\f12+\ep}}\|\hn G\|_{L^2}.
\end{align*}
This together with (\ref{prodad.1}) then completes the proof.
\end{proof}

\begin{lemma}\label{err12}
Let $0<\ep <1/2$ and $p>3$ be sufficiently close to $3$. Then for any $\mu\ge 1$ and any
scalar functions $F$ and $G$, there hold
\begin{align}
\|[P_\mu, F]G\|_{L^\infty}
&\les \mu^{-\f12-\ep}\|G\|_{L^\infty} \sum_{\la>\mu}
\left(\frac{\mu}{\la}\right)^{\frac{3}{p}-\frac{1}{2}+\ep} \|\la^\ep \hn^2 F_\la\|_{L^2} \nn \\
&+ \mu^{-\frac{1}{2}-\ep} \|G\|_{L^\infty} \sum_{\la\le \mu}
\left(\frac{\la}{\mu}\right)^{\frac{1}{2}-\ep} \|\la^\ep \hn^2 F_\la\|_{L^2} \label{eqn20}
\end{align}
and
\begin{align}
\|[P_\mu, F]G\|_{L^\infty}
&\les \mu^{-\frac{1}{2}-\ep} \|\hn F\|_{L^\infty} \sum_{\la>\mu}
   \left(\frac{\mu}{\la}\right)^{2+\ep} \|\la^\ep \hn G_\la\|_{L^2} \nn \\
&+ \mu^{-\frac{1}{2}-\ep} \|\hn F\|_{L^\infty} \sum_{\la\le \mu}
   \left(\frac{\la}{\mu}\right)^{\frac{1}{2}-\ep} \|\la^\ep \hn G_\la\|_{L^2} \nn \\
&+ \mu^{-\frac{3}{p}-\f12-\ep}\|G\|_{H^1} \sum_{\la>\mu}
   \left(\frac{\mu}{\la}\right)^{\frac{3}{p}+\ep} \|\la^{\frac{1}{2}+\ep} \hn^2 F_\la\|_{L^2}.\label{eqn21}
\end{align}
where $C>0$ is a constant depending only on $\ep$ and $p$.
\end{lemma}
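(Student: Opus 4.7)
My plan is to prove both inequalities by using the orthogonality decomposition (\ref{bdcp})
$$[P_\mu, F]G = [P_\mu, F]G_{\le \mu} + \sum_{\la > \mu} P_\mu(F_\la \cdot G_\la),$$
with the two pieces producing respectively the $\la \le \mu$ and $\la > \mu$ sums in the target estimates.

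The high-frequency piece $\sum_{\la > \mu} P_\mu(F_\la\cdot G_\la)$ is handled by Bernstein's inequality, which is the only place the parameter $p$ explicitly appears. For (\ref{eqn20}) I would write $\|P_\mu(F_\la G_\la)\|_{L^\infty} \les \mu^{3/p}\|F_\la\|_{L^p}\|G\|_{L^\infty}$ and invoke the finite band property $\|F_\la\|_{L^p} \les \la^{-1/2 - 3/p}\|\hn^2 F_\la\|_{L^2}$; the product of powers is exactly $\mu^{-1/2-\ep}(\mu/\la)^{3/p - 1/2 + \ep}\la^\ep$ after rewriting, and the constraint $p > 3$ sufficiently close to $3$ guarantees the exponent $3/p - 1/2 + \ep$ is positive so the sum is geometric-series convergent in $\la$. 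For the first sum of (\ref{eqn21}) I would instead use $\|P_\mu(F_\la G_\la)\|_{L^\infty}\les \mu^{3/2}\|F_\la G_\la\|_{L^2}\les \mu^{3/2}\|F_\la\|_{L^\infty}\|G_\la\|_{L^2} \les \mu^{3/2}\la^{-2}\|\hn F\|_{L^\infty}\|\hn G_\la\|_{L^2}$, which matches the target exactly. For the third sum of (\ref{eqn21}), I would apply Hölder $\|F_\la G_\la\|_{L^2}\le \|F_\la\|_{L^p}\|G_\la\|_{L^{p'}}$ with $1/p+1/p'=1/2$, then use $\|G_\la\|_{L^{p'}}\les \la^{3/p-1}\|\hn G_\la\|_{L^2}$ together with $\|\hn G_\la\|_{L^2}\le \|G\|_{H^1}$.

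The low-frequency piece $[P_\mu, F]G_{\le \mu}$ is handled through the integral representation (\ref{expression}) after further splitting $F = F_{\le\mu} + F_{>\mu}$. For $F_{>\mu}$ I would use Corollary \ref{base2com}-type estimates dyadically, and the resulting contribution is easily absorbed into the $\la > \mu$ sum already produced. For the more delicate $F_{\le\mu}$ piece, I would perform a second-order Taylor expansion
$$F_{\le\mu}(y) - F_{\le\mu}(x) = (y-x)^j \p_j F_{\le\mu}(x) + \int_0^1(1-\tau)(y-x)^j(y-x)^l \p_{jl}^2 F_{\le\mu}\bigl(x + \tau(y-x)\bigr)\, d\tau.$$
The first-order term collapses via Fourier analysis into $\mu^{-1}\p_j F_{\le\mu}(x)\cdot \widetilde P_\mu G_{\le\mu}(x)$ for a related LP projection $\widetilde P_\mu$, bounded by $\mu^{-1}\|\hn F_{\le\mu}\|_{L^\infty}\|G\|_{L^\infty}$; dyadically expanding $\|\hn F_{\le\mu}\|_{L^\infty} \les \sum_{\la\le \mu} \la^{1/2}\|\hn^2 F_\la\|_{L^2}$ via Bernstein produces exactly the second sum of (\ref{eqn20}). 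The second-order remainder contributes $\mu^{-2}\sum_{\la\le\mu}\la^{3/2}\|\hn^2 F_\la\|_{L^2}\|G\|_{L^\infty}$, which is subsumed into the same sum since $\la/\mu \le 1$. A parallel first-order expansion, combined with $\|G_\la\|_{L^\infty} \les \la^{1/2}\|\hn G_\la\|_{L^2}$ to handle $G_{\le \mu}$ in the $L^\infty$ norm, yields the second sum of (\ref{eqn21}).

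The main obstacle is purely bookkeeping: keeping track of all the dyadic indices and verifying that, in every regime, the Taylor expansion and Bernstein inequalities combine to produce exactly the stated exponents on $\mu$ and $\la$. The only subtle choice is to expand to \emph{second order} rather than first on the $F_{\le \mu}$ piece, which is forced by the fact that $\|\hn^2 F_{\le\mu}\|_{L^\infty}$ grows like $\la^{3/2}\|\hn^2 F_\la\|_{L^2}$ when summed—this requires the extra factor $\mu^{-2}$ that only a second-order remainder provides. No structural cancellation or sharp tool beyond dyadic analysis is required.
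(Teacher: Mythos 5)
Your overall architecture (the decomposition (\ref{bdcp}), Bernstein for the high--high interaction, the kernel representation (\ref{expression}) with a Taylor expansion for the remaining commutator) matches the paper's, and your treatment of the high--high piece and of the $F_{\le\mu}$ piece is sound (indeed the second-order expansion you describe is not actually forced: a dyadic application of Corollary \ref{base2com} to $[P_\mu,F_\la]G_{\le\mu}$ with $\la\le\mu$ already gives the weight $\mu^{-1}\la^{1/2}=\mu^{-\f12-\ep}(\la/\mu)^{\f12-\ep}\la^{\ep}$ of the second sums). The genuine gap is the cross term $b_\mu:=\sum_{\la>\mu}[P_\mu,F_\la]G_{\le\mu}$, which you dismiss with ``Corollary \ref{base2com}-type estimates dyadically, \ldots easily absorbed into the $\la>\mu$ sum already produced.'' That step fails: for an $L^\infty$ bound Corollary \ref{base2com} forces $\hn F_\la$ into $L^\infty$, and by Bernstein $\|\hn F_\la\|_{L^\infty}\les \la^{1/2}\|\hn^2F_\la\|_{L^2}$, so the dyadic commutator bound produces
\begin{equation*}
\mu^{-1}\la^{1/2}\|\hn^2F_\la\|_{L^2}\|G\|_{L^\infty}=\mu^{-\f12-\ep}\Big(\frac{\la}{\mu}\Big)^{\f12-\ep}\|\la^\ep\hn^2F_\la\|_{L^2}\|G\|_{L^\infty},
\end{equation*}
a weight that \emph{grows} in $\la>\mu$ and cannot be absorbed into the first sum of (\ref{eqn20}), whose weights $(\mu/\la)^{3/p-\f12+\ep}$ decay. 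The commutator gain of $\mu^{-1}$ is counterproductive here because the derivative lands on the high-frequency factor.

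This term is precisely where the paper invokes the Sobolev embedding $\|b_\mu\|_{L^\infty}\les\|\hn b_\mu\|_{L^p}+\|b_\mu\|_{L^2}$ with $p>3$ and then estimates $\|\hn F_\la\|_{L^p}\les\la^{\f12-\frac{3}{p}}\|\hn^2F_\la\|_{L^2}$; it is the source of the exponent $\frac{3}{p}-\f12+\ep$ in (\ref{eqn20}) and of the entire third sum in (\ref{eqn21}) (which you instead misattribute to the high--high piece; your H\"older computation there yields $\mu^{3/2}\la^{-3/2}$, which does not reduce to the stated third sum for $\la$ comparable to $\mu$). So your assertion that $p$ enters only through the high--high Bernstein step is incorrect, and is the symptom of the missing argument. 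For (\ref{eqn20}) there is a simple repair that avoids the Sobolev embedding altogether --- give up the commutator structure on $b_\mu$ and use $\|[P_\mu,F_\la]G_{\le\mu}\|_{L^\infty}\les\|F_\la\|_{L^\infty}\|G\|_{L^\infty}\les\la^{-1/2}\|\hn^2F_\la\|_{L^2}\|G\|_{L^\infty}$, whose weight $(\mu/\la)^{\f12+\ep}$ is admissible since $p\ge3$ --- but as written your proposal does not close this case, and for (\ref{eqn21}) you would still need to produce either the second or the third sum from this cross term by a correct argument.
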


\begin{proof}
In view of  (\ref{bdcp}), we can write $[P_\mu F]G=a_\mu+b_\mu +c_\mu$, where
$$
a_\mu=\sum_{\la>\mu} P_\mu(F_\la G_\la), \quad b_\mu=\sum_{\la>\mu}[P_\mu, F_\la]G_{\le \mu},\quad
c_\mu=\sum_{\la\le \mu}[P_\mu, F_\la] G_{\le \mu}.
$$
It is easy to derive that
\begin{equation*}
\|a_\mu\|_{L^\infty} \les \mu^{-\f12-\ep}\sum_{\la>\mu} \left(\frac{\mu}{\la}\right)^{2+\ep}
\|\la^{\ep}\hn^2 F_\la\|_{L_x^2}\|G_\la\|_{L^\infty}.
\end{equation*}
By using Corollary \ref{base2com} and the Bernstein inequality, we also have
\begin{align*}
\|c_\mu\|_{L^\infty} &\les \|G\|_{L^\infty}\sum_{\la\le \mu}\mu^{-1}\|\hn F_\la\|_{L^\infty}
\les \mu^{-\f12-\ep}\|G\|_{L^\infty} \sum_{\la\le \mu} \left(\frac{\la}{\mu}\right)^{\f12-\ep}
\|\la^{\ep}\hn^2 F_\la\|_{L^2}.
\end{align*}
Since $p>3$, we have the Sobolev embedding
$$
\|b_\mu\|_{L^\infty}\les \|\hn b_\mu\|_{L^p}+\|b_\mu\|_{L^2}.
$$
From (\ref{bdcp2}) and the properties of LP projections it follows that
\begin{align*}
\|\hn b_\mu\|_{L^p} &\les\|G\|_{L^\infty} \sum_{\la>\mu} \|\hn F_\la\|_{L^p}
\les \|G\|_{L^\infty}\sum_{\la>\mu} \la^{-\frac{3}{p}+\f12} \|\hn^2 F_\la\|_{L^2}\\
&\les \mu^{-\frac{3}{p}-\ep-\f12}\|G\|_{L^\infty}
\sum_{\la>\mu} \left(\frac{\mu}{\la}\right)^{\frac{3}{p}-\f12+\ep} \|\la^\ep\hn^2 F_\la\|_{L^2}.
\end{align*}
The term $\|b_\mu\|$ can be estimated similarly but much easier. We therefore complete the proof of (\ref{eqn20}).

Similarly, in order to obtain (\ref{eqn21}), we can use the properties of LP projections and
 Corollary \ref{base2com} to derive that
\begin{align*}
\|a_\mu\|_{L^\infty} &\les \mu^{-\f12-\ep} \|\hn F\|_{L^\infty}
\sum_{\la>\mu} \left(\frac{\mu}{\la}\right)^{2+\ep} \|\la^\ep \hn G_\la\|_{L^2},
\end{align*}
\begin{align*}
\|c_\mu\|_{L^\infty}&\les \mu^{-1} \sum_{\la\le \mu} \|\hn F_{\le \mu}\|_{L^\infty} \|G_\la\|_{L^\infty}
\les \mu^{-\f12-\ep}\|\hn F\|_{L^\infty} \sum_{\la\le \mu} \left(\frac{\la}{\mu}\right)^{\f12-\ep}
\|\la^{\ep} \hn G_\la\|_{L^2}.
\end{align*}
\begin{align*}
\|\hn b_\mu\|_{L^p}&\les\sum_{\la\le \mu}\sum_{\ell>\mu}\|G_\la\|_{L^\infty} \|\hn F_{\ell}\|_{L^p}
\les \sum_{\ell\le \mu}\sum_{\la>\mu}\ell^{\f12}\|\hn G_{\ell}\|_{L^2} \la^{-\frac{3}{p}-\ep}
\|\la^{\f12+\ep}\hn^2 F_\la\|_{L^2}\\
&\les \mu^{-\frac{3}{p}-\ep+\f12} \|G\|_{H^1} \sum_{\ell \le \mu}\sum_{\la>\mu} \left(\frac{\ell}{\mu}\right)^{\f12}
\left(\frac{\mu}{\la}\right)^{\frac{3}{p}+\ep}  \|\la^{\ep+\f12}\hn^2 F_\la\|_{L^2}.
\end{align*}
Hence the proof of (\ref{eqn21}) is completed.
\end{proof}

\subsection{$H^\ep$ elliptic estimates}

\begin{lemma}\label{ellp1}
For $0<\ep<1/2$ and any $\Sigma$-tangent tensor field $F$ there hold
\begin{align}
\|\hn^2 F\|_{\dot H^\ep} &\les \|\hdt F\|_{\dot H^\ep}+\|F\|_{H^1},\label{elep1}\\
\|\hn F\|_{\dot H^{1/2+\ep}} &\les \|\La^{\ep-1/2}\hdt F\|_{L^2}+\|F\|_{H^1}.\label{elep2}
\end{align}
\end{lemma}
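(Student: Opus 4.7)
The plan is to establish both estimates through dyadic Littlewood--Paley analysis combined with the commutator estimates already developed in Appendix III, leveraging the $L^2$-elliptic estimate of Lemma~\ref{Cor8.12.1} at each frequency level. Specifically, for a fixed dyadic $\mu$, I would apply Lemma~\ref{Cor8.12.1} to the piece $P_\mu F$:
\begin{equation*}
\|\hn^2 P_\mu F\|_{L^2}\les \|\hdt P_\mu F\|_{L^2}+\|\hn g\cdot \hn P_\mu F\|_{L^2}+\|\hn P_\mu F\|_{L^2}+\|P_\mu F\|_{L^2}.
\end{equation*}
Since the smooth Christoffel symbols $\hat{\Ga}$ of $\hat g$ enter the definition of $\hn$, the commutators $[P_\mu,\hn]$ and $[P_\mu,\hn^2]$ contribute only lower-order, summable errors, so that $\|P_\mu\hn^2 F\|_{L^2}$ and $\|\hn^2 P_\mu F\|_{L^2}$ agree up to harmless terms. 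The essential rough-coefficient commutator comes from
$\hdt P_\mu F = P_\mu \hdt F + [P_\mu,g^{ij}]\hn_i\hn_j F + \textrm{(smooth-coefficient l.o.t.)}$.

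Weighting by $\mu^\ep$ and taking the $l_\mu^2$-norm, the proof of (\ref{elep1}) reduces to bounding $\|\mu^\ep[P_\mu, g^{ij}]\hn^2 F\|_{l_\mu^2 L^2}$ and $\|\mu^\ep\hn g \cdot \hn P_\mu F\|_{l_\mu^2 L^2}$. The first is exactly Lemma~\ref{Lem3.7} applied with $F\to g^{ij}$, $G\to \hn F$, and the parameter $\ep$ there chosen to be $\frac12+\ep$:
\begin{equation*}
\|\mu^\ep[P_\mu, g^{ij}]\hn^2 F\|_{l_\mu^2 L^2}\les \|\hn g\|_{H^1}\|\hn F\|_{H^{1/2+\ep}},
\end{equation*}
and the second yields the same intermediate norm $\|\hn F\|_{H^{1/2+\ep}}$ via $\|\hn g\|_{L^6}\le \|\hn g\|_{H^1}$ and Bernstein. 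Since $\|\hn g\|_{H^1}\le C$ is already available under the bootstrap, it remains to interpolate
$\|\hn F\|_{H^{1/2+\ep}}\les \|\hn F\|_{H^\ep}^{1/2}\|\hn^2 F\|_{H^\ep}^{1/2}+\|\hn F\|_{H^\ep}$,
and to use Young's inequality to absorb $\delta\|\hn^2 F\|_{H^\ep}$ into the left-hand side. The remaining term $\|\hn F\|_{H^\ep}$ interpolates between $L^2$ and $H^1$ of $\hn F$, reducing via Lemma~\ref{Cor8.12.1} to $\|\hdt F\|_{L^2}+\|F\|_{H^1}\les \|\hdt F\|_{H^\ep}+\|F\|_{H^1}$; the low-frequency part $\|\hn^2 F\|_{L^2}$ is then added back by Lemma~\ref{Cor8.12.1} itself.

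For (\ref{elep2}), I would use a duality/IBP argument at the level of Littlewood--Paley pieces rather than a direct elliptic estimate. Writing
\begin{equation*}
\|\hn F\|_{\dot H^{1/2+\ep}}^2 \approx \sum_\mu \mu^{1+2\ep}\int P_\mu \hn F\c P_\mu \hn F,
\end{equation*}
I integrate by parts once to produce a pairing of $P_\mu F$ with $P_\mu \hdt F$ (the spatial harmonic gauge $\hn_i(g^{ij}d\mu_g)=0$ ensures the IBP generates no rough divergence term), yielding
\begin{equation*}
\|\hn F\|_{\dot H^{1/2+\ep}}^2 \les \left|\sum_\mu \langle \mu^{3/2+\ep}P_\mu F,\,\mu^{-1/2+\ep}P_\mu \hdt F\rangle\right| + \textrm{commutator errors}.
\end{equation*}
Cauchy--Schwarz then produces the factor $\|\La^{3/2+\ep}F\|_{L^2}\approx \|\hn F\|_{\dot H^{1/2+\ep}}+\|F\|_{H^1}$ paired with $\|\La^{-1/2+\ep}\hdt F\|_{L^2}$, and after dividing by $\|\hn F\|_{\dot H^{1/2+\ep}}$ one recovers (\ref{elep2}). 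The commutator errors take the same schematic form as in (\ref{elep1}) and are handled by Lemma~\ref{Lem3.7} (with dual weights), followed by the same interpolation/absorption scheme.

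The main obstacle is a careful bookkeeping in the absorption step: the commutator bound from Lemma~\ref{Lem3.7} produces the intermediate norm $\|\hn F\|_{H^{1/2+\ep}}$, which sits precisely half a derivative above $H^\ep$ and half a derivative below $H^{1+\ep}$. One must verify that the weight $\mu^\ep$ in $l_\mu^2$ is the exact threshold at which Lemma~\ref{Lem3.7} (with parameter $\tfrac12+\ep$) applies, so that interpolation with Young's inequality absorbs the $\|\hn^2 F\|_{\dot H^\ep}$ factor without wasting derivatives; any misalignment would force a logarithmic loss or require a stronger norm on $g$ than the available $\|g\|_{H^2}\le C$.
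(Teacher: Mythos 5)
Your proposal follows essentially the same route as the paper's proof: for (\ref{elep1}) you apply the $L^2$ elliptic inequality of Lemma~\ref{Cor8.12.1} to $P_\mu F$, commute $P_\mu$ past $\hdt=g^{ij}\hn_i\hn_j$, invoke Lemma~\ref{Lem3.7} for the rough commutator $[P_\mu,g^{ij}]\hn^2 F$ (with the parameter shifted to $\tfrac12+\ep$ exactly as in the paper's (\ref{er.1})), and absorb the resulting intermediate norm $\|\hn F\|_{H^{1/2+\ep}}$ by interpolation and Young; and for (\ref{elep2}) you integrate by parts at the dyadic level using $\hn_i(g^{ij}d\mu_g)=0$ and run the same commutator plus Cauchy--Schwarz plus absorption scheme. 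The only substantive difference is that you are more explicit than the paper about the Young/absorption step needed to convert $\|\hn F\|_{H^{1/2+\ep}}$ into a small multiple of $\|\hn^2 F\|_{\dot H^\ep}$ plus $\|F\|_{H^1}$, which is indeed exactly the point the paper treats tersely.
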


\begin{proof}
Consider (\ref{elep1}) first. We will use $\er_\mu$ to denote any
error term satisfying
$$
\|\er_\mu\|_{ L^2} \les \mu^{-1}\| F\|_{H^1}.
$$
Using Corollary \ref{base2com}, we can obtain
\begin{equation}\label{phess1}
P_\mu \hn^2 F=\hn^2 P_\mu F+\er_\mu.
\end{equation}
Recall from Lemma \ref{Cor8.12.1} that
\begin{align}\label{8.12.3}
\|\hn^2 P_\mu F\|_{L^2} &\les \|\hdt P_\mu F\|_{L^2} +\|\hn P_\mu F\|_{L^2}
+\|P_\mu F\|_{L^2}
\end{align}
Recall also that $\hdt=g^{ij} \hn_i\hn_j$, we have
\begin{equation}\label{8.12.4}
\hdt P_\mu F=P_\mu \hdt F+[P_\mu, g^{ij}](\p_i\p_j
F-\hat\Ga_{ij}^k \p_k F)+ \er_\mu.
\end{equation}
By Lemma \ref{Lem3.7} and \ref{prd3}, we have
\begin{align}
&\|\mu^\ep [P_\mu, g^{ij}]\p_i \p_j F\|_{l_\mu^2
L^2} \les\|\p g\|_{H^1} \|\La^{\ep+1/2} \p F\|_{L^2},\label{er.1}\\
&\|\mu^\ep[P_\mu, g^{ij}](\hat \Ga_{ij}^k \p_k F)\|_{l_\mu^2 L^2} \les \|\p g\|_{H^{\f12+\ep}}\|\p F\|_{L^2}\label{er.2}.
\end{align}
By the Sobolev inequality and (\ref{8.12.4}), we obtain $\hdt P_\mu F=P_\mu \hdt F +\er_\mu$.
This together with (\ref{phess1}) and (\ref{8.12.3}) gives (\ref{elep1}).

To prove (\ref{elep2}),  we first use the equivalence between $g$ and $\hat{g}$ and
the integration by parts to obtain
\begin{equation}\label{8.12.5}
\|P_\mu \hn F\|_{L^2}^2 \approx  \int_{\Sigma} g^{ij} P_\mu \p_i F P_\mu \p_j F d\mu_g
=\int_{\Sigma}  P_\mu F\hdt P_\mu F d \mu_g.
\end{equation}
 In view of (\ref{commt8}) and (\ref{er.2}), we can obtain
\begin{align*}
\sum_{\mu} \mu^{1+2\ep} & \left|\int_{\Sigma}  P_\mu F [P_\mu, g^{ij}]\p_i\p_j F d \mu_g\right|\\
& \les \sum_\mu \mu^{3/2+\ep} \|P_\mu F\|_{L^2} \|\mu^{-1/2+\ep} [P_\mu, g^{ij}]\p_i \p_j F\|_{L^2}\\
&\les \|\hn F\|_{H^{1/2+\ep}} \|\hn g\|_{H^1} \|\La^\ep \p F\|_{L^2},
\end{align*}
 and in view of (\ref{lem4eq})
\begin{align*}
\sum_\mu \mu^{1+2\ep}  \left|\int P_\mu F\c [P_\mu, g\hat{\Ga}]  \p F\right|
&\les \|\mu^{\f12+\ep}P_\mu F\|_{l_\mu^2 L^2}\|\hn(g\c \hat{\Ga})\|_{L^6}\|\p F\|_{H^\ep}\\
&\les \|F\|_{H^{\f12+\ep}}\|\p F\|_{H^\ep}\| g\|_{H^2}.
\end{align*}
In view of (\ref{8.12.4}) and (\ref{8.12.5}), we have with $p=\ep/(1/2+\ep)$ that
\begin{align*}
\sum_\mu \mu^{1+2\ep} \|P_\mu \hn F\|_{L^2}^2
&\les  (\|\ F\|_{{\dot{H}}^{\f12+\ep}}+\|\p F\|_{H^{1/2+\ep}}) \|\p F\|_{H^{1/2+\ep}}^{p}\|\p F\|_{L^2}^{1-p}\| g\|_{H^2}\\
&+\sum_\mu \|\mu^{3/2+\ep} P_\mu F\|_{L^2} \|\mu^{-1/2+\ep} P_\mu \hdt F\|_{L^2}.
\end{align*}
By  the fact
$\|g\|_{H^2} \les 1$ and the Young's inequality,  we obtain (\ref{elep2}).

To prove (\ref{elep2}) for the vector field case, we note that
$$
P_\mu \hn_i F^m =P_\mu \hn_i(F^m)+ [P_\mu, \hat \Ga] F+\hat\Ga\cdot P_\mu F.
$$
Using  Corollary \ref{base2com}, then there holds
$
P_\mu \hn_i F^m =P_\mu \hn_i(F^m)+ \er_\mu ,
$
hence we can obtain
$$
\|\hn F^m\|_{H^{1/2+\ep}} \les \|\hn (F^m)\|_{H^{1/2+\ep}} +\|F\|_{H^1}.
$$
Now we can use (\ref{elep2}) for the scalar function case to derive
\begin{equation}\label{8.14.2}
\|\hn F^m\|_{H^{1/2+\ep}}\les \|\La^{\ep-1/2} \hdt (F^m)\|_{L^2} +\|F\|_{H^1}.
\end{equation}
In view of (\ref{8.12.4}), we can obtain
\begin{equation*}
\|\La^{\ep-1/2} (\hdt F^m)\|_{L^2}\les \|\La^{\ep-1/2} (\hdt F)^m\|_{L^2}+\|\hn F\|_{L^2}.
\end{equation*}
combining this with (\ref{8.14.2}) completes the proof.
\end{proof}


\begin{thebibliography}{9999}
%
%
\bibitem{An97} Anderson, M. T., {\it Extrema of curvature functionals on the
space of metrics on $3$-manifolds}, Calc. Var. Partial Differential
Equations,  5 (1997), no. 3, 199--269.

\bibitem{AnF} Andersson, L. and  Moncrief, V., {\it Future complete vacuum spacetimes},
The Einstein equations and the large scale behavior of gravitational fields,
299--330, Birkh\"{a}ser, Basel, 2004,

\bibitem{AnMon}Andersson, L and  Moncrief, V.,
{\it Elliptic-hyperbolic systems and the Einstein equations},
Ann. Henri Poincar\`e 4 (2003) 1--34

\bibitem{BC1} Bahouri H. and Chemin J.Y.
{\it E\`quations d\`ondes quasilineaires et effet dispersif.}
Internat. Math. Res. Notices 1999, 1141--1178.

\bibitem{BC2}Bahouri H. and Chemin J.Y.
{\it E\`quations d\`ondes quasilineaires et estimation de Strichartz},
Amer. J. Math. 121 (1999), 1337--1377.

\bibitem{Ch} Choquet-Bruhat, Y. {Theoreme d'existence pour certains systemes d'equations
aux derivees partielles non lineaires}, Acta Math. 88 (1952), 141--225.

\bibitem{CK} Christodoulou, D. and Klainerman, S.
{\it The Global nonlinear stability of Minkowski Space},
Princeton Mathematical Series 41, 1993.

\bibitem{HKM} Hughes, T., Kato, T. and Marsden, J.E.
{\it Well-posed quasi-linear second-order hyperboilic systems with applications
to nonlinear elastodynamics and general relativity.}
Arch. Rat. Mech. Anal. 63(1977), 273--294.


\bibitem{KCom} Klainerman, S.
{\it A commuting vectorfield approach to Strichartz type inequalities and
applications to quasilinear wave equations.}
IMRN, 2001, No 5, 221--274.

\bibitem{uni} Klainerman, S. {\it  PDE as a unified subject.} Geom. Funct. Anal., Special Volume, 2000,
1--37.

\bibitem{KRduke} Klainerman, S. and Rodnianski, I. {\it Improved
local well-posedness for quasilinear wave equations in dimension
three}, Duke Math. J. 117 (2003), 1--124.

\bibitem{KR1} Klainerman, S. and Rodnianski, I.  {\it Rough
solutions to  the Einstein vacuum equations}, Ann. of Math. 161 (2005),
1143--1193.

\bibitem{KR2} Klainerman, S. and Rodnianski, I. {\it Causal
structure of microlocalized rough Einstein metrics}, Ann. of
Math. 161 (2005), 1195--1243.

\bibitem{KRd} Klainerman, S. and Rodnianski, I. {\it Ricci defects
of microlocalized Einstein metrics.}  J. Hyperbolic Differ. Equ.  1
(2004),  no. 1, 85--113.

\bibitem{KRC1} Klainerman, S  and Rodnianski, I.
{\it Causal geometry of Einstein-vacuum spacetimes with finite curvature flux.}
Invent. Math., 159(2005), no.3, 437-529

\bibitem{KRsurf}Klainerman, S. and Rodnianski, I.,  {\it A geometric
Littlewood-Paley theory},  Geom. Funct. Anal., 16 (2006), 126--163.

\bibitem{psc}Klainerman, S., Rodnianski, I., Szeftel, J. {\it Personal communication. 2011}

\bibitem{Sm1} Smith, H. F., {\it A parametrix construction for wave equations with
$C^{1,1}$ coefficients}, Ann. Inst. Fourier (Grenoble) 48 (1998), 797--835.

\bibitem{SmTT} Smith, H. F. and Tataru, D.
{\it Sharp local well-posedness results for the nonliear wave equation .}
Annals of Mathematics, 162 (2005), 291-366

\bibitem{Stein2} Stein, E. M., {\it Harmonic analysis: real-variable
methods, orthogonality, and oscillatory integrals},  With the
assistance of Timothy S. Murphy, Princeton Mathematical Series 43,
Monographs in Harmonic Analysis III, Princeton University Press,
Princeton, NJ, 1993.

\bibitem{T1} Tataru, D, Strichartz estimates for operators with nonsmooth coefficients and the
nonlinear wave equation, Amer. J. Math. 122 (2000), 349 --376.

\bibitem{T2} Tataru, D, Strichartz estimates for second order hyperbolic operators with nonsmooth
coefficients, II, Amer. J. Math. 123 (2001), 385--423.

\bibitem{T3} Tataru, D, Strichartz estimates for second order hyperbolic operators with nonsmooth
coefficients, III, J. Amer. Math. Soc. 15 (2002), 419--442.

\bibitem{Wang09} Wang, Q., {\it On the geometry of null cones in Einstein-vacuum spacetimes},
Ann Inst. H. Poincare Anal. Non Lineaire, 26 (2009), no. 1, 285--328.

\bibitem{Wang10} Wang, Q. {\it Improved Breakdown Criterion for Einstein Vacuum Equations in CMC Gauge},
Comm. Pure Appl. Math., Vol. LXV, 0021--0076 (2012).

\bibitem{Wang1} Wang, Q. {\it On Ricci Coefficients of Null Hypersurfaces with
Time foliation in Einstein Vacuum space-Time. Part I}, Calculus of Variations and PDE, to appear 
(arXiv:1006.5963)

\end{thebibliography}
\end{document}